\newcommand*\samethanks[1][\value{footnote}]{\footnotemark[#1]}
\colorlet{myGreen}{green!50!black}
\colorlet{myLightgreen}{green}
\colorlet{myRed}{red!90!black}
\definecolor{myBlue}{rgb}{0.25, 0.0, 1.0}
\definecolor{myLightBlue}{rgb}{0.39, 0.58, 0.93}
\colorlet{myViolet}{myBlue!55!myRed}
\definecolor{myOrange}{rgb}{1.0, 0.66, 0.07}
\definecolor{CornflowerBlue}{rgb}{0.39, 0.58, 0.93}
\definecolor{DarkGoldenrod}{rgb}{0.72, 0.53, 0.04}
\definecolor{BritishRacingGreen}{rgb}{0.0, 0.26, 0.15}
\definecolor{DarkMagenta}{rgb}{0.55, 0.0, 0.55}
\definecolor{AO}{rgb}{0.0, 0.5, 0.0}
\definecolor{BostonUniversityRed}{rgb}{0.8, 0.0, 0.0}
\definecolor{myRed}{rgb}{0.8, 0.0, 0.0}
\definecolor{DarkMidnightBlue}{rgb}{0.0, 0.2, 0.4}
\definecolor{DarkTangerine}{rgb}{1.0, 0.66, 0.07}
\definecolor{AppleGreen}{rgb}{0.55, 0.71, 0.0}
\definecolor{BrightUbe}{rgb}{0.82, 0.62, 0.91}
\definecolor{Amethyst}{rgb}{0.6, 0.4, 0.8}
\definecolor{DarkGray}{rgb}{0.52, 0.52, 0.51}
\definecolor{Gray}{rgb}{0.66, 0.66, 0.66}
\definecolor{BananaYellow}{rgb}{1.0, 0.88, 0.21}
\definecolor{Amber}{rgb}{1.0, 0.75, 0.0}
\definecolor{LightGray}{rgb}{0.83, 0.83, 0.83}
\definecolor{PrincetonOrange}{rgb}{1.0, 0.56, 0.0}
\definecolor{DeepCarrotOrange}{rgb}{0.91, 0.41, 0.17}
\definecolor{CarrotOrange}{rgb}{0.93, 0.57, 0.13}
\definecolor{MidnightBlue}{rgb}{0.1, 0.1, 0.44}
\definecolor{Magenta}{rgb}{0.50, 0.0, 0.50}
\definecolor{BrightPink}{rgb}{1.0, 0.0, 0.5}
\definecolor{BrilliantRose}{rgb}{1.0, 0.33, 0.64}
\definecolor{ChromeYellow}{rgb}{1.0, 0.65, 0.0}
\definecolor{HotMagenta}{rgb}{1.0, 0.11, 0.81}
\definecolor{Amethyst}{rgb}{0.6, 0.4, 0.8}
\definecolor{BrightGreen}{rgb}{0.4, 1.0, 0.0}
\definecolor{BrilliantLavender}{rgb}{0.96, 0.73, 1.0}
\setlist[itemize]{topsep=0pt,partopsep=0pt,itemsep=0pt,parsep=0pt}
\setlist[itemize,1]{label={\small\textbullet}}
\setlist[itemize,2]{label={\tiny\textbullet}}
\setlist[itemize,3]{label=$\cdot$}
\setlist[enumerate]{topsep=0pt,partopsep=0pt,itemsep=0pt,parsep=0pt}
\setlist[enumerate,1]{label=\roman*)}
\setlist[enumerate,2]{label=\alph*)}
\setlist[enumerate,3]{label=\arabic*)}
\theoremstyle{definition}
\newtheorem{environment}{Environment}[section]
\newtheorem{lemma}[environment]{Lemma}
\newtheorem*{lemma*}{Lemma}
\newtheorem{proposition}[environment]{Proposition}
\newtheorem{corollary}[environment]{Corollary}
\newtheorem{theorem}[environment]{Theorem}
\newtheorem*{theorem*}{Theorem}
\newtheorem{conjecture}[environment]{Conjecture}
\newtheorem*{hypothesis*}{Hypothesis}
\newtheorem{observation}[environment]{Observation}
\newtheorem{question}[environment]{Question}
\newtheorem{definition}[environment]{Definition}
\tikzset{
	position/.style args={#1:#2 from #3}{
		at=($(#3)+(#1:#2)$)
	}
}
\tikzset{
  v:main/.style = {draw, circle, scale=0.8, thick,fill=black,inner sep=0.7mm},
  v:ghost/.style = {inner sep=0pt,scale=1},
  >={latex},
  e:marker/.style = {line width=8.5pt,line cap=round,opacity=0.35,color=DarkGoldenrod},
  e:main/.style = {line width=1pt},
}
\newcommand\restrict[1]{\raisebox{-.5ex}{$|$}_{#1}}
\newcommand{\ocp}{\textsf{OCP}}
\newcommand{\ocptw}{\mathsf{OCP}\mbox{-}\mathsf{tw}}
\newcommand{\ocpw}{\mathsf{OCP}\mbox{-}\mathsf{width}}
\newcommand{\tw}{\mathsf{tw}}
\newcommand{\ocptd}{\textsf{OCP}-tree-decomposition\xspace}
\newcommand{\oddminor}{\preccurlyeq_{\mathsf{odd}}}
\newcommand{\mis}{\textsc{Maximum Weighted Independent Set}\xspace}
\title{Odd-Cycle-Packing-treewidth: On the Maximum Independent Set problem in odd-minor-free graph classes}
\date{}
\DeclareRobustCommand{\authorthing}{
	\begin{center}
        Mujin Choi\footnote{\href{mailto:mujinchoi@kaist.ac.kr}{mujinchoi@kaist.ac.kr}}~~\!\thanks{Supported by the Institute for Basic Science (IBS-R029-C1).} \\
		{\small Department of Mathematical Sciences, KAIST, Daejeon, South Korea\\ Discrete Mathematics Group, Institute for Basic Science (IBS), Daejeon, South Korea} \\
        \medskip
		Maximilian Gorsky\footnote{\href{mailto:m.gorsky@pm.me}{m.gorsky@pm.me}}~~\!\samethanks[2] \\
		{\small Discrete Mathematics Group, Institute for Basic Science (IBS), Daejeon, South Korea} \\
        \medskip
		Gunwoo Kim\footnote{\href{mailto:gunwoo.kim@kaist.ac.kr}{gunwoo.kim@kaist.ac.kr}}~~\!\samethanks[2] \\
		{\small School of Computing, KAIST, Daejeon, South Korea\\ Discrete Mathematics Group, Institute for Basic Science (IBS), Daejeon, South Korea} \\
        \medskip
		Caleb McFarland\footnote{\href{mailto:cmcfarland30@gatech.edu}{cmcfarland30@gatech.edu}} \\
		{\small School of Mathematics, Georgia Institute of Technology, Atlanta, USA} \\
	    \medskip
		Sebastian Wiederrecht\footnote{\href{mailto:wiederrecht@kaist.ac.kr}{wiederrecht@kaist.ac.kr}} \\
		{\small School of Computing, KAIST, Daejeon, South Korea} \\
\end{center}}
\author{\authorthing}
\begin{document}
\maketitle

\begin{abstract}
We introduce the tree-decomposition-based graph parameter \emph{Odd-Cycle-Packing-treewidth} (\textsf{OCP}-tw) as a width parameter that asks to decompose a given graph into pieces of bounded odd cycle packing number.
The parameter \textsf{OCP}-tw is monotone under the odd-minor-relation and we provide an analogue to the celebrated Grid Theorem of Robertson and Seymour for \textsf{OCP}-tw.
That is, we identify two infinite families of grid-like graphs whose presence as odd-minors implies large \textsf{OCP}-tw and prove that their absence implies bounded \textsf{OCP}-tw.
This structural result is constructive and implies a $2^{\mathbf{poly}(k)}\mathbf{poly}(n)$-time parameterized $\mathbf{poly}(k)$-approximation algorithm for \textsf{OCP}-tw.

Moreover, we show that the (weighted) \textsc{Maximum Independent Set} problem (MIS) can be solved in polynomial time on graphs of bounded \textsf{OCP}-tw.
Finally, we lift the concept of \textsf{OCP}-tw to a parameter for matrices of integer programs.
To this end, we show that our strategy can be applied to efficiently solve integer programs whose matrices can be ``tree-decomposed'' into totally $\Delta$-modular matrices with at most two non-zero entries per row.
\end{abstract}
\let\sc\itshape
\thispagestyle{empty}

\newpage
\tableofcontents
\thispagestyle{empty}
\newpage

\setcounter{page}{1}

\section{Introduction}\label{sec:intro}

The \textsc{Maximum Independent Set} \emph{problem} (MIS) takes as input a graph $G$ and asks for an \textsl{independent set} -- a set of vertices that do not share any edges -- of maximum size.
MIS is known to be \textsf{NP}-hard in general and remains \textsf{NP}-hard on triangle-free graphs \cite{Poljak1974ANote}, graphs without induced $K_{1,4}$-subgraphs \cite{Minty1980Maximal}, and planar graphs of maximum degree at most three \cite{GareyJ1977Rectilinear}.

The field of parameterized algorithms aims to identify hierarchical graph classes $\mathcal{C}_1\subseteq \mathcal{C}_2\subseteq \dots $ such that a given \textsf{NP}-hard problem can be solved in polynomial time in $\mathcal{C}_k$ for every fixed $k\in\mathbb{N}$.
A simple example for a typical result in this area is the observation that, if we let $\mathcal{C}_k$ be the class of all graphs $G$ with independent sets of order at most $k$, then for every $k$ we can solve MIS on the graphs in $\mathcal{C}_k$ in time $n^{\mathcal{O}(k)}$ which is, under established complexity-theoretic assumptions, best possible (see \cite{CyganFKLMPPS2015Parameterized} for an introduction to parameterized complexity).
As the degree of this polynomial grows in the solution size, a need for different types of \textsl{parametrizations} for MIS becomes apparent.

\paragraph{Our contribution.}
In this paper we introduce the notion of \textsl{Odd-Cycle-Packing-treewidth} (\textsf{OCP}-tw) as a unifying way to understand both tree-like features and the behaviour of odd cycles in a graph.
We leverage this definition to show that MIS can be solved in time $n^{f(k)}$ on graphs of \textsf{OCP}-tw at most $k$ (see \zcref{subsec:OCPtwIntro,subsec:ResultsIntro} for more detailed expositions).
This result can be understood as a major step forward in the understanding of the tractability horizon for MIS in \textsl{odd-minor-free} graph classes\footnote{Such graph classes can be seen as generalisations of bipartite graphs. See \zcref{subsec:OCPtwIntro} for a definition of odd-minors.} (see \zcref{subsec:RelatedIntro} for a deeper discussion).
Along the way we develop a list of technical tools we believe to be invaluable in the further advancement towards this goal.

\paragraph{Structural parametrizations for MIS.}
A well-established strategy for the search for good parametrizations is the exclusion of certain substructures.
An example of such a strategy is used for minor-closed graph classes:
Via a result of Garey and Johnson \cite{GareyJ1977Rectilinear}, we know that MIS is \textsf{NP}-hard on every minor-closed graph class containing all planar graphs.
However, the celebrated Grid Theorem of Robertson and Seymour \cite{RobertsonS1986Grapha} tells us that every such graph class excluding a planar graph $H$ has \textsl{bounded treewidth},\footnote{We postpone the definition of treewidth and tree-decompositions to \zcref{subsec:OCPtwIntro}.} i.e.\ treewidth at most $k = k(|V(H)|)$.
As a consequence, there exists an algorithm for MIS on $H$-minor-free graphs running in $f(k)\cdot n$-time \cite{CyganFKLMPPS2015Parameterized} if $H$ is planar.

The underlying tree-like structure of graphs with bounded treewidth naturally lends itself to dynamic-programming and allows for the design of many parameterized algorithms \cite{RobertsonS1986Graphb,ArnborgP1989Linear,BodlaenderK2007Combinatorial,BodlaenderCKN2015Deterministic}.
A downside of minor-closed graph classes -- and therefore of treewidth as a parameter -- is that $H$-minor-free graphs are necessarily \textsl{sparse}.
Specifically, they always exclude many graphs on which MIS is naturally polynomial-time solvable such as \textsl{dense} bipartite graphs.
In recent years, several research projects were aimed at overcoming this hurdle.
On one side, the work of Fiorini, Joret, Weltge, and Yuditsky \cite{FioriniJWY2021Integer,Fiorini2025Integer} focussed on structural generalisations of bipartite graphs by proving that MIS can be solved in polynomial time on graphs with at most $k$ pairwise disjoint odd cycles, i.e.\ graphs of bounded \emph{odd cycle packing number}.
On the other hand, based on an idea of Eiben, Ganian, Hamm, and Kwon \cite{EibenGHK2021Measuring}, so-called \textsl{hybridisation techniques} have been used to combine the structural power of tree-decompositions -- and related concepts -- with the structural properties of bipartite graphs to create new width parameters \cite{JansendK2021FPTAlgorithms,JansendKW2021Vertex,GollinW2023OddMinors,JaffkeMIT2023Dyanmix}.
The goal here is to create parameters $\mathsf{p}$ such that MIS -- or any other interesting problem -- can be solved in polynomial time on all graphs $G$ with $\mathsf{p}(G)\leq k$.

There are two slightly different genres of such hybrid parameters that can be found in the literature:
One way -- pioneered by Bulian and Dawar \cite{BulianD2016Graph,BulianD2017Fixed-parameter} -- fixes a minor-monotone\footnote{A graph parameter $\mathsf{q}$ is \emph{minor-monotone} if for every graph $G$ and every minor $H$ of $G$ it holds that $\mathsf{q}(H)\leq \mathsf{q}(G)$.} parameter $\mathsf{q}$ and a graph class $\mathcal{B}$ -- for example bipartite graphs -- and then fixes $\mathsf{p}(G)$ to be the smallest $k$ for which there exists $X \subseteq V(G)$ whose \textsl{torso}\footnote{The torso $G_X$ is built from $G[X]$ by turning $N_G(V(H))\subseteq X$ into a clique for every component $H$ of $G-X$.} $G_X$ satisfies $\mathsf{q}(G_X)\leq k$, and $G-X\in\mathcal{B}$.
The case where $\mathsf{q}$ is treewidth is known as $\mathcal{B}$-treewidth.

One may also change the way a tree-decomposition is evaluated.
This approach is central to our work.
In simplified terms, the treewidth of a graph is the smallest $k$ such that a graph $G$ can be decomposed into a tree-like arrangement of vertex sets $X\subseteq V(G)$ of size at most $k+1$.
Instead of focusing on the size of the sets, we could ask for any two such sets $X$ and $Y$ to satisfy $|X \cap Y| \leq k$ and for each torso $G_X$ to satisfy a fixed property.
A classic example of such a parameter is based on a theorem of Robertson and Seymour \cite{RobertsonS1993Excluding} and asks for every $X$ to either be of size at most $k+1$ or $G_X$ to be planar (see \cite{Kaminski2021MAXCUT} for an application and \cite{ThilikosW2024Killing} for a generalisation).
Regarding MIS, the notions of \textsl{bipartite treewidth} \cite{JaffkeMIT2023Dyanmix} asking for every $G_X$ to become bipartite by deleting at most $k$ vertices, and \textsl{$\mathcal{B}$-blind-treewidth} \cite{GollinW2023OddMinors}, where $\mathcal{B}$ is the class of bipartite graphs, which only measures the treewidth of the non-bipartite blocks of $G$, fall into this category.

We further discuss some of the ideas above in relation to our results in \zcref{subsec:RelatedIntro}.

\subsection{Odd-Cycle-Packing-treewidth}\label{subsec:OCPtwIntro}
Our results are centred around a novel extension of the ideas behind bipartite treewidth and odd cycle packing number.
We combine both of these notions into a single parameter by choosing the parameter $\mathsf{q}$ in the construction above to be the \textsl{odd cycle packing number}.
If we denote the class of bipartite graphs by $\mathcal{B}$, this results in a common generalisation of $\mathcal{B}$-treewidth, bipartite treewidth, and odd cycle packing number.
To provide a full definition, we first define tree-decompositions.

A \emph{tree-decomposition} of a graph $G$ is a pair $(T,\beta)$ where $T$ is a tree and $\beta$ assigns to every $t\in V(T)$ a set $\beta(t)\subseteq V(G)$ called the \emph{bag} of $t$, such that $\bigcup_{t\in V(T)}\beta(t)=V(G)$, for each $xy\in E(G)$ there is $t\in V(T)$ with $x,y\in\beta(t)$, and for every $v\in V(G)$, the set $\{ t\in V(T) \colon v\in\beta(t) \}$ is connected.
The \emph{adhesion} of $(T,\beta)$ is the value $\mathsf{adhesion}(T,\beta) \coloneq \max_{dt\in E(T)}|\beta(d)\cap\beta(t)|$ and the \emph{width} of $(T,\beta)$ is the value $\max_{t\in V(T)}|\beta(t)|-1$.
In this language, the \emph{treewidth} of a graph $G$, denoted by $\mathsf{tw}(G)$, is the minimum width over all tree-decompositions for $G$.

We now give a formal definition of Odd-Cycle-Packing-treewidth, which we mostly shorten to \ocp-treewidth.
This variant of treewidth measures how far each bag is from being bipartite.

In the following, for any graph $G$ we denote by $\ocp(G)$ the largest integer $k$ such that $G$ contains $k$ pairwise vertex-disjoint cycles of odd length.

\begin{definition}[\ocp-treewidth]\label{def:ocptreewidth}
    An \emph{\ocptd} of a graph $G$ is a triple $\mathcal{T}=(T,\beta, \alpha)$, where $T$ is tree and $\beta, \alpha \colon V(T) \rightarrow 2^{V(G)}$, satisfying the following conditions:
    \begin{enumerate} [leftmargin=1.8cm,labelindent=16pt,label= (\ocp\,\arabic*)]
        \item \label{ocp1} $(T,\beta)$ is a tree-decomposition of $G$,
        \item \label{ocp2} $\alpha(t) \subseteq \beta(t)$ for each $t \in V(T)$, and
        \item \label{ocp3} for each $t\in V(T)$, if there is a path $P$ in $G-\alpha(t)$ such that both endpoints of $P$ are in $\beta(t)$, then $V(P)\subseteq \beta(t)$.
    \end{enumerate}
    The width of an \ocptd $\mathcal{T}$, denoted by $\ocpw(\mathcal{T})$, is the following value:
    \begin{align*}
        \max\Big\{ \mathsf{adhesion}(T,\beta),~\max_{t\in V(T)}\big(  |\alpha(t)| +  \ocp(G[\beta(t)\setminus\alpha(t)])  \big)\Big\}.
    \end{align*}
    The \emph{\ocp-treewidth} of $G$, denoted by $\ocptw(G)$, is the minimum width of an \ocptd of $G$. For $t \in V(T)$, we call $\beta(t)$ the \emph{bag of $t$} and $\alpha(t)$ the \emph{apex set of $t$}.
\end{definition}

A key feature of treewidth is that it is minor-monotone.
A similar observation can be made for \ocp-treewidth if we augment the minor relation to respect the parity of cycles.

For a set $X\subseteq V(G)$ we write $\partial_G(X)\coloneqq \{ xy\in E(G) \colon x\in X, y\notin X\}$.
A \emph{bond} in $G$ is a set $\partial_G(X)$ such that there is no $Y\subseteq V(G)$ with $\partial_G(Y)\subsetneq \partial_G(X)$.
We perform a \emph{bond contraction} in a graph $G$ by contracting all edges in a bond $\partial_G(X)$.
A graph $H$ is said to be an \emph{odd-minor} of a graph $G$ if it can be obtained from $G$ by a sequence of edge deletions, vertex deletions, and bond contractions.

The notion of odd-minors has first been explicitly studied in the context of Hadwiger's Conjecture \cite{GeelenGRSV2009Oddminor,Steiner2022Asymptotic,NorinS2022NewUpper,Liu2024Proper}.
It is a special case of the notion of minors in \textsl{signed graphs} and has strong relations with matroid minors (see \zcref{sec:integerprogramming} for an in-depth introduction).
It is easy to see that \ocp-treewidth is odd-minor-monotone which naturally raises the question of its structural behaviour under the odd-minor relation.

\subsection{Our results}\label{subsec:ResultsIntro}
We split our overview into three categories: Algorithmic results, structural results, and building blocks for more structural inquiries concerning odd-minor-free graphs.

\paragraph{Algorithmic results.}
Our main result in this category is that MIS can be solved in polynomial time on graphs of bounded \ocp-treewidth.

\begin{theorem}[see \zcref{thm:miswithboundedocptw}]\label{thm:MISIntro}
For every fixed non-negative integer $k$, there exists a polynomial-time algorithm for (weighted) MIS on the class of all graphs of \ocp-treewidth at most $k$.
\end{theorem}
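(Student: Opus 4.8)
The plan is to model \mis as an integer program, observe that an \ocptd of bounded width turns its constraint matrix into a tree-like gluing of totally $\Delta$-modular matrices with at most two non-zero entries per row, and then run a dynamic program over this decomposition, using as a black box the fact \cite{FioriniJWY2021Integer,Fiorini2025Integer} that (weighted) \mis is solvable in polynomial time on any graph of bounded odd cycle packing number. First I would obtain the decomposition itself: either it is supplied with the input, or we invoke the $2^{\mathbf{poly}(k)}\mathbf{poly}(n)$-time $\mathbf{poly}(k)$-approximation for \ocptw to compute an \ocptd $\mathcal T=(T,\beta,\alpha)$ of width $w\le\mathbf{poly}(k)$; since $k$ is fixed, $w$ is a constant. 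Root $T$ at an arbitrary node; for a non-root node $t$ with parent $p$, write $B_t=\beta(t)\cap\beta(p)$ for the adhesion towards the parent, so $|B_t|\le w$, and let $G_t$ be the subgraph of $G$ induced by the union of the bags in the subtree rooted at $t$.

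The core structural observation driving the per-bag computation is that \emph{each bag induces a graph of bounded odd cycle packing number}: adding a single vertex back to a graph can increase \ocp by at most one, so
\[
  \ocp(G[\beta(t)])\;\le\;|\alpha(t)|+\ocp\big(G[\beta(t)\setminus\alpha(t)]\big)\;\le\;w .
\]
Hence the edge--vertex incidence matrix of $G[\beta(t)]$ — which has exactly two non-zero entries per row — is totally $\Delta$-modular for $\Delta=2^{O(w)}$, and (weighted) \mis on any induced subgraph of $G[\beta(t)]$, with a bounded set of vertices additionally forced in or out, is solvable in time $n^{f(w)}$ by the cited results. The dynamic program then proceeds bottom-up: at $t$ we compute $f_t\colon 2^{B_t}\to\mathbb R\cup\{-\infty\}$, where $f_t(S)$ is the maximum weight of an independent set $I$ of $G_t$ with $I\cap B_t=S$ (and $-\infty$ if $S$ is not independent), a table of at most $2^w$ entries. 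To compute one entry $f_t(S)$ we branch over the at most $2^{|\alpha(t)|}$ independent subsets $A\subseteq\alpha(t)$ consistent with $S$; having fixed $S$ and $A$, the remaining choice is an independent set of the bounded-\ocp graph $G[\beta(t)\setminus\alpha(t)]$ that avoids $N_G(S\cup A)$, contains $S\setminus\alpha(t)$, and is scored by its own weight plus the sum over children $c$ of the already-computed value $f_c$ evaluated at the trace of the chosen set on the bounded-size adhesion $B_c$; correctness of this sum is the usual separator property of a tree-decomposition (\ref{ocp1}), while \ref{ocp3} is what ensures that $G[\beta(t)]$ is a faithful local model, i.e.\ that every odd cycle of $G$ is ``visible'' in some bag.

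The main obstacle is precisely this last combination step. Each $f_c$ is an \emph{arbitrary} function of the up-to-$w$ vertices of $B_c$ lying in $\beta(t)\setminus\alpha(t)$, so it cannot simply be folded into vertex weights; and the obvious gadget of turning each $B_c$ into a clique is not available, since a path in $\beta(t)$ covered by overlapping adhesion triples would become a long sequence of vertex-disjoint triangles, destroying the bounded-\ocp property of the bag. The resolution is to treat the bag together with all of its child contributions as a single block of the integer program: the columns shared between blocks of adjacent tree nodes are exactly the adhesion sets, which are bounded, so the constraint matrix of $G$ is ``tree-decomposed'' into totally $\Delta$-modular matrices with at most two non-zero entries per row in the sense developed for the integer-programming part of the paper, and the machinery there solves the glued program. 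Equivalently, one enumerates for each child $c$ the $\le 2^w$ patterns of $I\cap B_c$, and for each choice of patterns re-solves a bounded-\ocp \mis instance with those vertices forced and the corresponding bonuses added as constants, organising the enumeration along the tree so that the total work stays at $n^{f(k)}$.

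Since $w$ and $\Delta$ are constants for fixed $k$, every ingredient — the $2^{|B_t|}$ and $2^{|\alpha(t)|}$ branchings, and the bounded-\ocp \mis subroutine — runs in polynomial time, and multiplying over the $O(|V(T)|)=O(n)$ nodes of the decomposition yields the claimed polynomial-time ($n^{f(k)}$) algorithm; reading off a maximum-weight independent set, rather than just its value, is standard bookkeeping in the dynamic program. The main things to get right in the full write-up are the precise notion of a matrix tree-decomposition and the verification that the incidence matrix of a bounded-\ocp graph is totally $2^{O(k)}$-modular, together with the check that all per-node subproblems are genuine induced-subgraph instances (so that the $n^{f(k)}$-time bounded-\ocp \mis routine applies verbatim).
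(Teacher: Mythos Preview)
Your overall architecture is the same as the paper's: obtain a bounded-width \ocptd via the approximation, enumerate over the apex set $\alpha(t)$ (and the parent adhesion), and at each bag call the Fiorini--Joret--Weltge--Yuditsky bounded-\ocp MIS algorithm as a black box. You also correctly identify the genuine obstacle, namely that a child $c$ contributes an arbitrary function of up to $w$ vertices of $B_c\cap(\beta(t)\setminus\alpha(t))$, and that gadgeting this into the bag by cliques would destroy bounded \ocp. But your proposed resolution does not actually close this gap. Appealing to ``the machinery'' of the integer-programming section is circular: that section reduces the IP \emph{to} MIS on graphs of bounded \ocp-treewidth and then invokes the present theorem, so it cannot be used to prove it. And ``enumerate for each child the $\le 2^w$ patterns'' is exponential in the number of children; you cannot binarise the tree to avoid this, because duplicating a bag makes the adhesion between the copies equal to the whole (unbounded) bag, violating the adhesion bound.

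The paper's key idea, which you are missing, is \emph{tameness}: the approximation algorithm it builds returns not an arbitrary \ocptd but a \emph{tame} one, meaning $|(\beta(d)\cap\beta(t))\setminus\alpha(t)|\le 1$ for every edge $dt$. Tameness is not a consequence of bounded \ocp-width; it is produced by the structure theorem. With tameness, after enumerating $X\subseteq P_t\coloneqq B_{t,\text{parent}}\cup\alpha(t)$ (a set of size at most $w+1$), each child $c$ contributes through at most one vertex $v_{c}\in\beta(t)\setminus\alpha(t)$, and the paper encodes the child's table by adding a single pendant vertex adjacent to $v_{c}$ with a suitably chosen weight (and adjusting $w(v_{c})$). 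Pendant edges create no new cycles, so $\ocp(G_{t,X}^+)\le \ocp(G[\beta(t)\setminus\alpha(t)])\le w$, and one bounded-\ocp MIS call suffices per pair $(t,X)$. This is precisely the step your proposal leaves open.
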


This algorithm consists of two parts.
One that finds a \ocp-tree-decomposition of small width and a second that is essentially a dynamic program along this decomposition which uses the algorithm of Fiorini et al.\@ \cite{FioriniJWY2021Integer,Fiorini2025Integer} for weighted MIS in graphs of bounded odd cycle packing number as a blackbox.
Due to the running time of the algorithm by Fiorini et al.\@, the running time of our algorithm from \zcref{thm:MISIntro} is of the form $n^{f(k)}$ where $f$ is a computable function.
\smallskip

\textbf{Tree-decomposing integer programs:}
The landscape of (tree-)decomposition based parameters measuring the matrices of integer programs and allow for parameterized algorithms is relatively sparse \cite{GanianOR2017Going,DvorakEGKO2021Complexity}.
Partially this lack of parameterized tools could be attributed to the fact that one of the most popular parameters -- \textsl{treewidth} -- fails in this regime \cite{GanianO2018Complexity}.
Based on our structural insight, we lift \zcref{thm:MISIntro} into a width parameter for integer programs whose coefficient matrices correspond to the signed incidence matrices of signed graphs.
While our parameter imposes further restrictions on the \textsl{structure} of the input matrices, i.\@e.\@ it requires them to be incidence matrices of signed graphs, it guarantees efficient algorithms way beyond the boundedness of typical parameters such as treewidth.

Fiorini et al.\@ used their result to efficiently solve certain types of integers programs as follows.
Let $\Delta$ be a positive integer.
A matrix $M$ is \emph{totally $\Delta$-modular} if the determinant of every square submatrix of $M$ falls into the set $\{ -\Delta, -\Delta+1, \dots, 0,\dots , \Delta-1, \Delta \}$.
A major conjecture in the field of integer programming is that for every $\Delta$ there is a polynomial-time algorithm to solve all integer programs of the form $\max\{ w^{\text{T}}x \colon Ax\leq b, x\in\mathbb{Z}^n\}$ where $A$ is totally $\Delta$-modular.
See \cite{ArtmannWZ2017Strongly,Schrijver2003Combinatorial,AprileFJKSWY2025Integer} for further partial solutions and more background.
In \cite{FioriniJWY2021Integer,Fiorini2025Integer}, Fiorini et al.\@ proved that for every $\Delta$ there exists a polynomial-time algorithm that solves all integer programs of the form $\max\{ w^{\text{T}}x \colon Ax\leq b, x\in\mathbb{Z}^n\}$ where $A$ is totally $\Delta$-modular and has at most two non-zero entries per row.
Using the bridge between odd-minors, minors in signed graphs, and matroid minors, we are able to combine \zcref{thm:MISIntro} with some of the methods from \cite{Fiorini2025Integer} to obtain algorithms for integers programs whose coefficient matrices correspond to signed graphs excluding certain minors.
For the sake of brevity, we present here an abbreviated version of our result and refer the interested reader to \zcref{sec:integerprogramming} where it appears in the form of \zcref{cor:IPsWork}.

To keep it short, a signed graph is a pair $(G,\gamma)$ where $G$ is a graph and $\gamma\colon E(G)\to \mathbb{Z}_2$ is a labelling of the edges of $G$ with elements of $\mathbb{Z}_2$.
A cycle $C$ of $G$ is \emph{unbalanced} if $\sum_{e\in E(C)}\gamma(e)=1$.
We adapt our definition of \ocp-treewidth to the world of signed graphs by replacing, for the evaluation of the width of a decomposition $\mathcal{T}$, the odd cycle packing number of $G[\beta(t)\setminus\alpha(t)]$ with the largest integer $k$ such that $(G[\beta(t)\setminus\alpha(t)],\gamma)$ has $k$ pairwise vertex-disjoint unbalanced cycles.

\begin{theorem}\label{thm:IPsIntro}
There exist a computable function $f$ and an algorithm that given a non-negative integer $k$ and an integer program of the form $\max\{ w^{\text{T}}x \colon Ax\leq b, x\in\mathbb{Z}^n\}$ where $A\in \{ -1,0,1\}^{m\times n}$ is the signed edge-vertex incidence matrix of a signed graph $(G,\gamma)$, either decides correctly that $\ocp\text{-}\tw(G,\gamma)>k$ or solves the integer program in time $n^{f(k)}$.
\end{theorem}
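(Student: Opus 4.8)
The plan is to first produce (or correctly rule out) a decomposition, and then to reduce the integer program to a weighted MIS instance on a graph of bounded \ocp-treewidth, so that \zcref{thm:MISIntro} applies. Concretely, I would first run the constructive $\mathbf{poly}(k)$-approximation for \ocp-treewidth — in its version for signed graphs, in which the number of pairwise vertex-disjoint unbalanced cycles plays the role of $\ocp$ — on the input $(G,\gamma)$ and $k$. Either this certifies $\ocptw(G,\gamma)>k$, and we output that; or it returns an \ocptd $\mathcal{T}=(T,\beta,\alpha)$ of $(G,\gamma)$ of width $c=c(k)=\mathbf{poly}(k)$, which we fix from now on.

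The next ingredient is the standard fact that the signed edge--vertex incidence matrix of a signed graph with at most $\nu$ pairwise vertex-disjoint unbalanced cycles is totally $\Delta$-modular with $\Delta\le 2^{\nu}$ and has at most two non-zero entries per row. Applying this to the core $G[\beta(t)\setminus\alpha(t)]$ of each node $t$ and using that $\ocpw(\mathcal{T})\le c$, the submatrix of $A$ indexed by the edges and vertices inside each core is totally $2^{c}$-modular with at most two non-zero entries per row. I would then invoke the machinery of Fiorini et al.\@ \cite{FioriniJWY2021Integer,Fiorini2025Integer}: a proximity/scaling argument reduces $\max\{w^{\mathrm T}x:Ax\le b,\ x\in\mathbb Z^n\}$ to polynomially many integer programs in which every variable lies in an interval of length at most $W=W(c)$, and any such bounded-window program is equivalent to a weighted MIS instance on a ``conflict graph'' $G'$: for every variable $x_v$ one takes a clique $V'_v$ with one vertex per admissible value, tunes the weights so that a maximum independent set selects exactly one vertex of each $V'_v$, and joins $(v,p)$ to $(w,q)$ whenever the constraint of an edge $vw$ of $G$ is violated by $x_v=p,\ x_w=q$. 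A necessary adaptation is to run the proximity step \emph{along} $\mathcal{T}$, so that the window bound $W$ is controlled by the cores (which have bounded unbalanced cycle packing) rather than by the global, unbounded structure of $(G,\gamma)$; a direct dynamic program along $\mathcal{T}$ that uses Fiorini et al.'s integer-programming algorithm as a black box on each core would be an alternative, but then coupling the value functions of the many children of a node becomes the bottleneck.

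The heart of the argument is the claim that the conflict graph $G'$ obtained this way satisfies $\ocptw(G')\le f(k)$ for a computable $f$; combined with \zcref{thm:MISIntro} this solves each bounded-window program in time $n^{f(k)}$, and aggregating over the polynomially many windows (and adding the running time of the approximation step) gives the theorem. To prove the claim I would lift $\mathcal{T}$ to an \ocptd $\mathcal{T}'=(T,\beta',\alpha')$ of $G'$ by blowing up bags, $\beta'(t)=\bigcup_{v\in\beta(t)}V'_v$ and $\alpha'(t)\supseteq\bigcup_{v\in\alpha(t)}V'_v$. Conditions \ref{ocp1} and \ref{ocp2} are immediate, the adhesion of $\mathcal{T}'$ is at most $W\cdot c$, and $G'[\beta'(t)\setminus\alpha'(t)]$ is contained in the conflict graph of a totally $2^{c}$-modular matrix with at most two non-zero entries per row, so its odd cycle packing number is bounded in terms of $k$ by the corresponding result of Fiorini et al.\@ The main obstacle is to verify \ref{ocp3} for $\mathcal{T}'$ while keeping $|\alpha'(t)|$ bounded: a path of $G'$ that leaves $\beta'(t)$ through a component $D'$ of $G'-\beta'(t)$ and returns must enter and exit through the blow-up $V'_u$ of a single ``boundary'' vertex $u$ of the corresponding component of $G-\beta(t)$ (condition \ref{ocp3} for $\mathcal{T}$ forces at most one such $u$), yet $D'$ can be adjacent to many vertices of $V'_u$, so neither the naive choice of $\alpha'(t)$ nor the choice that adds every such $V'_u$ works — the latter because the number of boundary vertices at $t$ is not controlled by $\ocpw(\mathcal{T})$. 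I expect this to be the technically most delicate point, and I would address it by first transforming $\mathcal{T}$ into a sufficiently clean decomposition (for instance by absorbing the bounded adhesions into apex sets and refining $T$ so that every relevant component of $G-\beta(t)$ is either fully apexed or fully contained in a neighbouring bag) and/or by a conflict-graph construction tailored to the tree structure; once the lift is available, \zcref{thm:MISIntro} concludes. Throughout, the dictionary between odd-minors, minors of signed graphs, and matroid minors is what lets the totally-$\Delta$-modular and conflict-graph statements be phrased directly in terms of $(G,\gamma)$.
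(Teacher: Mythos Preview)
Your approach diverges from the paper's, and the divergence creates a genuine gap. The paper does not lift an \ocptd from $(G,\gamma)$ to a blown-up conflict graph. Instead it uses the Fiorini--Joret--Weltge--Yuditsky reduction essentially verbatim: subdivide each even edge of $(G,\gamma)$ into two odd edges and each odd edge into three (so that all entries of $A$ become nonnegative), then use that every basic optimum of the LP relaxation $\{Ax\le b,\ \ell\le x\le u\}$ is \emph{half-integral} whenever $A$ is the edge--vertex incidence matrix of an unsigned graph --- this is the Nemhauser--Trotter property and holds irrespective of the odd-cycle packing number. Proximity then places an integral optimum within $\ell_\infty$-distance $\tfrac12$, so after translation the problem is MIS on a \emph{subgraph of a subdivision} of $G$. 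Subdivision and taking subgraphs preserve the exclusion of any signed-graph minor of minimum degree at least $2$; the parity grids qualify, so the new graph $G'$ still excludes them, and one re-applies the structure theorem (and hence \zcref{thm:MISIntro}) directly to $G'$. No lifting of decompositions is performed.

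Your route fails at the assertion that $G'[\beta'(t)\setminus\alpha'(t)]$ has bounded odd-cycle packing. If the window $W$ is at least $3$, each clique $V'_v$ contains a triangle; since $|\beta(t)\setminus\alpha(t)|$ is \emph{unbounded} in an \ocptd, the core $G'[\beta'(t)\setminus\alpha'(t)]$ contains arbitrarily many disjoint triangles and hence has unbounded $\ocp$. Putting all the cliques into $\alpha'(t)$ instead makes $|\alpha'(t)|$ unbounded. There is no ``corresponding result of Fiorini et al.''\ bounding the $\ocp$ of such a conflict graph; their whole argument is designed to force $W=2$, which half-integrality gives for free and which your proximity-along-$\mathcal{T}$ sketch does not. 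The \ref{ocp3} and local-vs-global-proximity obstacles you flag are real as well, but the $\ocp$ blow-up already invalidates the plan.
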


The incidence matrix of a signed graph $(G,\gamma)$ is totally $\Delta$-modular for some $\Delta$ if and only if any family of pairwise vertex-disjoint unbalanced cycles in $(G,\gamma)$ has bounded size (see \cite{Fiorini2025Integer}).
Hence, \zcref{thm:IPsIntro} contains the result from \cite{Fiorini2025Integer} for $\{ -1,0,1\}$-matrices as a special case.
Moreover, if $(G_k, \gamma_k)$ is the disjoint union of $k\geq 1$ unbalanced cycles, then for every $\Delta$ there exists $k$ such that the incidence matrix of $(G_k,\gamma_k)$ is not totally $\Delta$-modular, but $\ocp\text{-}\tw(G_k,\gamma_k)=1$ for all $k$.
Thus, \zcref{thm:IPsIntro} covers a larger class of $\{ -1,0,1\}$-matrices than the work of Fiorini et al.
\smallskip

\textbf{Finding tree-decompositions of small \textsc{OCP}-width:}
The core of our work is focussed on actually finding an \ocp-tree-decomposition of bounded width or determining that the \ocp-treewidth of the input graph is larger than $k$.
A first roadblock in this endeavour is the observation that computing the \ocp-treewidth of general graphs is as hard as computing the treewidth of bipartite graphs which is known to be equivalent to computing the treewidth of general graphs and therefore \textsf{NP}-complete \cite{Arnborg1987Complexity}.

\begin{theorem}\label{thm:OCPtwNP}
Computing \ocp-treewidth is $\mathsf{NP}$-complete.
\end{theorem}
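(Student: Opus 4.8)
The plan is to handle membership in $\mathsf{NP}$ quickly and then to reduce the problem of computing the treewidth of a bipartite graph -- which is $\mathsf{NP}$-hard by \cite{Arnborg1987Complexity} and, as noted above, polynomially equivalent to computing the treewidth of an arbitrary graph via a single subdivision of each edge -- to computing \ocp-treewidth. For membership it suffices to observe that every \ocptd of width at most $k$ may be assumed to have $\mathcal{O}(n)$ nodes and hence constitutes a polynomial certificate: conditions \ref{ocp1} and \ref{ocp2} are immediate to verify, \ref{ocp3} is equivalent to the polynomial-time checkable statement that every component $D$ of $G-\beta(t)$ satisfies $|N_G(D)\cap(\beta(t)\setminus\alpha(t))|\le 1$, and the bound $\ocp(G[\beta(t)\setminus\alpha(t)])\le k-|\alpha(t)|$ is a bounded-parameter check on each bag.

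For hardness, the first point is that one cannot reduce from treewidth directly: the single-bag \ocptd with empty apex, together with the \ocptd obtained from an optimal tree-decomposition by taking $\alpha(t)$ to be all but one vertex of each $\beta(t)$, show that $\ocptw(G)\le\min\{\tw(G),\ocp(G)\}$ for every graph $G$, and in particular $\ocptw(H)=0$ whenever $H$ is bipartite. Odd cycles therefore have to be introduced by a gadget, and the gadget must render the graph \emph{robustly odd-cycle-dense} -- in the sense that deleting any bounded set of vertices still leaves many pairwise vertex-disjoint odd cycles -- so that a single large bag with a small apex is never advantageous. The natural template is a vertex-substitution: replace each vertex $v$ of $H$ by a small gadget $Q_v$ (a clique, or a more carefully chosen graph), joining $Q_u$ and $Q_v$ appropriately when $uv\in E(H)$, and aim to show that $\ocptw(G)=g(\tw(H))$ for an explicit increasing function $g$, so that $k':=g(k)$ works.

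The forward inequality $\ocptw(G)\le g(\tw(H))$ should be routine: inflate an optimal tree-decomposition of $H$ by replacing each vertex in each bag by its gadget, and choose the apex sets bag-wise so as to make each gadget copy's non-apex part bipartite or otherwise cheap. The converse $\ocptw(G)\ge g(\tw(H))$ is where the work lies. Given an \ocptd $(T,\beta,\alpha)$ of $G$ of width $k$, I would project back to $H$ by setting $\beta'(t):=\{v\in V(H)\colon Q_v\cap\beta(t)\ne\emptyset\}$; this is a tree-decomposition of $H$ because each $Q_v$ is connected, so the subtrees $\{t\colon Q_v\cap\beta(t)\ne\emptyset\}$ satisfy the Helly property. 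The task is then to bound $|\beta'(t)|$, and the tool is the reformulation of \ref{ocp3} above: every component of $G-\beta(t)$ meets $\beta(t)\setminus\alpha(t)$ in at most one vertex. This forces each gadget copy meeting $\beta(t)$ to be either contained in $\beta(t)$ or to contribute a single vertex to $\beta(t)\setminus\alpha(t)$; copies of the first type that are essentially untouched by $\alpha(t)$ carry pairwise vertex-disjoint odd cycles inside $G[\beta(t)\setminus\alpha(t)]$, so there are at most $k$ of them, while a copy of the second type forces the $\beta(t)$-part of each $H$-neighbouring copy into $\alpha(t)$, and a separator argument on the components of $G-\beta(t)$ bounds those in turn.

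The hard part is making this accounting \emph{exact}. Even for the most obvious choice $Q_v=K_c$ one sees that $\ocptw(H[K_c])$ is not a function of $\tw(H)$ alone -- for instance $\ocptw(K_2[K_c])=\lfloor 2c/3\rfloor$ while $\ocptw(P_3[K_c])=c$, although $\tw(K_2)=\tw(P_3)=1$ -- so the gadget, the joining rule, and the choice of apex sets in the forward direction have to be tuned together until the upper bound $g$ coming from the inflation and the lower bound coming from the projection coincide. Pinning down a gadget for which the three ways a bag can interact with a copy $Q_v$ (fully contained, partly in the apex, contributing a single core vertex) are each charged against the width so as to reconstruct the same $g$ is the technical heart of the reduction; once that is achieved, $\ocptw(G)\le k'$ is equivalent to $\tw(H)\le k$ and $\mathsf{NP}$-hardness follows.
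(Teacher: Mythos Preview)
Your hardness argument has a genuine gap: the reduction is never completed. You correctly observe that vertex-substitution with cliques does not yield $\ocptw$ as a function of $\tw(H)$ alone --- your own examples $K_2[K_c]$ and $P_3[K_c]$ already rule this out --- and you then leave ``pinning down a gadget'' as the unfinished ``technical heart of the reduction.'' But that \emph{is} the whole proof; sketching the projection $\beta'(t)=\{v:Q_v\cap\beta(t)\ne\emptyset\}$ and a case split on how a gadget meets a bag is not a reduction until some specific gadget makes the upper and lower bounds coincide, and you give no candidate that does.

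The paper avoids vertex-substitution entirely and uses a much simpler \emph{edge-augmentation}. Starting from a bipartite graph $G$ (treewidth is already $\mathsf{NP}$-hard on bipartite graphs), for every edge $e=xy$ add a new degree-two vertex $v_e$ adjacent to $x$ and $y$, and show $\ocptw(G^{\Delta})=\tw(G)$ for the resulting graph $G^{\Delta}$. The easy direction is \zcref{lemma:tw>ocptw}. For the converse, take an \ocptd $(T,\beta,\alpha)$ of $G^{\Delta}$ of width $k$ and a suitably chosen root $r$: any edge $xy\in E(G)$ with both ends in $\beta(r)\setminus\alpha(r)$ forces $v_{xy}\in\beta(r)$ by \ref{ocp3}, and the triangles $\{x,y,v_{xy}\}$ over a matching $M$ of $G[\beta(r)]-\alpha(r)$ are pairwise disjoint odd cycles, so $|M|\le k$. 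Since $G$ is bipartite, K\H{o}nig's theorem converts this into a vertex cover $S$ of size $|M|$; deleting $A\coloneqq(\alpha(r)\cap V(G))\cup S$ (with $|A|\le k$) leaves an independent set $I$, and \ref{ocp3} forces each component of $G-A$ to meet $I$ in at most one vertex. An induction along these components rebuilds a tree-decomposition of $G$ of width at most $k$. This matching--cover duality is precisely the exact-accounting device your vertex-substitution framework lacks.

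A secondary issue: your membership sketch calls verifying ``$\ocp(G[\beta(t)\setminus\alpha(t)])\le k-|\alpha(t)|$'' a ``bounded-parameter check on each bag,'' but $k$ is part of the input and the bags $\beta(t)$ are of unbounded size (only adhesions and apex sets are bounded), so this step is not polynomial as stated --- computing $\ocp$ is itself $\mathsf{NP}$-hard. The paper, for what it is worth, also only spells out the hardness direction.
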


A major consequence of our structural results as explained below is the following parametrized approximation algorithm for \ocp-treewidth.

\begin{theorem}\label{thm:OCPapproxIntro}
There exists an algorithm that takes as input a non-negative integer $k$ and an $n$-vertex graph $G$ and either finds a subgraph of $G$ certifying that $\ocp\text{-}\tw(G)>k$ or computes an \ocp-tree-decomposition of width in $\mathbf{poly}(k)$ for $G$ in time $2^{\mathbf{poly}(k)}n^{\mathcal{O}(1)}$.
\end{theorem}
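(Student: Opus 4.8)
The plan is to make the proof of our Grid-Theorem analogue for \ocp-treewidth algorithmic. That structural dichotomy provides a computable function $g$ with $g(k)=\mathbf{poly}(k)$ such that for every graph $G$ and every $k$, either $G$ contains a member of one of our two grid-like families, of ``order'' $k$, as an odd-minor, or $\ocptw(G)\le g(k)$; moreover, the proof of the second alternative is constructive, producing the corresponding \ocptd explicitly. It thus suffices to (i) bound the running time of each step of that proof by $2^{\mathbf{poly}(k)}n^{\mathcal{O}(1)}$, and (ii) note that in the first alternative the odd-minor model we extract already is the required certificate: if $G'\subseteq G$ is the union of the branch sets together with the model edges of a member $O$ of one of the two families of order $k$, then $O\oddminor G'\oddminor G$, and since \ocp-treewidth is odd-minor-monotone while $\ocptw(O)>k$ by the ``presence implies large \ocp-treewidth'' direction of the dichotomy, the subgraph $G'$ witnesses $\ocptw(G)>k$.

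For the decomposition side, I would follow the established recursion scheme for approximating width parameters. Processing $G$ along separations of order $\mathbf{poly}(k)$, an FPT (in the separator order) balanced-separator routine either returns a roughly balanced separation $(A,B)$ with $|A\cap B|\le\mathbf{poly}(k)$, in which case one recurses on $G[A]$ and $G[B]$ with $A\cap B$ as a distinguished interface and glues the two returned \ocptds by a new node whose bag is $A\cap B$ together with the interface bags of the two sub-decompositions---this keeps the adhesion in $\mathbf{poly}(k)$, and by always placing the current interface into the apex set of the node owning a piece no path can leave a piece while avoiding its apex, so condition \ref{ocp3} can be maintained---or it certifies that the current piece is highly connected, i.e.\ carries a tangle of order $\mathbf{poly}(k)$. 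The recursion depth is controlled in the standard way, keeping the total running time at $2^{\mathbf{poly}(k)}n^{\mathcal{O}(1)}$.

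The base case is a highly connected piece $H$ with interface $S$, $|S|\le\mathbf{poly}(k)$. Here I would invoke the algorithmic flat-wall machinery to obtain a bounded apex set $A_0$ and a large flat wall $W$ in $H-A_0$, and then apply the dichotomy at the heart of the structural theorem: either there is a further bounded set $A_1$ with $\ocp(H-S-A_0-A_1)\le\mathbf{poly}(k)$---crucially, we may \emph{not} merely bound the odd cycle \emph{transversal} number here, since the Erd\H{o}s--P\'osa property fails for odd cycles, so we need the ``bounded apex plus bounded packing'' version furnished by our technical tools---in which case $H$ becomes a single bag with apex set $S\cup A_0\cup A_1$, contributing $\mathbf{poly}(k)$ to the width; or the parity obstruction inside $W$ is robust, and routing $\mathbf{poly}(k)$ pairwise vertex-disjoint \emph{odd} paths through $W$ assembles one of our two grid-like graphs as an odd-minor, which is output together with its model. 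Each primitive used---wall-finding, the bounded-apex-plus-packing routine, and the disjoint-odd-paths routing---runs in time $2^{\mathbf{poly}(k)}n^{\mathcal{O}(1)}$, so the claimed bound follows.

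I expect the main obstacle to be exactly this last dichotomy inside the flat wall: deciding, and certifying one way or the other, whether the non-bipartiteness concentrated in a highly connected region is ``captured'' by a bounded apex set or is genuinely grid-like. Routing the required, growing-in-$k$, number of vertex-disjoint odd paths while bookkeeping parities along the wall---so that the cycles we close are odd and vertex-disjoint, and so that a failure to route them hands back a bounded apex that makes the odd cycle packing number small---is the delicate step, mirroring the difficulties in the Erd\H{o}s--P\'osa theory for odd cycles and in odd-minor structure theory \cite{GeelenGRSV2009Oddminor}. This is where the technical tools developed earlier in the paper do the heavy lifting, and it is also the origin of the $\mathbf{poly}(k)$ loss in the width, as opposed to a tight bound.
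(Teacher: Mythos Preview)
Your high-level plan is correct and is precisely the paper's approach: the theorem is an immediate consequence of the algorithmic Grid-Theorem analogue (Theorem~\ref{thm:GridThmIntro}, proved constructively via Theorem~\ref{thm:globalstructure_induction}), and your recursion scheme---balanced separators in the generic step, wall-plus-local-structure in the highly-linked step---matches Section~\ref{sec:localtoglobal} of the paper. Your first paragraph, reducing the certificate side to odd-minor monotonicity of $\ocptw$ plus the lower bound on $\ocptw$ of the parity grids, is exactly how the paper argues (Theorem~\ref{thm:approxocptw}).

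There is, however, a genuine gap in your base-case description. You assert that after removing a bounded apex set one has $\ocp(H-S-A_0-A_1)\le\mathbf{poly}(k)$, so that all of $H$ can become a single bag. But the local-structure result (Corollary~\ref{cor:corecorollary}) only bounds the odd-cycle-packing number of the \emph{large block} $B_0$ of the tangle $\mathcal{T}_M-A$, not of $H-A$ as a whole. The remaining blocks of $H-A$, hanging off $B_0$ via cut-vertices, may have arbitrarily large $\ocp$: picture a cut-vertex with many pendant triangles---each triangle lies on the small side of a separation of order $1$ and is invisible to the tangle, so no bounded apex set can kill them all. Placing all of $V(H)$ in one bag would thus give unbounded width. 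The paper's fix (Case~2 of Theorem~\ref{thm:globalstructure_induction}) is to make only $V(B_0)\cup A$ the bag at this node, with apex set $A\cup S$, and to \emph{recurse} on every other block of $H-A$ (and every other component of $H-A$); each such piece meets the current bag in at most $|A|$ apices plus one cut-vertex, so the new interface stays small, and the piece is strictly smaller in the inductive measure. That $B_0$ is a block is also what makes condition~\ref{ocp3} hold cleanly at this node, since any path in $H-A$ between two vertices of $B_0$ is forced to stay inside $B_0$.
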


\paragraph{Structural results.}
There are several equivalent ways to state the celebrated \textsl{Grid Theorem} of Robertson and Seymour \cite{RobertsonS1986Grapha}.
One of them would be a twofold statement as follows:
\begin{enumerate}
\item For every $k\geq 1$, every graph containing the $(k\times k)$-grid as a minor has treewidth at least $k$.

\item There exists a function $f$ such that for every $k\geq 1$, every graph with treewidth at least $f(k)$ contains the $(k\times k)$-grid as a minor.
\end{enumerate}
A functionally equivalent statement would be:
\begin{center}
\vspace{-3mm}
\textsl{A minor-closed graph class $\mathcal{C}$ has bounded treewidth if and only if $\mathcal{C}$ excludes a planar graph.}
\vspace{-3mm}
\end{center}
\textsl{Planarity} in the statement above can be replaced by the phrase ``\textsl{... $\mathcal{C}$ excludes a graph $H$ such that there is a $k\geq 1$ for which $H$ is a minor of the $(k\times k)$-grid.}''
It just happens to be true that the class of all graphs that are a minor of some grid is exactly the class of planar graphs.

\begin{figure}[ht]
    \centering
    \scalebox{1}{
    \begin{tikzpicture}

        \pgfdeclarelayer{background}
		\pgfdeclarelayer{foreground}
			
		\pgfsetlayers{background,main,foreground}

        \begin{pgfonlayer}{background}
            \pgftext{\includegraphics[width=9.5cm]{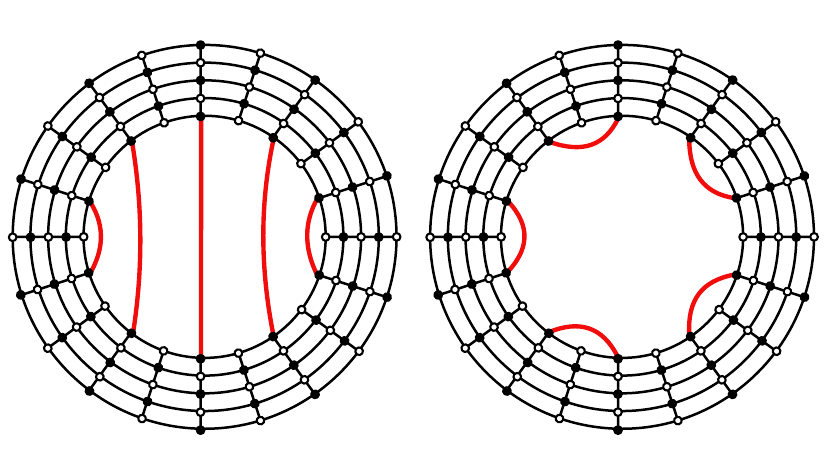}} at (C.center);
        \end{pgfonlayer}{background}
			
        \begin{pgfonlayer}{main}
        \node (M) [v:ghost] {};

        \end{pgfonlayer}{main}
        
        \begin{pgfonlayer}{foreground}
        \end{pgfonlayer}{foreground}

    \end{tikzpicture}}
    \caption{Representatives of our two obstructing families for \ocp-treewidth: The parity handle of order $5$ (left) and the parity vortex of order $5$ (right). Notice that a cycle in either of these graphs is odd if and only if it contains an odd number of the \textcolor{BostonUniversityRed}{red} edges.}
    \label{fig:ParityGridsIntro}
\end{figure}

Our structural main result is an analogue of the Grid Theorem of Robertson and Seymour for \ocp-treewidth under the odd-minor relation.
This provides a satisfying answer to the question about structural properties of graphs of bounded \ocp-treewidth as it classifies all odd-minor-closed graph classes where \ocp-treewidth is bounded, and hence, where \zcref{thm:MISIntro} can be applied.
The role of the $(k\times k)$-grid is played by two infinite families of graphs -- both consisting of planar graphs -- to which we will refer to as the \emph{parity grids} in the following.
We present both types of parity grids in their \textsl{cylindrical} form here, but they could easily be expressed as square grids as well.

Let $k\geq 1$ be an integer and $H_k$ be the cylindrical $(k \times 4k)$-grid.\footnote{The cylindrical $(n\times m)$-grid is the Cartesian (or box) product of the path $P_n$ and the cycle $C_m$.}
Moreover, let $X=\{ x_1,\dots,x_{2k} \}$ be a maximum subset of vertices of the $k$th copy of $C_{4k}$ in $H_k$ such that all vertices of $X$ are pairwise at even distance.
The \emph{parity handle} $\mathscr{H}_k$ of order $k$ is the graph obtained from $H_k$ by introducing the edges $x_ix_{2k-i+1}$ for all $i\in[k]$.
The \emph{parity vortex} $\mathscr{V}_k$ of order $k$ is the graph obtained from $H_k$ by introducing the edges $x_{2i-1}x_{2i}$ for all $i\in[k]$.
See \zcref{fig:ParityGridsIntro} for examples.

Our structural main theorem reads as follows.

\begin{theorem}\label{thm:GridThmIntro}
There exists a polynomial $f\colon\mathbb{N}\to\mathbb{N}$ such that for every integer $k \geq 1$ and every graph $G$ it holds that
\begin{enumerate}
    \item if $G$ contains $\mathscr{H}_k$ or $\mathscr{V}_k$ as an odd-minor, then $\ocp\text{-}\tw(G)\geq \frac{k}{2}$, and
    \item if $\ocp\text{-}\tw(G)\geq f(k)$ then $G$ contains $\mathscr{H}_k$ or $\mathscr{V}_k$ as an odd-minor.
\end{enumerate}
Moreover, $f(k)\in\mathbf{poly}(k)$ and there exists an algorithm that, given an integer $k$ and a graph $G$ as input, finds either a subgraph of $G$ certifying that $G$ contains $\mathscr{H}_k$ or $\mathscr{V}_k$ as an odd-minor, or produces an \ocp-tree-decomposition of width at most $f(k)$ in time $2^{\mathbf{poly}(k)}|V(G)|^{\mathcal{O}(1)}$.
\end{theorem}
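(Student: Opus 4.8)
The plan is to prove the two implications essentially separately, since the first is a soft lower-bound argument and the second is the genuine structural core (to which the algorithmic addendum is attached).

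For the first item, the strategy is to show that the parity grids themselves have large \ocp-treewidth and then invoke odd-minor-monotonicity. Concretely, I would argue that $\mathscr{H}_k$ and $\mathscr{V}_k$ each contain roughly $k/2$ pairwise vertex-disjoint odd cycles that are, moreover, ``spread out'' across the cylinder in a way that cannot be localized: any \ocptd witnessing small width would have some bag $t$ whose apex set $\alpha(t)$ must hit all but a bounded number of these odd cycles while simultaneously $\beta(t)\setminus\alpha(t)$ absorbs the rest — and condition \ref{ocp3} (the ``no long private paths'' condition) forces the bag to contain an entire concentric cycle of the cylinder once the adhesion is small, which already accounts for $\Omega(k)$ disjoint odd cycles inside a single torso. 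Since \ocp-treewidth is odd-minor-monotone (stated in the excerpt), $\ocp\text{-}\tw(G)\geq \ocp\text{-}\tw(\mathscr{H}_k)$ or $\geq\ocp\text{-}\tw(\mathscr{V}_k)$ whenever $G$ has one of them as an odd-minor, giving the bound $k/2$. The constant $1/2$ strongly suggests the intended computation is exactly of this concentric-cycle flavour.

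For the second item — the hard direction — I would run a win/win recursion in the spirit of the proof of the Grid Theorem, but relative to \ocp-treewidth rather than treewidth. Assume $\ocp\text{-}\tw(G)\geq f(k)$ for a polynomial $f$ to be fixed. First I would want a ``wall-like'' or highly connected substructure: if the ordinary treewidth of $G$ is already large (polynomial in $k$) one extracts a large wall via the polynomial Grid Theorem of Chuzhoy–Tan; if the treewidth is small, then \emph{every} \ocptd of small adhesion must have a single torso $G[\beta(t)\setminus\alpha(t)]$ of large odd cycle packing number together with a small apex set, and one then works inside that nearly-bipartite-free dense piece. The decisive case analysis is then: given a large wall $W$ in $G$, either the ``odd flow'' across $W$ is small — in which case one can route an \ocptd along the brick structure of $W$, localizing the non-bipartiteness into small apex sets using a Menger-type / LP-duality argument on odd paths between brick boundaries (this is where the polynomial bound really has to be earned, and where I expect to borrow machinery akin to the odd-minor / parity-clearing arguments behind Fiorini et al.\ and the parity versions of the flat-wall theorem) — or the odd flow is large, in which case many vertex-disjoint odd paths thread through the wall and one cleans them up, using the parity structure, into either a handle-type gadget (a single long odd ear wrapping around, yielding $\mathscr{H}_k$) or a vortex-type gadget (many short odd chords distributed around a cycle, yielding $\mathscr{V}_k$) as an odd-minor. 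The dichotomy handle-vs-vortex should mirror whether the odd paths can be made pairwise ``nested'' along the wall or are forced to ``cross,'' analogous to the one-vertex-vortex vs.\ planar dichotomy in structure theory.

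The main obstacle, and the part I would spend the most effort on, is the middle step: turning ``small odd flow across a large wall'' into an actual \ocptd of polynomial width, i.e.\ simultaneously controlling (a) the adhesion, (b) the apex sizes $|\alpha(t)|$, and (c) the odd cycle packing number of each reduced torso, while respecting the rigid condition \ref{ocp3}. This is precisely the place where the analogy with treewidth breaks down, because bipartiteness is not a minor-closed-on-bags property in the naive way and the torsos interact through odd paths that leave and re-enter. I would expect to handle it by a careful iterative ``peeling'' of the wall from the outside in, at each step cutting along a concentric cycle, certifying via an odd-version of Menger's theorem that a bounded apex set separates the odd obstructions, and recursing on the inside; the polynomial dependence then comes from the polynomial Grid Theorem bound plus a linear-in-$k$ loss per peeled layer. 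The algorithmic statement follows by making each of these steps constructive: the wall-or-decomposition step is the Chuzhoy–Tan algorithm, the odd-flow computation is polynomial-time (shortest odd path / $T$-join machinery), and the recursion has depth $\mathbf{poly}(k)$, for a total running time of $2^{\mathbf{poly}(k)}|V(G)|^{\mathcal O(1)}$, with the exponential factor absorbing the branching in the handle-vs-vortex extraction.
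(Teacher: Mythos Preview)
Your plan for item (i) is essentially what the paper does: prove directly that $\mathscr{H}_{2k}$ and $\mathscr{V}_{2k}$ have \ocp-treewidth at least $k$ (\zcref{lem:gridshavelargeocptw} orients the decomposition tree using balanced separators of the underlying cylindrical grid, finds a sink node, and shows its bag must absorb an even subdivision of a half-size parity grid), then conclude by odd-minor monotonicity.

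For item (ii) your proposal has a genuine structural gap. Your win/win is ``small odd flow across the wall $\Rightarrow$ build an \ocptd'' versus ``large odd flow $\Rightarrow$ extract a parity grid,'' with the decomposition produced by peeling along concentric cycles and killing odd obstructions with apices via an odd-Menger argument. The difficulty is that ``few odd paths crossing a boundary'' does not bound the odd cycle packing number of what lies inside: odd cycles can sit entirely within a region without ever meeting the brick boundaries you peel along, so no count on odd traffic through the boundary certifies small $\ocp(G[\beta(t)\setminus\alpha(t)])$. (Your side case ``treewidth small'' is also vacuous, since $\ocptw\leq\tw$.) The paper's route to bounding the \ocp\ of a bag is entirely different and is the real engine: via a parity-sensitive \emph{society classification theorem} (\zcref{thm:evensocietyclassification}) it iteratively builds a \emph{non-orientable, even-faced} rendition of the $M$-controlled block into a bounded-genus surface with a bounded number of bounded-depth vortices, in which every grounded odd cycle must have a one-sided trace or touch a vortex; only then does a topological counting argument (\zcref{thm:localstructureboundsocp}) bound the number of disjoint odd cycles. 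The handle/vortex dichotomy also does not come from nested-versus-crossing odd paths through a wall: $\mathscr{H}_t$ is produced when a flat planar transaction becomes an ``odd handle'' during reconciliation (\zcref{lem:get-parity-handle}), while $\mathscr{V}_t$ only appears at the very end, when a bounded-depth vortex hosts many \emph{sequential} parity-breaking paths. None of this surface-and-rendition layer is present in your outline, and without it there is no mechanism taking ``large wall with controlled odd paths'' to ``each bag has bounded \ocp.''
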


Since $f$ is a polynomial, the algorithm guaranteed by \zcref{thm:GridThmIntro} in fact yields \zcref{thm:OCPapproxIntro}.

Notably, every odd-minor of a parity handle can be embedded into the plane with at most two odd faces, while every odd-minor of the parity vortex can be embedded in the plane with a unique face that meets all odd cycles in the graph.
These two properties illustrate that there exist planar graphs $H$ such that $H$-odd-minor-free graphs have unbounded \ocp-treewidth.

As \textsc{MIS} is \textsf{NP}-hard on planar graphs, the core question underlying our research is the following:

\begin{question}\label{quest:MainIntro}
For which planar graphs $H$ does there exist a polynomial-time algorithm for \textsc{MIS} on $H$-odd-minor-free graphs?
\end{question}

To give a more concrete example of the gaps in our understanding of \textsc{MIS} in planar graphs, consider the following problem concerning a class of graphs with unbounded $\mathsf{OCP}$.

\begin{question}\label{quest:BoundedOddFaces}
Let $k \in \mathbb{N}$.
Does there exist a computable function $f \colon \mathbb{N} \to \mathbb{N}$ and an algorithm for \textsc{MIS} running in $f(k)n^{\mathcal{O}(1)}$-time on the class of planar graphs with at most $k$ odd faces?
\end{question}

In the above question $k$ can clearly be taken to be even.
The case $k \leq 2$ was solved by Gerards in \cite{Gerards1989MinMax}.
All other cases are widely open.
In \zcref{quest:BoundedOddFaces} we ask explicitly for an \textsf{FPT}-algorithm, but even an \textsf{XP}-algorithm is unknown and would fully suffice to further our understanding of \zcref{quest:MainIntro}.

Beyond planar graphs the following question seems key to expanding our understanding of the tractability of \textsc{MIS}.
A graph embedded in a surface $\Sigma$ is said to have an \textsl{even-faced embedding} if none of its odd cycles describes a genus-reducing curve in $\Sigma$, equivalently no odd cycle bounds a disk in $\Sigma$.

\begin{question}\label{quest:EvenFacedTorus}
Does there exist a polynomial-time algorithm for \textsc{MIS} on graphs with a given even-faced embedding on the torus.
\end{question}

\zcref{quest:BoundedOddFaces,quest:EvenFacedTorus} are not asked idly.
We strongly believe that both of these questions must in some way be resolved to expand the structural understanding of odd-minor-free graph classes in which \textsc{MIS} is efficiently solvable \textsl{beyond} the results of this paper, that is the combination of \zcref{thm:GridThmIntro} and \zcref{thm:MISIntro}.
Notably, both questions diverge from the perspective of \cite{ConfortiFHJW2019stableset}, \cite{Fiorini2025Integer}, and \cite{AprileFJKSWY2025Integer} as the classes considered here all contain graphs of unbounded $\mathsf{OCP}$ and thus the corresponding matrices have \textsl{unbounded} subdeterminants.
As such our more structural approach to studying odd-minor-closed graph classes seems more promising.
Indeed, any positive progress on the questions above is likely to yield immediate progress towards \zcref{quest:MainIntro} due to the robustness of our structural tools as laid out below.

\paragraph{Tools for the study of the structure of odd-minor-free graphs.}
As part of the more general project outlined above, we are working on understanding the structure of odd-minor-free graphs, with a particular focus on excluding planar graphs as odd-minors.
In line with this, several of the theorems and tools in this article are geared not only towards proving the results advertised in this paper, but also for further use in future parts of this project.
We highlight here the advantages of our particular approach and why it is necessary to proceed this way if we wish to prove theorems constructively and with explicit bounds.

The established approach to proving a structural result when excluding an odd-minor (or in a closely related fashion an \textsl{oriented or unoriented group-labelled-minor}\footnote{We will not define the particulars of the group-labelled setting as they are not needed for our results.}) is to take the existing results from the Graph Minor Series due to Robertson and Seymour (especially the Graph Minor Structure Theorem (GMST) from \cite{RobertsonS2003Grapha}) and then provide a tailor-made analysis of the structure provided by the GMST to solve the specific problem under consideration.
Examples of this approach can be found in the work of Fiorini et al.\ \cite{Fiorini2025Integer} we already discussed, an approximate description of the structure of odd-minor-free graphs \cite{DemaineHK2010Decomposition}, a solution to the linkage problem for the oriented group-labelled setting \cite{Huynh2009Linkage}, a structure theorem for oriented group-labelled graphs \cite{GeelenG2009Excluding}, and the solution to the linkage problem in the unoriented group-labelled setting (together with an associated structure theorem) \cite{LiuY2025Disjoint}.
We provide a rough description of what the GMST tells us about a graph excluding a given minor in \zcref{subsec:ProofSketchIntro}.

This approach has two downsides.
First, due to using older versions of the GMST and the associated machinery, such an approach often does not yield explicit bounds for the desired structural properties.
More importantly, due to the ad-hoc nature of these individual solutions it is difficult to adapt them as tools for the further development of an efficient structure theory for odd-minors, where by efficient we mean theorems with explicit, hopefully low bounds, such as the polynomial bounds presented in this paper.
This is particularly important as future progress towards resolving \zcref{quest:MainIntro} will most likely heavily rely on such structural results.

We instead build robust tools that can be used in a more modular fashion.
The key result here is a proof of an odd-minor version of the \textsl{Society Classification Theorem} (see \zcref{thm:evensocietyclassification}), which is the engine behind a new constructive proof of the GMST with explicit bounds due to Kawarabayashi, Thomas, and Wollan \cite[Theorem 10.1]{KawarabayashiTW2021Quickly} (see also \cite[Theorem 14.1]{GorskySW2025polynomialboundsgraphminor}).
This theorem is what allows Kawarabayashi et al.\ to prove the GMST constructively with explicit bounds via an induction.
Our variant necessarily features more outcomes than the original Society Classification but importantly can be adapted to prove further results regarding excluded odd minors and (in line with the variant in \cite{GorskySW2025polynomialboundsgraphminor}) features polynomial bounds on the order of the structural elements involved.

As a second example, we provide a proof of a odd-minor variant of the \textsl{Flat Wall Theorem} (again see \zcref{subsec:ProofSketchIntro} for more details) with polynomial bounds, contrasting the worse than exponential bounds one would derive from the more general Flat Wall Theorem variant Thomas and Yoo prove in \cite{ThomasY2023PackingCycles} for group-labelled graphs.

\subsection{Related work}\label{subsec:RelatedIntro}
Let us now revisit related work on the topic of islands of tractability for the \textsc{MIS} problem.
We partition this subsection into two parts:
First we discuss how \ocp-treewidth fits into the known landscape of odd-minor-free classes and odd-minor-related parameters that allow for efficient algorithms for \textsc{MIS}.
In the second part, we briefly touch upon other structural strategies to approach \textsc{MIS} that have emerged in recent years.

\paragraph{\textsc{Maximum Independent Set} and odd-minors.}

It is easy to see that a graph is bipartite if and only if it does not contain $K_3$ as an odd-minor.
So on $K_3$-odd-minor-free graphs \textsc{MIS} is famously tractable.
Moreover, Gerards \cite{Gerards1989MinMax} has proven a min-max duality for the \textsc{MIS}-problem on $K_4$-odd-minor-free graphs that implies a polynomial-time algorithm.
For any $t\geq 5$, the class of $K_t$-odd-minor-free graphs contains the class of all planar graphs and thus, \textsc{MIS} becomes \textsf{NP}-hard.
However, on the positive side, Tazari \cite{Tazari2012Faster} showed that \textsc{MIS} admits a PTAS on $H$-odd-minor-free graph classes for \textsl{every} graph $H$. 

\begin{figure}[ht]
    \centering
    \scalebox{1}{
    \begin{tikzpicture}

        \pgfdeclarelayer{background}
		\pgfdeclarelayer{foreground}
			
		\pgfsetlayers{background,main,foreground}

        \begin{pgfonlayer}{background}
            \pgftext{\includegraphics[width=12cm]{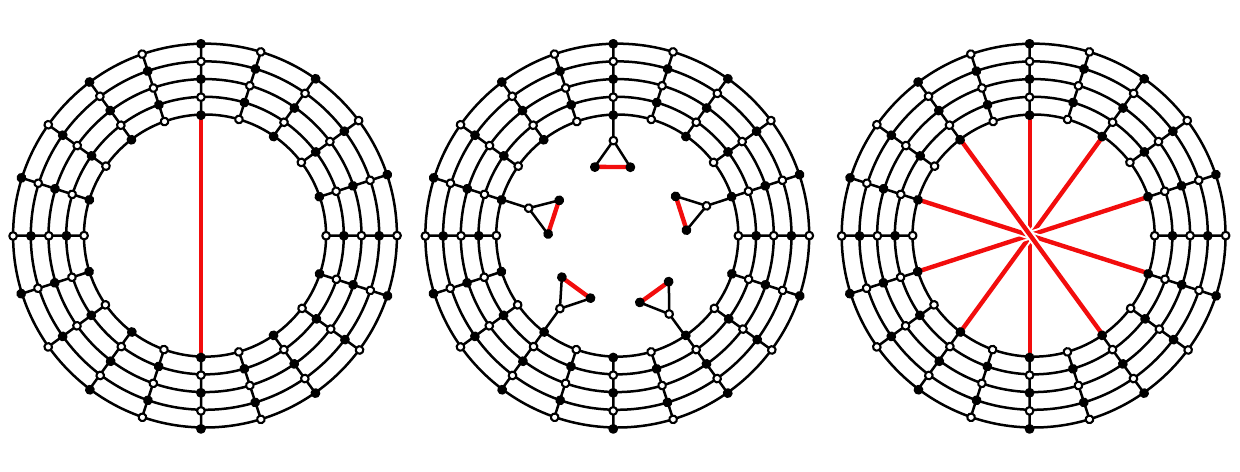}} at (C.center);
        \end{pgfonlayer}{background}
			
        \begin{pgfonlayer}{main}
        \node (M) [v:ghost] {};

        \end{pgfonlayer}{main}
        
        \begin{pgfonlayer}{foreground}
        \end{pgfonlayer}{foreground}

    \end{tikzpicture}}
    \caption{Representatives of three more types of parity grids: The single parity break of order $5$ (left), a grid with odd cycle outgrows of order $5$ (middle), and a parity crosscap, also known as an \textsl{Escher grid}, of order $5$ (right).
    As before, a cycle in either of these graphs is odd if and only if it contains an odd number of the \textcolor{BostonUniversityRed}{red} edges.}
    \label{fig:ThreeMoreGridsIntro}
\end{figure}

In the following we let $\mathcal{B}$ be the class of bipartite graphs.
Recall that for a graph $G$ we denote by $\ocp(G)$ the largest integer $k$ such that $G$ has $k$ pairwise vertex-disjoint odd cycles.
This is called the \emph{odd cycle packing number}.
The \emph{odd cycle transversal number} of $G$, denoted by $\mathsf{OCT}(G)$, is the smallest $k$ such that there is a set $S\subseteq V(G)$ of size at most $k$ where $G-S$ is bipartite.

It is clear that $\ocp(G)\leq k$ if and only if $G$ does not contain $k\cdot K_3$ as an odd-minor.
Similarly, Reed \cite{Reed1999Mangoes} proved that $\mathsf{OCT}(G)$ is bounded if and only if there exists a $k$ such that $G$ does not contain $k \cdot K_3$ nor the \textsl{Escher grid} of order $k$ (see \zcref{fig:ThreeMoreGridsIntro}) as odd-minors.
Since computing $\mathsf{OCT}(G)$ is \textsl{fixed-parameter tractable} (see for example \cite{ReedSV2004Finding}), \textsc{MIS} can be solved in polynomial-time on graphs of bounded \textsf{OCT}.
Moreover, by the result of Fiorini et al.\@ \cite{Fiorini2025Integer}, \textsc{MIS} is tractable on graph classes of bounded \ocp.
Notice that every class of bounded \textsf{OCT} is also a class of bounded \textsf{OCP} and, in turn, any class of bounded \ocp\@ is trivially also of bounded \ocp-treewidth.

The Escher grid acts as an excellent example that the parameters \textsf{OCT} and \ocp\@ are \textsl{comparable yet distinct} in the sense that for every positive integer $k$, \textsf{OCT} of the Escher grid of order $k$ is $k$, while its \ocp\@ is equal to $1$.
However, $\ocp(G)\leq \mathsf{OCT}(G)$ for all graphs $G$.
Two parameters that generalise the notion of \textsf{OCT} while maintaining both the comparability and the distinctness to \ocp\@ are the \emph{elimination distance to $\mathcal{B}$}, denoted by $\mathsf{ed}_{\mathcal{B}}$, (see \cite{BulianD2016Graph,BulianD2017Fixed-parameter}) and \emph{$\mathcal{B}$-treewidth}, denoted by $\tw_{\mathcal{B}}$ (see \cite{EibenGHK2021Measuring,JansendKW2021Vertex,JansendK2021FPTAlgorithms}).
Each of them is the minimum $k$ such that a graph $G$ has a set $X\subseteq V(G)$ such that $G-X\in\mathcal{B}$ and $\mathsf{p}(G_X)\leq k$ where $G_X$ is the \textsl{torso} of $X$ in $G$ and $\mathsf{p}$ is the \textsl{treedepth}\footnote{Since treedepth is not used anywhere in the paper, we omit its definition. See \cite{BulianD2016Graph,BulianD2017Fixed-parameter,JansendKW2021Vertex} for further information.} in the case of elimination distance to $\mathcal{B}$ and the \textsl{treewidth} in the case of $\mathcal{B}$-treewidth.
Moreover, it holds that $\mathsf{OCT}(G) \geq \mathsf{ed}_{\mathcal{B}}(G) \geq \tw_{\mathcal{B}}(G)$.

The \emph{$\mathcal{B}$-blind-treewidth}, denoted by $\mathcal{B}\text{-}\mathsf{blind}\text{-}\tw$, of a graph $G$ is the largest treewidth over all \textsl{non-bipartite} blocks of $G$.
Gollin and Wiederrecht \cite{GollinW2023OddMinors} proved an analogue of the Grid Theorem for $\mathcal{B}$-blind-treewidth where the grid is the \textsl{single parity break grid} (see \zcref{fig:ThreeMoreGridsIntro}).
Observe that the single parity break grid of order $k$ has \textsf{OCT} equal to $1$ for all $k\geq 1$.
The notion of \textsl{bipartite treewidth} as given by Jaffke et al. \cite{JaffkeMST2023Dynamic} is relatively technical, instead we give a functionally equivalent definition in terms of \ocp-tree-decompositions:
The \emph{bipartite treewidth}, denoted by \textsf{btw}, of a graph $G$ is the minimum width of an \ocp-tree-decomposition $(T,\beta,\alpha)$ where $G[\beta(t)]-\alpha(t)$ is bipartite for all $t\in V(T)$.
Notice that $\mathcal{B}\text{-}\mathsf{blind}\text{-}\tw(G) \geq \mathsf{btw}(G)$ and $\tw_{\mathcal{B}}(G) \geq \mathsf{btw}(G)$ for all graphs $G$.
In particular, for every $k\geq 1$, the grid with odd cycle outgrowths of order $k$ (see \zcref{fig:ThreeMoreGridsIntro}) has bipartite treewidth $1$, while its $\mathcal{B}$-treewidth is $k$.
So with bipartite treewidth we now have a first parameter that properly subsumes both, the \textsf{OCT}-based parameters as well as $\mathcal{B}$-blind-treewidth.

The only parameter we have not discussed yet is \ocp-treewidth itself.
From the definition one can immediately deduce that $\ocp\text{-}\tw(G) \leq \mathsf{btw}(G)$.
Moreover, the Escher grid of order $k$ can be seen to have bipartite treewidth $k$ while, as established above, its \ocp-treewidth is $1$ for all $k\geq 1$.
This makes \ocp-treewidth the \textsl{most general} odd-minor-based parameter so far when it comes to parametrizations of \textsc{MIS}.
In \zcref{fig:Classes} we give an overview of all classes and parameters discussed above and what they imply for the tractability of \textsc{MIS}.

\begin{figure}[ht]
    \centering
    \scalebox{0.95}{
    \begin{tikzpicture}

        \node (C) [v:ghost] {};

        \pgfdeclarelayer{background}
		\pgfdeclarelayer{foreground}
			
		\pgfsetlayers{background,main,foreground}

        \begin{pgfonlayer}{background}
            \pgftext{\includegraphics[width=12cm]{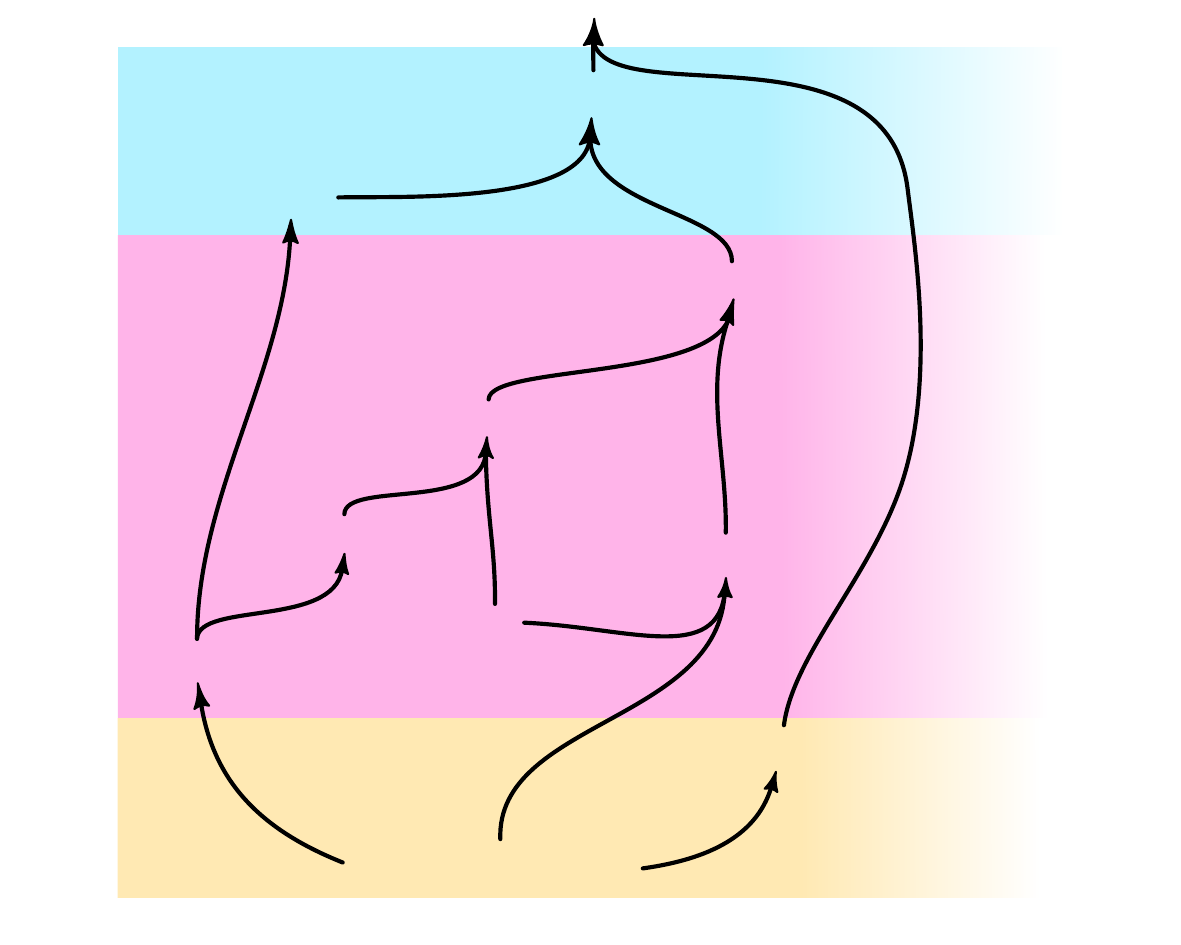}} at (C.center);
        \end{pgfonlayer}{background}
			
        \begin{pgfonlayer}{main}
        \node (M) [v:ghost,position=270:45mm from C] {};

        \node (L) [v:ghost,position=180:10mm from M] {};
        \node (R) [v:ghost,position=0:44mm from M] {};

        \node (Poly) [v:ghost,position=90:10mm from R] {\large \textsf{P}};
        \node (FPT) [v:ghost,position=90:45mm from R] {\large \textsf{FPT}};
        \node (XP) [v:ghost,position=90:80mm from R] {\large \textsf{XP}};
        \node (NPC) [v:ghost,position=90:100mm from R] {\textsf{NP}-hard};
        \node (NPC2) [v:ghost,position=270:4.5mm from NPC] {but PTAS};

        \node (K3) [v:ghost,position=90:5mm from L] {no $K_3$-odd-minor};
        \node (K4) [v:ghost,position=30:33mm from L] {no $K_4$-odd-minor};

        \node (OCT) [v:ghost,position=140:40mm from L] {\textsf{OCT}};

        \node (tw) [v:ghost,position=90:30mm from L] {\textsf{tw}};

        \node (Bblindtw) [v:ghost,position=15:25mm from tw] {$\mathcal{B}$-\textsf{blind}-\textsf{tw}};

        \node (btw) [v:ghost,position=90:28mm from Bblindtw] {\textsf{btw}};

        \node (Btw) [v:ghost,position=90:20mm from tw] {$\mathsf{tw}_{\mathcal{B}}$};

        \node (Bed) [v:ghost,position=150:17mm from tw] {$\mathsf{ed}_{\mathcal{B}}$};

        \node (OCP) [v:ghost,position=100:35mm from Bed] {$\mathsf{OCP}$};

        \node (OCPtw) [v:ghost,position=125:23mm from btw] {\large $\mathsf{OCP}$-\textsf{tw}};

        \node (Kt) [v:ghost,position=90:16mm from OCPtw] {no $K_t$-odd-minor};
        \node (Kt2) [v:ghost,position=270:5mm from Kt] {$t\geq 5$};

        \end{pgfonlayer}{main}
        
        \begin{pgfonlayer}{foreground}
        \end{pgfonlayer}{foreground}

    \end{tikzpicture}}
    \caption{The complexity landscape of \textsc{MIS} in odd-minor-closed graph classes.
    The bottom shows the two instances of $K_t$-odd-minor-free classes where \textsc{MIS} is in \textsf{P}. The two middle areas depict odd-minor-monotone parameters and the corresponding (parameterized) complexity classes for \textsc{MIS} and finally, the very top indicates the general regime of $K_t$-odd-minor-free graphs for $t\geq 5$ where \textsc{MIS} is known to be \textsf{NP}-hard and to admit a PTAS.
    An arrow from $x$ to $y$ indicates that membership in the class $x$, or having the parameter $x$ bounded also implies membership in the class $y$ or that the parameter $y$ is bounded.}
    \label{fig:Classes}
\end{figure}

With this it is now apparent that within the realm of known odd-minor-closed graph classes where \textsc{MIS} is tractable, there is a unique class, namely the $K_4$-odd-minor-free graphs, that does not fall under the regime of \ocp-treewidth.
Indeed, the parity handle grids are all $K_4$-odd-minor-free while $K_4$ is an odd-minor of $\mathscr{V}_3$.
This raises the following question, related to \zcref{quest:MainIntro}.

\begin{question}
Let $H$ be an odd-minor of the parity vortex of order $k$ for some $k\geq 1$, is there a polynomial-time algorithm for \textsc{MIS} on $H$-odd-minor-free graphs?
\end{question}

\paragraph{Non odd-minor-based approaches to \textsc{MIS}.}
To conclude this part, notice that we focussed here on the relevant literature for odd-minors.
There are several other ongoing lines of research dealing with the computational complexity of \textsc{MIS} in settings of restricted structure.
Among them is a recent interest in induced minors and the related \textsl{tree-independence number} \cite{Yolov2018Minor,DallardMS2024TreeIndependence,ChudnovskyHT2024Tree}.
Of particular interest is also the realm of $H$-vertex-minor-free graphs and classes of bounded \textsl{rankwidth} \cite{Oum2005Rankwidth,GeelenKMW2023Grid}.
In this area we have a particularly intriguing conjecture due to Geelen (see \cite{McCarty2021Local}) claiming that \textsc{MIS} should be tractable on \textsl{all} vertex-minor-closed graph classes.
Finally, there exist many different kinds of ``width'' parameters that are fit to act as possible parametrizations for \textsc{MIS} (see \cite{BergounouxKR2023} for some recent results and a good overview).

\paragraph{Existing tools for the study of odd-minor-free graphs.}
As mentioned earlier, there exist a few high-profile results on the structure of odd-minor-free and group-labelled-minor-free graphs \cite{Huynh2009Linkage,GeelenG2009Excluding,DemaineHK2010Decomposition,ThomasY2023PackingCycles,LiuY2025Disjoint}, which all feature the downsides laid out earlier, namely non-constructive proofs, non-modular proof approaches, and non-explicit or huge bounds.
Most other results concerning the structure of odd-minor-free graphs are more directly related to the odd-minor variant of Hadwiger's famous colouring conjecture \cite{Hadwiger1943Uber,JensenT1994Graph} (see also \cite{Seymour2016Hadwigers}).
Sadly, with the exception with the work of Geelen et al.\ in \cite{GeelenGRSV2009Oddminor}, these results tend to not be very helpful in pursuing the kind of characterisations of odd-minor-free classes we are after.

\subsection{An overview of the proof}\label{subsec:ProofSketchIntro}
We provide a brief overview of our proof for \zcref{thm:GridThmIntro}.
Many of our techniques are based on the recent results in \cite{GorskySW2025polynomialboundsgraphminor} on the so-called \textsl{Graph Minor Structure Theorem} (GMST).
The GMST -- originally due to Robertson and Seymour \cite{RobertsonS2003Grapha} -- gives an approximate description of $H$-minor-free graphs, by stating that each such graph has a tree-decomposition where neighbouring bags intersect in a bounded number of vertices and the torso of every bag ``\textsl{almost embeds}'' into a surface where $H$ does not embed.
Here an ``\textsl{almost embedding}'' of $H$ is roughly a drawing of $H$ into a surface $\Sigma$ after the removal of a bounded number of vertices, such that $H$ is drawn into $\Sigma$ ``up to 3-separations'' with a bounded number of special disks called ``\textsl{vortices}'' in which the drawing many include crosses but the connectivity of the part of $H$ embedded in the disk is restricted severely.
(We highly recommend \cite{GorskySW2025polynomialboundsgraphminor} for its illustrations of these concepts.)
We refine the techniques of \cite{GorskySW2025polynomialboundsgraphminor} to be sensitive also to the parity of the embedded cycles and construct a more refined form of such an almost embedding, where the only cycles that may be of odd length are those that pass through the crosscaps of the surface an odd number of times and those that interact with vortices.
Let us call this refined almost embedding an ``\textsl{even-faced almost embedding}'' for this overview.

Due to the high degree of technicality, we keep the details of the definitions involved purposefully vague and only provide a rough intuition.
The relevant definitions can be found in \zcref{sec:graphMinorPrelims}.

In the following we lay out five general steps.
The first four of these steps have as their final goal to describe the structure of graphs without $\mathscr{H}_k$ and $\mathscr{V}_k$ as odd-minors ``locally''.
Here the notion of locality is captured by being ``highly connected'' to a given grid minor (or \textsl{wall} or \textsl{mesh}).
In the theory of Robertson and Seymour this connectivity is expressed through the notion of tangles (see \cite{RobertsonS1991Graph}).
The final step then puts these local pieces together to derive \zcref{thm:GridThmIntro}.

\paragraph{A bipartite Flat Wall.}
A central theorem of Robertson and Seymour is the \textsl{Flat Wall Theorem} \cite{RobertsonS1995Graph}.
It says that given a large wall\footnote{A \emph{wall} is a subdivision of a hexagonal grid.} $W$ in a graph $G$, there either exists a $K_t$-minor highly connected to $W$, or a small set $A \subseteq V(G)$ and a smaller but still big wall $W' \subseteq W$ such that the part of $G - A$ that attaches to the ``inside''\footnote{The ``inside'' is everything except for the cycle bounding the outer face in a standard planar drawing of the wall.} of $W'$ can be almost embedded into the plane without vortices.
Such an embedding is called \textsl{flat}.
We note that this is still an almost embedding and not a drawing, since we are still only embedding the graph up to 3-separations.
Another key theorem for us is due to Geelen, Gerards, Reed, Seymour, and Vetta \cite{GeelenGRSV2009Oddminor} stating that given a large enough complete graph as a minor in $G$, the graph $G$ either contains every graph on $t$ vertices as an odd-minor, or there is a small set $A\subseteq V(G)$ such that the part of $G-A$ the contains this complete minor is bipartite.
Combining both results allows us to show that for any large enough wall $W$, we either find a set $A$ and wall $W'$ as above such that the inside of $W'$ is also bipartite, or we find $\mathscr{H}_k$ as an odd-minor.
A similar theorem was for example proven by Thomas and Yoo \cite{ThomasY2023PackingCycles}.
In the interest of deriving polynomial bounds for our results, we provide a simple, independent proof.

\paragraph{A parity-sensitive society classification.}
A core technique called \textsl{society classification} useful for proving the GMST, pioneered by Kawarabayashi, Thomas, and Wollan \cite{KawarabayashiTW2021Quickly} and refined in \cite{GorskySW2025polynomialboundsgraphminor}, is to fix our perspective onto a disk whose boundary is protected by many concentric cycles which appear together in a ``flat'' embedding on the outside of the disk, whilst on the inside of the disk we have an unembedded part of the graph.
In \cite{KawarabayashiTW2021Quickly} it is proved that in this setting we can either find a $K_t$-minor, a large linkage resembling a handle or crosscap that has its endpoints on the boundary of the disk, or we can embed the entirety of the graph drawn on this disk in a flat way up to a small number of vortices whose internal connectivity is additionally restricted.
In the last case, we in fact find an embedded, wall-like subgraph capturing each of these vortices in distinct walls that are all well-connected to a common wall living on the outside of our disk.  

We refine this result by upgrading the embedding in each case to be ``even-faced'' in the sense described above and letting a variant of our obstructions be another outcome.
In particular, we use the theorem from \cite{GeelenGRSV2009Oddminor} discussed above to process the $K_t$-minor further.

\paragraph{Building a surface.}
Our society classification variant can be used to build a ``local'' form of the structure theorem based on a wall.
First, the wall is made flat and bipartite via our modified flat wall theorem.
Then, starting on the outside of the wall, we apply our refined society classification theorem to either find one of our obstructions, turn the block associated with our wall bipartite after the removal of a small number of vertices, embed what remains unembedded of the graph up to a handful of vortices and deleted vertices, or we find a large linkage resembling a crosscap or handle after removing a few vertices.

The last option allows us to iterate, as we can use this linkage and the structure on the boundary of our disk to cut out another disk that encompasses the previous one whilst preserving a large part of the handle or crosscap linkage.
Thus we also have witnesses to the fact that we need to increase the genus of the surface we use in our even-faced almost embedding.
The supporting infrastructure these linkages provide accumulates throughout the iterations, allowing us to find one of our obstructions if this process continues for too long.
This procedure ends up either finding one of our obstructions or an even-faced almost embedding for the block associated with the remains of the initial wall.

\paragraph{Locally bounding \textsc{OCP}.}
Our cycles may be odd either when they are not entirely embedded, i.e.\ they go through a vortex, or they are embedded by going through an odd number of crosscaps.
We first remove all odd cycles arising from vortices or obtain $\mathcal{V}_t$ as an odd-minor using a general technique exemplified in \cite{ThilikosW2024Killing}.
Consequently, all remaining odd cycles go through an odd number of crosscaps, allowing us to bound the odd-cycle-packing number by the maximum size of a disjoint set of such curves in our embedding via fairly simple counting arguments.

\paragraph{Deriving a decomposition.}
Being able to ``locally'' guarantee that our graph has bounded \ocp, we modify a standard strategy stemming from one of the main proofs in \cite{RobertsonS1991Graph} to derive the GMST from the local structure theorem to finally arrive at our desired \ocptd in the absence of any obstructions.
This allows us to not only derive \zcref{thm:GridThmIntro} at this point, but also prove \zcref{thm:OCPapproxIntro} fairly directly.

\newpage
\section{Preliminaries}\label{sec:preliminaries}
We start by introducing some basic terms from graph theory and in particular graph structure theory.
Our notation mainly derives from \cite{Diestel2010Graph}.
Exceptions to this rule will be made explicit, e.g.\ we denote the complete graph on $t$ vertices as $K_t$.
Later on we will introduce a much larger and more complex set of concepts to delve into the theory surrounding the graph minor structure theorem (see \zcref{sec:graphMinorPrelims}, a variant of which we will prove ourselves.
This first set of definitions is used to introduce and discuss the basic properties of our new parameter (see \zcref{sec:ocptw,sec:dynamicprogramming}).

On many occasions we will deal with a set $\mathcal{G}$ of graphs and want to form the union of all of them.
We will often write $\bigcup \mathcal{G}$ to denote $\bigcup_{G \in \mathcal{G}} G$ for the sake of simplicity.
In particular, if $\mathfrak{G} = \{ \mathcal{G}_1, \ldots , \mathcal{G}_k \}$ is a set of sets of graphs, we further let $\bigcup \mathfrak{G}$ denote $\bigcup_{\mathcal{G} \in \mathfrak{G}} \bigcup \mathcal{G}$.

We write $\partial(X)$, for a set $X \subseteq V(G)$ in a graph $G$, to denote an \emph{edge cut (around $X$)}, which is the set of all edges in $E(G)$ with exactly one endpoint in $X$.
As a shorthand, we also write $\partial(v)$, if $X = \{ v \}$.

The \emph{Cartesian product} of two graphs $H,G$, denoted as $H \Box G$ is constructed by using $V(H) \times V(G)$ as the vertex set and making two vertices $(u,v),(u',v')$ adjacent if and only if either $uu' \in E(H)$ and $v=v'$, or $vv' \in V(G)$ and $u=u'$.

For a few of our statements, we will want access to the matrix multiplication constant to state accurate runtimes.
\begin{definition}\label{def:matrixmultconstant}
    Let $\omega_{\ref{def:matrixmultconstant}}$ be the matrix multiplication constant with $2 \leq \omega_{\ref{def:matrixmultconstant}} \leq 2.372$ (see \cite{WXXZ2024newbounds} for the current status of this constant).
\end{definition}

\paragraph{Colouring.}
Given a positive integer $k \in \mathbb{N}$ and a set $S$, a \emph{$k$-colouring (of $S$)} is a function $c \colon S \to [k]$.
We call a $k$-colouring $c$ of the vertex set of a graph $G$ \emph{proper} if we have $c(u) \neq c(v)$ for all $uv \in E(G)$.
A graph that admits a proper 2-colouring is called \emph{bipartite}.
We call the two sets of differently coloured vertices in a bipartite graph (with an implicit, fixed, proper colouring) its \emph{colour classes}.
In our figures we will often mark the two colour classes of a bipartite graph in black and white.

\paragraph{Separations.}
A \emph{separation} in a graph $G$ is a pair $(A,B)$ of vertex sets such that $A \cup B=V(G)$ and there is no edge in $G$ with one endpoint in $A\setminus B$ and the other in $B\setminus A$.
The \emph{order} of $(A,B)$ is $|A\cap B|$.
Given sets $X,Y\subseteq V(G)$ an \emph{$X$-$Y$-separation} in $G$ is a separation $(A,B)$ such that $X\subseteq A$ and $Y\subseteq B$.

\paragraph{Linkages and paths.}
We call a path $P$ \emph{even} if it contains an even number of edges and otherwise $P$ is considered to be \emph{odd}.
A \emph{linkage} $\mathcal{L}$ in a graph $G$ is a set of pairwise vertex-disjoint paths and, in a slight abuse of notation, we use $V(\mathcal{L})$ and $E(\mathcal{L})$ to denote $V( \bigcup \mathcal{L} )$ and $E(\bigcup \mathcal{L} )$ respectively.

We say that a path $P$ is \emph{internally disjoint} from a set $X$, or a subgraph $H$, if $V(P) \cap X$, respectively $V(P) \cap V(H)$, does not contain any non-endpoint vertex of $P$.
Let $A, B \subseteq V(G)$, an \emph{$A$-$B$-path} is a path that has one endpoint in $A$, the other in $B$, and is internally disjoint from $A \cup B$.
Further, an \emph{$A$-path} is a path of length at least one with both endpoints in $A$ that is internally disjoint from $A$.
If $H$ is a subgraph of $G$, we also denote $V(H)$-paths simply as \emph{$H$-paths} and we extend this notation also to $H$-$H'$-paths, if $H'$ is also a subgraph of $G$.
If $A$ contains only a single element $a \in A$, we also denote an $A$-$B$-path as an $a$-$B$-path.
Both $a$-$b$-paths and $A$-$b$-paths similarly denote paths between sets which may be singletons.
An \emph{$A$-$B$-linkage} for two vertex sets $A,B \subseteq V(G)$ is a linkage consisting of $A$-$B$-paths.

\begin{proposition}[Menger's Theorem \cite{Menger1927Zur}]\label{prop:mengersthm}
Let $G$ be a graph and $X,Y\subseteq V(G)$ be two sets of vertices.
Then the minimum order of a $X$-$Y$-separation in $G$ equals the maximum number of paths in an $X$-$Y$-linkage in $G$.
\end{proposition}

Given an $X$-$Y$-linkage of size $k$, a well-known algorithm of Ford and Fulkerson \cite{FordF1956Maximal} takes time $\mathbf{O}(|E(G)|)$ to find an $X$-$Y$-linkage of size $k+1$, if one exists.
It follows that we can find an $X$-$Y$-linkage of order $k$ in time $\mathbf{O}(k|E(G)|)$ or determine that no such linkage exists and find an $X$-$Y$-separation of minimum order instead.
We will generally use this fact implicitly when referencing \hyperref[prop:mengersthm]{Menger's Theorem}. 

If $P$ is a path and $x,y \in V(P)$ are vertices on $P$, we denote by $xPy$ the subpath of $P$ with the endpoints $x$ and $y$.
Let $P$ be a path from $s$ to $t$ and $Q$ be a path from $q$ to $p$.
If $x$ is a vertex in $V(P) \cap V(Q)$ and $sPx$ and $xQp$ only intersect in $x$, then we let $sPxQp$ be the path obtained from the union of $sPx$ and $xQp$.

\paragraph{Tree-decompositions and treewidth.}
A \emph{tree-decomposition} of a graph $G$ is a pair $\mathcal{T} = (T, \beta)$, where $T$ is a tree and $\beta: V(T) \rightarrow 2^{V(G)}$ such that
\begin{itemize}
    \item $\bigcup_{t \in V(T)} \beta(t) = V(G)$,
    \item for every edge $uv \in E(G)$, there is a node $t \in V(T)$ such that $\beta(t)$ contains both $u$ and $v$.
    \item for every vertex $v \in V(G)$, the subgraph of $T$ induced by $\{t \in V(T) : v \in \beta(t)\}$ is connected.
\end{itemize}
The \emph{width} of $\mathcal{T}$ is the maximum value of $|\beta(t)| - 1$ over all $t \in V(T)$.
The \emph{treewidth} of $G$, denoted by $\tw(G)$, is the minimum width over all tree-decompositions of $G$.

For each $t \in V(T)$, we define the \emph{adhesion sets of $t$} as the sets in $\{\beta(t) \cap \beta(d) : d \in N_T(t)\}$, and the maximum size of them is called the \emph{adhesion of $t$}. The \emph{adhesion} of $\mathcal{T}$, denoted by $\mathsf{adhesion}(\mathcal{T})$, is the maximum adhesion of a node of $\mathcal{T}$.

\paragraph{Block decomposition.}
A classic example of a tree-decomposition for a graph is found in \emph{block decompositions}, which we will need later.
We call a maximal 2-connected subgraph of a graph a \emph{block}.
Note that each edge induces a 2-connected subgraph of a graph and thus some of the blocks of a graph may be single edges.

\begin{proposition}[folklore]\label{prop:blockdecomposition}
    Let $G$ be a graph, then there exists a tree-decomposition $(T,\beta)$ such that for each block $B$ of $G$ there exists a unique $t \in V(T)$ with $\beta(t) = V(B)$ and we have $st \in E(T)$ if and only if the blocks $B_s$ and $B_t$, corresponding to $s$ and $t$ respectively, intersect in a single vertex.
    Accordingly, $(T,\beta)$ has adhesion at most 1 and is called the \emph{block decomposition} of $G$.

    In particular, the block decomposition of a given graph $G$ can be found in $\mathbf{O}(|V(G)|^2)$.
\end{proposition}

\paragraph{Minors.}
Given a graph $G$ with an edge $e = uv$, the \emph{contraction} of $e$ is the operation of identifying $u$ and $v$ into a single vertex and subsequently deleting all loops and parallel edges.
Note that, if we find a subset of vertices $U \subseteq V(G)$ that induces a connected subgraph in $G$, we can contract this set into a single vertex via repeatedly contracting edges.
We say that $H$ is a \emph{minor} of $G$ and conversely we say that $G$ has an \emph{$H$-minor} if a graph isomorphic to $H$ can be obtained from a subgraph of $G$ by repeatedly contracting edges.
Given an edge set $F \subseteq E(G)$ in a graph $G$, we denote the result of contracting all edges in $F$ as $G / F$.

A function $\varphi \colon V(H) \rightarrow 2^{V(G)}$ is called a \emph{minor model (of $H$ in $G$)} (or simply a \emph{model}) if $\varphi(V(H))$ is a set of pairwise disjoint vertex sets, each inducing a connected subgraph of $G$, and for each edge $uv \in E(H)$ there exists an edge $ab \in E(G)$ with $a \in \varphi(u)$ and $b \in \varphi(v)$.
Given this definition, we observe that a graph $H$ is a minor of a graph $G$ if and only if there exists a minor model of $H$ in $G$.
We denote by $H_\varphi$ the subgraph of $G$ that is the union $\bigcup_{v \in V(H)}\varphi(v)$ together with the edges in $E(H)$.
We call $\varphi(v)$ the \emph{branch set} of $v$.

\paragraph{Subdivisions.}
Let $G$ be a graph, let $uv\in E(G)$, and let $P$ be a path with endpoints $u$ and $v$ such that no internal vertex of $P$ belongs to $G$.
We say that the graph $G'\coloneqq (G-uv)+P$ is obtained from $G$ by \emph{subdividing} the edge $uv$.
A graph $G''$ is called a \emph{subdivision} of $G$ if it can be constructed by subdividing a subset of edges of $G$.

It is easy to observe using the notion of a minor model that, if a graph $G$ contains a graph $H$ with $\Delta(H) \leq 3$ as a minor, then $G$ also contains $H$ as a subdivision.

Another notion of subdivision we are using is \emph{even subdivision}.
Let $F\subseteq E(G)$ be a minimal edge cut and for each $uv\in F$, let $P_{uv}$ be a path of length $2$ whose endpoints are $u$ and $v$.
We say that the graph $G'\coloneqq (G-F)+\bigcup_{uv\in F} P_{uv}$ is obtained from $G$ by \emph{evenly subdividing} $F$.
A graph $G'''$ is called an even subdivision of $G$ if it can be constructed from $G$ by iteratively replacing an edge by a path of odd length and evenly subdividing minimal edge cuts.

Analogous to the case for normal minor models, if $G$ contains $H$ with $\Delta(G) \leq 3$ as an odd minor, then $G$ also contains $H$ as an even subdivision.

\paragraph{Odd-minors.}\label{def:oddminors}
We say an $H$-minor model $\varphi$ in $G$ is \emph{odd} if there is a $2$-colouring $c$ of vertices in $H_\varphi$ in such a way that $c$ restricted to $\varphi(v)$ is a proper $2$-colouring for each $v \in V(H)$, and each $e \in E(H)$ is monochromatic.
We say $H$ is an \emph{odd-minor} of $G$, and write $H \oddminor G$, if there is an odd $H$-minor model in $G$.
Equivalently, a graph $H$ is an odd-minor of $G$ if $H$ can be obtained from $G$ by iteratively removing vertices, edges, and contracting minimal edge cuts.
It is easy to see that if $H$ is an even subdivision of $G$, then $G$ is an odd minor of $H$.

On the other hand, we say an $H$-minor model $\varphi$ in $G$ is \emph{bipartite} if $\bigcup_{v\in V(H)} \varphi(v)$ is bipartite.

\begin{observation}
    Let $G$ and $H$ be graphs such that $H\oddminor G$. Then we have $\ocp(H)\leq \ocp(G)$.
\end{observation}

\begin{observation}
    For graphs $G_1,G_2,G_3$, if $G_1\oddminor G_2$ and $G_2\oddminor G_3$, then we have $G_1\oddminor G_3$.
\end{observation}

\paragraph{Grids and walls.}
Grids and graphs will similar structure are a fundamental part of structural graph theory, especially when discussing obstructions for width-parameters.
Let $n,m \in \mathbb{N}$ be two positive integers.
The \emph{$(n \times m)$-grid} is the graph $G$ with the vertex set $V(G) = [n] \times [m]$ and the edges
\begin{align*}
E(G) = \big\{ \{ (i, j) , (\ell , k) \} ~\!\colon\!~    & i, \ell \in [n], \ j,k \in [m], \text{ and } \ \\
                                                        & ( |i - \ell| = 1  \text{ and } j = k ) \text{ or } ( |j - k| = 1 \text{ and } i = \ell ) \big\} .
\end{align*}
An \emph{$n$-grid} is an $(n \times n)$-grid.
The \emph{elementary $(n \times m)$-wall} is then derived from the $(n \times 2m)$-grid by deleting all edges in the following set
\begin{align*}
\big\{ \{ (i, j) , (i+1 , j) \} ~\!\colon\!~ i \in [n - 1], \ j \in [m], \text{ and } {i \not\equiv j \mod{2} } \big\}
\end{align*}
and removing all vertices of degree at most 1 in the resulting graph.
An \emph{$(n \times m)$-wall} is a subdivision of the elementary $(n \times m)$-wall.
An \emph{$n$-wall} is an $(n \times n)$-wall.

\paragraph{Parity grids.}
In the introduction, we defined two grid-like graphs, the \emph{parity handle} and \emph{parity vortex}, that will serve as the principal obstructions to having bounded \ocp-treewidth, if we exclude them as odd minors.
Here, we define several additional features of these graphs.

\begin{definition}\label{def:grid}
    The \emph{$(k,\ell)$-cylindrical grid}, denoted by $\mathcal{C}_{k,\ell}$, is the graph obtained from the Cartesian product of a cycle $C=(u_1,u_2,\cdots,u_{\ell})$ on $\ell$ vertices with a path $P=v_1,\cdots,v_k$ on $k$ vertices.
    We say the cycles $C\openbox \{v_i\}$ are \emph{concentric cycles} and the paths $\{u_j\}\openbox P$ are \emph{radial paths}.

    Also, $\mathcal{H}_k$ and $\mathcal{V}_k$ are defined by adding edges to $\mathcal{C}_{k,4k}$, so we can define the concentric cycles and radial paths of $\mathcal{H}_k$ and $\mathcal{V}_k$ by those of $\mathcal{C}_{k,4k}$ in each graph.
    
    Note that a $(k,\ell)$-cylindrical grid is bipartite for all $k$ and $\ell$, and each edge added in parity handles and parity vortices connects two vertices in a same bipartition.
    Therefore, we call these new edges as \emph{parity breaking edges}.
\end{definition}

It is easy to see that $\ocp(\mathscr{H}_{k})=\ocp(\mathscr{V}_{k})=k$, as each odd cycle in $\mathscr{H}_k$ or $\mathscr{V}_k$ must contain an odd number of parity breaking edges.

We also define a universal parity breaking grid $\mathscr{U}_k$.

\begin{definition}\label{def:universalparitybreakinggrid}
    The \emph{universal parity breaking grid of order $k$}, denoted by $\mathscr{U}_k$, is a graph obtained from the $( 2k \times (2k+1))$-grid by subdividing the edges between $(2i-1,k)$ and $(2i-1,k+1)$ for all $i\in[k]$ so that they become a path of length $2$.
\end{definition}

\begin{figure}[!ht]
    \centering
    \begin{tikzpicture}[scale=1]
    \pgfdeclarelayer{background}
    \pgfdeclarelayer{foreground}
    \pgfsetlayers{background,main,foreground}
    \begin{pgfonlayer}{background}
        \pgftext{\includegraphics[width=6cm]{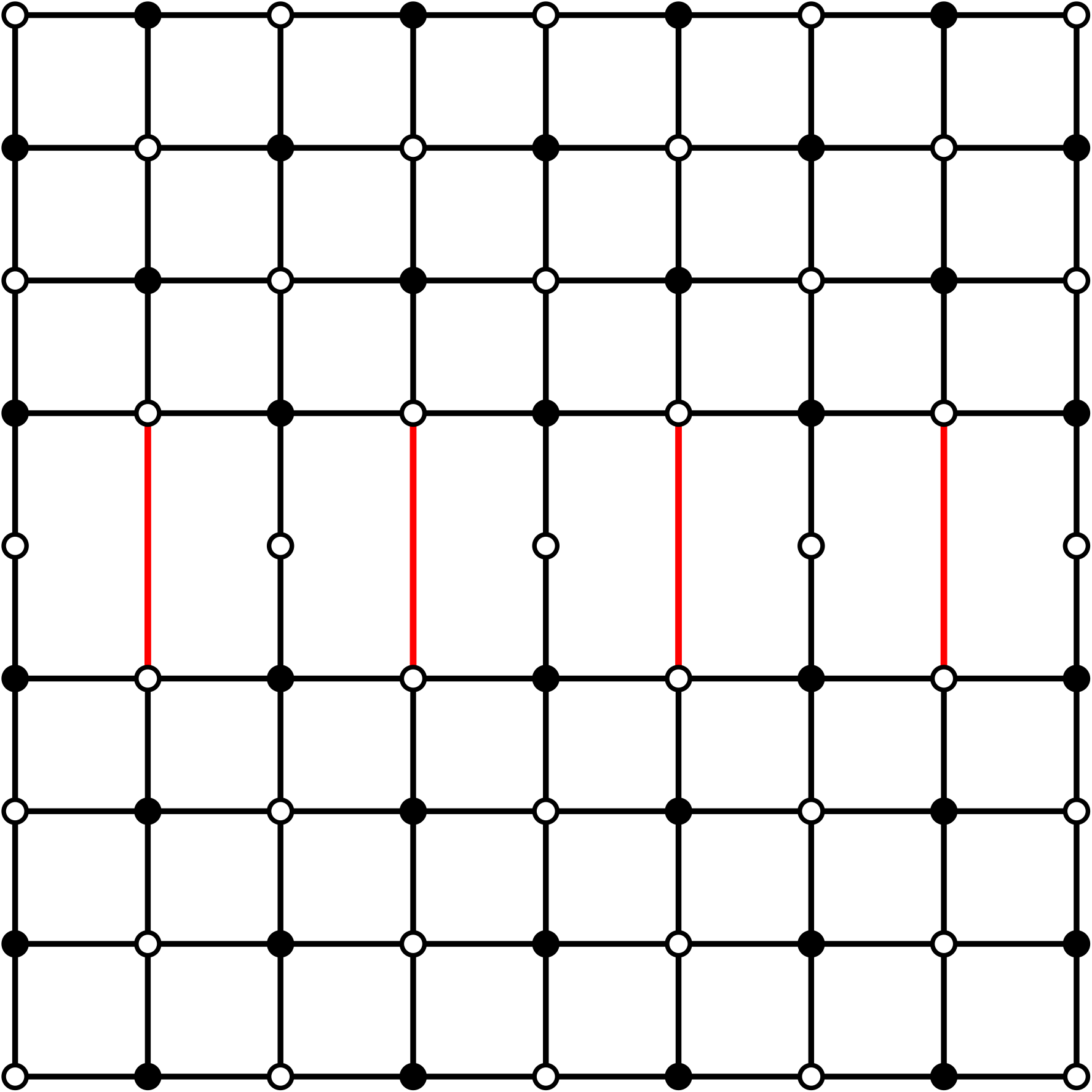}} at (C.center);
    \end{pgfonlayer}{background}
    \begin{pgfonlayer}{main}
    \end{pgfonlayer}{main}
    \begin{pgfonlayer}{foreground}
    \end{pgfonlayer}{foreground}
    \end{tikzpicture}
    \caption{The universal parity breaking grid of order $4$.
    The edges coloured by black form a bipartite graph while the red edges connect two vertices in a same bipartition.}
    \label{fig:universalparitybreakinggrid}
\end{figure}

We say the paths $(i,1)(i,2)\ldots(i,2k)$, or the path obtained by subdividing the edge between $(i,k)$ and $(i,k+1)$ if $i$ is odd, are the \emph{horizontal paths}, and we say the paths $(1,j)(2,j)\ldots(2k+1,j)$ are the \emph{vertical paths}.

It is easy to see that the universal parity breaking grid of order $3k$ contains the parity handle of order $k$ and the parity vortex of order $k$ as odd minor.
Furthermore, for each planar graph $H$, we can find a sufficiently large universal parity breaking grid containing $H$ as an odd minor using the methods Robertson, Seymour, and Thomas present to prove an analogous theorem for planar graphs found as minors of grids \cite{RobertsonST1994Quickly}.

\begin{theorem}\label{planarasoddminor}
    Let $H$ be a planar graph.
    Then there is some integer $k$ depending only on $H$ such that $\mathscr{U}_k$ contains $H$ as an odd minor.
\end{theorem}

By alternatively using the upper part and the lower part of $\mathscr{U}_{k^2}$, connected by the parity breaking edges, one can observe that $\mathscr{U}_{k^2}$ contains an odd minor isomorphic to a subdivision of a $(k \times k)$-grid whose non-outer faces are bounded by odd cycles.
As a special case of \zcref{planarasoddminor}, it is easy to see that we can use much simpler bounds if we are only interested in finding the parity handle and the parity vortex.
For later use, we state it here as a lemma.

\begin{lemma}\label{universalcontainsparityhandleandvortex}
    $\mathscr{U}_{3k}$ contains $\mathscr{H}_k$ and $\mathscr{V}_k$ as an odd minor.
\end{lemma}
\section{Odd-Cycle-Packing-treewidth}\label{sec:ocptw}
This section will be concerned with a set of fundamental results that confirm basic properties of \ocp-treewidth.
We start with an observation that solidifies our understanding of what the ``simple'' class is for our new parameter.

\begin{observation}\label{obs:ocptw0}
    For every graph $G$, it holds that $\ocptw(G) = 0$ if and only if $G$ is bipartite.
\end{observation}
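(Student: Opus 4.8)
The plan is to read both implications directly off \zcref{def:ocptreewidth}; the statement is essentially a consistency check confirming that the ``trivial'' graphs for \ocp-treewidth are exactly the bipartite ones.

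\emph{If $G$ is bipartite, then $\ocptw(G)=0$.} I would take the \ocptd $\mathcal{T}=(T,\beta,\alpha)$ in which $T$ is a single node $t$, $\beta(t)=V(G)$, and $\alpha(t)=\emptyset$. Then \ref{ocp1} holds since $(T,\beta)$ is plainly a tree-decomposition, \ref{ocp2} holds since $\emptyset\subseteq\beta(t)$, and \ref{ocp3} is vacuous because every path of $G$ is contained in $\beta(t)=V(G)$. As $T$ has no edges, $\mathsf{adhesion}(T,\beta)=0$, and as $G$ is bipartite it has no odd cycle, so $\ocp(G[\beta(t)\setminus\alpha(t)])=\ocp(G)=0$. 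Hence $\ocpw(\mathcal{T})=0$, and since the width of any \ocptd is non-negative, $\ocptw(G)=0$.

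\emph{If $\ocptw(G)=0$, then $G$ is bipartite.} Here I would fix an \ocptd $\mathcal{T}=(T,\beta,\alpha)$ with $\ocpw(\mathcal{T})=0$ and unpack what width $0$ forces. For every $t\in V(T)$ the quantity $|\alpha(t)|+\ocp(G[\beta(t)\setminus\alpha(t)])$ equals $0$, so $\alpha(t)=\emptyset$ and $\ocp(G[\beta(t)])=0$, i.e.\ each induced subgraph $G[\beta(t)]$ is bipartite. Moreover $\mathsf{adhesion}(T,\beta)=0$, so $\beta(d)\cap\beta(t)=\emptyset$ for every edge $dt\in E(T)$. The one small step is to upgrade this to the claim that \emph{all non-empty bags are pairwise disjoint}: if some vertex $v$ belonged to two distinct bags $\beta(d)$ and $\beta(t)$, then by the connectivity axiom of tree-decompositions $v$ would lie in $\beta(t')$ for every node $t'$ on the path of $T$ from $d$ to $t$, making the adhesion across each edge of that path at least $1$, a contradiction. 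Therefore $\{\beta(t):t\in V(T)\}$ is a partition of $V(G)$; since every edge of $G$ has both its endpoints in a common bag, every edge of $G$ lies inside a single part, so $G$ is the disjoint union of the graphs $G[\beta(t)]$. A disjoint union of bipartite graphs is bipartite, hence $G$ is bipartite.

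I do not expect a real obstacle here: the only place a genuine argument is needed is the deduction that empty adhesion forces the non-empty bags to be pairwise disjoint, hence $G$ to decompose as a disjoint union, and this is immediate from the standard connectivity property of tree-decompositions. Note that condition \ref{ocp3} is used in neither direction.
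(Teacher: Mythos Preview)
Your proof is correct. The paper states this as an observation without proof, so there is no proof in the paper to compare against; your argument fills in the details cleanly in both directions.
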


Unsurprisingly, the treewidth and \ocp-treewidth of a graph are related in a simple fashion.

\begin{lemma}\label{lemma:tw>ocptw}
    For every graph $G$, it holds that $\ocptw(G) \leq \tw(G)$.
\end{lemma}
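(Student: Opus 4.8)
The plan is to take an optimal tree-decomposition of $G$ and upgrade it to an \ocptd of the same width, choosing the apex sets to be as \emph{large} as possible — concretely, each bag minus one of its vertices, which is exactly the amount needed to make the ``$|\alpha(t)|$'' term in the width drop from $\tw(G)+1$ to $\tw(G)$.

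First I would fix a tree-decomposition $(T,\beta)$ of $G$ of width $w \coloneqq \tw(G)$, and preprocess it into a \emph{reduced} one: as long as some edge $dt\in E(T)$ satisfies $\beta(d)\subseteq\beta(t)$, delete the node $d$ and make $t$ adjacent to all former neighbours of $d$. Each such step keeps $(T,\beta)$ a tree-decomposition of $G$ of width at most $w$ (no bag grows, and $T$ stays a tree), and decreases $|V(T)|$, so the process terminates. In the resulting decomposition no bag is a subset of an adjacent bag, hence $|\beta(d)\cap\beta(t)|\le\min\{|\beta(d)|,|\beta(t)|\}-1\le w$ for every edge $dt$; that is, $\mathsf{adhesion}(T,\beta)\le w$.

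Next I would define the apex map $\alpha$: for every $t\in V(T)$ with $\beta(t)\ne\emptyset$ pick an arbitrary vertex $v_t\in\beta(t)$ and set $\alpha(t)\coloneqq\beta(t)\setminus\{v_t\}$, and for $t$ with $\beta(t)=\emptyset$ set $\alpha(t)\coloneqq\emptyset$. Then \ref{ocp1} holds by construction and \ref{ocp2} is immediate. For \ref{ocp3} I would observe that the two endpoints of any path $P$ in $G-\alpha(t)$ avoid $\alpha(t)$, so if they both lie in $\beta(t)$ they lie in $\beta(t)\setminus\alpha(t)$, a set of size at most one; thus $P$ can only be the trivial one-vertex path at $v_t$, for which $V(P)=\{v_t\}\subseteq\beta(t)$, as required.

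Finally I would bound the width of $\mathcal{T}\coloneqq(T,\beta,\alpha)$. For each $t$ the graph $G[\beta(t)\setminus\alpha(t)]$ has at most one vertex and hence no cycle, so $\ocp(G[\beta(t)\setminus\alpha(t)])=0$, while $|\alpha(t)|\le\max\{0,|\beta(t)|-1\}\le w$; combined with $\mathsf{adhesion}(T,\beta)\le w$ from the preprocessing this yields $\ocpw(\mathcal{T})\le w=\tw(G)$. I do not expect a genuine obstacle here: the only points worth a line of care are the reduction step — without it an optimal tree-decomposition can have adhesion $w+1$ (e.g.\ two identical bags spanning a $K_{w+1}$), which would break the bound — and the simple remark that shaving exactly one vertex off each bag is what keeps \ref{ocp3} vacuously true while saving the $+1$.
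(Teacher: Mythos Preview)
Your proof is correct and follows essentially the same approach as the paper: take an optimal tree-decomposition, reduce redundant bags to force the adhesion down to $\tw(G)$, and set $\alpha(t)=\beta(t)\setminus\{v_t\}$ so that $\beta(t)\setminus\alpha(t)$ is a single vertex. If anything you are slightly more careful than the paper, which does not explicitly spell out the verification of \ref{ocp3}; your observation that both endpoints of such a path are forced to equal $v_t$ is exactly the reason this axiom holds vacuously.
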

\begin{proof}
    Let $(T,\beta)$ be a tree-decomposition with width $\tw(G)$.
    Notice that we may assume $\mathsf{adhesion}(T,\beta)\leq\tw(G)$ since, if there are any two nodes $d,t\in V(T)$ such that $|\beta(d)\cap \beta(t)|=\tw(G)+1$, then, by the definition of tree-decompositions, there must exist two adjacent such nodes and, moreover $\beta(d) = \beta(t)$.
    So contracting the edge $dt$ in $T$ results in a tree-decomposition of minimum width for $G$ with strictly fewer nodes than $(T,\beta)$. 

    Now, for each $t\in V(T)$, choose a vertex $v_t \in \beta(t)$, and let $\alpha(t) = \beta(t) \setminus \{ v_t \}$.
    Given this $\alpha$, the \ocp-width of $(T,\beta,\alpha)$ is at most $\tw(G)+1$ as a single vertex does not constitute an odd cycle.
\end{proof}

As treewidth is closed under taking minors, we would expect that \ocp-treewidth would also be closed under odd minors, if it was defined sensibly, and this is indeed the case.

\begin{lemma}\label{lemma:OCPtwIsOddMinorMonotone}
    Let $H$ and $G$ be graphs such that $H \oddminor G$. Then we have $\ocptw(H) \leq \ocptw(G)$.
\end{lemma}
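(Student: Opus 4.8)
The plan is to verify odd-minor-monotonicity by handling the three operations defining the odd-minor relation — edge deletion, vertex deletion, and bond contraction — one at a time. By induction it suffices to show that if $H$ is obtained from $G$ by a single such operation, then $\ocptw(H) \leq \ocptw(G)$. Fix an \ocptd $\mathcal{T} = (T,\beta,\alpha)$ of $G$ of width $\ocptw(G)$; in each case I will modify it into an \ocptd of $H$ without increasing the width.

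The first two cases are routine. \textbf{Edge deletion:} if $H = G - e$, then $(T,\beta,\alpha)$ is still a valid \ocptd of $H$: conditions \ref{ocp1} and \ref{ocp2} are untouched, \ref{ocp3} can only become easier to satisfy since $G-\alpha(t)-e$ has fewer paths than $G-\alpha(t)$, and the width cannot increase because deleting an edge cannot increase $\ocp$ of any bag-induced subgraph. \textbf{Vertex deletion:} if $H = G - v$, set $\beta'(t) = \beta(t) \setminus \{v\}$ and $\alpha'(t) = \alpha(t) \setminus \{v\}$ for all $t$. Again \ref{ocp1} and \ref{ocp2} are immediate, adhesion does not grow, and for \ref{ocp3} a path in $H - \alpha'(t) = (G-v) - \alpha'(t)$ with endpoints in $\beta'(t)$ is also a path in $G - \alpha(t)$ (it avoids $v$, hence avoids $\alpha(t)$), so it was already contained in $\beta(t)$, hence in $\beta'(t)$; and removing a vertex cannot increase $|\alpha(t)|$ or the $\ocp$-term.

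\textbf{Bond contraction} is the main obstacle and the case that motivates the particular shape of \zcref{def:ocptreewidth}. Suppose $H$ is obtained from $G$ by contracting the bond $\partial_G(X)$; let $u_X$ be the new vertex resulting from identifying all of $X$. The natural attempt is to define $\beta'(t)$ by replacing $X \cap \beta(t)$ with $u_X$ whenever it is nonempty, and similarly for $\alpha'$. The key point to check is that $(T,\beta')$ remains a \emph{tree-decomposition} of $H$: the only delicate requirement is that $\{t : u_X \in \beta'(t)\} = \{t : \beta(t) \cap X \neq \emptyset\}$ is connected in $T$, and this is exactly where the definition of a bond is used — since $G[X]$ is connected (a bond $\partial_G(X)$ requires $G[X]$ and $G - X$ to be connected, as otherwise one could find a strictly smaller edge cut), the union of the (connected) subtrees $\{t : x \in \beta(t)\}$ over $x \in X$ is connected. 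The adhesion does not increase because $|\beta'(d) \cap \beta'(t)| \leq |\beta(d) \cap \beta(t)|$ (any block of $X$ in an adhesion set collapses to the single vertex $u_X$). For \ref{ocp3}: a path $P'$ in $H - \alpha'(t)$ with endpoints in $\beta'(t)$ lifts to a walk in $G - \alpha(t)$ — replacing a visit to $u_X$ by a subpath through $X$ — and after cleaning up this walk into a path $P$ with endpoints in $\beta(t)$, condition \ref{ocp3} for $\mathcal{T}$ forces $V(P) \subseteq \beta(t)$; pushing back down, $V(P') \subseteq \beta'(t)$. Here one must be a little careful that the subpath through $X$ can indeed be chosen inside $\beta(t) \cap X$, which follows because $G[X]$ restricted to a bag behaves well — this is the technical heart of the argument and the step I expect to need the most care. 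Finally, for the width: the bag $G[\beta'(t) \setminus \alpha'(t)]$ is obtained from $G[\beta(t) \setminus \alpha(t)]$ by contracting edges and possibly identifying vertices, which is a minor operation on that subgraph, and $\ocp$ is minor-monotone, so the $\ocp$-term does not increase; likewise $|\alpha'(t)| \leq |\alpha(t)|$. Combining the three cases and inducting on the length of the sequence of operations witnessing $H \oddminor G$ completes the proof.
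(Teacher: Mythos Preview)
Your overall approach --- push an optimal \ocptd of $G$ through each of the three defining operations --- is exactly the paper's strategy, and your treatment of edge and vertex deletion is correct. The bond-contraction case, however, contains two genuine errors.

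First, contracting the bond $\partial_G(X)$ does \emph{not} ``identify all of $X$'' to a single vertex $u_X$; by definition one contracts the edge set $\partial_G(X)$, so vertices are merged according to the connected components of the bipartite graph $(V(G),\partial_G(X))$, possibly producing several new vertices and leaving most of $X$ untouched. The paper accordingly works with the full minor model $\phi\colon V(G/C)\to 2^{V(G)}$ and defines $\beta'(t)=\{v:\phi(v)\cap\beta(t)\neq\emptyset\}$ and $\alpha'(t)=\{v:\phi(v)\cap\alpha(t)\neq\emptyset\}$. Your subtree-connectedness argument (``$G[X]$ connected $\Rightarrow$ union of subtrees connected'') survives in the corrected picture as ``each branch set $\phi(v)$ is connected in $G$'', so this point is a matter of writing the definition correctly rather than a missing idea.

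Second, and more seriously, your width bound appeals to ``$\ocp$ is minor-monotone''. This is false: contracting one edge of $C_6$ yields $C_5$, increasing $\ocp$ from $0$ to $1$, and in fact $\ocptw$ itself is not monotone under arbitrary contractions for the same reason. Your argument never uses the parity content of a bond contraction --- namely that any cycle crosses an edge cut an even number of times, so cycle parities are preserved --- and as written would equally ``prove'' monotonicity under ordinary minors. What you actually need (and what the paper invokes) is that $H[\beta'(t)\setminus\alpha'(t)]$ is an \emph{odd} minor of $G[\beta(t)\setminus\alpha(t)]$, together with the fact that $\ocp$ is \emph{odd}-minor-monotone. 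That is where the bond hypothesis earns its keep, not merely in the connectedness of the subtree you mention.
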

\begin{proof}
    Let $G$ be a graph and $\mathcal{T} = (T, \beta, \alpha)$ be an \ocptd of $G$.
    
    First, we show that the deletion of a vertex does not increase the \ocp-treewidth.
    Let $v$ be a vertex of $G$, and let $\mathcal{T}'$ be the \ocptd obtained from $\mathcal{T}$ by removing $v$ from all bags and sets $\alpha(t)$ containing $v$.
    Then $\mathcal{T}'$ is an \ocptd of $G-v$ and it is straightforward to see that $\ocpw(\mathcal{T}') \leq \ocpw(\mathcal{T})$.
    Hence, $\ocptw(G-v) \leq \ocptw(G)$.

    Similarly, we can show that the deletion of an edge does not increase the \ocp-treewidth.
    Let $e$ be an edge of $G$.
    Then $\mathcal{T}$ is still an \ocptd of $G-e$ and \ocp-width cannot have increased.
    Hence, $\ocptw(G-e) \leq \ocptw(G)$.
    
    Now, we show that contracting a minimal edge cut does not increase the \ocp-treewidth.
    Let $C$ be a minimal edge cut of $G$ and let $\phi \colon V(G / C) \rightarrow 2^{V(G)}$ be the minor model of $G / C$ in $G$.
    We construct an \ocptd $\mathcal{T}' = (T, \{ \beta'(t) \}_{t \in V(T)}, \{ \alpha'(t) \}_{t \in V(T)})$ of $G/C$, where $T$ is same tree as in $\mathcal{T}$, as follows.
    For each node $t$ of $T$, let $\beta'(t) = \{ v \in V(G/C) \mid \phi(v) \cap \beta(t) \neq \emptyset \}$ and $\alpha'(t) = \{ v \in V(G/C) \mid \phi(v) \cap \alpha(t) \neq \emptyset \}$.
    
    We claim that $\mathcal{T}'$ is an \ocptd of $G/C$ satisfying $\ocpw(\mathcal{T}')\leq \ocpw(\mathcal{T})$.
    The first and second condition of \zcref{def:ocptreewidth} are easy to check.
    To check the third condition, fix $t \in V(T)$.

    Suppose that there exists a path $P'$ in $G/C-\alpha'(t)$ such that both endpoints of $P'$ are in $\beta'(t)$.
    Let $V(P')=\{v_1,v_2,\cdots,v_\ell\}$, where indices are given according to the order of vertices in the path.
    Then $\phi(v_1)\cap \beta(t)\neq \emptyset$ and $\phi(v_1)\cap N(\phi(v_2))\neq \emptyset$, so we can find a path in $\phi(v_1)$ such that one of the endpoints is in $\beta(t)$ and the other is in $N(\phi(v_2))$.
    Note that $\phi(v_i)\cap \alpha(t)=\emptyset$ for all $i$.
    Similarly, for each $v_i$, we can find a path so that concatenating them gives a path $P$ in $G-\alpha(t)$ whose endpoints are in $\beta(t)$.
    This implies $V(P)\subseteq \beta(t)$, which gives $V(P')\subseteq \beta(t')$, which confirms the third condition.

    Next, we compare $\ocpw(\mathcal{T}')$ and $ \ocpw(\mathcal{T})$.
    It is straightforward to check that the adhesion of $(T,\{\beta'(t)\}_{t\in V(T)})$ is at most the adhesion of $(T,\{\beta(t)\}_{t\in V(T)})$.
    Also, for each $t\in V(T)$, we have $|\alpha(t)|\geq |\alpha'(t)|$ from the definition.
    Lastly, since $\beta'(t)-\alpha'(t)$ is an odd minor of $\beta(t)-\alpha(t)$, we have $\ocp(G[\beta(t)-\alpha(t)])\geq \ocp(G[\beta'(t)-\alpha'(t)])$.
    Therefore, we have $\ocpw(\mathcal{T}')\leq \ocpw(\mathcal{T})$, and consequently, this gives $\ocptw(G)\geq \ocptw(G/C)$.
    
    \medskip
    If $H$ is an odd minor of $G$, then $H$ can be obtained by a sequence of vertex deletions, edge deletions, and edge cut contraction.
    Therefore, we have $\ocptw(H)\leq \ocptw(G)$.
\end{proof}

Next, let us define a stronger version of \ocp-tree-decompositions that allows us to determine the width simply by considering the apex sets of each bag (and adding 1).
We say that an \ocp-tree-decomposition $(T,\beta,\alpha)$ of a graph $G$ is \emph{tame} if for all $dt\in E(T)$, $|(\beta(d)\cap\beta(t))\setminus\alpha(d)|\leq 1$.
The fact that we can find these is encapsulated in the next theorem (see also \zcref{thm:approxocptw}), which we will prove in \zcref{sec:localtoglobal}.

We mention this property in particular, since we will ultimately be able to prove that, in the absence of a parity handle and parity vortex, we can find such an \ocp-tree-decomposition with relatively modest width.
It will take us quite a while to get to the proof of this theorem, which will ultimately be a proven via a slightly stronger statement in \zcref{sec:localtoglobal}.

\begin{theorem}\label{thm:globalstructure}
    There exists a function $f_{\ref{thm:globalstructure}} \colon \mathbb{N} \to \mathbb{N}$ such that for every graph $G$ and positive integer $t$ one of the following holds:
    \begin{enumerate}
        \item $G$ has the parity handle of order $t$ as an odd minor,
        \item $G$ has the parity vortex of order $t$ as an odd minor, or
        \item $G$ has a tame \ocptd $(T,\beta,\alpha)$ of width at most $f_{\ref{thm:globalstructure}}(k)$ with $|V(T)| \in \mathbf{O}(|V(G)|)$.
    \end{enumerate}
    In particular, $f_{\ref{thm:globalstructure}}(t) \in \mathbf{O}(t^{3220})$ and there also exists an algorithm that, given $G$ and $k$ as above as input finds one of these outcomes in time $\max(f_{\ref{thm:ktminormodel}}(t),2^{\mathbf{poly}(t)})|V(G)|^6$.\footnote{$f_{\ref{thm:ktminormodel}}(t)$ is a computable function we inherit from a result discussed later which we use as a black box.}
\end{theorem}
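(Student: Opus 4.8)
\textbf{Proof proposal for \zcref{thm:globalstructure}.}
The plan is to reduce this global statement to a ``local'' structure theorem obtained along a tangle (or, equivalently, along a large wall), and then to glue the local pieces together by a tree-decomposition argument in the style of Robertson and Seymour's derivation of the Graph Minor Structure Theorem from the local structure theorem. First I would invoke the wall-finding machinery: either $G$ has bounded treewidth, in which case \zcref{lemma:tw>ocptw} already gives a tame \ocptd of small width (after the usual normalisation that makes adhesions small and apex sets all but one vertex of each bag), or $G$ contains a large wall $W$, and hence a tangle $\mathcal{T}_W$ of large order. In the latter case I would apply the sequence of local results sketched in \zcref{subsec:ProofSketchIntro}: the bipartite Flat Wall Theorem, the parity-sensitive society classification (\zcref{thm:evensocietyclassification}), the surface-building iteration, and the local \ocp-bounding step. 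The output of this chain is: either a parity handle $\mathscr{H}_t$ or a parity vortex $\mathscr{V}_t$ as an odd minor (outcomes (1) and (2)), or a small set $A$ of vertices and an even-faced almost embedding of the torso of the tangle's ``centre'' such that, after removing $A$, every bag in the associated decomposition has bounded \textsf{OCP}. All of the relevant bounds along this chain are polynomial in $t$, which is how the stated $\mathbf{O}(t^{3220})$ bound arises by tracking the composition of these polynomials.

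The second half of the proof is the gluing step. Here I would adapt the standard inductive scheme: repeatedly identify a tangle of order roughly $f_{\ref{thm:globalstructure}}(t)$ (if none exists, treewidth is bounded and we are done as above), apply the local structure theorem to carve out a bag whose torso has an even-faced almost embedding with \textsf{OCP} bounded after removing a small apex set $A$, and place this bag into the tree-decomposition with the boundary of the relevant separation as the adhesion to the rest. The key point is that the boundary size is controlled (bounded in $t$) by the local theorem, so the adhesions of the resulting \ocptd are bounded; and by letting $\alpha(t)$ for each new bag be exactly the apex set $A$ together with the boundary vertices, each bag $\beta(t)\setminus\alpha(t)$ lies inside the even-faced part and hence has bounded \textsf{OCP}. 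One then recurses on the remainder of the graph, which still excludes the two parity grids as odd minors, so the inductive hypothesis applies. To obtain tameness, I would additionally enforce \ref{ocp3} and the condition $|(\beta(d)\cap\beta(t))\setminus\alpha(d)|\le 1$ by a further local clean-up of each bag: absorbing the ``shrubbery'' hanging off $3$-separations into the apex/boundary structure so that the only non-apex part of an adhesion is the single embedded ``trunk'' vertex, exactly as in the passage from almost embeddings to genuine decompositions. The linear bound $|V(T)|\in\mathbf{O}(|V(G)|)$ follows because each iteration consumes at least one vertex into a bag that is never revisited, and the algorithmic version is obtained by noting every ingredient — wall-finding, the society classification, surface building, and the separation-carving — has a constructive $2^{\mathbf{poly}(t)}|V(G)|^{\mathcal{O}(1)}$ implementation, with the one black-box exception being the $K_t$-minor-model subroutine with running time $f_{\ref{thm:ktminormodel}}(t)|V(G)|^{\mathcal{O}(1)}$, and the per-iteration cost is $|V(G)|^6$ from the wall/tangle computations, multiplied by at most $|V(G)|$ iterations.

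\textbf{Main obstacle.} The hard part will be making the parity-sensitive society classification (\zcref{thm:evensocietyclassification}) yield an \emph{even-faced} almost embedding with the right list of obstructions and with polynomial bounds, and then ensuring that the surface-building iteration terminates with a polynomial bound on the genus (and hence on \textsf{OCP} after the vortex-cleanup). In particular, when the iteration produces a handle- or crosscap-like linkage, one must argue that the accumulated linkage infrastructure forces one of $\mathscr{H}_t$, $\mathscr{V}_t$ after only polynomially many rounds — otherwise the surface genus, and thus the width, would not be polynomially bounded in $t$. The counting argument that bounds \textsf{OCP} by the maximum number of disjoint genus-reducing curves in the even-faced embedding (after removing vortex-induced odd cycles via the ThilikosW-style ``killing'' technique) also needs to be carried out carefully so as not to lose more than polynomial factors. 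Everything downstream of these steps — the normalisation of adhesions, the choice of apex sets, the tameness clean-up, and the inductive gluing — is by comparison routine, following well-established patterns.
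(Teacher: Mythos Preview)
Your high-level architecture is right and matches the paper: a local structure result feeding into an inductive gluing scheme. But the gluing mechanism you describe differs from the paper's in a way worth noting, and your account of tameness is off.

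The paper packages the entire local chain (bipartite flat wall, society classification, surface building, vortex clean-up, \textsf{OCP} bound) into a single corollary (\zcref{cor:corecorollary}): given a large mesh $M$, either find $\mathscr{H}_t$ or $\mathscr{V}_t$, or find a small apex set $A$ such that the large block $B_0$ of $\mathcal{T}_M - A$ has bounded \textsf{OCP}. The surface structure and the rendition cells never surface in the tree-decomposition. The gluing (\zcref{thm:globalstructure_induction}) is an induction on $|V(G)\setminus X|$ for a designated set $X$ of bounded size, using a highly-linked-set\slash balanced-separator dichotomy (\zcref{prop_findsep}): if $X$ has a small balanced separator, recurse on components; if $X$ is highly linked, find a mesh via \zcref{thm:algogrid}, apply \zcref{cor:corecorollary}, and set the root bag to $V(B_0)\cup A$ with $\alpha(r)=A\cup X$. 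The children of the root are the branches of the block decomposition of $G-A$ hanging off $B_0$ (and the other components of $G-A$).

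Tameness then comes essentially for free from this block-decomposition structure, not from any ``$3$-separation clean-up'' of an almost embedding: the adhesion between the root bag and a child consists of $A$ (contained in $\alpha$ on both sides) plus a single cutvertex of the block decomposition. Likewise \ref{ocp3} holds at the root simply because $B_0$ is a block of $G-A$: any path in $G-\alpha(r)$ between two vertices of $B_0$ stays in $B_0$. Your proposed mechanism --- absorbing shrubbery at $3$-separations of the rendition so that ``the only non-apex part of an adhesion is the single embedded trunk vertex'' --- conflates the cell structure of a rendition (which is internal to the proof of \zcref{cor:corecorollary}) with the tree structure of the \ocptd. In fact, once you put the full boundary into $\alpha$, tameness is trivial and no clean-up is needed; the work is entirely in keeping $|\alpha|$ small, which the block-decomposition approach achieves directly.
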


\subsection{Computational hardness of \ocp-treewidth}\label{subsec:hardness}
We briefly discuss that computing the \ocp-treewidth of an arbitrary graph is $\mathsf{NP}$-complete.
This is shown via a reduction from the computation of the treewidth of a graph.

\begin{proposition}[Arnborg, Corneil, Proskurowski \cite{Arnborg1987Complexity}]\label{prop:twNPcomplete}
Computing treewidth is $\mathsf{NP}$-complete.
\end{proposition}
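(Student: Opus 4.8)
The plan is first to place the decision version of the problem --- given a graph $G$ and an integer $k$, decide whether $\tw(G)\le k$ --- in $\mathsf{NP}$, and then to prove $\mathsf{NP}$-hardness by a polynomial-time reduction. For membership in $\mathsf{NP}$, the natural certificate is a tree-decomposition of $G$ of width at most $k$; the only subtlety is bounding its size, which I would handle with the standard observation that any tree-decomposition can be made \emph{reduced} --- no bag contained in a neighbouring bag --- by repeatedly contracting an offending edge of the tree without changing the width, and that a reduced tree-decomposition has at most $|V(G)|$ nodes. Given a candidate pair $(T,\beta)$ with $|V(T)|\le|V(G)|$, checking that it is a tree-decomposition of $G$ (the union condition, the edge condition, and connectivity of $\{t : v\in\beta(t)\}$ for each vertex $v$) and that $|\beta(t)|\le k+1$ for every $t$ is clearly doable in polynomial time, so the problem is in $\mathsf{NP}$.

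For $\mathsf{NP}$-hardness I would reduce from a suitable $\mathsf{NP}$-complete problem of partition or graph-layout type --- this is the route taken by Arnborg, Corneil, and Proskurowski, and natural candidate sources include bin-packing/$3$-partition-type problems and minimum-cut linear-arrangement-type problems. The central tool is the classical reformulation of treewidth: $\tw(G)\le k$ if and only if $G$ has a chordal supergraph on $V(G)$ containing no clique of size more than $k+1$, equivalently $G$ admits a vertex-elimination ordering of width at most $k$. This recasts the question as ``can $G$ be triangulated while keeping every clique small?'', which is exactly the shape a gadget construction can control. A convenient further reduction is to build the gadget graph to be \emph{cobipartite}, i.e.\ a union of two cliques $A$ and $B$ together with some edges between them: every triangulation of a cobipartite graph is again cobipartite and chordal, hence an interval graph (it has no asteroidal triple), so on such graphs treewidth equals pathwidth and an optimal decomposition may be taken to be a path-decomposition that sweeps through $A$ and $B$. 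The source instance is then encoded in the adjacencies \emph{between} $A$ and $B$, padded with sufficiently many ``filler'' vertices calibrated to the target value $k$, so that a path-decomposition of width at most $k$ exists precisely when the items of the source instance fit within the prescribed capacities: one direction builds an explicit path-decomposition from a solution, and the other reads a solution off the cuts that any low-width decomposition is forced to make.

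The bookkeeping for membership in $\mathsf{NP}$ and the chordal-completion reformulation are routine; the step I expect to be the main obstacle is the gadget design together with the backward implication of the reduction, namely showing that the heavy connectivity imposed between $A$ and $B$ genuinely forces every tree-decomposition of width at most $k$ to behave like a linear layout that respects the capacity constraints, with the filler vertices tuned so that no decomposition can circumvent the bound. Making these quantitative estimates match the numerical thresholds of the source problem exactly is where the real care is required; once that is in place, the polynomial size of the reduction and the correctness equivalence follow by a routine verification.
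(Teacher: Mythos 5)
The paper does not prove this proposition at all: it is imported verbatim as a cited black box from Arnborg, Corneil, and Proskurowski, so there is no in-paper argument to compare yours against. What can be assessed is whether your blind sketch is a correct reconstruction of the known proof, and on that score the architecture is right: membership in $\mathsf{NP}$ via a reduced tree-decomposition of at most $|V(G)|$ nodes is standard and correct, the reformulation of $\tw(G)\le k$ as the existence of a chordal supergraph with clique number at most $k+1$ is the right lever, and the observation that chordal cobipartite graphs are asteroidal-triple-free and hence interval graphs (so that treewidth and pathwidth coincide on cobipartite instances, forcing any optimal decomposition to be a linear sweep) is exactly the mechanism the Arnborg--Corneil--Proskurowski reduction exploits.

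However, as submitted this is a proof plan rather than a proof, and the part that is missing is precisely the part that carries the $\mathsf{NP}$-hardness. You never fix the source problem (the original reduction is from a cut-into-equal-sized-subsets/bisection-type problem), you do not construct the gadget --- how the instance is encoded in the $A$--$B$ adjacencies, how many filler vertices are added, and what the target width $k$ is as a function of the instance --- and consequently neither direction of the equivalence is actually argued. You flag the backward implication (that every width-$\le k$ decomposition is forced to respect the capacity constraints) as the expected obstacle, which is an accurate self-diagnosis: that quantitative forcing argument is the entire content of the hardness proof, and without it the reduction is not established. So the proposal should be graded as an essentially correct outline of the standard proof with the central construction and its correctness verification left open; to make it a proof you would need to commit to a concrete source problem and carry out the counting in both directions, or else simply cite the result as the paper does.
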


A well-known corollary of \zcref{prop:twNPcomplete} is that computing treewidth is $\mathsf{NP}$-complete even on bipartite graphs.
To see this simply notice that subdividing edges of a graph does not change its treewidth, and for all graphs $G$, the graph $G'$ obtained from $G$ by subdividing every edge of $G$ exactly once is always bipartite. 

\begin{corollary}[folklore]\label{cor:biptw}
Computing treewidth is $\mathsf{NP}$-complete on bipartite graphs.
\end{corollary}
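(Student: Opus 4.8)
The plan is to give a polynomial-time many-one reduction from the problem of computing the treewidth of an arbitrary graph, which is $\mathsf{NP}$-complete by \zcref{prop:twNPcomplete}, to the same problem restricted to bipartite graphs. Given an arbitrary graph $G$, I would build the graph $G'$ obtained from $G$ by subdividing every edge exactly once: for each $e = uv \in E(G)$ introduce a fresh vertex $w_e$ and replace $e$ by the path $u\,w_e\,v$. This is computable in time polynomial in $|V(G)| + |E(G)|$, and $G'$ is bipartite with bipartition $\bigl(V(G),\ \{ w_e : e \in E(G) \}\bigr)$, since every edge of $G'$ joins an original vertex to a subdivision vertex.

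The key step is to establish $\tw(G') = \tw(G)$. The inequality $\tw(G) \leq \tw(G')$ is immediate, since $G$ is a minor of $G'$ (contract $u w_e$ for each $e = uv$) and treewidth is minor-monotone. For $\tw(G') \leq \tw(G)$, I would take an optimal tree-decomposition $(T,\beta)$ of $G$ of width $k$ and, for every $e = uv \in E(G)$, choose a node $t_e$ with $\{u,v\} \subseteq \beta(t_e)$, attach a new leaf node $s_e$ to $t_e$, and set $\beta(s_e) = \{u, v, w_e\}$. One checks routinely that this is a tree-decomposition of $G'$, and when $k \geq 2$ every bag has size at most $\max(k+1, 3) = k+1$. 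The small cases $k \leq 1$ are disposed of directly: if $k = 0$ then $G$ has no edges and $G' = G$, while if $k = 1$ then $G$ is a forest containing an edge, hence so is $G'$, and both have treewidth $1$.

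Finally, any decision procedure for treewidth on bipartite graphs decides whether $\tw(G) \leq k$ by instead deciding whether $\tw(G') \leq k$, so the problem is $\mathsf{NP}$-hard on bipartite graphs; membership in $\mathsf{NP}$ is inherited from the general case, as a tree-decomposition of width at most $k$ with at most $|V(G')|$ nodes serves as a polynomial-size certificate. This yields the claim. I do not expect a genuine obstacle here — the statement is folklore — and the only point requiring a moment's care is the behaviour of treewidth under edge subdivision when $\tw(G) \leq 1$, which the case analysis above resolves.
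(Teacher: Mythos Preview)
Your proposal is correct and takes essentially the same approach as the paper: reduce from general treewidth via subdividing every edge once, noting that the result is bipartite and that subdivision preserves treewidth. In fact you supply more detail than the paper does, which merely asserts that subdividing edges does not change treewidth; your explicit tree-decomposition construction and the small-case analysis for $k \leq 1$ are a welcome elaboration of what the paper leaves implicit.
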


Let $G$ be a bipartite graph.
We define a specific augmentation of $G$ as follows.
For every edge $e=xy\in E(G)$ introduce a new vertex $v_e$ adjacent exactly to $x$ and $y$.
We denote the resulting graph by $G^{\Delta}$.

It is easy to observe that, if $G$ is not a forest, then $\tw(G)=\tw(G^{\Delta})$.

\begin{lemma}\label{lemma:GDelta}
Let $G$ be a bipartite graph that is not a forest, then $\ocptw(G^{\Delta})=\tw(G)$.
\end{lemma}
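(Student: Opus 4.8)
The plan is to establish the two inequalities separately. The inequality $\ocptw(G^{\Delta})\le\tw(G)$ is immediate: by \zcref{lemma:tw>ocptw} one has $\ocptw(G^{\Delta})\le\tw(G^{\Delta})$, and since $G$ is not a forest the observation stated just above gives $\tw(G^{\Delta})=\tw(G)$. So the substance of the lemma is the reverse inequality $\tw(G)\le\ocptw(G^{\Delta})$, and for this I would fix an \ocptd $\mathcal{T}=(T,\beta,\alpha)$ of $G^{\Delta}$ of width $w$ and prove $\tw(G)\le w$. I would argue through the bramble duality $\tw(G)=\max\{\operatorname{order}(\mathcal{B})\}-1$: given any bramble $\mathcal{B}$ of $G$, say of order $q$, the goal is $q\le w+1$.

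The first step is to move to the tree-decomposition $(T,\beta_G)$ of $G$ with $\beta_G(t):=\beta(t)\cap V(G)$ and choose a node $t$ whose bag hits every element of $\mathcal{B}$. Put $A:=\alpha(t)\cap V(G)$ and $S:=\beta_G(t)\setminus A$, and let $\mathcal{B}'$ consist of the elements of $\mathcal{B}$ avoiding $A$; since adding $A$ to a transversal of $\mathcal{B}'$ gives a transversal of $\mathcal{B}$, the bramble $\mathcal{B}'$ has order at least $q-|A|$, and each of its elements meets $S$. Now I would extract two facts from condition \ref{ocp3}. First, any edge $e=xy$ of $G$ with $x,y\in S$ has $v_e\in\alpha(t)$ or $v_e\in\beta(t)\setminus\alpha(t)$: otherwise the path $xv_ey$ would be a path of $G^{\Delta}-\alpha(t)$ with both ends in $\beta(t)$, forcing $v_e\in\beta(t)$; in the latter case $xv_ey$ is an odd triangle inside $G^{\Delta}[\beta(t)\setminus\alpha(t)]$, and distinct such triangles arising from a matching of $G[S]$ are vertex-disjoint, so $\nu(G[S])\le\ocp(G^{\Delta}[\beta(t)\setminus\alpha(t)])+\bigl(|\alpha(t)|-|A|\bigr)\le w-|A|$; as $G$ is bipartite, König's theorem then yields $\tw(G[S])\le w-|A|$. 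Second, every component of $G-\beta_G(t)$ has at most one neighbour in $S$, since two such neighbours would be linked by a path through that component in $G-\alpha(t)$ with both ends in $\beta(t)$, again contradicting \ref{ocp3}.

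Using the second fact, for every $B\in\mathcal{B}'$ the pieces of $V(B)$ lying outside $\beta_G(t)$ each attach to the remainder of $V(B)$ through a single vertex of $S$, so the ``core'' $V(B)\cap S$ stays connected; moreover, after a short case analysis on how two elements of $\mathcal{B}'$ can intersect or be joined by an edge, any two cores still touch inside $G[S]$. Hence $\{\,G[V(B)\cap S]:B\in\mathcal{B}'\,\}$ is a bramble of $G[S]$, and it visibly has order at least $\operatorname{order}(\mathcal{B}')\ge q-|A|$, so bramble duality gives $\tw(G[S])\ge q-|A|-1$. Comparing with $\tw(G[S])\le w-|A|$ from the first fact yields $q\le w+1$, and therefore $\tw(G)\le w$. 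Running this over all \ocptds of $G^{\Delta}$ gives $\tw(G)\le\ocptw(G^{\Delta})$, which together with the easy inequality proves the lemma.

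I expect the delicate point to be the verification in the third step that the trimmed cores $V(B)\cap S$ genuinely form a bramble of $G[S]$: connectivity of each core follows cleanly from the ``at most one neighbour'' property, but showing that two cores still touch requires, for each way two bramble elements can meet outside $S$, pinning down the unique vertex through which the relevant component of $G-\beta_G(t)$ attaches and checking that it lies in $S$ and belongs to both cores. Everything else---the matching/König bound and the passage through bramble duality---should be routine, and none of the lower-bound argument needs the hypothesis that $G$ is not a forest (that hypothesis is only used for the easy inequality via $\tw(G^{\Delta})=\tw(G)$).
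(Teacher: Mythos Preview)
Your argument is correct. You and the paper share the same three structural observations---$(T,\beta\cap V(G))$ is a tree-decomposition of $G$; for $e=xy$ with $x,y\in S$ one has $v_e\in\beta(t)$, and the induced triangles bound $\nu(G[S])$ by $w-|A|$; and every component of $G-\beta_G(t)$ has at most one neighbour in $S$ by \ref{ocp3}---but you put them to work very differently. The paper builds a width-$w$ tree-decomposition of $G$ by an explicit induction: it fixes a root bag, uses K\H{o}nig to find $A=A'\cup S$ with $G[\beta(r)\cap V(G)]-A$ independent, splits $G$ along $A$ into pieces $H\in\mathcal{H}$ each meeting $\beta(r)$ in $\le k+1$ vertices, and recursively assembles tree-decompositions of the pieces, handling a somewhat delicate ``single child'' case by deleting a vertex and re-normalising the \ocptd. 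Your route through bramble duality is shorter and avoids this induction entirely: you localise any bramble $\mathcal{B}$ to a single node $t$, restrict to $G[S]$, and compare the induced bramble's order with the K\H{o}nig bound on $\tw(G[S])$.

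The touching verification you flagged does go through. For the sharing case, if $z\in V(B_1)\cap V(B_2)$ lies outside $\beta_G(t)$, a path in $B_i$ from $z$ to $V(B_i)\cap S$ must exit the component $C\ni z$ through a vertex of $V(B_i)\cap\beta_G(t)=V(B_i)\cap S$, hence through the unique $S$-neighbour of $C$; so both cores contain that vertex. For the edge case with $x\in V(B_1),y\in V(B_2)$, if one of $x,y$ (say $y$) is outside $\beta_G(t)$ then the unique $S$-neighbour of its component is forced into $V(B_2)\cap S$, and either equals $x$ (if $x\in S$) or is reached analogously from the $B_1$-side. The connectivity argument is as you describe: any two consecutive $S$-vertices on a path in $B$ are either equal or adjacent in $G[S]$, so the $S$-trace of the path is a walk in $G[S]$.

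What you gain is brevity and a clean single-bag argument; what the paper's approach buys is constructivity---it actually produces a tree-decomposition of $G$ of width $\le w$, which your proof does not (and need not, for the statement as given). Your proof also imports the full bramble--treewidth duality theorem as a black box, whereas the paper's induction is self-contained modulo K\H{o}nig.
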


\begin{proof}
By \zcref{lemma:tw>ocptw} we already have $\ocptw(G^{\Delta}) \leq \tw(G^{\Delta}) = \tw(G)$.
Thus, we only have to show that $\tw(G) \leq \ocptw(G^{\Delta})$.

Let $(T,\beta,\alpha)$ be an \ocp-tree-decomposition of minimum width $k$ for $G^{\Delta}$.
We may assume that for all $dt\in E(T)$, we have $\beta(d)\cap V(G)\neq \beta(t)\cap V(G)$.
Moreover, let us select an arbitrary vertex $r\in V(T)$ as a root for $T$ such that such that $(\beta(r)\setminus\beta(d))\cap V(G)\neq\emptyset$ for all $d\in N_T(r)$.
To see that this is possible for any choice of $r$ observe the following:
Suppose that $\beta(r)\cap V(G)\subseteq \beta(d)\cap V(G)$ for some $d\in N_T(r)$.
Let $x\in V(G^{\Delta})\cap \beta(r)$ be any vertex, then one of three options are possible:
\begin{itemize}
    \item $x$ appears only in $\beta(r)$,
    \item $x$ appears in $\beta(d)$, or
    \item $x$ does not appear in $\beta(d)$ but in $\beta(t)$ for some $t\in N_T(r)\setminus\{ d\}$.
\end{itemize}
If $x$ appears only in $\beta(r)$, then we must have $x = v_e \in V(G^\Delta) \setminus V(G)$ and by the axioms of tree-decompositions, both endpoints of $e$ are contained in $\beta(r)\cap V(G)\subseteq \beta(d)$.
In this case, $\alpha(d)$ must contain one of the endpoints of $e$ as otherwise \ref{ocp3} would require $x\in\beta(d)$ as well.
For every $t\in N_T(r)\setminus\{ d\}$ we have that $|\beta(d)\cap \beta(t)|\leq k$, thanks to our assumptions on $(T,\beta,\alpha)$.
In the third option, it must therefore be true that $|\beta(d)\cap \beta(t)|\leq k-h$, where $h$ is the number of such vertices $x\in \beta(r)$ that appear in $\beta(t)$ but not in $\beta(d)$.
Hence, by merging the bags $\beta(r)$ and $\beta(d)$ and keeping $\alpha(d)$ as the apex set, we do not increase the adhesion of the resulting tree-decomposition and, moreover, the odd cycle packing number of $G^{\Delta}(\beta(r)\cup\beta(d))-\alpha(d)$ is the same as the one of $G^{\Delta}(\beta(d))-\alpha(d)$.
Hence, now we may choose $d$ to be the root and keep repeating this process until either $T$ has a single vertex, or we have found a root $r$ as desired where the new bag of $r$ fully contains the bag of the originally chosen root.

We now describe how to transform $(T,\beta,\alpha)$ into a tree-decomposition of $G$ of bounded width.

Consider the graph $G_r\coloneqq G[\beta(r)]-\alpha(r)$.
Suppose there exists an edge $xy\in E(G_r)\cap E(G)$, then, by \ref{ocp3}, we know that $v_{xy}\in \beta(r)$.
Let $G'_r \coloneqq G_r\cap G$.

We claim that a maximum matching in $G'_r$ has size at most $k$ and, in particular, if $M$ is a maximum matching of $G'_r$ then for all $xy\in M$ except for at most $k-\alpha(r)$ we have that $v_{xy}\in\alpha(r)$.
To see this let $M$ be a maximum matching in $G'_r$.
By our observation above we have that for all $xy\in M$ we must also have that $v_{xy}\in \beta(r)$.
Moreover, since $\ocp(G_r)\leq k-|\alpha(r)|$ there can be at most $k-|\alpha(r)|$ edges $xy$ of $M$ such that $v_{xy}\in V(G_r)$.
This is because $\{ x,y,v_{xy}\}$ induces an odd cycle in $G_r$ otherwise and all of these triangles are vertex disjoint since $M$ is a matching.
It follows that $|M|\leq k$ as claimed.
From here on let $A'\coloneqq \alpha(r)\cap V(G)$ and notice that $|A'|+|M|\leq k$ due to our claim.

Since $G_r'$ is bipartite, K\H{o}nig's Theorem implies the existence of a set $S\subseteq V(G'_r)$ with $|S|=|M|$ such that $S$ is a vertex cover of $G'_r$.
Hence, $G_r'-S$ is an independent set.
Let $A\coloneqq A'\cup S$.

We need one more observation before we can start constructing our desired tree-decomposition.
Let $P$ be a path in $G-\alpha(r)$ with both endpoints in $\beta(r)$.
Then, by \zcref{ocp3} we know that $V(P)\subseteq \beta(r)$.
Consequently, no component of $G-A$ can contain more than one vertex of $I\coloneqq V(G_r'-S)$.
For each $v\in I$ let $H_v'$ be the component of $G-A$ that contains $v$.
Let $H_1',\dots,H_{\ell}'$ be the components of the graph induced by $A$ and the vertex sets of all components of $G-A$ that are disjoint from $I$, and for each $i \in [\ell]$ let $H_i\coloneqq G[V(H_i')\cup A]$.
Moreover, for each $v\in I$, let $H_v$ be the subgraph of $G$ induced by $V(H_v')\cup A$.

To indicate that our refinement procedure for $(T,\beta,\alpha)$ takes only finitely many steps and can thereby be handled by an induction, we distinguish two cases.

Our induction establishes the following:
For graphs $H \subseteq G$ where $|\beta(r)\cap V(H)|\leq k+1$ we prove by induction on $|V(H)|$ that $H$ has a tree-decomposition $(T_H,\beta_H)$ of width at most $k$ such that there exists $r_H\in V(T_H)$ with $\beta(r)\cap V(H)\subseteq \beta(r_H)$ and if $|\beta(r)\cap V(H)|> k+1$ there exists a tree-decomposition $(T_H,\beta_H)$ of width at most $k$ such that there exists $r_H\in V(T_H)$ with $\alpha(r)\cap V(H)\subseteq \beta(r_H)$.
In the following we argue for the case in which $H = G$, with the remainder following by induction.
Let $\mathcal{H}\coloneqq \{ H_1,\dots,H_{\ell} \} \cup \{ H_v \colon v\in I \}$.

Notice that the base case is given by $|V(H)|\leq k+1$ in which case we tree-decomposition whose tree has a single node suffices completely.
\medskip

\textbf{Case 1:}
There are at least two graphs in $\mathcal{H}$.
\smallskip

In this case we have that $|V(H)| < |V(G)|$ for all $H\in \mathcal{H}$ and $|V(H)\cap\beta(r)|\leq k+1$.
Indeed, since $\mathsf{adhesion}(T,\beta)\leq k$ we obtain that either $|V(H)\cap\beta(r)|\leq k$, we have $V(H)\subseteq\beta(r)$, or there exists $a\in A$ such that $N_G(a)\cap V(H)\subseteq \beta(r)$.
As $|V(H)|<|V(G)|$ for all $H\in\mathcal{H}$, we may assume by induction that each $H\in\mathcal{H}$ has a tree-decomposition $(T_H,\beta_H)$ of width at most $k$ with root $r_H\in V(T_H)$ such that $ V(H) \cap \beta(r) \subseteq \beta_H(r_H)$.

We now construct $(T_G,\beta_G)$ as follows.
Let $T_G$ be obtained from the disjoint union of all $T_H$ by first introducing, for each $H\in\mathcal{H}$, a vertex $t_H$ adjacent to $r_H$, and then a vertex $r_G$ adjacent to all $t_H$.
Then we set $\beta_G(r_G)\coloneqq A \supseteq \alpha(r)\cap V(G)$, for each $i\in[\ell]$ we set $\beta_G(t_{H_i})\coloneqq A$ and for each $v\in I$ we set $\beta_G(t_{H_v})\coloneqq A\cup \{ v \}$.
Next, for each $H\in\mathcal{H}$ and $t\in V(T_H)$, we set $\beta_G(t)\coloneqq \beta_H(t)$.
Finally, in case $|\beta(r)\cap V(G)|\leq k+1$, we know that $|(\beta(r)\cap V(G))\setminus A|\leq k+1 - |A|$, since $A = A' \cup S \subseteq \beta(r)\cap V(G)$.
In this special case we add $\beta(r)\cap V(G)$ to the bag of $r_G$.

The result is a pair $(T_G,\beta_G)$ such that $|\beta_G(t)|\leq k+1$ for all $t\in V(T_G)$.
Moreover, $\beta_G(r_G) \setminus A \subseteq \beta_G(t_H)\subseteq \beta_G(r_H)$ and in case $\beta_G(r_G) \neq A$, the set $\beta_G(r_G) \setminus A$ is an independent set in $G$.
Since $v \in \beta_G(t_{H_v})$ for each $i \in I$, we therefore know that every edge of $G$ is contained in at least one bag of $(T_G,\beta_G)$ and each vertex of $G$ induces a connected subtree of $T_G$ via $\beta_G$.
Hence, $(T_G,\beta_G)$ is the desired tree-decomposition of width at most $k$ since either we have $|\beta(r)\cap V(G)|\leq k+1$ and $\beta(r)\cap V(G)\subseteq \beta(r_G)$ or, in all other cases, we have that $\alpha(r)\cap V(G)\subseteq \beta(r_G)$.
\medskip

\textbf{Case 2:}
There is exactly one graph in $\mathcal{H}$.
\smallskip

It follows that $r$ has a unique neighbour in $T$, say $d$.

By our assumption we have that $(\beta(r)\setminus \beta(d))\cap V(G)\neq\emptyset$.
Note that for each $v \in I \subseteq (\beta(r)\cap V(G))\setminus A$ there exists a graph in $\mathcal{H}$.
Therefore, in this case, $(\beta(r)\cap V(G))\setminus A$ contains at most one vertex and thus, $|\beta(r)\cap V(G)|\leq k+1$.

Now pick any vertex $x\in(\beta(r)\setminus\beta(d))\cap V(G)$ and consider the graph $(G-x)^{\Delta}$ together with the \ocp-tree-decomposition $(T,\beta,\alpha)$ obtained by restricting to $(G-x)^{\Delta}$.

Let now $d\neq t\in V(T)$ be two distinct nodes of $T$ such that $\beta(d)\cap V(G-x)=\beta(t)\cap V(G-x)$.
Notice that any cycle containing a vertex $v\in V(G^{\Delta})\setminus V(G)$ must also contain both neighbours of $v$.
Hence, without loss of generality, we may assume that $\alpha(d),\alpha(t)\subseteq V(G-x)$.
Next observe that, by the axioms of tree decompositions, we must have that there exists a node $h\in N_T(d)$ which lies on the unique $d$-$t$-path in $T$ and $\alpha(d)\cap V(G-x)\subseteq \beta(h)$ since $\alpha(d)\cap V(G-x)=\alpha(t)\cap V(G-x)$ by assumption.
It follows that $|\beta(d)\cap V(G-x)|\leq k$ since the adhesion of $(T,\beta)$ is at most $k$.
Hence, we may assume without loss of generality that $\beta(d)\cap V(G) \subseteq \alpha(d)=\alpha(t)\supseteq \beta(t)\cap V(G)$.

By merging adjacent bags that coincide in their vertices of $G-x$ we may therefore assume, as before, that no two adjacent bags intersect $V(G-x)$ in the same set.
Indeed, notice that this also does not increase the adhesion of $(T,\beta)$ above $k$ since any vertex of $V(G^{\Delta})\setminus V(G)$ that lives in exactly one of these two adjacent bags $\beta(d)$ and $\beta(t)$ must either live in both, or, by the axioms of tree-decompositions, can only occur in bags of nodes of one of the two components of $T-dt$.
Moreover, we may assume $r$ to still be a node of $T$.
So, by our induction hypothesis, $G-x$ has a tree-decomposition $(T_{G-x},\beta_{G-x})$ of width at most $k$ with a root $r_{G-x}$ such that $\beta(r)\cap V(G-x)\subseteq \beta_{G-x}(r_{G-x})$.
Since, by choice of $x$, we know that all neighbours of $x$ belong to $\beta(r)\cap V(G)$ we may now simply introduce a new node $r_G$ adjacent to $r_{G-x}$ to $T_{G-x}$, thereby creating the tree $T_G$.
If we then set $\beta_G(r_G)\coloneqq \beta(r)\cap V(G)$ and $\beta_G(t)\coloneqq \beta_{G-x}(t)$ for all $t\in V(T_{G-x})$ we have obtained our desired tree-decomposition $(T_G,\beta_G)$ for $G$.
\end{proof}

Notice that $G^{\Delta}$ can be constructed from any bipartite graph $G$ in time $\mathbf{O}(|E(G)|)$ and $|V(G^{\Delta})|=|V(G)|+|E(G)|$.
Hence, the $\mathsf{NP}$-completeness of computing the \ocp-treewidth of graphs is an immediate corollary of \zcref{lemma:GDelta} and \zcref{cor:biptw}.
With this, the proof of \zcref{thm:OCPtwNP} is complete.

\subsection{Parity grids have large \ocp-treewidth}
In this subsection, we show that both the parity handle and parity vortex have large \ocp-treewidth.
First, observe that if there is a separation of small order in a cylindrical grid, then one side of the separation contains most of the vertices.

\begin{observation}\label{lem:separationingrid}
    Let $(S_1,S_2)$ be a separation of order $s$ in the cylindrical grid $\mathcal{C}_{k,\ell}$ with $k,\ell \geq s+1$.
    Then there is a unique $i \in \{1,2\}$ with $|S_i| \leq s^2/4$.
\end{observation}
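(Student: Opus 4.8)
The plan is to argue purely combinatorially about the cylindrical grid $\mathcal{C}_{k,\ell}$, using the fact that its vertex set decomposes into $k$ disjoint "levels" (copies of $C_\ell$) and $\ell$ disjoint "columns" (copies of $P_k$). Given a separation $(S_1,S_2)$ of order $s$, write $Z := S_1 \cap S_2$, so $|Z| \le s$. Since there is no edge between $S_1 \setminus Z$ and $S_2 \setminus Z$, each level $C_\ell$ and each column $P_k$ — being connected — must, after deleting $Z$, lie entirely on one side. The idea is to count how many levels and columns can possibly be split by $Z$, and then bound the "minority" side by a product of these counts.

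**Main steps.**
First I would observe that a level $L$ (a copy of $C_\ell$) with $L \cap Z = \emptyset$ lies entirely in $S_1 \setminus Z$ or entirely in $S_2 \setminus Z$; since a level contains $\ell \ge s+1 > s \ge |Z|$ vertices, at most $|Z| \le s$ levels meet $Z$. Say $a$ levels meet $Z$ and the remaining $k - a$ levels are "clean"; partition the clean levels into those fully on side $1$ and those fully on side $2$. The same dichotomy applies to columns: each column is a copy of $P_k$, again has $k \ge s+1$ vertices, so at most $s$ columns meet $Z$; the rest are clean and each lies fully on one side. Now a vertex $v$ sitting on a clean level $L$ and a clean column $C$ lies in $S_1 \setminus Z$ iff $L$ does iff $C$ does, so the clean levels and clean columns must be "consistently oriented": if any clean level is on side $1$ and any clean column is on side $2$ they would share a clean vertex forced onto both sides — contradiction. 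Hence (after possibly swapping indices) all clean columns lie on side $1$, say. Then every vertex of $S_2 \setminus Z$ lies either on one of the $\le s$ dirty columns, or... actually it must lie on a dirty level AND a dirty column? Let me re-examine: a vertex of $S_2 \setminus Z$ cannot be on a clean column (those are all on side $1$), so it is on one of the $\le s$ dirty columns; it also cannot be on a clean level (those would then have to be on side $2$, contradicting consistency with the side-$1$ clean columns unless there are no clean levels — handle the degenerate case separately), so it lies on a dirty level too. Since the number of dirty levels and dirty columns are each at most $s$, and a careful accounting gives that the two "dirty counts" sum to at most $s$ (each element of $Z$ can make its level dirty and its column dirty, but the relevant bound is that $|$dirty levels$| + |$dirty columns$| \le$ something like $s$... this needs care), the minority side is contained in the grid cells indexed by (dirty level)$\times$(dirty column), of which there are at most, by AM–GM, $(s/2)^2 = s^2/4$.

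**The main obstacle.**
The delicate point is the exact counting that yields the constant $s^2/4$ rather than $s^2$: naively there could be up to $s$ dirty levels and up to $s$ dirty columns, giving only an $s^2$ bound. The fix is that $Z$ has only $s$ vertices total, and each vertex of $Z$ lies on exactly one level and one column, so if $p$ levels and $q$ columns are "forced dirty by containing a $Z$-vertex" then $p + q \le$ (number of $Z$-vertices) $= s$ — wait, that is not quite right either, since one $Z$-vertex contributes to one level and one column simultaneously, so really $p \le s$ and $q \le s$ independently. The genuinely correct argument must be: the minority side $S_i \setminus Z$ is connected-component-wise confined, and one shows directly that if it is nonempty then it is "enclosed" by $Z$ in the grid, forcing $Z$ to contain a separating cycle-like structure whose two "dimensions" multiply to the area, with the isoperimetric trade-off giving $s^2/4$. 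I would most likely prove it by taking the minority side $S_i \setminus Z$, letting $r$ be the number of distinct levels it meets and $c$ the number of distinct columns it meets, showing $|S_i \setminus Z| \le rc$ trivially, and then showing $r + c \le s$ via a boundary argument (each of the $r$ levels and $c$ columns it touches must also be "exited", contributing a distinct vertex of $Z$ on a suitable frontier path), whence $rc \le s^2/4$ by AM–GM; uniqueness of the small side follows since $s^2/4 < (k\ell)/2$ under $k,\ell \ge s+1$, so at most one side can be this small. Pinning down the frontier argument so that it genuinely yields $r + c \le s$ (and not $2s$) is the step I expect to require the most care.
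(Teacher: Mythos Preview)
The paper states this as an Observation and gives no proof, so there is nothing to compare against. More importantly, the statement as written is false. Take $k=\ell=7$, $s=6$, set $A=\{(1,1),(1,2),(2,1),(2,2)\}$, and let $Z=N(A)=\{(1,3),(1,7),(2,3),(2,7),(3,1),(3,2)\}$. Then $(S_1,S_2)=(V\setminus A,\;A\cup Z)$ is a separation of order $6$ in $\mathcal{C}_{7,7}$ with $|S_2|=10>9=s^2/4$ and $|S_1|=45>9$; neither side meets the bound. (Even more cheaply, for $s\le 3$ the trivial separation $(V,Z)$ with $|Z|=s$ already gives $|S_2|=s>s^2/4$.) So no argument can establish the observation exactly as stated.

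Your proposal runs into precisely this. You oscillate between two pairs of counts: the numbers $p,q$ of levels and columns that meet $Z$, and the numbers $r,c$ of levels and columns that meet $A=S_i\setminus Z$. For the first pair you correctly have $S_i\subseteq(\text{dirty levels})\times(\text{dirty columns})$ and hence $|S_i|\le pq$, but the claimed bound $p+q\le s$ is false (a ``diagonal'' $Z$ gives $p=q=s$). For the second pair you have $|A|\le rc$, but even if your flagged step $r+c\le s$ goes through, it only yields $|S_i\setminus Z|\le s^2/4$, not $|S_i|\le s^2/4$ --- and the counterexample shows the latter cannot be rescued.

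What is straightforward, and what actually suffices for the paper's sole use of the observation (orienting the edges of the tree-decomposition toward a sink whose bag must meet a large $2$-connected subgraph), is the cruder bound $|S_i|\le s^2$: every vertex of $S_i=A\cup Z$ lies on a level meeting $Z$ and on a column meeting $Z$, and there are at most $s$ of each. Uniqueness then follows from $s^2<(s+1)^2\le k\ell$. If you want to sharpen the constant, note that each of the $r$ levels is a \emph{cycle} in which $A\cap R$ and the (nonempty) clean-column part of $R$ are separated by $Z\cap R$, forcing $|Z\cap R|\ge 2$ and hence $r\le s/2$; this gives $|A|\le rc\le s^2/2$ without any SDR/Hall argument. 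Pushing to $s^2/4$ via $r+c\le s$ would require a genuine system-of-distinct-representatives argument that you have not supplied, and in any case would still only bound $|S_i\setminus Z|$.
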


\begin{theorem}\label{lem:gridshavelargeocptw}
    For every positive integer $k$, we have $\ocptw(\mathscr{H}_{2k})\geq k$ and $\ocptw(\mathscr{V}_{2k})\geq k$.
\end{theorem}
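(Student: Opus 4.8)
The plan is to show that any \ocptd of $\mathscr{H}_{2k}$ or $\mathscr{V}_{2k}$ must have width at least $k$ by a standard ``central bag'' argument adapted to the apex-set mechanics of \ocptds. Fix an \ocptd $\mathcal{T}=(T,\beta,\alpha)$ of $\mathscr{H}_{2k}$ (resp.\ $\mathscr{V}_{2k}$) and suppose for contradiction that $\ocpw(\mathcal{T})\leq k-1$. In particular every adhesion set has size at most $k-1$, and for every node $t$ we have $|\alpha(t)|+\ocp(G[\beta(t)\setminus\alpha(t)])\leq k-1$.

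First I would locate a ``heavy'' bag. Since the underlying $H_{2k}$ is a cylindrical $(2k\times 8k)$-grid, \zcref{lem:separationingrid} tells us that every separation of $\mathscr{H}_{2k}$ (resp.\ $\mathscr{V}_{2k}$) of order at most $k-1$ has a side containing all but at most $(k-1)^2/4$ vertices --- note the extra edges added to form the parity handle/vortex do not create new small separations, since they only join vertices of the last cycle $C_{8k}$, which is itself highly connected. Orient each edge $dt\in E(T)$ towards the side of the separation $\partial_G(\beta(d)\cap\beta(t))$ that is ``heavy''; this defines a sink node $t^\star$ (the usual argument: follow the orientation, it cannot cycle in a tree), and $\beta(t^\star)$ then meets essentially all of the grid --- more precisely, for every neighbour $d$ of $t^\star$, the component of $G-(\beta(d)\cap\beta(t^\star))$ on the light side has at most $(k-1)^2/4$ vertices, so all but $\mathbf{O}(k^3)$ vertices of $G$ lie in $\beta(t^\star)$. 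By taking the order of the grid large enough relative to $k$ (which we may, since $2k\times 8k$ is already quadratic in $k$; if not one would increase the grid), $\beta(t^\star)$ contains, say, a full ``sub-annulus'' consisting of many consecutive copies of $C_{8k}$ together with all the parity edges on the last cycle.

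The heart of the argument is then to exhibit, inside $G[\beta(t^\star)\setminus\alpha(t^\star)]$, many vertex-disjoint odd cycles --- more than $k-1-|\alpha(t^\star)|$ of them --- contradicting the width bound. For the parity vortex $\mathscr{V}_{2k}$: the $k$ parity edges $x_{2i-1}x_{2i}$ together with short arcs of $C_{8k}$ and spokes of the grid form $k$ pairwise vertex-disjoint odd cycles, each using exactly one red edge (hence odd) and confined to a bounded neighbourhood of its red edge; deleting the at most $k-1$ vertices of $\beta(t^\star)$ not in $\beta(t^\star)$ together with $\alpha(t^\star)$ destroys at most $|\alpha(t^\star)|+\mathbf{O}(k^3)/(\text{size of a small region})$ --- better: since these $k$ odd cycles can be built inside $k$ disjoint large grid-blocks, even after removing $\alpha(t^\star)$ and the few missing vertices, at least $k-|\alpha(t^\star)|$ of them survive intact, giving $\ocp(G[\beta(t^\star)\setminus\alpha(t^\star)])\geq k-|\alpha(t^\star)|$, whence $|\alpha(t^\star)|+\ocp(\cdots)\geq k$, a contradiction. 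For the parity handle $\mathscr{H}_{2k}$ the situation is slightly different, since the added edges $x_ix_{2k-i+1}$ are ``long'' chords of $C_{8k}$; here I would instead argue that together with arcs of the grid they create $k$ nested odd cycles that can be made disjoint by routing through $k$ disjoint radial bands of the cylinder (one band per chord), each band being a $(2k\times \Theta(1))$-grid piece, again yielding $k$ vertex-disjoint odd cycles robust to the deletion of $\mathbf{O}(k^2)$ vertices.

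The main obstacle I anticipate is the disjointness and robustness bookkeeping for the parity handle: the chords of $\mathscr{H}_{2k}$ are long, so naively the $k$ odd cycles one writes down overlap heavily near the ``middle'' of the last cycle, and one must be careful to reroute them through genuinely disjoint portions of the grid while (a) keeping each cycle odd, i.e.\ using an odd number of red edges, and (b) ensuring enough of them avoid $\alpha(t^\star)$ and the light-side leftovers. The clean way is probably to first argue that $\beta(t^\star)\setminus\alpha(t^\star)$ contains a full copy of $\mathscr{H}_{2k'}$ (resp.\ $\mathscr{V}_{2k'}$) as a subgraph for some $k'$ only slightly smaller than $k$ --- obtained by deleting the $\mathbf{O}(k^3)$ stray vertices and $\alpha(t^\star)$, then contracting, which only hurts the order of the grid by a controlled amount --- and then invoke an elementary bound of the shape ``$\ocp(\mathscr{H}_{2m})\geq m$ and $\ocp(\mathscr{V}_{2m})\geq m$'' directly (these hold since the parity edges already give $m$ disjoint triangles in $\mathscr{V}$, and nested disjoint odd cycles in $\mathscr{H}$). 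Choosing the implicit constants so that $k'\geq k$ after all the losses is then just arithmetic, completing the contradiction.
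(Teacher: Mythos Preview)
Your sink-node argument starts correctly, but the step ``all but $\mathbf{O}(k^3)$ vertices of $G$ lie in $\beta(t^\star)$'' is not justified and is the crux of the problem. The light side of each edge incident to $t^\star$ has at most $(k-1)^2/4$ vertices, but $t^\star$ may have arbitrarily many neighbours in $T$, so summing gives no useful bound on $|V(G)\setminus\beta(t^\star)|$. Even if you first make $T$ binary (which is legitimate for \ocptds by duplicating bags), you get at most $3(k-1)^2/4 \in \Theta(k^2)$ vertices missing from $\beta(t^\star)$, and $|V(\mathscr{H}_{2k})|=16k^2$. After also removing $\alpha(t^\star)$ you have deleted $\Theta(k^2)$ vertices, and your robustness claim---that $k$ disjoint odd cycles survive---does not follow: for $\mathscr{V}_{2k}$ the $2k$ natural odd triangles have only $3$ vertices each and can all be killed by $6k\ll k^2$ deletions, and the ``rerouting through disjoint bands'' fix would need bands of size $\Theta(k)$ each, i.e.\ $\Theta(k^2)$ total, which is exactly the budget of deletions you cannot control. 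Your closing remark ``if not one would increase the grid'' is not available: the statement is about $\mathscr{H}_{2k}$, not $\mathscr{H}_{ck}$ for some larger $c$.

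The missing idea is axiom~\ref{ocp3}. The paper does \emph{not} try to show $\beta(t^\star)$ is large by counting. Instead it first removes only $\alpha(t^\star)$ (fewer than $k$ vertices) from the $2k$ concentric cycles and the $2k$ parity structures, leaving at least $k$ of each; their union $H$ is a $2$-connected even subdivision of $\mathscr{H}_k$ (resp.\ $\mathscr{V}_k$) living in $G-\alpha(t^\star)$. One then checks $|V(H)\cap\beta(t^\star)|\geq 2$ from the sink property, and now \ref{ocp3} forces $V(H)\subseteq\beta(t^\star)$: every vertex of $H$ lies on a path in $H\subseteq G-\alpha(t^\star)$ with both endpoints in $\beta(t^\star)$, and such paths must stay inside $\beta(t^\star)$. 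This yields $\ocp(G[\beta(t^\star)\setminus\alpha(t^\star)])\geq\ocp(H)=k$, the desired contradiction. The doubling from $k$ to $2k$ is used exactly once---to survive the deletion of $|\alpha(t^\star)|<k$ cycles/parity paths---and no further slack is needed.
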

\begin{proof}
    Suppose that the parity handle $\mathscr{H}_{2k}$ has a \ocptd $(T,\beta,\alpha)$ of \ocp-width $s$, where $s<k$.
    To reach a contradiction, we show that there is a bag containing an even subdivision of $\mathscr{H}_k$ as a subgraph.
    
    For $t_1t_2\in E(T)$, let $T_1$ and $T_2$ be components of $T-t_1t_2$ containing $t_1$ and $t_2$ respectively.
    As the adhesion of $(T,\beta)$ is at most $s$, $( \bigcup_{t\in V(T_1)}\beta(t), \bigcup_{t\in V(T_2)}\beta(t) )$ is a separation of order at most $s$.
    The separation $( \bigcup_{t\in V(T_1)}\beta(t), \bigcup_{t\in V(T_2)}\beta(t) )$ is also a separation of the cylindrical grid $\mathcal{C}_{2k,8k}$, since $V(\mathscr{H}_{2k}) = V(\mathcal{C}_{2k,8k})$.
    Thus, by \ref{lem:separationingrid}, there is a unique $i \in \{ 1,2 \}$ such that $|\bigcup_{t\in V(t_i)}\beta(t)| \leq s^2/4$.
    Therefore, we can define an orientation $\vec{T}$ of the edges of $T$ such that $t_1t_2 \in \vec{T}$ if and only if $|\bigcup_{t\in V(t_1)}\beta(t)| \leq s^2/4$.
    Since $T$ is a tree, there is a node $t$ of $\vec{T}$ that has outdegree $0$.
    
    For each parity breaking edge $e$, suppose that $e$ shares an endpoint with $i$-th and $j$-th radial paths.
    Let $Q_e$ be a vertex set that contains vertices in $i,i+1,j,j+1$-th radial paths.
    Since $|\alpha(t)| \leq k'$, there are at least $k$ concentric cycles that does not intersect with $\alpha(t)$, and there are at least $k$ sets $Q_e$ that do not intersect with $\alpha(t)$.
    By taking union of them, we can find an even subdivision of $\mathscr{H}_k$ as a subgraph, say $H$, in $\mathscr{H}_{2k} - \alpha(t)$.

    By the definition of $t$, we have $|\beta(t)\cap V(H)|\geq 2$.
    Since $H$ is $2$-connected, for each $v\in V(H)$, there is a path in $H$ whose endpoints are in $\beta(t)\cap V(H)$.
    By the definition of \ocptd, this implies $V(H)\subseteq \beta(t)$.
    However, this leads to a contradiction since $\ocp(H) = \ocp(\mathscr{H}_{k})=k>s$.
    Therefore, we conclude that $\ocptw(\mathscr{H}_{2k}) \geq k$.

    We omit the arguments for $\mathscr{V}_{2k}$ as they proceed along very similar lines.
\end{proof}

\subsection{Brambles}
Next, we define a bramble notion for \ocp-treewidth, which gives us a helpful conceptualisation of obstructions for having low \ocp-treewidth.
We note that, when comparing our concept to brambles for treewidth, our definition most closely matches that of a \textsl{strong} bramble.

\begin{definition}\label{def:ocpbramble}
    Let $G$ be a graph.
    An \emph{\ocp-bramble of order $k$} is a set $\mathcal{B}\subseteq 2^{V(G)}$ satisfying following properties:
    \begin{enumerate}
        \item for each $B_1,B_2 \in \mathcal{B}$, they intersect each other, i.e. $B_1 \cap B_2 \neq \emptyset$,
        \item for each $B \in \mathcal{B}$, $G[B]$ is a $2$-connected subgraph with $\ocp(G[B]) \geq k$, and
        \item if a set $S \subseteq V(G)$ satisfies $S \cap B \neq \emptyset$ for all $B \in \mathcal{B}$, we have $|S| \geq k$. 
    \end{enumerate}
\end{definition}

As expected of a bramble, there exists a duality theorem between \ocp-treewidth and \ocp-brambles.

\begin{theorem}\label{thm:brambleduality}
    Let $G$ be a graph.
    Then one of the following options holds for every positive integer $k$.
    \begin{enumerate}
        \item If $G$ has an \ocp-bramble of order $k$, then $\ocptw(G)\geq k$.
        \item If $\ocptw(G)> f_{\ref{thm:globalstructure}}(k^2)$, then $G$ has an \ocp-bramble of order at least $k$.
    \end{enumerate}
    In particular, given $G$ with $\ocptw(G)>f_{\ref{thm:globalstructure}}(k^2)$, we can find an \ocp-bramble of order at least $k$ in time $\max(f_{\ref{thm:ktminormodel}}(k^2),2^{\mathbf{poly}(k)})|V(G)|^6$.
\end{theorem}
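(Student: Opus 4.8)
The plan is to establish the two directions separately, the first by a direct separator-chasing argument and the second by contraposition via Theorem~\ref{thm:globalstructure}. For the first direction, suppose $G$ has an \ocp-bramble $\mathcal{B}$ of order $k$ and let $(T,\beta,\alpha)$ be any \ocptd of $G$ of width $w$; I want to show $w\ge k$. The idea mirrors the classical bramble--treewidth duality: for each edge $dt\in E(T)$ the adhesion set $\beta(d)\cap\beta(t)$ separates $G$ into the two sides induced by the two components of $T-dt$, and since $|\beta(d)\cap\beta(t)|\le w$ and $w<k$ would contradict property~(iii) of an \ocp-bramble (the adhesion set cannot be a hitting set for $\mathcal{B}$), every bramble element is contained, up to its intersection with the separator, on one side; orienting each tree edge toward the side that "swallows" the bramble and using that $T$ is a tree yields a node $t$ such that every $B\in\mathcal{B}$ satisfies $B\subseteq\beta(t)\cup(\text{small separators})$—more precisely, using $2$-connectedness of $G[B]$ together with axiom~\ref{ocp3}, any $B$ that is not hit by $\alpha(t)$ and meets $\beta(t)$ in at least two vertices must lie entirely in $\beta(t)$. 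Then one picks a single $B\in\mathcal{B}$ living in $\beta(t)$ and argues that $\alpha(t)$ must hit it (else $\ocp(G[\beta(t)\setminus\alpha(t)])\ge\ocp(G[B])\ge k$, contradicting width $<k$), but $\alpha(t)$ hitting $B$ alone forces $|\alpha(t)|\ge 1$ and iterating over all $B$ through property~(iii) forces $|\alpha(t)|\ge k$. This gives $w\ge k$; I would need to be a little careful routing the $2$-connectivity + \ref{ocp3} argument so that the "small separators" don't let a bramble element escape the bag, but this is the same bookkeeping as in Theorem~\ref{lem:gridshavelargeocptw}.

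For the second direction, I argue the contrapositive of a stronger statement: if $G$ has no \ocp-bramble of order $k$, then $\ocptw(G)\le f_{\ref{thm:globalstructure}}(k^2)$. Here I invoke Theorem~\ref{thm:globalstructure} with parameter $t\coloneqq k^2$: either $G$ contains the parity handle $\mathscr{H}_{k^2}$ or the parity vortex $\mathscr{V}_{k^2}$ as an odd minor, or $G$ admits a tame \ocptd of width at most $f_{\ref{thm:globalstructure}}(k^2)$. In the latter case we are immediately done. In the former case, the key point is that a parity handle or parity vortex of order $k^2$ yields an \ocp-bramble of order $k$ in $G$: by Theorem~\ref{lem:gridshavelargeocptw} these parity grids have \ocp-treewidth at least $k^2/2$, and—following the blueprint used there—one extracts from the large cylindrical-grid substructure a family of vertex sets, each inducing a $2$-connected subgraph with high \ocp, pairwise intersecting, and with no small hitting set, because any separator of order $<k$ in a sufficiently large cylindrical grid leaves almost all of the grid (hence an even subdivision of a parity grid of order $k$, hence \ocp $\ge k$) on one side by Observation~\ref{lem:separationingrid}. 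Concretely, I would let $\mathcal{B}$ consist of vertex sets of all "large balanced pieces" of the parity grid that inherit the parity-breaking edges, sized so that each piece contains an even subdivision of the order-$k$ parity grid and any two pieces overlap in a long concentric annulus. Crucially this must survive passing from the odd minor back to $G$: since each parity-grid vertex is realized by a connected branch set in $G$, the images of these bramble sets are still pairwise intersecting and $2$-connected, and an odd cycle in a branch-set-expansion of a high-\ocp graph still gives high \ocp, while a small hitting set in $G$ would project to a small hitting set in the parity grid.

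For the algorithmic statement, I would run the algorithm from Theorem~\ref{thm:globalstructure} on input $G$ and $k^2$ in time $\max(f_{\ref{thm:ktminormodel}}(k^2),2^{\mathbf{poly}(k)})|V(G)|^6$; if it returns an \ocptd we report that $\ocptw(G)\le f_{\ref{thm:globalstructure}}(k^2)$ (so the hypothesis $\ocptw(G)>f_{\ref{thm:globalstructure}}(k^2)$ was violated), and otherwise it returns a subgraph of $G$ witnessing $\mathscr{H}_{k^2}$ or $\mathscr{V}_{k^2}$ as an odd minor, from which the explicit construction above produces the \ocp-bramble of order $\ge k$ in time polynomial in $|V(G)|$. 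I expect the main obstacle to be the second direction's explicit extraction of a genuine \ocp-bramble (in the precise sense of Definition~\ref{def:ocpbramble}, with the $2$-connectedness clause and the "no small transversal" clause simultaneously) from the parity-grid odd-minor model: making the overlapping annuli wide enough and the pieces numerous enough that property~(iii) holds with the correct constant is exactly where the quadratic loss $k\mapsto k^2$ comes from, and verifying that Observation~\ref{lem:separationingrid}-style arguments go through after pulling back along arbitrary branch sets (rather than in the clean grid) will require the same care as in the proof of Theorem~\ref{lem:gridshavelargeocptw}.
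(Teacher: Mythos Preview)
Your first direction is the paper's argument: orient tree edges toward the side containing a bramble element disjoint from the adhesion, take a sink $t$, and pick $R\in\mathcal{B}$ disjoint from $\alpha(t)$ using property~(iii) and $|\alpha(t)|\le w<k$. Your finish is garbled, though: there is no ``iterate over all $B$ to force $|\alpha(t)|\ge k$''. The paper argues once: if $R\subseteq\beta(t)$ then $\ocp(G[\beta(t)\setminus\alpha(t)])\ge\ocp(G[R])\ge k$, contradicting $w<k$; so $R$ has a vertex outside $\beta(t)$, hence lies across some adhesion $\beta(t)\cap\beta(t')$. The orientation at $tt'$ supplies a second element $R'$ disjoint from that adhesion and contained on the $t$-side; $R\cap R'\ne\emptyset$ forces $R$ to meet both sides of the adhesion, so by $2$-connectedness $|R\cap\beta(t)\cap\beta(t')|\ge 2$. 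Now axiom~\ref{ocp3} applied to paths in $G[R]$ (which avoid $\alpha(t)$) with both ends in $\beta(t)$ forces $R\subseteq\beta(t)$ after all. The contradiction comes from a single $R$.

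For the second direction your call to Theorem~\ref{thm:globalstructure} at $k^2$ is right, but your bramble extraction has a genuine gap and is missing the paper's key trick. Pulling a $2$-connected set back through an odd-minor model does \emph{not} in general give a $2$-connected set in $G$: a branch set can carry pendant vertices, creating cut vertices in the preimage, so your claim ``the images of these bramble sets are still \ldots\ $2$-connected'' is false as stated. The paper avoids pulling back through branch sets altogether. It first deletes edges from $\mathscr{H}_{k^2}$ to obtain a subgraph $W'$ of maximum degree~$3$ (keeping all $k^2$ concentric cycles and $k^2$ pairwise-disjoint zig-zag paths $Q'_1,\dots,Q'_{k^2}$, each containing exactly one parity-breaking edge). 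Since $W'$ has maximum degree~$3$, having $W'$ as an odd minor means $G$ contains an \emph{even subdivision} $W$ of $W'$ as an actual subgraph. The bramble then lives natively in $G$: with $C_1,\dots,C_{k^2}$ and $Q_1,\dots,Q_{k^2}$ the subdivided cycles and paths in $W$, set $B_i^j$ to be the block of $G\bigl[\bigcup_{a\in[k]}V(C_{(i-1)k+a})\cup\bigcup_{b\in[k]}V(Q_{(j-1)k+b})\bigr]$ containing $C_{ik}$, for $i,j\in[k]$. Pairwise intersection follows since each $Q_\ell$ meets every $C_m$; $2$-connectedness is free because each $B_i^j$ is a block by definition; and the hitting-set bound is pigeonhole on the $k$ groups of $k$ disjoint cycles versus the $k$ groups of $k$ disjoint paths --- which is precisely where the quadratic loss $k\mapsto k^2$ you anticipated appears.
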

\begin{proof}
    First, let $G$ be a graph that has an \ocp-bramble $\mathcal{B}$ of order $k$ and an \ocptd $\mathcal{T}=(T,\beta,\alpha)$ of \ocp-width $k'$ where $k'<k$.
    We define an orientation of $T$ as follows.
    
    For $t_1t_2\in E(T)$, let $T_1$ and $T_2$ be components of $T-t_1t_2$ containing $t_1$ and $t_2$ respectively. 
    As $|\beta(t_1)\cap \beta(t_2)|<k$, there is a bramble element $R\in \mathcal{B}$ such that $R\cap (\beta(t_1)\cap \beta(t_2))=\emptyset$.
    Since $R$ induces a connected subgraph and $\beta(t_1)\cap \beta(t_2)$ is a separator in $G$, there is a unique $i\in \{1,2\}$ such that $R\subseteq \bigcup_{s\in V(T_i)} \beta(s)\setminus\beta(t_{3-i})$.
    Furthermore, if $R'\in \mathcal{B}$ is another bramble element such that $R'\cap (\beta(t_1)\cap \beta(t_2))=\emptyset$, then $R\cap R'\neq \emptyset$ implies that $R'\subseteq \bigcup_{s\in V(T_i)} \beta(s)\setminus \beta(t_{3-i})$.
    Therefore, we can define an orientation $\vec{T}$ such that $(t_1,t_2)\in \vec{T}$ if and only if there is a bramble element $R\in \mathcal{B}$ satisfying $R\subseteq \bigcup_{s\in V(T_1)} \beta(s)-\beta(t_{2})$.

    Since $T$ is a tree, there is a node $t_3$ of $\vec{T}$ that has outdegree $0$.
    Because $|\alpha(t)|\leq k'<k$, there is a bramble element $R\in \mathcal{B}$ such that $R\cap \alpha(t)=\emptyset$.
    Then $\ocp(G[R])\geq k$ and $\ocp(G[\beta(t)-\alpha(t)])<k$ implies $R\not \subseteq \beta(t)$.
    Therefore, there is an edge $t_3t_4\in E(T)$ such that $R\cap (\bigcup_{s\in V(T_4)} \beta(s)\setminus \beta(t_3))\neq\emptyset$.
    By the definition of $t_3$, there is a bramble element $R'\in \mathcal{B}$ such that $R'\subseteq \bigcup_{s\in V(T_3)}\setminus \beta(t_4)$.
    Since $R\cap R'\neq \emptyset$ and $R$ is $2$-connected, $|R'\cap (\beta(t_3)\cap \beta(t_4))|\geq 2$.
    Again since $R$ is $2$-connected, for every vertex $r\in R$, there is a path whose endpoints are in $\beta(t_3)\cap \beta(t_4)$ that contains $r$.
    This gives $V(R)\subseteq \beta(t_3)$, which is a contradiction.
    Therefore, we conclude that $\ocptw(G)\geq k$.

    \medskip
    Now suppose that $G$ is a graph with $\ocptw(G)> f_{\ref{thm:globalstructure}}(k^2)$.
    Then, by \zcref{thm:globalstructure}, we can find either $\mathscr{H}_{k^2}$ or $\mathscr{V}_{k^2}$ as an odd minor in time $\max(f_{\ref{thm:ktminormodel}}(k^2),2^{\mathbf{poly}(k)})|V(G)|^6$.

    First suppose that $G$ contains $\mathscr{H}_{k^2}$ as an odd minor.
    Index the vertices of $\mathscr{H}_{k^2}$ by $\{v_i^j\}_{i\in [4k^2],\, j\in [k^2]}$ so that $\{v_i^j\}_{i\in [4k^2]}$ lies on the $j$-th concentric cycle in cyclic order, $\{v_i^j\}_{j\in [k^2]}$ lies on the $i$-th radial path, and for each $i\in [k^2]$, the edge $v_{2i}^{1}v_{4k^2+2-2i}^{1}$ is a parity breaking edge. 
    Let $W'$ be a graph obtained from $\mathscr{H}_{k^2}$ by deleting edges as following: (see \zcref{fig:paritywall})
    $$E(W')=E(\mathscr{H}_{k^2})\setminus \{v_i^jv_i^{j+1}\mid i\in [4k^2],\, j\in [k^2-1],\, i+j\mbox{ is odd} \}$$
    Then $W'$ has maximum degree $3$.
    
    For each $\ell\in [k^2]$, define a path $Q_\ell'$ by 
    \begin{align*}
        E(Q_\ell')=&\{ v^1_{2\ell} v^1_{4k^2+2-2\ell} \}\cup\{v_{2\ell-1}^jv_{2\ell}^j,v^j_{4k^2+1-2\ell}v^j_{4k^2+2-2\ell}\mid j\in [k^2]\}\\
        &\cup \{ v^j_iv^{j+1}_i \mid i\in \{2\ell-1,2\ell,4k^2+1-2\ell,4k^2+2-2\ell\},\, j\in [k^2-1],\,i+j\mbox{ is even} \}
    \end{align*}
    so that each $Q_\ell'$ is a path that contains one parity breaking edge (see \zcref{fig:paritywall}).

\begin{figure}
    \centering
    \begin{tikzpicture}[scale=1]
    \pgfdeclarelayer{background}
    \pgfdeclarelayer{foreground}
    \pgfsetlayers{background,main,foreground}
    \begin{pgfonlayer}{background}
        \pgftext{\includegraphics[width=6cm]{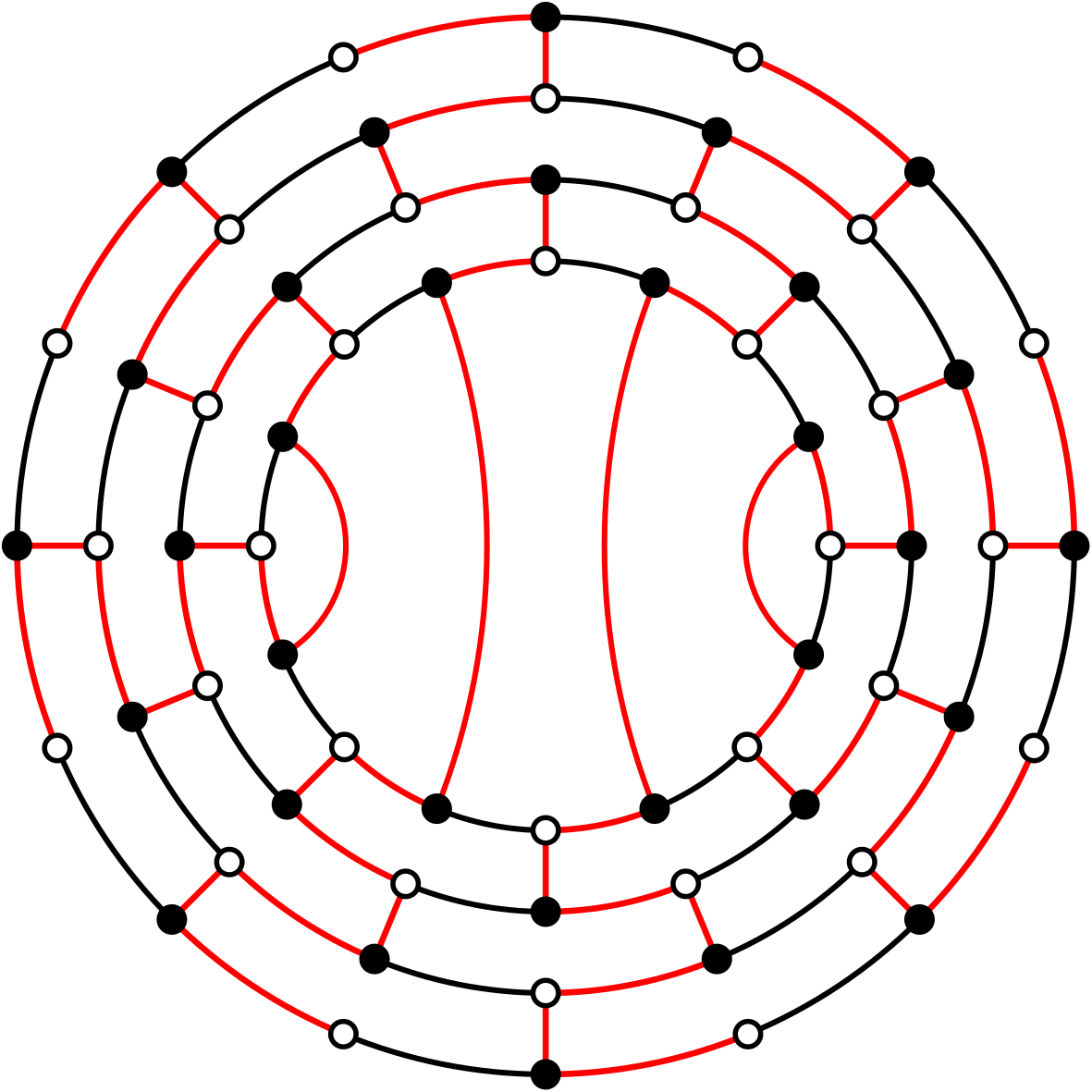}} at (C.center);
    \end{pgfonlayer}{background}
    \begin{pgfonlayer}{main}
    \end{pgfonlayer}{main}
    \begin{pgfonlayer}{foreground}
    \end{pgfonlayer}{foreground}
    \end{tikzpicture}
    \caption{We remove edges of $\mathscr{H}_{k^2}$ to obtain $W'$ whose maximum degree is $3$.
    The paths $Q_\ell'$ are coloured red.}
    \label{fig:paritywall}
\end{figure}

    $G$ contains $\mathscr{H}_{k^2}$ as an odd minor, so $G$ also contains $W'$ as an odd minor.
    Since the maximum degree of $W'$ is $3$, $G$ contains an even subdivision of $W'$ as a subgraph, say $W$.
    Then we can find cycles $C_1,\cdots C_{k^2}$ that corresponds to the subdivision of the first to $k^2$-th concentric cycles of $W'$, and paths $Q_1,\cdots,Q_{k^2}$ that corresponds to the subdivision of $Q_1',\cdots,Q_{k^2}'$.

    Let $\mathcal{B}=\{B_i^j\}_{i\in [k],\,j\in[k]}$ such that each $B_i^j$ is defined as the block of $G[\bigcup_{a\in [k]} V(C_{(i-1)k+a})\cup \bigcup_{b\in [k]}V(Q_{(j-1)k+b})]$ that contains $C_{ik}$.
    We claim that $\mathcal{B}$ is an \ocp-bramble of order $k$.

    First, as each $Q_\ell'$ meets with each concentric cycles, each $Q_\ell$ also meets with each $C_i$.
    Hence if $i\geq i'$, we have $B_{i}^j\cap B_{i'}^{j'}\supseteq V(C_{i'k}) \cap V(Q_{jk})\neq \emptyset $, so each bramble elements intersect each other.
    By the definition of $B_i^j$, they are $2$-connected.
    Lastly, let $H\subseteq V(G)$ be as set of vertices with $|H|<k$.
    Each $C_i$'s are disjoint from each other, so there is some $p\in [k]$ so that $H\cap \bigcup_{a\in [k]} V(C_{(p-1)k+a}) =\emptyset$.
    Also, each $Q_\ell$'s are disjoint from each other, so there is some $q\in [k]$ so that $H\cap \bigcup_{b\in [k]} V(Q_{(q-1)k+b}) =\emptyset$.
    Then $H\cap B_p^q=\emptyset$.
    Therefore, $G$ has an \ocp-bramble of order $k$.

    If $G$ contains $\mathscr{V}_{k^2}$ as an odd minor, we can also similarly obtain an \ocp-bramble of order $k$.
\end{proof}

\section{Dynamic programming on \ocp-tree-decompositions}\label{sec:dynamicprogramming}

\newcommand{\w}{\mathrm{W}}

We solve \mis problem on vertex-weighted graphs $(G,w)$, where $G$ is of bounded \ocp-treewidth.
A \emph{vertex-weighted graph} is a pair $(G,w)$ consisting of a graph $G$ and a function $w: V(G) \rightarrow \mathbb{N}$.  

\begin{theorem}\label{thm:fptalgoformis} (\cite{Fiorini2025Integer})
    There exists a computable function $f\colon \mathbb{N} \rightarrow \mathbb{N}$ such that \mis can be solved in time $|V(G)|^{f(\ocp(G))}$.
\end{theorem}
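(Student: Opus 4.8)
Since the statement is a restatement of the main graph-theoretic result of Fiorini, Joret, Weltge, and Yuditsky \cite{FioriniJWY2021Integer,Fiorini2025Integer}, the proof I would give is to invoke their work directly; for orientation I sketch the route and isolate the step that carries the real weight.

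A natural first move is to reduce to a highly connected setting: a dynamic program over the block--cut tree reduces \mis to the $2$-connected blocks, and splitting along $2$- and $3$-separations reduces further to, say, internally $4$-connected graphs, the only subtlety being to track how the weights and the promise $\ocp(G)\le k$ are inherited by the pieces. The decisive ingredient is then a structural dichotomy: a highly connected graph $G$ with $\ocp(G)\le k$ either (i) admits an odd cycle transversal of size bounded by a function of $k$, or (ii) has, after deleting an apex set of size bounded by a function of $k$, an embedding into a surface of Euler genus bounded by a function of $k$ in which no odd cycle is genus-reducing --- an ``even-faced'' embedding in the terminology of this paper.

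Given the dichotomy the algorithm branches. In case (i) one enumerates the (at most $2^{f(k)}$) independent subsets $S'$ of a fixed odd cycle transversal $S$, and for each deletes $S'$ together with $N(S')$ and the remaining vertices $S\setminus S'$; the residual graph is bipartite, so \mis on it is solvable in polynomial time via K\H{o}nig's theorem and bipartite matching, and one adds $w(S')$ and reports the best outcome. In case (ii) one first branches over the apex set, reducing to an even-faced embedded graph in which the parity of a cycle is a fixed $\mathbb{Z}_2$-linear function of its homology class; on such a graph a maximum-weight independent set is computed by the topological machinery of \cite{Fiorini2025Integer}, which --- through the interplay of independent sets, $T$-joins, and cuts in an auxiliary graph drawn on the surface, together with proximity bounds for integer programs of bounded subdeterminant --- reduces the problem to a bounded family of minimum $T$-join / shortest-path computations, one per relevant homology class.

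The main obstacle is the structural dichotomy itself, together with the algorithmics of case (ii): forcing either a small odd cycle transversal or a bounded-genus even-faced embedding out of the bound $\ocp(G)\le k$ (with usable quantitative bounds) is where surface-structure theory enters, and the enumeration over the topological data --- the $2^{f(k)}$ homology classes, the branching over transversals and apex sets, and the congruence-class enumeration coming from the proximity theorems --- is exactly what pushes the degree of the polynomial up to $f(\ocp(G))$, so that the bound $|V(G)|^{f(\ocp(G))}$ is of \textsf{XP} rather than \textsf{FPT} type. Verifying that every reduction is polynomial-time and respects the promise $\ocp\le k$ up to the allowed slack is routine.
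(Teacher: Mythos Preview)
Your proposal is correct and matches the paper's treatment: the paper states this theorem purely as a citation to \cite{Fiorini2025Integer} and uses it as a black box, with no proof or sketch provided. Your additional orientation sketch of the Fiorini--Joret--Weltge--Yuditsky argument goes beyond what the paper does, and is a reasonable high-level summary of their approach.
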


First, we note that \zcref{thm:globalstructure} allows us to approximate the \ocp-treewidth of a graph and in particular find a tame \ocptd, if the \ocp-treewidth of the graph is low.

\begin{theorem}\label{thm:approxocptw}
    Let $G$ be a graph.
    There exists an algorithm that either determines that $\ocptw(G) > k$ or computes a tame \ocptd $(T,\beta,\alpha)$ for $G$ of width at most $f_{\ref{thm:globalstructure}}(k)$ and with $|V(T)| \in \mathbf{O}(|V(G)|)$ in time $\max(f_{\ref{thm:ktminormodel}}(t),2^{\mathbf{poly}(t)})|V(G)|^6$.
\end{theorem}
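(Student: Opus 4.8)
The plan is to obtain \zcref{thm:approxocptw} as an essentially immediate consequence of the structural result \zcref{thm:globalstructure}, combined with the lower bound \zcref{lem:gridshavelargeocptw} for the parity grids and odd-minor-monotonicity of \ocp-treewidth (\zcref{lemma:OCPtwIsOddMinorMonotone}). On input $(G,k)$, I would run the algorithm promised by \zcref{thm:globalstructure} with the parameter set to $t \coloneqq 2(k+1)$ rather than to $k$ itself. That algorithm runs in time $\max\big(f_{\ref{thm:ktminormodel}}(2(k+1)),\, 2^{\mathbf{poly}(2(k+1))}\big)|V(G)|^6 = \max\big(f_{\ref{thm:ktminormodel}}(\mathbf{O}(k)),\, 2^{\mathbf{poly}(k)}\big)|V(G)|^6$ and returns one of its three outcomes.

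If it returns outcome (1) or (2), i.e.\ a subgraph of $G$ witnessing that $\mathscr{H}_{2(k+1)}$ or $\mathscr{V}_{2(k+1)}$ is an odd minor of $G$, then by \zcref{lem:gridshavelargeocptw} we have $\ocptw(\mathscr{H}_{2(k+1)}) \geq k+1$ and $\ocptw(\mathscr{V}_{2(k+1)}) \geq k+1$, so \zcref{lemma:OCPtwIsOddMinorMonotone} gives $\ocptw(G) \geq k+1 > k$; in this case the procedure reports ``$\ocptw(G) > k$''. Otherwise the algorithm returns outcome (3): a tame \ocptd $(T,\beta,\alpha)$ of $G$ of width at most $f_{\ref{thm:globalstructure}}(2(k+1))$ with $|V(T)| \in \mathbf{O}(|V(G)|)$, which the procedure outputs verbatim; note that tameness is precisely the property guaranteed by \zcref{thm:globalstructure}, so nothing further is needed here.

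Correctness then rests on two observations. First, the report in the grid branch is sound, since outcomes (1) and (2) genuinely force $\ocptw(G) > k$; equivalently, whenever $\ocptw(G) \leq k$ the algorithm of \zcref{thm:globalstructure} cannot land in outcome (1) or (2) and must therefore produce outcome (3), so in that regime the procedure always outputs a tame \ocptd of small width, as required. Second, the width bound has the right shape: since $f_{\ref{thm:globalstructure}}(t) \in \mathbf{O}(t^{3220})$, we have $f_{\ref{thm:globalstructure}}(2(k+1)) \in \mathbf{poly}(k)$, and absorbing the constant-factor blow-up in the argument into the polynomial — keeping the name $f_{\ref{thm:globalstructure}}$ by a harmless abuse — yields width at most $f_{\ref{thm:globalstructure}}(k)$; the running time was computed above and matches the claimed bound (with the black-box function $f_{\ref{thm:ktminormodel}}$ reindexed in the obvious way).

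The only point that requires any care is the parameter bookkeeping just described: one must invoke \zcref{thm:globalstructure} at a parameter \emph{strictly} exceeding $k$ — the convenient choice $2(k+1)$ being tuned to the even-order form of \zcref{lem:gridshavelargeocptw} — so that the grid outcomes certify $\ocptw(G) > k$ rather than merely $\ocptw(G) \geq k$, and one must then fold the resulting polynomial-in-$k$ shift in the parameter into the width function. Beyond this there is no genuine obstacle: all the substance lives in \zcref{thm:globalstructure}, and \zcref{thm:approxocptw} simply repackages it into the parameterized-approximation form needed by the dynamic program developed in the remainder of this section.
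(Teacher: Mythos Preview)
Your proposal is correct and follows essentially the same route as the paper: invoke \zcref{thm:globalstructure}, use \zcref{lem:gridshavelargeocptw} together with \zcref{lemma:OCPtwIsOddMinorMonotone} to turn the parity-grid outcomes into a certificate that $\ocptw(G)>k$, and otherwise output the tame \ocptd. If anything, you are more careful than the paper about the off-by-one: the paper applies the structure theorem at order $2k$ and infers $\ocptw(G)>k$ from $\ocptw(\mathscr{H}_{2k})\geq k$, which is a slip; your choice of $t=2(k+1)$ cleanly yields the strict inequality, and the resulting polynomial reparametrisation of $f_{\ref{thm:globalstructure}}$ is exactly the kind of harmless absorption the paper also performs implicitly.
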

\begin{proof}
    Suppose first that we have $\mathscr{H}_{2k}$ or $\mathscr{V}_{2k}$ as an odd minor in $G$.
    Then, according to \zcref{lemma:OCPtwIsOddMinorMonotone}, the \ocp-treewidth of $G$ is bounded from below by the \ocp-treewidth of these two graphs, which we know to be at least $k$, according to \zcref{lem:gridshavelargeocptw}.
    Thus, we have $\ocptw(G) > k$.

    Therefore, when applying \zcref{thm:globalstructure}, we either find a a witness that tells us that the \ocp-treewidth of $G$ is at least $k$, or we find a tame \ocptd $(T,\beta,\alpha)$ for $G$ with width at most $f_{\ref{thm:globalstructure}}(k)$ and $|V(T)| \in \mathbf{O}(|V(G)|)$ in $\max(f_{\ref{thm:ktminormodel}}(t),2^{\mathbf{poly}(t)})|V(G)|^6$-time.
\end{proof}

We can then immediately use following theorem to solve maximum weight independent set in weighted graphs, provided that their \ocp-treewidth is low.

\begin{theorem}\label{thm:miswithboundedocptw}
    Let $k$ be a positive integer.
    There exists a computable function $f$ and an algorithm that takes as input a vertex-weighted graph $(G, w)$ with $\sum_{v \in V(G)} w(v) \in \mathbf{O}(|V(G)|)$ and either correctly determines that $\ocptw(G)>k$ or computes a maximum weight independent set of $(G,w)$ in time $|V(G)|^{f(k)}$.
\end{theorem}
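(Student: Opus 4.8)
The plan is to feed \zcref{thm:approxocptw} into a dynamic programming argument along a tame \ocptd, using the algorithm of \zcref{thm:fptalgoformis} on each bag as a black box. First, run \zcref{thm:approxocptw} on $G$ with parameter $k$: either it certifies $\ocptw(G)>k$ and we are done, or it returns a tame \ocptd $\mathcal{T}=(T,\beta,\alpha)$ of width at most $w \coloneqq f_{\ref{thm:globalstructure}}(k)$ with $|V(T)|\in\mathbf{O}(|V(G)|)$. Root $T$ arbitrarily; for a node $t$ let $A_t \coloneqq \beta(t)\cap\beta(\mathrm{parent}(t))$ (with $A_r\coloneqq\emptyset$ at the root $r$), and let $G_t$ be the subgraph of $G$ induced by the union of all bags in the subtree below $t$. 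Bottom-up, compute a table $D_t$ assigning to every independent set $I$ of $G[A_t]$ (at most $2^{|A_t|}\le 2^w$ of them) the maximum of $w(J)$ over all independent sets $J$ of $G_t$ with $J\cap A_t=I$. A routine tree-decomposition argument shows $D_r(\emptyset)$ is the weight of a maximum weight independent set of $G$, and that for children $c_1,\dots,c_m$ of $t$,
\[
D_t(I) \;=\; \max_{K}\Bigl( w(K) + \sum_{i=1}^{m}\bigl( D_{c_i}(K\cap A_{c_i}) - w(K\cap A_{c_i}) \bigr)\Bigr),
\]
where $K$ ranges over independent sets of $G[\beta(t)]$ with $K\cap A_t=I$; this uses that the sets $V(G_{c_i})\setminus\beta(t)$ are pairwise non-adjacent and that $V(G_{c_i})\cap\beta(t)=A_{c_i}$.

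The point of using a \emph{tame} decomposition is that every adhesion set incident to $t$ — namely $A_t$ and each $A_{c_i}$ — is contained in $\alpha(t)$ together with at most one extra vertex of $\beta(t)\setminus\alpha(t)$; write $x_{c_i}$ for this vertex (for the child $c_i$) and $x_t$ (for the parent edge), where they exist. This lets us turn the inner maximisation into a bounded-\textsf{OCP} weighted MIS instance. Since $|\alpha(t)|\le w$, branch over all $2^{|\alpha(t)|}\le 2^w$ candidates $I_\alpha$ for $K\cap\alpha(t)$ (keeping only the independent ones agreeing with $I$ on $A_t\cap\alpha(t)$). For a fixed $I_\alpha$, the set $K'\coloneqq K\setminus\alpha(t)$ must be an independent set of $H_t\coloneqq G[\beta(t)\setminus\alpha(t)]-N_G(I_\alpha)$, and $\ocp(H_t)\le\ocp(G[\beta(t)\setminus\alpha(t)])\le w$; the requirement $K\cap A_t=I$ becomes a fixed in/out constraint on the single vertex $x_t$ (if present), which we impose by deleting $x_t$ or deleting its closed neighbourhood in $H_t$. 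Moreover each summand $D_{c_i}(K\cap A_{c_i})-w(K\cap A_{c_i})$ depends only on $I_\alpha$ and on whether $x_{c_i}\in K'$, so it decomposes into an additive constant $B$ (the value with no $x_{c_i}$ placed into $K'$, plus $w(I_\alpha)$) together with, for each $v\in V(H_t)$, an integer weight $\delta(v)$ equal to $w(v)$ plus the net gain of putting $v$ into $K'$, summed over the children $c_i$ with $x_{c_i}=v$. Then $D_t(I)=B+\max_{K'}\sum_{v\in K'}\delta(v)$ over the constrained independent sets $K'$ of $H_t$; vertices with $\delta(v)<0$ that are not forced in may be deleted without affecting the maximum, leaving a non-negative integer-weighted MIS instance of \textsf{OCP} at most $w$, which \zcref{thm:fptalgoformis} solves in time $|V(H_t)|^{f(\ocp(H_t))}\le|V(G)|^{f(w)}$.

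What needs care is the bookkeeping: verifying the recurrence and the reduction to $H_t$ (routine, given the structure of tame \ocptds), and checking that the weights handed to \zcref{thm:fptalgoformis} stay polynomially bounded — this is exactly where the hypothesis $\sum_{v\in V(G)}w(v)\in\mathbf{O}(|V(G)|)$ is used: every entry $D_{c_i}(\cdot)$ is at most the total weight of $G_{c_i}$, the subgraphs $G_{c_i}$ pairwise meet only inside $\beta(t)$, so the $\delta$-values over a single bag again sum to $\mathbf{O}(|V(G)|)$ and remain non-negative integers after the clean-up. Storing a witnessing set at each table entry (or retracing the recursion) outputs the independent set itself, not merely its weight. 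In total there are $\mathbf{O}(|V(G)|)$ nodes, each handled by $2^{\mathbf{O}(w)}$ calls to an $|V(G)|^{f(w)}$-time subroutine, so together with the cost of \zcref{thm:approxocptw} the running time is $|V(G)|^{g(k)}$ for a computable function $g$. The step I expect to be the main obstacle is precisely this reduction of the per-bag optimisation to a single weighted MIS call on $H_t$: making all child contributions collapse onto the single vertices $x_{c_i}$ relies entirely on tameness, and getting the additive constants, the forced-vertex cases, and the magnitude bound on the modified weights exactly right is the delicate part.
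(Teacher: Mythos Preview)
Your proposal is correct and follows essentially the same approach as the paper: run \zcref{thm:approxocptw}, then do bottom-up dynamic programming along the tame \ocptd, where tameness collapses each child's contribution onto a single vertex of $\beta(t)\setminus\alpha(t)$ so that the per-bag optimisation becomes a bounded-\textsf{OCP} weighted MIS instance solved by \zcref{thm:fptalgoformis}. The only cosmetic difference is that the paper encodes the binary ``include $x_{c_i}$ or not'' choice by adding a new degree-$1$ vertex $x_i$ adjacent to $x_{c_i}$ (so both options carry non-negative weight and no negative weights ever arise), whereas you push the difference into a signed weight $\delta(x_{c_i})$ and then discard negative-weight vertices; both encodings are equivalent.
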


A $4$-tuple $\mathcal{T}=(T,r,\beta, \alpha)$ is a \emph{rooted \ocptd} for a graph $G$ if  $(T,\beta, \alpha)$ is an \ocptd, and $r \in V(T)$.
We say $r$ is the \emph{root} of $\mathcal{T}$.
For $t \in V(T)$, we denote by $\mathcal{T}_t$ the rooted \ocptd $(T_t, t, \beta_t, \alpha_t)$, where $T_t$ is the subtree of $T$ rooted at $t$, and $\beta_t$, $\alpha_t$ are the restriction of $\beta$ and $\alpha$ to $T_t$, respectively.
Furthermore, we denote by $(G_t, w)$ the vertex-weighted subgraph of $(G,w)$ induced by $\beta(t)$ and by $(G_{T_t}, w)$ the vertex-weighted subgraph of $(G,w)$ induced by the vertex set $\bigcup_{d \in V(T_t)} \beta(d)$. 

\begin{proof}[Proof of \zcref{thm:miswithboundedocptw}]
    Without loss of generality, we assume that $G$ is connected. If $\ocptw(G) > k$, then the algorithm in \zcref{thm:approxocptw} correctly determines this.
    Otherwise, the algorithm outputs a tame \ocptd $(T,\beta, \alpha)$ of $G$ of \ocp-width $k' \coloneqq f_{\ref{thm:globalstructure}}(k)$ and with $|V(T)| \in \mathbf{O}(|V(G)|)$ in time $\max(f_{\ref{thm:ktminormodel}}(t),2^{\mathbf{poly}(t)})|V(G)|^6$.
    We choose an arbitrary root $r \in V(T)$ and consider $(T,\beta, \alpha)$ as a rooted \ocptd $\mathcal{T}=(T,r,\beta, \alpha)$. For each edge $td \in E(T)$, we set $B_{td} \coloneqq \beta(t) \cap \beta(d)$.
    For the root $r$, we let $P_r \coloneqq \alpha(r)$.
    For each non-root node $t \in V(T) \setminus \{r\}$ and the unique neighbour $d \in V(T)$ of $t$ that is closer to the root in $T$, we let $P_t \coloneqq B_{td} \cup \alpha(t)$.
    Note that we have $|P_t| \leq k'+1$ for all $t \in V(T)$ due to our \ocptd being tame.
    For each subset $X \subseteq P_t$, we store the maximum weight $\w_{t,X}$ of an independent set $\mathcal{I}$ of $(G_{T_t}, w)$ such that $\mathcal{I} \cap P_t = X$ in the dynamic programming table on $\mathcal{T}$.
    If $X$ is not an independent set, we set $\w_{t,X} = -\infty$.

    We now build an auxiliary graph that will allow us to merge the tables.
    Assume that we are given some $t \in V(T)$, its children
    $d_1, d_2, ..., d_\ell$ in $T$, and some $X \subseteq P_t$.
    Furthermore, assume that we have already computed the maximum weight values $\w_{d_i, X'}$ for all $X' \subseteq P_{d_i}$ and for all $i \in [\ell]$.
    Let $(G_{t,X}^+, w_{t,X}^+)$ be the vertex-weighted graph obtained from $(G_t - P_t - N_{G_t}[X],w)$ as follows.
    If $|B_{td_i} \setminus \alpha(t)| = 1$, we let $v_{d_i}$ be the unique vertex that lies in $B_{td_i} \setminus \alpha(t)$.
    For each $i \in [\ell]$, we introduce a new vertex $x_i$.
    If there is $v_{d_i}$, and $v_{d_i} \notin N[X]$, we connect $x_i$ and $v_{d_i}$ in $G_{t,X}^+$ and set
    \[
        w_{t,X}^+(v_{d_i}) \coloneqq \max\left\{\w_{d_i, X'} - \sum_{v \in X \cap B_{td_i}} w(v) : X' \subseteq P_{d_i} \text{ and } X' \cap B_{td_i} = (X \cup \{v_{d_i}\}) \cap B_{td_i}\right\} \text{ and }
    \]
    \[
        w_{t,X}^+(x_i) \coloneqq \max\left\{\w_{d_i, X'} - \sum_{v \in X \cap B_{td_i}} w(v) : X' \subseteq P_{d_i} \text{ and } X' \cap B_{td_i} = X \cap B_{td_i}\right\} .
    \]
    Otherwise,
    $x_i$ is isolated in $G_{t,X}^+$, and we set
    \[
        w_{t,X}^+(x_i) \coloneqq \max\left\{\w_{d_i, X'} - \sum_{v \in X \cap B_{td_i}} w(v) : X' \subseteq P_{d_i} \text{ and } X' \cap B_{td_i} = X \cap B_{td_i}\right\} .
    \]
    
    For all the other vertices $v$ in $\beta(t) \setminus (P_t \cup N_{G_t}[X])$, we set $w_{t,X}^+(v) \coloneqq w(v)$.
    Note that if $\sum_{v \in V(G)} w(v) \in \mathbf{O}(|V(G)|)$, then we have $\sum_{v \in V(G_{t,X}^+)} w_{t,X}^+(v) \in \mathbf{O}(|V(G)|)$.
    Since we did not create any new cycles in $G_{t,X}^+$, we have \ocp$(G_{t,X}^+) \leq k'$.
    Note that we can compute $w_{t,X}^+(v_{d_i})$ and $w_{t,X}^+(x_i)$ for all $i \in [\ell]$ in $\mathbf{O}(\ell2^{k'+1})$ time.
    Hence this auxiliary graph can be built in time $\mathbf{O}(\ell2^{k'+1}|V(G)|)$

    We can find the maximum weight independent set in $G_{t,X}^+$ using the algorithm in \zcref{thm:fptalgoformis} in time $|V(G)|^{f(k')}$.
    This allows us to set $\w_{t,X}$ to be $\sum_{v \in X}w(v)$ plus the maximum weight of an independent set in $(G_{t,X}^+, w_{t,X}^+)$ if $X$ is an independent set, and otherwise we set $\w_{t,X} = -\infty$.
    The tuple $(G_{t,X}^+, w_{t,X}^+)$ can be built and these computation can be carried out for all $X \subseteq P_t$ in time $\ell2^{2(k'+1)}|V(G)|^{f(k') + 1}$.
    We then do this for all $t \in V(T)$, and note that the sum of all $\ell$ for each $t \in V(T)$ is equal to $|E(T)|$.
    Thus we have a total runtime of $2^{2(k'+1)}|V(G)|^{f(k')+2}$.
    We can then return the maximum weight of an independent set of $(G,w) = (G_{T_r},w)$ as $\max\{W_{r,X} : X \subseteq P_r\}$.
 
    We now prove correctness of the above algorithm.
    Fix $t \in V(T)$ and an independent set $X \subseteq P_t$. Let $\mathcal{I}^+$ be a maximum weight independent set of $(G_{t,X}^+, w_{t,X}^+)$.
    We now want to show that $w_{t,X}^+(\mathcal{I}^+) + w(X)$ is the same as the maximum weight of an independent set $\mathcal{I}$ of $(G_{T_t}, w)$ such that $\mathcal{I} \cap P_t = X$, which implies that $\w_{t,X}$ is set to the correct value.

    We first build an independent set in $(G_{T_t}, w)$ from $\mathcal{I}^+$. Let $S$ be the set obtained from $G$ by taking the union of the following.
    \begin{itemize}
        \item $X$,
        
        \item $\mathcal{I}^+ \cap \beta(t)$,

        \item for each $i \in [\ell]$, a maximum weight independent set $S_i$ of $(G_{T_{d_i}},w)$ such that $S_i \cap B_{td_i} = (\mathcal{I}^+ \cup X) \cap B_{td_i}$
    \end{itemize}
    Then $S$ is an independent set in $G_{T_t}$ with $S \cap P_t = X$. Hence, $w(S) \leq w(\mathcal{I})$. Note that the above sets are disjoint except at $(\mathcal{I}^+ \cup X) \cap B_{td_i}$, so
    $$w(S) = w(X) + w(\mathcal{I}^+ \cap \beta(t)) + \sum_{i \in [\ell]} (w(S_i) - w((\mathcal{I}^+ \cup X) \cap B_{td_i})).$$
    Note that for all $i \in [\ell]$, $\mathcal{I}^+ \cap B_{td_i}$ is either empty or equal to $\{v_{d_i}\}$ due to the tameness of our \ocptd.
    If $\mathcal{I}^+ \cap B_{td_i} = \{v_{d_i}\}$, then $w(S_i) - w((\mathcal{I}^+ \cup X) \cap B_{td_i}) = w_{t,X}^+(v_{d_i}) - w(v_{d_i})$.
    If $\mathcal{I}^+ \cap B_{td_i} = \varnothing$, then $w(S_i) - w((\mathcal{I}^+ \cup X) \cap B_{td_i}) = w_{t,X}^+(x_i)$.
    Hence $w(S) = w_{t,X}^+(\mathcal{I}^+) + w(X)$ and we obtain $w^+(\mathcal{I}^+) + w(X) \leq w(\mathcal{I})$.

    For the other direction, we build an independent set in $(G_{t,X}^+, w_{t,X}^+)$ from $\mathcal{I}$.
    Let $S_i^+ \coloneqq \mathcal{I} \cap V(G_{T_{d_i}} - P_t - N_{G_{T_t}}[X])$ and let $S^* \coloneqq V(G_t - P_t - N_{G_{T_t}}[X])$.
    We construct $S^+$ by taking the union of the set $S^*$ and $\{x_i\}$ for each $i\in [\ell]$ with $v_{d_i} \notin S^+_i$.
    Observe that $S^+$ is an independent set of $G_{t,X}^+$, and $w_{t,X}^+(S^+)$ is the same as $w(\mathcal{I}) - w(X)$.
    Then by the choice of $\mathcal{I}^+$, we have $w(\mathcal{I}) \leq w_{t,X}^+(S^+) + w(X) = w_{t,X}^+(\mathcal{I}^+) + w(X)$.
    Therefore, it holds that $w_{t,X}^+(\mathcal{I}^+) + w(X) = w(\mathcal{I})$.
\end{proof}

\section{A toolbox for graph minors}\label{sec:graphMinorPrelims}
Our methods are built upon the literature surrounding the graph minor series.
As such, we will need to introduce many of the key concepts relevant to this area.
Most of the definitions we present here are adaptations of concepts found in \cite{GorskySW2025polynomialboundsgraphminor}, which we sometimes modified to crop them to their most essential features.

The concepts we present here are largely derived through refinement of the definitions in \cite{KawarabayashiTW2018New,KawarabayashiTW2021Quickly} and the Graph Minors Series by Robertson and Seymour.
The interested reader can find more in-depth explanations for these concepts, including numerous illustrations, in \cite{GorskySW2025polynomialboundsgraphminor}.

\subsection{Meshes and Tangles}
Meshes are a generalisation of grids and walls, which enable us to state several theorems more concisely.

\paragraph{Meshes}
    Let $n,m$ be integers with $n,m\geq 2$.
    A \emph{$(n\times m)$-mesh} is a graph $M$ which is the union of paths $M=P_1\cup\cdots\cup P_n\cup Q_1\cup \cdots \cup Q_m$ where
    \begin{itemize}
        \item $P_1,\cdots,P_n$ are pairwise vertex-disjoint, and $Q_1,\cdots,Q_m$ are pairwise vertex-disjoint.
        \item for every $i\in [n]$ and $j\in [m]$, the intersection $P_i\cap Q_j$ induces a path,
        \item each $P_i$ is a $V(Q_1)$-$V(Q_m)$-path intersecting the paths $Q_1,\cdots Q_m$ in the given order, and each $Q_j$ is a $V(P_1)-V(P_m)$-path intersecting the paths $P_1,\cdots, P_h$ in the given order. 
    \end{itemize}
    We say that the paths $P_1,\cdots,P_n$ are the \emph{horizontal paths}, and the paths $Q_1,\cdots,Q_m$ are the \emph{vertical paths}.
    A mesh $M'$ is a \emph{submesh} of a mesh $M$ if every horizontal (vertical) paths of $M'$ is a subpath of a horizontal (vertical) paths $M$, respectively.
    We may write $n$-mesh instead of $(n\times n)$-mesh as a shorthand.
    A mesh $M$ is a \emph{bipartite mesh} if $M$ is a bipartite graph.
    
\begin{figure}[!hb]
    \centering
    \begin{tikzpicture}[scale=1.5]
    \pgfdeclarelayer{background}
    \pgfdeclarelayer{foreground}
    \pgfsetlayers{background,main,foreground}
    \begin{pgfonlayer}{background}
        \pgftext{\includegraphics[width=6cm]{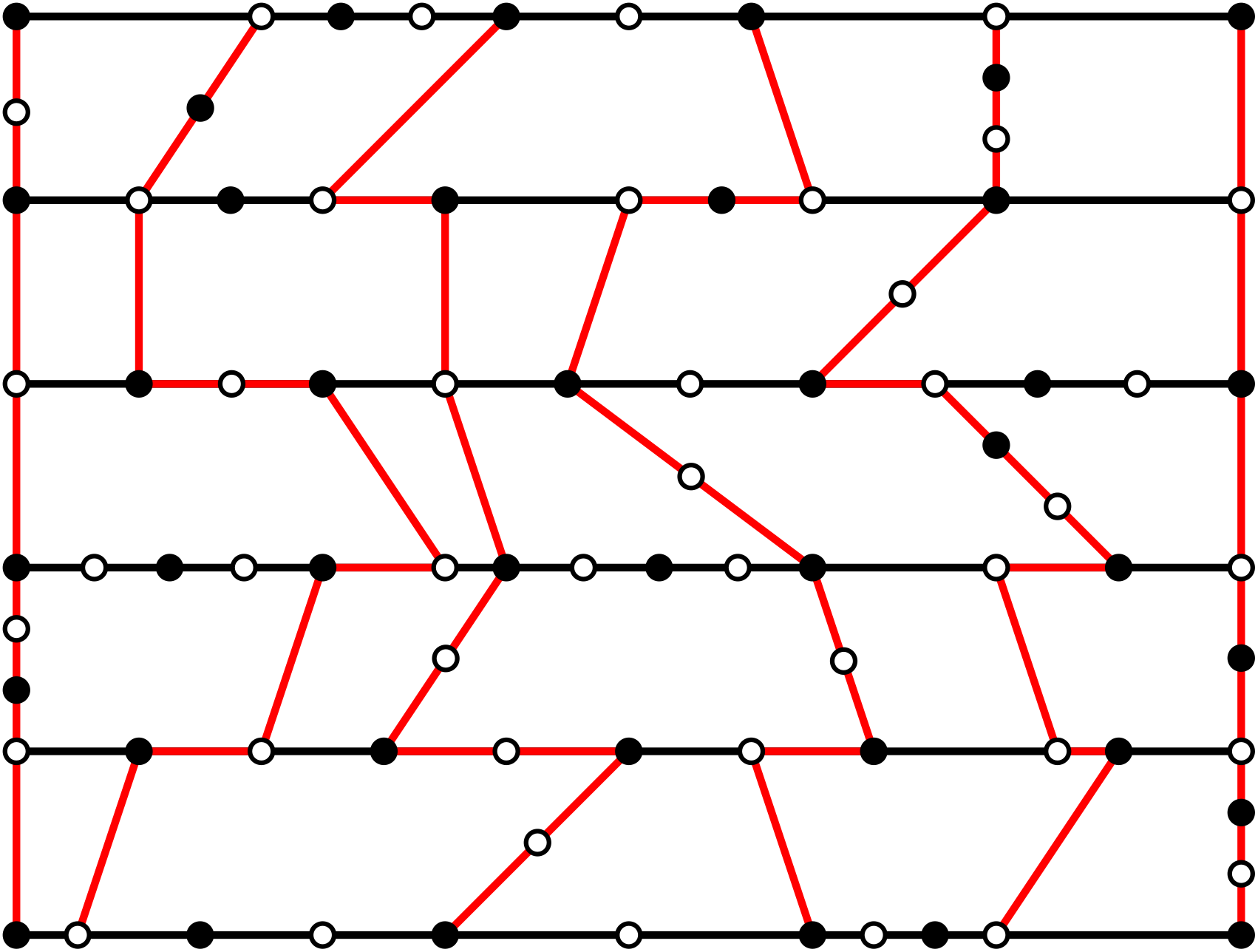}} at (C.center);
    \end{pgfonlayer}{background}
    \begin{pgfonlayer}{main}
        \node[]() at (-3.2,2.2) {$P_1$};
        \node[]() at (-3.2,1.32) {$P_2$};
        \node[]() at (-3.2,0.44) {$P_3$};
        \node[]() at (-3.2,-0.44) {$P_4$};
        \node[]() at (-3.2,-1.32) {$P_5$};
        \node[]() at (-3.2,-2.2) {$P_6$};
        \node[]() at (-2.85,2.5) {$\color{red}{Q_1}$};
        \node[]() at (-1.7,2.5) {$\color{red}{Q_2}$};
        \node[]() at (-0.55,2.5) {$\color{red}{Q_3}$};
        \node[]() at (0.6,2.5) {$\color{red}{Q_4}$};
        \node[]() at (1.75,2.5) {$\color{red}{Q_5}$};
        \node[]() at (2.9,2.5) {$\color{red}{Q_6}$};
    \end{pgfonlayer}{main}
    \begin{pgfonlayer}{foreground}
    \end{pgfonlayer}{foreground}
    \end{tikzpicture}
    \caption{A bipartite $(6\times 6)$-mesh.}
    \label{fig:bipartitemesh}
\end{figure}

\paragraph{Minor models controlled by meshes.}
Let \(G\) be a graph, and let \(M\) be a \((w \times h)\)-mesh in \(G\) with vertical paths \(P_1, \ldots, P_w\) and horizontal paths \(Q_1, \ldots, Q_h\). In this setting, for any set \(Z \subseteq V(G)\) with \(|Z| < \min\{w, h\}\),
at least one vertical path is disjoint from \(Z\), and at least one horizontal path is disjoint from \(Z\).
Since every vertical path intersects every horizontal path, there exists a component of \(G - Z\) containing all vertical and horizontal paths of the mesh that are disjoint from \(Z\).

Let $H$ be a graph and let $\varphi$ be a minor model of $H$ in the graph $G$.
We define three cases where we say that $\varphi$ is \emph{controlled} by $M$ as follows:

First, let \(H\) be $K_t$ with \(t \le \min\{w, h\}\).
In this case, we say that \(\varphi\) is \emph{controlled} by \(M\) if for every set \(Z \subseteq V(G)\) with \(|Z| < t\), and every \(x \in V(H)\), if the branch set \(\varphi(x)\) is disjoint from \(Z\), then it is contained in the same component of \(G - Z\) as the vertical and horizontal paths of \(M\) which are disjoint from \(Z\).
By \hyperref[prop:mengersthm]{Menger's Theorem} this is equivalent to saying that for each \(x \in V(H)\) and each vertical or horizontal path \(P\) of the mesh, there exist \(t\) internally disjoint \(\varphi(x)\)--\(V(P)\) paths \(R_1, \ldots, R_t\) in \(G\).
(We allow \(R_1, \ldots, R_t\) to be the same path of length \(\le 1\), if the sets \(\varphi(x)\) and \(V(P)\) intersect or there exists an edge between them.)

The second case is when $H$ is a universal parity breaking grid $\mathscr{U}_t$ with $2t+1\leq \min\{w,h\}$.
For a vertical or horizontal path $P$ of $\mathcal{U}_t$, let $V_P$ be the union of the branch sets $\bigcup_{x\in V(P)}\varphi(x)$.
We say that $\varphi$ is \emph{controlled} by $M$ if for every set $Z\subseteq V(G)$ with $|Z| < t$, and every vertical or horizontal path $P$ of $\mathcal{U}_t$, if $V_P$ is disjoint from $Z$, then it is contained in the same component of $G-Z$ as the vertical and horizontal paths of $M$ which are disjoint from $Z$.

The last case is when $H$ is a parity handle $\mathcal{H}_t$ with $t\leq \min\{w,h\}$.
For a concentric cycle or radial path $X$ of $\mathcal{H}_t$, let $V_X$ be the union of the branch sets $\bigcup_{x\in V(X)}\varphi(x)$.
We say that $\varphi$ is \emph{controlled} by $M$ if for every set $Z\subseteq V(G)$ with $|Z|<t$, and every vertical or horizontal path $P$ of $\mathcal{U}_t$, if $V_P$ is disjoint from $Z$, then it is contained in the same component of $G-Z$ as the vertical and horizontal paths of $M$ which are disjoint from $Z$.

For the second and the third cases, we can find equivalent condition using \hyperref[prop:mengersthm]{Menger's Theorem} similarly as the first case. 

While walls represent amazing witnesses for large treewidth which impose a lot of additional structure onto a graph, they are also relatively local and concrete in nature and for this reason not always good candidates for working with treewidth in a more abstract way.
Two more abstract alternatives are given by so-called ``highly linked sets'' which are sets that cannot be spread out over several bags of any tree-decomposition of small width, and ``tangles'' which constitute possibly the most abstract and versatile obstruction for treewidth.

\paragraph{The set of separations.}
Let $G$ be a graph and $k$ be a positive integer.
We denote by $\mathcal{S}_k(G)$ the collection of all separations $(A,B)$ of order $< k$ in $G$.

\paragraph{Tangles.}
An \emph{orientation} of $\mathcal{S}_k(G)$ is a set $\mathcal{O}$ such that for all $(A,B)\in\mathcal{S}_k(G)$ exactly one of $(A,B)$ and $(B,A)$ belongs to $\mathcal{O}$. 
A \emph{tangle} of order $k$ in $G$ is an orientation $\mathcal{T}$ of $\mathcal{S}_k(G)$ such that for all $(A_1,B_1),(A_2,B_2),(A_3,B_3)\in\mathcal{T}$, it holds that $G[A_1]\cup G[A_2]\cup G[A_3]\neq G$.
If $\mathcal{T}$ is a tangle and $(A,B)\in\mathcal{T}$ we call $A$ the \emph{small side} and $B$ the \emph{big side} of $(A,B)$.

Let $G$ be a graph and $\mathcal{T}$ and $\mathcal{D}$ be tangles of $G$.
We say that $\mathcal{D}$ is a \emph{truncation} of $\mathcal{T}$ if $\mathcal{D}\subseteq\mathcal{T}$.
\medskip

Let $r \in \mathbb{N}$ with $r\geq 3$, let $G$ be a graph, and $M$ be an $r$-mesh in $G$.
Let $\mathcal{T}_M$ be the orientation of $\mathcal{S}_r$ such that for every $(A,B)\in\mathcal{T}_M$, the set $B\setminus A$ contains the vertex set of both a horizontal and a vertical path of $M$, we call $B$ the \emph{$M$-majority side} of $(A,B)$.
Then $\mathcal{T}_M$ is the tangle \emph{induced} by $M$.
If $\mathcal{T}$ is a tangle in $G$, we say that $\mathcal{T}$ \emph{controls} the mesh $M$ if $\mathcal{T}_M$ is a truncation of $\mathcal{T}$.

Let $G$ and $H$ be graphs as well as $\mathcal{T}$ be a tangle in $G$.
We say that a minor model $\mu$ of $H$ in $G$ is \emph{controlled} by $\mathcal{T}$ if there does not exist a separation $(A,B)\in\mathcal{T}$ of order less than $|V(H)|$ and an $x \in V(H)$ such that $\mu(x)\subseteq A\setminus B$.

Notice that, if a minor model $\mu$ of $H$ in $G$ is controlled by some mesh $M$ in $G$, then $\mu$ is also controlled by $\mathcal{T}_M$.

\paragraph{Large blocks.}
Given a block $B$ of a graph $G$, we note that $B$ naturally defines an orientation $\mathcal{T}_B$ of $\mathcal{S}_2$, by letting the big side of each separation always be the one that contains $V(B)$.
This orientation can be seen to be a tangle of order $1$.

Notice that, if $\mathcal{T}$ is a tangle of order $k\geq 2$ in a graph $G$ and $B$ and $B'$ are distinct blocks of $G$, then at most one of $\mathcal{T}_B$, $\mathcal{T}_{B'}$ can be a truncation of $\mathcal{T}$.
Moreover, there must exist at least one block $B$ of $G$ such that $\mathcal{T}_B\subseteq \mathcal{T}$. 
We say that $B$ is the \emph{large block} of a tangle $\mathcal{T}$ of order $k$ if $\mathcal{T}_B$ is a truncation of $\mathcal{T}$.
\smallskip

Given a tangle $\mathcal{T}$ of order $k+x$ in $G$ and a set $X$ with $|X| = x$, we define the tangle $\mathcal{T}-X$ of order $k$ in $G$ as follows
\[ \mathcal{T} - X \coloneqq \{ (A,B) \in \mathcal{S}_k(G) ~\!\colon\!~ (A \cup X, B \cup X) \in \mathcal{T} \} . \]
If $\varphi$ is a minor model of $K_t$ in $G$, we define an associated tangle $\mathcal{T}_\varphi$ of order $t$ by adding the separation $(A,B) \in \mathcal{S}_t(G)$ to $\mathcal{T}_\varphi$ if at least one branch set of $\varphi$ is contained entirely in $B$.
Note that this condition must hold for either $A$ or $B$ and thus this properly defines a tangle.

This allows us to state the following result on $K_t$-minor models which will be of great use to us.
\begin{proposition}[Geelen, Gerards, Reed, Seymour, and Vetta \cite{GeelenGRSV2009Oddminor,Yamaguchi2016Packing} (Theorem 13)]\label{thm:ktminormodel}
    There is a constant $\mathsf{c}_{\ref{thm:ktminormodel}}$ such that for all $l \geq 1$ if a graph $G$ contains a $K_t$-minor model $\varphi$, where $t = \lceil \mathsf{c}_{\ref{thm:ktminormodel}} l \sqrt{\log 12l} \rceil$, then either $G$ contains an odd $K_l$-minor model, or for some set $A$ of vertices with $|A| \leq 8l$, the large block of $\mathcal{T}_\varphi - A$ is bipartite.

    Moreover, there exists an algorithm that takes $G$ and $\varphi$ as above as input and finds one of the outcomes above in time $\mathbf{O}(f_{\ref{thm:ktminormodel}}(t)|V(G)|^{\omega_{\ref{def:matrixmultconstant}}})$ where $f_{\ref{thm:ktminormodel}}$ is a computable function (see \cite{Yamaguchi2016Packing}).
\end{proposition}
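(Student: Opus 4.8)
We treat \zcref{thm:ktminormodel} as a mild repackaging of the theorem of Geelen, Gerards, Reed, Seymour, and Vetta \cite{GeelenGRSV2009Oddminor}, with the running-time statement supplied by Yamaguchi \cite{Yamaguchi2016Packing}; we only sketch the plan. The guiding idea is that a $K_l$-minor is \emph{odd} precisely when one can $2$-colour the union of its branch sets so that every branch set is properly coloured while every used edge joining two distinct branch sets is monochromatic. So the task is to transform the given $K_t$-minor model $\varphi$ into such a structure of order $l$, or else to extract the obstruction --- a near-bipartite region located by the tangle $\mathcal{T}_\varphi$.

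First I would normalise $\varphi$: by a standard minimality argument pass to a submodel in which each branch set $\varphi(x_i)$ induces a tree $T_i$, and fix the distance-$2$-colouring $\phi_i$ of $T_i$. For every pair $i\neq j$ record $S_{ij}\subseteq\{0,1\}$, the set of parities $\phi_i(u)+\phi_j(v)$ taken over all edges $uv$ of $G$ with $u\in V(T_i)$ and $v\in V(T_j)$. Call the pair \emph{flexible} if $S_{ij}=\{0,1\}$, and \emph{rigid}, with label $\ell_{ij}$, if $S_{ij}=\{\ell_{ij}\}$. This is a $2$-colouring of $E(K_t)$, so its majority colour class has average degree at least $(t-1)/2$; since $t=\lceil\mathsf{c}_{\ref{thm:ktminormodel}}\,l\sqrt{\log 12l}\rceil$ with $\mathsf{c}_{\ref{thm:ktminormodel}}$ chosen large enough, the Kostochka--Thomason bound on clique minors forces this colour class to contain a $K_l$-minor --- this is exactly where the factor $\sqrt{\log 12l}$ enters.

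If the \emph{flexible} graph carries this $K_l$-minor, then contracting each of its minor branch sets along \emph{bichromatic} representative edges keeps every resulting super-branch-set bipartite, while flexibility furnishes a \emph{monochromatic} edge between any two of them; this is an odd $K_l$-minor model and we are done. If instead the \emph{rigid} graph carries the $K_l$-minor, then each minor branch set can be merged into a single bipartite super-set by resigning its factors compatibly along a spanning tree (a tree labelling is always a coboundary); what then remains is that the quotient $\mathbb{Z}_2$-labelling between the $l$ super-sets need not be a coboundary. Deleting a bounded set $A$ of at most $8l$ vertices to absorb the edges witnessing this failure --- and to reconcile the surviving bipartite region with $\mathcal{T}_\varphi$ --- leaves the large block of $\mathcal{T}_\varphi-A$ bipartite, which is the second outcome.

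The step I expect to be the main obstacle is precisely this last one: controlling why only $8l$ deletions suffice to make the rigid $K_l$-minor simultaneously bipartite and aligned with the tangle, which in \cite{GeelenGRSV2009Oddminor} requires a delicate analysis of unbalanced $\mathbb{Z}_2$-labellings on the quotient. For the algorithmic statement one checks that each step is constructive and polynomial for fixed $t$: computing a tree submodel, the colourings $\phi_i$, and the sets $S_{ij}$, and extracting a clique minor from a dense graph, are all polynomial, while testing and repairing $\mathbb{Z}_2$-balance is linear algebra over $\mathbb{F}_2$; assembling these as in \cite{Yamaguchi2016Packing} yields the stated $\mathbf{O}(f_{\ref{thm:ktminormodel}}(t)\,|V(G)|^{\omega_{\ref{def:matrixmultconstant}}})$ bound.
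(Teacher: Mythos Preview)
The paper does not prove this proposition at all: it is imported as a black box from \cite{GeelenGRSV2009Oddminor} (with the algorithmic addendum from \cite{Yamaguchi2016Packing}) and used throughout as a subroutine. There is therefore no ``paper's own proof'' to compare against; your sketch is an attempt to outline the argument of the cited source rather than of the present paper.

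As to the sketch itself: the overall architecture you describe --- normalise branch sets to trees, record the parity pattern $S_{ij}$ of inter-branch edges, apply Kostochka--Thomason to the majority colour class to obtain a $K_l$-minor, and split into flexible/rigid cases --- is in the spirit of \cite{GeelenGRSV2009Oddminor}. However, your handling of the two cases is not quite right. In the flexible case, having $S_{ij}=\{0,1\}$ for the edges of an auxiliary $K_l$-minor does not by itself let you assemble an odd $K_l$: when you merge several original branch sets into one super-branch-set along representative edges, you must simultaneously keep the super-set bipartite \emph{and} retain a monochromatic witness to every other super-set, and these two requirements can conflict (the sign choices along a spanning tree of the auxiliary minor may force the parity of certain external edges). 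The actual argument in \cite{GeelenGRSV2009Oddminor} does not proceed via this flexible/rigid dichotomy; rather, it works with a bipartite $K_t$-minor model directly and analyses when an odd cycle can be threaded through many branch sets. Your acknowledged gap in the rigid case --- why $8l$ deletions suffice --- is indeed where the real work lies, and your sketch does not supply the mechanism (which in \cite{GeelenGRSV2009Oddminor} comes from an Erd\H{o}s--P\'osa-type argument for odd $T$-paths rather than from linear algebra over $\mathbb{F}_2$). So as a proof this is incomplete, but as a citation-with-commentary it matches what the paper itself does.
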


\subsection{Drawings, embeddings, and \texorpdfstring{$\Sigma$}{Sigma}-renditions}\label{sec:renditions}
Modified notions of embeddings in surfaces are at the core of graph minor structure theory.
One of our most notable departures from \cite{GorskySW2025polynomialboundsgraphminor} is the switch from \textsl{$\Sigma$-decompositions} to \textsl{$\Sigma$-renditions}.
The latter simplify the structure of ``cell'' via the idea of paintings as presented in \cite{KawarabayashiTW2018New}.

\paragraph{Surfaces.}
By a \emph{surface} we mean a compact $2$-dimensional manifold with or without boundary.

Given a pair $(\mathsf{h}, \mathsf{c}) \in \mathbb{N} \times [0,2]$ we define $\Sigma^{(\mathsf{h}, \mathsf{c})}$ to be the two-dimensional surface without boundary created from the sphere by adding $\mathsf{h}$ handles and $\mathsf{c}$ crosscaps (see \cite{MoharT2001Graphs} for more details).
If $\mathsf{c} = 0$ the surface $\Sigma^{(\mathsf{h}, \mathsf{c})}$ is an \emph{orientable} surface, otherwise it is called \emph{non-orientable}.
By Dyck's theorem \cite{Dyck1888Beitraege,FrancisW1999Conways}, two crosscaps are equivalent to a handle in the presence of a third crosscap.
Thus the notation $\Sigma^{(\mathsf{h}, \mathsf{c})}$ is sufficient to capture all two-dimensional surfaces without boundary.

If $\Sigma$ is a surface (with holes), we let $\overline{\Sigma}$ be the surface resulting from gluing a closed, respectively open disk onto each open, respectively closed hole of $\Sigma$.
We let the \emph{Euler-genus} of $\Sigma$ be $2\mathsf{h} + \mathsf{c}$, where $\Sigma^{\mathsf{(\mathsf{h},\mathsf{c})}}$ is the surface to which $\overline{\Sigma}$ is isomorphic.
According to a classic theorem by Dyck \cite{Dyck1888Beitraege,FrancisW1999Conways}, the Euler-genus is thus a concrete value for each given surface.

In a given surface $\Sigma$, the \emph{closure} of a set $X \subseteq \Sigma$ is denoted by $\overline{X}$ and we denote the \emph{boundary} of $X$ as $\mathsf{bd}(X)$.
A curve $C$ in a surface $\Sigma$ is called \emph{contractible} if it is simple, closed, and some component of $\Sigma - C$ is a disk whose closure has $C$ as its boundary.
We call a simple, closed curve $\gamma$ in an arc-wise connected surface $\Sigma$ \emph{non-contractible} if it is not contractible, and if $\Sigma - \gamma$ has exactly one more boundary component than $\Sigma$, we call $\gamma$ \emph{one-sided}.\footnote{Equivalently, the deletion of a one-sided curve decreases the Euler-genus of $\Sigma$ by exactly one.}
\medskip

In a first step, we introduce a framework to talk about ``paintings'' of graphs in a surface.
We then extend this to a notion of ``renditions'' of a graph in a surface.
These notions are refinements of notions found in \cite{KawarabayashiTW2018New,KawarabayashiTW2021Quickly,GorskySW2025polynomialboundsgraphminor}, whose origins are further found in the Graph Minors series.
In particular, the notion of a painting draws heavily from the ideas presented in \cite{KawarabayashiTW2018New}.

\paragraph{Paintings in surfaces.}
A \emph{painting} in a surface $\Sigma$ is a pair $\Gamma = (U,N)$, where $N \subseteq U \subseteq \Sigma$, $N$ is finite, $U \setminus N$ has a finite number of arcwise-connected components, called \emph{cells} of $\Gamma$, and for every cell $c$, the closure $\overline{c}$ is a closed disk where $N_\Gamma(c) \coloneqq \overline{c} \cap N \subseteq \mathsf{bd}(\overline{c})$.
If $|N_\Gamma(c)| \geq 4$, the cell $c$ is called a \emph{vortex}.
We further let $N(\Gamma) \coloneqq N$, $U(\Gamma) \coloneqq U$, and let $C(\Gamma)$ be the set of all cells of $\Gamma$.
\medskip

Any given painting $\Gamma = (U,N)$ defines a hypergraph with $N$ as its vertices and the set of closures of the cells of $\Gamma$ as its edges.
Accordingly, we call $N$ the \emph{nodes} of $\Gamma$.

\paragraph{$\Sigma$-renditions.}
Let $G$ be a graph and $\Sigma$ be a surface.
A \emph{$\Sigma$-rendition} of $G$ is a triple $\rho = (\Gamma, \sigma, \pi)$, where
\begin{itemize}
    \item $\Gamma$ is a painting in $\Sigma$,
    \item for each cell $c \in C(\Gamma)$, $\sigma(c)$ is a subgraph of $G$, and
    \item $\pi \colon N(\Gamma) \to V(G)$ is an injection,
\end{itemize}
such that
\begin{description}
    \item[R1] $G = \bigcup_{c \in C(\Gamma)}\sigma(c)$,
    \item[R2] for all distinct $c,c' \in C(\Gamma)$, $\sigma(c)$ and $\sigma(c')$ are edge-disjoint,
    \item[R3] $\pi(N_\Gamma(c)) \subseteq V(\sigma(c))$ for every cell $c \in C(\Gamma)$, and
    \item[R4] for every cell $c \in C(\Gamma)$, $V(\sigma(c) \cap \bigcup_{c' \in C(\Gamma) \setminus \{ c \}} (\sigma(c'))) \subseteq \pi(M_\Gamma(c))$.
\end{description}
We write $N(\rho)$ for the set $N(\Gamma)$, let $N_\rho(c) = N_\Gamma(c)$ for all $c \in C(\Gamma)$, and similarly, we lift the set of cells from $C(\Gamma)$ to $C(\rho)$.
If it is clear from the context which $\rho$ is meant, we will sometimes simply write $N(c)$ instead of $N_\rho(c)$, and if the $\Sigma$-rendition $\rho$ for $G$ is understood from the context, we usually identify the sets $\pi(N(\rho))$ and $N(\rho)$ along $\pi$ for ease of notation.
The number of vortex cells of a $\Sigma$-rendition $\rho$ is called the \emph{breadth} of $\rho$.
\medskip

Let $G$ be a graph, let $\Sigma$ be a surface, let $H \subseteq G$ be a subgraph, and let $\rho = (\Gamma,\sigma,\pi)$ be a $\Sigma$-rendition of $G$.
Then we define $\sigma'$ and $\Gamma'$ as follows.
Initialise $U'$ as $U$.
For every cell $c \in C(\rho)$, set $\sigma'(c) = H \cap \sigma(c)$, and if $\sigma_\rho(c) \cap H \subseteq N_\rho(c)$, then $c$ is discarded from $U'$.
Similarly, let $N' = N(\Gamma) \cap V(H)$.
This allows us to set $\Gamma' = (U',N')$.
We then call $\rho' = (\Gamma', \sigma', \pi\restrict{N(\Gamma')})$ the \emph{restriction of $\rho$ to $H$}.
If the restriction of two $\Sigma$-renditions to $H$ is isomorphic, we say that they \emph{agree on the rendition of $H$}.
Furthermore, for any vortex $c$ of $\rho$ we let $G - [\sigma(c)]$ be defined as the graph $(G - (V(\sigma(c)) \setminus N(c))) - E(\sigma(c))$, which informally is the graph in which we delete all of the insides of $c$ but leave its boundary vertices intact.

\paragraph{Societies.}
Let $\Omega$ be a cyclic ordering of the elements of some set which we denote by $V(\Omega)$.
A \emph{society} is a pair $(G,\Omega),$ where $G$ is a graph and $\Omega$ is a cyclic ordering with $V(\Omega)\subseteq V(G)$.
A \emph{cross} in a society $(G,\Omega)$ is a pair $(P_1,P_2)$ of vertex-disjoint paths in $G$ such that for both $i \in [2]$ the path $P_i$ has the endpoints $s_i,t_i\in V(\Omega)$ and is otherwise disjoint from $V(\Omega)$, and the vertices $s_1,s_2,t_1,t_2$ occur in $\Omega$ in the order listed.

Let $(G, \Omega)$ be a society.
For a given set $S \subseteq V(\Omega)$ a vertex $s \in S$ is an \emph{endpoint} of $S$ if there exists a vertex $t \in V(\Omega) \setminus S$ that immediately precedes or succeeds $s$ in $\Omega$.
We call $S$ a \emph{segment} of $\Omega$ if $S$ has two or less endpoints.
For $s,t \in V(\Omega)$ we denote by $s\Omega t$ the uniquely determined segment with the first vertex $s$ and the last vertex $t$ according to $\Omega$.

\paragraph{Renditions of societies.}
Let $(G,\Omega)$ be a society, and let $\Sigma$ be a surface with one boundary component $B$ homeomorphic to the unit circle.
A \emph{rendition} of $G$ in $\Sigma$ is a $\Sigma$-rendition $\rho$ of $G$ such that the image under $\pi_{\rho}$ of $N(\rho) \cap B$ is $V(\Omega)$ and $\Omega$ is one of the two cyclic orderings of $V(\Omega)$ defined by the way the points of $\pi_{\rho}(V(\Omega))$ are arranged in the boundary $B$.

\begin{proposition}[Two Paths Theorem \cite{Jung1970Verallgemeinerung,Seymour1980Disjoint,Shiloach1980Polynomial,Thomassen19802Linked}]\label{prop:TwoPaths}
A society $(G,\Omega)$ has no cross if and only if it has a vortex-free rendition in a disk.
\end{proposition}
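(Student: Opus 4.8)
The plan is to prove the two implications separately: the forward direction is a short topological argument, while the converse is an induction that ultimately rests on the classical $2$-linkage structure theorem behind \cite{Jung1970Verallgemeinerung,Seymour1980Disjoint,Shiloach1980Polynomial,Thomassen19802Linked}.

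For the easy direction, suppose $(G,\Omega)$ has a vortex-free rendition $\rho=(\Gamma,\sigma,\pi)$ in a disk $\Delta$ and, towards a contradiction, that $(G,\Omega)$ has a cross $(P_1,P_2)$ whose terminals $s_1,s_2,t_1,t_2$ occur in this order in $\Omega$, hence in this order on $\mathsf{bd}(\Delta)$. I would replace each $P_i$ by a curve $\gamma_i$ in $\Delta$ as follows: whenever a maximal subpath of $P_i$ lies inside a single cell $c$, its two endpoints are nodes of $N_\rho(c)$ (by \textbf{R4}, $\sigma(c)$ meets the rest of $G$ only in $\pi(N_\rho(c))$), and I join them by an arc inside $\overline c$; concatenating these arcs yields a curve $\gamma_i$ from $s_i$ to $t_i$. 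Since $P_1,P_2$ are vertex-disjoint, $\gamma_1$ and $\gamma_2$ can only cross in the interior of a common cell $c$, and that forces each of $P_1,P_2$ to traverse $c$ between two distinct nodes, whence $|N_\rho(c)|\ge 4$ — contradicting vortex-freeness. Hence $\gamma_1,\gamma_2$ are disjoint curves in $\Delta$ joining the interleaved pairs $\{s_1,t_1\}$ and $\{s_2,t_2\}$ on $\mathsf{bd}(\Delta)$, which is impossible by the Jordan curve theorem.

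For the converse, assume $(G,\Omega)$ has no cross; I would build a vortex-free disk rendition by induction on $|V(G)|+|E(G)|$. If $|V(\Omega)|\le 3$ or $|V(G)|\le 3$, a single cell carrying all of $G$ with $\pi(N)=V(\Omega)$ on $\mathsf{bd}(\Delta)$ already works, so assume otherwise. First I would reduce to $G$ connected: a component disjoint from $V(\Omega)$ goes into its own node-free cell, while the no-cross hypothesis forces the $V(\Omega)$-footprints of two components meeting $V(\Omega)$ to be non-crossing segments of $\Omega$, so their inductively obtained renditions fit into disjoint sub-disks. A vertex $v\notin V(\Omega)$ of degree at most two is suppressed or deleted and reinserted in the appropriate cell of the rendition returned by induction. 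For a separation $(A,B)$ of order at most three with $V(\Omega)\subseteq B$ and $A\setminus B\ne\varnothing$, I would apply induction to the society obtained from $G[B]$ with $A\cap B$ turned into a clique (a cross using the new edges would lift to a cross of $(G,\Omega)$ routed through $A$), arrange for $A\cap B$ to lie on the boundary of one cell, and glue in an inductively obtained rendition of $G[A]$ there; separations that split $V(\Omega)$ across both sides are handled by cutting $\Omega$ into the corresponding arcs. After all such reductions $G$ is essentially $4$-connected, and here I would invoke the characterisation underlying the cited references: a $4$-connected graph in which some fixed interleaved $4$-tuple of terminals cannot be linked by two disjoint paths is planar with those terminals on a common face in the prescribed cyclic order. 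Since $(G,\Omega)$ has \emph{no} cross at all, letting the $4$-tuple range over $V(\Omega)$ together with the uniqueness of the embedding of a $4$-connected planar graph forces $G$ to be planar with all of $V(\Omega)$ incident with a single face in the order $\Omega$; drawing $G$ in a closed disk with that face as the outer region and taking the faces of the drawing as the cells of a painting (no vortices, since this is a genuine embedding) gives the required rendition.

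The hard part will be the separations of order exactly three. The topological direction and the disconnectedness, low-degree, and $\le 2$-separation reductions are routine, but I expect the real work to be in showing that an inductively produced rendition of $G[B]$ can always be massaged so that a prescribed $3$-element separator $A\cap B$ ends up on the boundary of a single cell — so that the torso of $A$ can be pasted in without creating a vortex — all while preserving the cyclic order on $V(\Omega)$. This is precisely the bookkeeping that the rendition formalism is designed to absorb (as opposed to a bare planar embedding), and carrying it out carefully is the crux of the proof.
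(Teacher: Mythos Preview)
The paper does not prove this proposition; it is quoted as a classical result with citations to \cite{Jung1970Verallgemeinerung,Seymour1980Disjoint,Shiloach1980Polynomial,Thomassen19802Linked}, followed only by the remark that the outcomes can be found in linear time. So there is no ``paper's own proof'' to compare against.

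Your outline is the standard route to this equivalence and is broadly sound. The forward direction is fine. For the converse, your reductions (components, low degree, small separations) and the final appeal to the $4$-connected $2$-linkage characterisation are exactly how this is done. You correctly identify the one genuinely delicate point --- arranging, after an order-$3$ separation, that the separator $A\cap B$ sits on the boundary of a single cell so the $A$-side can be glued in without creating a vortex --- and then you stop short of carrying it out. That step is not deep but does require care (one typically adds a triangle on $A\cap B$ to the $B$-side, notes that a cross in the augmented society would lift to a cross in $(G,\Omega)$ through $A$, gets a rendition by induction, and then observes that in any vortex-free disk rendition a triangle is contained in a single cell or can be made so). As written your proposal is a correct plan with the crux left as an exercise; since the paper itself offers only a citation, you are already doing more than it does.
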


In particular these outcomes can be found in linear time \cite{KawarabayashiLR2015Connectivity}.

\paragraph{Transactions and their types.}
Given a society $(G,\Omega)$, with $A,B$ being two disjoint segments of $\Omega$, a \emph{transaction} $\mathcal{P}$ in $(G,\Omega)$ is an $A$-$B$-linkage in $G$.
The inclusion-wise minimal segments $X$ and $Y$ of $\Omega$ for which $\mathcal{P}$ is an $X$-$Y$-linkage are called the \emph{end segments of $\mathcal{P}$ in $(G,\Omega)$}.
\medskip

If $(G, \Omega)$ is a society with a $\Sigma$-rendition $\rho$ and $\mathcal{P}$ is a transaction in $(G,\Omega)$, we call $\mathcal{P}$ \emph{exposed} if for every path $P \in \mathcal{P}$ there exists a vortex $c \in C(\rho)$ and an edge $e \in E(P) \cap \sigma(c)$.
On the other hand, if for every path $P \in \mathcal{P}$ we have $E(P) \cap E(\sigma(c)) = \emptyset$ for all vortices $c \in C(\rho)$, we call $\mathcal{P}$ \emph{unexposed}.
A transaction may be neither exposed nor unexposed.
But at least half of any transaction will fall into one of these two categories.
\medskip

Let \(\mathcal{P}\) be a transaction in a society \((G, \Omega)\) with the end segments \(X\) and \(Y\).
Suppose that the members of \(\mathcal{P}\) can be enumerated as \(P_1, \ldots, P_n\) so that if \(x_i \in X\) and \(y_i \in Y\) denote the endpoints of \(P_i\), then the vertices \(x_1, \ldots, x_n\) appear in the segment \(X\) in the listed order or the reverse one, and the vertices \(y_1, \ldots, y_n\) appear in \(Y\) in the listed order or the reverse one. Then we say that \(\mathcal{P}\) is \emph{monotone}, and if \(P_1, \ldots, P_n\) are ordered as above, they are \emph{indexed naturally}.

Should the vertices \(x_1, \ldots, x_n, y_n, \ldots, y_1\) appear in \(\Omega\) in the listed cyclic ordering or its reverse, we call \(\mathcal{P}\) a \emph{planar transaction}, and if the vertices \(x_1, \ldots, x_n, y_1, \ldots, y_n\) appear in \(\Omega\) in the listed cyclic ordering or its reverse, we call \(\mathcal{P}\) a \emph{crosscap transaction}.
The paths \(P_1\) and \(P_n\) are called the \emph{boundary paths} of \(\mathcal{P}\).

Let $H$ be a subgraph of a graph $G$.
An \emph{$H$-bridge} in $G$ is a connected subgraph $B$ of $G$ such that $E(B) \cap E(H) = \emptyset$ and either $E(B)$ consists of a unique edge with both ends in $H$, or 
$B$ is constructed from a component $C$ of $G - V(H)$ and the non-empty set of edges $F \subseteq E(G)$ with one end in $V(C)$ and the other in $V(H)$, by taking the union of $C$, the endpoints of the edges in $F$, and $F$ itself.
The vertices in $V(B) \cap V(H)$ are called the \emph{attachments} of $B$.

We let $H$ denote the subgraph of $G$ obtained from the union of elements of $\mathcal{P}$ by adding the elements of $V(\Omega)$ as isolated vertices.
Further, we define $H'$ as the subgraph of $H$ consisting of $\mathcal{P}$ and all vertices of $X \cup Y$.
Consider all $H$-bridges of $G$ with at least one attachment in $V(H') \setminus V(P_1 \cup P_n)$, and for each such $H$-bridge $B$, let $B'$ denote the graph obtained from $B$ by deleting all attachments that do not belong to $V(H')$.
We let $G_1$ denote the union of $H'$ and all graphs $B'$ as above and call $G_1$ the \emph{strip (of $\mathcal{P}$)}.

The \emph{\(\mathcal{P}\)-strip society in \((G, \Omega)\)} is defined as the society \((G_1, \Omega_1)\), where \(\Omega_1\) is the concatenation of the segment \(X\) ordered from \(x_1\) to \(x_n\), and the segment \(Y\) ordered from \(y_n\) to \(y_1\).
If the \(\mathcal{P}\)-strip society admits a vortex-free $\Delta$-rendition in a disk $\Delta$, we call \(\mathcal{P}\) a \emph{flat transaction}.
Further, if no edge of $G$ has an endpoint in $V(G_1) \setminus V(P_1 \cup P_n)$ and the other point $V(G) \setminus V(G_1)$, then we call $\mathcal{P}$ \emph{isolated}.
Let $X',Y'$ be the two distinct segments of $\Omega$ that have one endpoint in $X$ and the other in $Y$.
Note that $V(\Omega) = X \cup Y \cup X' \cup Y'$.
We say that \(\mathcal{P}\) is \emph{separating} if it is isolated and there exists no $X'$-$Y'$-path in $G - V(G_1)$.

Monotone transactions are easy to find using a classic theorem by Erd\H{o}s and Szekeres \cite{ErdosS1935Combinatorial}.

\begin{lemma}\label{lem:monotonetransaction}
    Let $p,q$ be a positive integers, let $(G,\Omega)$ be a society, and let $\mathcal{P}$ be a transaction of order $(p-1)(q-1) + 1$.
    Then there exists a planar transaction $\mathcal{P}' \subseteq \mathcal{P}$ of order $p$ or a crosscap transaction $\mathcal{Q} \subseteq \mathcal{P}$ of order $q$, which can be found in $\mathbf{O}(pq|V(G)|)$-time.
\end{lemma}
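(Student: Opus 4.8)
The plan is to reduce this to the classical Erdős–Szekeres theorem on monotone subsequences, as the reference to \cite{ErdosS1935Combinatorial} suggests. First I would enumerate the members of $\mathcal{P}$ as $P_1,\dots,P_n$ with $n=(p-1)(q-1)+1$ so that, writing $x_i$ and $y_i$ for the endpoints of $P_i$ in the end segments $X$ and $Y$ respectively, the vertices $x_1,\dots,x_n$ occur along $X$ in this order (with respect to a fixed orientation of $\Omega$). Since $\mathcal{P}$ is a linkage these $2n$ endpoints are pairwise distinct, and all $y_i$ lie on $Y$; recording the order of $y_1,\dots,y_n$ along $Y$ in the same orientation produces a sequence $b_1,\dots,b_n$ which is a permutation of $[n]$.

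Next I would apply the Erdős–Szekeres theorem in the form: a sequence of $(p-1)(q-1)+1$ pairwise distinct reals contains a decreasing subsequence of length $p$ or an increasing subsequence of length $q$. This gives indices $i_1<\dots<i_m$ along which $(b_{i_j})_j$ is monotone, with $m=p$ in the decreasing case and $m=q$ in the increasing case. Unwinding the definitions, if $(b_{i_j})_j$ is decreasing then reading off the cyclic order of the relevant endpoints in $\Omega$ we see $x_{i_1},\dots,x_{i_p}$ followed by $y_{i_p},\dots,y_{i_1}$, so $\mathcal{P}'\coloneqq\{P_{i_1},\dots,P_{i_p}\}$ realises the planar pattern; symmetrically, an increasing subsequence yields $x_{i_1},\dots,x_{i_q},y_{i_1},\dots,y_{i_q}$ in $\Omega$, i.e.\ the crosscap pattern for $\mathcal{Q}\coloneqq\{P_{i_1},\dots,P_{i_q}\}$. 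In both cases one checks that the chosen subfamily is again a transaction: its endpoints still lie in $X$ and in $Y$, so its end segments are subsegments of $X$ and $Y$ and are in particular disjoint, and the required cyclic-ordering condition on the endpoints is simply inherited from $\Omega$.

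For the running time, identifying the endpoints of the $n\le pq$ paths and reading their orders along $X$ and $Y$ takes $\mathbf{O}(|V(G)|)$ time by a constant number of passes over $G$ and over the cyclic ordering $\Omega$ (which has at most $|V(G)|$ elements), and the monotone subsequence can be extracted by the standard dynamic program in $\mathbf{O}(n^2)$ time; since the paths of $\mathcal{P}$ are vertex-disjoint we have $n\le|V(G)|$ and also $n\le pq$, hence $n^2\le pq\,|V(G)|$ and the total is $\mathbf{O}(pq\,|V(G)|)$. I do not expect a genuine obstacle here: the only points requiring care are the orientation bookkeeping that matches ``decreasing subsequence'' to the planar cyclic pattern $x_1,\dots,x_n,y_n,\dots,y_1$ and ``increasing subsequence'' to the crosscap pattern $x_1,\dots,x_n,y_1,\dots,y_n$ (the degenerate cases $p=1$ or $q=1$ being immediate, since a single path vacuously satisfies both patterns), together with the routine verification that restricting a transaction to a subfamily whose endpoints remain in the original end segments preserves the transaction structure.
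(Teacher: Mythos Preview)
Your proposal is correct and matches the paper's approach: the paper does not give a detailed proof but simply remarks that monotone transactions are ``easy to find using a classic theorem by Erd\H{o}s and Szekeres,'' which is exactly the reduction you carry out. Your handling of the orientation bookkeeping and the running-time estimate are both sound.
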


A transaction $\mathcal{P}$ of order $2n$ in a society $(G, \Omega)$, for a positive integer $n$, is called a \emph{handle transaction} if $\mathcal{P}$ can be partitioned into two transactions $\mathcal{R}, \mathcal{Q}$ each of order $n$, such that both are planar, and $S^\mathcal{R}_1, S^\mathcal{Q}_1, S^\mathcal{R}_2, S^\mathcal{Q}_2$ are segments partitioning $\Omega$, with $\mathcal{X}$ being a $S^\mathcal{X}_1$-$S^\mathcal{X}_2$-linkage for both $\mathcal{X} \in \{ \mathcal{R} , \mathcal{Q} \}$, and the segments are found on $\Omega$ in the order they were listed above.
We call a handle transaction $\mathcal{P} = \mathcal{R} \cup \mathcal{Q}$ \emph{isolated in $G$}, respectively \emph{flat in $G$}, if both the $\mathcal{R}$-strip society and the $\mathcal{Q}$-strip society of $(G, \Omega)$ are isolated and flat in $G$.

\subsection{Traces, grounded subgraphs, and \texorpdfstring{$c_0$}{c₀}-disks}\label{sec:traces}
Let $\rho$ be a $\Sigma$-rendition of a society $(G,\Omega)$.
For every cell $c \in C(\rho)$ with $|N_\rho(c)| = 2$, we select one of the components of $\mathsf{boundary}(c) - N_\rho(c)$.
This selection will be called a \emph{tie-breaker in $\rho$}, and we assume that every rendition comes equipped with a tie-breaker.

\paragraph{Traces of paths and cycles.}
Let $G$ be a graph and $\rho$ be a $\Sigma$-rendition for $G$.
Let $Q$ be a cycle or path in $G$ that uses no edge of $\sigma(c)$ for every vortex $c \in C(\rho)$.
We say that $Q$ is \emph{grounded} if it uses edges of $\sigma(c_1)$ and $\sigma(c_2)$ for two distinct cells $c_1, c_2 \in C(\rho)$, or $Q$ is a path with both endpoints in $N(\rho)$.
If $Q$ is grounded we define the \emph{trace} of $Q$ as follows.
Let $P_1,\dots,P_k$ be distinct maximal subpaths of $Q$ such that $P_i$ is a subgraph of $\sigma(c)$ for some cell $c$.
Fix $i \in [k]$.
The maximality of $P_i$ implies that its endpoints are $\pi(n_1)$ and $\pi(n_2)$ for distinct nodes $n_1,n_2 \in N(\rho)$.
If $|N_\rho(c)| = 2$, let $L_i$ be the component of $\mathsf{boundary}(c) - \{ n_1,n_2 \}$ selected by the tie-breaker, and if $|N_\rho(c)| = 3$, let $L_i$ be the component of $\mathsf{boundary}(c) - \{ n_1,n_2 \}$ that is disjoint from $N_\rho(c)$.
Finally, we define $L_i'$ by pushing $L_i$ slightly so that it is disjoint from all cells in $C(\rho)$.
We construct these curves while maintaining that they intersect only at a common endpoint.
The \emph{trace} of $Q$ is defined to be $\bigcup_{i\in[k]} L_i'$.
The trace of a cycle is thus the homeomorphic image of the unit circle, and the trace of a path is an arc in $\Sigma$ with both endpoints in $N(\rho)$.

\paragraph{Aligned disks and grounded subgraphs.}
Let $G$ be a graph and let $\rho = (\Gamma, \sigma, \pi)$ be a $\Sigma$-rendition of $G$.
Furthermore, a disk in $\Sigma$ is called \emph{$\rho$-aligned} if its boundary only intersects $\Gamma$ in nodes.
For any $\rho$-aligned disk $\Delta$, we call the subgraph of $G$ that is drawn by $\Gamma$ and $\sigma$ onto $\Delta$ the \emph{crop of $H$ by $\Delta$ (in $\rho$)}.
Furthermore, the \emph{restriction $\rho'$ of $\rho$ to $\Delta$} is defined as the $\Delta$-rendition that consists of the restriction of both $\Gamma$, $\sigma$, and $\pi$ to $\Delta$.
Let $V(\Omega_{\Delta}) = \mathsf{bd}(\Delta) \cap N(\rho)$ and let $\Omega_{\Delta}$ be the cyclic ordering of $V(\Omega_{\Delta})$ obtained by traversing along $\mathsf{bd}(\Delta)$ in the anticlockwise direction.
Further, let $G_{\Delta}$ be the crop of $G$ by $\Delta$.
Then $(G_{\Delta}, \Omega_{\Delta})$ is the \emph{$\Delta$-society (in $\rho$)}.
Given a $\Delta$-society in $\rho$, we also call $\rho'$ the \emph{restriction of $\rho$ to $(G_\Delta, \Omega_\Delta)$} and vice versa $(G_\Delta, \Omega_\Delta)$ is the society associated with $\rho'$.

Let $C \subseteq G$ be a grounded cycle with the trace $T$ in $\rho$.
If $T$ is the boundary of a unique disk $\Delta_C$ in $\Sigma$, we note that $\Delta_C$ is $\rho$-aligned and we call the $\Delta_C$-society $(G_{\Delta_C}, \Omega_{\Delta_C})$ the \emph{$C$-society (in $\rho$)}.
Importantly, not all parts of $C$ have to be contained in $G_{\Delta_C}$, as the subpaths of $C$ that are drawn within cells of $\rho$ could be ``pushed out'' of $\Delta_C$ by the tie-breakers associated with the trace of $C$.

We say that a 2-connected subgraph $H$ of $G$ is \emph{grounded (in $\rho$)} if every cycle in $H$ is grounded and no vertex of $H$ is drawn by $\Gamma$ into the interior of a vortex of $\rho$.
If $H$ is also planar, we say that it is \emph{flat in $\rho$} if there exists a $\rho$-aligned disk $\Delta \subseteq \Sigma$ which contains all cells $c \in C(\Gamma)$ with $E(\sigma(c)) \cap E(H) \neq \emptyset$ and $\Delta$ does not contain any vortices of $\Gamma$.

\paragraph{$c_0$-disks.}
Let $\rho$ be a $\Sigma$-rendition of a graph $G$ with a vortex $c_0$ and let $T$ be the trace of a grounded cycle $C \subseteq G$ that bounds a disk $\Delta_0 \subseteq \Sigma$ containing $c_0$.
We call $\Delta_0$ the \emph{$c_0$-disk of $C$}.
Given a 2-connected, grounded graph $H \subseteq G$ that contains a cycle that has a $c_0$-disk, we call the inclusion-wise minimal $c_0$-disk of a cycle in $H$ be the \emph{$c_0$-disk of $H$}.

\subsection{Cylindrical renditions and nests}\label{sec:nest}
Many parts of our proof take place in a society that has a rendition in a disk containing a single vortex $c_0$.
Let $(G,\Omega)$ be a society.
If $\rho$ is a $\Delta$-rendition of $(G,\Omega)$ in a disk $\Delta$ with a unique vortex $c_0 \in C(\rho)$, we say that $\rho$ is a \emph{cylindrical rendition of $(G,\Omega)$ around $c_0$ (in $\Delta$)}.

\paragraph{Inner and outer graphs of a cycle.}
Let $(G, \Omega)$ be a society with a $\Sigma$-rendition $\rho$.
Further, let $C$ be a grounded cycle whose trace bounds a disk $\Delta_C$ and the $\Delta_C$-society $(G', \Omega')$.
We call $G' \cup C$ the \emph{inner graph of $C$ (in $\rho$)} and call $G'$ itself the \emph{proper inner graph of $C$ (in $\rho$)}.
Let $B = \pi(N(\rho) \cap \mathsf{bd}(\Delta_C))$.
We define the \emph{proper outer graph of $C$ (in $\rho$)} as $G'' \coloneqq G[B \cup (V(G) \setminus V(G'))]$ and call $G'' \cup C$ the \emph{outer graph of $C$ (in $\rho$)}.

\paragraph{Nests.}
Let $\rho$ be a $\Sigma$-rendition of a society $(G, \Omega)$ with the vortices $c_1, \ldots , c_b$, then a \emph{nest (in $\rho$)} is a set of disjoint cycles $\mathcal{C} = \{ C_1, \ldots , C_s \}$ in $G$ such that each of them is grounded in $\rho$ and the trace of $C_i$ bounds a surface $\Sigma_i$ in such a way that $\bigcup_{i=1}^b c_i \subseteq \Sigma_1 \subseteq \cdots \subseteq \Sigma_s \subseteq \Sigma$.
If $c_1, \ldots , c_b$ are clear from the context, we call $\mathcal{C}$ a nest in $\rho$ and drop the reference to the vortices.
Furthermore, when introducing a nest $\mathcal{C} = \{ C_1, \ldots , C_s \}$, we will not specify that it has order $s$ if this is clear from the context.
If $\rho$ is a cylindrical rendition around the vortex $c_0$, we will say that the nest lies \emph{around $c_0$}.

Let $(G,\Omega)$ be a society, let $\rho$ be a cylindrical rendition of $(G,\Omega)$ in a disk $\Delta$ around the vortex $c_0$, and let $C$ be a grounded cycle or $V(\Omega)$-path with a $c_0$-disk.
Given a grounded $C$-path $P$ of length at least one, we say that $P$ \emph{sticks out towards $c_0$ (in $\rho$)} if the $c_0$-disk of $P \cup C$ is not the $c_0$-disk of $C$ and otherwise we say that $P$ \emph{sticks out away from $c_0$ (in $\rho$)}.

Let $\mathcal{C} = \{ C_1, \dots , C_s \}$ be a nest in a $\Sigma$-rendition $\rho$.
We let $c_1$ be a potentially artificial disk bounded by the trace $T_1$ of $C_1$ in $\rho$ and replace the surface $\Sigma_1$, which is bounded by $T_1$ and does not contain the boundary of $\Sigma$, with $c_1$.
This implicitly creates a cylindrical rendition $\rho'$ of $G$ around $c_0$.
We say that $\mathcal{C}$ is \emph{cozy} if for every $i \in [s]$ and every grounded $C_i$-path $P$ that sticks out away from $c_0$ in $\rho'$ we have $V(P) \cap V(\Omega) \neq \emptyset$ or there exists a $j \in [s] \setminus \{ i \}$ such that $V(P) \cap V(C_j) \neq \emptyset$.

\paragraph{Linkages in relation to a nest.}
If $(G, \Omega)$ is a society with a nest $\mathcal{C} = \{ C_1, \ldots , C_s \}$ in a  $\Sigma$-rendition $\rho$ of $(G, \Omega)$, we call a $V(\Omega)$-$V(C_1)$-linkage $\mathcal{R}$ a \emph{radial linkage (in $\rho$) for $\mathcal{C}$}.
We say that $\mathcal{R}$ is \emph{orthogonal to $\mathcal{C}$} if for all $C \in \mathcal{C}$ and all $R \in \mathcal{R}$ the graph $C \cap R$ is a path.
In general, letting $H$ be the inner graph of $C_1$ in $\rho$, we call a linkage $\mathcal{R}'$ in which each path has an endpoint in $V(\Omega)$ and the other in $V(H)$ \emph{orthogonal} to $\mathcal{C}$ if for all $C \in \mathcal{C}$ and all $R' \in \mathcal{R}'$ the graph $C \cap R'$ is a path.
Similarly, we say that a transaction $\mathcal{P}$ in $(G,\Omega)$ is \emph{orthogonal to $\mathcal{C}$} if for all $C\in\mathcal{C}$ and all $P\in\mathcal{P}$ the graph $C\cap P$ consists of exactly two paths.

\subsection{Parity-preservation and non-orientable, even-faced \texorpdfstring{$\Sigma$}{Σ}-renditions}
We now introduce the core modification of $\Sigma$-renditions that our structural results will be centred on.
Our intention is to define a type of rendition in which we have additional control over the odd cycles in the graph.
We are most concerned with the behaviour of grounded odd cycles and want to exclude types of grounded odd cycles that imply the existence of an ``odd face'' in our rendition.
To get a feeling for this, recall that in any planar graph, if we consider an odd cycle then either it is itself a face, or it divides the graph into two subgraphs which are separated at the drawing of the odd cycles, each of which must have two odd faces according to the Handshake lemma, which implies that there exists an odd face in each.
Due to this intuition, which extends beyond planar graphs, graph embeddings which guarantee that all faces are even imply that the behaviour of the odd cycles in the graph is particularly restricted.
We will want to extend these notions, amongst others, to renditions in the following.

Let $c$ be a cell in a $\Sigma$-rendition, and let $u,v \in N(c)$ be distinct nodes of $c$.
We call $c$ \emph{parity-preserving between $u$ and $v$}, if all $u$-$v$-paths in $\sigma(c)$ have the same parity.
Let $H$ be a subgraph of $G$, let $c$ be a cell in $\Sigma$-rendition, and let $u,v \in N(c)$ be distinct nodes of $c$.
We say $c$ is \emph{$2$-connected to $H$ via $u$ and $v$} if there is a path whose endpoints are in $H$ and contains a $u$-$v$-path in $\sigma(c)$ as a subpath.
More simply, we say that $c$ is \emph{$2$-connected to $H$} if the nodes $u$ and $v$ we use are not relevant.

A $\Sigma$-rendition $\rho$ of $G$ is called \emph{even-faced around $H$} if every non-vortex cell of $\rho$ that is $2$-connected to $H$ via $u$ and $v$ is parity-preserving between $u$ and $v$, and the trace of each grounded odd cycle that contains an edge in a cell that is $2$-connected to $H$ is non-contractible in $\Sigma$.
An even-faced $\Sigma$-rendition $\rho$ of $G$ around $H$ is called \emph{non-orientable} if the trace of each grounded odd cycle that contains an edge in a cell that is $2$-connected to $H$ is one-sided in $\Sigma$.
If $G$ is itself 2-connected, we write that $\rho$ is an even-faced (and possibly non-orientable) $\Sigma$-rendition of $G$ to imply that $\rho$ is even-faced (and possibly non-orientable) around $G$.

Note that for a cell $c$, we can check whether it is parity-preserving or not in $\mathbf{O}(|E(c)|)$ time.
Thus for a given graph $G$ with a $\Delta$-rendition $\rho$ in a disk $\Delta$ and a subgraph $H$, we can check whether $\rho$ is even-faced or not in $\mathbf{O}(|E(G)|)$-time.

In some special types of societies, we will want to make the $H$ around which the rendition is even-faced implicit.
A transaction in a society $(G,\Omega)$ is called \emph{even-faced} if its strip has a vortex-free $\Delta$-rendition in a disk $\Delta$ that is even-faced around the union of paths in the transaction.
We extend this notion to handle transactions in the natural way.
A cylindrical rendition $\rho$ of a society $(G,\Omega)$ (in a disk $\Delta$) that has a nest $\mathcal{C}$ is called \emph{even-faced} if $\rho$ is even-faced around $\bigcup \mathcal{C}$.

We close with a few remarks on these definitions.
First, we want to emphasise that $\rho$ being non-orientable does not imply that the surface in which $\rho$ embeds the graph is itself non-orientable.
Instead we wish to imply that all of the interesting (i.e.\ grounded) odd cycles in the graph behave in a way that implies they are in a non-orientable surface.
Of course, if our surface does not feature any crosscaps, this implies that there are no grounded odd cycles.
Furthermore, we note that if $\rho$ is even-faced this does not mean that each odd cycle in $G$ contains an edge in $E(\sigma_\rho(c_0))$, since there may be a cell $c \in C(\rho)$ such that $\sigma_\rho(c)$ contains an odd cycle $C$ that is not grounded.
However, due to $c$ being parity-preserving, no path in $\sigma_\rho(c)$ that connects two of the vertices corresponding to the nodes on the boundary of $c$ can use an edge in $E(C)$.

\subsection{Depth of vortices}\label{subsec:vortex}
Finally, we need a notion of ``depth'' for our vortices.
So far, the definitions allow us to hide almost anything within a vortex, as long as its interface with the rest of the graphs agrees on the cylindrical ordering of some set of vertices.
In our proof, such vortices will be refined further until the graphs drawn in their interiors stop providing enough infrastructure to continue this process.

Let $G$ be a graph and $\rho$ be a $\Sigma$-rendition of $G$ with a vortex cell $c_0$.
Notice that $c_0$ defines a society $(\sigma(c_0),\Omega_{c_0})$ where $V(\Omega_{c_0})$ is the set of vertices of $G$ corresponding $N_\rho(c_0)$.
The ordering $\Omega$ is obtained by traversing along the boundary of the closure of $c_0$ in anti-clockwise direction.
We call $(\sigma(c_0),\Omega_{c_0})$ as obtained above the \emph{vortex society} of $c_0$.

We define the \emph{depth} of a society $(G,\Omega)$ as the maximum cardinality of a transaction in $(G,\Omega)$.
The \emph{depth} of the vortex $c_0$ is thereby defined as the depth of its vortex society.
Given a $\Sigma$-rendition $\rho$ with vortices, we define the \emph{depth of $\rho$} as the maximum depth of its vortex societies.

\paragraph{Linear decompositions of vortices.}
Let $(G,\Omega)$ be a society.
A \emph{linear decomposition} of $(G,\Omega)$ is a labelling $v_1,v_2,\dots,v_n$ of $V(\Omega)$ such that $v_1,v_2,\dots,v_n$ appear in $\Omega$ in the order listed, together with sets $(X_1,X_2,\dots,X_n)$ such that
\begin{enumerate}
    \item $X_i\subseteq V(G)$ and $v_i\in X_i$ for all $i\in[n]$,
    \item $\bigcup_{i\in[n]}X_i=V(G)$ and for every $uv\in E(G)$ there exists $i\in[n]$ such that $u,v\in X_i$, and
    \item for every $x\in V(G)$ the set $\{ i\in[n] ~\!\colon\!~ x\in X_i \}$ forms an interval in $[n]$.
\end{enumerate}
The \emph{adhesion} of a linear decomposition is $\max \{ |X_i\cap X_{i+1}| ~\!\colon\!~ i\in[n-1] \}$.
The \emph{width} of a linear decomposition is $\max \{ |X_i| ~\!\colon\!~ i\in[n] \}$.
\medskip

It is easy to see that every society with a linear decomposition of adhesion at most $k$ has depth at most $2k$, where depth here is used in our sense of the definition.
The (partial) reverse of this observation was shown by Robertson and Seymour in \cite{RobertsonS1990Graph} (see also \cite{GorskySW2025polynomialboundsgraphminor}).

\begin{proposition}[Robertson and Seymour \cite{RobertsonS1990Graph}]\label{prop:depth_to_lin_decomp}
Let $k$ be a non-negative integer and $(G,\Omega)$ be a society of depth at most $k$.
Then $(G,\Omega)$ has a linear decomposition of adhesion at most $k$.
\end{proposition}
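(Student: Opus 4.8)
The plan is to read the decomposition off from a \emph{nested} chain of small separators. First, I would enumerate $V(\Omega)=\{v_1,\dots,v_n\}$ in the cyclic order of $\Omega$ and cut the cycle at the gap between $v_n$ and $v_1$, so that for each $i\in\{1,\dots,n-1\}$ the sets $A_i\coloneqq\{v_1,\dots,v_i\}$ and $B_i\coloneqq\{v_{i+1},\dots,v_n\}$ are disjoint segments of $\Omega$ partitioning $V(\Omega)$. Any $A_i$-$B_i$-linkage is a transaction in $(G,\Omega)$, hence has at most $k$ paths by hypothesis, so \hyperref[prop:mengersthm]{Menger's Theorem} supplies a set $S_i\subseteq V(G)$ with $|S_i|\le k$ meeting every $A_i$-$B_i$-path; I also set $S_0\coloneqq S_n\coloneqq\varnothing$.

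Next, I would arrange the $S_i$ into a nested family. For a minimum $A_i$-$B_i$-separator $S$ let $R(S)$ be $A_i$ together with every vertex reachable from $A_i$ in $G-S$; then $N_G(R(S))$ is again a minimum $A_i$-$B_i$-separator and lies inside $S$, so I may take $S_i=N_G(R_i)$ with $R_i\coloneqq R(S_i)$, and write $L_i\coloneqq R_i\cup S_i$. Submodularity of $W\mapsto|N_G(W)|$ shows that, for fixed $i$, the sets arising this way are closed under intersection, hence have a unique inclusion-minimal (\emph{leftmost}) member, which I choose for $S_i$. The same submodular uncrossing applied to two consecutive cuts — using $A_i\subseteq R_{i+1}$ and $B_{i+1}\cap R_i=\varnothing$ (since $B_{i+1}\subseteq B_i$ and $R_i\cap B_i=\varnothing$) — forces $R_i\subseteq R_{i+1}$. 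With the conventions $L_0\coloneqq\varnothing$, $L_n\coloneqq V(G)$ this yields $\varnothing=L_0\subseteq\dots\subseteq L_n=V(G)$ where $v_i\in L_i$, where $L_i\cap B_i\subseteq S_i$, and where any $u\in L_i$ having a neighbour outside $L_i$ must lie in $S_i$.

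Then I would define the bags $X_i\coloneqq S_{i-1}\cup(L_i\setminus L_{i-1})$ for $i\in\{1,\dots,n\}$ and verify the three axioms. We have $v_i\in X_i$, since $v_i\in L_i$ gives either $v_i\in L_i\setminus L_{i-1}$ or $v_i\in L_{i-1}\cap B_{i-1}\subseteq S_{i-1}$; and $\bigcup_iX_i=V(G)$ as every vertex first enters the chain at some index. For an edge $uv$, with $u,v$ first entering the chain at indices $a\le b$, the separator property forces $u\in S_j$ for all $a\le j<b$, so $u,v\in X_b$ (and $u,v\in X_a$ if $a=b$). The index set $\{i:u\in X_i\}$ equals $\{a\}\cup\{j+1:j\ge a,\ u\in S_j\}$; since the $R_j$ increase and $u\in L_j$ for all $j\ge a$, the set $\{j\ge a:u\in S_j\}$ is an initial segment of $\{a,a+1,\dots\}$, so $\{i:u\in X_i\}$ is an interval. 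Finally $X_i\cap X_{i+1}\subseteq S_i$ straight from the definitions, giving adhesion at most $k$.

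I expect the uncrossing step producing the chain $L_0\subseteq\dots\subseteq L_n$ to be the only genuine obstacle: one must run the standard neighbourhood-submodularity argument while controlling the nuisance that a minimum $A_i$-$B_i$-separator may contain vertices of $A_i$ or $B_i$, which I handle throughout by working with $N_G(\cdot)$ of a reachability-closed set rather than with the separator itself, and by checking that the leftmost choices for consecutive cuts are automatically nested. Everything afterwards — the three linear-decomposition axioms and the adhesion bound — should be routine verification.
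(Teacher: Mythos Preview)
The paper does not include a proof of this proposition; it is simply cited from Robertson and Seymour \cite{RobertsonS1990Graph} (with a pointer also to \cite{GorskySW2025polynomialboundsgraphminor}). Your proposal is the standard argument and is correct: cutting $\Omega$ at a fixed gap, applying \hyperref[prop:mengersthm]{Menger's Theorem} to each prefix--suffix pair to get small separators, uncrossing via submodularity of $|N_G(\cdot)|$ to obtain a nested (leftmost) family $L_0\subseteq\cdots\subseteq L_n$, and reading off $X_i=S_{i-1}\cup(L_i\setminus L_{i-1})$ is exactly how this result is proved in the cited sources. The verifications you sketch (each $v_i\in X_i$, edge containment, the interval property via monotonicity of the $R_j$, and $X_i\cap X_{i+1}\subseteq S_i$) all go through as you describe.
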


\subsection{Reconciliation}
There is a somewhat subtle issue in regards to combining a cylindrical rendition of a society with the rendition of a flat transaction within it.
One might expect that one can simply add these two renditions together to get a rendition that fits both what originally lies outside of the vortex and whatever is contained in the strip of the flat transaction.
But this ignores the fact that we cannot a priori know how exactly the structure of the cells in the overlap of these two graphs looks in the two renditions.
For this purpose we make use of a helpful result from \cite{KawarabayashiTW2021Quickly} (see Lemma 5.15) that allows us to marry the renditions of a strip society and a cylindrical rendition at the loss of a little bit of the nest.

Before we state this result, we must first define a slightly technical construction.
Let $(G,\Omega)$ be a society with a cylindrical rendition $\rho$ around a vortex $c_0$, with a nest $\mathcal{C} = \{ C_1, \ldots , C_s \}$, and let $\mathcal{P} = \{ P_1, \ldots , P_p \}$ be a transaction that is orthogonal to $\mathcal{C}$.
Further, let $C_i \in \mathcal{C}$ and let $(G',\Omega')$ be the $C_i$-society in $\rho$, then the \emph{restriction of $\mathcal{P}$ to $(G',\Omega')$} is the unique transaction $\mathcal{P}' = \{ P_1', \ldots , P_p' \}$ of order $p$ in $(G',\Omega')$ such that $P_j' \subseteq P_j$ for all $j \in [p]$.
Note that the paths in $\mathcal{P}'$ are uniquely determined due to $\mathcal{P}$ being orthogonal to $\mathcal{C}$.

\begin{proposition}[Kawarabayashi, Thomas, and Wollan \cite{KawarabayashiTW2021Quickly}]\label{lem:reconciliation}
    Let $s \geq 7$, let \((G, \Omega)\) be a society with a cylindrical rendition $\rho$ in a disk $\Delta$ around a vortex \(c_0\), and let \(\mathcal{C} = \{ C_1, \ldots, C_s \}\) be a nest in \(\rho\) and let \(\mathcal{Q}\) be an exposed, monotone transaction of order at least three in \((G, \Omega)\) which is orthogonal to \(\mathcal{C}\).
    Let \(Y_1\) and \(Y_2\) be the two segments obtained by deleting the end segments of \(\mathcal{Q}\) from \(\Omega\).
    Assume that there exists a linkage \(\mathcal{P} = \{P_1, P_2\}\) such that \(P_i\) links \(Y_i\) and \(V(\sigma(c_0))\) for \(i \in [2]\), \(\mathcal{P}\) is disjoint from \(\mathcal{Q}\), and \(\mathcal{P}\) is orthogonal to \(\mathcal{C}\).

    Let \(i \ge 7\), let \((G', \Omega')\) be the \(C_i\)-society in \(\rho\), let \(\mathcal{Q}'\) be the restriction of \(\mathcal{Q}\) to \((G', \Omega')\), and let \(\rho' \) be the restriction of \(\rho\) to \((G', \Omega')\).
    Then \(\mathcal{Q}'\) is exposed and monotone in \(\rho'\), \(\mathcal{Q}'\) is a crosscap transaction if and only if \(\mathcal{Q}\) is a crosscap transaction, and if the \(\mathcal{Q}\)-strip society is flat and isolated in \((G, \Omega)\), then the \(\mathcal{Q}'\)-strip society is flat and isolated in \((G', \Omega')\).
\end{proposition}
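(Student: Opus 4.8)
The plan is to carry out everything inside the annulus $A$ lying between the trace of $C_i$ and $\mathsf{bd}(\Delta)$. Since $c_0$ is the only vortex of $\rho$ and the nest satisfies $c_0\subseteq\Sigma_1\subseteq\dots\subseteq\Sigma_s$, the trace of $C_i$ bounds a subdisk $\Delta_{C_i}\subseteq\Delta$ with $c_0$ in its interior, and the restriction of $\rho$ to $A\coloneqq\Delta\setminus\mathrm{int}(\Delta_{C_i})$ is vortex-free; consequently any family of pairwise vertex-disjoint paths drawn in $A$, each with one endpoint on $\mathsf{bd}(\Delta)=\Omega$ and the other on $\mathsf{bd}(\Delta_{C_i})=\Omega'$, is a non-crossing linkage in an annulus and hence --- cut $A$ along one of these paths to obtain a disk and read off the two induced linear orders --- transfers the cyclic order of its feet on $\mathsf{bd}(\Delta)$ to the cyclic order of its feet on $\mathsf{bd}(\Delta_{C_i})$. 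All four conclusions will be read off from this single topological principle.

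First I would make the restriction explicit. Since $\mathcal{Q}$ and $\mathcal{P}$ are orthogonal to $\mathcal{C}$, each $Q\in\mathcal{Q}$ crosses the trace of $C_i$ exactly twice and each $P_j$ crosses it exactly once, so $Q$ has a unique maximal subpath $Q'$ drawn inside $\Delta_{C_i}$ with both endpoints on $\Omega'$, and $P_j$ has a unique maximal subpath $P_j'$ inside $\Delta_{C_i}$ retaining its endpoint in $V(\sigma(c_0))$; the family of these $Q'$ is exactly $\mathcal{Q}'$. Because $c_0$ lies in the interior of $\Delta_{C_i}$ and $\mathcal{Q}$ is exposed with $c_0$ its only vortex, the edge of $\sigma(c_0)$ used by each $Q$ lies on $Q'$, so $\mathcal{Q}'$ is exposed in $\rho'$.

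Next I would treat monotonicity and the crosscap/planar dichotomy together. Writing $\mathcal{Q}=\{Q_1,\dots,Q_n\}$ with endpoints $x_j$ and $y_j$ on the two end segments $X$ and $Y$ of $\mathcal{Q}$, the ``legs'' of the paths of $\mathcal{Q}\cup\mathcal{P}$ in $A$ form, by the crossing count above and since $\mathcal{P}$ is disjoint from $\mathcal{Q}$, a non-crossing linkage of $2n+2$ arcs joining $\mathsf{bd}(\Delta)$ to $\mathsf{bd}(\Delta_{C_i})$. Applying the principle of the first paragraph, the entry points $x_1',\dots,x_n'$ and exit points $y_1',\dots,y_n'$ of the paths in $\mathcal{Q}'$ together with the trace points $p_1',p_2'$ of $P_1',P_2'$ occur along $\Omega'$ in the same cyclic order as $x_1,\dots,x_n,y_1,\dots,y_n$ and the endpoints of $P_1,P_2$ occur along $\Omega$. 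Hence the $x_i'$ form a consecutive block in order, the $y_i'$ a consecutive block in order, the two blocks are separated by arcs containing $p_1'$ respectively $p_2'$, and the orientation of the $y_i'$-block relative to the $x_i'$-block agrees with that of the $y_i$-block relative to the $x_i$-block. This is precisely the assertion that $\mathcal{Q}'$ is monotone --- with $P_1',P_2'$ certifying that its two end segments are separated in $\Omega'$ --- and that $\mathcal{Q}'$ is a crosscap transaction if and only if $\mathcal{Q}$ is.

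Finally, assume the $\mathcal{Q}$-strip society is flat and isolated in $(G,\Omega)$, and let $G_1$ be its strip with a vortex-free rendition in a disk. Isolation makes $G_1$ a localized piece of $G$, meeting $G-V(G_1)$ only along the boundary paths of $\mathcal{Q}$, and flatness draws this piece in a disk; the $\mathcal{Q}'$-strip $G_1'$ is assembled from the subpaths $Q'$ and the bridges of $G'$ attaching to their inner side, each of which is the restriction to $\Delta_{C_i}$ of a bridge already present in $G_1$ (a bridge of $G_1'$ attaches to an interior path of $\mathcal{Q}'$, hence to an interior path of $\mathcal{Q}$), so that $G_1'\subseteq G_1\cap G'$. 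Thus the flat rendition of $G_1$ restricts to a flat, vortex-free rendition of $G_1'$, and isolation of $G_1'$ in $(G',\Omega')$ is inherited because any edge of $G'\subseteq G$ witnessing its failure would witness the failure of isolation of $G_1$. I expect the main obstacle to be the topological bookkeeping in the third step: checking carefully that the annulus legs really are pairwise disjoint and non-crossing --- this is exactly where orthogonality to $\mathcal{C}$ bounds the number of trace crossings, where disjointness of $\mathcal{P}$ from $\mathcal{Q}$ enters, and where the slack $s\geq 7$, $i\geq 7$ ensures enough concentric cycles to perform the cut cleanly --- and then squeezing monotonicity, the correct pair of end segments, and the crosscap-versus-planar conclusion simultaneously out of a single planarity argument; a secondary obstacle is matching up the two strip constructions in the last step. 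The hypothesised linkage $\mathcal{P}$ enters precisely here: it certifies that $c_0$ stays reachable from both arcs of $\Omega'$ complementary to the feet of $\mathcal{Q}'$, which is what pins down the two end segments of the restricted transaction.
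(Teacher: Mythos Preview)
The paper does not prove this proposition at all: it is imported verbatim from \cite{KawarabayashiTW2021Quickly} (their Lemma~5.15) and used as a black box. So there is no ``paper's own proof'' to compare against; your proposal is an attempt to reconstruct the argument from the cited source.

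Your outline is broadly on the right track for the first three conclusions. The annulus argument---that the legs of $\mathcal{Q}\cup\mathcal{P}$ in the vortex-free region between $C_i$ and $\Omega$ form a non-crossing family, hence transfer cyclic order from $\Omega$ to $\Omega'$---is the correct mechanism for exposedness, monotonicity, and the crosscap/planar dichotomy, and your use of $P_1,P_2$ to pin down the two end segments of $\mathcal{Q}'$ is exactly right. One caution: the ``legs'' live in the outer graph of $C_i$, which is not literally an annulus in the graph-theoretic sense but has a vortex-free rendition in one; you should phrase the non-crossing argument in terms of traces rather than the paths themselves, since paths may share cells.

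The genuine gap is in the flat-and-isolated step. Your claim that $G_1'\subseteq G_1\cap G'$ is too quick: a bridge of $\bigcup\mathcal{Q}'$ in $G'$ need not be (the restriction of) a bridge of $\bigcup\mathcal{Q}$ in $G$, because after cutting at $C_i$ what was one bridge may split, or attachments in $V(\Omega)\setminus V(\Omega')$ may disappear, altering which bridges count toward the strip. More seriously, you never use the hypotheses $s\ge 7$ and $i\ge 7$ in any essential way---you only gesture at them---yet these numerical thresholds are precisely what the original proof needs to reconcile the vortex-free rendition of the $\mathcal{Q}$-strip with the cylindrical rendition $\rho$ near $C_i$ (this is the ``reconciliation'' the lemma is named for). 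Without invoking enough nest cycles to absorb the mismatch between the two renditions, the flatness of the $\mathcal{Q}'$-strip does not follow from a simple restriction argument.
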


The following corollary illustrates why we say that this result allows us to reconcile the rendition $\rho$ with the rendition of the $\mathcal{Q}$-strip society.

\begin{corollary}
    Let $s \geq 7$, let \((G, \Omega)\) be a society with a cylindrical rendition $\rho$ in a disk $\Delta$ around a vortex \(c_0\), and let \(\mathcal{C} = \{ C_1, \ldots, C_s \}\) be a nest in \(\rho\) and let \(\mathcal{Q}\) be an exposed, monotone transaction of order at least three in \((G, \Omega)\) which is orthogonal to \(\mathcal{C}\).
    Further, let $(G',\Omega')$ be the $\mathcal{Q}$-strip society, let $H$ be the outer graph of $C_7$, and let \(Y_1\) and \(Y_2\) be the two segments obtained by deleting the end segments of \(\mathcal{Q}\) from \(\Omega\).
    Assume that there exists a linkage \(\mathcal{P} = \{P_1, P_2\}\) such that \(P_i\) links \(Y_i\) and \(V(\sigma(c_0))\) for \(i \in [2]\), \(\mathcal{P}\) is disjoint from \(\mathcal{Q}\), and \(\mathcal{P}\) is orthogonal to \(\mathcal{C}\).

    Then there exists a vortex-free $\Delta$-rendition of $(G',\Omega')$ and a vortex-free $\Sigma$-rendition $\rho''$ of $(G \cup G',\Omega)$, where $\Sigma$ is $\Delta$ if $\mathcal{Q}$ is a planar transaction and $\Sigma$ is the surface resulting from adding a crosscap into $\Delta$ if $\mathcal{Q}$ is a crosscap transaction, such that $\rho$ and $\rho''$ agree on $H$ and $\rho'$ and $\rho''$ agree on $G''$.

    In particular, the renditions $\rho'$ and $\rho''$ can be found in $\mathbf{O}(m)$-time.
\end{corollary}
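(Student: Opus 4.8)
The plan is to push $\mathcal{Q}$ down to the seventh cycle of the nest via \zcref{lem:reconciliation} and then reassemble the resulting pieces by a topological cut-and-paste. First I would fix $i = 7$, let $(G_7,\Omega_7)$ be the $C_7$-society in $\rho$, let $\rho_7$ be the restriction of $\rho$ to it, and let $\mathcal{Q}_7$ be the restriction of $\mathcal{Q}$ to $(G_7,\Omega_7)$; this restriction is well defined precisely because $\mathcal{Q}$ is orthogonal to $\mathcal{C}$. Feeding $s$, the nest $\mathcal{C}$, the transaction $\mathcal{Q}$, and the linkage $\mathcal{P}$ into \zcref{lem:reconciliation} then yields that $\mathcal{Q}_7$ is exposed and monotone in $\rho_7$, that $\mathcal{Q}_7$ is a crosscap transaction if and only if $\mathcal{Q}$ is, and that the $\mathcal{Q}_7$-strip society is flat and isolated whenever the $\mathcal{Q}$-strip society $(G',\Omega')$ is. As the $\mathcal{Q}$-strip society is flat and isolated, so is the $\mathcal{Q}_7$-strip society; by the very definition of a flat transaction together with the Two Paths Theorem (\zcref{prop:TwoPaths}) it admits a vortex-free $\Delta$-rendition $\rho'$, and since the $\mathcal{Q}_7$-strip society is exactly $(G',\Omega')$, this $\rho'$ is the first rendition the statement asks for.

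Next I would build $\rho''$. In $\rho$, let $\Delta_7$ be the $c_0$-disk bounded by the trace of $C_7$; since $\mathcal{Q}_7$ is exposed it uses edges drawn inside $c_0$, so all of $G'$ lies ``behind'' $C_7$ inside $\Delta_7$, whereas the outer graph $H$ of $C_7$ is drawn by $\rho$ on $\Sigma \setminus \mathrm{int}(\Delta_7)$ with no vortex. The linkage $\mathcal{P} = \{P_1,P_2\}$, being disjoint from $\mathcal{Q}$, orthogonal to $\mathcal{C}$, and linking the segments $Y_1,Y_2$ to $\sigma(c_0)$, certifies that along $\mathsf{bd}(\Delta_7)$ the two end segments of $\mathcal{Q}_7$ are separated by the images of $Y_1$ and $Y_2$, so the cyclic order of nodes on $\mathsf{bd}(\Delta_7)$ inherited from $\rho$ matches the boundary ordering $\Omega'$ of the strip society up to reflection. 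I would then delete $\mathrm{int}(\Delta_7)$ from $\Sigma$ and glue in the disk carrying $\rho'$ along the common boundary circle, identifying nodes along $\pi$. If $\mathcal{Q}$ is planar the two boundary orderings agree directly, no topology is created, and the ambient surface is again $\Delta$; if $\mathcal{Q}$ is a crosscap transaction the orderings agree only after reversing orientation, so the identification is realisable only once a crosscap has been inserted, producing the surface $\Sigma$ of the statement. Setting $\rho''$ to be the resulting rendition on $H \cup G' = G \cup G'$, it is vortex-free (both $\rho$ restricted to $H$ and $\rho'$ are), it coincides with $\rho$ on $H$ because nothing outside $\Delta_7$ was touched, and it coincides with $\rho'$ on the strip graph because nothing inside the glued disk was touched.

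For the running time, each ingredient — extracting $(G_7,\Omega_7)$ and $\rho_7$, restricting $\mathcal{Q}$, invoking \zcref{lem:reconciliation}, producing the vortex-free rendition of the flat strip via \zcref{prop:TwoPaths}, and carrying out the gluing — is linear, so $\rho'$ and $\rho''$ are found in $\mathbf{O}(m)$ time.

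The step I expect to be the real obstacle is making the gluing in the second paragraph rigorous: verifying that the cyclic order on $\mathsf{bd}(\Delta_7)$ genuinely matches $\Omega'$, and that the crosscap case requires exactly one crosscap (rather than, say, a handle). This is where the hypotheses get spent — monotonicity of $\mathcal{Q}$ ensures its two halves are laid out coherently on $\mathsf{bd}(\Delta_7)$, the separating linkage $\mathcal{P}$ pins down how the end segments of $\mathcal{Q}_7$ interleave with $Y_1,Y_2$, and $s \geq 7$ with $i = 7$ is exactly what lets \zcref{lem:reconciliation} apply. I would treat the planar and crosscap cases uniformly by first specifying the abstract boundary gluing map and only afterwards reading off which surface it forces.
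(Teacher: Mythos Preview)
Your overall strategy matches the paper's intent: the corollary is stated there without proof as a direct illustration of \zcref{lem:reconciliation}, and your plan---apply the lemma with $i=7$, obtain a flat strip, then glue against $\rho$ outside $C_7$, with a crosscap inserted exactly when $\mathcal{Q}$ is a crosscap transaction---is the right shape.

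There is, however, a real confusion in the execution. You claim that ``the $\mathcal{Q}_7$-strip society is exactly $(G',\Omega')$'' and that ``all of $G'$ lies `behind' $C_7$ inside $\Delta_7$''. Both are false. By definition $(G',\Omega')$ is the $\mathcal{Q}$-strip society in the full $(G,\Omega)$: its boundary $\Omega'$ lives on $\Omega$, and $G'$ runs from $\Omega$ through the entire nest into the vortex. The $\mathcal{Q}_7$-strip society is a strictly smaller society with boundary on $C_7$. Consequently your $\rho'$ is a rendition of the wrong object, and your cut-and-paste (excise the interior of $\Delta_7$, insert $\rho'$) cannot produce a rendition of $H\cup G'$, since $G'$ does not fit inside $\Delta_7$. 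The fix is to decouple the two conclusions: the vortex-free $\Delta$-rendition of $(G',\Omega')$ follows directly from flatness of $\mathcal{Q}$ (no restriction to $C_7$ needed); what \zcref{lem:reconciliation} actually buys you is flatness and isolation of the \emph{restricted} $\mathcal{Q}_7$-strip, and it is \emph{that} smaller rendition which gets glued into $\Delta_7$ against $\rho\restrict{H}$ to build $\rho''$. Separately, note that the flatness and isolation of $\mathcal{Q}$ you invoke (``As the $\mathcal{Q}$-strip society is flat and isolated'') are not among the stated hypotheses of the corollary; they are implicitly inherited from the setting of \zcref{lem:reconciliation}.
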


We will need this idea in two contexts and in particular, we want to ensure that it interacts nicely with our notion of even-faced renditions.

\begin{lemma}\label{lem:reconciliationforevenfaced}
    Let $(G,\Omega)$ be a society with an even-faced, cylindrical rendition $\rho$ in a disk $\Delta$ around the vortex $c_0$ with a nest $\mathcal{C} = \{ C_1, \ldots , C_s \}$ and let $\mathcal{Q}$ be an even-faced, monotone transaction of order at least three with the $\mathcal{Q}$-society $(G',\Omega')$ that is orthogonal to $\mathcal{C}$.
    Further, let $(G',\Omega')$ be the $\mathcal{Q}$-strip society and suppose that there exists a vortex-free $\Sigma$-rendition $\rho'$ of $( G'' \coloneqq G - [\sigma_\rho(c_0)] \cup G', \Omega)$, where $\Sigma$ is $\Delta$, if $\mathcal{Q}$ is a planar transaction and $\Sigma$ is the surface resulting from adding into $\Delta$ a crosscap placed in $c_0$, if $\mathcal{Q}$ is a crosscap transaction, such that $\rho$ agrees with $\rho'$ on $G - [\sigma_\rho(c_0)]$ and the restriction of $\rho'$ to $G'$ is even-faced.

    Then every cell of $\rho'$ is parity-preserving and, if $\mathcal{Q}$ is a crosscap transaction, $\rho'$ is even-faced.
    If $\mathcal{Q}$ is instead a planar transaction, we let $P_1,P_2$ be the two boundary paths of $\mathcal{P}$ and let $\mathsf{C}_1,\mathsf{C}_2$ be the two distinct cycles in $C_1 \cup P_1 \cup P_2$ such that $\mathsf{C}_i$ contains an edge of $E(P_i) \setminus E(C_1)$ but no vertex of $P_{3-i}$ for both $i \in [2]$ and let $\Delta_i \subseteq \Delta$ be the disk bounded by the trace of $\mathsf{C}_i$.
    Then the trace of each grounded odd cycle in $\rho'$ that is found in $G''$ separates the interior of $\Delta_1$ from the interior of $\Delta_2$.
\end{lemma}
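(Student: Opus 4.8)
\emph{Overall approach.} The plan is to locate the cells and the grounded odd cycles of $\rho'$ relative to two regions: the part $G-[\sigma_\rho(c_0)]$, on which $\rho'$ agrees with $\rho$, and the strip $G'$, on which $\rho'$ restricts to an even-faced rendition by hypothesis. Two observations are used throughout. First, whether a cell is parity-preserving between two of its nodes depends only on the pair $(\sigma(c),\{u,v\})$, not on the surrounding surface. Second, an even-faced rendition \emph{in a disk} has no grounded odd cycle through a cell that is $2$-connected to the subgraph around which it is even-faced, since a disk carries no non-contractible simple closed curve. For the first assertion, each cell of $\rho'$ is, up to the identifications forced by the two agreement hypotheses, either a non-vortex cell of $\rho$ or a cell of $\rho'\restrict{G'}$; since $\rho$ is even-faced around $\bigcup\mathcal{C}$ and $\rho'\restrict{G'}$ is even-faced around $\bigcup\mathcal{Q}$, and since every cell of interest is $2$-connected to the respective subgraph — which I would verify using that $\bigcup\mathcal{C}$ and $\bigcup\mathcal{Q}$ are ``central'' in their renditions together with the connectivity present whenever the lemma is applied — the first observation yields that every cell of $\rho'$ is parity-preserving.

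\emph{The crosscap case.} Here $\Sigma$ is $\Delta$ with a crosscap inserted at $c_0$, and it remains to show that the trace $T_C$ of every grounded odd cycle $C$ of $\rho'$ is non-contractible in $\Sigma$. If $C$ used no edge of $G'$ drawn inside the former $c_0$-disk, then $C\subseteq G-[\sigma_\rho(c_0)]$ would be a grounded odd cycle of $\rho$ through a cell $2$-connected to $\bigcup\mathcal{C}$, contradicting the second observation; if $C$ lay entirely inside $G'$, then $C$ would be a grounded odd cycle of the even-faced rendition $\rho'\restrict{G'}$, hence have non-contractible trace. Otherwise $C$ crosses the boundary of the former $c_0$-disk, and since $\mathcal{Q}$ is a crosscap transaction the strip — and with it the portion of $C$ inside that disk — runs through the inserted crosscap; a parity computation along the parity-preserving cells met by $C$ then shows that $C$ being odd forces $T_C$ to traverse the crosscap an odd number of times, so $T_C$ is one-sided, hence non-contractible. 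Together with the previous paragraph this proves $\rho'$ is even-faced.

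\emph{The planar case.} Now $\Sigma=\Delta$; fix $P_1,P_2,\mathsf{C}_1,\mathsf{C}_2,\Delta_1,\Delta_2$ as in the statement and let $C$ be a grounded odd cycle of $\rho'$, so $T_C$ bounds a disk $\Delta_C\subseteq\Delta$. By the same two reductions, $C$ meets the interior of the former $c_0$-disk but is not contained in $G'$, so $C$ alternates between the corridor occupied by the strip and its complement. The cycles $\mathsf{C}_1$ and $\mathsf{C}_2$ are exactly the two boundary cycles of this corridor, $\mathsf{C}_i$ following the boundary path $P_i$ and closing along an arc of $C_1$, and $\Delta_1,\Delta_2$ are the two caps on opposite sides of the corridor. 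The same parity computation, now carried out in the orientable disk, shows that $C$ odd forces $T_C$ to wind around the corridor rather than entering and leaving it on one side; equivalently $\Delta_C$ contains exactly one of $\mathrm{int}(\Delta_1)$ and $\mathrm{int}(\Delta_2)$, i.e.\ $T_C$ separates $\mathrm{int}(\Delta_1)$ from $\mathrm{int}(\Delta_2)$, as claimed.

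\emph{Main obstacle.} The cell classification, the two reductions ``$C$ meets the interior of the former $c_0$-disk'' and ``$C$ is not contained in $G'$'', and the $2$-connectedness checks are routine bookkeeping. The crux is the parity-to-topology dictionary invoked in the last two paragraphs: translating ``$C$ is odd'' into a statement about how $T_C$ interacts with the inserted crosscap, respectively with the strip corridor and the caps $\Delta_1,\Delta_2$. Making this precise means tracking how parity accumulates along the parity-preserving cells traversed by $C$ relative to the explicit layout of $\mathcal{Q}$, the boundary paths $P_1,P_2$, and the nest cycle $C_1$, thereby re-deriving why a crosscap transaction contributes a crosscap while a planar one contributes no new topology yet confines the grounded odd cycles between $\Delta_1$ and $\Delta_2$. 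I expect the cycles that straddle the boundary of the former $c_0$-disk to be the most delicate case, and would pattern that argument on the proof of \zcref{lem:reconciliation} in \cite{KawarabayashiTW2021Quickly}.
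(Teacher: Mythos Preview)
Your first step (every cell of $\rho'$ is parity-preserving) is fine and matches the paper, which dispatches it in one line (``follows directly from the construction of $\rho'$'').

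For the main assertions, however, your route and the paper's diverge, and the gap you yourself flag is real. The paper does \emph{not} track how a given grounded odd cycle $C$ interacts with the crosscap or with the strip corridor. Instead it passes to an auxiliary graph $G^\star$ with an honest embedding $\phi$ in $\Sigma$: after splitting cells that fail to be internally $2$-connected, each cell is replaced by an edge or a path of length two so that parities match, and one checks that $\rho'$ is even-faced if and only if $\phi$ is even-faced in the classical sense. The argument then becomes a face-parity count. In the crosscap case, every face of $G^\star$ is already a face of (the image of) $G'$ or of $G-[\sigma_\rho(c_0)]$, hence even; so an odd cycle with contractible trace would, by the handshake lemma, force an odd face in the disk it bounds, a contradiction. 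In the planar case there are exactly two new faces, bounded by $\mathsf{C}_1^\star$ and $\mathsf{C}_2^\star$; they have the same parity, and if odd, the handshake argument pins exactly one of them inside the disk bounded by any odd cycle's trace.

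Your proposed ``parity computation along the parity-preserving cells'' is not a different mechanism for the same conclusion---made precise, it \emph{is} the face argument. Saying ``$C$ odd forces $T_C$ to traverse the crosscap an odd number of times'' is equivalent to saying that the $\mathbb{Z}_2$-cohomology class recording parity coincides with the class recording crosscap crossings, and the way to certify that equality is to check it on a basis of $H_1$, i.e.\ on faces. Without that step the case ``$C$ crosses the boundary of the former $c_0$-disk'' is genuinely open in your sketch: such a $C$ can enter and leave the strip without passing through the crosscap at all, and nothing you have written rules out that it is odd. Patterning on \cite[Lemma~5.15]{KawarabayashiTW2021Quickly} will not help here, since that lemma addresses flatness of the restricted transaction and carries no parity information. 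The missing idea is the passage to $G^\star$ and the observation that the face set decomposes into the two even-faced pieces (plus, in the planar case, the two caps).
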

\begin{proof}
    We note that the fact that each cell of $\rho'$ is parity-preserving follows directly from the construction of $\rho'$.
    For the proof of remainder of our statement we translate $\rho'$ and $G''$ into a proper even-faced\footnote{An embedding of a graph in a surface is even-faced if all closed walks bounding faces are of even length.} embedding $\phi$ of a graph $G^\star$ in $\Sigma$ and then make observations about the non-contractible cycles of $G^\star$ in $\phi$.

    First, for every cell $c \in C(\rho')$ with $|N_{\rho'}(c)| = |\{ v_1,v_2,v_3 \}| \leq 3$ such that there exist $i,j \in [|N_{\rho'}(c)|]$ for which there does not exist a $v_i$-$v_j$-path in $\sigma_{\rho'}(c)$, we let $H_i$ be the component of $\sigma_{\rho'}(c)$ that contains $v_i$ and split $c$ into disjoint subdisks of $c$ each containing one of the components $H_j$ (if $H_i = H_j$ these two will occupy the same subdisk).
    Secondly, after eliminating all cells from the first step, if there exists a cell $c \in C(\rho')$ with $|N_{\rho'}(c)| = |\{ v_1,v_2,v_3 \}| = 3$ and there exists a vertex $v \in V(\sigma_{\rho'}(c))$ such that $\sigma_{\rho'}(c) - v$ contains no $v_i$-$v_j$-path for some $i \in [3]$ and both $j \in [3] \setminus \{ i \}$, we split $c$ into a disk having $v_i$ and $v$ on its boundary and a second disk having $v$ and the two vertices $v_j$ with $j \in [3] \setminus \{ i \}$ in their boundary in the straightforward way.

    We let $\rho''$ be the resulting $\Sigma$-decomposition which now notably has the following properties:
    For every cell $c \in C(\rho'')$, each pair of vertices in $N_{\rho''}(c)$ are connected via some path in $\sigma_{\rho''}(c)$ and in particular, if $|N_{\rho''}(c)| = |\{ v_1,v_2,v_3 \}| = 3$, there exist pairwise internally disjoint paths $P_{1,2},P_{2,3},P_{3,1}$ such that $P_{i,j}$ connects $v_i$-$v_j$.

    To build $G^\star$, we start with the vertices $N(\rho'')$, which are the \emph{nodes} of $G^\star$, and add an edge $uv$ to $G^\star$ if there exists a cell $c \in C(\rho'')$ such that $u,v \in N_{\rho''}(c)$ and there exists a $u$-$v$-path of odd length in $\sigma_{\rho''}$.
    Furthermore, we add a $u$-$v$-path of length 2 to $G^\star$ if there exists a cell $c \in C(\rho'')$ such that $u,v \in N_{\rho''}(c)$ and there exists a $u$-$v$-path of even length in $\sigma_{\rho''}$.
    By drawing within the cell corresponding to the paths and edges we added, we arrive at an embedding $\phi$ of $G^\star$ into $\Sigma$.
    We observe that by construction $\rho'$ is even-faced if and only if $\phi$ is even-faced.

    Suppose first that $\Sigma$ contains a crosscap.
    Let $C$ be some odd cycle in $G^\star$ whose embedding in $\phi$ describes a contractible cycle $\mathsf{C}$ in $\Sigma$.
    Then, since $\mathsf{C}$ is contractible, it bounds a disk $d$ which contains a planar drawing of a subgraph $H$ of $G^\star$ of which $C$ bounds a face.
    Since $C$ is odd, there exists another face in $H$ that is bounded by an odd cycle $C'$ in $G^\star$.
    We now note that, due to the fact that $C'$ bounds a face in $G^\star$, $C'$ corresponds to an odd cycle with a contractible trace in $\rho'$ that lies entirely in $G'$ or $G - [\sigma_\rho(c_0)]$.
    The first option contradicts the fact that the restriction of $\rho'$ to $G'$ is even-faced and the second contradicts the fact that $\rho$ is even-faced.
    This confirms our statement in the case in which $\mathcal{Q}$ is a crosscap transaction.

    Thus we may instead suppose that $\Sigma$ is $\Delta$ and $\mathcal{Q}$ is a planar transaction.
    We observe that there are exactly two faces of $G^\star$ in $\phi$ that are not found in subgraph of $G^\star$ corresponding to $G'$ or in the subgraph of $G^\star$ corresponding to $G - [\sigma_\rho(c_0)]$.
    The boundary of these two faces correspond to $\mathsf{C}_1,\mathsf{C}_2$ and accordingly, we call them $\mathsf{C}_1^\star,\mathsf{C}_2^\star$.
    Since all other faces of $G^\star$, including the ``outer face'' of $G^\star$ whose boundary contains $V(\Omega)$, are even, we note that $\mathsf{C}_1^\star$ is odd if and only if $\mathsf{C}_2^\star$ is odd.
    If these cycles are even, then by analogous arguments to the previous case, $\phi$ and thus $\rho'$ is even-faced.

    We may therefore further suppose that $\mathsf{C}_1^\star$ and $\mathsf{C}_2^\star$ are odd.
    Let $\mathsf{C}$ be an odd cycle in $G^\star$, then $\mathsf{C}$ describes a disk $\Delta' \subseteq \Delta$ and this disk must contain one odd face of $G^\star$, meaning that it contains either $\mathsf{C}_1^\star$ or $\mathsf{C}_2^\star$ in its entirety.
    In particular, the other cycle must lie outside of the interior of $\Delta'$, whilst possibly intersecting $\mathsf{C}$ and thus its boundary.
    We can therefore conclude that the trace of any odd cycle in $G''$ found in $\rho'$ that corresponds to $\mathsf{C}$ indeed separates the interior of $\Delta_1$ and $\Delta_2$, as they are defined in our statement.
    This concludes our proof.
\end{proof}

Despite the complexity of the above statement, it turns out that the more annoying option is immediately helpful in the context of our specific project, as it can easily be seen to yield $\mathscr{H}_k$ as an odd minor controlled by a mesh produced by the nest.
We believe that the type of transaction featured in the above statement plays a significant role in generalisations of the statements we present in this article.
For this reason, we give it a name and present several of our results in such a way that these transactions are a valid outcome.
For the purpose of our proofs, we will later make use of the fact that these transactions allow us to find a large parity handle (see \zcref{lem:get-parity-handle}).

Let $(G,\Omega)$ be a society with an even-faced, cylindrical rendition $\rho_0$ in the disk $\Delta$ with a nest $\mathcal{C} = \{ C_1, \ldots , C_s \}$ around the vortex $c_0$, and let $\mathcal{P}$ be an even-faced, planar transaction of order $p$ in $(G, \Omega)$ that is orthogonal to $\mathcal{C}$.
We call $\mathcal{P}$ an \emph{odd handle} if the following holds.

Letting $(G',\Omega')$ be the strip society of $\mathcal{P}$, there exists a vortex-free $\Delta$-rendition $\rho$ of $(G'' \coloneqq G' \cup (G - [\sigma_{\rho_0}(c_0)]), \Omega)$, for which we let $P_1,P_2$ be the two boundary paths of $\mathcal{P}$ and let $\mathsf{C}_1,\mathsf{C}_2$ be the two distinct cycles in $C_1 \cup P_1 \cup P_2$ such that $\mathsf{C}_i$ contains an edge of $E(P_i) \setminus E(C_1)$ but no vertex of $P_{3-i}$ for both $i \in [2]$ and let $\Delta_i \subseteq \Delta$ be the disk bounded by the trace of $\mathsf{C}_i$ in $\rho$.
Further, $G''$ contains an odd cycle that is grounded in $\rho$, every cell of $\rho$ is parity-preserving, and the trace of each grounded odd cycle in $\rho$ that is found in $G''$ separates the interior of $\Delta_1$ from the interior of $\Delta_2$.

We choose the term odd handle in reference to the fact that we now observe that $\mathcal{P}$ allows us to find a parity handle as an odd minor.

\begin{lemma}\label{lem:get-parity-handle}
    Let $s,k$ be positive integers with $s \geq 2k+4$.
    Let $(G,\Omega)$ be a society with an even-faced, cylindrical rendition $\rho_0$ in the disk $\Delta$ with a nest $\mathcal{C} = \{ C_1, \ldots , C_s \}$ around the vortex $c_0$, and let $\mathcal{P}$ be an odd handle of order at least $2k+2$.

    Then $G$ contains $\mathscr{H}_k$ as an odd minor controlled by a mesh whose horizontal paths are subpaths of distinct cycles from $\mathcal{C}$.
\end{lemma}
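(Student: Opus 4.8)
The plan is to read off, from the concentric cycles of the nest $\mathcal{C}$ and the paths of the odd handle $\mathcal{P}$, both the required mesh and an explicit odd-minor model of $\mathscr{H}_k$, verifying its parities in the planar ``even-faced'' picture that the definition of an odd handle hands us. Write $\rho$ for the vortex-free $\Delta$-rendition of $G'' := G' \cup (G - [\sigma_{\rho_0}(c_0)])$ and $\mathsf{C}_1,\mathsf{C}_2,\Delta_1,\Delta_2$ for the objects named there. Since $\rho$ agrees with $\rho_0$ away from $c_0$, the cycles $C_1,\dots,C_k$ remain nested grounded cycles of $\rho$ with $C_1$ innermost; and since every cell of $\rho$ is parity-preserving, I would first pass, exactly as in the proof of \zcref{lem:reconciliationforevenfaced}, to a genuine embedding $\phi$ of an auxiliary graph $\hat G$ in $\Delta$ such that grounded cycles of $G''$ correspond to cycles of $\hat G$ of the same parity and such that $\Delta_1,\Delta_2$ are the \emph{only} odd faces of $\phi$. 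Every parity computation below is then a Handshake lemma count in $\phi$: a cycle bounding a disk in $\Delta$ is odd exactly when that disk contains an odd number of odd faces, i.e.\ exactly one of $\Delta_1,\Delta_2$.

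Using $|\mathcal{P}|\ge 2k+2$ I would select $2k$ of the at least $|\mathcal{P}|-2$ non-boundary paths of $\mathcal{P}$. Each chosen path $P$, being orthogonal to $\mathcal{C}$, meets every $C_j$ in exactly two subpaths, hence splits into a descending subpath crossing $C_k,\dots,C_1$ once each, a bottom arc $P'$ drawn inside the $C_1$-disk, and an ascending subpath crossing $C_1,\dots,C_k$ once each. The $4k$ descending/ascending subpaths (the \emph{strands}) are pairwise disjoint apart from shared endpoints and, together with $C_1,\dots,C_k$, form a cylindrical-grid-like subgraph $M_{\mathrm{cyl}}$; cutting each $C_i$ at a common gap between two consecutive strand feet turns $M_{\mathrm{cyl}}$ into a $(k\times 4k)$-mesh $M$ whose horizontal paths are subpaths of the distinct cycles $C_1,\dots,C_k$, as required. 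Every cycle of $M_{\mathrm{cyl}}$ is drawn outside the open $C_1$-disk, so the disk it bounds contains either none or both of $\Delta_1,\Delta_2$ and is therefore even; thus $M_{\mathrm{cyl}}$ is bipartite, and I fix a proper $2$-colouring $\chi_0$ of it.

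Next I would call $k$ of the $2k$ chosen paths \emph{chord paths}. As $\mathcal{P}$ is planar and orthogonal to $\mathcal{C}$, their bottom arcs $\gamma_1,\dots,\gamma_k$ are pairwise disjoint and nested inside the $C_1$-disk, with feet in cyclic order $\alpha_1,\dots,\alpha_k,\beta_k,\dots,\beta_1$ on $C_1$ and $\gamma_j$ joining $\alpha_j$ to $\beta_j$. For each $j$ the cycle $\mathsf{C}^{(j)}$ consisting of $\gamma_j$ together with the arc of $C_1$ on the $\Delta_2$-side of its feet bounds a disk inside the $C_1$-disk that misses $\Delta_1$ and contains the innermost region $\Delta_2$; by the Handshake count $\mathsf{C}^{(j)}$ is an odd cycle of $G''$. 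Extending $\chi_0$ properly along $\gamma_j$ from $\alpha_j$ therefore forces the edge of $\gamma_j$ incident to $\beta_j$ to be monochromatic, all other edges of $\gamma_j$ staying bichromatic. The odd-$\mathscr{H}_k$-minor model is now built in the obvious way: the $4k$ arcs of each $C_i$ between consecutive strand feet — each enlarged by the interior of the incident strand crossing and, on $C_1$, of the incident chord arc — are the branch sets of the ring-$i$ vertices, and the obvious pieces of the strands serve for the remaining radial vertices; with the colouring extending $\chi_0$ over the $\gamma_j$ every branch set is properly $2$-coloured, the grid edges of $\mathscr{H}_k$ are witnessed by bichromatic edges and the $k$ chord edges $x_jx_{2k-j+1}$ by the monochromatic ones produced above. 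This model lies in $G''\subseteq G$; after a routine identification that normalises the cyclic positions of the chord feet against the radial paths (deleting or contracting superfluous rings and strands, for which the surplus in $s$ and $|\mathcal{P}|$ leaves room) it becomes an odd-$\mathscr{H}_k$-minor model; and it is controlled by $M$ because each ring (resp.\ radial) branch set contains, up to a bounded set, a horizontal (resp.\ vertical) path of $M$.

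The crux is the first paragraph: extracting from the abstract odd-handle hypothesis the clean fact that $\phi$ has exactly the two odd faces $\Delta_1,\Delta_2$, and then using it to deduce at once that the whole cylindrical grid $M_{\mathrm{cyl}}$ is bipartite and that all $k$ chord cycles are odd — the latter being exactly what turns the grid-plus-chords gadget into a \emph{parity} handle rather than an ordinary one. The orthogonality bookkeeping behind the strands and the nesting of the chord arcs, together with the chord-foot normalisation, is routine but must be carried out carefully to land exactly on $\mathscr{H}_k$.
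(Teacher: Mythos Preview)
The paper states this lemma but gives no proof; the authors evidently regard it as immediate from the definition of an odd handle. Your overall plan is exactly the intended one, and the crucial first step---passing to the auxiliary planar embedding and recognising that $\Delta_1,\Delta_2$ are the only two odd faces, so that the cylindrical mesh built from $C_1,\dots,C_k$ and the strands is bipartite while every chord arc $\gamma_j$ is parity-breaking---is carried out correctly.

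There is, however, a genuine snag hiding in your ``routine normalisation''. With your choice of $4k$ strand feet (all from non-boundary paths), the feet sit on $C_1$ in cyclic order $a_1,\dots,a_{2k},b_{2k},\dots,b_1$, so $\gamma_j$ joins positions $j$ and $4k-j+1$; these positions have \emph{opposite} parity (their sum is $4k+1$). But the handle edges of $\mathscr{H}_k$ join the two vertices $x_i$ and $x_{2k-i+1}$, which by definition lie in the \emph{same} parity class of the $4k$-cycle. No cyclic shift or reflection of the labelling repairs this. Equivalently: your own oddness argument forces $a_j$ and $b_j$ to receive the same colour under $\chi_0$, hence the arc of $C_1$ between the innermost pair $a_{2k},b_{2k}$ (and, symmetrically, the wraparound arc between $b_1$ and $a_1$) must have even length, so the $4k$ strand feet cannot be contracted to a bipartite $4k$-cycle in the way you describe.

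The fix is small: drop one ascending strand (say $b_{2k}$) and add one descending strand from a \emph{boundary} path (say the $a$-foot of $P_1$). The $4k$ selected feet are then $a_0,a_1,\dots,a_{2k},b_{2k-1},\dots,b_1$, the chord $\gamma_{2i-1}$ now joins re-indexed positions $2i$ and $4k-2i+2$ for each $i\in[k]$, and the model of $\mathscr{H}_k$ goes through verbatim. Since you explicitly restricted yourself to non-boundary paths, this adjustment is not available in your write-up, and without it the claimed minor model does not exist; once made, everything else you wrote stands.
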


\subsection{Surface walls}\label{sec:surfacwalls}
In our local structure theorem we want to ensure that any part of the surface we add is sufficiently represented by a lot of grid-like infrastructure.
This part is inspired by the work of Thilikos and Wiederrecht on excluding graphs of bounded genus \cite{ThilikosW2024Excluding} and essentially the same definitions are featured in \cite{GorskySW2025polynomialboundsgraphminor}.

\paragraph{Annulus walls.}
Let $m,n$ be positive integers.
The \emph{$(n\times m)$-annulus grid} is the graph obtained from the $(n\times m)$-grid by adding the edges $\{ \{(i,1),(i,n)\} ~\!\colon\!~\ i\in[n] \}$.
The \emph{elementary $(n\times m)$-annulus wall} is the graph obtained from the $(n\times 2m)$-annulus grid by deleting all edges in the following set
\begin{align*}
    \big\{  \{(i,j),(i+1,j) \}  ~\!\colon\!~ i\in[n-1],\text{ }j\in[2m]\text{, and }i\not\equiv j\mod 2 \big\}.
\end{align*}
An \emph{$(n \times n)$-annulus wall} is a subdivision of the elementary $(n \times n)$-annulus wall.
We also write \emph{$n$-annulus wall or grid} as an abbreviation for an $(n \times n)$-annulus wall or grid.
\medskip

One can also see an annulus $(n\times m)$-wall as the graph obtained by completing the horizontal paths of a wall to cycles instead of discarding the vertices of degree one.
This viewpoint will be very helpful in the following constructions.
An $n$-annulus wall contains $n$ cycles $C_1,\dots,C_n$, such that $C_i$ consists exactly of the vertices of the $i$th row of the original wall.
We refer to these cycles as the \emph{base cycles} of the $n$-annulus wall.

\paragraph{Cylindrical meshes.}
In some settings it will be easier to work with a version of meshes that is cylindrical, since it is easier to argue for their existence than for annulus walls or grids.

Let $m,n$ be positive integers, let $M$ be a graph, and let $C_1, \ldots, C_m$ be cycles and $P_1, \ldots , P_n$ be paths in $M$ such that the following holds for all $i \in [m]$ and $j \in [n]$:
\begin{itemize}
    \item $C_1, \ldots , C_m$ are pairwise vertex-disjoint, $P_1, \ldots, P_n$ are pairwise vertex-disjoint, and $M = C_1 \cup \cdots \cup C_m \cup P_1 \cup \cdots \cup P_n$.

    \item $C_i \cap P_j$ is a path, and if $i \in \{ 1, m \}$ or $j \in \{ 1, n \}$, then $C_i \cap P_j$ has exactly one vertex,

    \item when traversing $C_i$ starting from an endpoint of $P_1\cap C_i$, then either the paths $P_1, \ldots , P_n$ are encountered in the order listed or the next $P_j$ one encounters is $P_n$ and from here the paths are encountered in the order $P_n,\dots,P_1$, and

    \item $P_j$ has one end in $C_1$ and the other in $C_m$, and when traversing $P_j$ starting from its endpoint on $C_1$, the cycles $C_1, \ldots , C_m$ are encountered in the order listed.
\end{itemize}
If the above conditions hold for $M$, we call $M$ an \emph{$(n \times m)$-cylindrical mesh}.
The cycles $C_1, \ldots, C_m$ are called the \emph{concentric cycles}, or \emph{cycles}, of $M$ and the paths $P_1, \ldots , P_n$ are called the \emph{radial paths}, or \emph{rails}, of $M$.
According to this definition, annulus walls are cylindrical meshes.
We also call $(n \times n)$-cylindrical meshes \emph{$n$-cylindrical meshes}.

\paragraph{Wall segments.}
Let $n$ be a positive integer.
An \emph{elementary $n$-wall-segment} is the graph $W_0$ obtained from the $(n\times 8n)$-grid by deleting all edges in the following set
\begin{align*}
    \big\{  \{(i,j),(i+1,j) \}  ~\!\colon\!~ i\in[n-1],\text{ }j\in[8n]\text{, and }i\not\equiv j\mod 2 \big\}.
\end{align*}
The vertices in $\{ (i,1)  ~\!\colon\!~ i\in[n] \}$ are said to be the \emph{left boundary} of the segment, while the vertices in $\{ (i,n)  ~\!\colon\!~ i\in[n] \}$ form the \emph{right boundary of the segment}.
Finally, we refer to the vertices in $\{ (1,i) ~\!\colon\!~ i=2j\text{, }j\in[1,8n] \}$ as the \emph{top boundary}.
\smallskip

An \emph{elementary $n$-handle-segment} is the graph obtained from the elementary $n$-wall-segment $W_0$ by adding the following edges, which we call \emph{handle edges},
\begin{align*}
    &\big\{ \{(1,2i),(1,6n+2-2i)\}  ~\!\colon\!~  i\in[1,n]  \big\}\\
    \cup \ &\big\{ \{(1,2i),(1,8n+2-2i)\}  ~\!\colon\!~ i\in[n+1,2n]  \big\}.
\end{align*}

An \emph{elementary $n$-crosscap-segment} is the graph obtained from the elementary $n$-wall-segment $W_0$ by adding the following edges, which we call \emph{crosscap edges},
\begin{align*}
    \big\{ \{(1,2i),(1,4n+2i)\}  ~\!\colon\!~  i\in[1,2n]  \big\}.
\end{align*}

An \emph{elementary $n$-vortex-segment} is the graph obtained from the disjoint union of two elementary $n$-wall segments $W_0$ and $W_1$ by making the $i$th top boundary vertex of $W_0$ adjacent to the $i$th top boundary vertex of $W_1$ for each $i \in [4n]$, and by making the $j$th left boundary vertex of $W_1$ adjacent to the $j$th right boundary vertex of $W_1$ for each $j \in [n]$.

We denote the $(n\times 4n)$-annulus wall defined on the vertex set of $W_1$ as above by $W$.
The base cycle $C_1,\dots C_n$ of $W$ are assumed to be ordered such that $C_n$ contains all vertices adjacent to $W_0$ and we call $\{ C_1,\dots, C_s\}$ the \emph{nest} of the elementary $n$-vortex segment.
We refer to $C_n$ as the \emph{outer cycle} and to $C_1$ as the \emph{inner cycle} of the elementary $n$-vortex segment.
Finally notice that there exist $4n$ pairwise disjoint ``vertical'' paths which are orthogonal to both the horizontal paths of $W_0$ and the cycles $C_1,\dots, C_n$.
The family of these paths is called the \emph{rails} of the elementary $n$-vortex-segment.
\smallskip

In all four types of segments we refer to the elementary wall segment $W_0$ as the \emph{base}.
If we do not want to specify the \textit{type} of an elementary wall-, handle-, crosscap-, or vortex-segment, we simply refer to the graph as a \emph{elementary $n$-segment}, or \emph{elementary segment} if $n$ is not specified.

Let $n$ and $\ell$ be positive integers and let $S_1,\dots,S_\ell$ be elementary $n$-segments.
The \emph{cylindrical closure} of $S_1,\dots,S_{\ell}$ is the graph obtained by introducing, for every $i\in[\ell-1]$ and every $j\in[n]$ and edge between the $j$th vertex of the right boundary of $S_i$ and the $j$th vertex of the left boundary of $S_{i+1}$ together with edges between the $j$th vertex of the right boundary of $S_{\ell}$ and the $j$th vertex of the left boundary of $S_1$.

\paragraph{Surface-walls.}
The \emph{elementary extended $n$-surface wall} with \emph{$h$ handles, $c$ crosscaps, and $b$ vortices} is the graph obtained from the cylindrical closure of $n$-segments $S_1,\dots,S_{h+c+b+1}$ such that there are exactly one elementary $n$-wall-segment, $h$ elementary $n$-handle-segments, $c$ elementary $n$-crosscap-segments, and $b$ elementary $n$-vortex-segments among the $S_i$.
An \emph{extended $n$-surface-wall} is a subdivision of an elementary $n$-surface-wall.
We refer to the tuple $(h,c,b)$ as the \emph{signature} of the extended $n$-surface-wall with $h$ handles, $c$ crosscaps, and $b$ vortices.
An extended surface wall without vortices is also simply called a \emph{surface wall}.

Notice that every extended $n$-surface-wall with signature $(h,c,b)$ contains an $(n \times (h+c+b+1))$-annulus wall consisting of $n$ cycles.
We refer to this wall as the \emph{base wall} of the extended $n$-surface-wall.
Let $C_1,\dots,C_n$ be the base cycles of the base wall.
We will usually assume that $C_1$ is the cycle that contains all top boundary vertices of all segments involved, while $C_n$ can be seen as the ``outermost'' cycle.
We refer to $C_n$ as the \emph{simple cycle} of the extended $n$-surface-wall.

\paragraph{Parity walls.}
Let $n$ be a non-negative integer.
Let $W$ be an extended $n$-surface-wall with a base wall $W'$ that is bipartite and with the following additional properties.
Furthermore, for each handle or crosscap segment $S$ of $W$, either $S$ has a 2-colouring that extends the 2-colouring of $W'$ restricted to $S \cap W'$, in which case we call $S$ \emph{even}, or, for each of the paths $P$ corresponding to the handle, respectively the crosscap segment of $S$ the graph $(S \cap W') \cup P$ is non-bipartite, in which case we call $S$ \emph{odd}.
For each vortex segments $S'$ of $W$ either $S'$ has a 2-colouring that extends the 2-colouring of $W'$ restricted to $S \cap W'$, in which case we call $S'$ \emph{even}, or each cycle of the nest of $S'$ is odd and each odd cycle in $S'$ separates the vertices of the inner cycle of $S'$ from the vertices of the base cycle of $W$, in which case we call $S'$ odd.
A wall with all of these properties is called a \emph{parity $n$-surface-wall}.
Let $h^e$ be the number of even handle segments in $W$, $h^o$ be the number of odd handle segments, $c^e$ be the number of even crosscap segments, $c^o$ be the number of odd crosscap segments, let $b^o$ be the number of odd vortex segments, and let $b^e$ be the number of even vortex segments.
We refer to the tuple $(h^o,h^e,c^o,c^e,b^o,b^e)$ as the \emph{signature} of the parity $n$-surface-wall with $h^o$ odd handles, $h^e$ even handles, $c^o$ odd crosscaps, $c^e$ even crosscaps, $b^o$ odd vortices, and $b^e$ even vortices.

We note that in our situation we will only ever consider parity $n$-surface-walls without odd handles and without odd vortices.
The terms \emph{base wall} and \emph{simple cycle} are then adapted to parity $n$-surface-walls in the natural fashion.
\section{Bipartite flat wall theorem}\label{sec:bipartiteflatwall}
In this section, we prove how to find an even-faced mesh or transaction in a given flat mesh or flat transaction, respectively.
In \cite{RobertsonS1995Grapha} and \cite{GorskySW2025polynomialboundsgraphminor}, a flat mesh is the starting point to find a $\Sigma$-decomposition of a graph almost embedded on a surface.
Similarly, for our structure theorem, an even-faced flat mesh is a starting point, and we repeatedly find even-faced flat transactions to construct an almost embedding on a surface.

\begin{definition}[Even-faced flat mesh]\label{def:flatmesh}
    Let $n \geq 2$ be an integer.
    Let $G$ be a graph, let $M \subseteq G$ be an $n$-mesh.
    We say that $M$ is \emph{flat} in $G$ if there exits a $\Sigma$-rendition $\rho$ of $G$ in a sphere $\Sigma$ with a single vortex $c_0$, such that $M$ is flat in $\rho$, and the trace of the perimeter of $M$ separates $c_0$ and the vertices in $N(\rho) \cap V(M)$ in $\Sigma$.
    We additionally say that $M$ is \emph{even-faced flat} in $G$ if $\rho$ is even-faced around $M$.
\end{definition}

\begin{theorem}[Gorsky, Seweryn, and Wiederrecht \cite{GorskySW2025polynomialboundsgraphminor}]\label{thm:flatmesh}
    Let $t,n'$ be integers with $t\geq 5$, $n'\geq 2$, let $n=100t^3(n'+2t+2)$, and let $G$ be a graph with an $n$-mesh $M$. Then either
    \begin{itemize}
        \item there exists a model $\mu$ of $K_t$ in $G$ which is controlled by $M$, or
        \item there exist a set $Z$ with $|Z|<16t^3$ and an $(n'\times n')$-submesh $M'$ of $M$ which is disjoint from $Z$ and flat in $G-Z$.
    \end{itemize}
    Furthermore, there exists $\mathbf{O}(|E(G)|)$-time algorithm which finds either the model $\mu$ or the set $Z$, the submesh $M'$, and a $\Sigma$-rendition witnessing that $M'$ is flat in $G-Z$.
\end{theorem}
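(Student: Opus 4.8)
The plan is to prove this as a mesh-phrased, polynomially bounded Flat Wall Theorem, essentially following the route of Kawarabayashi, Thomas, and Wollan as sharpened in \cite{GorskySW2025polynomialboundsgraphminor}. First I would pass from the $n$-mesh $M$ to a large cylindrical submesh of $M$ and hence to a large subwall $W$ inside $M$, while keeping the perimeter cycle $C$ of $M$, which bounds a disk-like region: let $G_{\mathsf{in}}$ be the part of $G$ enclosed by $C$ together with $C$ itself, and form the society $(G_{\mathsf{in}},\Omega)$ with $\Omega$ the cyclic order on $V(C)$. Every phenomenon relevant to the theorem is an obstruction to $G_{\mathsf{in}}$ having a vortex-free rendition in a disk, so the whole proof organises around measuring, and then either destroying or exploiting, such obstructions.

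By the Two Paths Theorem (Proposition~\ref{prop:TwoPaths}) the obstructions are exactly \emph{crosses} in the society, and more globally \emph{transactions} across the wall. I would branch as follows. If $(G_{\mathsf{in}},\Omega)$, relative to an appropriate subwall, admits a transaction $\mathcal{P}$ of order above the Erd\H{o}s--Szekeres threshold $\sim(2t)^2$, then I clean $\mathcal{P}$ to be orthogonal to the concentric cycles of $M$ and apply Lemma~\ref{lem:monotonetransaction}; this yields either a large planar transaction or a crosscap transaction of order $\gtrsim 2t$. A crosscap transaction of this order, routed through $\Theta(t)$ concentric cycles and $\Theta(t)$ radial paths of $M$, is turned into a $K_t$-minor model by the standard grid-routing construction: each branch set is a chunk of the cylindrical mesh together with one of the cross paths, made wide enough that the mesh supplies the required $t$ internally disjoint linking paths to every horizontal and vertical path, so that the model is controlled by $M$. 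If no such crosscap transaction exists anywhere, then every central separator has order $O(t^2)$ and one is in the ``bounded obstruction'' regime.

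In that regime I would run the usual $W$-bridge analysis: a bridge attaching within a short arc of $W$ is harmless, while a bridge with widely spread attachments, or a crossing pair of bridges, is a local obstruction. If there are $\gtrsim t^3$ pairwise-disjoint local obstructions spread over $\gtrsim t^3$ disjoint sub-meshes of $M$, I combine them through the grid structure of $M$ into a $K_t$-minor controlled by $M$, exactly as before; this is where the cubic term in $n=100t^3(n'+2t+2)$ and the $16t^3$ in the bound on $|Z|$ come from, since each obstruction consumes a bounded number of rows and columns and a bounded number of apex vertices and one can afford $O(t^3)$ of them. Otherwise there is a set $Z$ with $|Z|<16t^3$ meeting all obstructions; inside an $(n'\times n')$-submesh $M'$ disjoint from $Z$, the compass of $M'$ in $G-Z$ has no cross, so Proposition~\ref{prop:TwoPaths} gives it a vortex-free rendition in a disk, and placing everything outside that disk into a single vortex $c_0$ produces the $\Sigma$-rendition in the sphere demanded by Definition~\ref{def:flatmesh}, with the trace of the perimeter of $M'$ separating $c_0$ from $N(\rho)\cap V(M')$.

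For the algorithm, each step is linear time: extracting the cylindrical submesh and the subwall, computing maximum transactions and minimum central separators together with their nested linkages via linear-time connectivity routines \cite{KawarabayashiLR2015Connectivity}, testing for crosses and producing the disk rendition via the linear-time Two Paths algorithm, and the Erd\H{o}s--Szekeres sorting step \cite{ErdosS1935Combinatorial}. The step I expect to be the main obstacle is the quantitative routing: showing, with the explicit constants above, that $\gtrsim t^3$ well-separated bad configurations in a mesh of the stated size genuinely assemble into a $K_t$-minor that is \emph{controlled} by $M$ (that is, one meeting the ``$t$ disjoint paths to every horizontal and vertical path'' condition), and dually that deleting fewer than $16t^3$ vertices really does suffice to make every surviving bridge tame. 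Getting the bookkeeping of rows, columns, and apex vertices consumed by each piece to balance is the crux, and is precisely the part that \cite{GorskySW2025polynomialboundsgraphminor} carries out in detail.
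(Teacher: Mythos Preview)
This theorem is not proved in the paper: it is stated with attribution to \cite{GorskySW2025polynomialboundsgraphminor} and used as a black box (it is applied inside the proof of Theorem~\ref{thm:evenfacedflatmesh} and nowhere reproved). So there is no ``paper's own proof'' to compare your proposal against. Your sketch is a plausible high-level outline of the Kawarabayashi--Thomas--Wollan route as carried out in \cite{GorskySW2025polynomialboundsgraphminor}, and you yourself note that the delicate bookkeeping is deferred to that reference; but for the purposes of the present paper the correct ``proof'' is simply the citation.
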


We start with a basic lemma on the parity of paths connecting two vertices in non-bipartite graphs.

\begin{lemma}\label{lem:paritypaths}
    Let $G$ be a $2$-connected non-bipartite graph and let $u,v \in V(G)$ be distinct.
    Then there exist two $u$-$v$-paths $P_1$ and $P_2$ such that $P_1$ has an odd length and $P_2$ has an even length.
    Furthermore, there exists an $\mathbf{O}(|E(G)|)$-time algorithm that finds such a pair $P_1$ and $P_2$.
\end{lemma}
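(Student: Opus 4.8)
The plan is to exploit $2$-connectivity via an ear decomposition together with the hypothesis that $G$ is non-bipartite, which forces at least one ear to be an odd cycle or an odd-length ear relative to its attachment. First I would fix the two vertices $u,v$ and, using Menger's theorem (or a DFS-based $st$-numbering), find two internally disjoint $u$-$v$-paths $R_1$ and $R_2$; their union is a cycle $K$ through $u$ and $v$. If $K$ is odd, then the two arcs of $K$ between $u$ and $v$ already have opposite parity and we are done, taking $P_1,P_2$ to be these arcs. So the interesting case is when $K$ is even, i.e.\ $R_1$ and $R_2$ have the same parity; here we need to ``absorb'' an odd cycle somewhere in the graph to flip the parity of one of the two paths.

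Assume $K$ is even. Since $G$ is non-bipartite it contains an odd closed walk, hence an odd cycle $C$. Using $2$-connectivity, fix two disjoint paths from $V(C)$ to $V(K)$ (if $|V(C)\cap V(K)|\ge 2$ these may be trivial, and if $C$ and $K$ already share two vertices we handle that directly); this is a standard fan/linkage argument via Menger between the vertex set $V(C)$ and $V(K)$. These two connecting paths, together with an appropriately chosen arc of $C$, let us reroute one of the two $u$-$v$-paths so that it now traverses an odd portion of $C$, changing its parity by exactly the parity of the arc of $C$ used. More carefully, let $a,b$ be the two points where the connecting structure meets $K$; the two $a$-$b$-arcs of $K$ differ in parity from the two $a$-$b$-``detours'' through $C$ by the two arc-lengths of $C$, which sum to an odd number, so at least one detour has parity opposite to the corresponding arc of $K$. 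Splicing this detour into one of $R_1,R_2$ (choosing the arc of $K$ lying on the correct side of $\{a,b\}$ relative to $\{u,v\}$, or concatenating via $u$ and $v$ around $K$) produces a $u$-$v$-walk whose parity differs from the other path; extracting a $u$-$v$-path from this walk preserves parity modulo~$2$ only if we are careful, so instead I would phrase the whole argument at the level of subgraphs: take the symmetric difference of the even cycle $K$ with an odd cycle $C'$ formed from $C$ and the two linking paths, obtaining an \emph{odd} closed walk through $u$ and $v$ that can be decomposed into two $u$-$v$-paths of opposite parity.

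The main obstacle is the careful case analysis needed to guarantee that the rerouting actually yields two genuine (internally disjoint, or at least simple) $u$-$v$-\emph{paths} of the two prescribed parities, rather than merely walks: symmetric-difference arguments give an Eulerian subgraph of odd total size, and one must argue that it decomposes appropriately. I would handle this by working with a \emph{theta graph} or subdivided-$K_4$ configuration on $\{u,v,a,b\}$: once we know there is a subgraph $H\subseteq G$ that is a subdivision of a theta graph with branch vertices $u,v$ in which one of the three $u$-$v$-paths is odd and another is even, we are immediately done. So the real content is: \textbf{in a $2$-connected non-bipartite graph, any two vertices lie on a common subdivided theta subgraph containing both an odd and an even $u$-$v$-path.} This follows by taking the cycle $K$ above and, if it is even, adding a path realizing an odd cycle attached to $K$, pruning to a theta subdivision. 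For the algorithmic claim, every step---finding $R_1,R_2$, finding an odd cycle (e.g.\ via BFS layering detecting a non-bipartite edge), finding the two linking paths via one more Menger/flow computation of value $2$, and extracting the theta subdivision---runs in $\mathbf{O}(|E(G)|)$ time, so the overall bound is $\mathbf{O}(|E(G)|)$ as claimed.
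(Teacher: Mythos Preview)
Your approach is workable but takes a substantially more complicated route than the paper, and you leave real gaps at exactly the points you yourself flag as delicate.

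The paper's proof is a two-liner that inverts your order of operations: instead of first building a cycle $K$ through $u,v$ and then trying to splice an odd cycle into it, the paper starts with an odd cycle $C$ (which exists since $G$ is non-bipartite) and uses Menger's theorem to find two disjoint $\{u,v\}$--$V(C)$ paths $Q_1,Q_2$ landing at $u_1,u_2\in V(C)$. The two arcs $R_1,R_2$ of $C$ between $u_1$ and $u_2$ have opposite parities because $C$ is odd, so $Q_1\cup R_1\cup Q_2$ and $Q_1\cup R_2\cup Q_2$ are two $u$--$v$ paths of different parities. No case analysis, no rerouting, no walk-to-path extraction, and no theta subgraph is needed (note the two paths are not claimed to be internally disjoint --- they share $Q_1$ and $Q_2$). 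This is the trick you are missing: anchoring at the odd cycle rather than at $u,v$ makes the parity-flip automatic.

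In your version, once $K$ is even you have to reason about where the two $V(C)$--$V(K)$ linking paths land relative to $u,v$ on $K$, and you never actually carry out that case split. Your fallback claim --- that $u$ and $v$ always lie on a subdivided theta with an odd and an even $u$--$v$ arm --- is stronger than what the lemma asks for and you do not prove it; ``pruning to a theta subdivision'' hides exactly the case analysis you were trying to avoid, and when the attachment points $a,b$ both lie on the same arc of $K$ between $u$ and $v$ you get a theta on $a,b$, not on $u,v$. Your parity concern about extracting a path from a walk is also genuine and unresolved. All of this can be made rigorous, but the paper's proof sidesteps the entire discussion.
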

\begin{proof}
    Since $G$ is non-bipartite, $G$ contains an odd cycle $C$.
    By Menger's theorem, there are two disjoint $\{ u, v \}$-$V(C)$-paths $Q_1$ and $Q_2$.
    The endpoints $u_1,u_2$ of $Q_1$ and $Q_2$ on $C$, with $u_i \in V(Q_i)$ for $i \in [2]$, divide $C$ into two paths $R_1$ and $R_2$ with the same endpoints.
    Since the length of $C$ is odd, the parity of $R_1$ and $R_2$ must be different.
    Then the paths $Q_1u_1R_1u_2Q_2$ and $Q_1u_1R_2u_2Q_2$ are two $u$-$v$-paths with different parity.
\end{proof}

Next, we prove that a sufficiently large flat mesh contains an even-faced flat mesh of given size.

\begin{theorem}\label{thm:bipartitemesh}
    Let $k,h\geq 2$ be integers and let $G$ be a graph that contains a flat $5kh$-mesh $M$.
    Then one of the following holds:
    \begin{enumerate}
        \item $G$ contains an odd minor model $\mu$ of $\mathscr{U}_h$ that is controlled by $M$, or
        \item $G$ contains an even-faced flat $k$-submesh $M'$ of $M$.
    \end{enumerate}
    Furthermore, there exists an $\mathbf{O}(|E(G)|)$-time algorithm that finds either the model $\mu$ or the submesh $M'$, and a $\Sigma$-rendition witnessing that $M'$ is even-faced and flat.
\end{theorem}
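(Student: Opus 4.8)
The plan is to work inside the given sphere-rendition $\rho$ of the flat $5kh$-mesh $M$, with its unique vortex $c_0$ and the $\rho$-aligned disk $\Delta$ bounded by the trace of the perimeter of $M$ that is disjoint from $c_0$. Every cell of $\rho$ that is $2$-connected to a submesh of $M$ lies in $\Delta$, and since in a sphere every trace is contractible, the non-contractibility clause of ``even-faced'' is vacuous; hence the only obstructions to $\rho$ being even-faced around a submesh $M'\subseteq M$ are a cell $c$ that is $2$-connected to $M'$ via nodes $u,v$ with $\sigma(c)$ containing $u$-$v$-paths of both parities, and a grounded odd cycle using an edge in a cell $2$-connected to $M'$. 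To make this combinatorial I would first apply the cell-splitting from the proof of \zcref{lem:reconciliationforevenfaced}, and then collapse each remaining cell $2$-connected to $M$ to a single edge, respectively a path of length two, between each pair of its nodes according to the parity of the paths of $\sigma(c)$ between them (placing both an edge and a length-two path when both parities occur). This yields a graph $\widehat M$ drawn in $\Delta$ that still contains a $5kh$-mesh arising from $M$ and in which a non-parity-preserving cell becomes an odd cycle and every grounded odd cycle of $G$ through a $2$-connected cell survives as an odd cycle; consequently a submesh $M'$ of $M$ is even-faced flat, as witnessed by $\rho$, precisely when the subgraph $\widehat{M'}$ of $\widehat M$ spanned by $M'$ together with all cells $2$-connected to $M'$ is bipartite.

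Next I would set up the dichotomy. Using \zcref{lem:paritypaths} on the $2$-connected blocks of the cells, every cell $2$-connected to $M$ that is not parity-preserving is turned into a \emph{parity switch}: a pair of $M$-paths of opposite parity running through that cell with common endpoints on $M$. The first alternative is that there is a $k$-submesh $M'$ of $M$ with $\widehat{M'}$ bipartite; by the locality of cells in a flat rendition one checks that no obstruction reaches $M'$, so $\rho$ witnesses that $M'$ is an even-faced flat $k$-submesh and we are in outcome (2). If no such $k$-submesh exists, then the non-bipartiteness of $\widehat M$ is ``robust'': it cannot be localised to a bounded part of the $5kh$-mesh. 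I would exploit this -- either via an odd-cycle-packing count or by iteratively searching for grounded odd cycles with a breadth-first search and discarding those confined to few mesh bricks -- to extract either a single grounded odd cycle or parity switch that interacts monotonically with $\Omega(h)$ of the mesh lines, or $\Omega(h)$ pairwise disjoint such obstructions attached at spread-out bricks. Since $M$ is a $5kh$-mesh, the grid-connectivity that remains after fixing these obstructions still contains an $\Omega(h)$-mesh, through which I route the obstruction(s) to realise $\mathscr{U}_h$ as an odd minor controlled by $M$, with the factor $5$ absorbing this loss; this is outcome (1).

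The step I expect to be the main obstacle is exactly this last routing: one must convert a non-local parity switch, or a family of spread-out parity obstructions, into precisely the pattern of parity-breaking edges defining $\mathscr{U}_h$, while certifying that the resulting minor model is controlled by $M$ -- that is, that each branch set stays linked, via Menger's theorem, to many horizontal and many vertical paths of $M$ -- and while keeping the bounds linear. The fact that $\mathscr{U}_h$ is a \emph{universal} parity breaking grid should reduce the single non-local switch to bookkeeping (any small parity obstruction attached to a sufficiently large controlled grid contains an order-$h$ universal parity grid as a controlled odd minor), but disentangling several switches in the spread-out case and carrying everything out within the time bound is where the real work lies. For the $\mathbf{O}(|E(G)|)$-time algorithm the remainder is routine: the rendition is given; testing each cell $2$-connected to $M$ for parity-preservation costs $\mathbf{O}(|E(\sigma(c))|)$, hence $\mathbf{O}(|E(G)|)$ in total; a grounded odd cycle through a $2$-connected cell (or a certificate of its absence) is found by a breadth-first search in $\widehat M$ in linear time; and the subsequent extraction of $M'$, or the routing of $\mathscr{U}_h$ with the help of \zcref{lem:paritypaths} and \zcref{lem:monotonetransaction}, is carried out in linear time as well.
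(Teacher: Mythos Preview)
The step you yourself flag as ``the main obstacle'' is a genuine gap, and the paper shows it is entirely avoidable. Your plan is to deduce, from the absence of \emph{any} even-faced $k$-submesh, a structured family of parity obstructions via an unspecified ``odd-cycle-packing count'' or iterative search, and then to route these into $\mathscr{U}_h$. But you neither justify the dichotomy ``one long switch versus many spread-out switches'' nor explain why the spread-out case produces obstructions in the \emph{grid pattern} that $\mathscr{U}_h$ actually demands (one independent parity choice per edge of a $(2h{+}1)\times(2h{+}1)$ grid); a single odd cycle crossing many mesh lines does not obviously yield that many independent choices. Invoking \zcref{lem:monotonetransaction} is a red herring, since there is no society or transaction here. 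A smaller issue: you want the original $\rho$ to witness even-facedness of a submesh $M'$, but ``$2$-connected to $M'$'' is a global condition and cells far outside the disk of $M'$ may qualify; the paper handles this by passing to a new cylindrical rendition per submesh.

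The paper sidesteps all of this with one elementary move. It tests only the $(4h{+}1)^2$ specific $k$-submeshes $M_a^b$ obtained by slicing $M$ into a $(4h{+}1)\times(4h{+}1)$ array, each equipped with its own rendition $\rho_a^b$ (everything outside $\Delta_a^b$ merged into the vortex, which also localises the even-facedness check). If some $\rho_a^b$ is even-faced we are done. Otherwise, for \emph{every} $(a,b)$ the block of $G_a^b$ containing the perimeter of $M_a^b$ is $2$-connected and non-bipartite, so \zcref{lem:paritypaths} applied to each ``orange'' block $M_a^b$ with $a+b$ odd gives two paths of opposite parity, inside $\Delta_a^b$, between the two adjacent ``blue'' corner vertices $v_{a'}^{b'}$ (both indices odd). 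Choosing, independently for each orange block, whichever parity $\mathscr{U}_h$ prescribes, the union of the chosen paths is already an even subdivision of $\mathscr{U}_h$ controlled by $M$. There is no extraction and no routing: the $(4h{+}1)\times(4h{+}1)$ array \emph{is} the routing, and the factor $5$ in $5kh$ is exactly what makes $(4h{+}1)k\le 5kh$.
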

\begin{proof}
    Let $\rho$ be a $\Sigma$-rendition of $G$ in which $M$ is flat, let $P_0, P_1, \cdots ,P_{5kh-1}$ be the horizontal paths of $M$, and let $Q_0, Q_1, \cdots ,Q_{5kh-1}$ be the vertical paths of $M$.    
    For $a \in [4h+1]$ and $b \in [4h+1]$, let $M_{a}^b$ be the unique block of $\bigcup_{(a-1)k+1 \leq i \leq ak} P_i \cup \bigcup_{(b-1)k+1 \leq j \leq bk} Q_j$.
    Then each $M_{a}^b$ is a $(k\times k)$-mesh.
    For each $a$ and $b$, the trace of the perimeter of $M_{a}^b$ bounds a closed disk $\Delta_a^b$.
    Since $M$ is a flat mesh, we may assume that $\Delta_a^b$ contains each edge of $M^b_a$.
    Let $\rho_a^b$ be the $\Sigma$-rendition obtained by merging the cells not in $\Delta_a^b$ in to a vortex $c_a^b$.
    Then $M_a^b$ is flat in $\rho_a^b$.
    If there is some $a,b$ such that $\rho_{a}^b$ is even-faced, then we found a desired flat even-faced $k$-mesh.
    Note that checking whether $\rho_a^b$ is even-faced or not costs us $\mathbf{O}(|E(G_a^b)|)$-time, where $G_a^b \coloneqq G - [\sigma_{\rho_a^b}(c_a^b)]$.
    So in total the whole process can be done in $\mathbf{O}(|E(G)|)$-time.
    
    Now suppose that no $\rho_{a}^b$ is even-faced.
    In this case, we will show that $G$ contains an even subdivision of the universal parity breaking grid of order $h$.
    Let $B_a^b$ be the block of $G_a^b$ containing the perimeter of $M_a^b$.
    Since $\rho_a^b$ is not even-faced, $B_a^b$ is non-bipartite.
    For each odd $a$ and odd $b$, fix a vertex $v_a^b \in P_{ak} \cap Q_{bk}$.
    
    For each odd $a$ and even $b$, let $u_a^b$ and $w_a^b$ be vertices such that $B \cap P_{ak} = u_a^bP_{ak}w_a^b$, where on $P_{ak}$, the four vertices $v_{a}^{b-1}$, $u_a^b$, $w_a^b$ and $v_{a}^{b+1}$ appear in the listed order.
    Then, according to \zcref{lem:paritypaths}, since $B_a^b$ is a $2$-connected non-bipartite graph, there are two $u_a^b$-$w_a^b$-paths that have lengths of differing parities.
    By concatenating them with $v_a^{b-1}P_{ak}u_a^b$ and $w_a^bP_{ak}v_a^{b+1}$, we obtain two $v_a^{b-1}$-$v_a^{b+1}$-paths $R_a^b$ and $\overline{R}_a^b$, where $R_a^b$ has even length and $\overline{R}_a^b$ has odd length.
    We may assume that that $R_a^b\cap G_a^{b-1}=\overline{R}_a^b\cap G_a^{b-1}$ and $R_a^b\cap G_a^{b+1}=\overline{R}_a^b\cap G_a^{b+1}$ are subpaths of $P_{ak}$.
    
    Similarly, for each even $a$ and odd $b$, we can obtain two $v_{a-1}^b$-$v_{a+1}^b$-paths $S_a^b$ and $\overline{S}_a^b$, where $S_a^b$ has even length and $\overline{S}_a^b$ has odd length, such that $S_a^b\cap G_{a-1}^{b} = \overline{S}_a^b\cap G_{a-1}^{b}$ and $S_a^b\cap G_{a+1}^{b} = \overline{S}_a^b\cap G_{a+1}^{b}$ are subpaths of $Q_{ak}$.
    Then for each $a$ and $b$ with $a+b$ odd (orange area in \zcref{fig:evenfacedflatmesh}), we can choose one of $R^a_b$ or $\overline{R}^a_b$ and $S^a_b$ or $\overline{S}_b^a$, so that the union of all chosen paths forms a graph that contains a universal parity breaking grid of order $k$.
    
    Note that since $M$ is a mesh, for each odd $a$ and odd $b$ (blue area in \zcref{fig:evenfacedflatmesh}), the intersection between $R_a^{b-1}$ and $S_{a-1}^b$ is a subpath of $P_{bk}\cap Q_{ak}$ who has $v_a^b$ as an endpoint.
    Similarly, the intersection between one of $R_a^{b-1},\overline{R}_a^{b-1},R_a^{b+1},\overline{R}_a^{b+1}$ and one of $S_{a-1}^{b},\overline{S}_{a-1}^{b},S_{a+1}^{b},\overline{S}_{a+1}^{b}$ is always a subpath of $P_{bk}\cap Q_{ak}$ who has $v_a^b$ as an endpoint.
    Hence, when we take the union of all chosen paths, there will be no cycle contained in one $M_a^b$.
    
\end{proof}

\begin{figure}[ht]
    \centering
    \begin{tikzpicture}[scale=0.7]
    \pgfdeclarelayer{background}
    \pgfdeclarelayer{foreground}
    \pgfsetlayers{background,main,foreground}
    \begin{pgfonlayer}{background}
        \pgftext{\includegraphics[width=16cm]{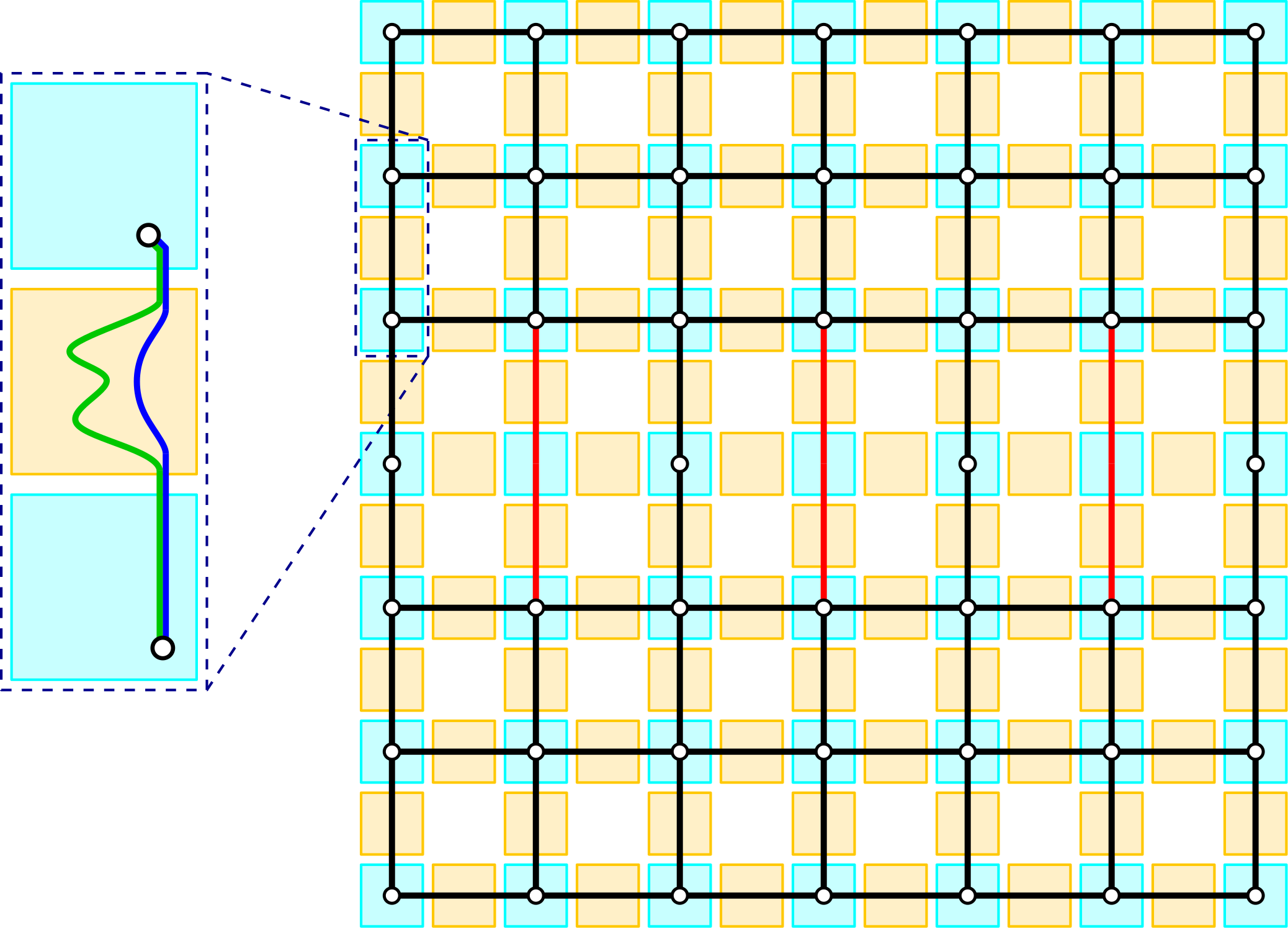}} at (C.center);
    \end{pgfonlayer}{background}
    \begin{pgfonlayer}{main}
    \end{pgfonlayer}{main}
        \node[]() at (-6.65,3) {$v_{3}^{1}$};
        \node[]() at (-6.5,-2.15) {$v_{5}^{1}$};
        \node[]() at (-7.45,0.9) {$\color{green!70!black}{\overline{S}_{4}^{1}}$};
        \node[]() at (-5.85,0.95) {$\color{blue}{S_{4}^{1}}$};
    \begin{pgfonlayer}{foreground}
    \end{pgfonlayer}{foreground}
    \end{tikzpicture}
    \caption{In each orange area, we can obtain two paths with different parity between two adjacent blue areas.
    We can choose the parity of each path between two blue area so that we can find a subgraph that has the universal parity breaking grid as an odd minor:}
    \label{fig:evenfacedflatmesh}
\end{figure}

By combining \zcref{thm:ktminormodel}, \zcref{thm:flatmesh} and \zcref{thm:bipartitemesh}, we can deduce the first main result of this section.

\begin{theorem}[Even-faced Flat Mesh Theorem]\label{thm:evenfacedflatmesh}
    Let $t,k,h$ be integers with $t\geq 5$, $k,h\geq 2$, let $t_0=\lceil c_{\ref{thm:ktminormodel}}t\sqrt{\log 12t} \rceil$, let $n=100t_0^3(5kh+2t_0+2)$, and let $G$ be a graph with an $n$-mesh $M$.
    Then either
    \begin{itemize}
        \item there exists an odd-minor model $\mu$ of $K_t$ in $G$ which is controlled by $M$,
        \item for some set $A$ of vertices with $|A|\leq 8t$, the large block of $\mathcal{T}_M-A$ is bipartite,
        \item there exists an odd-minor model $\mu'$ of a universal parity breaking grid or order $h$ in $G$ which is controlled by $M$, or
        \item there exist a set $Z$ with $|Z|<16t_0^3$ and an even-faced $k$-submesh $M'$ of $M$ that is disjoint from $Z$ and flat in $G-Z$.
    \end{itemize}
    Furthermore, there exists $\mathbf{O}(f_{\ref{thm:ktminormodel}}(t)|V(G)|^{\omega_{\ref{def:matrixmultconstant}}})$-time algorithm which finds either the model $\mu$, the set $A$, the model $\mu'$, or the set $Z$, the submesh $M'$, and a $\Sigma$-decomposition witnessing that $M'$ is even-faced and flat in $G-Z$.
\end{theorem}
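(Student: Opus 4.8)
The plan is to chain together the three results cited just before the statement: \zcref{thm:flatmesh}, \zcref{thm:ktminormodel}, and \zcref{thm:bipartitemesh}. First I would verify the parameter bookkeeping: since $t_0=\lceil \mathsf{c}_{\ref{thm:ktminormodel}}t\sqrt{\log 12t}\rceil\ge t\ge 5$ and $5kh\ge 20\ge 2$, \zcref{thm:flatmesh} applies to $G$ and $M$ with its clique parameter instantiated as $t_0$ and with $n'=5kh$, because $n=100t_0^3(5kh+2t_0+2)=100t_0^3(n'+2t_0+2)$ is exactly the required bound. In $\mathbf{O}(|E(G)|)$-time this returns either a model $\varphi$ of $K_{t_0}$ in $G$ controlled by $M$, or a set $Z$ with $|Z|<16t_0^3$ together with a $5kh$-submesh $M'$ of $M$ that is disjoint from $Z$ and flat in $G-Z$.

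In the first case I would feed $\varphi$ into \zcref{thm:ktminormodel} with $l=t$ (this is why $t_0$ has its particular shape). Its two outcomes are: an odd $K_t$-minor model in $G$; or a set $A$ with $|A|\le 8t$ such that the large block of $\mathcal{T}_\varphi-A$ is bipartite. To match outcomes one and two of the target theorem I need two transfer facts. (a) The odd $K_t$-minor model handed back is still controlled by $M$: this I would argue from the fact that its branch sets are built from pieces of the branch sets of $\varphi$ and introduce no vertices outside the highly connected region around $M$, so no separation of order $<t$ (hence of order $<t_0$) isolates one of its branch sets on the minority side. (b) Since $\varphi$ is controlled by $M$, the tangle $\mathcal{T}_\varphi$ of order $t_0$ is exactly the truncation of $\mathcal{T}_M$ to order $t_0$: given any $(A',B')\in\mathcal{S}_{t_0}(G)$ with $B'$ the $M$-majority side, at least one of the $t_0$ branch sets of $\varphi$ avoids the separator $A'\cap B'$, and control forces that branch set into $B'$, so $\mathcal{T}_M$ and $\mathcal{T}_\varphi$ orient every low-order separation the same way. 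Given (b), $\mathcal{T}_\varphi-A$ is the truncation of $\mathcal{T}_M-A$ to order $t_0-|A|\ge 2$, and a tangle of order at least $2$ shares its large block with all of its truncations of order at least $2$; hence the large block of $\mathcal{T}_M-A$ is bipartite, which is outcome two.

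In the second case I would apply \zcref{thm:bipartitemesh} to $G-Z$ with its flat $5kh$-mesh $M'$ and the same $k,h$, again in $\mathbf{O}(|E(G)|)$-time. Its first outcome is an odd-minor model $\mu'$ of $\mathscr{U}_h$ in $G-Z\subseteq G$ controlled by $M'$; since $M'$ is a submesh of $M$ and both meshes are far larger than $h$, after deleting any set of size $<h$ the surviving part of $M'$ lies in the same component as the surviving part of $M$, so $\mu'$ is controlled by $M$ as well — this is outcome three. Its second outcome is an even-faced flat $k$-submesh $M''$ of $M'$, hence a submesh of $M$, which together with $Z$ (of size $<16t_0^3$) is outcome four. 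For the running time I would simply observe that \zcref{thm:flatmesh} and \zcref{thm:bipartitemesh} are linear, while \zcref{thm:ktminormodel} costs $\mathbf{O}(f_{\ref{thm:ktminormodel}}(t_0)|V(G)|^{\omega_{\ref{def:matrixmultconstant}}})$, and $f_{\ref{thm:ktminormodel}}(t_0)$ is a computable function of $t$, so after renaming the overall bound is $\mathbf{O}(f_{\ref{thm:ktminormodel}}(t)|V(G)|^{\omega_{\ref{def:matrixmultconstant}}})$.

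None of the individual invocations is deep; the real work is the control bookkeeping. The step I expect to be the main obstacle is fact (a) above — certifying that the odd $K_t$-minor produced by \zcref{thm:ktminormodel} is controlled by $M$ — together with the clean identification of $\mathcal{T}_\varphi$ with a truncation of $\mathcal{T}_M$ needed so that the ``bipartite large block'' conclusion survives the change of tangle. The remaining verifications (parameter inequalities, submesh nesting, that flatness and even-facedness are inherited by submeshes of a flat/even-faced mesh in $G-Z$) are routine.
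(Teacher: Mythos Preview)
Your proposal is correct and matches the paper's approach exactly: the paper does not give a written proof at all beyond the sentence ``By combining \zcref{thm:ktminormodel}, \zcref{thm:flatmesh} and \zcref{thm:bipartitemesh}, we can deduce the first main result of this section,'' and your chain (\zcref{thm:flatmesh} with clique parameter $t_0$ and $n'=5kh$, then \zcref{thm:ktminormodel} on the $K_{t_0}$-model, else \zcref{thm:bipartitemesh} on the flat $5kh$-submesh in $G-Z$) is precisely that combination with the intended parameters. You are also right that the only non-routine points are the two control transfers you flag: that the odd $K_t$-model returned by \zcref{thm:ktminormodel} is controlled by $M$, and that $\mathcal{T}_\varphi$ coincides with the truncation of $\mathcal{T}_M$ so that the bipartite-large-block conclusion passes to $\mathcal{T}_M-A$; the paper uses both of these facts elsewhere (e.g.\ in the proof of \zcref{thm:strongest_localstructure}) without further comment, so your identification of them as the bookkeeping that needs spelling out is accurate rather than a gap.
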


Now we find an even-faced transaction in a strip society that is flat and isolated.
The following theorem is what we want to make an even-faced version.

\begin{lemma}[Lemma 7.1. from \cite{GorskySW2025polynomialboundsgraphminor}]\label{lem:flatteningtransactions}
    Let $t\geq 5,w'\geq 2$ be integers, let $w=97t^3(w'+40)+1$, let $(G,\Omega)$ be a society with a cylindrical rendition $\rho$ around a vortex $c_0$, and let $\mathcal{C}$ be a nest in $\rho$ with $|\mathcal{C}|\geq t+16$.
    Let $\mathcal{P}$ be a transaction of order $w$ in $(G,\Omega)$ that is monotone and orthogonal to $\mathcal{C}$.
    
    Then, either there exists a model of $K_t$ controlled by a mesh whose horizontal paths are subpaths of distinct cycles from $\mathcal{C}$, or there exist a set $A \subseteq V(\sigma(c_0))$ with $|A|\leq 8t^3$, and a subtransaction $\mathcal{P}' \subseteq \mathcal{P}$ of order $w'$ that consists of consecutive paths from $\mathcal{P}$, is disjoint from $A$ and isolated and flat in $(G-A,\Omega)$.
    
    Furthermore, there exists an $\mathbf{O}(|E(G)|)$-time algorithm, which finds $\mu$ or $A$ and $\mathcal{P'}$ as above.
\end{lemma}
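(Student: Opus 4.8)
The plan is to mirror the ``flatten a transaction'' arguments from the Graph Minors series and their quantitative refinements in \cite{KawarabayashiTW2021Quickly,GorskySW2025polynomialboundsgraphminor}: I will show that either a long block of \emph{consecutive} paths of $\mathcal{P}$ has essentially no interaction with the vortex $c_0$ — so that this block is flat and isolated after deleting a bounded set of vortex vertices — or such interactions are abundant enough to be woven together with the nest $\mathcal{C}$ into a $K_t$-minor controlled by a mesh formed from subpaths of the nest cycles.

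First I would fix the infrastructure. Index $\mathcal{P}=\{P_1,\dots,P_w\}$ naturally, so that $\mathcal{P}$ is monotone in this order. Since $\mathcal{P}$ is orthogonal to $\mathcal{C}$, every $P_i$ meets every $C_j$ in exactly two arcs, so the cycles $C_1,\dots,C_s$ together with suitable subpaths of the $P_i$ form a cylindrical mesh $M$ whose horizontal paths are subpaths of the $C_j$. As $\rho$ is cylindrical with $c_0$ its only vortex, $M$ is drawn by $\rho$ in the embedded part of $\Delta$; and for a consecutive sub-transaction $\mathcal{P}'=\{P_a,\dots,P_b\}$ with strip society $(G_1,\Omega_1)$, the region of $\Delta$ ``between'' the boundary paths $P_a,P_b$ that avoids $c_0$ is a disk. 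Hence the only way $\mathcal{P}'$ can fail to be flat or isolated in $G-A$ is if some path of $\mathcal{P}'$, or some $H$-bridge of its strip, reaches into the innermost surface $\Sigma_1$ and uses an edge of $\sigma(c_0)$: by \zcref{prop:TwoPaths} a cross in $(G_1,\Omega_1)$, and, by a routing argument, a strip $H$-bridge joining the two sides of the strip through $V(G)\setminus V(G_1)$, must meet $N_\rho(c_0)$, since everything outside $\Sigma_s$ is embedded and $\mathcal{P}$ is monotone.

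Next I would run a double pigeonhole on this. Call $P_i$ \emph{dirty} if $P_i$, together with every $H$-bridge of $\mathcal{P}$ or of a consecutive strip attaching to it inside $\Sigma_s$, meets $\sigma(c_0)$, and \emph{clean} otherwise; orthogonality to $\mathcal{C}$ bounds the nodes of $c_0$ that a single dirty path reaches by an absolute constant. If at most $8t^3$ paths are dirty, let $A\subseteq V(\sigma(c_0))$ collect the (at most $8t^3$) nodes of $c_0$ hit by dirty paths; the dirty paths split the remaining index positions into at most $8t^3+1$ runs of consecutive clean paths, so some run has length at least $(w-8t^3)/(8t^3+1)>w'+40$, and trimming a few paths at its ends to avoid degenerate separations yields a consecutive subtransaction $\mathcal{P}'$ of order $w'$ whose strip in $G-A$ lies entirely inside the embedded, $c_0$-free disk, hence $(G_1,\Omega_1)$ has a vortex-free rendition in a disk (it is flat) and receives no edge from $V(G-A)\setminus V(G_1)$ (it is isolated). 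If more than $8t^3$ paths are dirty, I would instead pick $t$ of them spread out along the index set, pass to the $(t+16)\times(2t)$ submesh $M'$ of $M$ carried by the nest cycles and the $t$ chosen radial pairs, carve $t$ branch sets out of $M'$, and use each chosen dirty path together with $\sigma(c_0)$ and the nest to route — via \hyperref[prop:mengersthm]{Menger's Theorem} — the missing adjacencies, the nest margin $s\ge t+16$ ensuring the resulting $K_t$-model is controlled by $M'$. This is exactly what the value $w=97t^3(w'+40)+1$ delivers: at most $8t^3$ bad events cut the transaction into $\sim 8t^3$ clean blocks, so a clean block of order $w'+40$ needs $w$ of order $t^3w'$, and $97t^3(w'+40)+1$ comfortably exceeds this. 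The $\mathbf{O}(|E(G)|)$-time algorithm follows by combining the linear-time Two Paths algorithm \cite{KawarabayashiLR2015Connectivity} with linear-time cross and bridge detection and linear-time disjoint-paths routing for the clique construction.

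The hardest step will be the ``many dirty paths'' case. Since the depth of $c_0$ is not bounded by the hypotheses, I cannot read a large clique off the vortex society directly; I must show that the interactions of the $t$ selected dirty paths with $c_0$, organised against the bounded-size nest, yield $\binom{t}{2}$ pairwise-disjoint connecting paths realising every adjacency of $K_t$, and — most delicately — that the resulting model is \emph{controlled} by $M'$, i.e.\ that no separation of order $<t$ pushes a branch set off the $M'$-majority side. Establishing this last point is where the genuine structure-theoretic work lies: it uses nest routing, the a priori bound $8t^3$ on the number of bad events, and the ``two arcs per cycle'' orthogonality, rather than pigeonhole alone.
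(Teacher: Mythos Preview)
This lemma is not proved in the present paper: it is quoted verbatim as Lemma~7.1 from \cite{GorskySW2025polynomialboundsgraphminor} and used as a black box, so there is no in-paper proof to compare your proposal against.

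That said, your sketch has the right architecture (split into consecutive blocks, either a long block is flat and isolated after a small deletion, or many obstructions combine with the nest into a controlled $K_t$), but two of the key steps do not go through as written. First, your bookkeeping for $A$ is off: you bound the number of \emph{dirty paths} by $8t^3$ and then assert that the ``nodes of $c_0$ hit by dirty paths'' also number at most $8t^3$; orthogonality to $\mathcal{C}$ controls how each $P_i$ crosses the nest, but inside $\Sigma_1$ a single dirty path (and the bridges attaching to it) may meet $\sigma(c_0)$ in arbitrarily many vertices, so this does not yield $|A|\le 8t^3$. The actual argument tracks a bounded number of \emph{obstructing events} (crosses or non-isolating bridges in block strips), each killed by deleting a bounded number of vertices, rather than counting contaminated paths. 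Second, in the ``many obstructions'' branch you only have $t$ paths that each touch $\sigma(c_0)$ somewhere; since the depth of $c_0$ is unbounded, this by itself does not produce the $\binom{t}{2}$ pairwise-disjoint linking paths you need, and you correctly flag this as the hardest step without supplying the mechanism. The reference proves this via a more careful dichotomy on strip societies (crosses versus flat renditions, using \zcref{prop:TwoPaths}) together with a mesh-based clique construction that exploits the $t+16$ nest cycles directly, rather than a generic Menger argument through $c_0$.
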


The following lemma enables us, whenever we find an unexposed transaction that is orthogonal to the nest, to either find a $\mathscr{U}_h$ or refine the transaction to be even-faced.

\begin{lemma}\label{lem:bipartitetransaction}
    Let $s,h,p$ be positive integers with $s\geq h+1$ and let $b$ be a non-negative integer.
    Let $(G, \Omega)$ be a society with an even-faced cylindrical rendition $\rho_0$ in the disk $\Delta$ around the vortex $c_0$ with a cozy nest $\mathcal{C} = \{ C_1, \ldots , C_s \}$ and let $\mathcal{R}$ be a radial linkage of order $h$ where $Y$ is the set of endpoints of $\mathcal{R}$ in $\Omega$.
    Let $H \coloneqq G - [\sigma_{\rho_0}(c_0)]$.
    Further, let $\rho$ be a $\Delta$-rendition of $(G, \Omega)$ with $b$ vortices $c_1, \ldots , c_b$ such that $\rho$ and $\rho_0$ agree on the rendition of $H$.
    Additionally, let $\mathcal{P}$ be an unexposed transaction of order $2ph+bp+p$ in $(G, \Omega)$ that is orthogonal to $\mathcal{C}$.
    
    Then either there exists
    \begin{itemize}
        \item an odd-minor model $\mu$ of $\mathscr{U}_h$ controlled by a mesh whose horizontal paths are subpaths of distinct cycles from $\mathcal{C}$ such that for each horizontal or vertical path $X$ of $\mathscr{U}_h$, there exist $h$ internally disjoint $Y$-$\bigcup_{x\in V(X)}\mu(x)$ paths,
        \item  or there exists an separating even-faced transaction $\mathcal{P}' \subseteq \mathcal{P}$ of order $p$, with $(G',\Omega')$ being the $\mathcal{P}'$-strip society, such that $V(\sigma(c_i)) \cap V(G') = \emptyset$ for all $i \in [b]$.
    \end{itemize}
    In particular, there exists an algorithm that, when given $\rho$, $(G,\Omega)$, and $\mathcal{P}$ as input, finds one of these outcomes in time at most $\mathbf{O}(|E(G)|)$.
\end{lemma}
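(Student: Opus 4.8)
The plan is to process the transaction $\mathcal{P}$ in two stages: first peel off the interaction with the extra vortices $c_1,\dots,c_b$ and with the radial linkage $\mathcal{R}$, reducing to an unexposed transaction of order roughly $2p$ that is genuinely ``in the surface part'', and then run a parity argument analogous to the one used in \zcref{thm:bipartitemesh} to either extract an odd-minor model of $\mathscr{U}_h$ controlled by a mesh coming from $\mathcal{C}$, or refine down to an even-faced separating subtransaction of order $p$. For the first stage I would argue as follows. Since $\mathcal{P}$ has order $2ph+bp+p$, a counting/pigeonhole argument (each vortex $c_i$ can be ``entered'' by at most a controlled number of paths before we can reroute along the nest, and the $h$ radial paths of $\mathcal{R}$ cut $\Omega$ into $h$ segments) lets me discard at most $bp$ paths that use an edge in some $\sigma(c_i)$ and at most $2ph$ further paths so that the remaining $p$ paths form a subtransaction $\mathcal{P}_0\subseteq\mathcal{P}$ of consecutive paths whose strip society is disjoint from all $V(\sigma(c_i))$; here I would use that $\mathcal{P}$ is unexposed so each of its paths avoids $\sigma(c_0)$ already, and that $s\ge h+1$ together with coziness of $\mathcal{C}$ lets me localise $\mathcal{P}_0$ inside a $C_j$-society for some $j\le s$ while keeping enough of the nest intact. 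Using \zcref{lem:monotonetransaction} I may further assume $\mathcal{P}_0$ is monotone, hence either a planar or a crosscap transaction.

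For the second stage I would work inside the strip society $(G',\Omega')$ of $\mathcal{P}_0$ together with its flat $\Delta$-rendition inherited from $\rho$ (the paths of $\mathcal{P}_0$ avoid all vortices, so by the Two Paths Theorem / the fact that it is isolated this strip is vortex-free in a disk). If this rendition is already even-faced around $\bigcup\mathcal{P}_0$, then after checking separation — which follows because $\mathcal{P}_0$ consists of consecutive paths and, by shrinking once more if necessary using the leftover nest cycles, there is no linking path between the two complementary segments — we are in the second outcome and done. Otherwise some cell fails parity-preservation, equivalently some block $B$ between two consecutive paths is $2$-connected and non-bipartite; then \zcref{lem:paritypaths} gives, between suitably chosen boundary vertices, two paths of opposite parity. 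Running this over the $\approx h$ ``bad'' blocks I can glue the chosen paths together with subpaths of the members of $\mathcal{P}_0$ and with $h$ cycles from the nest $\mathcal{C}$ (playing the role of the horizontal paths / concentric cycles of a mesh), exactly as in the construction in \zcref{thm:bipartitemesh}, to build an even subdivision of $\mathscr{U}_h$; the nest cycles simultaneously certify that this $\mathscr{U}_h$ is controlled by the mesh and, via the radial linkage $\mathcal{R}$ of order $h$, yield the required $h$ internally disjoint $Y$-$\bigcup_{x\in V(X)}\mu(x)$ paths for every horizontal or vertical path $X$ of $\mathscr{U}_h$. To keep no cycle of the construction trapped inside a single block, I would, as in \zcref{thm:bipartitemesh}, arrange the blocks in a checkerboard pattern and route the horizontal and vertical correction paths through distinct nest cycles, so their pairwise intersections are single subpaths.

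The main obstacle I expect is the bookkeeping in the first stage: simultaneously accounting for the $b$ auxiliary vortices, the $h$ radial paths, and the coziness/nest constraints while still guaranteeing a subtransaction of order exactly $p$ that is both separating and disjoint from every $V(\sigma(c_i))$, and doing so within the claimed $\mathbf{O}(|E(G)|)$ running time. In particular ensuring the \emph{separating} property — not just isolated — requires being careful about which complementary segments of $\Omega$ we end up with after the reductions, and may force an extra use of \zcref{lem:flatteningtransactions} or of the cozy nest to kill any residual $X'$-$Y'$-path in the outer graph. The parity-gluing in the second stage is essentially a replay of \zcref{thm:bipartitemesh} and should be routine once the clean subtransaction is in hand. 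All steps are constructive and linear-time: the reduction is pigeonholing plus rerouting along explicit nest cycles, \zcref{lem:paritypaths} is $\mathbf{O}(|E(G)|)$, and checking even-facedness of a cell is $\mathbf{O}(|E(c)|)$.
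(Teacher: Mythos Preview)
Your plan has a genuine gap in the dichotomy that drives the proof. In Stage~1 you reduce all the way down to a \emph{single} subtransaction $\mathcal{P}_0$ of order $p$, and then in Stage~2 you say: if the strip of $\mathcal{P}_0$ is even-faced we are done, ``otherwise some cell fails parity-preservation \ldots\ running this over the $\approx h$ bad blocks'' you build $\mathscr{U}_h$. But failure of even-facedness of one strip of order $p$ guarantees only \emph{one} non-parity-preserving cell, not $h$ of them; there is no reason you get $\approx h$ bad blocks inside that single strip. A single parity-breaking region is nowhere near enough to assemble the $2h{+}1$ columns of an even subdivision of $\mathscr{U}_h$.

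The paper's proof fixes this by never collapsing to one candidate. It partitions $\mathcal{P}$ into $2h+b+1$ consecutive blocks $\mathcal{P}_1,\ldots,\mathcal{P}_{2h+b+1}$, each of order $p$. Because $\mathcal{P}$ is unexposed in $\rho$, each vortex $c_j$ lies in the strip society of at most one $\mathcal{P}_i$ (note: no path of $\mathcal{P}$ \emph{uses} a vortex edge, so your ``discard $bp$ paths that use an edge in some $\sigma(c_i)$'' is already a misreading; the vortices sit \emph{between} paths, inside strips). Throwing out at most $b$ strips leaves a set $S$ of $2h{+}1$ indices whose $\mathcal{P}_i$-strip societies are vortex-free in $\rho$ and hence separating and flat. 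Now the dichotomy is correct: either some $\mathcal{P}_i$ with $i\in S$ is even-faced and we output it, or \emph{every one} of the $2h{+}1$ strips contains a non-parity-preserving cell. These $2h{+}1$ independent bad regions, together with the cycles $C_1,\ldots,C_{h+1}$ from $\mathcal{C}$, are precisely what is glued (via \zcref{lem:paritypaths} in each strip) into the odd-minor model of $\mathscr{U}_h$, with the radial linkage $\mathcal{R}$ supplying the $Y$-paths. Your Stage~2 construction is morally the right picture, but it only works once you have $2h{+}1$ disjoint bad strips, not one.
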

\begin{proof}
    Let $P_1,\ldots,P_{2ph+bp+p}$ be the paths in $\mathcal{P}$, naturally indexed.
    For each $i\in [2h+b+1]$, let $\mathcal{P}_i=\{P_{(i-1)p+1},P_{(i-1)p+2},\ldots,P_{ip}\}$.
    Let $(G_i,\Omega_i)$ be the $\mathcal{P}_i$-strip society.
    Since $\mathcal{P}$ is unexposed in $\rho$, each of the vortices $c_1, \ldots , c_b$ can be contained in at most one $G_i$.
    Thus, we can find $S \subseteq [2h+b+1]$ with $|S|=2h+1$ so that for each $i \in S$, the strip society $(G_i,\Omega_i)$ does not contain a vortex.
    Furthermore, for each $i\in S$, $\mathcal{P}_i$ is an separating flat transaction.

    If there is some $i \in S$ such that $\mathcal{P}_i$ is even-faced, we are done.
    So suppose that for each $i \in S$, $\mathcal{P}_i$ is not even-faced.
    Note that checking whether the strip society of $\mathcal{P}_i$ is even-faced or not can be done in $\mathbf{O}(|E(G_i)|)$-time, so in sum, the whole process can be carried out in $\mathbf{O}(|E(G)|)$-time.
    
    Now we will find a universal parity breaking grid $\mathscr{U}_t$ as an odd minor.
    Let $X$ and $Y$ denote the two unique $P_1$-$P_{2ph+bp+p}$-paths in $C_1$ of positive length.
    Let $X_i$ and $Y_i$ denote the two unique $P_{(i-1)p+1}$-$P_{ip}$-paths in $C_1$ of positive length so that $X_i$ is contained in $X$.
    The uniqueness of these paths stems from the fact that $\mathcal{P}$ is orthogonal to $\mathcal{C}$, which further implies that $X_i$ and $Y_i$ are disjoint.
    Let $\Delta_i$ be the closure of the disk in $\Delta$ minus the traces of $P_{(i-1)p+1}$, $P_{ip}$, $X_i$ and $Y_i$ whose interior does not contain any node belonging to a cycle in $\mathcal{C}$.
    Further, let $u_i \in P_{(i-1)p+1} \cap X_i$ be an endpoint of $X_i$ and let $v_i \in P_{ip} \cap Y_i$ be an endpoint of $Y_i$.
    
    We will show that for each $i \in S$, there are two $u_i$-$v_i$-paths $Q_i$ and $\overline{Q}_i$ whose traces are contained in $\Delta_i$, such that $Q_i$ has the same parity as the $u_i$-$v_i$-paths contained in $C_1$,\footnote{Since $\rho_0$ is even-faced all cycles in $\mathcal{C}$ have even length.} while $\overline{Q}_i$ has the opposite parity.
    Note that finding such paths can be done in time at most $\mathbf{O}(|E(G_i)|)$, so in total, the whole process can be done in time at most $\mathbf{O}(|E(G)|)$.

    Let $c_i \in C(\rho)$ be a non-parity-preserving cell in the restriction of $\rho$ to $G_i$.
    Since $\rho_0$ and $\rho$ agree on $H$ and $\rho_0$ is even-faced, each such $c_i$ can be found in $\Delta_i$.
    Let $n_i$ and $m_i$ be the nodes of $c$ such that there are two $n_i$-$m_i$ paths that have different parity.
    Then by concatenating them with two disjoint paths between $\{n_i,m_i\}$ and $\{u_i,v_i\}$, we obtain the two $u_i$-$v_i$-paths $Q_i$ and $\overline{Q}_i$ with the desired parities.
    
    Let $M_1$ be a mesh whose horizontal paths are subpaths of $C_1, \ldots , C_{h+1}$ and whose vertical paths are subpaths of $\{ P_{(i-1)p+1} \mid i \in S \}$ that lie outside of the disk bounded by $C_1$ and intersect with $X$.
    In turn, let $M_2$ be a mesh whose horizontal paths are subpaths of $C_1, \ldots , C_{h+1}$ and whose vertical paths are subpaths of $\{ P_{ip} \mid i \in S \}$ that lie outside of the disk bounded by $C_1$ and intersect with $Y$.
    For each $i$, we can choose $Q_i$ or $\overline{Q}_i$ and add it between $M_1$ and $M_2$, such that the resulting graph contains the universal parity breaking grid of order $t$ as an odd minor (see \zcref{fig:evenfacedtransaction}).

    Let $X$ be a horizontal or vertical path of $\mathscr{U}_h$.
    By the Menger's theorem, it is easy to see that there exists $h$ internally disjoint $Y$-$\bigcup_{x\in V(P)}\mu(x)$ paths.
\end{proof}    
\begin{figure}[ht]
    \centering
    \begin{tikzpicture}[scale=0.67]
    \pgfdeclarelayer{background}
    \pgfdeclarelayer{foreground}
    \pgfsetlayers{background,main,foreground}
    \begin{pgfonlayer}{background}
        \pgftext{\includegraphics[width=16cm]{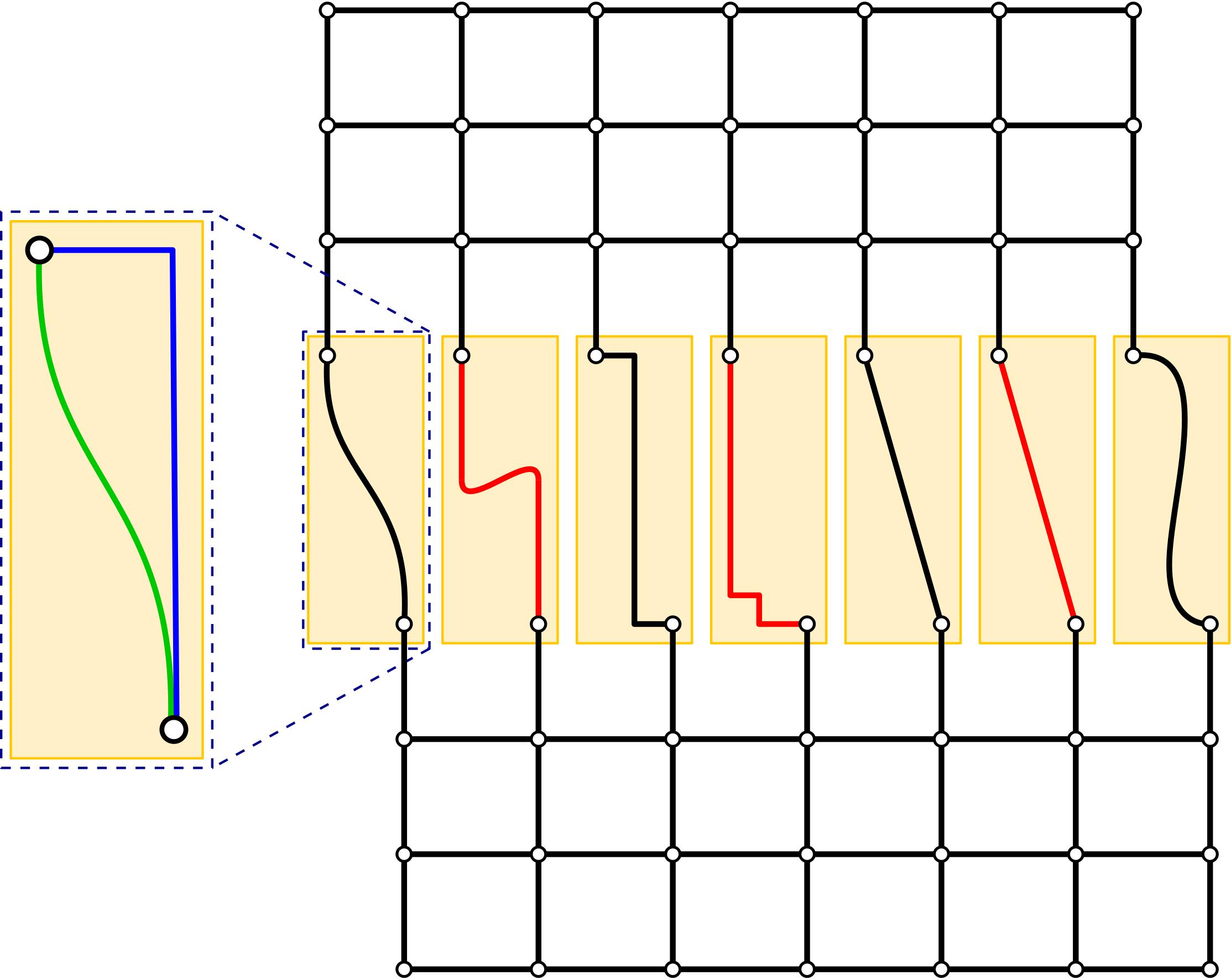}} at (C.center);
    \end{pgfonlayer}{background}
    \begin{pgfonlayer}{main}
    \end{pgfonlayer}{main}
        \node[]() at (-7,2.7) {$u_{1}$};
        \node[]() at (-6.25,-3) {$v_{1}$};
        \node[]() at (-7,0) {$\color{green!70!black}{Q_1}$};
        \node[]() at (-6.15,1.1) {$\color{blue}{\overline{Q}_1}$};
    \begin{pgfonlayer}{foreground}
    \end{pgfonlayer}{foreground}
    \end{tikzpicture}
    \caption{In each orange area, we can obtain two paths with different parity between two adjacent blue areas.
    We can choose the parity of each path between two meshes so that we can find a subgraph that has the universal parity breaking grid as an odd minor:}
    \label{fig:evenfacedtransaction}
\end{figure}

Here is one additional observation we need later:
Suppose that in \zcref{lem:bipartitetransaction}, we found an odd minor model $\mu$ of $\mathscr{U}_h$.

In addition, if $\mathcal{P}$ is planar, then we can further split the first case of \zcref{lem:bipartitetransaction} according to whether the cylindrical rendition of $(G,\Omega)$ remains even-faced after we add the $\mathcal{P}'$-strip society to it or not.

\begin{lemma}\label{lem:planarbipartitetransaction}
    Let $s,h,p$ be positive integers with $s\geq h+1$ and let $b$ be a non-negative integer.
    Let $(G, \Omega)$ be a society with an even-faced cylindrical rendition $\rho_0$ in the disk $\Delta$ around the vortex $c_0$ with a cozy nest $\mathcal{C} = \{ C_1, \ldots , C_s \}$ and let $\mathcal{R}$ be a radial linkage of order $h$ where $Y$ is the set of endpoints of $\mathcal{R}$ in $\Omega$.
    Let $H \coloneqq G - [\sigma_{\rho_0}(c_0)]$.
    Further, let $\rho$ be a $\Delta$-rendition of $(G, \Omega)$ with $b$ vortices $c_1, \ldots , c_b$ such that $\rho$ and $\rho_0$ agree on the rendition of $H$.
    Additionally, let $\mathcal{P}$ be a planar transaction of order $2ph+bp+p$ in $(G, \Omega)$ that is orthogonal to $\mathcal{C}$ and unexposed in $\rho$.
    
    Then either there exists
    \begin{itemize}
        \item an odd-minor model $\mu$ of $\mathscr{U}_h$ controlled by a mesh whose horizontal paths are subpaths of distinct cycles from $\mathcal{C}$ such that for each horizontal or vertical path $X$ of $\mathscr{U}_h$, there exist $h$ internally disjoint $Y$-$\bigcup_{x\in V(X)}\mu(x)$ paths,
        \item a separating even-faced planar transaction $\mathcal{P}' \subseteq \mathcal{P}$ of order $p$, with $(G',\Omega')$ being the $\mathcal{P}'$-strip society, such that $V(\sigma(c_i)) \cap V(G') = \emptyset$ for all $i \in [b]$, and the restriction of $\rho$ to $H \cup G'$ is even-faced, or
        \item there exists a separating odd handle $\mathcal{P}' \subseteq \mathcal{P}$ of order $p$.
    \end{itemize}
    In particular, there exists an algorithm that, when given $\rho$, $(G,\Omega)$, and $\mathcal{P}$ as input, finds one of these outcomes in time at most $\mathbf{O}(|E(G)|)$.
\end{lemma}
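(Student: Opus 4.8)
The plan is to run the proof of \zcref{lem:bipartitetransaction} and then split its ``even-faced transaction'' conclusion into the two finer outcomes using \zcref{lem:reconciliationforevenfaced}. First I would reproduce the opening of that proof: index $\mathcal{P}$ naturally as $P_1,\dots,P_{2ph+bp+p}$, partition it into $2h+b+1$ blocks $\mathcal{P}_1,\dots,\mathcal{P}_{2h+b+1}$ of $p$ consecutive paths each, let $(G_i,\Omega_i)$ be the $\mathcal{P}_i$-strip society, and use that $\mathcal{P}$ is unexposed in $\rho$ to see that each vortex $c_1,\dots,c_b$ meets at most one $G_i$; hence there is $S\subseteq[2h+b+1]$ with $|S|=2h+1$ so that for every $i\in S$ the society $(G_i,\Omega_i)$ is vortex-free and $\mathcal{P}_i$ is separating and flat. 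Since $\mathcal{P}$ is planar and each $\mathcal{P}_i$ consists of consecutive paths of $\mathcal{P}$, every such $\mathcal{P}_i$ is also planar.

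Then I would branch on whether some $\mathcal{P}_i$ with $i\in S$ has an even-faced (vortex-free, disk) strip rendition. If none does, the argument is word for word the second half of the proof of \zcref{lem:bipartitetransaction}: for each $i\in S$ I would extract from a non-parity-preserving cell of the strip disk two $u_i$--$v_i$ paths of opposite parity, plug the chosen ones between two meshes formed from $C_1,\dots,C_{h+1}$ and the boundary paths of the $\mathcal{P}_i$, and read off an odd-minor model $\mu$ of $\mathscr{U}_h$ controlled by a mesh whose horizontal paths lie on distinct cycles of $\mathcal{C}$, the required $h$ internally disjoint $Y$-paths coming from Menger's theorem together with the radial linkage $\mathcal{R}$. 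This is the first outcome, and nothing new is needed here.

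The substantive case is when some $\mathcal{P}_i$ --- call it $\mathcal{P}'$, with strip society $(G',\Omega')$ --- does have an even-faced disk rendition; note that then $V(\sigma(c_j))\cap V(G')=\varnothing$ for all $j\in[b]$. Set $H:=G-[\sigma_{\rho_0}(c_0)]$; since $\rho$ and $\rho_0$ agree on $H$ and $\rho_0$ is even-faced, the restriction of $\rho$ to $H$ is even-faced. Because $\mathcal{P}'$ is planar (hence monotone), isolated, separating, even-faced and orthogonal to $\mathcal{C}$, I would build a vortex-free $\Delta$-rendition $\rho'$ of $H\cup G'$ that agrees with $\rho$ on $H$ and whose restriction to $G'$ is the even-faced disk rendition --- either directly, by replacing the region of $\rho$ bounded by the two cycles $\mathsf{C}_1,\mathsf{C}_2$ (which lie in $C_1$ together with the two boundary paths of $\mathcal{P}'$, as in the odd-handle definition) with the disk rendition of $(G',\Omega')$, using isolation to rule out edges of $G$ crossing the boundary except along those boundary paths and the corresponding arcs of $C_1$; or by invoking the corollary following \zcref{lem:reconciliation}, once a two-path linkage from the two segments of $\Omega$ complementary to the ends of $\mathcal{P}'$ into $V(\sigma(c_0))$, orthogonal to $\mathcal{C}$ and disjoint from $\mathcal{P}'$, has been pulled out of $\mathcal{R}$ and the nest. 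With $\rho'$ in hand I would apply \zcref{lem:reconciliationforevenfaced} with $\mathcal{Q}=\mathcal{P}'$, a planar transaction: it yields that every cell of $\rho'$ is parity-preserving and that the trace of every grounded odd cycle of $\rho'$ lying in $G'':=H\cup G'$ separates the interiors of the disks $\Delta_1,\Delta_2$ bounded by the traces of $\mathsf{C}_1,\mathsf{C}_2$. If $G''$ has no grounded odd cycle in $\rho'$, then all cells being parity-preserving makes $\rho'$ even-faced, and I would output $\mathcal{P}'$ for the second outcome; if it does, then $\rho'$ together with these two properties is precisely the data certifying that $\mathcal{P}'$ is an odd handle, and $\mathcal{P}'$ is separating, so I would output it for the third outcome. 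The whole procedure is linear in $|E(G)|$, since the partition, testing even-facedness of the $\mathbf{O}(h+b)$ strip renditions, the $\mathscr{U}_h$-construction, and the reconciliation each run in $\mathbf{O}(|E(G)|)$ time.

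The hard part will be the gluing step: checking that the isolated/separating structure of $\mathcal{P}'$ really lets one swap the strip region of $\rho$ for the even-faced disk rendition of $(G',\Omega')$ without creating cells that straddle the boundary traces, so that the hypotheses of \zcref{lem:reconciliationforevenfaced} are met --- and, in the route through the corollary after \zcref{lem:reconciliation}, extracting the auxiliary linkage into $\sigma(c_0)$ with the demanded orthogonality and disjointness. Once $\rho'$ is available, telling the second outcome from the third is just a matter of matching the conclusion of \zcref{lem:reconciliationforevenfaced} against the definition of an odd handle.
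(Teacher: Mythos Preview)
Your approach is correct and essentially identical to the paper's, which simply invokes \zcref{lem:bipartitetransaction} as a black box and then splits its ``even-faced transaction'' outcome using \zcref{lem:reconciliationforevenfaced}. The gluing step you flag as hard is in fact immediate: the even-facedness test carried out inside the proof of \zcref{lem:bipartitetransaction} is performed on the restriction of $\rho$ to each strip society, so once $\mathcal{P}'$ passes that test you can simply take $\rho'$ to be the restriction of $\rho$ to $H\cup G'$ --- it automatically agrees with $\rho_0$ on $H$ (since $\rho$ and $\rho_0$ agree there) and has even-faced restriction to $G'$, so no swapping of renditions or appeal to the corollary after \zcref{lem:reconciliation} is needed before applying \zcref{lem:reconciliationforevenfaced}.
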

\begin{proof}
    By \zcref{lem:bipartitetransaction}, we can either conclude the first result or find an even-faced transaction $\mathcal{P}'\subseteq \mathcal{P}$ or order $p$, with $(G',\Omega')$ being the $\mathcal{P}'$-strip society, such that $V(\sigma(c_i))\cap V(G') = \emptyset$ for all $i\in [b]$.
    Suppose that we have the later option.
    
    If the restriction of $\rho$ to $H\cup G'$ is even-faced, then we conclude the second result.
    If not, by using \zcref{lem:reconciliationforevenfaced}, we can deduce the last result.
\end{proof}

Lastly, we also need a version of the lemmas for an exposed but flat transactions.
The proof is same as before, except that we do not have to care about vortices between paths if the input transaction is already flat.

\begin{lemma}\label{lem:exposedbipartitetransaction}
    Let $s,h,p$ be positive integers with $s\geq h+1$ and let $b$ be a non-negative integer.
    Let $(G, \Omega)$ be a society with an even-faced cylindrical rendition $\rho_0$ in the disk $\Delta$ around the vortex $c_0$ with a cozy nest $\mathcal{C} = \{ C_1, \ldots , C_s \}$ and let $\mathcal{R}$ be a radial linkage of order $h$ where $Y$ is the set of endpoints of $\mathcal{R}$ in $\Omega$. 
    Let $H \coloneqq G - [\sigma_{\rho_0}(c_0)]$.
    Additionally, let $\mathcal{P}$ be a flat transaction of order $2ph+p$ in $(G, \Omega)$ that is orthogonal to $\mathcal{C}$.
    
    Then either there exists an odd-minor model $\mu$ of $\mathscr{U}_h$ controlled by a mesh whose horizontal paths are subpaths of distinct cycles from $\mathcal{C}$ such that for each horizontal or vertical path $X$ of $\mathscr{U}_h$, there exist $h$ internally disjoint $Y$-$\bigcup_{x\in V(X)}\mu(x)$ paths, or there exists a flat even-faced transaction $\mathcal{P}' \subseteq \mathcal{P}$ of order $p$ which is orthogonal.

    In particular, there exists an algorithm that, when given $\rho$, $(G,\Omega)$, and $\mathcal{P}$ as input, finds one of these outcomes in time at most $\mathbf{O}(|E(G)|)$.
\end{lemma}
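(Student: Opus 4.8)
The plan is to follow the proof of \zcref{lem:bipartitetransaction} essentially verbatim, the sole simplification being that a flat transaction cannot interact with any vortex, so no vortices can lie ``between'' its paths and hence none of the sub-transactions we produce has to be discarded on account of a vortex.

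First I would write $P_1,\dots,P_{2ph+p}$ for the paths of $\mathcal{P}$, naturally indexed along one of its two end segments, and for $i\in[2h+1]$ set $\mathcal{P}_i\coloneqq\{P_{(i-1)p+1},\dots,P_{ip}\}$. Each $\mathcal{P}_i$ is orthogonal to $\mathcal{C}$ because $\mathcal{P}$ is, and, being a sub-transaction of a flat transaction consisting of consecutive paths, each $\mathcal{P}_i$ is again flat; I denote its strip society by $(G_i,\Omega_i)$ and fix a vortex-free rendition $\rho_i$ of it in a disk. For each $i$ I would then check, in $\mathbf{O}(|E(G_i)|)$ time, whether $\rho_i$ is even-faced around the union of the paths of $\mathcal{P}_i$. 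If this happens for some $i$, then $\mathcal{P}'\coloneqq\mathcal{P}_i$ is a flat, even-faced, orthogonal transaction of order $p$ and we are done; summed over all $i$ this costs $\mathbf{O}(|E(G)|)$ time.

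Otherwise no $\mathcal{P}_i$ is even-faced. Since $\rho_i$ is drawn in a disk it admits no non-contractible curve, so, exactly as in the proof of \zcref{thm:bipartitemesh}, the block of $G_i$ containing the paths of $\mathcal{P}_i$ is non-bipartite; by \zcref{lem:paritypaths} any two of its vertices are joined by two paths of opposite parity, found in $\mathbf{O}(|E(G_i)|)$ time. Let $D_1$ and $D_2$ be the two $P_1$-$P_{2ph+p}$-subpaths of $C_1$ of positive length, and for each $i$ let $D_1^i\subseteq D_1$ and $D_2^i\subseteq D_2$ be the two $P_{(i-1)p+1}$-$P_{ip}$-subpaths of $C_1$ of positive length; these are disjoint by orthogonality to $\mathcal{C}$. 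Picking endpoints $u_i\in P_{(i-1)p+1}\cap D_1^i$ and $v_i\in P_{ip}\cap D_2^i$ and routing through a non-parity-preserving cell of $\rho_i$ together with two disjoint connecting paths, I obtain, inside the appropriate sub-disk, two $u_i$-$v_i$-paths $Q_i,\overline{Q}_i$ of opposite parity, with $Q_i$ of the same parity as the $u_i$-$v_i$-subpaths of $C_1$ (these have a well-defined parity since $\rho_0$ is even-faced, so every cycle of $\mathcal{C}$ has even length).

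Finally, using $s\ge h+1$ and the coziness of $\mathcal{C}$, I would build two $(h+1)$-row meshes $M_1,M_2$ whose horizontal paths are subpaths of $C_1,\dots,C_{h+1}$ and whose vertical paths are subpaths of the boundary paths $P_{(i-1)p+1}$ (meeting $D_1$), respectively $P_{ip}$ (meeting $D_2$), lying outside the disk bounded by $C_1$. Choosing one of $Q_i,\overline{Q}_i$ for each $i$ and inserting it between $M_1$ and $M_2$ yields a subgraph containing an even subdivision of $\mathscr{U}_h$ -- equivalently an odd-minor model $\mu$ of $\mathscr{U}_h$ -- controlled by a mesh whose horizontal paths are subpaths of distinct cycles of $\mathcal{C}$ (this is the situation pictured in \zcref{fig:evenfacedtransaction}). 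Since $C_1$ is one of those horizontal paths and $\mathcal{R}$ is a radial linkage of order $h$ from $Y$ to $V(C_1)$, splicing $\mathcal{R}$ onto the $h$ internally disjoint paths supplied by the ``controlled by a mesh'' property and invoking Menger's theorem gives, for every horizontal or vertical path $X$ of $\mathscr{U}_h$, the required $h$ internally disjoint $Y$-$\bigcup_{x\in V(X)}\mu(x)$ paths. All steps run in linear time, for a total of $\mathbf{O}(|E(G)|)$. I expect the main obstacle to be this last paragraph: carefully verifying that the union of $M_1$, $M_2$ and a consistent choice of the parity-breaking connectors $Q_i$ genuinely contains $\mathscr{U}_h$ as an odd minor controlled by a nest-mesh, and that the radial linkage can be attached to deliver the extra $h$-path property -- a bookkeeping argument that parallels, but is slightly lighter than, the corresponding part of \zcref{lem:bipartitetransaction}.
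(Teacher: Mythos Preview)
Your proposal is correct and follows exactly the approach the paper indicates: the paper does not give a standalone proof of this lemma but simply remarks that ``the proof is same as before, except that we do not have to care about vortices between paths if the input transaction is already flat.'' Your write-up carries out precisely this, partitioning the $2ph+p=(2h+1)p$ paths into $2h+1$ consecutive flat sub-transactions of order $p$ (with no need to discard any on account of vortices, since the strip society is already vortex-free by flatness) and otherwise recycling the construction of $\mathscr{U}_h$ from the proof of \zcref{lem:bipartitetransaction} verbatim.
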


\section{Society classification}\label{sec:societyclassification}
Core to the approach of the graph minor structure theorem in \cite{GorskySW2025polynomialboundsgraphminor} is the Society Classification Theorem, a tool which originates from \cite{KawarabayashiTW2021Quickly}.
This theorem takes a society with a cylindrical rendition and a nest, and yields a set of outcomes which each somehow contribute positively towards the proof of the graph minor structure theorem.
Since we have similar plans for our later sections and future extensions of our results, we will be proving our own variant of this theorem.

The options in our society classification variant (see below) are roughly as follows:
\begin{enumerate}[label=\textbf{(S\arabic*)}]
    \item an odd-minor model of $K_t$,
    \item an odd-minor model of $\mathscr{U}_h$,
    \item a bipartite minor model of $K_t$ which identifies a large bipartite part of our graph after the deletion of a few apices,
    \item a crosscap transaction that is even-faced after the removal of some apices,
    \item a handle transaction that is even-faced after the removal of some apices,
    \item we find what is essentially an even-faced rendition of the society with a few vortices of bounded depth (supported by a surface wall), or
    \item an odd-minor model of $\mathcal{H}_h$.
\end{enumerate}
The last option here is a sort of escape valve for the earlier strange situation which we called a ``separating odd handle''.
This object can in fact be carried through to replace the last option of our result, but this requires significantly more effort, related to what is called ``padding'' in \cite{GorskySW2025polynomialboundsgraphminor}.
In the interest of not overcomplicating this already rather technical part of our proof, we leave this extension of the theorem to future work.

\begin{theorem}\label{thm:evensocietyclassification}
    There exist polynomial functions $\mathsf{apex}^\mathsf{bip},\mathsf{apex}^\mathsf{genus}_{\ref{thm:evensocietyclassification}}, \mathsf{loss}_{\ref{thm:evensocietyclassification}} \colon \mathbb{N} \rightarrow \mathbb{N}$, $\mathsf{cost}_{\ref{thm:evensocietyclassification}}\colon \mathbb{N}^2 \rightarrow \mathbb{N}$, $\mathsf{nest}_{\ref{thm:evensocietyclassification}}, \mathsf{radial}_{\ref{thm:evensocietyclassification}}\colon \mathbb{N}^3\to\mathbb{N}$, and $\mathsf{apex}^\mathsf{fin}_{\ref{thm:evensocietyclassification}},\mathsf{depth}_{\ref{thm:evensocietyclassification}} \colon \mathbb{N}^4\to\mathbb{N}$, such that the following holds.
    
    Let $t,h,s,k,p,r$ be positive integers with $s \geq \mathsf{nest}_{\ref{thm:evensocietyclassification}}(t,h,k)$.
    Let $(G,\Omega)$ be a society with an even cylindrical rendition $\rho$ in a disk $\Delta$, a cozy nest $\mathcal{C} = \{ C_1, \ldots , C_s \}$ in $\rho$ around the vortex $c_0$ with a radial linkage $\mathcal{R}$ of order $r \geq \mathsf{radial}_{\ref{thm:evensocietyclassification}}(t,h,k)$.
    Further, let $(G', \Omega')$ be the $C_{s - \mathsf{cost}_{\ref{thm:evensocietyclassification}}(t,k)}$-society in $\rho$.

    Then $G'$ contains a set $A \subseteq V(G')$ such that one of the following exists:
    \begin{enumerate}[label=\textbf{(S\arabic*)},ref={(S\arabic*)}]
        \item\label{item_s1} An odd $K_t$-minor model controlled by a mesh whose horizontal paths are subpaths of distinct cycles from $\mathcal{C}$. 

        \item\label{item_s2} An odd minor model for the universal parity breaking grid $\mathscr{U}_h$ of order $h$ controlled by a mesh whose horizontal paths are subpaths of distinct cycles from $\mathcal{C}$.

        \item\label{item_s3} A bipartite $K_t$-minor model $\varphi$ in $G - A$, with $|A| \leq \mathsf{apex}^\mathsf{bip}(t)$, such that the large block of $\mathcal{T}_\varphi - A$ is bipartite and the minor model is controlled by a mesh whose horizontal paths are subpaths of distinct cycles from $\mathcal{C}$.
        
        \item\label{item_s4} An even-faced crosscap transaction $\mathcal{P}$ of order $p$ in $(G'-A,\Omega')$, with $|A| \leq \mathsf{apex}^\mathsf{genus}_{\ref{thm:evensocietyclassification}}(t)$, and a nest $\mathcal{C}'$ in $\rho$ of order $s - (\mathsf{loss}_{\ref{thm:evensocietyclassification}}(t)+\mathsf{cost}_{\ref{thm:evensocietyclassification}}(t,k))$ around $c_0$ to which $\mathcal{P}$ is orthogonal.

        \item\label{item_s5} An even-faced handle transaction $\mathcal{P}$ of order $p$ in $(G'-A,\Omega')$, with $|A| \leq \mathsf{apex}^\mathsf{genus}_{\ref{thm:evensocietyclassification}}(t)$, and a nest $\mathcal{C}'$ in $\rho$ of order $s - (\mathsf{loss}_{\ref{thm:evensocietyclassification}}(t)+\mathsf{cost}_{\ref{thm:evensocietyclassification}}(t,k))$ around $c_0$ to which $\mathcal{P}$ is orthogonal.
        
        \item\label{item_s6} An even-face rendition $\rho'$ of $(G - A, \Omega)$ in $\Delta$ with breadth $b \in [\nicefrac{1}{2}(t-3)(t-4)+h-1]$ and depth at most $\mathsf{depth}_{\ref{thm:evensocietyclassification}}(t,k,p)$, $|A| \leq \mathsf{apex}^\mathsf{fin}_{\ref{thm:evensocietyclassification}}(t,k,p)$, and a parity $k$-surface-wall $D$ with signature $(0,0,0,0,0,b)$, such that $D$ is grounded in $\rho'$, the base cycles of $D$ are the cycles $C_{s-\mathsf{cost}_{\ref{thm:evensocietyclassification}}(t,k)-1-k},\dots,C_{s-\mathsf{cost}_{\ref{thm:evensocietyclassification}}(t,k)-1}$, and there exists a bijection between the vortices $v$ of $\rho'$ and the even vortex segments $S_v$ of $D$, where $v$ is the unique vortex contained in the disk $\Delta_{S_v}$ defined by the trace of the inner cycle of the nest of $S_v$, and $\Delta_{S_v}$ is chosen to avoid the trace of the simple cycle of $D$.
        
        \item\label{item_s7} An odd minor model for the parity handle $\mathcal{H}_h$ of order $h$ controlled by a mesh whose horizontal paths are subpaths of distinct cycles from $\mathcal{C}$.
    \end{enumerate}
    
    In particular, the set $A$, the odd or bipartite $K_t$-minor model, the odd $\mathscr{U}_h$-minor model, the odd $\mathscr{H}_h$-minor model, the transaction $\mathcal{P}$, the rendition $\rho'$, and the extended surface-wall $D$ can each be found in time $(f_{\ref{thm:ktminormodel}}(t)+\mathbf{poly}( h + s + p + k )) |E(G)||V(G)|^2$.
\end{theorem}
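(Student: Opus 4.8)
The plan is to prove the theorem by an iteration that grows an even-faced $\Delta$-rendition $\rho'$ of a subsociety together with a supporting parity surface-wall $D$, adding one vortex at a time. At the start the breadth is $b=1$: take $\rho'=\rho$, the single vortex $c_0$, and let $D$ be the $k$-surface-wall whose base cycles are carved from the cozy nest $\mathcal{C}$ and which has one even vortex segment capturing $c_0$; that $\rho$ is even-faced and that $\mathcal{C}$ consists of even cycles certifies that $D$ is a parity surface-wall of signature $(0,0,0,0,0,1)$. Throughout the iteration I maintain, besides $\rho'$, $D$, and an accumulated apex set $A$: a cozy nest of order at least $s-b\,(\mathsf{loss}(t)+\mathsf{cost}(t,k))$ to which a radial linkage of order at least $r-b\cdot\mathbf{poly}(t)$ is orthogonal, together with the correspondence placing each vortex $v$ as the unique vortex inside the disk bounded by the inner cycle of its segment $S_v$ in $D$. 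The unspecified functions $\mathsf{nest},\mathsf{radial},\mathsf{depth},\dots$ are fixed at the end as the polynomials in $t,h,k$ (and the maximum breadth $\frac{1}{2}(t-3)(t-4)+h$, see below) needed to keep this invariant alive through all rounds.

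One round proceeds as follows. If every vortex of $\rho'$ has depth at most $\mathsf{depth}(t,k,p)$, then $\rho'$, $D$, and $A$ already witness \ref{item_s6} and we stop. Otherwise some vortex $c$ is deeper, so its vortex society contains a transaction of order exceeding $\mathsf{depth}(t,k,p)$; using the nest, the radial linkage, and the wall segment of $c$ I present it as a transaction $\mathcal{P}$ in $(G,\Omega)$ orthogonal to the relevant nest, and by \zcref{lem:monotonetransaction} pass to a large planar or a large crosscap subtransaction. Applying \zcref{lem:flatteningtransactions}, either it returns a $K_t$-minor controlled by a mesh whose horizontal paths lie on distinct nest cycles — which, fed into \zcref{thm:ktminormodel}, upgrades to an odd $K_t$-minor (\ref{item_s1}) or, after a few more apex deletions, a bipartite large block of the associated, nest-controlled tangle (\ref{item_s3}) — or, after deleting at most $8t^3$ apices from the vortex, a flat, isolated subtransaction of still-large order, which makes the originally-exposed and unexposed cases coincide.

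Next I process the flat isolated subtransaction, feeding the radial linkage of order $h$ into the even-transaction lemmas of \zcref{sec:bipartiteflatwall}. If it is a crosscap transaction, \zcref{lem:exposedbipartitetransaction} (or \zcref{lem:bipartitetransaction}) either extracts an odd $\mathscr{U}_h$-minor controlled by a nest-mesh (\ref{item_s2}) or refines it to an even-faced crosscap transaction, which together with the undisturbed part of the nest is \ref{item_s4}. If it is a planar transaction that, following the planar-case analysis of the society classification, cannot be absorbed because extra routing crosses its strip, it yields a handle transaction, and a further application of the even-transaction refinement gives \ref{item_s2} or an even-faced handle transaction (\ref{item_s5}). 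Finally, if it is a separating planar transaction, \zcref{lem:planarbipartitetransaction} gives either an odd $\mathscr{U}_h$-minor (\ref{item_s2}); or a separating odd handle, from which \zcref{lem:get-parity-handle} builds a parity handle $\mathscr{H}_h$ controlled by a nest-mesh (\ref{item_s7}); or an even-faced separating planar transaction whose absorption via \zcref{lem:reconciliationforevenfaced} and \zcref{lem:reconciliation} leaves the rendition even-faced. In this last case I enlarge $\rho'$ by the new vortex, extend $D$ by one more even vortex segment routed along fresh nest cycles with the help of the radial linkage, increment $b$, and return to the top of the loop.

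Two points make the iteration halt within the claimed bound. Each absorption consumes $\mathsf{loss}(t)+\mathsf{cost}(t,k)$ nest cycles and $\mathbf{poly}(t)$ radial paths (plus whatever the apex deletions cost), so choosing $\mathsf{nest},\mathsf{radial}$ large enough keeps the invariant for all rounds and still leaves a nest of order above $k+1+\mathsf{cost}(t,k)$ for \ref{item_s6}; and once $b$ reaches $\frac{1}{2}(t-3)(t-4)+h$, the $b$ vortices — each carrying a transaction, all tied together through the common base wall and the radial linkage — carry enough grid-like infrastructure to route either a $K_t$-minor controlled by a nest-mesh (using $\binom{t-3}{2}$ of the vortices to realise the edges among $t-3$ branch vertices placed on the wall, three further branch vertices supplied by the wall), which again triggers \zcref{thm:ktminormodel} and thus \ref{item_s1}/\ref{item_s3}, or, when parity obstructs that routing, an odd $\mathscr{U}_h$-minor on the remaining $h$ vortices (\ref{item_s2}); hence $b$ never reaches this value and \ref{item_s6} has $b\le\frac{1}{2}(t-3)(t-4)+h-1$. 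Since there are at most $\mathcal{O}(t^2+h)$ rounds, each dominated by one use of \zcref{lem:monotonetransaction} plus linear-time subroutines, together with at most one invocation of \zcref{thm:ktminormodel}, the running time $(f_{\ref{thm:ktminormodel}}(t)+\mathbf{poly}(h+s+p+k))|E(G)||V(G)|^2$ follows. I expect the main obstacle to be the invariant maintenance across absorptions: after carving out a new vortex one must re-establish a cozy nest orthogonal to a still-large radial linkage that simultaneously supplies base cycles and rails for the new vortex segment of $D$, all while certifying that $D$ remains a parity wall with no odd handle and no odd vortex — exactly the situation in which \zcref{lem:reconciliationforevenfaced} forces the escape to \ref{item_s7} via \zcref{lem:get-parity-handle}; a secondary difficulty is carrying out the breadth-capping $K_t$/$\mathscr{U}_h$ routing with polynomial bounds and compatibly with the mesh-control required by \ref{item_s1}, \ref{item_s2}, \ref{item_s3}, \ref{item_s7}.
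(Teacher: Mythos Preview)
Your approach differs substantially from the paper's, and the difference exposes a genuine gap in your breadth-capping step.

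The paper does \emph{not} iterate from scratch. It first invokes the non-parity Society Classification Theorem (\zcref{thm:societyclassification}) as a black box with parameter $t' = \lceil c_{\ref{thm:ktminormodel}} t\sqrt{\log 12t}\rceil$. Its first three outputs are dispatched immediately: a $K_{t'}$-minor is fed to \zcref{thm:ktminormodel} to yield \ref{item_s1} or \ref{item_s3}; a flat crosscap or handle transaction is fed to \zcref{lem:exposedbipartitetransaction} to yield \ref{item_s2}, \ref{item_s4}, or \ref{item_s5}. Only the fourth output---already a $\Delta$-rendition with at most $b' \le \tfrac{1}{2}(t'-3)(t'-4)-1$ vortices of bounded depth plus an extended surface-wall---is refined further, and this refinement is organised as a binary \emph{vortex refinement tree}: each incomplete leaf is processed with \zcref{lem:separating_evenfaced_planar_transaction}, and when a separating even-faced planar transaction is found the society is split in two, discarding any side that is \emph{quasi-bipartite} (vortex-free and bipartite). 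No apices are accumulated during this tree phase, which is why the apex bounds stay as tight as claimed.

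The gap in your plan is precisely the breadth-cap. You assert that once $b$ reaches $\tfrac{1}{2}(t-3)(t-4)+h$ you can route a $K_t$-minor through $\binom{t-3}{2}$ of the vortices, or else an odd $\mathscr{U}_h$. But vortices produced by absorbing a separating even-faced planar transaction need not host a cross (they may be shallow, or contain only planar transactions), so \zcref{prop:cliques_in_extended_walls} does not apply; nor do they automatically host a parity-breaking path, which \zcref{lemma:parity_grid_in_extended_walls} requires. The paper sidesteps this exactly via its two-phase structure: after the black-box call there are at most $b'$ \emph{real} vortices, and the tree is grown so that every leaf is non-quasi-bipartite by construction; thus once the tree has $b''+1 = \tfrac{1}{2}(t'-3)(t'-4)+h$ leaves, at least $h$ of them cannot contain a real vortex and therefore must be genuinely non-bipartite, supplying the parity-breaking paths that feed \zcref{lemma:parity_grid_in_extended_walls} and give \ref{item_s2}. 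Your linear iteration carries no such invariant; patching it (e.g.\ absorbing cross-free vortices via \zcref{prop:TwoPaths}, then separately tracking bipartiteness of each region and real-vortex containment) essentially reconstructs the paper's two-phase/tree argument. What you flag as a ``secondary difficulty'' is in fact the place where the proof is decided.
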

We will discuss explicit bounds for the above functions at the end of this section.

\subsection{A toolbox for society classification}
As part of our proof of our society classification variant, we will be using several tools from \cite{GorskySW2025polynomialboundsgraphminor}.
The simplest of which is an algorithm which finds a transaction in a society relatively efficiently.

\begin{proposition}\label{prop:findtransactionorlineardecomp}
    Let $p$ be a positive integer and let $(G,\Omega)$ be a society.
    Then it is possible to find either a transaction of order $p$ in $(G,\Omega)$ or a linear decomposition of \((G, \Omega)\) with adhesion less than \(p\) in time $\mathbf{O}(p|E(G)| \log |V(G)|)$.
\end{proposition}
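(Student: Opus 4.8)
The plan is to reduce the task to a sequence of max-flow / Menger computations combined with a greedy ``sweep'' along the cyclic ordering $\Omega$. Write $V(\Omega) = \{v_1, \dots, v_n\}$ in the cyclic order given by $\Omega$. A transaction of order $p$ is a linkage of $p$ vertex-disjoint paths joining two disjoint segments of $\Omega$, and the obstruction to one being large is precisely a small separator, which is exactly the ingredient a linear decomposition packages. So the first thing I would do is make the correspondence between ``small separators all along $\Omega$'' and ``a linear decomposition of small adhesion'' completely explicit: given the $v_i$ in order, for each $i$ we want a set $X_i \ni v_i$ such that $X_i \cap X_{i+1}$ is a separator of $G$ separating $\{v_1, \dots, v_i\}$ from $\{v_{i+1}, \dots, v_n\}$ in the cyclic sense, and the interval property holds. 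The adhesion of such a decomposition is $\max_i |X_i \cap X_{i+1}|$, so bounding it by $p-1$ is the same as finding, for every ``cut point'' between consecutive vertices of $\Omega$, a separator of size $< p$ of the appropriate type.

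The second step is the algorithm itself. For each $i \in [n-1]$, split $\Omega$ at the gap between $v_i$ and $v_{i+1}$ and at one other gap, giving two complementary segments $A$ and $B$; run a Menger/max-flow computation (with vertex capacities) to find a maximum $A$--$B$ linkage. If for some pair of antipodal-ish gaps this max linkage has order $\geq p$, output it: that is our transaction. Otherwise every such computation returns a linkage of order $< p$ together with a separator $S_i$ of size $< p$, by LP duality / Menger. The collection of these separators, read along $\Omega$, yields the $X_i$ of the linear decomposition: set $X_i$ to be the union of $v_i$ with the relevant separators bounding its interval, and verify conditions (1)--(3) of the definition of a linear decomposition — (1) and (2) are immediate, and (3), the interval property, follows because the separators are laminar enough (or can be made so by a standard uncrossing argument, since separators of a fixed ``side pattern'' form a lattice under the submodularity of the cut function). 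To get the running time down to $\mathbf{O}(p|E(G)|\log|V(G)|)$ rather than something with an extra factor of $n$, one does not redo an independent flow computation at every gap; instead, one sweeps $\Omega$ once, maintaining a linkage of order $< p$ and augmenting it incrementally as the ``source/sink'' segments shift by one vertex. Each augmentation step is a single BFS/DFS for an augmenting path, costing $\mathbf{O}(|E(G)|)$, and since the linkage order never exceeds $p-1$ the number of augmentations per sweep position is amortised; the $\log|V(G)|$ factor comes from the bookkeeping (e.g.\ a balanced search structure) needed to locate which cell of the evolving decomposition a given vertex currently lies in, exactly as in the cited implementation.

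The main obstacle I expect is the uncrossing / laminarity argument needed to turn the family of individually-chosen minimum separators $S_i$ into a genuine \emph{nested} family that respects the interval condition of a linear decomposition — a naive choice of minimum cuts at consecutive gaps need not be consistent, and one has to invoke submodularity of the vertex-cut function to replace them by a canonical (say, leftmost-minimum) cut so that $S_i$ and $S_{i+1}$ are comparable in the right sense. This is routine in spirit but is the place where care is required to keep the bound on adhesion at $p-1$ rather than something like $2(p-1)$, and to keep the algorithmic cost within the stated budget. Everything else is a direct appeal to Menger's theorem (\zcref{prop:mengersthm}), the max-flow min-cut duality, and incremental-augmentation bookkeeping; in particular the existence half of the statement follows purely from Menger applied gap-by-gap together with \zcref{prop:depth_to_lin_decomp}-style reasoning, and only the efficiency claim needs the sweep.
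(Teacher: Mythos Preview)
The paper does not prove this proposition: it is quoted as a tool imported from \cite{GorskySW2025polynomialboundsgraphminor} (see the sentence immediately preceding it in Section~\ref{sec:societyclassification}), so there is no proof in the paper to compare against. Your overall strategy---Menger computations along a linearisation of $\Omega$, using leftmost minimum separators to obtain nestedness via submodularity, and outputting a transaction whenever some minimum cut reaches $p$---is the standard one and is correct in outline.

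That said, two points in your write-up are muddled. First, you speak of splitting $\Omega$ at the gap between $v_i$ and $v_{i+1}$ ``and at one other gap'' chosen to be ``antipodal-ish''. This is not right: a linear decomposition corresponds to cutting $\Omega$ at \emph{one fixed} gap (say between $v_n$ and $v_1$), after which the adhesion sets $X_i\cap X_{i+1}$ are separators between $\{v_1,\dots,v_i\}$ and $\{v_{i+1},\dots,v_n\}$. You do not vary the second cut; if every such prefix--suffix min cut has size $<p$ you already have your linear decomposition, and if some has size $\ge p$ the corresponding $p$ disjoint paths form a transaction between the two complementary segments. Second, your worry about the adhesion bound degrading to $2(p-1)$ is misplaced: the factor of $2$ appears only in the \emph{converse} direction (linear decomposition of adhesion $k$ implies depth $\le 2k$), whereas here you are building the decomposition directly from cuts of size $<p$, so the adhesion is $<p$ on the nose. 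Finally, your justification for the $\log|V(G)|$ factor is thin; the incremental-sweep picture you describe is more naturally $\mathbf{O}((n+p)|E(G)|)$, and the stated bound in the cited source comes from a more careful divide-and-conquer organisation of the cut computations rather than from a balanced search structure for ``locating cells''.
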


Another simple lemma concerns the construction of cozy nests.

\begin{proposition}[Gorsky, Seweryn, and Wiederrecht \cite{GorskySW2025polynomialboundsgraphminor}]\label{prop:makenestcozy}
Let $s \geq 1$ be an integer, $(G,\Omega)$ be a society, $\rho$ be a cylindrical rendition of $(G,\Omega)$ in a disk with a nest $\mathcal{C} = \{ C_1, \dots , C_s \}$ around the vortex $c_0$.
Then there exists a cozy nest $\mathcal{C}'$ of order $s$ in $(G,\Omega)$ around $c_0$, such that outer graph of $C_1$ in $\rho$ contains $\bigcup \mathcal{C}'$, and an algorithm that finds $\mathcal{C}'$ in time $\mathbf{O}(s|E(G)|^2)$.
\end{proposition}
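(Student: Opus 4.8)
The plan is to make the nest cozy by repeatedly ``pushing its cycles outwards'' along offending paths, and to control the process with a monovariant that counts enclosed edges. Suppose $\mathcal{C}=\{C_1,\dots,C_s\}$ is a nest in $\rho$ around $c_0$ that is not cozy, and let $\rho'$ be the cylindrical rendition it induces (with $\Sigma_1$ replaced by a single artificial vortex bounded by the trace of $C_1$). Negating the definition of coziness yields an index $i\in[s]$ and a grounded $C_i$-path $P$ (automatically of length at least one) sticking out away from the innermost vortex in $\rho'$ with $V(P)\cap V(\Omega)=\emptyset$ and $V(P)\cap V(C_j)=\emptyset$ for all $j\neq i$. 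The endpoints of $P$ split $C_i$ into subpaths $Q_1,Q_2$; among all offending paths we take one for which the disk $D$ it cuts off is inclusion-minimal, where $D$ is bounded by the trace of $P$ and the trace of $Q_2$ (with $Q_1,Q_2$ named accordingly). Since $P$ sticks out away from $c_0$, the inclusion-minimal $c_0$-disk of a cycle in $P\cup C_i$ is still $\Sigma_i$, which is exactly the statement that the cycle $C_i':=Q_1\cup P$ has $c_0$-disk $\Sigma_i\cup D$.

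First I would show that $\mathcal{C}^{\star}:=(\mathcal{C}\setminus\{C_i\})\cup\{C_i'\}$ is again a nest in $\rho$ around $c_0$. Its cycles are pairwise disjoint, since the vertices of $C_i'$ lie on $C_i$ or on $P$ and $P$ avoids all other cycles. Each is grounded: $C_i'$ is a subpath of the grounded cycle $C_i$ joined to the grounded path $P$, and a short check of which cells are traversed (using that $P$ is grounded and that $D$ meets no trace of $\mathcal{C}$) shows $C_i'$ still uses at least two cells and never enters a vortex. The disks are still nested: $D$ lies in the annular region between the traces of $C_i$ and $C_{i+1}$ — as $P$ avoids $C_{i+1}$, its trace does not cross the trace of $C_{i+1}$, and minimality of $D$ forces $D\subseteq\Sigma_{i+1}$ — so, with the natural conventions at the two ends, $\Sigma_{i-1}\subseteq\Sigma_i\subseteq\Sigma_i\cup D\subseteq\Sigma_{i+1}$, and in particular all vortices still lie inside the innermost disk. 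Now put
\[
\Phi(\mathcal{C})\ :=\ \sum_{\ell=1}^{s}\bigl|\{\,e\in E(G)\ :\ \text{the trace of }e\text{ lies in }\overline{\Sigma_\ell}\,\}\bigr|,
\]
where $\Sigma_\ell$ is the disk bounded by the trace of the $\ell$-th cycle of the current nest. A rerouting changes only the $i$-th term, replacing $\overline{\Sigma_i}$ by $\overline{\Sigma_i\cup D}\supsetneq\overline{\Sigma_i}$; since $P$ has length at least one and its trace avoids $\overline{\Sigma_i}$ except at its endpoints, at least one edge of $P$ becomes newly enclosed, so $\Phi$ strictly increases. As $\Phi\le s|E(G)|$, the process halts after at most $s|E(G)|$ reroutings, and a nest admitting no offending path is cozy.

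It remains to record the two auxiliary assertions. Every path used throughout the process sticks out away from the current innermost vortex, hence lies outside $\Sigma_1$ (the innermost disk only ever grows, so it always contains the original $\Sigma_1$); thus no vertex of such a path belongs to the proper inner graph of the original $C_1$, and since the final cycles are assembled from subarcs of the original cycles and such paths, each lies in the outer graph of $C_1$, giving $\bigcup\mathcal{C}'\subseteq$ the outer graph of $C_1$. For the running time, in one iteration and for each $i$ one detects an offending $C_i$-path by inspecting the subgraph of $G$ drawn in the closed annulus between $C_i$ and $C_{i+1}$ (between $C_s$ and the boundary when $i=s$) with $V(\Omega)$ and the other cycles removed, and testing with a single search whether some $C_i$-bridge there produces a path of length at least one with both ends on $C_i$; taking an inclusion-minimal such bridge makes $P$ minimal, and both the search and the rerouting cost $\mathbf{O}(|E(G)|)$. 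With $\mathbf{O}(s|E(G)|)$ iterations this gives the claimed $\mathbf{O}(s|E(G)|^2)$ bound.

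The hard part is not the termination argument — the monovariant decouples the cycles, so one never has to fix them in any prescribed order — but the purely topological verification of a single rerouting: that $C_i'$ stays disjoint from, and correctly nested with, the other cycles, stays grounded, and still encloses all vortices. Everything here hinges on the minimal choice of $P$, which must be shown to keep the cut-off disk $D$ ``clean'', i.e.\ meeting no trace of $\mathcal{C}$ and in particular not escaping past $C_{i+1}$ or reaching the boundary; one must also confirm that ``sticks out away'' is stable under enlarging the artificial innermost vortex, so that pushing an inner cycle outwards does not secretly re-break an outer one. This homeomorphism bookkeeping, routine given the apparatus of \cite{GorskySW2025polynomialboundsgraphminor}, is the only delicate point.
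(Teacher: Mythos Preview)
This proposition is not proved in the present paper; it is quoted from \cite{GorskySW2025polynomialboundsgraphminor}, so there is no in-paper argument to compare your proposal against. Your rerouting-plus-monovariant strategy is the natural one and is very likely what the source does in spirit.

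That said, there is a genuine gap in your termination argument. First, $\Phi$ is ill-defined as written: individual edges have no trace in this framework. More importantly, even under the natural reading (count edges lying in cells contained in $\overline{\Sigma_\ell}$, or edges of the inner graph of $C_\ell$), your claim that $\Phi$ strictly increases can fail. The crucial unproved assertion is that the trace of an offending path $P$ ``avoids $\overline{\Sigma_i}$ except at its endpoints''; this is not forced by ``sticks out away from $c_0$.'' Consider a non-vortex cell $c$ with $N(c)=\{u,v\}\subseteq V(C_i)$ such that $\sigma(c)$ contains two internally disjoint $u$--$v$-paths: the subpath $Q_2$ of $C_i$ through $c$ and another path $P$. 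Then $P$ is grounded (its endpoints are nodes), it avoids all other $C_j$ and $V(\Omega)$, and its trace is the tie-breaker arc of $c$ --- the \emph{same} arc as the trace of $Q_2$, hence lying on $\partial\Sigma_i$ rather than strictly outside it. The rerouting $C_i\to C_i'=Q_1\cup P$ leaves $\Sigma_i$ unchanged, the cut-off disk $D$ is degenerate, no new edge is enclosed, and now $Q_2$ is an offending path for $C_i'$ of exactly the same kind; your procedure oscillates between $C_i$ and $C_i'$. Your inclusion-minimality criterion in fact \emph{prefers} these degenerate paths, since their $D$ is smallest. (Whether $P$ formally ``sticks out away'' hinges on whether one insists that $P\cup C_i$ be grounded as a $2$-connected graph --- it is not, because $Q_2\cup P$ lives in a single cell --- but under the evident relaxation to grounded cycles only, it does.) A repair requires either a lexicographically refined potential that breaks ties within single cells, or a separate argument eliminating single-cell offending paths by choosing how $C_i$ traverses each such cell; neither is supplied, and the second route needs care when $\sigma(c)$ contains three or more pairwise disjoint $u$--$v$-paths.
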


Of course in our proof of \zcref{thm:evensocietyclassification}, we make use of the version of the Society Classification theorem found in \cite{GorskySW2025polynomialboundsgraphminor}.
\begin{proposition}\label{thm:societyclassification}
    There exist polynomial functions $\mathsf{apex}^\mathsf{genus}_{\ref{thm:societyclassification}}, \mathsf{loss}_{\ref{thm:societyclassification}} \colon \mathbb{N} \rightarrow \mathbb{N}$, $\mathsf{nest}_{\ref{thm:societyclassification}}, \mathsf{cost}_{\ref{thm:societyclassification}} \colon \mathbb{N}^2 \rightarrow \mathbb{N}$, and $ \mathsf{apex}^\mathsf{fin}_{\ref{thm:societyclassification}}$,  $\mathsf{depth}_{\ref{thm:societyclassification}} \colon \mathbb{N}^3\to\mathbb{N}$, such that the following holds.
    
    Let $t,s,k,p$ be positive integers with $s \geq \mathsf{nest}_{\ref{thm:societyclassification}}(t,k)$.
    Let $(G,\Omega)$ be a society with a cylindrical rendition $\rho$ in a disk $\Delta$, a cozy nest $\mathcal{C} = \{ C_1, \ldots , C_s \}$ in $\rho$ around the vortex $c_0$ with a radial linkage $\mathcal{R}$ of order $\max(4k,8t+96)\nicefrac{1}{2}(t-3)(t-4)$.
    Further, let $(G', \Omega')$ be the $C_{s - \mathsf{cost}_{\ref{thm:societyclassification}}(t,k)}$-society in $\rho$.

    Then $G'$ contains a set $A \subseteq V(G')$ such that one of the following exists:
    \begin{enumerate}
        \item A $K_t$-minor model in $G$ controlled by a mesh whose horizontal paths are subpaths of distinct cycles from $\mathcal{C}$.

        \item A flat crosscap transaction $\mathcal{P}$ of order $p$ in $(G'-A,\Omega')$, with $|A| \leq \mathsf{apex}^\mathsf{genus}_{\ref{thm:societyclassification}}(t)$, and a nest $\mathcal{C}'$ in $\rho$ of order $s - (\mathsf{loss}_{\ref{thm:societyclassification}}(t)+\mathsf{cost}_{\ref{thm:societyclassification}}(t,k))$ around $c_0$ to which $\mathcal{P}$ is orthogonal.

        \item A flat handle transaction $\mathcal{P}$ of order $p$ in $(G'-A,\Omega')$, with $|A| \leq \mathsf{apex}^\mathsf{genus}_{\ref{thm:societyclassification}}(t)$, and a nest $\mathcal{C}'$ in $\rho$ of order $s - (\mathsf{loss}_{\ref{thm:societyclassification}}(t)+\mathsf{cost}_{\ref{thm:societyclassification}}(t,k))$ around $c_0$ to which $\mathcal{P}$ is orthogonal.

        \item A rendition $\rho'$ of $(G - A, \Omega)$ in $\Delta$ with breadth $b \in [\nicefrac{1}{2}(t-3)(t-4)-1]$ and depth at most $\mathsf{depth}_{\ref{thm:societyclassification}}(t,k,p)$, $|A| \leq \mathsf{apex}^\mathsf{fin}_{\ref{thm:societyclassification}}(t,k,p)$, and an extended $k$-surface-wall $D$ with signature $(0,0,b)$, such that $D$ is grounded in $\rho'$, the base cycles of $D$ are the cycles $C_{s-\mathsf{cost}_{\ref{thm:societyclassification}}(t,k)-1-k},\dots,C_{s-\mathsf{cost}_{\ref{thm:societyclassification}}(t,k)-1}$, and there exists a bijection between the vortices $v$ of $\rho'$ and the vortex segments $S_v$ of $D$, where $v$ is the unique vortex contained in the disk $\Delta_{S_v}$ defined by the trace of the inner cycle of the nest of $S_v$, and $\Delta_{S_v}$ is chosen to avoid the trace of the simple cycle of $D$.
    \end{enumerate}
    
    In particular, the set $A$, the $K_t$-minor model, the transaction $\mathcal{P}$, the rendition $\rho'$, and the extended surface-wall $D$ can each be found in time $\mathbf{poly}( t + s + p + k ) |E(G)||V(G)|^2$.
\end{proposition}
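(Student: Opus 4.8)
The plan is to prove \zcref{thm:societyclassification} by an iterative ``genus-and-breadth building'' procedure in the spirit of Kawarabayashi, Thomas, and Wollan, carried out inside the $C_{s-\mathsf{cost}_{\ref{thm:societyclassification}}(t,k)}$-society $(G',\Omega')$ and using the outermost nest cycles only as scaffolding. Throughout we maintain: an apex set $A_i$ (growing by only $\mathbf{O}(t^3)$ per step); a $\Delta$-rendition $\rho_i$ of $G-A_i$ with vortices $c_0=c^{(i)}_0,c^{(i)}_1,\dots,c^{(i)}_{b_i}$; for each vortex a nest whose cycles are built from $\mathcal{C}$, together with a sub-linkage of the radial linkage $\mathcal{R}$ serving as its rails; and a partial $k$-surface-wall scaffold $D_i$ with one vortex segment per vortex. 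First we normalise: by \zcref{prop:makenestcozy} assume $\mathcal{C}$ is cozy, and reserve the cycles $C_{s-\mathsf{cost}_{\ref{thm:societyclassification}}(t,k)-1-k},\dots,C_{s-\mathsf{cost}_{\ref{thm:societyclassification}}(t,k)-1}$ as the future base cycles of $D$. If \zcref{prop:findtransactionorlineardecomp} applied to $(G',\Omega')$ returns a linear decomposition of small adhesion instead of a transaction, then $(G',\Omega')$ is already a single vortex of bounded depth and we output case~4 with $b=1$.

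The main loop proceeds as follows. If every current vortex has depth at most $\mathsf{depth}_{\ref{thm:societyclassification}}(t,k,p)$, we stop (see the last paragraph). Otherwise we pick a vortex $c^{(i)}_j$ of large depth; by definition of depth its vortex society contains a transaction of that order, which after rerouting through the nest around $c^{(i)}_j$ (costing a bounded number of nest cycles) we may take to be orthogonal to that nest and still of large order. We then set up the local society as the inner graph of a suitable nest cycle around $c^{(i)}_j$, with $c^{(i)}_j$ as the vortex and the cycles between them as the nest, and apply \zcref{lem:monotonetransaction} to extract a monotone subtransaction that is planar or a crosscap, still large enough to feed into \zcref{lem:flatteningtransactions}.

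In the crosscap case, \zcref{lem:flatteningtransactions} either returns a $K_t$-minor controlled by a mesh built from cycles of $\mathcal{C}$ (case~1, stop), or returns a bounded apex set and a flat isolated crosscap subtransaction of order $p$; extending its $p$ paths radially out to $\Omega'$ along the reserved scaffold (possible since ample nest cycles and rails survive) gives the crosscap transaction of case~2 and we stop. The planar case is the crux. Here \zcref{lem:flatteningtransactions} again yields a $K_t$-minor (case~1) or a flat isolated planar subtransaction $\mathcal{P}''$ of large order, and we test whether $\mathcal{P}''$ is separating. If it is not, the two segments of $\Omega$ ``around'' $\mathcal{P}''$ are joined by a path disjoint from its strip, which together with $\mathcal{P}''$ assembles into a handle transaction; one more flattening step then gives either a $K_t$-minor (case~1) or the flat handle transaction of case~3, and we stop. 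If $\mathcal{P}''$ is separating, its strip redraws into the disk, and by the reconciliation lemma \zcref{lem:reconciliation} we obtain an updated rendition $\rho_{i+1}$ in which $c^{(i)}_j$ is split into at most two vortices, so $b_{i+1}\le b_i+1$, the scaffold gains a vortex segment, and the rails of $c^{(i)}_j$ are partitioned between the two children. As soon as $b_i\ge\nicefrac{1}{2}(t-3)(t-4)$, the accumulated vortex scaffolding forces a $K_t$-minor controlled by $\mathcal{C}$ by the standard ``many disjoint vortices yield $K_t$'' argument (case~1); hence the loop performs at most $\nicefrac{1}{2}(t-3)(t-4)-1$ iterations before terminating or reaching the bounded-depth state.

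When the loop stops in the bounded-depth state, each vortex has a linear decomposition of bounded adhesion by \zcref{prop:depth_to_lin_decomp}, so $\rho_i$ is a rendition of $G-A_i$ of breadth $b_i\in[\nicefrac{1}{2}(t-3)(t-4)-1]$ and bounded depth; combining the reserved base cycles with the distributed rails and the per-vortex nests produces the $k$-surface-wall $D$ with signature $(0,0,b_i)$ and the required bijection between vortices and vortex segments, which is case~4. The radial order $\max(4k,8t+96)\nicefrac{1}{2}(t-3)(t-4)$ is exactly what guarantees that, after at most $\nicefrac{1}{2}(t-3)(t-4)$ splits, every vortex still owns $\ge\max(4k,8t+96)$ rails --- enough for the flattening steps and for a $k$-vortex-segment --- and tracking that each iteration loses only $\mathbf{O}(t^3)$ nest cycles, $\mathbf{O}(k)$ rails, and $\mathbf{O}(t^3)$ apices over $\mathbf{O}(t^2)$ iterations yields all the polynomial functions $\mathsf{nest}_{\ref{thm:societyclassification}},\mathsf{cost}_{\ref{thm:societyclassification}},\mathsf{loss}_{\ref{thm:societyclassification}},\mathsf{apex}^{\mathsf{genus}}_{\ref{thm:societyclassification}},\mathsf{apex}^{\mathsf{fin}}_{\ref{thm:societyclassification}},\mathsf{depth}_{\ref{thm:societyclassification}}$ (with the genus apex bound depending only on $t$ since cases~2--3 terminate within $\mathbf{O}(t^2)$ steps). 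The running time bound $\mathbf{poly}(t+s+p+k)|E(G)||V(G)|^2$ follows by summing, over the $\mathbf{O}(t^2)$ iterations, the costs of \zcref{prop:makenestcozy}, \zcref{prop:findtransactionorlineardecomp}, \zcref{lem:monotonetransaction}, \zcref{lem:flatteningtransactions}, and \zcref{lem:reconciliation}. The main obstacle is the planar-transaction step: proving that a flat isolated planar transaction of large order either redraws into the disk --- splitting the vortex while, via reconciliation, keeping a coherent rendition together with its nests and rails --- or upgrades to a handle transaction or $K_t$-minor, and, in parallel, the bookkeeping that keeps the scaffold, the nest, the radial linkage, and the apex set degrading only polynomially across the bounded number of iterations.
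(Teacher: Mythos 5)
You should first note that the paper does not prove this statement at all: it is imported verbatim as a black box from \cite{GorskySW2025polynomialboundsgraphminor} (their Society Classification Theorem), and the only original work in this paper is the parity-sensitive refinement \zcref{thm:evensocietyclassification} built on top of it. So there is no in-paper proof to compare against; your sketch does follow the same Kawarabayashi--Thomas--Wollan-style iterative vortex-refinement strategy that the cited source uses.

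That said, your accounting has a genuine gap. You claim the loop performs at most $\nicefrac{1}{2}(t-3)(t-4)-1$ iterations because each separating split raises the breadth by one and many vortices force a $K_t$-minor. But a separating planar transaction can split a vortex into two children one of which has no cross and is immediately redrawn into the disk, so the breadth need not increase; a single deep vortex can be peeled in this way many times, and the only a priori bound on the number of such peels is linear in $|V(G)|$ (each peel moves at least the strip's vertices out of the vortex), exactly as in the paper's own vortex refinement tree where $|V(T)| \leq 2|V(G)|$. Since you obtain each flat separating transaction via \zcref{lem:flatteningtransactions}, which deletes up to $8t^3$ apex vertices per application, your apex set would grow like $\Omega(|V(G)|\,t^3)$ rather than being bounded by $\mathsf{apex}^{\mathsf{fin}}_{\ref{thm:societyclassification}}(t,k,p)$. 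The correct argument must perform the splitting step \emph{without} apex deletion (the separating strip has to be shown flat for free, e.g.\ because the relevant transaction is unexposed or already lives in a vortex-free region), reserving the apex-costing flattening lemma for the terminal crosscap/handle outcomes and the final depth-bounding of the at most $\nicefrac{1}{2}(t-3)(t-4)-1$ surviving vortices. Separately, your step ``a non-separating planar transaction assembles into a handle transaction'' is asserted rather than argued; establishing that the connecting path together with $\mathcal{P}''$ yields two disjoint planar transactions whose strips are each flat and isolated, in the interleaving pattern required of a handle transaction, is one of the delicate points of the original proof and cannot be waved through.
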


Using our existing tools, we can almost get to \zcref{thm:evensocietyclassification} with ease.
In particular, the first three options of \zcref{thm:societyclassification} can immediately be refined using our existing tools to yield one of the options in our theorem.
To deal with the last option, we will have to do a bit more work and import, as well as modify several results from \cite{GorskySW2025polynomialboundsgraphminor}.

\paragraph{Orthogonalisation.}
We first prove a variant of the following central lemma within \cite{GorskySW2025polynomialboundsgraphminor}, which allows us to find a transaction that is orthogonal to all cycles except the innermost one of the nest.

\begin{proposition}\label{lemma:orthogonal_transaction}
Let $s,p$ be positive integers.
Let $(G, \Omega)$ be a society with a cylindrical rendition $\rho$ around a vortex $c_0$ in the disk $\Delta$ with a cozy nest $\mathcal{C} = \{ C_1, \ldots , C_s \}$.
Further, let $\mathcal{P}$ be an exposed transaction of order $s(p-1)+1$ in $(G,\Omega)$ with the end segments $X_1, X_2$.

Then there exists an exposed transaction $\mathcal{P}'$ of order $p$ such that
\begin{enumerate}
    \item $\mathcal{P}'$ is orthogonal to $\{ C_2, C_3, \dots , C_s \}$,
    
    \item $\mathcal{P}'$ connects vertices of $X_1 \cap V(\mathcal{P})$ to vertices of $X_2 \cap V(\mathcal{P})$, and

    \item the intersection of $\bigcup\mathcal{P}'$ with the inner graph of $C_1$ in $\rho$ is fully contained in $C_1\cup\bigcup\mathcal{P}$.
\end{enumerate}
Moreover, there exists an algorithm that finds $\mathcal{P}'$ in time $\mathbf{O}( p |E(G)| )$.
\end{proposition}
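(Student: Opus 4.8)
The plan is to reroute the paths of $\mathcal{P}$ inside the part of $G$ that $\rho$ draws outside $C_1$, which is vortex-free and essentially planar, turning them one cycle at a time into paths that cross each of $C_2,\dots,C_s$ exactly once, while never touching the inner graph of $C_1$. First I would fix the arena: since $c_0$ is the unique vortex of $\rho$ and it lies in $\Sigma_1$, the restriction $\rho^{\mathrm{out}}$ of $\rho$ to the outer graph of $C_1$ is a vortex-free rendition in the annulus obtained from $\Delta$ by deleting the open disk bounded by the trace of $C_1$, and in this annulus the traces of $C_2,\dots,C_s$ are pairwise non-crossing concentric circles. Every $P\in\mathcal{P}$ meets the inner graph of $C_1$ in a subpath carrying the $\sigma(c_0)$-edge that witnesses exposedness, and this subpath will never be modified; all rerouting takes place in the concentric annuli $A_i$ that $\rho^{\mathrm{out}}$ draws between $C_i$ and $C_{i+1}$ (with $A_s$ the region between $C_s$ and $\Omega$), which are disjoint from the $C_1$-disk and from every nest cycle other than $C_i$. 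In particular condition (3) will hold automatically, the endpoints on $X_1,X_2$ are preserved so condition (2) holds, and the transaction stays exposed.

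Next I would run the rerouting from the outermost cycle $C_s$ inward to $C_2$. When we reach $C_i$ the current sub-transaction is already orthogonal to $C_{i+1},\dots,C_s$, so inside the $C_{i+1}$-disk each path is a single subpath that crosses $C_i$ some even number $2k$ of times; in $A_i$ this subpath appears as two ``through'' arcs together with $k-1$ ``excursion'' arcs having both ends on $C_i$. An innermost excursion bounds, with a sub-arc of $C_i$, a disk meeting no other path, and rerouting along that sub-arc of $C_i$ removes the excursion; one repeats from the innermost excursion outward, and here the \emph{cozy} hypothesis on $\mathcal{C}$ is what rules out pathological excursions and makes this process terminate. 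The obstruction to doing all of this for free is that a sub-arc of $C_i$ along which we want to reroute may carry attachment points of other paths or run through a $3$-node cell shared with another path; one shows that discarding at most $p-1$ paths per cycle suffices to break all such conflicts, so at least $s(p-1)+1-(s-1)(p-1)=p$ paths survive all $s-1$ steps. Truncating to exactly $p$ of them yields the desired $\mathcal{P}'$, and since each individual reroute is a local surgery the whole procedure can be implemented in $\mathbf{O}(p\,|E(G)|)$ time.

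The hard part will be the bookkeeping inside a single cycle's step: translating the topological picture of disjoint arcs in $A_i$ into genuine vertex-disjoint paths of $G$ realised through the cells of $\rho$, pinning down exactly how many paths must be sacrificed to keep the survivors pairwise disjoint and to avoid reintroducing a crossing at a $3$-node cell, and checking that fixing $C_i$ leaves untouched the parts of the paths inside the $C_{i+1}$-disk and inside $C_1$, so that orthogonality to $C_{i+1},\dots,C_s$ and conditions (2) and (3) all persist. Because none of this reaches the vortex $c_0$ and none of it refers to the parity of cycles, this is in substance the argument carried out for ordinary transactions in \cite{GorskySW2025polynomialboundsgraphminor}; the present version only has to carry along the extra conclusion (3) and make the linear running time explicit.
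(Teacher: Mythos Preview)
This proposition is not proved in the paper; it is quoted from \cite{GorskySW2025polynomialboundsgraphminor} and used as a black box. The paper's own contribution here is the variant Lemma~\ref{lem:planar_orthogonal_transaction}, whose proof \emph{begins} by invoking this proposition. So there is no in-paper argument to match your proposal against, and your closing remark that ``this is in substance the argument carried out \ldots\ in \cite{GorskySW2025polynomialboundsgraphminor}'' is in fact all the paper itself says about it.

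Judged as a standalone sketch, your plan has the right architecture: freeze the $C_1$-interior (guaranteeing condition~(3) and exposedness automatically), and straighten the paths cycle-by-cycle from $C_s$ down to $C_2$ by rerouting along each $C_i$ in the annulus $A_i$. But the load-bearing sentence is ``one shows that discarding at most $p-1$ paths per cycle suffices to break all such conflicts,'' and you give no mechanism for showing it. This is not a technicality --- it is the entire quantitative content of the lemma. The arc of $C_i$ needed to reroute one path can pass over the attachment points of arbitrarily many other paths on $C_i$, so there is no a~priori reason the number of sacrifices should be bounded by the \emph{target} $p-1$ rather than by something depending on the current transaction size or the total number of excursions. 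Without an explicit argument here (a pigeonhole on how the $2N$ legs interleave around $C_i$, or a controlled innermost-first induction that actually bounds the damage), what you have is a plan rather than a proof.

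A smaller gap in the picture itself: you only name the excursions that stick out into $A_i$ (away from $c_0$), and you invoke coziness to dispose of those. But once those are gone, each path still has its $k-1$ excursions on the \emph{inside} of $C_i$ (the $C_i$-paths sticking out toward $c_0$, one of which carries the $c_0$-edge). Coziness says nothing about these, and it is precisely their removal --- by replacing them with arcs of $C_i$ --- that creates the conflicts you later allude to. Your sketch should treat these explicitly.
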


Whilst this lemma is quite useful, for our application we cannot afford to lose a cycle of the nest each time we apply it.
Luckily, we will be using it in a context in which we have more structure.
In our setting the transaction together with the nest has a vortex-free rendition.
Here we show that we can find a transaction that is orthogonal to the entirety of the nest.
We state this lemma without reference to our special type of decompositions, as it may find applications outside of this work.
A similar result is presented in \cite{PaulPTW2024Obstructionsa} (see Lemma 4.16).

\begin{lemma}\label{lem:planar_orthogonal_transaction}
Let $s,p$ be positive integers.
Let $(G, \Omega)$ be a society with a cylindrical rendition $\rho$ around a vortex $c_0$ in the disk $\Delta$ with a cozy nest $\mathcal{C} = \{ C_1, \ldots , C_s \}$, and let $H$ be the outer graph of $C_1$ in $\rho$.
Further, let $\mathcal{P}$ be an unexposed transaction of order $s(p+1)+1$ in $(G,\Omega)$ with the end segments $X_1, X_2$, such that there exists a vortex-free rendition $\rho'$ of $(H \cup \bigcup \mathcal{P}, \Omega)$ that agrees with $\rho$ on $H$.

Then there exists a transaction $\mathcal{P}'$ of order $p$ such that
\begin{enumerate}
    \item $\mathcal{P}'$ is orthogonal to $\mathcal{C}$ and unexposed in $\rho$,
    
    \item $\mathcal{P}'$ connects vertices of $X_1 \cap V(\mathcal{P})$ to vertices of $X_2 \cap V(\mathcal{P})$, and

    \item the intersection of $\bigcup\mathcal{P}'$ with the inner graph of $C_1$ in $\rho$ is fully contained in $C_1\cup\bigcup\mathcal{P}$.
\end{enumerate}
Moreover, there exists an algorithm that finds $\mathcal{P}'$ in time $\mathbf{O}( p |E(G)| )$.
\end{lemma}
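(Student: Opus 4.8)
The plan is to use the vortex-free rendition $\rho'$ to transport the whole situation into a purely planar setting, in which the innermost cycle $C_1$ is no longer special: the obstacle that forces \zcref{lemma:orthogonal_transaction} to give up $C_1$ is precisely the vortex $c_0$ lurking inside $C_1$, and this obstacle disappears once we pass to $\rho'$. First I would observe that since $\mathcal{P}$ is unexposed in $\rho$, no path of $\mathcal{P}$ meets the interior of $c_0$, so $\bigcup\mathcal{P}$ together with the outer graph $H$ of $C_1$ (which also avoids the interior of $c_0$) lies in $G - [\sigma_\rho(c_0)]$; hence $\rho'$ is a genuine vortex-free rendition, in a disk with $V(\Omega)$ on the boundary, of the graph $G^\star \coloneqq H \cup \bigcup\mathcal{P}$. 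Since $\mathcal{C} \subseteq H$ and $\rho'$ agrees with $\rho$ on $H$, the traces $T_1,\dots,T_s$ of the nest cycles in $\rho'$ are still pairwise nested, bounding closed disks $D_1 \subseteq \cdots \subseteq D_s$ disjoint from $V(\Omega)$, and cozyness is inherited. This leaves us with the clean picture of $s$ nested cycles in a disk, $V(\Omega)$ on the boundary, and a transaction $\mathcal{P}$ of order $s(p+1)+1$ between two boundary segments, all drawn planarly by $\rho'$.

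Next I would carry out the classical re-routing argument, but simultaneously against all $s$ cycles. For a path $P \in \mathcal{P}$ and a cycle $C_i$ whose trace $P$ crosses more than twice, some sub-arc of $P$ between two consecutive crossings of $T_i$ bounds, together with an arc of $T_i$, an innermost disk; replacing that sub-path of $P$ by the corresponding sub-path of $C_i$ strictly reduces the number of crossings of $T_i$, and iterating makes $P$ cross each $T_i$ exactly twice for every cycle $P$ actually penetrates. Doing this for $p$ carefully chosen, pairwise ``parallel'' paths of $\mathcal{P}$ at once, one keeps the re-routed paths pairwise vertex-disjoint and routes each re-routing along an as-yet-unused arc of the relevant cycle; since a single re-routing along a cycle blocks only a bounded portion of that cycle, roughly a $1/(s+\mathcal{O}(1))$-fraction of the $s(p+1)+1$ paths can be handled together, producing the desired $\mathcal{P}'$ of order $p$. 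The point is that in $\rho'$ there is no vortex, so this re-routing applies to $C_1$ exactly as to $C_2,\dots,C_s$; this is the step beyond \zcref{lemma:orthogonal_transaction}. (Paths not reaching all the way inside $D_1$ are discarded first, and the slack in the order $s(p+1)+1$ is there so that $p$ paths penetrating the entire nest and admitting clean re-routings survive; here the cozyness of $\mathcal{C}$ is used to rule out useless outward excursions. A closely related planar straightening appears as Lemma~4.16 of \cite{PaulPTW2024Obstructionsa}.)

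Finally I would verify the three conclusions and the running time. Conclusion~(ii) is immediate, as re-routing never moves the endpoints of a path on $\Omega$, so $\mathcal{P}'$ connects $X_1 \cap V(\mathcal{P})$ to $X_2 \cap V(\mathcal{P})$. For~(iii): each path of $\mathcal{P}'$ is assembled from sub-paths of $\mathcal{P}$ and arcs of $C_1,\dots,C_s$; the arcs taken from $C_2,\dots,C_s$ lie in $H$, hence meet the inner graph of $C_1$ only in vertices of $C_1$, while the arcs taken from $C_1$ lie on $C_1$ itself — so inside the inner graph of $C_1$ the set $\bigcup\mathcal{P}'$ uses nothing outside $C_1 \cup \bigcup\mathcal{P}$. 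For~(i): orthogonality to $\mathcal{C}$ holds by construction, and $\mathcal{P}'$ is unexposed in $\rho$ because both the surviving sub-paths of $\mathcal{P}$ and the cycles $C_i \subseteq H$ avoid the interior of $c_0$. The running time $\mathbf{O}(p|E(G)|)$ follows since only $\mathcal{O}(p)$ paths are processed, each re-routing is a linear-time operation, and the traces, the nesting, and the crossing patterns are read off $\rho'$ in linear time. I expect the main difficulty to be the middle step: making the simultaneous re-routing against all $s$ cycles rigorous — in particular choosing the re-routing arcs on the various cycles so that the $p$ output paths stay pairwise disjoint and the loss is exactly $s(p+1)+1 \to p$ — and handling cleanly the paths that fail to penetrate the whole nest.
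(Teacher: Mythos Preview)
Your conceptual starting point—passing to the vortex-free rendition $\rho'$ so that $C_1$ loses its special status—is exactly right and is the key observation. But the paper does not then attempt the ``simultaneous rerouting against all $s$ cycles'' that you sketch; instead it takes a two-stage, modular route that sidesteps precisely the difficulty you yourself flag.

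First, the paper applies \zcref{lemma:orthogonal_transaction} as a black box to $\mathcal{P}$, obtaining a transaction $\mathcal{Q}=\{P_1,\dots,P_{p+2}\}$ of order $p+2$ that is already orthogonal to $C_2,\dots,C_s$ and already satisfies (ii) and (iii). Since $\rho'$ is vortex-free, $\mathcal{Q}$ is automatically a \emph{planar} transaction. Second, the paper fixes orthogonality to $C_1$ alone, using a short explicit argument rather than a generic rerouting scheme. Cozyness of $\mathcal{C}$ together with orthogonality to $C_2$ implies that $\mathcal{Q}$ has no $C_1$-paths in $H$, so only the behaviour of $\mathcal{Q}$ inside the $c_0$-disk of $C_1$ matters. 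Planarity of $\rho'$ then forces the traces of $P_1,\dots,P_{p+2}$ to separate one another inside that disk; in particular, for each $i\in[2,p+1]$ the first and last $C_1$-hits of $P_i$ sit between those of $P_{i-1}$ and $P_{i+1}$ on the two $P_1$--$P_{p+2}$ arcs $S_1,S_2$ of $C_1$. One locates a single $S_1$--$S_2$ subpath $Q_i\subseteq P_i$ in the inner graph of $C_1$ and splices it in via short arcs $S_1^i\subseteq S_1$, $S_2^i\subseteq S_2$; the separation property guarantees these arcs are disjoint from every other $P_j$. This costs only the two boundary paths $P_1,P_{p+2}$, giving exactly $p$ paths and the loss $s(p+1)+1\to p+2\to p$.

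So the gap you identify is genuine for your from-scratch plan, but the paper never faces it: by the time it touches $C_1$, the paths are already nested, planar, and orthogonal to the rest of the nest, so no simultaneous bookkeeping across several cycles is required and no ``as-yet-unused arc'' accounting is needed. Your approach could plausibly be made rigorous along the lines of \cite{PaulPTW2024Obstructionsa}, but the paper's route is shorter, reuses existing machinery, and makes the exact constant $s(p+1)+1$ transparent.
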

\begin{proof}
    Unsurprisingly, we start by applying \zcref{lemma:orthogonal_transaction} to $\mathcal{P}$, which yields a transaction $\mathcal{Q}$ of order $p' \coloneqq p+2$.
    We note that due to the existence of $\rho'$ both $\mathcal{P}$ and $\mathcal{Q}$ must be planar.
    Thus, we may assume that $\mathcal{Q} = \{ P_1, \ldots , P_{p'} \}$ is indexed naturally.

    According to \zcref{lemma:orthogonal_transaction}, $\mathcal{Q}$ is already orthogonal to all cycles in $\mathcal{C}$ except for $C_1$.
    Thus our goal will be the rectify this last non-orthogonal part of $\mathcal{Q}$.
    Furthermore, since the intersection of $\bigcup \mathcal{P}$ with the inner graph $H'$ of $C_1$ is found entirely in $C_1 \cup \bigcup \mathcal{P}$, the rendition $\rho'$ in particular tells us that there is a vortex-free rendition of $(H \cup \bigcup \mathcal{Q}, \Omega)$, which can be derived from $\rho'$.
    For the sake of simplicity we will directly work with $\rho'$.
    Since $\mathcal{C}$ is cozy and $\mathcal{Q}$ is orthogonal to $\mathcal{C} \setminus \{ C_1 \}$, we also note that there cannot exist a $C_1$-path in $H \cap \bigcup \mathcal{Q}$, as this path would have to be disjoint from $C_2$ and stick out away from $c_0$ in $\rho$.

    We let $a_i \in X_1$ and $b_i \in X_2$ be the endpoints of $P_i \in \mathcal{Q}$ for all $i \in [p']$.
    For both $j \in [2]$, let $\mathcal{R}_j$ be the set of $X_j$-$C_1$-paths in $\bigcup \mathcal{Q}$, which contains exactly $p'$ paths.
    For all $i \in [p']$, we let $a_i'$ be the second endpoint of the path in $\mathcal{R}_1$ that has $a_i$ as an endpoint and define $b_i'$ analogously.

    Let $i \in [p']$ and let $P$ be a non-trivial $P_i$-path in $C_1$.
    Suppose that $a_j'$ is found in $V(P)$ for some $j \in [p'] \setminus \{ i \}$, but $b_j'$ is not.
    Then $a_j'$ cannot be an endpoint of $P$ and is thus an internal vertex of $P$.
    Due to the existence of the vortex-free rendition $\rho'$ and the coziness of $\mathcal{C}$, there also cannot exist any non-trivial $C_1$-paths in $P_j \cap H$.
    Thus $P_j$ must somehow reach $b_j'$ starting from $a_j'$ via a subpath of $P_j$ found entirely in $H'$.
    This is clearly impossible if $b_j'$ is not found in $V(P)$.

    As a consequence, for all distinct $i,j \in [p']$, there exists a unique $P_i$-path in $C_1$ that contains both $a_j'$ and $b_j'$.
    Let $i \in [2,p'-1]$.
    Due to the fact that $\rho'$ is a vortex-free rendition, the trace of $P_i$ separates the traces of $P_{i-1}$ and $P_{i+1}$ on $\Delta$.
    Thus we further know that $a_{i-1}'$ and $a_{i+1}'$ are found in distinct $P_i$-paths of $C_1$.
    This in particular implies that for each $i \in [2,p'-1]$ there exist exactly two $P_i$-paths in $C_1$ that contain vertices of other paths from $\mathcal{Q}$, one which contains vertices of paths with a lower index than $i$ and the second containing vertices of paths with a higher index than $i$.
    Furthermore, each of these two paths contains vertices from both $X_1$ and $X_2$.

    Note that for $P_1$ and $P_{p'}$ there may not exist a non-trivial $C_1$-path in $G$.
    Let $S_1,S_2$ be the two unique $P_1$-$P_{p'}$-paths in $C_1$.
    We choose these indices such that $a_i' \in S_1$ and $b_i' \in S_2$ for all $i \in [2,p']$.
    For all paths $P_i \in \mathcal{Q}$, with $i \in [2,p'-1]$, there exists an $S_1$-$S_2$-path $Q_i$ in $P_i \cap H'$.
    Let $a_i''$ be the endpoint of $Q_i$ in $S_1$ and let $b_i''$ be the endpoint of $Q_i$ in $S_2$.

    For each $i \in [2,p'-1]$ we let $S_1^i$ be the unique $a_i'$-$a_i''$-path in $S_1$ and let $S_2^i$ be the unique $b_i'$-$b_i''$-path in $S_2$.
    We claim that for both $j \in [2]$, the path $S_j^i$ is disjoint from $\bigcup (\mathcal{Q} \setminus \{ P_i \})$.
    Suppose this is not the case, then $S_j^i$ contains at least one of the two $P_i$-paths, say $P'$, in $C_1$ that contain vertices of other paths from $\mathcal{Q}$.
    Note that $S_j^i$ is disjoint from $X_{3-j}$ and thus $P'$ is disjoint from $X_{3-j}$, which contradicts that $P'$ must contain vertices of both $X_1$ and $X_2$.
    Thus our claim holds.
    
    We now construct $p' - 2 = p$ paths $Q_2', \ldots , Q_{p'-1}'$ as follows.
    For each $i \in [2,p'-1]$ start at $a_i$ and follow $P_i$ until $a_i'$.
    From $a_i'$ walk towards $a_i''$ using $S_1^i$.
    Then walk towards $b_i''$ using $Q_i$, reach $b_i'$ via $S_2^i$ and reach $b_i$ via $P_i$.

    The paths $Q_2', \ldots , Q_{p'-1}'$ we can construct this way are pairwise disjoint, orthogonal to $\mathcal{C}$, exposed in $\rho$, and the intersection of $\bigcup \{ Q_2', \ldots , Q_{p'-1}' \}$ with $H'$ is fully contained in $C_1 \cup \bigcup \mathcal{P}$ by construction.
    This completes our proof.
\end{proof}

We will also require the ability to manipulate radial linkages.
In particular, we will need to make a radial linkage orthogonal in certain contexts.

Let $(G,\Omega)$ be a society, let $\rho$ be a $\Sigma$-rendition of $(G,\Omega)$ in a disk $\Delta$, and let $\mathcal{P}$ and $\mathcal{Q}$ be two linkages of the same order.
If for each path $Q \in \mathcal{Q}$ there exists a path $P \in \mathcal{P}$ with the same endpoints as $Q$, we say that $\mathcal{P}$ and $\mathcal{Q}$ are \emph{end-identical}.

\begin{proposition}[Gorsky, Seweryn, and Wiederrecht \cite{GorskySW2025polynomialboundsgraphminor}]\label{prop:radialtoorthogonal}
    Let $s,r$ with $r \leq s$.
    Let $(G,\Omega)$ be a society with a $\Sigma$-rendition $\rho$ with a cozy nest $\mathcal{C}$ of order $s$ and a radial linkage $\mathcal{R}$ of order $r$ for $\mathcal{C}$.

    Then there exists a radial linkage $\mathcal{R}'$ of order $r$ for $\mathcal{C}$ that is orthogonal to $\mathcal{C}$ and end-identical to $\mathcal{R}$.

    Moreover, there exists an algorithm running in $\mathbf{O}(r|E(G)|)$-time that finds $\mathcal{R}'$.
\end{proposition}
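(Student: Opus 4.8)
The plan is to prove this by straightening $\mathcal{R}$ against the cycles of the nest one at a time, from the outermost cycle $C_s$ inward, while maintaining that the current (end‑identical) linkage is already orthogonal to the cycles processed so far; each stage reroutes the offending portions of the paths along subarcs of the current cycle. For the setup, order $\mathcal{C} = \{C_1,\dots,C_s\}$ so that the trace of $C_i$ bounds a surface $\Sigma_i$ with $\bigcup_j c_j \subseteq \Sigma_1 \subseteq \dots \subseteq \Sigma_s$. Since the trace of a grounded cycle is disjoint from every cell of $\rho$, each $C_i$ separates $\Sigma$, every cell and every vertex lies strictly on one of its two sides (or on the trace, at a vertex of $C_i$), so every $V(\Omega)$--$V(C_1)$-path in $G$ must contain a vertex of each $C_i$; in particular every path of any radial linkage genuinely crosses each nest cycle, so asking that $R \cap C$ be a (nonempty) path for all pairs is meaningful. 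Passing through the cylindrical rendition implicit in the definition of a cozy nest, we may assume $\rho$ is cylindrical around a vortex $c_0$, so that the part of $\Sigma$ lying outside $\Sigma_i$, and more generally the closed annular region between two consecutive nest cycles, is vortex-free and planar.

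For a single straightening step, suppose $\mathcal{R}$ is orthogonal to $C_{i+1},\dots,C_s$. For $R \in \mathcal{R}$ let $x_R,y_R$ be the first and last vertices of $R$ on $V(C_i)$ along the traversal from the $V(\Omega)$-end; the goal is to replace $R[x_R,y_R]$ by one of the two $x_R$--$y_R$ subarcs of $C_i$, for all $R$ simultaneously, keeping the paths disjoint. I would do this by an uncrossing argument: as long as some path still meets $C_i$ in more than one component there is a \emph{$C_i$-excursion} $E$ (a subpath of a path in $\mathcal{R}$ with both ends on $C_i$ and interior off $C_i$); pick $E$ so that its trace, together with one of the two subarcs $\alpha$ of $C_i$ it cuts off, bounds an inclusion-minimal disk $D_E \subseteq \Sigma$. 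Coziness says an outward $C_i$-path must reach $V(\Omega)$ or another nest cycle, and orthogonality to $C_{i+1},\dots,C_s$ already forbids reaching an outer cycle, so (up to the minor care needed for paths passing internally through a free vertex of $V(\Omega)$) $E$ must point inward; then planarity together with minimality of $D_E$, and coziness again to rule out a $C_i$-bridge escaping $D_E$ undetected, force $D_E$ to be disjoint from $\mathcal{R} \setminus E$. Replacing $E$ by $\alpha$ yields an end-identical radial linkage with strictly fewer $C_i$-components in total, and since $\alpha$ is a subgraph of the single cycle $C_i$, which is vertex-disjoint from every other nest cycle, this does not disturb orthogonality to $C_{i+1},\dots,C_s$. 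Iterating finishes the step, and after processing $C_s,\dots,C_1$ the linkage is orthogonal to $\mathcal{C}$; end-identity is automatic since only interior segments of paths are ever altered.

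The step I expect to be the real obstacle is exactly this uncrossing for inward excursions: making rigorous that an inclusion-minimal cut-off disk contains no part of the other radial paths (and no part of the same path outside $E$), which is where coziness of the nest is genuinely needed and where one must be careful about how excursions interact with the vortices hidden inside $\Sigma_1$. Equivalently, one can phrase the whole argument as an extremal choice: among all end-identical radial linkages for $\mathcal{C}$, take one minimising $\sum_{R}\sum_i(\,\#\text{components of }R\cap C_i - 1)$ and, subject to that, minimising total length; if it is not orthogonal, the move above strictly decreases the measure, a contradiction.

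For the algorithmic bound one organises this as: process the $r$ paths one at a time, and for a single path perform one left-to-right sweep from its $V(\Omega)$-end, recording for each nest cycle the first and last visit and short-cutting in between along the cycle, always rerouting into the component of the relevant region obtained after deleting the not-yet-processed radial paths. Each path is handled in $\mathbf{O}(|E(G)|)$ time, for a total of $\mathbf{O}(r|E(G)|)$, and correctness is the constructive version of the uncrossing argument above.
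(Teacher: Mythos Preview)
The paper does not prove this proposition at all: it is stated with attribution to \cite{GorskySW2025polynomialboundsgraphminor} and used as a black box, so there is no ``paper's own proof'' to compare against.

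Your rerouting argument is sound. The key point, which you flag as the crux, does go through: once you are orthogonal to $C_{i+1},\dots,C_s$, coziness together with the fact that a $V(\Omega)$--$V(C_1)$-path meets $V(\Omega)$ only at its endpoint and $V(C_1)$ only at its endpoint rules out outward excursions from $C_i$ and forces every inward excursion to live in the vortex-free annulus between $C_1$ and $C_i$; for an inclusion-minimal such $D_E$, any other vertex of $\mathcal{R}$ on $\alpha$ would have to leave $C_i$ inward at some point of $\alpha$, and geometrically the inward side of $\alpha$ \emph{is} $D_E$, producing a strictly smaller excursion and a contradiction. The same reasoning shows $R\setminus E$ itself avoids the interior of $\alpha$, so the replacement yields a genuine path, and since $\alpha\subseteq C_i$ is disjoint from $C_{i+1},\dots,C_s$, orthogonality to the outer cycles is preserved. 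Note that you never need to process $C_1$: a $V(\Omega)$--$V(C_1)$-path meets $C_1$ only at its endpoint, so orthogonality to $C_1$ is automatic.

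One remark on the running time: your extremal/uncrossing presentation does not by itself give $\mathbf{O}(r|E(G)|)$, since the number of replacements could be large. Your last paragraph's per-path sweep is the right idea for the bound, but to make it a proof you still need the disjointness guarantee when rerouting a single path against all cycles at once in the presence of the other (not yet processed) paths; this follows from the same ``inward from $\alpha$ enters $D_E$'' observation, applied with $E$ ranging over \emph{all} remaining excursions of the current path, but it deserves a sentence.
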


\paragraph{Finding specific (odd) minors in extended surface walls.}
Our ultimate goal is to prove by induction that we will at some point have to arrive in one of the options of \zcref{thm:evensocietyclassification} if we try to refine the kind of society mentioned in the setting of the theorem.
For this purpose we need one last tool which allows us to say that the iterative process that we use will have to terminate after finding too many disjoint vortices.

For this purpose we first import a lemma that builds a large $K_t$-minor model controlled by the base wall of an extended surface wall $W$ if $W$ has enough vortices hosting crosses.
\begin{proposition}[Gorsky, Seweryn, and Wiederrecht \cite{GorskySW2025polynomialboundsgraphminor}]\label{prop:cliques_in_extended_walls}
Let $t$ be a positive integer, $\Sigma$ be the sphere, and let $W$ be an extended $(2t+8)$-surface-wall with $\nicefrac{1}{2}(t-3)(t-4)$ vortices.
Moreover, let $G$ be the union of $W$ and $(t-3)(t-4)$ vertex-disjoint $V(W)$-paths $L_i,R_i$, with $i \in [\nicefrac{1}{2}(t-3)(t-4)]$, such that for every $i \in [\nicefrac{1}{2}(t-3)(t-4)]$, the paths $L_i$ and $R_i$ form a cross on the innermost cycle of the nest of the $i$th vortex-segment of $W$, both $L_i$ and $R_i$ are internally disjoint from $W$, and the endpoints of $L_i$ and $R_i$ are endpoints of rails of the $i$th vortex-segment of $W$.
Then $G$ contains a $K_t$-minor controlled by $W$.

Moreover, such a minor model of $K_t$ can be found in time $\mathbf{O}(|V(G)|)$.
\end{proposition}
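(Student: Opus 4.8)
This statement is quoted from \cite{GorskySW2025polynomialboundsgraphminor}; for the reader's orientation we outline the construction. We may assume $t \geq 5$, since for $t \leq 4$ the number $\nicefrac{1}{2}(t-3)(t-4) = \binom{t-3}{2}$ of vortex-segments is $0$ and a sufficiently large wall already contains $K_4$ as a minor controlled by its base wall. Write $q \coloneqq \binom{t-3}{2}$ for the number of vortex-segments of $W$, and fix once and for all a bijection between these vortex-segments and the $q$ two-element subsets of $\{4,5,\dots,t\}$; we say the $i$th vortex-segment is \emph{assigned} the pair it corresponds to.

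The plan is to exhibit $t$ branch sets $B_1,\dots,B_t$. Three of them, $B_1,B_2,B_3$, are built from the outer concentric structure of the base wall: since the base wall of $W$ is an annulus wall, its outermost few base cycles form a cyclic arrangement, and cutting this cyclic strip into three consecutive arcs (absorbing a few rungs so that each arc is connected and the three are pairwise adjacent) produces a $K_3$ that meets every vortex-segment of $W$. The remaining $t-3$ branch sets $B_4,\dots,B_t$ are built from ``vertical'' infrastructure: $B_j$ consists of a rail-like path that traverses the cyclic sequence of vortex-segments, is made to reach outward and touch each of $B_1,B_2,B_3$ (which is automatic, as a path crossing the concentric cycles passes through all three outer arcs), and, inside each vortex-segment whose assigned pair contains $j$, takes a detour along part of the cross and part of the innermost nest cycle of that segment. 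For a pair $\{j,j'\} \subseteq \{4,\dots,t\}$ we use the vortex-segment assigned to $\{j,j'\}$: there the detours of $B_j$ and $B_{j'}$ are joined, and the cross $L_i,R_i$ on the innermost nest cycle of that segment is exactly what allows these two paths to meet while all the other rails passing through the segment stay routed disjointly through its (planar) nest. As the vortex-segments are pairwise distinct, these $\binom{t-3}{2}$ meetings are realised simultaneously, so all pairs among $B_4,\dots,B_t$ become adjacent, and together with the $K_3$ on $B_1,B_2,B_3$ and the automatic adjacencies between the two groups we obtain a $K_t$-minor model $\varphi$. The wall order $2t+8$ guarantees that every nest has enough concentric cycles and rails to perform all of these routings pairwise vertex-disjointly; carrying out this disjointness bookkeeping, and tracking exactly how each cross is consumed, is the only genuinely delicate point of the argument.

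It remains to see that $\varphi$ is controlled by $W$, i.e.\ by its base wall $M$, which is a cylindrical mesh of order at least $2t+8 > t$. By construction each branch set contains either an entire base cycle or an entire rail of $M$ together with enough rungs to be $2$-connected to $M$. Given $Z \subseteq V(G)$ with $|Z| < t$, more than $t+8$ of the base cycles of $W$ avoid $Z$; if $\varphi(x)$ avoids $Z$, then the horizontal or vertical path of $M$ contained in $\varphi(x)$ meets all these surviving base cycles, which in turn wrap around to the surviving rails, so $\varphi(x)$ lies in the same component of $G-Z$ as every horizontal and vertical path of $M$ disjoint from $Z$. By Menger's theorem this is precisely the statement that $\varphi$ is controlled by $M$. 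Finally, all the paths above are traced through explicitly described subgraphs of $W$ and of the crosses, so the model can be produced in $\mathbf{O}(|V(G)|)$ time.
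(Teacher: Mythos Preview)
The paper does not give its own proof of this proposition; it is imported verbatim from \cite{GorskySW2025polynomialboundsgraphminor} and used as a black box. So there is no in-paper argument to compare against. Your sketch correctly identifies this and follows the standard construction from the cited source: three branch sets carved from the outer concentric cycles of the base annulus wall to form a $K_3$ meeting every segment, and $t-3$ ``rail'' branch sets whose $\binom{t-3}{2}$ pairwise adjacencies are realised one-per-vortex via the crosses on the innermost nest cycles. That is exactly the approach of \cite{GorskySW2025polynomialboundsgraphminor}, and the overall shape of your sketch is sound.

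One point to tighten: your control argument asserts that ``each branch set contains either an entire base cycle or an entire rail of $M$'', but for $B_1,B_2,B_3$ as you described them (arcs of the outermost base cycles) this is not literally true---each is only a third of a cyclic strip. What actually gives control is that each of $B_1,B_2,B_3$ contains at least $t$ vertex-disjoint subpaths of distinct rails (the rungs absorbed into the arc), so any $Z$ with $|Z|<t$ leaves one such rail-fragment intact, and that fragment reaches a surviving base cycle. This is implicit in ``absorbing a few rungs'' but should be stated when you invoke Menger for the control claim. With that adjustment your outline matches the cited proof.
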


We will need a modified version of this that allows us to find $\mathscr{U}_t$ as an odd minor instead.
Luckily this is not too difficult to prove.
Given an even cycle $C$ and a path $P$, that is internally disjoint from $C$ but has both endpoints in $V(C)$, we call $P$ \emph{parity breaking} if $C \cup P$ does not have a proper 2-colouring.

\begin{lemma}\label{lemma:parity_grid_in_extended_walls}
Let $h$ be a positive integer, $\Sigma$ be the sphere, and let $W$ be an parity $(2h+1)$-surface-wall with signature $(h^o,h^e,c^o,c^e,b^o,b^e)$ such that $b^e \geq h$.
Moreover, let $G$ be the union of $W$ and $h$ vertex-disjoint $V(W)$-paths $P_i$, with $i \in [h]$, such that for every $i \in [h]$, the endpoints of $P_i$ are on the innermost cycle $C_1^i$ of the nest of the $i$th even vortex-segment of $W$ and $P_i$ is parity breaking for $C_1^i$.
Then $G$ contains an odd $\mathscr{U}_h$-minor model controlled by $W$.

Moreover, an odd minor model of $\mathscr{U}_h$ can be found in time $\mathbf{O}(|V(G)|)$.
\end{lemma}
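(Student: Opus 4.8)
The plan is to imitate the proof of the clique version \zcref{prop:cliques_in_extended_walls}, with the crosses replaced by the parity-breaking paths $P_i$ and $K_t$ replaced by $\mathscr{U}_h$. First I would fix the infrastructure carried by $W$. Let $W'$ be the base wall of $W$; it is an annulus wall whose $2h+1$ base cycles $C_1,\dots,C_{2h+1}$ ($C_1$ innermost and containing the top boundary of every segment, $C_{2h+1}$ the simple cycle) form a bipartite graph, and fix a proper $2$-colouring of it. We shall only use $W'$ together with $h$ of the $b^e\ge h$ even vortex segments, say $S_1,\dots,S_h$, and the paths $P_1,\dots,P_h$. By definition of a parity surface-wall each $S_j$ is bipartite and its $2$-colouring agrees with that of $W'$ on the overlap, so $W'\cup S_j$ is bipartite; moreover $S_j$ carries a nest whose outer cycle is attached to $W'$ along $C_1$ and whose innermost cycle is $C^j_1$, together with $4(2h+1)$ pairwise-disjoint rails running from the top boundary of $S_j$ on $C_1$, through $S_j$, to $C^j_1$. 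Each $P_j$ is internally disjoint from $W$ and has both endpoints $u_j,v_j$ on $C^j_1$; since $C^j_1$ is even while $C^j_1\cup P_j$ contains an odd cycle, the two cycles of $C^j_1\cup P_j$ distinct from $C^j_1$ are both odd, and consequently every $u_j$–$v_j$ path lying inside the bipartite graph $W'\cup S_j$ has parity opposite to that of $P_j$.

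Next I would carry out the routing. Choose $2h+1$ pairwise disjoint radial paths $Q_1,\dots,Q_{2h+1}$ of $W'$, each meeting all of $C_1,\dots,C_{2h+1}$, such that for every $j\in[h]$ the paths $Q_{2j-1}$ and $Q_{2j}$ are consecutive among the $Q_i$ and reach $C_1$ inside the top-boundary arc of $S_j$; this is possible since the base of each $S_j$ supplies $4(2h+1)>2$ disjoint radial paths and there are $h$ retained even vortex segments, with the leftover path $Q_{2h+1}$ placed anywhere. For each $j$, pick two rails of $S_j$ ending at the inner endpoints of $Q_{2j-1}$ and $Q_{2j}$ on $C_1$, and let $\tilde P_j$ be the path that follows the first rail from the inner end of $Q_{2j-1}$ through $S_j$ to $u_j$, then $P_j$ to $v_j$, then an arc of $C^j_1$ to the second rail, and back through $S_j$ to the inner end of $Q_{2j}$. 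The paths $\tilde P_1,\dots,\tilde P_h$ lie in pairwise distinct segments, hence are pairwise disjoint and internally disjoint from $W'\cup Q_1\cup\dots\cup Q_{2h+1}$. Set $G^\star\coloneqq C_1\cup\dots\cup C_{2h+1}\cup Q_1\cup\dots\cup Q_{2h+1}\cup\tilde P_1\cup\dots\cup\tilde P_h$.

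Finally I would read off $\mathscr{U}_h$ from $G^\star$ and verify the two required properties. After suppressing degree-$2$ vertices, $C_1,\dots,C_{2h+1}$ and $Q_1,\dots,Q_{2h+1}$ form the concentric-cycle/radial-path skeleton of $\mathscr{U}_h$, while each $\tilde P_j$ becomes its $j$th parity-breaking feature, attached at the inner ends of $Q_{2j-1},Q_{2j}$. The parity is correct: the cycle obtained from $\tilde P_j$ together with the $C_1$-arc joining the inner ends of $Q_{2j-1}$ and $Q_{2j}$ consists of $P_j$ plus a $u_j$–$v_j$ path lying entirely inside the bipartite graph $W'\cup S_j$, hence is odd by the first paragraph, whereas every cycle of $G^\star$ avoiding all $\tilde P_j$ lies in the bipartite $W'$; so the parity pattern of $G^\star$ agrees with that of $\mathscr{U}_h$, and choosing inside each $\tilde P_j$ a single edge to play the role of $\mathscr{U}_h$'s parity-breaking edge (absorbing the remaining portions into branch sets) we obtain an \emph{odd} $\mathscr{U}_h$-minor model. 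Control by $W$ follows exactly as in \zcref{prop:cliques_in_extended_walls}: every horizontal branch set is a sub-arc of a distinct base cycle $C_a$ and every vertical branch set lives on a distinct radial path of $W'$, the whole model is contained in $W\cup P_1\cup\dots\cup P_h$, and the density of the annulus wall $W'$ supplies, for any vertex set $Z$ of size $<h$ missing a prescribed horizontal or vertical path of $\mathscr{U}_h$, the Menger-type connections in $G-Z$ witnessing control. Since all steps amount to tracing paths already present in the input structure, the construction runs in $\mathbf{O}(|V(G)|)$ time.

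The hard part will be the parity bookkeeping needed to certify the model is an \emph{odd} minor model of $\mathscr{U}_h$ in the formal sense — that is, arranging the branch sets and the designated parity-breaking edge of each $\tilde P_j$ so that the induced sign pattern is consistent with the fixed $2$-colouring of $W'$ on \emph{all} cycles of $\mathscr{U}_h$, not merely on the $h$ rung cycles — together with the more routine task of choosing the $2h+1$ radial paths so that they are simultaneously spanning, pairwise disjoint, and suitably incident to $C_1$ near each $S_j$, and of matching the precise combinatorial definition of $\mathscr{U}_h$, which the sketch above handles only up to the evident contractions.
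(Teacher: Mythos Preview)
There is a genuine gap: the graph $G^\star$ you assemble is a cylindrical mesh (the $2h+1$ base cycles together with $2h+1$ radial paths) decorated with $h$ sequential parity-breaking chords $\tilde P_j$ on the innermost cycle $C_1$. That is the shape of the parity vortex $\mathscr{V}$, not of the universal parity-breaking grid $\mathscr{U}_h$. In $\mathscr{U}_h$ (in the paper's $90^\circ$-rotated convention) the $h$ red edges sit on $h$ \emph{distinct} horizontal paths---one on each even-indexed row---so the $i$th parity break must live at depth $2i$ in the mesh, not on a common boundary cycle. Your $G^\star$ has no mechanism to lift the $j$th break from $C_1$ up to the layer of $C_{2j}$: you retained only two rails from each $S_j$ and consumed both inside $\tilde P_j$, leaving no disjoint infrastructure with which to re-route an even-indexed base cycle through the nest. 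Suppressing degree-$2$ vertices or contracting does not repair this; the parity breaks remain pinned to a single row, and the resulting object does not contain $\mathscr{U}_h$ as an odd minor.

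The paper's construction is therefore essentially different. Rather than keeping the base cycles intact and attaching external rungs, it \emph{rewires each even-indexed horizontal path}: in segment $S_i$ it builds $2h+1$ pairwise disjoint left-to-right paths $Q^i_1,\dots,Q^i_{2h+1}$, with $Q^i_{2i}$ detouring down through the rails to the innermost nest cycle $C^i_1$, across $P_i$, and back up (thus flipping parity), while for $j<2i$ the paths $Q^i_j$ are routed around the vortex through \emph{other} nest cycles of $S_i$ so that they stay disjoint from $Q^i_{2i}$. Concatenating the $Q^i_j$ across all segments produces $2h+1$ horizontal paths with exactly one parity flip on the $2i$th path for each $i\in[h]$, which is precisely the row-by-row placement $\mathscr{U}_h$ demands. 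The layered use of the full nest of each vortex segment---the step you set aside as ``routine''---is exactly what distributes the red edges over distinct rows; without it your sketch yields only a $\mathscr{V}$-type object.
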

\begin{proof}
    Let $S_0,S_1,\cdots,S_{h}$ be the segments in $W$, where $S_0$ is a $(2h+1)$-wall segment while $S_1,\cdots,S_h$ are even $(2h+1)$-vortex segments.
    We find the odd minor model of a universal parity breaking grid of order $h$ as follows:\footnote{In our construction we will consider the $\mathscr{U}_h$ as the $90$-degree rotation of what is depicted in \zcref{fig:universalparitybreakinggrid} so that the red edges are in horizontal paths.}
    
    Let $C_1,\cdots,C_{2h+1}$ be the base cycles of $W$ in order such that $C_{2h+1}$ is the simple cycle of $D$.
    We will find vertex-disjoint paths from the $i$-th left boundary to the $i$-th right boundary of $S_0$, each disjoint from $S_0$, such that odd-numbered paths have the same parity with the paths with the same endpoints in $S_0$, while the even-numbered paths have the opposite parity.
    Then the resulting graph contains $\mathscr{U}_h$ as an odd minor.

    Now we explain how to find paths with the desired parity.
    We will find $2h+1$ vertex-disjoint paths $Q_1^i,Q_2^i,\cdots,Q_{2h+1}^i$ in $S_i$, where the endpoints of $Q_j^i$ are in the $j$-th left and right boundary of $S_i$, that have the same parity as path with same endpoints in the base wall, except for $Q_{2i}^i$, which has the opposite parity.

    For each $Q_j^i$, with $j\in [2i-1]$, we start from $j$-th left boundary of $S_i$ and follow along $C_i$ until we have met the $j$-th path of the rails of $S_i$.
    We then follow along this rail until we have met the outermost $j$-th cycle of the nest of $S_j$.
    From here we travel ``around'' the vortex following the nest of $S_j$, by choosing the direction that lets us encounter the last rail before we would encounter the $(2h)$-th rail.
    However, before that happens, when first meeting the $(8h+5-j)$-th rail, we go back through that rail to $C_i$ and follow it to arrive at the $j$-th right boundary of $S_i$.

    For each $Q_j^i$, with $j \in [2i+1,2h+1]$, we simply follow along $C_i$ from the left to the right boundary of $S_i$.
    Since $S_i$ is an even-vortex segment, each $P_j^i$ with $j\neq 2i$ has desired parity.
    
    Now we define $Q_{2i}^i$.
    We start from $j$-th left boundary of $S_i$ and follow along $C_i$ until we have met the $(2h+1)$-th path of the rails of $S_i$.
    Next we follow along this rail to the end until we have met the innermost cycle $C_1^i$.
    Let $x_1,x_2$ be the endpoints of $P_i$, and for $k \in [2]$, let $y_k$ be the endpoint of $(2h+k)$-th path of the rail.
    (We may assume that each rail meets with $C_1^i$ in unique vertex.)
    Since $C_1^i$ is $2$-connected, there exists an $\{x_1,x_2\}$-$\{y_1,y_2\}$-linkage in $C_1^i$.
    We move our growing path $Q_{2i^i}$ through one of the paths in this linkage, then follow $P_i$, and come back to the endpoint of $(2h+2)$-th path of the rails.
    Next we run down through this rail to go back to $C_i$, and then follow it to arrive at the $(2i)$-th right boundary of $S_i$.

    Note that since $P_i$ is a parity breaking path for $C_1^i$, the path $Q_{2i}^i$ has the opposite parity compared with a path that has same endpoints in the base wall.

    Now for each $j\in [2h+1]$, by concatenating the $Q_{j}^i$ using the segments of $C_j$, we obtain the desired path between $j$-th left and right boundary of $S_0$.\qedhere

    \begin{figure}[ht]
    \centering
    \begin{tikzpicture}[scale=0.7]
    \pgfdeclarelayer{background}
    \pgfdeclarelayer{foreground}
    \pgfsetlayers{background,main,foreground}
    \begin{pgfonlayer}{background}
        \pgftext{\includegraphics[width=16cm]{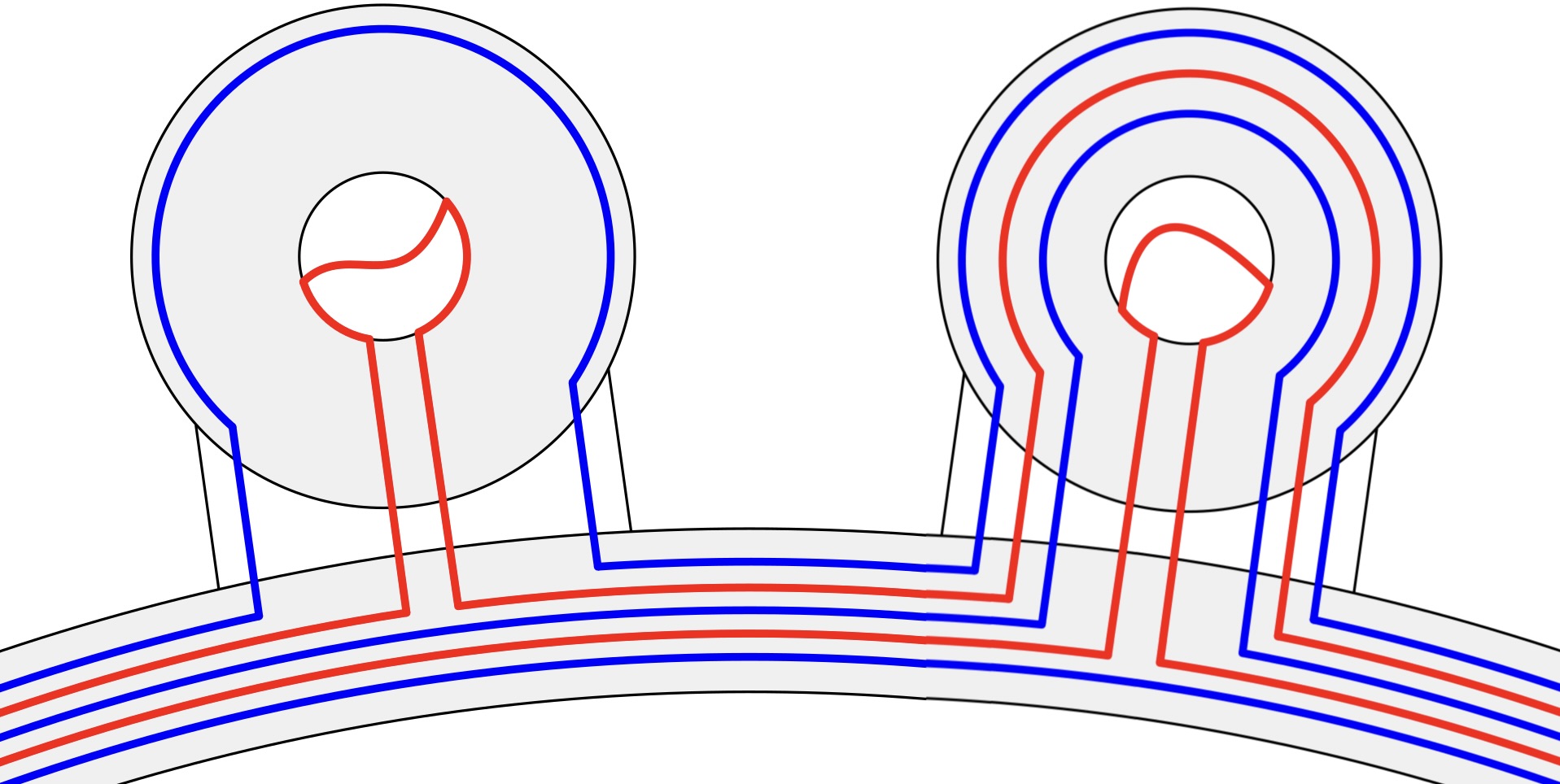}} at (C.center);
    \end{pgfonlayer}{background}
    \begin{pgfonlayer}{main}
    \end{pgfonlayer}{main}
    \begin{pgfonlayer}{foreground}
    \end{pgfonlayer}{foreground}
    \end{tikzpicture}
    \caption{A sketch of how to find $\mathscr{U}_h$ as an odd minor in an extended surface-wall by finding paths with desired parity through even vortex segments in the proof of \zcref{lemma:parity_grid_in_extended_walls}.}
    \label{fig:evenvortexsegments}
\end{figure}
\end{proof}

\paragraph{Finding an unexposed even-faced planar transaction.}
Our first task is to find an unexposed even-faced planar transaction to refine the given even-faced rendition. This can be done easily thanks to \zcref{lem:planar_orthogonal_transaction} and \zcref{lem:planarbipartitetransaction} after we find an unexposed transaction in the following lemma.

\begin{lemma}\label{lem:finding_unexposed_transaction}
    Let $s,p,d$ be positive integers and $b$ be a non-negative integer. Let $(G, \Omega)$ be a society with an even-faced rendition $\rho$ in the disk $\Delta$ with $b$ vortices $c_1, \ldots, c_b$ of depth at most $d$ and a cozy nest $\mathcal{C} = \{ C_1, \ldots , C_s \}$ and let $H$ be the outer graph of $C_1$ in $\rho$. Furthermore, let $\mathcal{P}$ be a transaction of order $p' \coloneqq (b+1)p + bd$ in $(G, \Omega)$ with the end segments $X_1$ and $X_2$. Then there exists an unexposed transaction $\mathcal{P}'$ of order $p$ such that there is an even-faced vortex-free rendition $\rho'$ of $(H \cup \bigcup \mathcal{P}', \Omega)$ that agrees with $\rho$ on $H$.
    
    In particular, there exists an algorithm that, when given $\rho$, $(G,\Omega)$, and $\mathcal{P}$ as input, finds $\mathcal{P}'$ in time $\mathbf{O}(|E(G)|)$. 
\end{lemma}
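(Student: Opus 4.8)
The plan is to extract from $\mathcal{P}$ a large subtransaction of paths that avoid the interiors of all the vortices $c_1,\dots,c_b$, and then to take the restriction of $\rho$ to $H$ together with this subtransaction. Since $H$ is the outer graph of $C_1$ and $\mathcal{C}$ is a cozy nest, all vortices lie inside the disk bounded by the trace of $C_1$, whereas $V(\Omega)$ lies on the boundary of $\Delta$; in particular the nest separates each $\overline{c_i}$ from $V(\Omega)$ in $\Delta$. The two things we must achieve are: (i) the discarded transaction is small enough, and (ii) the restriction really is an \emph{even-faced, vortex-free} rendition in $\Delta$.

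The core step is to bound, for each vortex $c_i$, the number of paths of $\mathcal{P}$ that use an edge of $\sigma_\rho(c_i)$. Suppose more than $d$ paths of $\mathcal{P}$ use an edge of a fixed vortex $c_i$. For each such path $P$ pick a maximal subpath $R_P$ of $P$ that lies in $\sigma_\rho(c_i)$ and contains a vortex edge; choosing $R_P$ to be the first such subpath along $P$, the parts of $P$ lying outside $\overline{c_i}$ form, together with the analogous parts of the other paths, a family of pairwise vertex-disjoint curves in the annular region between $\partial\overline{c_i}$ and the boundary of $\Delta$, and their outer ends lie in $X_1\cup X_2$. A winding/planarity argument in this annulus then shows that the endpoints of the $R_P$ on $N_\rho(c_i)$ split into two disjoint segments of $\Omega_{c_i}$ according to whether the incident tail reaches $X_1$ or $X_2$, so the paths $R_P$ form a transaction in the vortex society $(\sigma_\rho(c_i),\Omega_{c_i})$ of order greater than $d$; this contradicts $\mathrm{depth}(c_i)\le d$. (The bookkeeping when a path enters $c_i$ several times is handled by the ``first subpath'' choice together with a short cleanup of the tails inside $\overline{c_i}$.) Hence at most $bd$ paths of $\mathcal{P}$ meet the vortices, and deleting them leaves an unexposed subtransaction $\mathcal{P}_1\subseteq\mathcal{P}$ of order at least $(b+1)p$.

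Now I would pass to the restriction $\rho_1$ of $\rho$ to $H\cup\bigcup\mathcal{P}_1$. Since $H$ contains no vertex drawn in the interior of a vortex and $\mathcal{P}_1$ is unexposed (and the endpoints of its paths lie in $V(\Omega)$, hence not in vortex interiors), no edge and no vertex of $H\cup\bigcup\mathcal{P}_1$ lies strictly inside any vortex cell; thus every vortex cell of $\rho$ is discarded in $\rho_1$, and after filling in the vacated regions we obtain a vortex-free rendition of $(H\cup\bigcup\mathcal{P}_1,\Omega)$ that agrees with $\rho$ on $H$. Even-facedness is inherited: a non-vortex cell of $\rho$ that is parity-preserving between two of its nodes stays parity-preserving after restriction, since every boundary-to-boundary path in the sub-cell is one in the original cell; and any grounded odd cycle of $H\cup\bigcup\mathcal{P}_1$ meeting a cell $2$-connected to the relevant subgraph would be such a cycle of $G$, which is impossible because $\rho$ is even-faced and in the disk $\Delta$ every trace is contractible. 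Finally, retaining only $p$ of the paths of $\mathcal{P}_1$ (and restricting once more) gives the desired $\mathcal{P}'$ and $\rho'$; the slack $(b+1)p-p$ is there to absorb the few additional paths one may have to drop per vortex so that the vacated vortex regions can be merged cleanly into neighbouring cells. Locating $\mathcal{P}_1$ via the vortex societies, forming the restriction, and trimming are all linear, giving the claimed $\mathbf{O}(|E(G)|)$ bound.

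The main obstacle is the counting step: making rigorous the claim that, because the nest and the outer boundary $\Omega$ surround each vortex $\overline{c_i}$ in a planar way, the subpaths of $\mathcal{P}$ running through $\sigma_\rho(c_i)$ assemble into an honest transaction of the vortex society — in particular handling paths that enter and leave $c_i$ repeatedly, so that ``first subpath plus tails'' is enough to force the two-segment structure on $\Omega_{c_i}$.
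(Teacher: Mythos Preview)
Your counting step is close to the paper's but has a real gap. You claim that for each vortex $c_i$ at most $d$ paths of $\mathcal{P}$ use an edge of $\sigma(c_i)$, arguing via planarity of the tails in the annulus $\Delta\setminus\overline{c_i}$. But a path may pass through some other vortex $c_j$ before reaching $c_i$, and then its tail uses edges of $\sigma(c_j)$ and has no well-defined trace; your ``disjoint curves in the annular region'' picture is not available, and nothing forces the entry points on $\Omega_{c_i}$ into a single segment. The paper fixes this by a small but essential refinement: for each vortex $c$ it counts only those paths that meet $c$ as their \emph{first} vortex (traversing from $X_1$). Then the tail $Q_i'$ from $X_1$ to the first entry point $v_i\in\Omega_c$ uses no vortex edges at all, so the traces of $Q_1'$ and $Q_{d'}'$ together with arcs of $X_1$ and $\Omega_c$ bound an honest disk not containing $c$; this pins all $v_i$ into one segment $Y_1$ of $\Omega_c$, and each path must then cross to $Y_2=\Omega_c\setminus Y_1$ to reach $X_2$, giving a transaction of order $d'>d$ in the vortex society, a contradiction. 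Since every exposed path has a first vortex, summing over $b$ vortices still gives at most $bd$ exposed paths.

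The second halves also differ. After discarding the exposed paths, the paper does not simply restrict $\rho$. It splits the $(b+1)p$ unexposed paths into $b+1$ blocks of $p$ \emph{consecutive} paths and observes that each vortex lies in the strip of at most one block; pigeonhole then yields a block $\mathcal{P}_i'$ whose strip contains no vortex, and that block is returned. This is precisely what the factor $(b+1)$ in $p'=(b+1)p+bd$ buys --- it is not about ``merging vacated vortex regions cleanly'' (restriction discards empty cells automatically, no slack needed). Your direct-restriction route, once the counting step is repaired, does appear to prove the stated conclusion already from the weaker hypothesis $p'=p+bd$, so it is a legitimate alternative; but then you should say so rather than invent a role for the unused $bp$ paths. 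The paper's pigeonhole additionally guarantees the strip avoids every vortex, a property used downstream (e.g.\ in the ``separating'' conclusion of the next lemma).
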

\begin{proof}
    We first show that for each vortex $c$, the number of paths in $\mathcal{P}$ that meets $c$ as the first vortex is bounded.
    Let $\mathcal{Q}=\{Q_1,\cdots,Q_{d'}\} \subseteq \mathcal{P}$ be the collection of such paths, and let $X_1'$ be the end segment of $\mathcal{Q}$ in $X_1$.
    Also let $(\sigma(c),\Omega_c)$ be the vortex society of $c$.
    Suppose that $d' \coloneqq |\mathcal{Q}|>d$.
    For each $i\in [d']$, let $v_i$ be the first vertex in $Q_i$ that meets $c$, and $Q_i'$ be the subpath of $Q_i$ between $v_i$ and $X_1'$.
    Let $Y_1$ be the segment of $\Omega_c$ between $v_1$ and $v_{d'}$ such that the disk $\Delta'\subseteq \Delta$ bounded by $Q_1', Q_{d'}', X_1'$, and $Y_1$ does not contain the vortex $c$.
    Note that $Y_1$ contains all $v_i$'s.
    Let $Y_2=\Omega_c\setminus Y_1$.
    Since $Q_1'$ and $Q_{d'}'$ meet no other vortex and cannot be crossed by other paths, each path in $\mathcal{Q}$ must pass through $Y_2$ to arrive at $X_2$.
    Let $w_i$ be the first vertex in $Q_i$ that meets $Y_2$, and let $u_i$ be the last vertex in $Q_i$ that meets with $Y_1$ before $w_i$.
    Then this gives $d'$ disjoint $Y_1$-$Y_2$-paths in $c$.
    Since $c$ has depth at most $d$, we have $d' \leq d$, a contradiction to the bound on the depth of $c$.

    Therefore, at most $bd$ paths in $\mathcal{P}$ can meet a vortex.
    Let $\mathcal{P}''\subseteq \mathcal{P}$ be the paths that do not meet a vortex.
    Now let $\mathcal{P}'_1, \ldots , \mathcal{P}_{b+1}'$ be disjoint collections of $p$ consecutive paths in $\mathcal{P}''$.
    We have $b$ vortices and each vortex can be contained in at most one strip society of $\mathcal{P}_i'$, so there is an $i \in [b+1]$ such that the strip society of $\mathcal{P}_i'$ does not contain a vortex.
    Then $\mathcal{P}_i'$ is the desired unexposed planar transaction of order at least $p$.
    Furthermore, by construction, there exists an even-faced vortex-free rendition $\rho'$ of $(H \cup \bigcup \mathcal{P}', \Omega)$ that agrees with $\rho$ on $H$.  
\end{proof}

For the next lemma, we define a function $\mathsf{depth}_{\ref{lem:separating_evenfaced_planar_transaction}} : \mathbb{N}^5 \to \mathbb{N}$ such that
\[ \mathsf{depth}_{\ref{lem:separating_evenfaced_planar_transaction}}(s,b,p,h,d) \coloneqq 2((b+1)s(2ph + bp + p + 1) + bd + b + 1) . \]
We now further refine the unexposed transaction found in \zcref{lem:finding_unexposed_transaction}, yielding one of our desired odd minors, a separating even-faced planar transaction, or a separating odd handle.

\begin{lemma}\label{lem:separating_evenfaced_planar_transaction}
    Let $s,h,p,d$ be positive integers with $s\geq h+1$ and let $b$ be a non-negative integer.
    Let $(G, \Omega)$ be a society with an even-faced cylindrical rendition $\rho_0$ in the disk $\Delta$ around the vortex $c_0$ with a cozy nest $\mathcal{C} = \{ C_1, \ldots , C_s \}$ and let $\mathcal{R}$ be a radial linkage of order $h$ where $Y$ is the set of endpoints of $\mathcal{R}$ in $\Omega$.
    Let $H \coloneqq G - [\sigma_{\rho_0}(c_0)]$.
    Further, let $\rho$ be a $\Delta$-rendition of $(G, \Omega)$ with $b$ vortices $c_1, \ldots , c_b$ of depth at most $d$ such that $\rho$ and $\rho_0$ agree on the rendition of $H$.

    Then one of the following holds:
    \begin{enumerate}
        \item The society $(G, \Omega)$ has depth at most $\mathsf{depth}_{\ref{lem:separating_evenfaced_planar_transaction}}(s,b,p,h,d)$.
                
        \item There is an odd minor model for the universal parity breaking grid $\mathscr{U}_h$ controlled by a mesh whose horizontal paths are subpaths of distinct cycles from $\mathcal{C}$ such that for each horizontal or vertical path $X$ of $\mathscr{U}_h$, there exist $h$ internally disjoint $Y$-$\bigcup_{x\in V(X)}\mu(x)$ paths.

        \item There is a separating even-faced planar transaction $\mathcal{P}$ of order $p$ that is orthogonal to $\mathcal{C}$ with $(G',\Omega')$ being the $\mathcal{P}$-strip society, such that $V(\sigma(c_i)) \cap V(G') = \emptyset$ for all $i \in [b]$, and the restriction of $\rho$ to $H \cup G'$ is even-faced.

        \item There is a separating odd handle $\mathcal{P}$ of order $p$ in $(G, \Omega)$.
    \end{enumerate}

    In particular, there exists an algorithm that, when given $\rho$, $(G,\Omega)$, and $\mathcal{P}$ as input, finds one of these outcomes in time $\mathbf{O}((ph + bp)|E(G)|)$. 
\end{lemma}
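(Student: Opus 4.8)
The plan is to reduce the statement to a short chain of the preprocessing results of this section, ending with one application of \zcref{lem:planarbipartitetransaction}. Throughout, set $p' \coloneqq (b+1)s(2ph+bp+p+1)+bd+b+1$, so that $\mathsf{depth}_{\ref{lem:separating_evenfaced_planar_transaction}}(s,b,p,h,d)=2p'$, and record the identity $p' = (b+1)\bigl(s(2ph+bp+p+1)+1\bigr)+bd$. First I would apply \zcref{prop:findtransactionorlineardecomp} to the society $(G,\Omega)$ with parameter $p'$. If it returns a linear decomposition of adhesion less than $p'$, then by the easy direction of the correspondence between linear decompositions and depth recalled just before \zcref{prop:depth_to_lin_decomp}, the society $(G,\Omega)$ has depth at most $2(p'-1)<2p'=\mathsf{depth}_{\ref{lem:separating_evenfaced_planar_transaction}}(s,b,p,h,d)$, which is the first conclusion. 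Otherwise it returns a transaction $\mathcal{P}_0$ of order $p'$, and we proceed with $\mathcal{P}_0$.

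Next I would clean up $\mathcal{P}_0$. Write $H_0$ for the outer graph of $C_1$ in $\rho$; since $\rho$ and $\rho_0$ agree on $H=G-[\sigma_{\rho_0}(c_0)]$, they render $H_0$ identically and $\rho$ is even-faced around $H_0$, so \zcref{lem:finding_unexposed_transaction} applies. Using it on $\mathcal{P}_0$ with the choice $p'' \coloneqq s(2ph+bp+p+1)+1$ — legitimate because $(b+1)p''+bd=p'$ — yields an unexposed transaction $\mathcal{P}_1$ of order $p''$ together with an even-faced vortex-free rendition $\rho_1$ of $(H_0\cup\bigcup\mathcal{P}_1,\Omega)$ agreeing with $\rho$ on $H_0$. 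The existence of $\rho_1$ already forces $\mathcal{P}_1$, and every subtransaction of it, to be planar: each path of $\mathcal{P}_1$ has both endpoints on the boundary circle of the disk, the traces of the paths are pairwise disjoint arcs and hence non-crossing, and since the two end segments of $\mathcal{P}_1$ are disjoint segments of $\Omega$ this non-crossing pattern is precisely the one defining a planar transaction. Now I would apply \zcref{lem:planar_orthogonal_transaction} to $\mathcal{P}_1$ with output order $2ph+bp+p$; since $s\bigl((2ph+bp+p)+1\bigr)+1 = p''$ is exactly the required input order, this produces a transaction $\mathcal{P}_2$ of order $2ph+bp+p$ that is orthogonal to $\mathcal{C}$, unexposed in $\rho$, whose intersection with the inner graph of $C_1$ in $\rho$ lies in $C_1\cup\bigcup\mathcal{P}_1$, and which is planar because its endpoints are among those of $\mathcal{P}_1$.

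Finally I would feed $\mathcal{P}_2$ into \zcref{lem:planarbipartitetransaction}. Its hypotheses — $s\ge h+1$, the even-faced cylindrical rendition $\rho_0$, the cozy nest $\mathcal{C}$, the radial linkage $\mathcal{R}$ with endpoint set $Y$, the $\Delta$-rendition $\rho$ with $b$ vortices agreeing with $\rho_0$ on $H$, and a planar transaction of order $2ph+bp+p$ that is orthogonal to $\mathcal{C}$ and unexposed in $\rho$ — are exactly what we have assembled, and its three possible conclusions are verbatim the second, third, and fourth conclusions of \zcref{lem:separating_evenfaced_planar_transaction}. The algorithmic claim then follows by composing the algorithms provided by \zcref{prop:findtransactionorlineardecomp}, \zcref{lem:finding_unexposed_transaction}, \zcref{lem:planar_orthogonal_transaction}, and \zcref{lem:planarbipartitetransaction}.

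I expect the main obstacle to be bookkeeping rather than a new idea. One has to verify that the ``$H$'' demanded by \zcref{lem:finding_unexposed_transaction} and \zcref{lem:planar_orthogonal_transaction} (the outer graph of the innermost nest cycle in $\rho$) is compatible with the $H=G-[\sigma_{\rho_0}(c_0)]$ used by \zcref{lem:separating_evenfaced_planar_transaction} and \zcref{lem:planarbipartitetransaction}, that $\rho$ is even-faced in the sense those lemmas require (which follows from its agreement with the even-faced $\rho_0$ on $H$ together with the nest being present), and that the even-faced vortex-free disk rendition obtained from \zcref{lem:finding_unexposed_transaction} survives the orthogonalisation step so the input hypotheses of \zcref{lem:planarbipartitetransaction} are genuinely met. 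The planarity-from-disk-rendition argument and the exact matching of transaction orders against $\mathsf{depth}_{\ref{lem:separating_evenfaced_planar_transaction}}(s,b,p,h,d)$ must also be spelled out, but these were arranged above precisely so that the failure of the first conclusion delivers exactly the transaction size the later lemmas consume.
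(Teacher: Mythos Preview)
Your proposal is correct and follows essentially the same approach as the paper: both proofs chain \zcref{prop:findtransactionorlineardecomp}, \zcref{lem:finding_unexposed_transaction}, \zcref{lem:planar_orthogonal_transaction}, and \zcref{lem:planarbipartitetransaction} in that order with the same intermediate transaction sizes. Your write-up is in fact more careful than the paper's, spelling out the linear-decomposition branch explicitly, verifying the arithmetic identities $(b+1)p''+bd=p'$ and $s((2ph+bp+p)+1)+1=p''$, and arguing planarity of the orthogonalised transaction rather than leaving it implicit.
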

\begin{proof}
    We may assume that $(G, \Omega)$ has depth at least $\mathsf{depth}_{\ref{lem:separating_evenfaced_planar_transaction}}(s,b,p,h,d)+1$ and we can thus find a transaction $\mathcal{P}$ of order $p' \coloneqq \nicefrac{1}{2}\mathsf{depth}_{\ref{lem:separating_evenfaced_planar_transaction}}(s,b,p,h,d)$ in $\Delta$ by using \zcref{prop:findtransactionorlineardecomp}.
    Let $X, Y$ be the end segments of $\mathcal{P}$ in $(G, \Omega)$.
    By \zcref{lem:finding_unexposed_transaction}, we can find an unexposed transaction $\mathcal{P}'$ of order $s(2ph + bp + p + 1) + 1$ in time $\mathbf{O}(|E(G)|)$ such that there is a vortex-free rendition $\rho'$ of $(H \cup \bigcup \mathcal{P}', \Omega)$ that agrees with $\rho$ on $H$.
    Then due to \zcref{lem:planar_orthogonal_transaction}, we obtain an unexposed transaction of order $2ph + bp + p$ that is orthogonal to $\mathcal{C}$ in time $\mathbf{O}((ph + bp)|E(G)|)$.
    Moreover, thanks to \zcref{lem:planarbipartitetransaction} we obtain one of the second to the fourth options in time $\mathbf{O}(|E(G)|)$. 
\end{proof}

Finally, we will need the following simple rerouting argument that allows us to attach a grounded linkage to a radial linkage.

\begin{lemma}\label{lem:finding_linkages}
    Let $s,p,d,k$ be positive integers with $s \geq k$. Let $(G, \Omega)$ be a society with a $\Delta$-rendition $\rho$ in the disk $\Delta$, a nest $\mathcal{C} = \{ C_1, \ldots , C_s \}$, and $\mathcal{R}$ be a radial linkage for $\mathcal{C}$ of order $k$ with $B \subseteq V(\Omega)$ being the set of endpoints of the paths of $\mathcal{R}$ in $V(\Omega)$.
    Further, let $H$ be the inner graph of $C_1$ in $\rho$, and let $A \subseteq V(H)$ be a set of size $k' \geq k$.
    
    If there is an $A$-$C_l$-linkage $\mathcal{L}$ with $k \leq l \leq s$ of order $k'$ in $G$, then for every $A' \subseteq A$ of size $k$, there exists an $A'$-$B$-linkage $\mathcal{P}$ of order $k$ in $G$ such that $V(\mathcal{P})\cap V(H)\subseteq V(\mathcal{L})$.
    Moreover, there is an algorithm that finds $\mathcal{P}$ in time $\mathbf{O}(k |E(G)|)$.
\end{lemma}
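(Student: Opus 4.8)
The plan is to recast the conclusion as a connectivity statement and settle it with Menger's theorem, which also yields the stated running time via $k$ rounds of augmenting-path search. Set $G_0 \coloneqq G - (V(H) \setminus V(\mathcal{L}))$. Since $A' \subseteq A \subseteq V(\mathcal{L})$ and $V(\Omega)$ is disjoint from $V(H)$ (it lies on $\partial\Delta$, whereas $H$ is drawn inside the sub-disk $\Delta_{C_1}$), the sets $A'$ and $B$ both survive in $G_0$, and \emph{any} $A'$--$B$-linkage $\mathcal{P}$ contained in $G_0$ automatically satisfies $V(\mathcal{P}) \cap V(H) \subseteq V(\mathcal{L})$. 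Hence it suffices to find an $A'$--$B$-linkage of order $k$ inside $G_0$; equivalently, by Menger's theorem, to show that no set $S \subseteq V(G_0)$ with $|S| < k$ separates $A'$ from $B$ in $G_0$.

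Two objects survive in $G_0$: all of $\mathcal{L}$ (no vertex of $\mathcal{L}$ is deleted) and a truncation of the radial linkage. Indeed, since the nest cycles are pairwise disjoint and nested and $l \le s$, the trace of $C_l$ separates $\partial\Delta$ from $V(C_1)$ in $\Delta$, so every path of $\mathcal{R}$ --- running from $V(\Omega)$ to $V(C_1)$ --- meets $V(C_l)$; replacing each such path by its initial segment up to its first vertex on $C_l$ yields a $B$--$V(C_l)$-linkage $\mathcal{R}^{-}$ of order $k$ that stays outside the interior of $\Delta_{C_l}$, hence outside the part of $H$ drawn inside $\Delta_{C_1}$, so $\mathcal{R}^{-} \subseteq G_0$. (If $l$ is so small that this truncation still reaches into $H$ --- essentially $l = 1$, where $\mathcal{L}$ and $\mathcal{R}$ both terminate on $C_1$ --- I would instead run the same argument on the single cycle $C_1$; this degenerate case is treated separately.)

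Now suppose, for a contradiction, that such an $S$ exists. Since $|S| < k$, among the $k$ paths of $\mathcal{L}$ with an endpoint in $A'$ there is one, say $L$, that avoids $S$, and among the $k$ paths of $\mathcal{R}^{-}$ there is one, say $R$, that avoids $S$. Then in $G_0 - S$ the path $L$ joins a vertex of $A'$ to a vertex of $V(C_l) \setminus S$, and $R$ joins a vertex of $B$ to a vertex of $V(C_l) \setminus S$. So it only remains to connect these two vertices of $C_l \setminus S$ inside $G_0 - S$; this contradicts the choice of $S$ and completes the Menger argument.

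This last connection is the crux, and the place where the hypothesis $l \ge k$ is used: on its own, $C_l - S$ may split into as many as $|S| < k$ arcs. To bridge the arcs I would work inside the region of $G$ drawn between $C_1$ and $C_l$, which lies outside $H$ and hence may be used without restriction; it contains the nested, pairwise disjoint cycles $C_2, \dots, C_l$ (the innermost nest cycle $C_1$ must be avoided, being inside $H$) together with the parts of the $k$ radial paths lying in this region, which cross $C_2, \dots, C_l$ in order and thus form a cylindrical-mesh-like subgraph. Since $l \ge k$, at least $k-1$ of these cycles are available, and a rerouting argument in this structure --- walk along $C_l$ until an obstructing vertex of $S$, then detour one cycle inward along a radial path avoiding $S$, around an inner cycle avoiding $S$, and back out, iterating --- eventually closes the gap, because $S$ has fewer than $k$ vertices while $k$ radial strands and at least $k-1$ concentric cycles are available for detours. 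Carrying this rerouting out precisely --- in particular the bookkeeping that keeps the $k$ resulting paths pairwise disjoint, and that converts the existence proof into the claimed $\mathbf{O}(k|E(G)|)$-time algorithm (a routine $k$-fold vertex-disjoint-paths computation in $G_0$) --- is the only step that needs genuine care; everything else is standard.
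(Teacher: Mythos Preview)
Your Menger-in-a-restricted-graph framework is exactly the paper's, but you make the connecting step harder than necessary. Once a separator $S$ with $|S|\le k-1$ is fixed, the paper does not route around $S$ through an annulus; it simply observes that the $l\ge k$ pairwise-disjoint cycles $C_1,\dots,C_l$ cannot all meet $S$, so some $C_i$ has $V(C_i)\cap S=\emptyset$. Since the trace of $C_i$ separates $A'\subseteq V(H)$ from both $V(\Omega)$ and $V(C_l)$, every path of $\mathcal{L}$ and every path of $\mathcal{R}$ must meet $C_i$; choosing one of each that avoids $S$ and joining them along the intact cycle $C_i$ gives the contradiction in one line---no detours, no bookkeeping.

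The reason you cannot run this pigeonhole directly is your choice of restriction: passing to $G_0=G-(V(H)\setminus V(\mathcal{L}))$ may delete parts of $C_1$, leaving only $l-1$ intact nest cycles, and in the boundary case $l=k$ that is one too few. The paper instead restricts to the subgraph on the edges of $\bigcup\mathcal{C}\cup\bigcup\mathcal{L}\cup\bigcup\mathcal{R}$, which keeps all of $C_1,\dots,C_l$ available while still confining the linkage. Your annular-rerouting sketch may well be completable, but as written the counting (``$k$ radial strands and at least $k-1$ concentric cycles'' against $|S|\le k-1$) does not obviously close, so this is a real---if small---gap that the paper's one-line pigeonhole sidesteps entirely.
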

\begin{proof}
    Let $A'\subseteq A$ be a set of size $k$.
    Suppose that there is no $A'$-$B$-linkage of order $k$ in $G$.
    We may assume that each edge of $G$ is contained in one of $V(\mathcal{C})$, $V(\mathcal{L})$, or $V(\mathcal{R})$.
    Then by \zcref{prop:mengersthm}, there exists a separator $S$ of size at most $k-1$ between $A'$ and $B$.
    Since $|S|\leq k-1$, there is some $i\in [l]$ such that $V(C_i)\cap S=\emptyset$.
    There are $k$ paths between $A'$ and $C_i$, which are subpaths of $\mathcal{L}$, so $C_i$ and some path in $\mathcal{L}$ intersect in $G-S$.
    As there are also $k$ paths between $B$ and $C_i$, which are subpaths of $\mathcal{R}$, we can also connect $C_i$ to $B$, which is a contradiction to $S$ being an $A'$-$B$-separator.
    Therefore, there is an $A'$-$B$-linkage $\mathcal{P}$ of order $k$ in $G$ such that $V(\mathcal{P})\cap V(H)\subseteq V(\mathcal{L})$.
    Furthermore, there is an algorithm that finds $\mathcal{P}$ in time $\mathbf{O}(k|E(G)|)$ (see the discussion below \zcref{prop:mengersthm}).
\end{proof}

We next introduce several notions that we will use to prove our main theorem.

\paragraph{Society configurations.}
Let $s,k$ be positive integers.
We call the tuple $\mathfrak{S} = ((G, \Omega), \rho, c, \Delta, \mathcal{C},\mathcal{L},X,Y)$ an \emph{$(s, k)$-society configuration} if
\begin{enumerate}
    \item $(G,\Omega)$ be a society with a cylindrical rendition $\rho$ around a vortex $c$ in the disk $\Delta$ with a nest $\mathcal{C}=\{C_1,\ldots,C_s\}$,
    \item $\mathcal{L}$ is a radial linkage of order $k$,
    \item $X$ is the set of endpoints of $\mathcal{L}$ in $C_1$, and
    \item $Y$ is the set of endpoints of $\mathcal{L}$ in $\Omega$.
\end{enumerate}

\paragraph{Moved in society configurations.}
Let $\mathfrak{S}=((G, \Omega), \rho, c, \Delta, \mathcal{C}=\{C_1,\ldots,C_s\}, \mathcal{L}, X,Y)$ be an $(s, k)$-society configuration.
We define an $(s-m-1, k)$-society configuration $\mathfrak{S}^{-m}=((G', \Omega'), \rho', c, \Delta', \mathcal{C}', \mathcal{L}', X', Y')$ that is obtained by \emph{moving in by $m$} by removing the $m+1$ outermost cycles of the nest $\mathcal{C}$ as follows:
Let $(G', \Omega')$ be the $C_{s-m}$-society in $\rho$ and $\Delta'$ be the disk bounded by the trace of $C_{s-m}$.
Furthermore, we let $\rho'$ be the restriction of $\rho$ to $G'$, let $\mathcal{C}' \coloneqq \{C_1,\ldots,C_{s-m-1}\}$, and let $\mathcal{L}'$ be the set of $\Omega'$-$V(C_1)$ subpaths of the paths in $\mathcal{L}$.
Also, let $X'$ and $Y'$ be the set of endpoints of $\mathcal{L}'$ in $C_1$ and $\Omega'$, respectively.
Note that there are $m$ cycles $C_{s-m+1},\ldots,C_s$ that are disjoint with $G'$.

\paragraph{Dividing a society configuration into two parts}
In the course of the proof of the society classification theorem, we repeatedly divide a society configuration into two parts.
Let $\mathfrak{S}=((G,\Omega),\rho,c,\Delta,\mathcal{C}=\{C_1,\ldots,C_s\},\mathcal{L},X,Y)$ be an $(s,k)$-society configuration.
Let $\mathcal{P} = \{P_1, \ldots, P_p\}$ be a separating even-faced planar transaction of order $p$ that is orthogonal to $\mathcal{C}$ with $(G_{\mathcal{P}},\Omega_{\mathcal{P}})$ being the $\mathcal{P}$-strip society, such that the restriction of $\rho$ to $(G-[\sigma_{\rho}(c)]) \cup G_{\mathcal{P}}$ is even-faced.
We \emph{divide the $(s,k)$-society configuration $\mathfrak{S}$ by the transaction $\mathcal{P}$ of order $p = 2k+2s+2$} into two smaller $(s, k)$-society configurations $\mathfrak{C}^L$ and $\mathfrak{C}^R$ as follows.

\begin{figure}[!h]
    \centering
    \begin{tikzpicture}[scale=2.3]
    \pgfdeclarelayer{background}
    \pgfdeclarelayer{foreground}
    \pgfsetlayers{background,main,foreground}
    \begin{pgfonlayer}{background}
        \pgftext{\includegraphics[width=6cm]{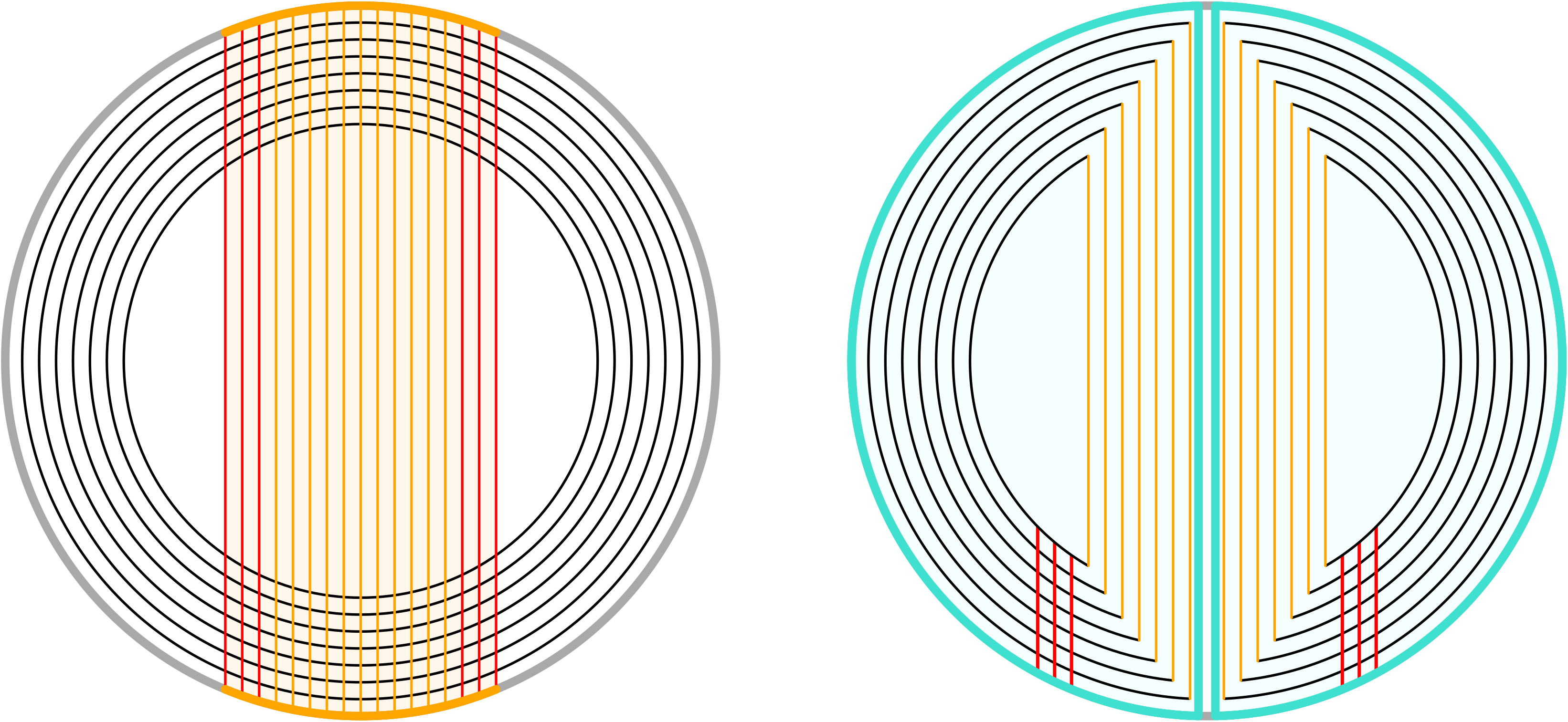}} at (C.center);
    \end{pgfonlayer}{background}
    \begin{pgfonlayer}{main}
        \node[]() at (-2.75,1.2) {$(G,\Omega)$};
        \node[]() at (-2.27,0) {$\color{orange}{\mathcal{P}}$};
        \node[]() at (0.4,1.2) {$\color{cyan!90!black}{(G^L,\Omega^L)}$};
        \node[]() at (2.83,1.2) {$\color{cyan!90!black}{(G^R,\Omega^R)}$};
        \node[]() at (1.05,-0.5) {$\color{red}{\mathcal{L}^L}$};
        \node[]() at (2.22,-0.5) {$\color{red}{\mathcal{L}^R}$};
    \end{pgfonlayer}{main}
    \begin{pgfonlayer}{foreground}
    \end{pgfonlayer}{foreground}
    \end{tikzpicture}
    \caption{An illustration of dividing a society configuration into two parts: 
    We start from a society $(G,\Omega)$ with a separating even-faced planar transaction $\color{orange}{\mathcal{P}}$ (Left).
    We define a two new societies $\color{cyan!90!black}{(G^L,\Omega^L)}$ and $\color{cyan!90!black}{(G^R,\Omega^R)}$ whose nests and radial linkages $\color{red}{\mathcal{L}^L}$ and $\color{red}{\mathcal{L}^R}$ are obtained by using the cycles and paths from previous nests and transactions (Right).}
    \label{fig:vortexrefinementtree}
\end{figure}

\begin{itemize}
    \item First we define four subtransactions $\mathcal{P}^L$, $\mathcal{P}^R$, $\mathcal{P}^{L,M}$, $\mathcal{P}^{R,M}$ of $\mathcal{P}_i$ in such a way that there are $k$ paths in $\mathcal{P}^L$ and $\mathcal{P}^R$ taken from the left-most $k$ and the right-most $k$ paths of $\mathcal{P}_i$. Moreover, the remaining paths in $\mathcal{P}_i$ are split into two so that there are $s+1$ paths in $\mathcal{P}^{L,M} = \{P_1^L, \ldots, P_{s+1}^L\}$ and in $\mathcal{P}^{R,M} = \{P_1^R, \ldots, P_{s+1}^R\}$, the former indexed naturally and the latter indexed backwards with respect to the indices of paths in $\mathcal{P}_i$.
    \begin{align*}
        \mathcal{P}^L =& \{P_1, \ldots, P_k\},\\
        \mathcal{P}^{L,M} =& \{P_{k+1}, \ldots, P_{k+s+1}\}, \text{ where we let } P_a^L\coloneqq P_{k+a} \text{ for } a \in [s+1],\\ 
        \mathcal{P}^{R,M} =& \{P_{k+s+2}, \ldots, P_{k+2s+2}\}, \text{ where we let } P_{a}^R\coloneqq P_{k+2s+3-a}  \text{ for } a \in [s+1], \text{ and }\\
        \mathcal{P}^R =& \{P_{k+2s+3}, \ldots, P_{2k+2s+2}\}.
    \end{align*}

    \item Let $m \in \{L, R\}$.
    For $l \in [s]$, let $u_l^m, v_l^m$ be the vertices of $P_{l}^m$ in $C_l$ such that the $(u_l^m, v_l^m)$-path does not contain any vertices in $C_l$ other than its endpoints. Let $Q_l^{m}$ be the path in $C_l$ with $u_l^m$ and $v_l^m$ being its endpoints such that it does not intersect $P_{l+1}^m$.
    Further, let $C_l^{m}$ be the cycle defined by the union of two paths $u_l^mP_{l}^mv_l^m$ and $Q_l^{m}$.
    Now we define the elements of $\mathfrak{C}^m$ as follows:

    \begin{enumerate}
        \item The disk $\Delta^m \subseteq \Delta_{i-1}$ is the closure of the component of $\Delta_{i-1} - T(P_{s+1}^m)$ which contains $T(P_s^m)$, where $T(P_{l}^m)$ is the trace of $P_{l}^m$ in $\rho$ for $l \in [s+1]$.

        \item The society $(G^m, \Omega^m)$ is the $\Delta^m$-society in $\rho$.
        Note that we have $V(\Omega^m)\subseteq V(\Omega)\cup V(P^m_{s+1})$.
        
        \item The even-faced cylindrical rendition $\rho^m$ of $G^m$ around $c^m$ is a restriction of $\rho$ to $\Delta^m$.

        \item The vortex $c^m$ is $C_1^m$-disk in $\rho^m$.
        
        \item The nest is $\mathcal{C}^m = \{ C_1^{m}, \ldots , C_{s}^{m} \}$.

        \item\label{item:dividing_society_config_6} To define the radial linkage $\mathcal{L}^m$, choose an end segment of $\mathcal{P}^m$ in $(G,\Omega)$.
        Let $Y^m$ be the set of endpoints of $\mathcal{P}^m$ contained in the chosen end segment, and let $\mathcal{L}^m$ be the set of $Y^m$-$V(C_1^m)$-subpaths of paths in $\mathcal{P}^m$.
        Subsequently, let $X^m$ be the set of endpoints of $\mathcal{L}^m$ other than $Y^m$.
        Observe that $\mathcal{L}^m$ is also a $C_1$-$\Omega$-linkage.
    \end{enumerate}
    Finally, we define new society configurations to be $\mathfrak{C}^m\coloneqq ((G^m,\Omega^m),\rho^m,c^m,\Delta^m,\mathcal{C}^m,\mathcal{L}^m,X^m,Y^m)$.
    Note that $\mathcal{L}^L$ and $\mathcal{L}^R$ are disjoint.
\end{itemize}

\medskip
\paragraph{Vortex refinement tree}
Let $k$ be a positive integer.
A \emph{vortex refinement tree} is a tuple $\mathfrak{T} = (T, \mathfrak{B},\Lambda)$, where
\begin{enumerate}
    \item $T$ is a rooted tree such that each node has at most $2$ children, whose nodes are marked as either ``incomplete'' or ``completed''.
    In addition, each node is marked as either ``moving'' or ``staying'', where any node marked as moving has a parent that is either the root or has two children,
    \item $\mathfrak{B}$ is a mapping assigning each node $v\in V(T)$ to a society configuration, and
    \item $\Lambda$ is a closed set in $\Delta$ whose boundary intersects $\rho$ only at the nodes.
\end{enumerate}

Here, $\Lambda$ will be the area where our linkages are allowed to pass through.

\subsection{Proof of our Society Classification Theorem}
We are now ready to prove the main theorem of this section.

\begin{proof} [Proof of \zcref{thm:evensocietyclassification}]
Let
\begin{align*}
    t' \coloneqq~& \lceil c_{\ref{thm:ktminormodel}}t\sqrt{\log 12t} \rceil\\
    \mathsf{apex}^\mathsf{bip}(t) \coloneqq~ & 8t \in  \mathbf{O}(t),\\
    \mathsf{apex}^\mathsf{genus}_{\ref{thm:evensocietyclassification}}(t) \coloneqq~ &\mathsf{apex}^\mathsf{genus}_{\ref{thm:societyclassification}}(t') \in  \mathbf{O}(t^9),\\
    \mathsf{loss}_{\ref{thm:evensocietyclassification}}(t) \coloneqq~ &\mathsf{loss}_{\ref{thm:societyclassification}}(t') \in  \mathbf{O}(t^4),\\
    \mathsf{cost}_{\ref{thm:evensocietyclassification}}(t,k) \coloneqq~ & \mathsf{cost}_{\ref{thm:societyclassification}}(t',k) \in  \mathbf{O}(t^2+k),\\
    \mathsf{radial}_{\ref{thm:evensocietyclassification}}(t,h,k) \coloneqq~ & 4k(\nicefrac{1}{2}(t'-3)(t'-4)+h-1)^2 \in \mathbf{O}(k(t^3+h)^2), \\
    \mathsf{nest}'(t,h,k) \coloneqq~ & (k+1)(\nicefrac{1}{2}(t'-3)(t'-4)+h-1)+2k+\mathsf{radial}_{\ref{thm:evensocietyclassification}}(t,h,k) \in \mathbf{O}(k(t^3+h)^2),\\
    \mathsf{nest}_{\ref{thm:evensocietyclassification}}(t,h,k) \coloneqq~ & \mathsf{nest}_{\ref{thm:societyclassification}}(t',\mathsf{nest}'(t,h,k))\in \mathbf{O}(t^{14}k+t^{10}h^2k),\\
    \mathsf{apex}^\mathsf{fin}_{\ref{thm:evensocietyclassification}}(t,h,k,p) \coloneqq~ & \mathsf{apex}^\mathsf{fin}_{\ref{thm:societyclassification}}(t',k,2ph+p)\in \mathbf{O}(t^{36}hk^2p^3),\mbox{ and }\\
    \mathsf{depth}_{\ref{thm:evensocietyclassification}}(t,h,k,p) \coloneqq~ & \mathsf{depth}_{\ref{lem:separating_evenfaced_planar_transaction}}(\mathsf{nest}'(t,h,k),\nicefrac{1}{2}(t'-3)(t'-4)-1, \\
    &\ 2\mathsf{radial}_{\ref{thm:evensocietyclassification}}(t,h,k)+2\mathsf{nest}'(t,h,k)+2,h,\mathsf{depth}_{\ref{thm:societyclassification}}(t',k,2ph+p))\in \mathbf{O}(t^{68}h^5k^4p^2).
\end{align*}

Let $b'' \coloneqq \nicefrac{1}{2}(t'-3)(t'-4)+h-1$.
Since $s \geq \mathsf{nest}_{\ref{thm:evensocietyclassification}}(t,h,k) = \mathsf{nest}_{\ref{thm:societyclassification}}(t', \mathsf{nest}'(t,h,k))$, we may apply \zcref{thm:societyclassification} to $(G,\Omega)$, which takes $\mathbf{poly}( t + s + h + p + k ) |E(G)||V(G)|^2$-time. 
Then we can find a set $A\subseteq V(G')$ and one of the four options.
If we have the first option of \zcref{thm:societyclassification} and find a $K_{t'}$-minor model, we can apply \zcref{thm:ktminormodel}. 
Then we find either an odd $K_t$-minor model as stated in \ref{item_s1} or a bipartite $K_{t'-8t}$-minor model, which contains a bipartite $K_t$-minor model, in $G-A$ as in \ref{item_s3} with an apex set $|A| \leq \mathsf{apex}^\mathsf{bip}(t)$ in time $\mathbf{O}(f_{\ref{thm:ktminormodel}}(t)|V(G)|^{\omega_{\ref{def:matrixmultconstant}}})$.
Both the second and the third option of \zcref{thm:societyclassification} yield a flat transaction of order $2ph + p$ in $(G' - A, \Omega')$ with $|A| \leq \mathsf{apex}^\mathsf{genus}_{\ref{thm:societyclassification}}(t') = \mathsf{apex}^\mathsf{genus}_{\ref{thm:evensocietyclassification}}(t)$ and allow us to use \zcref{lem:exposedbipartitetransaction} to obtain \ref{item_s2}, \ref{item_s4}, or \ref{item_s5} in time $\mathbf{O}(|E(G)|)$.

Suppose that we have the fourth option of \zcref{thm:societyclassification}.
We obtain a rendition $\rho'$ of $(G-A,\Omega)$ in $\Delta$ with breadth $b' \leq \nicefrac{1}{2}(t'-3)(t'-4)-1$, depth at most $d = \mathsf{depth}_{\ref{thm:societyclassification}}(t',k,2ph+p)$, $|A|\leq \mathsf{apex}^\mathsf{fin}_{\ref{thm:societyclassification}}(t',k,2ph+p)$, and an extended $\mathsf{nest}'(t,h,k)$-surface wall $D$ as described in the fourth option of \zcref{thm:societyclassification}.

Let $c'_1,\ldots,c'_{b'}$ be the vortices in $\rho'$.
Let $\rho_0$ be a $\Delta$-rendition of $(G-A,\Omega)$ that agrees with $\rho'$ on $G-A-[\sigma_{\rho'}(c_0)]$.
Let $\mathcal{C}_0$ be the set of base cycles of $D$ of order $\mathsf{nest}'(t,h,k)$.
Let $\mathcal{L}_0$ be the radial linkage of $\mathcal{C}_0$ obtained by taking $\mathsf{radial}_{\ref{thm:evensocietyclassification}}(t,h,k)$ paths from the rails of the vortex segments in $D$, and let $X_0$ and $Y_0$ be the endpoints of $\mathcal{L}_0$ that are in $C_1$ and $\Omega$, respectively.
Finally, let $\mathfrak{S}_0=((G-A,\Omega),\rho_0,c_0,\Delta,\mathcal{C}_0,\mathcal{L}_0,X_0,Y_0)$.

\medskip
\textbf{Building a vortex refinement tree: }
We now proceed by inductively building a vortex refinement tree $\mathfrak{T}=(T,\mathfrak{B},\Lambda)$ until either there is no active leaf node remaining, or $T$ has $b''+1$ leaf nodes.
We start from a tree with a single node $t_0$, where $\mathfrak{B}(t_0)=\mathfrak{S}_0$.
Add a node $t_1$ as a unique child of $t_0$, and let $\mathfrak{B}(t_1)=\mathfrak{S}_0^{-k} = ((G_1,\Omega_1),\rho_1,c_1,\Delta_1,\mathcal{C}_1,\mathcal{L}_1,X_1,Y_1)$ and let $\Lambda=\Delta_1-c_0$, where $\Delta_1$ is the disk of $\mathfrak{B}(t_1)$.
Furthermore, we add an incomplete, moving node $t_2$ as a unique child of $t_1$ and let $\mathfrak{B}(t_2)=\mathfrak{S}_0^{-3k}$, where $\Lambda$ does not change.
Note that we add the two new nodes $t_1$ and $t_2$ to ensure that there are $k$ base cycles and $k$ cycles for a nest when constructing a parity $k$-surface wall later, where we use $k$ cycles in the middle to ensure there is a linkage between each nest and the base cycles. We also define two mappings $\mathfrak{C}$ and $\mathfrak{D}$ assigning each moving node to a society configuration in order to build a parity $k$-surface wall and to find linkages easily when required. We let $\mathfrak{C}(t_2)=((G_1,\Omega_1),\rho_1,c_1,\Delta_1,\widetilde{\mathcal{C}}_1,\widetilde{\mathcal{L}}_1,\widetilde{X}_1,Y_1)$, where $\widetilde{\mathcal{C}}_1=\{\widetilde{C}^1_1,\ldots,\widetilde{C}^1_k\}$ is the set of the outermost $k$ cycles of $\mathcal{C}_1$, $\widetilde{\mathcal{L}}_1$ is the $V(\Omega_1)$-$V(\widetilde{C}^i_1)$-subpaths of the paths in $\mathcal{L}_1$, and $\widetilde{X}_1\subseteq V(\widetilde{C}^i_1)$ is the endpoints of the paths in $\mathcal{L}_1$.
Moreover, let $\mathfrak{D}(t_{2})=(\mathfrak{S}_0)^{-2k}$. We further define $\mathcal{L}^{\mathsf{base}}$ to be the $V(\Omega)$-$V(C_{\mathsf{nest}'(t,h,k)-k+1})$-subpaths of the paths in $\mathcal{L}_0$, $\mathcal{C}^{\mathsf{base}}$ to be the outhermost $k$ cycles of $\mathcal{C}_0$, and $Y^{\mathsf{base}} = Y_0$, which will be used when building a parity $k$-surface wall.

We iteratively apply \zcref{lem:separating_evenfaced_planar_transaction} to each incomplete leaf node.
In each step, one of the following happens: i) we stop the iteration concluding \ref{item_s2} or \ref{item_s7}, ii) we add a new node as a child and mark it as completed and moving, iii) we add two new incomplete moving leaf nodes as children, or iv) we add one new incomplete staying leaf node as a unique child.

Now, suppose that $T$ has at most $b''$ leaf nodes, and there is an incomplete leaf node $t_i \in V(T)$.
Note that since each moving node has at most two children, $t_i$ has $d_i \leq b''$ ancestors marked as moving.
Let $\mathfrak{B}(t_i) = \mathfrak{S}_i \coloneqq ((G_i, \Omega_i), \rho_i, c_i, \Delta_i, \mathcal{C}_i = \{ C_1^i, \ldots , C_{\mathsf{nest}'(t,h,k)-(k+1)(d_i+2)}^i \}, \mathcal{L}_i, X_i, Y_i)$.
Suppose that the restriction of $\rho'$ to $G_i$ contains at most $b'$ vortices.
We apply \zcref{lem:separating_evenfaced_planar_transaction} to $(G_i,\Omega_i)$ to obtain one of the following results in time $\mathbf{O}(k(b'+h)(t^2+h)^2 |E(G)|)$.

\begin{enumerate}
    \item The society $(G_i,\Omega_i)$ has depth at most $\mathsf{depth}_{\ref{lem:separating_evenfaced_planar_transaction}}(\mathsf{nest}'(t,h,k)-(k+1)d_i,b',2\mathsf{radial}_{\ref{thm:evensocietyclassification}}(t,h,k)+2\mathsf{nest}'(t,h,k)+2,h,\mathsf{depth}_{\ref{thm:societyclassification}}(t',k,2ph+p))$
    \item There is an odd minor model for the universal parity breaking grid $\mathscr{U}_h$ controlled by a mesh whose horizontal paths are subpaths of distinct cycles from $\mathcal{C}$.
    \item There is a separating even-faced planar transaction $\mathcal{P}_i = \{P_1^i, \ldots, P_{p'}^i\}$ of order $p' = 2\mathsf{radial}_{\ref{thm:evensocietyclassification}}(t,h,k)+2\mathsf{nest}'(t,h,k)+2$ that is orthogonal to $\mathcal{C}_i$ with $(G_{\mathcal{P}_i},\Omega_{\mathcal{P}_i})$ being the $\mathcal{P}_i$-strip society, such that $V(\sigma(c_j')) \cap V(G_{\mathcal{P}_i}) = \emptyset$ for all $j \in [b]$, and the restriction of $\rho_i$ to $(G-[\sigma_{\rho_i}(c_j)]) \cup G'$ is even-faced.
    \item There is a separating odd handle $\mathcal{P}$ of order $2\mathsf{radial}_{\ref{thm:evensocietyclassification}}(t,h,k)+2\mathsf{nest}'(t,h,k)+2$ in $(G_i,\Omega_i)$.
\end{enumerate}

\smallskip
We start by discussing the first and the third options listed above.
The second and the fourth option are addressed later on.
If we obtain the first option, we simply mark $t_i$ as completed.

\smallskip
Now, suppose that we obtain the third option.
In this case, we divide the $(\mathsf{nest}'(t,h,k)-(k+1)d_i, \mathsf{radial}_{\ref{thm:evensocietyclassification}}(t,h,k))$-society configuration $\mathfrak{S}_i$ by the transaction $\mathcal{P}_i$ of order $2\mathsf{radial}_{\ref{thm:evensocietyclassification}}(t,h,k)+2\mathsf{nest}'(t,h,k)+2$ to obtain two new society configurations $\mathfrak{S}^L_i=((G^L_i,\Omega^L_i),\rho^L_i,c^L_i,\Delta^L_i,\mathcal{C}_i^L,\mathcal{L}_i^L,X^L_i,Y_i^L)$ and $\mathfrak{S}^R_i=((G^R_i,\Omega^R_i),\rho^R_i,c^R_i,\Delta^R_i,\mathcal{C}^R_i,\mathcal{L}^R_i, X^R_i,Y^R_i)$.
As there is a linkage $\mathcal{L}_i^m$ with $m \in \{L, R\}$ for $\mathcal{C}_i^m$, there is a block $B_i^m$ in $G_i^m$ that contains all cycles in $\mathcal{C}_i^m$. If $B_i^m$ is bipartite, and the restriction of $\rho'$ to $G_i^m$ does not contain any of the vortices $c_1',\ldots,c_{b'}'$, we say that $\mathfrak{S}_i^m$ is \emph{quasi-bipartite}.

Suppose that both $\mathfrak{S}_i^L$ and $\mathfrak{S}_i^R$ are not quasi-bipartite.
In this case, we add two incomplete, moving nodes $t^L_{i+1}$ and $t^R_{i+1}$ as children of $t_i$ where $\mathfrak{B}(t^L_{i+1})=(\mathfrak{S}^L_i)^{-2k}$ and $\mathfrak{B}(t^R_{i+1})=(\mathfrak{S}^R_i)^{-2k}$.
Here, we move in by $2k$ to make sure that there are enough cycles when constructing a parity $k$-surface wall, and there is a linkage between nests and the base cycles.
We update $\Lambda$ by taking the union with the disk bounded by the strip society of $\mathcal{P}_i$.
Furthermore, for each $m\in \{L,R\}$, we additionally define $\mathfrak{C}(t^m_{i+1})=((G^m_i,\Omega^m_i),\rho^m_i,c^m_i,\Delta^m_i,\widetilde{\mathcal{C}}_i^m,\widetilde{\mathcal{L}}_i^m,\widetilde{X}^m_i,Y_i^m)$, where $\widetilde{\mathcal{C}}_i^m=\{\widetilde{C}^i_1,\ldots,\widetilde{C}^i_k\}$ is the set of the outermost $k$ cycles of $\mathcal{C}_i^m$, $\widetilde{\mathcal{L}}_i^m$ is the $V(\Omega^m_i)$-$V(\widetilde{C}^i_1)$-subpaths of paths in $\mathcal{L}_i^m$, and $\widetilde{X}^m_i\subseteq V(\widetilde{C}^i_1)$ is the endpoints of paths in $\mathcal{L}_i^m$.
Lastly, let $\mathfrak{D}(t^m_{i+1})=(\mathfrak{S}^m_i)^{-k}$.

Now consider the other case where $\mathfrak{S}_i^L$ or $\mathfrak{S}_i^R$ is quasi-bipartite.
Note that only one of $\mathfrak{S}_i^L$ or $\mathfrak{S}_i^R$ can satisfy this condition.
Let $m\in \{L,R\}$ so that $\mathfrak{S}_i^m$ is not quasi-bipartite.
Then we add an incomplete staying node $t_{i+1}$ as a unique child of $t_i$ where $\mathfrak{B}(t_{i+1})=\mathfrak{S}^m_i$.

Lastly, notice that whenever we obtain the third option to grow the vortex refinement tree, the number of vertices in the union of $V(G_u)$ for every leaf node $u$ strictly decreases.
Therefore, the whole process will end before $|V(G)|$ iteration, so we have $|V(T)|\leq 2|V(G)|$.

\medskip
\textbf{Building a linkage:}
Before dealing with the second and fourth option, we explain how to build a linkage that connects the society configurations we found in each step to the original society configuration.
Let $\mathfrak{T}=(T,\mathfrak{B},\Lambda)$ be a vortex refinement tree and let $W\subseteq V(T)$ be a set of leaf nodes.
For each node $u\in V(T)$, let $\mathfrak{B}(u)=((G_u,\Omega_u),\rho_u,c_u,\Delta_u,\mathcal{C}_u=\{C_1^u,\ldots,C_s^u\},\mathcal{L}_u,X_u,Y_u)$.
We traverse the vortex refinement tree starting from $W$ toward the root to find a $(\bigcup_{t\in W}X_{t})$-$V(\Omega)$-linkage $\mathcal{R}$ of order $4k(b'')^2$ such that the paths in $\mathcal{R}$ are contained in $\Lambda$.

Let $T_u$ be the subtree of $T$ rooted at $u$.
We inductively define a $( \bigcup_{t_i\in V(T_u)\cap W} X_{t_i})$-$Y_u$-linkage $\mathcal{R}_u$ such that
\begin{itemize}
    \item the paths in $\mathcal{R}_u$ are contained in $\Lambda \cap \Delta_u$,
    \item if $u$ is not the root node and its parent $w$ has a unique child, then $\mathcal{R}_u$ is also a $( \bigcup_{t_i\in V(T_u)} X_{t_i})$-$\Omega_{w}$-linkage, and
    \item if $u$ is not the root node and its parent $w$ has two children, then $\mathcal{R}_u$ is also a $( \bigcup_{t_i\in V(T_u)} X_{t_i})$-$C^w_{k+1}$-linkage.
\end{itemize}

We find $\mathcal{R}_u$ as follows starting from the nodes in $W$:
\begin{enumerate}
    \item If $u$ is a leaf, we define $\mathcal{R}_u = \mathcal{L}_u$.
    
    \item If $u$ is a node with a unique child $v$ such that $\mathcal{R}_v$ is defined, then since $\mathcal{R}_v$ is also a $( \bigcup_{t_i\in V(T_u)} X_{t_i} )$-$\Omega_u$-linkage, we can apply \zcref{lem:finding_linkages} to $(G_u,\Omega_u)$, $\mathcal{R}_v$, and $\mathcal{L}_u$ to find $\mathcal{R}_u$.

    \item Suppose that $u$ is a node with two children $v_1$ and $v_2$ such that $\mathcal{R}_{v_1}$ and $\mathcal{R}_{v_2}$ are defined.
    Since $\Delta_{v_1}$ and $\Delta_{v_2}$ are disjoint, $\mathcal{R}_{v_1}$ and $\mathcal{R}_{v_2}$ are also disjoint.
    Thus, $\mathcal{R}_{v_1}\cup \mathcal{R}_{v_2}$ is a $( \bigcup_{t_i\in V(T_u)} X_{t_i})$-$C_{s-m}^u$-linkage.
    Therefore, we can apply \zcref{lem:finding_linkages} to $(G_u,\Omega_u)$, $\mathcal{R}_{v_1}\cup \mathcal{R}_{v_2}$, and $\mathcal{L}_u$ to find $\mathcal{R}_u$.
\end{enumerate}

Observe that the second and the third condition of $\mathcal{R}_u$ follow from the fact that $\mathcal{L}_u$ results from \zcref{item:dividing_society_config_6} of the definition of dividing a society configuration.

We repeat the process all the way up to the root of $T$, to find a desired $(\bigcup_{i\in [\ell]}X_{t_i})$-$Y_0$-linkage.
Furthermore, since $|V(T)|\leq 2|V(G)|$, we can find such a linkage in time $\mathbf{O}(k|E(G)|\cdot |V(G)|)$.

\smallskip

Now, suppose that we obtain the second option of \zcref{lem:separating_evenfaced_planar_transaction}.
Note that there is a $X_i$-$Y_0$-linkage of order $4kb'' \geq (2h+1)^2$ constructed as above.
So, the odd minor model for $\mathscr{U}_h$ is also controlled by a mesh whose horizontal paths are subpaths of distinct cycles from $\mathcal{C}_0$, and we have \ref{item_s2}.

If we obtain the fourth option of \zcref{lem:separating_evenfaced_planar_transaction}, note that $2\mathsf{radial}_{\ref{thm:evensocietyclassification}}(t,h,k)+2\mathsf{nest}'(t,h,k)+2\geq 2h+2$.
Therefore, by \zcref{lem:get-parity-handle}, $G_i$ contains $\mathcal{H}_h$ as an odd minor controlled by a mesh whose horizontal paths are subpaths of distinct cycles from $\mathcal{C}_i$.
Similar to the second option, by the $X_i$-$Y_0$-linkage of order $4kb'' \geq 4h^2$ constructed as above, it is easy to see that this odd minor model is controlled by a mesh whose horizontal paths are subpaths of distinct cycles from $\mathcal{C}_0$ as in \ref{item_s7}.

\medskip
\textbf{Building a surface wall:}
Suppose that at some point in the construction of the vortex refinement tree, we have $\mathfrak{T}=(T,\mathfrak{B},\Lambda)$ such that $T$ has $b''+1$ leaf nodes, say $t_1,\ldots,t_{b''+1}$.
Let $\mathfrak{B}(t_i)=((G_i,\Omega_i),\rho_i,c_i,\Delta_i,\mathcal{C}_i,\mathcal{L}_i,X_i,Y_i)$.
Then either $c_i$ contains one of the vortices $c_1',\ldots,c_{b'}'$, or $\mathfrak{B}(t_i)$ is not quasi-bipartite.
Since it holds that $b' \leq \nicefrac{1}{2}(t'-3)(t'-4)-1$, there are at least $h$ leaf nodes whose $\mathfrak{B}(t_i)$ is not quasi-bipartite.
Without loss of generality, assume that $\mathfrak{B}(t_1),\ldots,\mathfrak{B}(t_h)$ are not quasi-bipartite.

Then we build a parity-$(2h+1)$-surface-wall with signature $(0,0,0,0,0,h)$ as follows:
For each $i$, let $s_i$ be the nearest ancestor of $t_i$ that is marked as moving, and let $\mathfrak{C}(s_i)=(G^\mathfrak{C}_i,\Omega^\mathfrak{C}_i,\rho^\mathfrak{C}_i,c^\mathfrak{C}_i,\Delta^\mathfrak{C}_i,\mathcal{C}^\mathfrak{C}_i,\mathcal{L}^\mathfrak{C}_i,X^\mathfrak{C}_i,Y^\mathfrak{C}_i)$ and $\mathfrak{D}(s_i)=(G^\mathfrak{D}_i,\Omega^\mathfrak{D}_i,\rho^\mathfrak{D}_i,c^\mathfrak{D}_i,\Delta^\mathfrak{D}_i,\mathcal{D}^\mathfrak{D}_i,\mathcal{L}^\mathfrak{D}_i,X^\mathfrak{D}_i,Y^\mathfrak{D}_i)$.

For each $i$, choose a set $\widetilde{X}_i'\subseteq X^{\mathfrak{D}}_i$ with $|X_i'|= 4h(2h+1)$.
Let $X_i'\subseteq X^{\mathfrak{C}}_i$ be the set of vertices corresponds to $\widetilde{X}_i'$ via radial linkage which has $\mathcal{L}_i^\mathfrak{C}$ and $\mathcal{L}_i^{\mathfrak{D}}$.

Then similarly as before, we can find a $\bigcup_{i\in [h]} X_i'$-$Y_1$-linkage $\mathcal{R}$ of order $4h^2(2h+1)$.
Then for each $i$, we have $X_i'$-$Y_1$-linkage of order $4h(2h+1)$ that is subset of $\mathcal{R}$, and naturally index the paths in this linkage as $P^i_1,\ldots,P^i_{4h(2h+1)}$.
For each $j\in [h]$, let $Z_j^i$ be the minimal segment of $\Omega$ that contains the endpoints of $P^i_{4(j-1)(2h+1)+1},\ldots,P^i_{4j(2h+1)}$, and let $D^i_j$ be the area bounded by the trace of $P^i_{4(j-1)(2h+1)+1}$, $P^i_{4j(2h+1)}$, $Z_j^i$, and the arc of the innermost cycle of $\mathcal{C}_i$ that contains $X_i'$.
Then by the pigeonhole principle, for at least one of $j$, $D^i_j$ does not contain all other $c_1,\ldots,c_{i-1},c_{i+1},\ldots,c_h$.
Fix such $j_i$ for each $i$ and let $\mathcal{R}'=\bigcup_{i\in [h]}\{P^i_{4(j_i-1)(2h+1)+1},\ldots,P^i_{4j_i(2h+1)}\}$.
We can extend these paths by concatenating the paths in the $\mathcal{L}^{\mathfrak{D}}_i$ and the paths in the $\mathcal{L}^{\mathsf{base}}$, we can find $\bigcup_{i\in [h]} X_i^{\mathfrak{D}}$-$Y^{\mathsf{base}}$-linkage such that for each $i\in [h]$, it contains $X_i^{\mathfrak{D}}$-$Y^{\mathsf{base}}$-linkage of order $4(2h+1)$, which is orthogonal to both $\mathcal{C}^{\mathsf{base}}$ and $\mathcal{C}_i^\mathfrak{D}$.

With $\mathfrak{D}(s_i)$ for $i\in [h]$, we can build a parity-$(2h+1)$-surface-wall with signature $(0,0,0,0,0,h)$ by using $\mathcal{C}^\mathsf{base}$ and the linkages found above.
Since the $\mathfrak{B}(t_i)$ we chose are not quasi-bipartite, we can add a parity breaking paths whose endpoints are on the innermost cycle of $\mathcal{C}_i$, and whose internal vertices are contained in $c_i$.
This allows us to apply \zcref{lemma:parity_grid_in_extended_walls} to find an odd minor model of $\mathscr{U}_h$ as in \ref{item_s7}.

\smallskip
If there is no active leaf node while we have $b\leq b''$ leaf nodes, we build a parity $k$-surface-wall with signature $(0,0,0,0,0,b)$ for \ref{item_s6} similarly as above.
Furthermore, the whole process can be done in time $(f_{\ref{thm:ktminormodel}}(t)+\mathbf{poly}( h + s + p + k )) |E(G)||V(G)|^2$.
\end{proof}
\section{Local structure}\label{sec:localstructure}
Now that we have gone through the arduous task of adapting the Society Classification Theorem for our purposes, we need to also reprove the Local Structure Theorem in our setting.
This theorem first appears in \cite{RobertsonS2003Grapha} as Theorem 3.1, with Theorem 15.1 being the corresponding statement in \cite{GorskySW2025polynomialboundsgraphminor}.
We will follow the proof in \cite{GorskySW2025polynomialboundsgraphminor} fairly closely and thus not elaborate too much on its details, since those can be found in \cite{GorskySW2025polynomialboundsgraphminor}.
Instead we will focus on proving that graphs which admit the type of local structure that can be found in our theorem indeed do have bounded odd cycle packing number.

Before we can state our first local structure theorem, we will need to introduce another concept relating to the way we represent our graph on a surface.
This definition is specific to the local structure theorem and we can forget about it again once we have determined that the resulting structure does not permit graphs with arbitrary odd cycle packing number.

These definitions differ slightly from those found in \cite{GorskySW2025polynomialboundsgraphminor} as we desire some slightly different restrictions related to even-faced renditions and the fact that some of our vortices are allowed to not have a cross as long as they are non-bipartite in exchange.

\paragraph{Non-orientable, even landscapes.}
Let $k, w \geq 4$ be integers, let $G$ be a graph, and let $\Sigma$ be a surface of Euler-genus $g$.
Let $h$, $c^o$, $c^e$, and $b$ be non-negative integers where $g=2h+c$ and $c = c^o + c^e \neq 0$ if and only if $\Sigma$ is non-orientable.
Moreover, let $D \subseteq G$ be a parity $k$-surface-wall with signature $(0,h,c^o,c^e,0,b)$, let $W \subseteq G-A$ be a $w$-mesh in $G$, and let $\mathcal{T}_D$ and $\mathcal{T}_W$ be the tangles they respectively define.
Finally, let $A \subseteq V(G) \setminus V(D)$.

The tuple $\Lambda = (A,W,D,\rho)$ is called a \emph{non-orientable, even $\Sigma$-landscape} of \emph{detail $k$} and \emph{oddness $c^o$} if
\begin{description}
    \item[L1~~] $\rho$ is a non-orientable, even-faced $\Sigma$-rendition of the large block of $\mathcal{T}_W$,
    \item[L2~~] $D$ and $W$ are grounded in $\rho$,
    \item[L3~~] $W$ is flat in $\rho$,
    \item[L4~~] the disk bounded by the trace of the simple cycle of $D$ in $\rho$ avoids the traces of the other base cycles of $D$,
    \item[L5~~] the tangle $\mathcal{T}_D$ is a truncation of the tangle $\mathcal{T}_W$,
    \item[L6~~] if $C$ is a cycle from the nest of some vortex-segment of $D$, then the trace of $C$ is a contractible closed curve in $\Sigma$,
    \item[L7~~] $\rho$ has exactly $b$ vortices and there exists a bijection between the vortices $v$ of $\rho$ and the vortex segments $S_v$ of $D$ such that $v$ is the unique vortex of $\rho$ that is contained in the $v$-disk $\Delta_{C_1}$ of the inner cycle of $S_v$, where $\Delta_{C_1}$ avoids the trace of the simple cycle of $D$, and
    \item[L8~~] for every vortex $v$ of $\rho$, the society induced by the outer cycle from the nest of the corresponding vortex segment is non-bipartite or has a cross.
\end{description}
We refer to the vortex segments as the \emph{vortices} of $\Lambda$ and call $A$ the \emph{apex set}.
The integer $b$ is called the \emph{breadth} of $\Lambda$ and the \emph{depth} of $\Lambda$ is the depth of $\rho$.
We say that $\Lambda$ is \emph{centred} at the mesh $W$.

\paragraph{Non-orientable, even layouts.}
Let $k \geq 4$, $l$, $d$, $b$, $r$, and $a$ be non-negative integers and $\Sigma$ be a surface.
We say that a graph $G$ with a mesh $M$ has a \emph{non-orientable, even $(k,l)$-$(a,b,d,r)$-$\Sigma$-layout} \emph{centred at $M$} if there exists a set $A \subseteq V(G)$ of size at most $a$ and a submesh $M'\subseteq M$ such that there exists a non-orientable, even $\Sigma$-landscape $(A,M',D,\rho)$ of detail $k$, oddness $l$, breadth $b$, and depth $d$ for $G$ where every vortex of $\rho$ has a linear decomposition of adhesion at most $d$, and $M'$ is a $w$-mesh with $w \geq a+b(2d+1)+6+r$.

\subsection{A Local Structure Theorem}
We will now present a theorem in the style of Theorem 15.1 in \cite{GorskySW2025polynomialboundsgraphminor}.
Note that this is not the ultimate goal of this section, since we will proceed to show that this result further implies that the graphs we are interested in have bounded \ocp.
We present this result separately due to its conceptual closeness to the aforementioned result from \cite{GorskySW2025polynomialboundsgraphminor} and due to the sheer length and technical complexity of the proof of this result.

Recall that, given some $r$-mesh $M$ and a separation $(A,B)$ of order at most $r-1$ we said that $X \in \{ A,B \}$ is the \textsl{majority side} of $(A,B)$ if $X \setminus Y$, where $Y \in \{ A,B \} \setminus \{ X \}$, contains the vertices of both a vertical and a horizontal path of $M$.
Moreover, we observed that the orientation $\mathcal{T}_M$ of $\mathcal{S}_r$ is a tangle.

Let $\Sigma$ be a surface and let $\rho$ be a $\Sigma$-rendition of a graph $G$ containing an $r$-mesh $M$.
We say that $\rho$ is \emph{$M$-central} if there is no cell $c \in C(\rho)$ such that $V(\sigma(c))$ contains the majority side of a separation from $\mathcal{T}_M$.
Similarly, let $A \subseteq V(G)$, $|A| \leq r-1$, let $\Sigma'$ be a surface and $\rho'$ be a $\Sigma'$-rendition of $G-A$.
Then we say that $\rho'$ is \emph{$(M-A)$-central} for $G$ if no cell of $\rho'$ contains the majority side of a separation from $\mathcal{T}_M \cap \mathcal{S}_{r-|A|}$.

\begin{theorem}\label{thm:strongest_localstructure}
There exist functions $\mathsf{apex}^\mathsf{bip}_{\ref{thm:strongest_localstructure}}, \mathsf{clique}_{\ref{thm:strongest_localstructure}} \colon \mathbb{N}\to\mathbb{N}$, $\mathsf{apex}_{\ref{thm:strongest_localstructure}},\mathsf{depth}_{\ref{thm:strongest_localstructure}}\colon\mathbb{N}^2\to\mathbb{N}$, and $\mathsf{mesh}_{\ref{thm:strongest_localstructure}}\colon\mathbb{N}^3\to\mathbb{N}$ such that for all positive integers $k$, $t$, $r$, and $t' \coloneqq \mathsf{clique}_{\ref{thm:strongest_localstructure}}(t)$, every graph $G$, and every $\mathsf{mesh}_{\ref{thm:strongest_localstructure}}(t,r,k)$-mesh $M \subseteq G$ one of the following holds.
\begin{enumerate}
    \item\label{itm:localstructurenearlybipartite} There exists a set $X \subseteq G$, with $|X| \leq \mathsf{apex}^\mathsf{bip}_{\ref{thm:strongest_localstructure}}(t')$, the large block of $\mathcal{T}_M - X$ is bipartite, where $\mathcal{T}_M$ is the tangle associated with $M$,
    \item\label{itm:localstructureparityhandle} $G$ has $\mathscr{H}_t$ as an odd minor controlled by $M$, or
    \item\label{itm:localstructureevenfacedrendition} $G$ has a non-orientable, even $(k,l)$-$(\mathsf{apex}_{\ref{thm:strongest_localstructure}}(t,k),\nicefrac{1}{2}(t'-3)(t'-4)+t-1,\mathsf{depth}_{\ref{thm:strongest_localstructure}}(t,k),r)$-$\Sigma$-layout $\Lambda$ centred at $M$ where $l < \nicefrac{1}{2}(t'-3)(t'-4)+t-1$, the surface $\Sigma$ has genus less than ${t'}^2$, and the $\Sigma$-rendition $\rho$ of $\Lambda$ is $(M - A)$-central where $A$ is the apex set of $\Lambda$.
\end{enumerate}
Moreover, it holds that

{\centering
  $ \displaystyle
    \begin{aligned}
        \mathsf{clique}_{\ref{thm:strongest_localstructure}}(t) \in                                                             & \ \mathbf{O}(t^4), \\
        \mathsf{apex}^\mathsf{bip}_{\ref{thm:strongest_localstructure}}(t) \in                                                  & \ \mathbf{O}(t^{12}), \\
        \mathsf{apex}_{\ref{thm:strongest_localstructure}}(t,k),~ \mathsf{depth}_{\ref{thm:strongest_localstructure}}(t,k) \in  & \ \mathbf{O}\big((t+k)^{150}\big), \text{ and} \\
        \mathsf{mesh}_{\ref{thm:strongest_localstructure}}(t,r,k) \in                                                           & \ \mathbf{O}\big((t+k)^{161} + rt^{12} \big) .
    \end{aligned}
  $
\par}

There also exists an algorithm that, given $t$, $k$, $r$, a graph $G$, and a mesh $M$ as above as input finds one of these outcomes in time $(f_{\ref{thm:ktminormodel}}(t) + \mathsf{poly}(k))|E(G)||V(G)|^2$.
\end{theorem}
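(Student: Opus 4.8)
The plan is to follow the proof of the classical Local Structure Theorem (Theorem 15.1 of \cite{GorskySW2025polynomialboundsgraphminor}), replacing the Flat Wall Theorem by the Even-faced Flat Mesh Theorem \zcref{thm:evenfacedflatmesh} and the Society Classification Theorem by its parity-sensitive variant \zcref{thm:evensocietyclassification}, and carrying the ``even-faced'' and ``non-orientable'' invariants through every step. Put $t' \coloneqq \mathsf{clique}_{\ref{thm:strongest_localstructure}}(t)$ and apply \zcref{thm:evenfacedflatmesh} to $M$ with clique parameter of order $t'$ and universal-grid parameter of order $t$. If it returns an odd $K_{t'}$-minor or an odd $\mathscr{U}_{\mathbf{O}(t)}$-minor controlled by $M$, a rerouting argument --- using the mesh as a cylindrical-grid backbone and routing a nested family of parity-breaking $M$-paths through the non-bipartite part, in the spirit of \zcref{lem:get-parity-handle} and \zcref{lemma:parity_grid_in_extended_walls} --- produces an $\mathscr{H}_t$-odd-minor controlled by $M$, which is outcome \ref{itm:localstructureparityhandle}; if it returns the ``bipartite large block'' outcome we are in case \ref{itm:localstructurenearlybipartite}; otherwise we obtain a bounded set $Z$ and an even-faced flat submesh $M' \subseteq M - Z$, that is, an even-faced $\Sigma_0$-rendition $\rho_0$ of the large block of $\mathcal{T}_{M'}$ in the sphere with one vortex $c_0$, with $M'$ flat and separated from $c_0$ by its perimeter. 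This is our ``trivial landscape''.

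From here we iteratively grow the surface. At each stage we hold a surface $\Sigma_i$, an apex set $A_i$, a non-orientable even-faced $\Sigma_i$-rendition of $G - A_i$ whose only unfinished vortex is $c_0$, and a parity surface-wall supporting it, where the concentric cycles and radial paths of $M'$ together with the pieces built so far provide a cozy nest and a radial linkage around $c_0$ (after normalising via \zcref{prop:makenestcozy} and \zcref{prop:radialtoorthogonal}); the part of $G$ on the $c_0$-side of a suitably deep cycle of the nest forms a society to which \zcref{thm:evensocietyclassification} applies. We branch on the outcome: \ref{item_s1}, \ref{item_s2}, \ref{item_s7} are converted to an $\mathscr{H}_t$-odd-minor controlled by $M$ as above, giving case \ref{itm:localstructureparityhandle}; \ref{item_s3} gives case \ref{itm:localstructurenearlybipartite}; in \ref{item_s4} and \ref{item_s5} we use the corollary to \zcref{lem:reconciliation} together with \zcref{lem:reconciliationforevenfaced} to fold the even-faced crosscap (resp.\ handle) transaction into $\rho_i$ --- this adds a crosscap or handle to the surface while keeping the rendition even-faced, extends the parity surface-wall by the corresponding even segment, shrinks $c_0$, and we iterate; and \ref{item_s6} hands us an even-faced rendition with $b$ bounded-depth vortices and a parity $k$-surface-wall of the prescribed signature, which is precisely a non-orientable even $\Sigma$-landscape, so we stop and output case \ref{itm:localstructureevenfacedrendition} after turning the vortex societies into linear decompositions via \zcref{prop:depth_to_lin_decomp} and checking \textbf{L1}--\textbf{L8}. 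All apex sets are merged into one set $A$, and $(M-A)$-centrality is preserved because every move only re-embeds material strictly inside the current disk.

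Termination and the bounds on genus, breadth, and oddness come from the accumulating infrastructure left behind by the genus-increasing steps: once more than $\nicefrac{1}{2}(t'-3)(t'-4)$ vortex-segments carry crosses, \zcref{prop:cliques_in_extended_walls} yields a $K_{t'}$-minor controlled by the base wall, hence by \zcref{thm:ktminormodel} an odd $K_t$-minor (case \ref{itm:localstructureparityhandle} via the conversion) or a bipartite large block (case \ref{itm:localstructurenearlybipartite}); and once more than $t-1$ vortex-segments are even but non-bipartite, \zcref{lemma:parity_grid_in_extended_walls} yields an odd $\mathscr{U}_t$-minor controlled by the base wall. Thus the process stops within $\mathbf{O}({t'}^2)$ rounds in one of the three cases, with $b, l < \nicefrac{1}{2}(t'-3)(t'-4)+t-1$ and Euler genus below ${t'}^2$. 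The quantitative bounds are obtained by composing the polynomial functions of \zcref{thm:evensocietyclassification} (and the $\mathbf{O}({t'}^3)$ blow-up of \zcref{thm:evenfacedflatmesh}) across these rounds; in particular the initial mesh must survive the subtraction of $\mathsf{cost}_{\ref{thm:evensocietyclassification}}+\mathsf{loss}_{\ref{thm:evensocietyclassification}}$ cycles per round and the flat-mesh blow-up, leaving a $w$-mesh with $w \ge a + b(2d+1)+6+r$, which forces $\mathsf{mesh}_{\ref{thm:strongest_localstructure}}(t,r,k) \in \mathbf{O}((t+k)^{161}+rt^{12})$, and the algorithmic claim is immediate since every invoked result is constructive within the stated time and the loop runs only $\mathbf{O}({t'}^2)$ times (the $|V(G)|^2$ factor already absorbing the linear-size vortex refinement tree of \zcref{thm:evensocietyclassification}). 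I expect the main difficulty to be the bookkeeping that keeps the rendition even-faced and non-orientable across the reconciliation steps --- in particular disposing of the ``separating odd handle''/``odd handle'' configuration, which is not a valid landscape and must be cashed in for $\mathscr{H}_t$ via \zcref{lem:get-parity-handle} --- together with keeping the composed polynomial degrees down to the advertised values.
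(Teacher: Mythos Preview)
Your overall plan matches the paper's proof: start with \zcref{thm:evenfacedflatmesh}, set up an even-faced cylindrical rendition around a single vortex, and iterate \zcref{thm:evensocietyclassification}, increasing genus on outcomes \ref{item_s4}/\ref{item_s5} and terminating on \ref{item_s6}. The handling of outcomes \ref{item_s1}, \ref{item_s2}, \ref{item_s3}, \ref{item_s7} and of the odd-handle configuration via \zcref{lem:get-parity-handle} is also as in the paper.

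There is one genuine confusion in your termination argument. The genus-increasing iterations do \emph{not} accumulate vortex segments; they accumulate handle and crosscap segments in the surface wall $D_i$. Consequently \zcref{prop:cliques_in_extended_walls} (which needs many vortex segments carrying crosses) is the wrong tool here---that proposition is what drives the internal termination of \zcref{thm:evensocietyclassification}, not the outer loop. The paper terminates the outer loop via \zcref{prop:universal-surface-walls}: once the Euler genus reaches ${t'}^2$, $K_{t'}$ embeds in $\Sigma_i$, and the $(4(k+c_{\ref{prop:universal-surface-walls}}{t'}^{12}+1))$-surface wall $D_i$ contains a $K_{t'}$-minor controlled by it; then \zcref{thm:ktminormodel} yields \ref{itm:localstructurenearlybipartite} or \ref{itm:localstructureparityhandle}. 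This is also why the nest, radial linkage, and transaction sizes carry the $c_{\ref{prop:universal-surface-walls}}{t'}^{12}$ term---you need the surface wall large enough for \zcref{prop:universal-surface-walls} to apply. The bounds on breadth and oddness ($b,l < \nicefrac{1}{2}(t'-3)(t'-4)+t-1$) are not proved in the outer loop at all; they come for free from the statement of outcome \ref{item_s6}.

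A second point: ``the corollary to \zcref{lem:reconciliation} together with \zcref{lem:reconciliationforevenfaced}'' is not enough to perform the genus step. The paper invokes the dedicated integration lemmas \zcref{lemma:integrate-crosscap} and \zcref{lemma:integrate-handle-clean} (adaptations of Lemmas 10.2 and 10.5 of \cite{KawarabayashiTW2021Quickly}), which not only merge the renditions but also manufacture a new cozy nest $\mathcal{C}_{i+1}$ around a new vortex $c_{i+1}$ and thread the radial linkage through it; the latter additionally requires \zcref{prop:connected_linkages} to splice $\mathcal{R}_i$ to the rails coming from the transaction. Without this step you have no nest for the next application of \zcref{thm:evensocietyclassification}.
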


\paragraph{Extending the surface.}
A major part of the proof of the Local Structure Theorem consists of using the opportunity provided by the options in \zcref{thm:evensocietyclassification} that gives us crosscap and handle transactions to grow the surface we are embedding our graph in.
To accomplish this we extract two results from \cite{KawarabayashiTW2021Quickly} (see Lemma 10.2 and Lemma 10.5), one for the crosscap case and the other for handles.
A more thorough, illustrated explanation of the intuition behind these two statements is given in \cite{GorskySW2025polynomialboundsgraphminor}.
We adopt the minor changes from \cite{GorskySW2025polynomialboundsgraphminor} to these statements which involve only considering a radial linkage instead of a linkage from $\Omega$ to the vortex.
This comes at a very minor cost in the number of cycles we lose.
More importantly, we note that by applying \zcref{lem:reconciliationforevenfaced} into the proof of Lemma 10.2 and Lemma 10.5 whenever \zcref{lem:reconciliation} is applied (see Lemma 5.15 from \cite{KawarabayashiTW2021Quickly}), we can also ensure additional properties for the resulting rendition.
For the crosscap case, this tells us that the new rendition is even-faced and non-orientable.

\begin{lemma}\label{lemma:integrate-crosscap}
Let $s$ and $p$ be non-negative integers.
Let $(G,\Omega)$ be a society with an even-faced, cylindrical rendition $\rho_0$ in the disk $\Delta$ with a nest $\mathcal{C} = \{ C_1, \ldots , C_{s+9} \}$ around the vortex $c_0$.
Let $X_1,X_2$ be disjoint segments of $\Omega$ such that there exist
\begin{itemize}
    \item a radial linkage $\mathcal{R}$ orthogonal to $\mathcal{C}$ starting in $X_1$, and
    \item an even-faced, flat crosscap transaction $\mathcal{P}$ of order at least $p+2s+7$ with all endpoints in $X_2$ and disjoint from $\mathcal{R}$.
\end{itemize}
Let $\Sigma^*$ be a surface, homeomorphic to the projective plane minus an open disk, which is obtained from $\Delta$ by adding a crosscap to the interior of $c_0$.
\smallskip
Then there exists a crosscap transaction $\mathcal{P}' \subseteq \mathcal{P}$ of order $p$, consisting of the middle $p$ paths of $\mathcal{P}$, and a non-orientable, even-faced rendition $\rho_1$ of $(G,\Omega)$ in $\Sigma^*$ (around the $\bigcup \mathcal{C}$) with a unique vortex $c_1$ and the following hold:
\begin{enumerate}
    \item $\mathcal{P}'$ is disjoint from $\sigma(c_1)$,

    \item the vortex society of $c_1$ in $\rho_1$ has an even-faced, cylindrical rendition $\rho_1'$ with a nest $\mathcal{C}'=\{ C_1',\dots,C_s'\}$ around the unique vortex $c_1'$,

    \item every element of $\mathcal{R}$ has an endpoint in $V(\sigma_{\rho_1'}(c_1'))$,

    \item $\mathcal{R}$ is orthogonal to $\mathcal{C}'$, and for every $i \in [s]$ and every $R \in \mathcal{R}$, $C_i' \cap R = C_{i+8} \cap R$. Moreover,

    \item let $\mathcal{R} = \{ R_1, \ldots , R_{\ell} \}$.
    For each $i \in [\ell]$ let $x_i$ be the endpoint of $R_i$ in $X_1$, and let $y_i$ be the last vertex of $R_i$ on $c_1$ when traversing along $R_i$ starting from $x_i$; if $x_1,x_2,\dots,x_{\ell}$ appear in $\Omega$ in the order listed, then $y_1,y_2,\dots,y_{\ell}$ appear on $\mathsf{bd}(c_1)$ in the order listed.

    \item Finally, let $\Delta'$ be the open disk bounded by the trace of $C_{s+8}$ in $\rho_0$.
    Then $\rho_0$ restricted to $\Delta\setminus\Delta'$ is equal to $\rho_1$ restricted to $\Delta\setminus \Delta'$.
\end{enumerate}
Moreover, there exists an algorithm that computes this outcome in time $\mathbf{poly}(sp|\mathcal{R}|)|V(G)|$.
\end{lemma}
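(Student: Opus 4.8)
The plan is to follow the proof of Lemma~10.2 in \cite{KawarabayashiTW2021Quickly}, with the simplifications of \cite{GorskySW2025polynomialboundsgraphminor} that replace the $\Omega$-to-vortex linkage there by a radial linkage (the extra cycles $C_{s+1},\dots,C_{s+9}$ beyond the $s$ we retain absorb the usual losses incurred both by this variant and by the crosscap insertion), and to carry the parity information through the argument. Run on the data $(G,\Omega)$, $\rho_0$, $\mathcal{C}$, $X_1$, $X_2$, $\mathcal{R}$, $\mathcal{P}$, that proof already outputs a crosscap transaction $\mathcal{P}'\subseteq\mathcal{P}$ made of the middle $p$ paths of $\mathcal{P}$, together with a rendition $\rho_1$ of $(G,\Omega)$ in $\Sigma^*$ with a single vortex $c_1$ for which conclusions (1)--(6) hold verbatim; nothing we do below touches the combinatorial bookkeeping of cycles, linkages, and disks that establishes these. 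In particular the cylindrical rendition $\rho_1'$ of the vortex society of $c_1$ with nest $\mathcal{C}'$ arises by restricting $\rho_0$ to the part of $\Delta$ lying outside the disk into which the crosscap is inserted, so its even-facedness is inherited directly from that of $\rho_0$.

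The point of the exercise is that the only step of that proof in which a cell of $\rho_1$ is not directly a cell of $\rho_0$ or of the given rendition of the $\mathcal{P}$-strip society is the repeated application of the reconciliation lemma (Lemma~5.15 of \cite{KawarabayashiTW2021Quickly}, i.e.\ our \zcref{lem:reconciliation}), which marries a cylindrical rendition to the rendition of the strip of a (sub)crosscap-transaction. At each such step the transaction being reconciled is a crosscap transaction, the ambient cylindrical rendition is inductively even-faced around its nest, and the strip society carries an even-faced vortex-free rendition agreeing with it away from the vortex --- exactly the hypotheses of \zcref{lem:reconciliationforevenfaced}. Using \zcref{lem:reconciliationforevenfaced} in place of \zcref{lem:reconciliation} therefore certifies, at every such step, that all cells of the merged rendition are parity-preserving and that the merged rendition is even-faced; propagating this through the finitely many reconciliation steps yields that $\rho_1$ is even-faced around $\bigcup\mathcal{C}$ with all cells parity-preserving. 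The hard part, and the step I expect to be the main obstacle, is verifying that the transactions fed into these reconciliation steps are genuinely \emph{even-faced} rather than merely flat: this forces one to re-run the preparatory parts of the \cite{KawarabayashiTW2021Quickly} argument --- flattening and orthogonalising the crosscap transaction, passing to subnests and subtransactions --- with the even-faced analogues developed in \zcref{sec:bipartiteflatwall} and \zcref{sec:societyclassification} substituted for their parity-blind counterparts, and to check line by line that the even-faced property survives each manipulation.

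It remains to upgrade ``even-faced'' to ``non-orientable''. Since all cells of $\rho_1$ are parity-preserving, we may translate $\rho_1$ restricted to $G-[\sigma_{\rho_1}(c_1)]$ into a genuine even-faced embedding $\phi$ of an auxiliary graph $G^\star$ in $\Sigma^*$, exactly as in the proof of \zcref{lem:reconciliationforevenfaced}. As $\Sigma^*$ is the projective plane with an open disk removed, $H_1(\Sigma^*;\mathbb{Z}_2)\cong\mathbb{Z}_2$, and a simple closed curve in $\Sigma^*$ avoiding the boundary is one-sided precisely when it represents the nontrivial class. Even-facedness of $\phi$ makes ``length modulo $2$'' descend to a linear functional $\lambda$ on $H_1(\Sigma^*;\mathbb{Z}_2)$, and the crosscap transaction $\mathcal{P}'$ together with an arc of a cycle of $\mathcal{C}$ produces a grounded cycle crossing the crosscap exactly once, hence representing the generator; so $\lambda$ is nonzero. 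Therefore a grounded cycle meeting a cell $2$-connected to $\bigcup\mathcal{C}$ is odd if and only if its homology class is the generator, i.e.\ if and only if its trace is one-sided, which is exactly non-orientability of $\rho_1$ (and incidentally confirms that $\Sigma^*$ really does need its crosscap). For the running time, the imported algorithm already runs within $\mathbf{poly}(sp|\mathcal{R}|)|V(G)|$, each call to \zcref{lem:reconciliationforevenfaced} costs $\mathbf{O}(|E(G)|)$ and merely replaces an equally cheap call to \zcref{lem:reconciliation}, and testing even-facedness of a cell or rendition is linear, so the stated bound is preserved.
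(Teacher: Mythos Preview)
Your proposal is correct and matches the paper's approach exactly: the paper does not give a self-contained proof but simply states that the lemma is extracted from Lemma~10.2 of \cite{KawarabayashiTW2021Quickly} with the radial-linkage modification of \cite{GorskySW2025polynomialboundsgraphminor}, substituting \zcref{lem:reconciliationforevenfaced} for \zcref{lem:reconciliation} at each reconciliation step, and that in the crosscap case this yields an even-faced and non-orientable rendition --- which is precisely your plan, with your homology paragraph supplying detail the paper omits. One small point on that paragraph: the step ``so $\lambda$ is nonzero'' is unjustified (the cycle you build from a path of $\mathcal{P}'$ and an arc of a nest cycle need not be odd) and also unnecessary, since once $\lambda$ is well-defined on $H_1(\Sigma^*;\mathbb{Z}_2)\cong\mathbb{Z}_2$, any odd grounded cycle has $\lambda$-value $1$, hence represents the generator, hence has one-sided trace, while if $\lambda=0$ non-orientability holds vacuously.
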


The case in which we try to integrate a handle is more complex.
As we can see in \zcref{lem:reconciliationforevenfaced}, either of the two planar transactions making up the handle transaction may cause odd cycles to appear.
For us, if this happens for at least one of the two planar transactions, this is a way to build one of our obstructions.
Thus we are satisfied with the following version of the statement.

\begin{lemma}\label{lemma:integrate-handle}
Let $s$ and $p$ be non-negative integers.
Let $(G,\Omega)$ be a society with an even-faced, cylindrical rendition $\rho_0$ in the disk $\Delta$ with a nest $\mathcal{C} = \{ C_1, \ldots , C_{s+9} \}$ around the vortex $c_0$.
Let $X_1,X_2$ be disjoint segments of $\Omega$ such that there exist
\begin{itemize}
    \item a radial linkage $\mathcal{R}$ orthogonal to $\mathcal{C}$ starting in $X_1$, and
    \item an even-faced, flat handle transaction $\mathcal{P}$ of order at least $2p+4s+12$ with all endpoints in $X_2$ and disjoint from $\mathcal{R}$.
\end{itemize}
Let $\mathcal{P}_1$ and $\mathcal{P}_2$ be the two planar transactions such that $\mathcal{P} = \mathcal{P}_1 \cup \mathcal{P}_2$.
Let $\Sigma^+$ be a surface, homeomorphic to the torus minus an open disk, which is obtained from $\Delta$ by adding a handle to the interior of $c_0$.
\smallskip
Then there exist transactions $\mathcal{P}_1' \subseteq \mathcal{P}_1$ and $\mathcal{P}_2' \subseteq \mathcal{P}_2$ of order $p$, each $\mathcal{P}_i'$ consisting of the middle $p$ paths of $\mathcal{P}_i$, with $i \in [2]$, such that $\mathcal{P}'= \mathcal{P}_1' \cup \mathcal{P}_2'$ is a handle transaction, and a rendition $\rho_1$ of $(G,\Omega)$ in $\Sigma^+$ with a unique vortex $c_1$ and the following hold:
\begin{enumerate}
    \item all cells of $\rho_1$ are parity-preserving,

    \item $\mathcal{P}'$ is disjoint from $\sigma(c_1)$,

    \item either
    \begin{itemize}
        \item for at least one $i \in [2]$, $\mathcal{P}_i$ is an odd handle in $(G,\Omega)$, or 

        \item $\rho_1$ is non-orientable and even-faced around $\bigcup \mathcal{C}$, and the vortex society of $c_1$ in $\rho_1$ has an even-faced, cylindrical rendition $\rho_1'$ with a nest $\mathcal{C}'=\{ C_1', \ldots , C_s' \}$ around the unique vortex $c_1'$, and
        \begin{enumerate}
            \item every element of $\mathcal{R}$ has an endpoint in $V(\sigma_{\rho_1'}(c_1'))$,

            \item $\mathcal{R}$ is orthogonal to $\mathcal{C}'$, and for every $i \in [s]$ and every $R \in \mathcal{R}$, $C_i' \cap R = C_{i+8} \cap R$. Moreover,

            \item let $\mathcal{R}=\{ R_1, \ldots , R_\ell \}$.
            For each $i \in [\ell]$ let $x_i$ be the endpoint of $R_i$ in $X_1$, and let $y_i$ be the last vertex of $R_i$ on $c_1$ when traversing along $R_i$ starting from $x_i$; then if $x_1, x_2, \ldots , x_\ell$ appear in $\Omega$ in the order listed, then $y_1, y_2, \ldots , y_\ell$ appear on $\mathsf{bd}(c_1)$ in the order listed.

            \item Finally, let $\Delta'$ be the open disk bounded by the trace of $C_{s+8}$ in $\rho_0$.
            Then $\rho_0$ restricted to $\Delta \setminus \Delta'$ is equal to $\rho_1$ restricted to $\Delta \setminus \Delta'$.
        \end{enumerate}
    \end{itemize}
\end{enumerate}
Moreover, there exists an algorithm that computes this outcome in time $\mathbf{poly}(sp|\mathcal{R}|)|V(G)|$.
\end{lemma}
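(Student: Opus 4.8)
The plan is to follow the proof of the handle–integration lemma in \cite{GorskySW2025polynomialboundsgraphminor} (the analogue of Lemma~10.5 of \cite{KawarabayashiTW2021Quickly}) essentially unchanged, modifying only the way overlaps are resolved: wherever that proof invokes the reconciliation lemma \zcref{lem:reconciliation} (Lemma~5.15 of \cite{KawarabayashiTW2021Quickly}) we instead invoke its even-faced refinement \zcref{lem:reconciliationforevenfaced}. Recall the skeleton of the non-parity proof. First one passes, using \zcref{lem:monotonetransaction} and the orthogonalisation tools (\zcref{lemma:orthogonal_transaction}, \zcref{prop:radialtoorthogonal}), to monotone sub-transactions of $\mathcal{P}_1$ and $\mathcal{P}_2$ that are orthogonal to $\mathcal{C}$, and makes $\mathcal{R}$ orthogonal to $\mathcal{C}$ as well; this is where the $4s$ and the constants in the bound $2p+4s+12$, as well as the loss of eight cycles from the nest, are spent. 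Then one performs the reconciliation arguments attached to the two halves of the handle transaction — the one attached to $\mathcal{P}_1$ glues the planar strip of that half into the old $c_0$-disk without changing the genus, and the one attached to $\mathcal{P}_2$ glues its strip across, forcing the addition of a handle and producing the rendition $\rho_1$ in $\Sigma^+$ with its single new vortex $c_1$, the surviving nest $\mathcal{C}'=\{C_1',\dots,C_s'\}$ with $C_i'=C_{i+8}$, the disjointness of $\mathcal{P}'$ from $\sigma(c_1)$, and the bookkeeping items 3(b)(a)--(d). None of that underlying combinatorics changes, and the middle $p$ paths of each half that survive as $\mathcal{P}_1',\mathcal{P}_2'$ are selected exactly as in the non-parity argument; that $\mathcal{P}'=\mathcal{P}_1'\cup\mathcal{P}_2'$ is again a handle transaction is immediate, since taking middle portions of the two halves only shrinks the end segments inside the four segments partitioning $\Omega$. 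The hypotheses of \zcref{lem:reconciliationforevenfaced} are met at each call: the ambient rendition is even-faced around the current nest (it is $\rho_0$ for the first call and, provided the first call was ``clean'', its even-faced output for the second), the sub-transaction is even-faced (being a sub-transaction of the even-faced flat handle transaction $\mathcal{P}$), monotone, and orthogonal to $\mathcal{C}$, and the required vortex-free rendition $\rho'$ of $G''$ extending $\rho$ on $G-[\sigma(c_0)]$ with even-faced restriction to the strip is precisely the rendition produced by the corresponding ordinary reconciliation step (the corollary of \zcref{lem:reconciliation}), since the strips of an even-faced flat handle transaction carry even-faced vortex-free disk renditions.

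The new content is the case analysis forced by \zcref{lem:reconciliationforevenfaced} in the planar case. Each of the two reconciliation calls — say the one attached to $\mathcal{P}_i$, $i\in[2]$ — either returns a rendition that is again even-faced around the nest (all new cells parity-preserving and, by the ``$\mathsf{C}_1^\star,\mathsf{C}_2^\star$ even'' branch of \zcref{lem:reconciliationforevenfaced}, no new grounded odd cycle), or returns the branch in which every cell is parity-preserving and the trace of every grounded odd cycle of $G''$ separates the interiors of the two disks $\Delta_1,\Delta_2$ bounded by the traces of the boundary cycles $\mathsf{C}_1,\mathsf{C}_2$ of $\mathcal{P}_i$. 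In the latter branch there is a further split: if $G''$ has no grounded odd cycle at all, then together with ``all cells parity-preserving'' the rendition is in fact even-faced and we are back in the clean case; if $G''$ does have a grounded odd cycle, then reading off the definition of odd handle, $\mathcal{P}_i$ (more precisely the monotone, orthogonal sub-transaction of $\mathcal{P}_i$ to which the reconciliation was applied, which we henceforth call $\mathcal{P}_i$) is an odd handle in $(G,\Omega)$, and we output \zcref{lemma:integrate-handle}(3) in the first alternative with that $i$. Note that we still complete both reconciliations and hence still produce $\rho_1$ in $\Sigma^+$ with its unique vortex $c_1$ and the middle sub-transactions $\mathcal{P}_1',\mathcal{P}_2'$ of order $p$; the odd-handle flag only forbids us from also claiming even-facedness.

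To assemble the outcome: item~1 (all cells of $\rho_1$ parity-preserving) holds because the cells created in the reconciliation steps are parity-preserving by \zcref{lem:reconciliationforevenfaced}, while the cells inherited from $\rho_0$ are parity-preserving since $\rho_0$ is even-faced around its nest; item~2 is inherited from the non-parity construction. If at least one of the two reconciliations hit the odd-handle sub-case, we are in the first alternative of item~3 with the corresponding $i$. If neither did, then $\rho_1$ is even-faced around $\bigcup\mathcal{C}$, and it is non-orientable in our sense vacuously: $\Sigma^+$ has no crosscaps, so any contractibility obstruction would have to come from a grounded odd cycle meeting the nest, and no such cycle existed in $\rho_0$ (even-faced on the disk $\Delta$, where all simple closed curves are contractible) nor was created by the clean reconciliation steps — so the surviving cylindrical rendition $\rho_1'$ around $c_1'$ with nest $\mathcal{C}'$ and the orthogonality/cyclic-order items 3(b)(a)--(d) are carried over unchanged, giving the second alternative of item~3. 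The algorithmic claim is inherited, as \zcref{lem:reconciliationforevenfaced} (like \zcref{lem:reconciliation}) runs in linear time and is applied a bounded number of times. I expect the main obstacle to be the bookkeeping around the odd-handle sub-case: verifying at the moment of each call that the vortex-free rendition of the strip-union with even-faced restriction to the strip is genuinely available, and that the odd-handle certificate (boundary cycles $\mathsf{C}_1,\mathsf{C}_2$, the separating grounded odd cycle, the rendition of $G''$) is produced for exactly the orthogonalised transaction that the statement identifies with $\mathcal{P}_i$, so that the dichotomy of item~3 comes out with the promised objects rather than mere proxies for them.
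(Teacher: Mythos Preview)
Your proposal is correct and matches the paper's approach exactly: the paper does not give a standalone proof of this lemma but instead states that it follows from the proof of Lemma~10.5 in \cite{KawarabayashiTW2021Quickly} (equivalently its analogue in \cite{GorskySW2025polynomialboundsgraphminor}) by substituting \zcref{lem:reconciliationforevenfaced} for \zcref{lem:reconciliation} at each reconciliation step, which is precisely what you describe. Your case analysis of the dichotomy in \zcref{lem:reconciliationforevenfaced} and the observation that non-orientability is vacuous on $\Sigma^+$ (an orientable surface with no one-sided curves, so the clean branch forces the absence of relevant grounded odd cycles) are the right elaborations of that one-line justification.
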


Luckily, via \zcref{lem:get-parity-handle}, the first option yields a large parity handle.
This allows us to simplify \zcref{lemma:integrate-handle} by additionally excluding $\mathscr{H}_k$.

\begin{lemma}\label{lemma:integrate-handle-clean}
Let $s,k,p$ be positive integers with $s \geq 2k+4$.
Let $(G,\Omega)$ be a society with an even-faced, cylindrical rendition $\rho_0$ in the disk $\Delta$ with a nest $\mathcal{C} = \{ C_1, \ldots , C_{s+9} \}$ around the vortex $c_0$.
Let $X_1,X_2$ be disjoint segments of $\Omega$ such that there exist
\begin{itemize}
    \item a radial linkage $\mathcal{R}$ orthogonal to $\mathcal{C}$ starting in $X_1$, and
    \item an even-faced, flat handle transaction $\mathcal{P}$ of order at least $\max(2p,2k)+4s+12$ with all endpoints in $X_2$ and disjoint from $\mathcal{R}$.
\end{itemize}
Let $\mathcal{P}_1$ and $\mathcal{P}_2$ be the two planar transactions such that $\mathcal{P} = \mathcal{P}_1 \cup \mathcal{P}_2$.
Let $\Sigma^+$ be a surface, homeomorphic to the torus minus an open disk, which is obtained from $\Delta$ by adding a handle to the interior of $c_0$.
\smallskip
Then either $G$ contains $\mathscr{H}_k$ as an odd minor, or there exist transactions $\mathcal{P}_1' \subseteq \mathcal{P}_1$ and $\mathcal{P}_2' \subseteq \mathcal{P}_2$ of order $p$, each $\mathcal{P}_i'$ consisting of the middle $p$ paths of $\mathcal{P}_i$, with $i \in [2]$, such that $\mathcal{P}'= \mathcal{P}_1' \cup \mathcal{P}_2'$ is a handle transaction, and a non-orientable, even-faced rendition $\rho_1$ of $(G,\Omega)$ in $\Sigma^+$ (around $\bigcup \mathcal{C}$) with a unique vortex $c_1$ and the following hold:
\begin{enumerate}
    \item $\mathcal{P}'$ is disjoint from $\sigma(c_1)$,

    \item the vortex society of $c_1$ in $\rho_1$ has an even-faced, cylindrical rendition $\rho_1'$ with a nest $\mathcal{C}'=\{ C_1', \ldots , C_s' \}$ around the unique vortex $c_1'$, and

    \item every element of $\mathcal{R}$ has an endpoint in $V(\sigma_{\rho_1'}(c_1'))$,

    \item $\mathcal{R}$ is orthogonal to $\mathcal{C}'$, and for every $i \in [s]$ and every $R \in \mathcal{R}$, $C_i' \cap R = C_{i+8} \cap R$. Moreover,

    \item let $\mathcal{R}=\{ R_1, \ldots , R_\ell \}$.
    For each $i \in [\ell]$ let $x_i$ be the endpoint of $R_i$ in $X_1$, and let $y_i$ be the last vertex of $R_i$ on $c_1$ when traversing along $R_i$ starting from $x_i$; then if $x_1, x_2, \ldots , x_\ell$ appear in $\Omega$ in the order listed, then $y_1, y_2, \ldots , y_\ell$ appear on $\mathsf{bd}(c_1)$ in the order listed.

    \item Finally, let $\Delta'$ be the open disk bounded by the trace of $C_{s+8}$ in $\rho_0$.
    Then $\rho_0$ restricted to $\Delta \setminus \Delta'$ is equal to $\rho_1$ restricted to $\Delta \setminus \Delta'$.
\end{enumerate}
Moreover, there exists an algorithm that computes this outcome in time $\mathbf{poly}(sp|\mathcal{R}|)|V(G)|$.
\end{lemma}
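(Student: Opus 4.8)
The plan is to deduce the lemma directly from \zcref{lemma:integrate-handle} together with \zcref{lem:get-parity-handle}; no new structural argument is needed, only careful bookkeeping of the orders involved. First I would apply \zcref{lemma:integrate-handle} to $(G,\Omega)$ with the rendition $\rho_0$, the nest $\mathcal{C}$, the radial linkage $\mathcal{R}$, and the flat handle transaction $\mathcal{P}$. This is permitted because $\mathcal{P}$ has order at least $\max(2p,2k)+4s+12 \ge 2p+4s+12$, so all hypotheses of \zcref{lemma:integrate-handle} hold verbatim. We obtain planar subtransactions $\mathcal{P}_1' \subseteq \mathcal{P}_1$ and $\mathcal{P}_2' \subseteq \mathcal{P}_2$ of order $p$ (each consisting of the middle $p$ paths of the planar transaction $\mathcal{P}_i$, $i \in [2]$), a surface $\Sigma^+$ as described, a rendition $\rho_1$ of $(G,\Omega)$ in $\Sigma^+$ with a single vortex $c_1$, and in particular the dichotomy of outcome (3) of \zcref{lemma:integrate-handle}.

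Suppose we land in the first horn of that dichotomy, i.e.\ for some $i\in[2]$ the planar transaction $\mathcal{P}_i$ is an odd handle in $(G,\Omega)$. Since $\mathcal{P}_1$ and $\mathcal{P}_2$ partition $\mathcal{P}$, the order of $\mathcal{P}_i$ is at least $\nicefrac{1}{2}(\max(2p,2k)+4s+12) = \max(p,k)+2s+6 \ge k+2s+6 \ge 2k+2$, the last inequality using $s \ge 2k+4$. As $\mathcal{C}$ has $s+9 \ge 2k+4$ cycles, I would truncate it to its innermost $2k+4$ cycles and apply \zcref{lem:get-parity-handle} to the odd handle $\mathcal{P}_i$; this yields $\mathscr{H}_k$ as an odd minor of $G$ (indeed controlled by a mesh whose horizontal paths are subpaths of distinct cycles of $\mathcal{C}$), which is the first outcome.

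Otherwise we are in the second horn: $\rho_1$ is non-orientable and even-faced around $\bigcup\mathcal{C}$, all its cells are parity-preserving, $\mathcal{P}' := \mathcal{P}_1'\cup\mathcal{P}_2'$ is a handle transaction disjoint from $\sigma(c_1)$, and the vortex society of $c_1$ in $\rho_1$ carries an even-faced cylindrical rendition $\rho_1'$ with a nest $\mathcal{C}' = \{C_1',\dots,C_s'\}$ around a vortex $c_1'$ satisfying properties (a)--(d) of \zcref{lemma:integrate-handle}. These are precisely the enumerated conclusions of the present lemma (parity-preservation of all cells being subsumed by $\rho_1$ being even-faced), so I would simply output $\rho_1$, $\rho_1'$, $\mathcal{C}'$, and $\mathcal{P}'$.

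The running time is that of the single call to \zcref{lemma:integrate-handle}, namely $\mathbf{poly}(sp|\mathcal{R}|)|V(G)|$, plus the (cheaper) extraction of the parity handle in the first case. The only point requiring attention — and the reason for phrasing the hypothesis with $\max(2p,2k)+4s+12$ rather than merely $2p+4s+12$ — is guaranteeing that the planar transaction returned as an odd handle is itself long enough (order $\ge 2k+2$) to feed into \zcref{lem:get-parity-handle}; I do not foresee any genuine obstacle beyond this arithmetic.
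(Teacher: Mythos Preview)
Your proposal is correct and matches the paper's approach exactly. The paper does not give a standalone proof of this lemma; instead, the text immediately preceding it says that the first option of \zcref{lemma:integrate-handle} yields a large parity handle via \zcref{lem:get-parity-handle}, which is precisely the reduction you carry out, including the arithmetic check that each $\mathcal{P}_i$ has order at least $2k+2$.
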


\paragraph{Helpful tools for the proof.}
Another smattering of tools is distributed throughout different parts of \cite{GorskySW2025polynomialboundsgraphminor}.
First, we will need to connect two orthogonal radial linkages to build a new orthogonal radial linkage.

\begin{proposition}[Gorsky, Seweryn, and Wiederrecht \cite{GorskySW2025polynomialboundsgraphminor}]\label{prop:connected_linkages}
Let $s,r,k,\ell$ be positive integers with $s \geq r+3$.
Let $(G,\Omega)$ be a society with a $\Sigma$-rendition $\rho$ and a nest $\mathcal{C} = \{ C_1, \ldots , C_s \}$.
Moreover, let $\mathcal{L}$ and $\mathcal{R}$ each be radial linkages of order $r$ in $(G,\Omega)$ such that both are orthogonal to $\mathcal{C}$ and let $I = [\ell,k] \subseteq [2,s]$ be an interval with $|I| = r+2$.

Then there exists a radial linkage $\mathcal{P}$ of order $r$ in $(G,\Omega)$ such that
\begin{enumerate}
    \item $\mathcal{P}$ is orthogonal to $\{ C_i ~\!\colon\!~ i \in [s] \setminus I \}$ with endpoints on $C_1$,
    
    \item $H_{\ell}\cap \bigcup\mathcal{P}$ is a subgraph of $H_{\ell}\cap \mathcal{L}$, where $H_{\ell}$ is the inner graph of $C_{\ell}$ in $\rho$.
    In particular, the endpoints of $\mathcal{P}$ on $V(C_1)$ coincide with the endpoints of $\mathcal{L}$ on $V(C_1)$, and
    
    \item $H_k \cap \bigcup \mathcal{P}$ is a subgraph of $H_k \cap \mathcal{R}$, where $H_k$ is the outer graph of $C_k$ in $\rho$.
    In particular, the endpoints of $\mathcal{P}$ on $V(\Omega)$ coincide with the endpoints of $\mathcal{R}$ on $V(\Omega)$.
\end{enumerate}
Moreover, there exists an algorithm that finds $\mathcal{P}$ in time $\mathbf{O}(r|E(G)|)$.
\end{proposition}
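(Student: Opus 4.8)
The plan is to obtain $\mathcal{P}$ by stitching together the ``inner part'' of $\mathcal{L}$, a fresh linkage routed through the annular region governed by the interval $I$, and the ``outer part'' of $\mathcal{R}$. Write $H_\ell$ for the inner graph of $C_\ell$ in $\rho$ and $H_k$ for the outer graph of $C_k$ in $\rho$, and let $G_{\mathrm{mid}}$ denote the part of $G$ drawn between the traces of $C_\ell$ and $C_k$, so that $H_\ell\cap G_{\mathrm{mid}}=C_\ell$, $G_{\mathrm{mid}}\cap H_k=C_k$, and $H_\ell\cap H_k=\emptyset$ (as $\ell<k$). First I would trim: for each $L\in\mathcal{L}$ let $L^{\mathrm{in}}$ be the subpath of $L$ from its endpoint on $C_1$ to the last vertex $L$ has on $C_\ell$ when traversed outward from $C_1$, and let $X$ be the set of these $C_\ell$-endpoints; symmetrically, for each $R\in\mathcal{R}$ let $R^{\mathrm{out}}$ be the subpath from the first vertex $R$ has on $C_k$ to its endpoint on $V(\Omega)$, and let $Y$ be the set of these $C_k$-endpoints. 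Using that $\mathcal{L}$ and $\mathcal{R}$ are orthogonal to $\mathcal{C}$, one checks the routine facts that $\bigcup\{L^{\mathrm{in}}\}\subseteq H_\ell$ and is orthogonal to $C_1,\dots,C_{\ell-1}$ (meeting no $C_i$ with $i>\ell$), that $\bigcup\{R^{\mathrm{out}}\}\subseteq H_k$ and is orthogonal to $C_{k+1},\dots,C_s$, and that the vertices of $X$ (resp.\ $Y$) are pairwise distinct with each $L^{\mathrm{in}}$ (resp.\ $R^{\mathrm{out}}$) meeting $C_\ell$ (resp.\ $C_k$) in a single subpath.

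The key step is to link $X$ to $Y$ by $r$ disjoint paths inside $G^{**}\coloneqq G_{\mathrm{mid}}-(V(C_\ell)\setminus X)-(V(C_k)\setminus Y)$. I would argue this by a Menger-type contradiction: if no such linkage exists there is an $X$--$Y$-separator $S$ of size at most $r-1$ in $G^{**}$; since $|I|=r+2$ there are exactly $r$ pairwise disjoint cycles $C_{\ell+1},\dots,C_{k-1}$, each still a cycle in $G^{**}$, so $S$ misses some $C_j$ with $\ell<j<k$. The subpaths of the members of $\mathcal{L}$ from $X$ to their first vertex on $C_j$ form an $X$--$V(C_j)$-linkage of order $r$ inside $G^{**}$ (these subpaths touch $C_\ell$ only in $X$ and avoid $C_k$), so some vertex of $C_j$ — hence, by connectivity of $C_j$, all of $C_j$ — is reachable from $X$ in $G^{**}-S$; and the subpaths of the members of $\mathcal{R}$ from $C_j$ to $Y$ form a $V(C_j)$--$Y$-linkage of order $r$ inside $G^{**}$, so $Y$ is reachable from $C_j$ in $G^{**}-S$, contradicting the choice of $S$. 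This yields the desired linkage $\mathcal{M}=\{M_1,\dots,M_r\}$, and by construction each $M_i$ meets $C_\ell\cup C_k$ only in its two endpoints, one in $X$ and one in $Y$.

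Finally I would reassemble: each $M_i$ runs from some $x\in X$, the $C_\ell$-endpoint of a unique $L^{\mathrm{in}}$, to some $y\in Y$, the $C_k$-endpoint of a unique $R^{\mathrm{out}}$, and concatenating $L^{\mathrm{in}}$, $M_i$, and $R^{\mathrm{out}}$ gives a $V(C_1)$--$V(\Omega)$-path $P_i$. The bookkeeping then shows $\mathcal{P}=\{P_1,\dots,P_r\}$ is a radial linkage of order $r$ with the required properties: the $L^{\mathrm{in}}$-parts lie in $H_\ell$ and belong to distinct members of $\mathcal{L}$, the $R^{\mathrm{out}}$-parts lie in $H_k$ and belong to distinct members of $\mathcal{R}$, the $M_i$ are disjoint, and since $H_\ell\cap G^{**}\subseteq X$, $G^{**}\cap H_k\subseteq Y$, and $H_\ell\cap H_k=\emptyset$, every cross-intersection collapses to a shared endpoint. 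Orthogonality of $\mathcal{P}$ to $\{C_i:i\in[s]\setminus I\}$ holds because on $C_1,\dots,C_{\ell-1}$ the linkage $\mathcal{P}$ agrees with $\{L^{\mathrm{in}}\}$ and on $C_{k+1},\dots,C_s$ with $\{R^{\mathrm{out}}\}$; the contributions of $\mathcal{P}$ to $H_\ell$ come only from $\{L^{\mathrm{in}}\}\subseteq\mathcal{L}$ and $X\subseteq V(\mathcal{L})$, giving $H_\ell\cap\bigcup\mathcal{P}\subseteq H_\ell\cap\bigcup\mathcal{L}$ (and symmetrically for $H_k$ and $\mathcal{R}$), which in particular pins the $C_1$- and $\Omega$-endpoints of $\mathcal{P}$ to those of $\mathcal{L}$ and $\mathcal{R}$ respectively. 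For the running time, the trimming is linear and $\mathcal{M}$ is computed by the standard augmenting-path implementation of Menger's Theorem in $\mathbf{O}(r|E(G)|)$ time, so the whole construction runs in $\mathbf{O}(r|E(G)|)$ time. The main obstacle is the technical care needed to justify the ``routine'' facts about orthogonal radial linkages — that trimming stays inside $H_\ell$ and $H_k$ and respects orthogonality, and that the inner, middle, and outer graphs overlap only along the traces of $C_\ell$ and $C_k$ — which is where the structure of $\Sigma$-renditions really enters; the Menger argument itself is the conceptual heart but is short.
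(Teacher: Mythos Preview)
The paper does not prove this proposition; it is quoted from \cite{GorskySW2025polynomialboundsgraphminor} and used as a black box. So there is no in-paper proof to compare against, and I can only comment on the soundness of your argument.

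Your approach is the natural one and it is essentially correct: trim $\mathcal{L}$ to its inner parts up to $C_\ell$, trim $\mathcal{R}$ to its outer parts from $C_k$, and bridge the two sets of endpoints by a Menger linkage routed through the annulus between $C_\ell$ and $C_k$. The counting $|I|=r+2$ gives exactly $r$ intermediate cycles $C_{\ell+1},\dots,C_{k-1}$, so any separator of size $\le r-1$ misses one of them, and the orthogonality of $\mathcal{L}$ and $\mathcal{R}$ to $\mathcal{C}$ supplies $r$ disjoint paths from $X$ to that cycle and $r$ disjoint paths from that cycle to $Y$ inside $G^{**}$, yielding the contradiction. The reassembly and the verification of properties (i)--(iii) are routine once one notes that the three pieces $L^{\mathrm{in}}$, $M_i$, $R^{\mathrm{out}}$ live in the pairwise almost-disjoint regions $H_\ell$, $G_{\mathrm{mid}}$, $H_k$.

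Two small points worth tightening. First, when you argue that $M_i$ meets $C_\ell$ only in $x_i$, note that $G^{**}$ still contains all of $X$, so a priori $M_i$ could pass through some $x_j$ with $j\neq i$; this is ruled out because the $M_i$ are pairwise disjoint and $x_j$ is the endpoint of $M_j$. Second, your claim that $L^{\mathrm{in}}\cap C_i = L\cap C_i$ for $i<\ell$ (needed for orthogonality of $\mathcal{P}$ to $C_i$) relies on the cycles being hit by $L$ in the nested order $C_s,\dots,C_1$; this follows from orthogonality plus the fact that the trace of each $C_i$ separates its inner and outer graphs in $\rho$, but it is worth saying explicitly. With these two remarks made precise, the argument goes through and matches what one finds in \cite{GorskySW2025polynomialboundsgraphminor}.
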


Another important ingredient in our proof will be the ability to build layouts via finding surface walls and $K_t$-minors within them.
For this purpose we first introduce so-called ``surface configurations''.

Let $(G,\Omega)$ be a society with a cylindrical rendition and a nest $\mathcal{C}$ around the vortex $c_0$.
Further let $\mathcal{P}_1,\dots,\mathcal{P}_{\ell}$ be a set of transactions on $(G,\Omega)$ as well as $\mathcal{R}$ be a radial linkage such that
\begin{itemize}
    \item $V(\mathcal{P}_i)\cap V(\mathcal{P}_j)=\emptyset$ for all $i\neq j\in[\ell]$ as well as $V(\mathcal{R})\cap V(\mathcal{P}_i)=\emptyset$ for all $i\in[\ell]$,
    \item for each $i\in[\ell]$, $\mathcal{P}_i$ is orthogonal to $\mathcal{C}$ and $\mathcal{R}$ is orthogonal to $\mathcal{C}$, and
    \item there exist pairwise disjoint segments $I_1,J_1,I_2,J_2,\dots,I_{\ell},J_{\ell},R$ of $\Omega$ such that these segments appear on $\Omega$ in the order listed, for each $i\in[\ell]$, $\mathcal{P}_i$ is a $V(I_i)$-$V(J_i)$-linkage, and each path from $\mathcal{R}$ has one endpoint in $V(R)$.
\end{itemize}
We call the tuple $(G,\Omega,\mathcal{C},\mathcal{R},\mathfrak{P}=\{ \mathcal{P}_1,\dots,\mathcal{P}_{\ell}\})$ a \emph{surface configuration} of $(G,\Omega)$ and the sequence $(I_1,J_1,I_2,J_2,\dots,I_{\ell},J_{\ell},R)$ the \emph{signature} of $(G,\Omega,\mathcal{C},\mathcal{R},\mathfrak{P})$.
Finally we say that $(s,r,p_1,\dots,p_{\ell})$ is the \emph{strength} of $(G,\Omega,\mathcal{C},\mathcal{R},\mathfrak{P})$ if $|\mathcal{C}|=s$, $|\mathcal{R}|=r$ and $|\mathcal{P}_i|=p_i$ for all $i\in[\ell]$.

The following core observation on surface configurations stems from \cite{GorskySW2025polynomialboundsgraphminor} as well.

\begin{observation}\label{obs:surface-configs-to-walls}
Let $k \geq 3$, $\ell$, $h$, and $c$ be non-negative integers with $\ell = h + c$.
Moreover, let $s \geq k$, $r \geq 4k$, $p_i \geq 4k$ for all $i \in [\ell]$.

Then, for every surface configuration $(G,\Omega,\mathcal{C},\mathcal{R},\mathfrak{P})$ of strength $(s,r,p_1,\dots,p_{\ell})$ with $h$ handle-transactions and $c$ crosscap-transactions, $G$ contains a $k$-surface-wall $W$ with $h$ handles and $k$ crosscaps as a subgraph such that the base cycles of $W$ are cycles from $\mathcal{C}$ and the vertical paths of the $k$-wall-segment of $W$ are subpaths of the paths from $\mathcal{R}$.
\end{observation}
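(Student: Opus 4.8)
The plan is to prove this by an explicit construction of the surface-wall $W$ inside $G$, following the blueprint of the analogous statement in \cite{GorskySW2025polynomialboundsgraphminor}. Since $W$ is only required to be a \emph{subdivision} of an elementary $k$-surface-wall, it suffices to exhibit the $k$ base cycles, the rails of the unique wall-segment, and, for each of the $\ell = h+c$ transactions in $\mathfrak{P}$, the internally disjoint paths realising a handle- or a crosscap-segment, and then to check that all of these pieces are pairwise internally disjoint and intersect exactly as prescribed by the cylindrical closure of the corresponding elementary segments.

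First I would fix $k$ consecutive cycles $C_j,C_{j+1},\dots,C_{j+k-1}$ of the nest $\mathcal{C}$, which is possible since $s\geq k$; these serve as the $k$ base cycles of $W$, with the innermost one in the role of the cycle through all top-boundary vertices and the outermost in the role of the simple cycle. Using $r\geq 4k$, select $4k$ paths from the radial linkage $\mathcal{R}$; as $\mathcal{R}$ is already orthogonal to $\mathcal{C}$ (and one may invoke \zcref{prop:radialtoorthogonal} if convenient), their intersections with the chosen base cycles are subpaths, so they play the role of the $4k$ rails of the unique elementary $k$-wall-segment $S_0$, and the brick-wall structure of $S_0$ is cut out of the union of these rails with the base cycles.

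Next, for each $i\in[\ell]$ take $4k$ of the $p_i\geq 4k$ paths of $\mathcal{P}_i$. Because $\mathcal{P}_i$ is orthogonal to $\mathcal{C}$, each such path meets every chosen base cycle in exactly two subpaths, and the portion of such a path lying strictly inside the innermost chosen base cycle, together with arcs of the base cycles, forms the ``inner'' detour that carries the genus. If $\mathcal{P}_i$ is a crosscap transaction, the cyclic order of its endpoints on $\Omega$ forces these detours to cross pairwise, and routing them with the base-cycle arcs exactly as in the definition of an elementary $k$-crosscap-segment yields a subdivision of such a segment; if $\mathcal{P}_i$ is a handle transaction, its two planar halves supply the two groups of handle edges (indexed as in the definition of the elementary $k$-handle-segment), and the same routing yields an elementary $k$-handle-segment. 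The signature $(I_1,J_1,\dots,I_\ell,J_\ell,R)$ guarantees that these $\ell$ segments, together with $S_0$ built on $R$, occur around $\Omega$ — hence around the nest — in a single cyclic order with pairwise disjoint supports; gluing consecutive segments along the base-cycle arcs between their end-boundaries realises precisely the cylindrical closure, so the union is a subdivision of the elementary $k$-surface-wall with $h$ handles and $c$ crosscaps (and no vortices), with base cycles among $\mathcal{C}$ and the wall-segment's vertical paths among $\mathcal{R}$, as claimed.

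The routine but most delicate point — and the one I expect to cost the most care — is matching the topological behaviour of each transaction to the combinatorial recipe for the corresponding elementary segment: one must verify that a crosscap (resp.\ handle) transaction, once its paths are trimmed to the parts between their two crossings with the nest and combined with the arcs of $C_j,\dots,C_{j+k-1}$, contains a subdivision of the crosscap- (resp.\ handle-) segment while all pieces stay internally disjoint. Disjointness across different transactions and across $\mathcal{R}$ is granted outright by the definition of a surface configuration, and having ``enough'' cycles, rails, and transaction paths is exactly what the bounds $s\geq k$, $r\geq 4k$, $p_i\geq 4k$ ensure, so no counting is in danger; the entire work is in this topological bookkeeping, which is why the statement is recorded as an observation rather than proved in detail.
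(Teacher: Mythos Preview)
Your proposal is correct and matches the construction the paper has in mind. The paper records this as an observation without proof, so you are simply supplying the omitted details; the direct construction from the nest cycles, the radial linkage, and the orthogonal handle/crosscap transactions, glued in the cyclic order dictated by the signature, is exactly the intended argument (and indeed the same as in \cite{GorskySW2025polynomialboundsgraphminor}). One small remark: the radial linkage $\mathcal{R}$ is already orthogonal to $\mathcal{C}$ by the definition of a surface configuration, so the appeal to \zcref{prop:radialtoorthogonal} is unnecessary.
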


We will also need the following result that allows us to extract minors from surface walls representing surfaces of sufficient genus.

\begin{proposition}[Gorsky, Seweryn, and Wiederrecht \cite{GorskySW2025polynomialboundsgraphminor}]\label{prop:universal-surface-walls}
There exists a universal constant $c_{\ref{prop:universal-surface-walls}}$ such that for all non-negative integers $c,h,t$, every graph $H$ that embeds in a surface homeomorphic to the surface obtained from the sphere by adding $h$ handles and $c$ crosscaps is a minor of the extended $\big( c_{\ref{prop:universal-surface-walls}}g^4(|V(H)|+g)^2\big)$-surface wall $W$ with $h$ handles, $c$ crosscaps, and 0 vortices, where $g=2h+c$.
Moreover, if $H$ is a complete graph, then there exists an $H$-minor model controlled by $W$.

In particular, $K_t$ is a minor for every extended $k$-surface-wall with $h$ handles and $c$ crosscaps where $2h+c=t^2$ and $k\in \mathbf{\Omega}(t^{12})$.
\end{proposition}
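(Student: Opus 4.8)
This proposition is imported from \cite{GorskySW2025polynomialboundsgraphminor}, so for our purposes it suffices to cite it; nevertheless I sketch the plan one would follow to prove it. For the first assertion I would begin with the standard reductions: we may assume $H$ is connected and cellularly embedded in $\Sigma^{(h,c)}$, since turning handle- and crosscap-edges of a surface wall back into ordinary wall-edges shows that an extended wall with $h$ handles and $c$ crosscaps contains as a minor an extended wall of every smaller signature, while any non-cellular embedding witnesses embeddability in a surface of strictly smaller Euler-genus. One then completes $H$ to a triangulation $H^{+}$ of $\Sigma^{(h,c)}$ on $\mathbf{O}(|V(H)|+g)$ vertices with $H \minor H^{+}$, so that it remains to realise $H^{+}$ as a minor of $W$.

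Next I would cut $\Sigma^{(h,c)}$ open along a polygonal schema: a system of $\mathbf{O}(g)$ simple closed curves through a common base point whose complement is an open disk, chosen (after an ambient homeomorphism, which we apply to $H^{+}$) so that $H^{+}$ becomes a planar graph $H'$ drawn in the resulting $\mathbf{O}(g)$-gon, with every edge of $H^{+}$ meeting the schema only $\mathbf{O}(g)$ times. The planar part of $W$, namely the base annulus wall slit along one rail, is a $\mathbf{\Theta}(n)\times\mathbf{\Theta}(n)$ planar grid and therefore hosts the $\mathbf{O}(|V(H)|+g)$-vertex planar graph $H'$ as a minor by the classical grid-minor bound for planar graphs; the $\mathbf{O}(g(|V(H)|+g))$ edge-pieces of $H^{+}$ forced to meet the schema are then routed through the bundles of $\mathbf{\Theta}(n)$ parallel tracks supplied by the $h$ handle-segments and the $c$ crosscap-segments, each handle or crosscap carrying the pieces crossing the corresponding curves of the schema. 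Tracking the per-segment demand against the detail $n$ and accounting for the interference between bundles forces $n\in\mathbf{O}\big(g^{4}(|V(H)|+g)^{2}\big)$, and when $H$ is complete all branch sets produced by this routing lie on the base wall, so the resulting $H$-minor model is controlled by $W$.

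For the final sentence, fix non-negative integers $h,c$ with $2h+c=t^{2}$. By the Ringel--Youngs theorem and its non-orientable counterpart the orientable genus of $K_{t}$ is $\lceil (t-3)(t-4)/12\rceil$ and, for $t\neq 7$, its non-orientable genus is $\lceil (t-3)(t-4)/6\rceil$, both at most $t^{2}$, so together with Dyck's theorem $K_{t}$ embeds in $\Sigma^{(h,c)}$ whenever $2h+c=t^{2}$. Applying the first part with $H=K_{t}$, $|V(H)|=t$, and $g=t^{2}$ produces a $K_{t}$-minor model controlled by the extended $\big(c_{\ref{prop:universal-surface-walls}}(t^{2})^{4}(t+t^{2})^{2}\big)$-surface wall with $h$ handles and $c$ crosscaps; since $c_{\ref{prop:universal-surface-walls}}(t^{2})^{4}(t+t^{2})^{2}\in\mathbf{O}(t^{12})$ and a surface wall of a given signature contains every surface wall of the same signature and smaller detail as a minor, this gives the claim for every extended $k$-surface wall with $h$ handles, $c$ crosscaps, and no vortices as soon as $k\in\mathbf{\Omega}(t^{12})$.

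The crux is the topological routing step: choosing the ambient homeomorphism so that each edge of the triangulation crosses the polygonal schema only $\mathbf{O}(g)$ times, keeping the track demand on each handle and crosscap polynomial in $|V(H)|+g$, while at the same time keeping all branch sets of the resulting model inside the base wall --- which is precisely what being \emph{controlled} by $W$ demands. That is where the real content of \cite{GorskySW2025polynomialboundsgraphminor} lies; the planar grid-routing and the genus bookkeeping are comparatively routine.
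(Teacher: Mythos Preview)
The paper does not provide its own proof of this proposition; it is imported verbatim from \cite{GorskySW2025polynomialboundsgraphminor} and used as a black box. You correctly identify this at the outset, and your sketch of the argument --- triangulate, cut along a polygonal schema, route the planar part in the base wall and the crossing pieces through the handle/crosscap segments, then substitute the Ringel--Youngs genus of $K_t$ for the final sentence --- is a plausible outline of how such a statement would be proved, but there is nothing in the present paper to compare it against.
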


\paragraph{The proof of the Local Structure Theorem.}
We begin by providing more accurate estimates for the functions we will use throughout our proofs.
This also serves to justify the bounds we later claim in the statement in \zcref{thm:strongest_localstructure}.

We begin by establishing a few auxiliary functions that track certain constructions in our proof.
The parameter $g$ here is used to remember how many times we have increased the genus of our surface so far.
\begin{align*}
    \mathsf{clique}_{\ref{thm:strongest_localstructure}}(t) \coloneqq~ & \lceil \mathsf{c}_{\ref{thm:ktminormodel}} (12t^2+6t) \sqrt{\log 144t^2+72t} \rceil \\
    \mathsf{radial}(g,t,k) \coloneqq~& (g+2)(8k+8c_{\ref{prop:universal-surface-walls}}t^{12}+4+ \mathsf{radial}_{\ref{thm:evensocietyclassification}}(\mathsf{clique}_{\ref{thm:strongest_localstructure}}(t),t,k) + 1) \\
    \mathsf{nest}(g,t,k)\coloneqq~ &(g+2)\big(8k+8c_{\ref{prop:universal-surface-walls}}t^{12} + \mathsf{radial}(g-1,t,k) +14+ \mathsf{cost}_{\ref{thm:evensocietyclassification}}(t,4(k+c_{\ref{prop:universal-surface-walls}}t^{12}+1))\\
    &+\mathsf{loss}_{\ref{thm:evensocietyclassification}}(t) \big) + \mathsf{nest}_{\ref{thm:evensocietyclassification}}(\mathsf{clique}_{\ref{thm:strongest_localstructure}}(t),t,4(k+c_{\ref{prop:universal-surface-walls}}t^{12}+1)) + \nicefrac{4k}{2}(t-3)(t-4)+1\\
    \mathsf{transaction}(g,t,k)\coloneqq~ & 4k+4c_{\ref{prop:universal-surface-walls}}t^{12}+24 + 4\mathsf{nest}(g-1,t,k)+2\mathsf{radial}(g-1,t,k)
\end{align*}
For these functions, based on the bounds in \cite{GorskySW2025polynomialboundsgraphminor}, we can give the following estimates:
$\mathsf{clique}_{\ref{thm:strongest_localstructure}}(t) \in \mathbf{O} \in (t^4)$, $\mathsf{radial}(g,t,k) \in \mathbf{O}(gt^{12} + gkt^8)$, $\mathsf{nest}(g,t,k),\mathsf{transaction}(g,t,k) \in \mathbf{O}(g^2t^{12} + kg^2t^8 + t^{36} + kt^{32})$.
With these values fixed, the remaining functions of \zcref{thm:strongest_localstructure} are as follows.
\begin{align*}
    \mathsf{apex}^\mathsf{bip}_{\ref{thm:strongest_localstructure}}(t) \coloneqq~& \max(2\mathsf{apex}^\mathsf{bip}_{\ref{thm:evensocietyclassification}}(t), 8\mathsf{clique}_{\ref{thm:strongest_localstructure}}(t)^3) \\
    \mathsf{apex}_{\ref{thm:strongest_localstructure}}(t,k) \coloneqq~& t^2\cdot \mathsf{apex}^\mathsf{genus}_{\ref{thm:evensocietyclassification}}(t) + \mathsf{apex}^\mathsf{fin}_{\ref{thm:evensocietyclassification}}(t,t,4(k+c_{\ref{prop:universal-surface-walls}}t^{12}+1),\mathsf{transaction}(t^2,t,k)) + 16t^3\\
    \mathsf{depth}_{\ref{thm:strongest_localstructure}}(t,k) \coloneqq~& \mathsf{depth}_{\ref{thm:evensocietyclassification}}(\mathsf{clique}_{\ref{thm:strongest_localstructure}}(t),t,4(k+c_{\ref{prop:universal-surface-walls}}t^{12}+1),\mathsf{transaction}(t^2,t,k))\\
    \mathsf{mesh}_{\ref{thm:strongest_localstructure}}(t,r,k) \coloneqq ~& 100 \mathsf{clique}_{\ref{thm:strongest_localstructure}}(t)^3\Big(\mathsf{apex}_{\ref{thm:strongest_localstructure}}(t,k)+2\mathsf{nest}(t^2,t,k)+2\mathsf{clique}_{\ref{thm:strongest_localstructure}}(t)+11+r\\&
    +\nicefrac{1}{2}(t-3)(t-4)(2\mathsf{depth}_{\ref{thm:evensocietyclassification}}(t,4(k+c_{\ref{prop:universal-surface-walls}}t^{12}+1),\mathsf{transaction}(t^2,t,k))+1)\Big)
\end{align*}
This means we can bound the above functions as follows:
$\mathsf{apex}^\mathsf{bip}_{\ref{thm:strongest_localstructure}}(t) \in \mathbf{O}(t^{12})$, $\mathsf{apex}_{\ref{thm:strongest_localstructure}}(t,k) \in \mathbf{O}((t+k)^{150})$, $\mathsf{depth}_{\ref{thm:strongest_localstructure}}(t,k) \in \mathbf{O}((t+k)^{141})$, and $\mathsf{mesh}_{\ref{thm:strongest_localstructure}}(t,r,k) \in \mathbf{O}((t+k)^{161} + rt^{12} )$.

We note that in many aspects our proof closely follows the core proof of the local structure theorem in \cite{GorskySW2025polynomialboundsgraphminor} (see the proof of Theorem 15.4).
Our main adjustments are further case distinctions when applying \zcref{thm:evenfacedflatmesh}, \zcref{thm:evensocietyclassification}, and other results.

\begin{proof}[Proof of \zcref{thm:strongest_localstructure}]
    We let $t' \coloneqq \mathsf{clique}_{\ref{thm:strongest_localstructure}}(t)$ and let $M$ be a $\mathsf{mesh}_{\ref{thm:strongest_localstructure}}(t,r,k)$-mesh in $G$.
    We first apply \zcref{thm:evenfacedflatmesh} to $M$ using the values $t'$ and $r$.
    In $\mathbf{O}(f_{\ref{thm:ktminormodel}}(t')|V(G)|^{\omega_{\ref{def:matrixmultconstant}}})$-time, the first three options yields either a universal parity breaking grid of order $3t$ controlled by $M$, an odd minor model $\varphi$ of $K_{12t^2+6t}$ controlled by $M$, a set $X \subseteq V(G)$ with $|X| \leq \mathsf{apex}^\mathsf{bip}_{\ref{thm:strongest_localstructure}}(t)$, such that the large block of $\mathcal{T}_M - A$ is bipartite, or a set $A_0$ with $|A_0| < 16{t'}^3$ and an even-faced $r'$-submesh $M'$ of $M$ that is disjoint from $A_0$ and flat in $G - A_0$.
    In the first and second case, we can simply construct a model of $\mathscr{H}_t$ that is controlled by $M$ by reducing both cases first to the universal parity breaking grid of order $3t$ and then applying \zcref{universalcontainsparityhandleandvortex} to find our desired parity handle of order $t$.
    The third case in turn immediately satisfies our statement.

    Thus we may assume that we arrive in the last option of \zcref{thm:evenfacedflatmesh}, yielding a set $A_0$ and an even-faced $r'$-submesh $M'$ of $M$ that is disjoint from $A_0$ and flat in $G - A_0$, with
    \begin{align*}
        r'  \geq    ~& \frac{1}{100{t'}^3}(\mathsf{mesh}(t',k,r) - 2t' - 2) \\
            \geq    ~& \mathsf{apex}_{\ref{thm:strongest_localstructure}}(t',k) + 9 + r + 2\mathsf{nest}({t'}^2,t',k) \\
                    ~& + \nicefrac{1}{2}(t'-3)(t'-4)(2\mathsf{depth}_{\ref{thm:evensocietyclassification}}(t',4(k + c_{\ref{prop:universal-surface-walls}}{t'}^{12} + 1),\mathsf{transaction}({t'}^2,t',k)) + 1) .
    \end{align*}
    We use $M'$ to define a set $\mathcal{C}_0' = \{ {C^0_1}', \ldots , {C^0}'_{\mathsf{nest}({t'}^2,t',k)} \}$ of $\mathsf{nest}({t'}^2,t',k)$ pairwise disjoint cycles by iteratively peeling off the perimeters ${C_i^0}'$ of the $(r' - 2(i-1))$-submeshes of $M'$ for $i \in [\mathsf{nest}({t'}^2,t',k)]$.
    Let $M_0$ be the $( \mathsf{apex}_{\ref{thm:strongest_localstructure}}(t',k) + \nicefrac{1}{2}(t'-3)(t'-4)(2\mathsf{depth}_{\ref{thm:evensocietyclassification}}(t',4(k + c_{\ref{prop:universal-surface-walls}}{t'}^{12} + 1),\mathsf{transaction}({t'}^2,t',k)) + 1) + 9 + r )$-submesh of $M'$ that is left after removing the cycles in $\mathcal{C}_0'$ from $M'$ and let $U_0$ be the perimeter of $M_0$.
    Our setup allows us to make the following observation.

    \begin{observation}
        Let $I \subseteq [\mathsf{nest}({t'}^2,t',k)]$ with $|I| = k$ and let $O$ be a cylindrical $k$-mesh with the cycles ${C^0_i}'$, where $i \in I$, and the rails taken from subpaths of the paths in $M$ that connect these cycles.
        Then the tangle $\mathcal{T}_O$ is a truncation of the tangle $\mathcal{T}_{M_0}$.
    \end{observation}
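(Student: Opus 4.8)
The plan is to establish $\mathcal{T}_O \subseteq \mathcal{T}_{M_0}$ directly. Write $w_0$ for the order of the mesh $M_0$; by the choice of parameters $w_0 \geq k$, so that $\mathcal{T}_O$ orients $\mathcal{S}_k(G)$ (it has order $k$, since $O$ carries $k$ pairwise disjoint concentric cycles and $k$ pairwise disjoint rails) while $\mathcal{T}_{M_0}$ orients $\mathcal{S}_{w_0}(G) \supseteq \mathcal{S}_k(G)$. Both are genuine orientations: a separation of order below $k$, resp.\ below $w_0$, misses one of the $k$, resp.\ $w_0$, disjoint concentric cycles/horizontal paths and one of the disjoint rails/vertical paths, and in a cylindrical mesh, resp.\ a mesh, every rail/vertical path crosses every concentric cycle/horizontal path, so these two missed curves lie on a common side of the separation, which is then the unique majority side. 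Hence it suffices to show that for every $(A,B) \in \mathcal{S}_k(G)$ whose $O$-majority side is $B$, the $M_0$-majority side of $(A,B)$ is also $B$.

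First I would assume the contrary: there is $(A,B) \in \mathcal{S}_k(G)$ with $O$-majority side $B$ but $M_0$-majority side $A$. Then $B \setminus A$ contains the vertex set of a concentric cycle $C = {C^0_i}'$ of $O$ for some $i \in I$, while $A \setminus B$ contains the vertex set of a horizontal path $P$ of $M_0$. I then want $w_0$ pairwise vertex-disjoint $V(C)$-$V(P)$ paths inside $M'$. Take the $w_0$ vertical paths $Q_1, \dots, Q_{w_0}$ of $M_0$; each $Q_j$ is a subpath of a distinct vertical path $\widehat{Q}_j$ of $M'$, and $\widehat{Q}_1, \dots, \widehat{Q}_{w_0}$ are pairwise disjoint. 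Since $C$ is the perimeter of a submesh of $M'$ that contains $M_0$ in its interior -- because $M_0$ arises from $M'$ by successively deleting the perimeters of the nested submeshes whose perimeters form $\mathcal{C}_0'$, and $C \in \mathcal{C}_0'$ -- each $\widehat{Q}_j$ meets $V(C)$; moreover $Q_j \subseteq \widehat{Q}_j$ meets $V(P)$ because in the mesh $M_0$ every vertical path meets every horizontal path. The subpaths of the $\widehat{Q}_j$ running between $V(C)$ and $V(P)$ are then the required $w_0$ disjoint paths.

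Finally, each of these $w_0$ disjoint paths joins a vertex of $B \setminus A$ to a vertex of $A \setminus B$ and hence meets $A \cap B$, so $|A \cap B| \geq w_0 \geq k$, contradicting $(A,B) \in \mathcal{S}_k(G)$. I expect the only subtlety -- and hence the main, if mild, obstacle -- to be the geometric verification that the extension $\widehat{Q}_j$ in $M'$ of each vertical path of $M_0$ genuinely crosses every perimeter cycle ${C^0_i}'$; this uses that the peeled-off submeshes are nested and that $M_0$ sits in the central block of each of them, so the column occupied by $\widehat{Q}_j$ lies within the column range of every ${C^0_i}'$. One could alternatively phrase the argument by observing that both $\mathcal{T}_O$ and $\mathcal{T}_{M_0}$ are truncations of $\mathcal{T}_{M'}$ and therefore agree on $\mathcal{S}_k(G)$, but this reduces to the same linkage estimate.
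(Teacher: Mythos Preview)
Your argument is correct. The paper states this as an observation with no proof at all (it simply writes ``Our setup allows us to make the following observation''), so there is nothing to compare against; your linkage argument via the vertical paths of $M'$ is exactly the intended verification, and the closing remark that both tangles are truncations of $\mathcal{T}_{M'}$ is an equally valid shortcut.
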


    Let $\Sigma_0$ be the sphere.
    As $M'$ is flat in $G$, there exists a non-orientable, even-faced $\Sigma_0$-rendition $\rho_0'$ of $G$ with exactly one vortex $c_0$ such that $M'$ is grounded in $\rho_0'$ and the trace of the perimeter of $M'$ in $\rho_0'$ bounds a closed disk $\Delta_0' \subseteq \Sigma_0$ that is disjoint from $c_0$ and every vertex in $V(M') \cap N(\rho_0')$ is drawn in $\Delta_0'$.

    Let $X_0 = V(U_0) \cap N(\rho_0')$ and let $\Delta_0''$ be the open disk in $\Sigma_0$ that is bounded by the trace of $U_0$ in $\rho_0'$ and disjoint from $c_0$.
    Further, let $\Delta_0$ be the closed disk $\Sigma_0 \setminus \Delta_0''$.
    By construction, the $\Delta_0$-society $(G_0,\Omega_0)$ has a non-orientable, even-faced, cylindrical rendition $\rho_0$ in the disk $\Delta_0$ with the nest $\mathcal{C}_0'$ around the vortex $c_0$.
    In fact, we may select $\mathsf{radial}({t'}^2,t',k)$ pairwise disjoint paths obtained from the mesh $M'$ to construct a radial linkage $\mathcal{R}_0'$ that links vertices from distinct rows of $M'$ on $U_0$ to the cycle ${C_1^0}'$.

    We find a cozy nest $\mathcal{C}_0 = \{ C^0_1, \ldots , C^0_{\mathsf{nest}({t'}^2,t',k)} \}$ around $c_0$ in $\rho_0$ in $\mathbf{O}( \mathsf{nest}({t'}^2,t',k)|E(G)|^2 )$-time using \zcref{prop:makenestcozy}.
    Applying \zcref{prop:radialtoorthogonal} then yields a radial linkage $\mathcal{R}_0$ of order $\mathsf{radial}({t'}^2,t',k)$ that is end-identical with $\mathcal{R}_0'$ and orthogonal to $\mathcal{C}_0$ in $\mathbf{O}(\mathsf{radial}({t'}^2,t',k)\mathsf{nest}({t'}^2,t',k)|E(G)|)$-time.
    We observe that making our nest cozy did not affect the way the tangles interact.

    \begin{observation}
        Let $I \subseteq [\mathsf{nest}({t'}^2,t',k)]$ with $|I| = k$ and let $O$ be a cylindrical $k$-mesh with the cycles $C^0_i$, where $i \in I$, and the rails taken from subpaths of the paths in $M$ that connect these cycles.
        Then the tangle $\mathcal{T}_O$ is a truncation of the tangle $\mathcal{T}_{M_0}$.
    \end{observation}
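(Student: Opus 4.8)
The plan is to show directly that $\mathcal{T}_O \subseteq \mathcal{T}_{M_0}$ (i.e.\ that $\mathcal{T}_O$ is a truncation of $\mathcal{T}_{M_0}$), arguing as in the preceding observation but using the orthogonal radial linkage $\mathcal{R}_0$ to reconnect the cozy nest to the perimeter $U_0$ of $M_0$. Fix $I$, the cylindrical $k$-mesh $O$, and a separation $(A,B) \in \mathcal{T}_O$; then $|A \cap B| < k$ and the $O$-majority side $B \setminus A$ contains (the vertex set of) a concentric cycle $C^0_{i_0}$ of $O$ with $i_0 \in I$ together with a rail of $O$. Since $M_0$ is a mesh whose order exceeds $k$ by a wide margin, $\mathcal{T}_{M_0}$ is a tangle of order $> k$, so exactly one of $(A,B)$, $(B,A)$ lies in $\mathcal{T}_{M_0}$; suppose toward a contradiction it is $(B,A)$, so $A \setminus B$ contains a horizontal path and a vertical path $V_0$ of $M_0$.

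The heart of the matter is that the part of $O$ lying in $B \setminus A$ cannot be separated from the interior of $M_0$ by fewer than $k$ vertices. I would exploit the infrastructure already in place: by \zcref{prop:makenestcozy} the set $\bigcup \mathcal{C}_0$ lies in the outer graph of ${C^0_1}'$, so the cozy cycles sit in the annular region between $U_0$ and the outermost ring of $M'$; and by \zcref{prop:radialtoorthogonal} the linkage $\mathcal{R}_0$ has order $\mathsf{radial}({t'}^2,t',k) \gg k$, is orthogonal to $\mathcal{C}_0$, and is end-identical to $\mathcal{R}_0'$, so each of its paths meets every $C^0_i$ with $i \in I$ and has one endpoint on $V(\Omega_0) \subseteq V(U_0)$, these endpoints lying on pairwise distinct rows of $M_0$ (inherited from $M'$). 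Since $|A \cap B| < k$, at least $k$ paths $R_1, \dots, R_k$ of $\mathcal{R}_0$ avoid $A \cap B$; each $R_j$ is connected and meets $C^0_{i_0} \subseteq B \setminus A$, hence $R_j \subseteq B \setminus A$ and its $\Omega_0$-endpoint $u_j$ lies in $B \setminus A$. On distinct rows of $M_0$ I then route pairwise disjoint paths $P_1', \dots, P_k'$ inside $M_0$, each leading from $u_j$ to the crossing of its row with $V_0$; these are disjoint from one another (distinct rows) and from every $R_{j'}$ (since $\mathcal{R}_0$ lies in $\Delta_0$ and meets $M_0$ only along $U_0 = \partial\Delta_0$, at the $u_{j'}$'s). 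Then $R_1 \cup P_1', \dots, R_k \cup P_k'$ are $k$ pairwise disjoint connected subgraphs, each meeting both $B \setminus A$ (on $C^0_{i_0}$) and $A \setminus B$ (on $V_0$), and hence each containing a vertex of $A \cap B$; this forces $|A \cap B| \geq k$, a contradiction. Therefore $(A,B) \in \mathcal{T}_{M_0}$, and as $(A,B) \in \mathcal{T}_O$ was arbitrary, $\mathcal{T}_O$ is a truncation of $\mathcal{T}_{M_0}$.

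The main obstacle will be bookkeeping about conventions rather than anything substantive: pinning down the precise grid-like witness that the definition of $\mathcal{T}_O$ forces onto the $O$-majority side of a \emph{cylindrical} $k$-mesh (a full concentric cycle versus only a long subpath of one, depending on how the mesh obtained by cutting $O$ open is defined) and, in the subpath case, matching the rail of $O$ inside $B \setminus A$ with $\mathcal{R}_0$ so that enough of the $R_j$ genuinely reach that part of $O$ rather than merely the cycle $C^0_{i_0}$ in the abstract; similarly, the disjoint routing of the $P_j'$ inside $M_0$ has to be set up exactly as in the preceding observation. None of this is a genuine difficulty, since $O$ and $\mathcal{R}_0$ are built from subpaths of $M$ and all the geometric facts used ($\mathcal{R}_0 \subseteq \Delta_0$ meeting $M_0$ only on $U_0$, rows of $M_0$ being rows of $M'$, the order of $M_0$ being far larger than $k$) are inherited verbatim from the construction. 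Alternatively, if one prefers to reuse the preceding observation, one can first derive $\mathcal{T}_{O'} \subseteq \mathcal{T}_{M_0}$ for the cylinder $O'$ built from the ${C^0_i}'$ and then use $\mathcal{R}_0$ only to transfer tangle-control from $O'$ to $O$, but this shortens the argument only marginally.
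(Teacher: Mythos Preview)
Your proposal is correct. The paper gives no proof for this observation beyond the one-line remark ``We observe that making our nest cozy did not affect the way the tangles interact,'' so your argument is precisely the kind of fleshing-out the paper leaves implicit: using that $\mathcal{R}_0$ is orthogonal to the cozy nest $\mathcal{C}_0$ and end-identical to $\mathcal{R}_0'$ (hence landing on distinct rows of $M_0$ along $U_0$) to produce $k$ disjoint connectors between a surviving cozy cycle in $B\setminus A$ and a surviving vertical path of $M_0$ in $A\setminus B$. Your bookkeeping caveats about how $\mathcal{T}_O$ is read off a cylindrical mesh are real but harmless, and your alternative of transferring tangle-control from the ${C^0_i}'$-cylinder $O'$ via $\mathcal{R}_0$ is exactly the shortcut the paper's one-line remark gestures at.
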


    If $\overline{G_0}$ is the graph $G - (V(G_0) \setminus V(\Omega_0))$, then $(\overline{G_0},\Omega_0)$ has a vortex-free, even-faced rendition derived from $\rho_0'$ in the surface obtained from $\Sigma_0$ by removing an open disk.
    Finally, let $\mathcal{C}^\star = \{ C^0_{\mathsf{nest}({t'}^2,t',k) - (4(k + c_{\ref{prop:universal-surface-walls}}{t'}^{12} + 1) - 1)}, \ldots , C^0_{\mathsf{nest}({t'}^2,t',k)} \}$ be the outermost $4(k + c_{\ref{prop:universal-surface-walls}}{t'}^{12} + 1)$ cycles of $\mathcal{C}^0$.
    This makes $(G_0,\Omega_0,\mathcal{C}^\star,\mathcal{R}_0,\emptyset)$ a surface configuration of $(G_0,\Omega_0)$ for the surface $\Sigma_0$ of strength $(4(k + c_{\ref{prop:universal-surface-walls}}{t'}^{12} + 1), \mathsf{radial}({t'}^2,t',k))$.

    \paragraph{Inductively refining a layout: The setup.}
    As a consequence of \zcref{obs:surface-configs-to-walls}, the tuple $\Lambda_0 = (A_0,M_0,D_0,\rho_0')$ is in fact a $\Sigma_0$-landscape of detail $4(k + c_{\ref{prop:universal-surface-walls}}{t'}^{12} + 1)$, where $D_0$ is a $4(k + c_{\ref{prop:universal-surface-walls}}{t'}^{12} + 1)$-surface-wall without handles or crosscaps whose base cycles coincide with the cycles in $\mathcal{C}^\star$.

    Given the following collection of objects for some $i \in [0,{t'}^2-1]$, our goal is to show that by applying \zcref{thm:evensocietyclassification} we can either find the small set $X$ whose removal turns the large block of $\mathcal{T}_M - X$ bipartite, find the parity handle of order $t$ as an odd minor controlled by $D_i$, or through \ref{item_s6} in \zcref{thm:evensocietyclassification}, create a non-orientable, even $\Sigma_i$-layout centred at $M$.
    We have already shown how to accomplish this for $i = 0$ and thus, our efforts will imply that we eventually find the $\Sigma_i$-layout centred at $M$ that we desire.

    Our list of objects is as follows.
    Let $i \in [0, {t'}^2-1]$ be given together with
    \begin{itemize}
        \item an apex set $A_i \subseteq V(G)$ with $A_{i-1} \subseteq A_i$, $A_{-1} \coloneqq \emptyset$, and $|A_i| \leq i \cdot \mathsf{apex}^\mathsf{genus}_{\ref{thm:evensocietyclassification}}(t') + 16{t'}^3$,
        
        \item a surface $\Sigma_i$ obtained from the sphere by adding a total of $i$ handles and crosscaps in some combination,
        
        \item a non-orientable, even-faced $\Sigma_i$-rendition $\rho_i$ with a unique vortex $c_i$,
        
        \item a $4(k + c_{\ref{prop:universal-surface-walls}}{t'}^{12} + 1)$-surface wall $D_i$ with the same amount of handles and crosscaps used to obtain $\Sigma_i$ from the sphere, such that the base cycles of $D_i$ coincide with the cycles of $\mathcal{C}^\star$,
        
        \item a society $(G_i,\Omega_i)$ such that there exists a $\rho_i$-aligned disk $\Delta_i$ whose boundary intersects $\rho_i$ exactly in $V(\Omega_i)$ and the restriction $\rho_i'$ of $\rho_i$ to $\Delta_i$ is a cylindrical rendition of $(G_i,\Omega_i)$ with $c_i$ being its unique vortex,
        
        \item a cozy nest $\mathcal{C}_i = \{ C^i_1, \dots , C^i_{\mathsf{nest}({t'}^2-i,t',k)} \}$ of order $\mathsf{nest}({t'}^2-i,t',k)$ in $\rho_i'$ around $c_i$,
        
        \item a family of transactions $\mathfrak{P}_i = \{ \mathcal{P}_1, \dots , \mathcal{P}_i \}$ on $(G_0,\Omega_0)$ such that $\mathcal{P}_j$ is a handle or crosscap transaction,
        
        \item a radial linkage $\mathcal{R}_i$ of order $\mathsf{radial}({t'}^2-i,t',k)$ orthogonal to $\mathcal{C}_i \cup \mathcal{C}^\star$ whose endpoints on $(G_0,\Omega_0)$ coincide with some of the endpoints of $\mathcal{R}_0$ and which is disjoint from the paths in $\bigcup \mathfrak{P}_i$, and
        
        \item all objects above are chosen such that $(G_0 - A_i, \Omega_0, \mathcal{C}^\star, \mathcal{R}_i, \mathfrak{P}_i)$ is a $\Sigma_i$-configuration of strength $(4(k + c_{\ref{prop:universal-surface-walls}}{t'}^{12} + 1), \mathsf{radial}({t'}^2-i,t',k),p_1, \dots , p_i)$ with $p_j = 4(k + c_{\ref{prop:universal-surface-walls}}{t'}^{12})$ for all $j \in [i]$.
    \end{itemize}
    Using \zcref{obs:surface-configs-to-walls} again, we can see that the last point ensures the existence of the $4(k+c_{\ref{prop:universal-surface-walls}}{t'}^{12}+1)$-surface-wall $D_i$.
    Should we ever reach $i = {t'}^2$, we can find a $K_{t'}$-minor controlled by $D_i$ via \zcref{prop:universal-surface-walls}.
    This in turn lets us apply \zcref{thm:ktminormodel} to either find the set $X$ such that the large block of $\mathcal{T}_{D_i} - X$ is bipartite or an odd $K_{12t^2+6t}$-minor model controlled by $D_i$ that allows us to construct $\mathscr{U}_{3t}$ and thus $\mathcal{H}_t$, which means that we are done as well.
    We therefore know that this process will terminate.
    Both of these properties transfer to $M$ by construction of our $\Sigma_i$-configuration.
    Thus, if the large block of $\mathcal{T}_{D_i} - X$ is bipartite, then so is the large block of $\mathcal{T}_M - X$ is bipartite, and analogously, if we find a minor model controlled by $D_i$, it is also controlled by $M$.
    Either outcome therefore satisfies our desires.

    \paragraph{Inductively refining a layout: Construction.}
    Suppose for some $i \in [0, {t'}^2-1]$ our list of objects above has already been constructed.
    We begin by applying \zcref{thm:evensocietyclassification} to $(G_i, \Omega_i)$ (using $t'$ as the desired size of the odd or bipartite clique minor, $\mathsf{nest}({t'}^2-i,t',k)$ as the size of the nest, $4(k + c_{\ref{prop:universal-surface-walls}}{t'}^{12})$ as the size of the handle or crosscap transaction we want, $3t$ as the size of the universal parity grid, respectively the parity handle, we wish to find, and $4(k + c_{\ref{prop:universal-surface-walls}}{t'}^{12}) + 1$ as the strength of the extended surface wall we want to find) and split our remaining arguments into a case distinction depending on the outcome of this application.

    \textbf{Case 1:}
    We first consider the outcomes \ref{item_s1}, \ref{item_s2}, and \ref{item_s7} of \zcref{thm:evensocietyclassification}./
    Clearly, \ref{item_s7} immediately yields a model of the parity handle of order $t$ that is controlled by $M$.
    Similarly, the odd $K_{t'}$-minor model controlled by $M$ from \ref{item_s1} also immediately helps us, as it allows us to find the parity handle and case \ref{item_s2} also allows us to proceed as laid out previously to find the desired parity handle.
    
    \textbf{Case 2:}
    Suppose we instead land in the outcome \ref{item_s3} of \zcref{thm:evensocietyclassification}.
    Then there exists a set $A$ with $|A| \leq \mathsf{apex}^\mathsf{bip}(t')$ and a bipartite $K_{t'}$-minor model $\varphi$ in $G - A$ such that the large block of $\mathcal{T}_\varphi - A$ is bipartite and $\varphi$ is controlled by a (cylindrical) mesh $M_{\mathcal{C}_i}$ containing all of the cycles of $\mathcal{C}_i$.
    Note that the tangle associated with $M_{\mathcal{C}_i}$ is a truncation of $\mathcal{T}_{M_0}$.
    Thus in particular the large block of $\mathcal{T}_M - A$ is bipartite and we have again reached the first option of our statement.

    \textbf{Case 3:}
    Let $(G',\Omega')$ be the $C^i_{\mathsf{nest}(t'^2-i,t',k) - \mathsf{cost}_{\ref{thm:evensocietyclassification}}(t',4(k + c_{\ref{prop:universal-surface-walls}}{t'}^{12} + 1) - 1}$-society in $\rho_i$.
    This case contains two subcases which can be treated mostly analogously.
    These correspond to outcomes \ref{item_s4} and \ref{item_s5} of \zcref{thm:evensocietyclassification}, where both outcomes yield a set $A \subseteq V(G)$ with $|A| \leq \mathsf{apex}^\mathsf{genus}_{\ref{thm:evensocietyclassification}}(t')$.

    In the first of the two outcomes, we also find an even-faced crosscap transaction of order $\mathsf{transaction}(t'^2-i,t',k)$ in $(G' - A, \Omega')$ together with a nest $\mathcal{C}'$ in $\rho_i$ of order $\mathsf{nest}(t'^2-i,t',k) - \mathsf{cost}_{\ref{thm:evensocietyclassification}}(t',4(k + c_{\ref{prop:universal-surface-walls}}{t'}^{12} + 1) - \mathsf{loss}_{\ref{thm:evensocietyclassification}}(t')$ around $c_i$ to which $\mathcal{Q}$ is orthogonal.
    In the second outcome, we instead find even-faced handle transaction of order $\mathsf{transaction}(t'^2-i,t',k)$ in $(G' - A, \Omega')$ together with a nest $\mathcal{C}'$ in $\rho_i$ of order $\mathsf{nest}(t'^2-i,t',k) - \mathsf{cost}_{\ref{thm:evensocietyclassification}}(t',4(k + c_{\ref{prop:universal-surface-walls}}{t'}^{12} + 1) - \mathsf{loss}_{\ref{thm:evensocietyclassification}}(t')$ around $c_i$ to which the two planar transactions $\mathcal{Q}_1$ and $\mathcal{Q}_2$ that make up $\mathcal{Q}$ are orthogonal.

    We then shave off a little off of the sides of the transaction in both cases as follows.
    In the first outcome, we reduce the order of $\mathcal{Q}$ by $2\mathsf{radial}(t'^2-i-1,t',k)$ through shedding off the last $2\mathsf{radial}(t'^2-i+1,t',k)$ paths of $\mathcal{Q}$.
    Alternatively, in the second outcome, we reduce the order of $\mathcal{Q}$ by $2\mathsf{radial}(t'^2-i-1,t',k)$ by removing from each $\mathcal{Q}_j$, $j\in[2]$, the last $\mathsf{radial}(t'^2-i-1,t',k)$ paths.
    Let $\mathcal{Q}$, $\mathcal{Q}_1'$, and $\mathcal{Q}_2'$ be the resulting transactions.
    Thanks to our choices, we may select a radial linkage $\mathcal{L}$ of order $\mathsf{radial}(t'^2-i-1,t',k)$ in $(G'-A,\Omega')$ that is orthogonal to $\mathcal{C}'$.
    In fact, we may select $\mathcal{Q}'$ and $\mathcal{L}$ such that there are disjoint segments $I_1$ and $I_2$ of $\Omega'$ where $\mathcal{Q}'$ has all endpoints in $I_1$ and the endpoints of $\mathcal{L}$ in $V(\Omega')$ are found in $I_2$.
    From this construction, we know that $|\mathcal{Q}'| = 4k + 4c_{\ref{prop:universal-surface-walls}}{t'}^{12}+14 + 4\mathsf{nest}(t'^2-i-1,t',k)$ and $|\mathcal{Q}_1'| = |\mathcal{Q}_2'| = 2k + 2c_{\ref{prop:universal-surface-walls}}t^{12}+7 + 2\mathsf{nest}(t'^2-i-1,t',k)$ in case $\mathcal{Q}'$ is a handle transaction.

    For the remainder of our arguments in \textbf{Case 3} we focus on the case in which $\mathcal{Q}'$ is a crosscap transaction.
    The other case can be handled analogously, except that instead of applying \zcref{lemma:integrate-crosscap} one needs to use \zcref{lemma:integrate-handle-clean} and some choices need to be made for $\mathcal{Q}_1'$ and $\mathcal{Q}_2'$ separately instead of only for $\mathcal{Q}'$.
    We chose the size of $\mathcal{Q}'$ explicitly to allow for this, as we will elaborate on later.

    We assume that $\mathcal{Q}' = \{ Q_1, \ldots , Q_{4k + 4c_{\ref{prop:universal-surface-walls}}{t'}^{12} + 14 + 4\mathsf{nest}(t'^2-i-1,t',k)} \}$ are indexed naturally.
    Let $a \coloneqq 2\mathsf{nest}(t'^2-i-1,t',k) + 7$ and $b \coloneqq 2\mathsf{nest}(t'^2-i-1,t',k)+ 4k + 4c_{\ref{prop:universal-surface-walls}}{t'}^{12} + 8$.
    Moreover, let $\mathcal{A} \coloneqq \{ Q_1, \ldots , Q_{a-1} \}$, $\mathcal{B} = \{ Q_{b+1}, \ldots , Q_{4k + 4c_{\ref{prop:universal-surface-walls}}{t'}^{12} + 14 + 4\mathsf{nest}(t'^2-i-1,t',k)} \}$, and $\mathcal{I} \coloneqq \{ Q_a, \ldots , Q_b \}$.
    Thus $|\mathcal{I}| = 4k + 4c_{\ref{prop:universal-surface-walls}}{t'}^{12} + 8 + 2\mathsf{nest}(t'^2-i-1,t',k)$ and $|\mathcal{A}| = |\mathcal{B}| = 2\mathsf{nest}(t'^2-i-1,t',k) + 6$.

    As a result we have split $\mathcal{Q}'$ into three parts.
    The paths in $\mathcal{A} \cup \mathcal{B}$ will be used to build a new nest for the next society, with the paths in the interior, thus called $\mathcal{I}$, serving to extend our growing $\Sigma_i$-configuration.
    In the following we will also be rerouting radial linkages to achieve two goals.
    First, we will want a radial linkage that feeds into the $\Sigma_i$-configuration and will ultimately allow us to use the paths in $\mathcal{I}$ to add a crosscap or handle segment to a surface wall.
    Secondly, we will want a separate radial linkage, later called $\mathcal{J}$, that will be extended to form $\mathcal{R}_{i+1}$ and allow us to continue our iterative process.

    We let the cycles in $\mathcal{C}'$ be numbered $C_1', \ldots , C_{\mathsf{nest}(t'^2-i,t',k) - \mathsf{cost}_{\ref{thm:evensocietyclassification}}(t',4(k+c_{\ref{prop:universal-surface-walls}}{t'}^{12}+1)) - \mathsf{loss}_{\ref{thm:evensocietyclassification}}(t')}'$ from innermost to outermost.
    Further, we let
    \begin{align*}
         I  =~& [\mathsf{nest}(t'^2-i,t',k) - \mathsf{cost}_{\ref{thm:evensocietyclassification}}(t',4(k+c_{\ref{prop:universal-surface-walls}}{t'}^{12}+1)) - \mathsf{loss}_{\ref{thm:evensocietyclassification}}(t') - 3, \ \mathsf{nest}(t'^2-i,t',k) \ - \\
            & \quad \mathsf{cost}_{\ref{thm:evensocietyclassification}}(t',4(k+c_{\ref{prop:universal-surface-walls}}{t'}^{12}+1)) - \mathsf{loss}_{\ref{thm:evensocietyclassification}}(t') - 8k-8c_{\ref{prop:universal-surface-walls}}{t'}^{12} - \mathsf{radial}(t'^2-i-1,t',k) - 9] \\
            \subseteq~& [\mathsf{nest}(t'^2-i,t',k) - \mathsf{cost}_{\ref{thm:evensocietyclassification}}(t',4(k+c_{\ref{prop:universal-surface-walls}}{t'}^{12}+1)) - \mathsf{loss}_{\ref{thm:evensocietyclassification}}(t')] .
    \end{align*}
    Then $|I| = 8k + 8c_{\ref{prop:universal-surface-walls}}{t'}^{12} + \mathsf{radial}(t'^2-i-1,t',k) + 6 = 2|\mathcal{I}| + |\mathcal{L}| + 2$.
    This in particular means
    \begin{align*}
        |I| - 2 = 2|\mathcal{I}| + |\mathcal{L}| =~& 8k+8c_{\ref{prop:universal-surface-walls}}{t'}^{12}+4 + \mathsf{radial}(t'^2-i-1,t',k) \leq \mathsf{radial}(t'^2-i,t',k) = |\mathcal{R}_i|.
    \end{align*}
    Let us now select $\mathcal{I}'$ to be all $V(\Omega')$-$V(C_1')$-subpaths of the paths in $\mathcal{I}$.
    It follows that $\mathcal{I}'$ is a radial linkage of order $2|\mathcal{I}|$.
    Moreover, $\mathcal{I}' \cup \mathcal{L}$ is a radial linkage of order $8k+8c_{\ref{prop:universal-surface-walls}}{t'}^{12}+\mathsf{radial}(t'^2-i-1,t',k)+4 = |I|-2$ which is orthogonal to $\mathcal{C}'$.
    This allows us to call \zcref{prop:connected_linkages} for $\mathcal{R}_i$, $\mathcal{I}' \cup \mathcal{L}$, $\mathcal{C}'$, and $I$.
    As a result we obtain, in time $\mathbf{poly}(t+k)|E(G)|$, a radial linkage $\mathcal{R}'$ of order $|I|-2$ which shares its endpoints on $V(\Omega')$ with the endpoints of $\mathcal{R}_i$ and its endpoints on $C_1'$ with $\mathcal{I}' \cup \mathcal{L}$.
    Moreover, $\mathcal{R}'$ is orthogonal to $\mathcal{C}' \setminus \{ C_i ~\colon~ i \in I \}$ and within the inner graph $G''$ of $C_{\mathsf{nest}(t'^2-i,t',k)-\mathsf{cost}_{\ref{thm:evensocietyclassification}}(t',4(k+c_{\ref{prop:universal-surface-walls}}{t'}^{12}+1))-\mathsf{loss}_{\ref{thm:evensocietyclassification}}(t')-8k-8c_{\ref{prop:universal-surface-walls}}{t'}^{12}-\mathsf{radial}(t'^2-i-1,t',k)-9}'$ it is disjoint from $\mathcal{Q}' \setminus \mathcal{I}$.
    Recall that $\mathcal{L}$ is a radial linkage that we constructed such that it is found outside of the strip of $\mathcal{Q}'$.
    This allows us to let $\mathcal{J} \subseteq \mathcal{R}'$ be all those paths that do not intersect the paths in $\mathcal{I}$ within $G''$.
    Then $|\mathcal{J}| = |\mathcal{R}'| - 8k-8c_{\ref{prop:universal-surface-walls}}{t'}^{12}-4 = \mathsf{radial}(t'^2-i-1,t',k)$.

    Next we adjust the remaining nest.
    Let $\mathcal{C}'' \subseteq \mathcal{C}$ be the set of the innermost
    \[ \mathsf{nest}(t'^2-i,t',k) - \mathsf{cost}_{\ref{thm:evensocietyclassification}}(t',4(k+c_{\ref{prop:universal-surface-walls}}{t'}^{12}+1)) - \mathsf{loss}_{\ref{thm:evensocietyclassification}}(t') - 8k-8c_{\ref{prop:universal-surface-walls}}{t'}^{12} - \mathsf{radial}(t'^2-i-1,t',k)-10 \]
    cycles of the nest $\mathcal{C}'$.
    Further, let $\Omega''$ be the cyclic ordering of the ground vertices of the cycle $C'_{\mathsf{nest}(t'^2-i,t',k) - \mathsf{cost}_{\ref{thm:evensocietyclassification}}(t',4(k+c_{\ref{prop:universal-surface-walls}}{t'}^{12}+1)) - \mathsf{loss}_{\ref{thm:evensocietyclassification}}(t') - 8k-8c_{\ref{prop:universal-surface-walls}}{t'}^{12}-\mathsf{radial}(t'^2-i-1,t',k)-9}$ and let $(G'',\Omega'')$ be the resulting society with cylindrical rendition $\rho''$, which is the restriction of $\rho_i$ to the $c_i$-disk $\Delta'$ of $C'_{\mathsf{nest}(t'^2-i,t',k) - \mathsf{cost}_{\ref{thm:evensocietyclassification}}(t',4(k+c_{\ref{prop:universal-surface-walls}}{t'}^{12}+1)) - \mathsf{loss}_{\ref{thm:evensocietyclassification}}(t') - 8k-8c_{\ref{prop:universal-surface-walls}}{t'}^{12} - \mathsf{radial}(t'^2-i-1,t',k) - 9}$ in $\rho_i$.
    Of course $\rho''$ remains non-orientable and even-faced.
    Moreover, let $\mathcal{J}'$ and $\mathcal{Q}''$ be the restrictions of $\mathcal{J}$ and $\mathcal{Q}'$ to $G''$.
    Note that $\mathcal{C}''$ is a nest in $\rho''$ and we have
    \begin{align*}
        |\mathcal{C}''| =~&\mathsf{nest}(t'^2-i,t',k) - \mathsf{cost}_{\ref{thm:evensocietyclassification}}(t',4(k+c_{\ref{prop:universal-surface-walls}}{t'}^{12}+1)) - \mathsf{loss}_{\ref{thm:evensocietyclassification}}(t') - 8k-8c_{\ref{prop:universal-surface-walls}}{t'}^{12} \\
                        & - \mathsf{radial}(t'^2-i-1,t',k) - 10 \\
                        =~& (t'^2-i-1)\big(8k+8c_{\ref{prop:universal-surface-walls}}{t'}^{12} + \mathsf{radial}(t'^2-i,t',k) + 24 + \mathsf{cost}_{\ref{thm:evensocietyclassification}}(t',4(k+c_{\ref{prop:universal-surface-walls}}{t'}^{12}+1)) \ + \\
                        & \mathsf{loss}_{\ref{thm:evensocietyclassification}}(t') \big) + \mathsf{nest}_{\ref{thm:evensocietyclassification}}(t',4(k+c_{\ref{prop:universal-surface-walls}}{t'}^{12}+1)) + \nicefrac{4k}{2}(t'-3)(t'-4) + 15 \\
                        \geq~& (t'^2-i-1)\big(8k+8c_{\ref{prop:universal-surface-walls}}{t'}^{12} + \mathsf{radial}(t'^2-i-1,t',k)+24 \ + \\
                        & \mathsf{cost}_{\ref{thm:evensocietyclassification}}(t',4(k+c_{\ref{prop:universal-surface-walls}}{t'}^{12}+1)) + \mathsf{loss}_{\ref{thm:evensocietyclassification}}(t') \big) \ + \\
                        & \mathsf{nest}_{\ref{thm:evensocietyclassification}}(t',4(k+c_{\ref{prop:universal-surface-walls}}{t'}^{12}+1)) + \nicefrac{4k}{2}(t'-3)(t'-4) + 15 \\
                        =~& \mathsf{nest}(t'^2-i-1,t',k)+14 .
    \end{align*}
    Furthermore, for $\mathcal{Q}''$ we have 
    \begin{align*}
        |\mathcal{Q}''| =~& 4k+4c_{\ref{prop:universal-surface-walls}}{t'}^{12} + 14 + 4\mathsf{nest}(t'^2-i-1,t',k) \\
                        =~& 2 (2k+2c_{\ref{prop:universal-surface-walls}}{t'}^{12}) + 4\mathsf{nest}(t'^2-i-1,t',k) + 14 .
    \end{align*}
    According to the calculations above we can now apply \zcref{lemma:integrate-crosscap}.
    Indeed, we exceed the necessary bounds for the lemma by more than $2\mathsf{nest}(t'^2-i-1,t',k)$.
    If instead we were dealing with a handle transaction, we would next want to apply \zcref{lemma:integrate-handle-clean}.
    Here we would have constructed two transactions, namely $\mathcal{Q}''_1$ and $\mathcal{Q}''_2$, each of order at least $2k+2c_{\ref{prop:universal-surface-walls}}{t'}^{12} + 2\mathsf{nest}(t'^2-i-1,t',k) + 7$ and thus, together, they form a transaction $\mathcal{Q}''$ of the order above.
    This explains our choice for the size of $\mathcal{Q}''$.

    We apply \zcref{lemma:integrate-crosscap} to $(G'',\Omega'')$, $\mathcal{C}''$, $\mathcal{J}'$, and $\mathcal{Q}''$ with $p = 4k + 4c_{\ref{prop:universal-surface-walls}}{t'}^{12} + 2$, rendition $\rho''$, and the disk $\Delta'$.\footnote{We note that we will need to argue a bit more about the case in which we apply \zcref{lemma:integrate-handle-clean}, since the result will not necessarily be a non-orientable, even-faced rendition. This will be deferred until we have finished our arguments for the crosscap case.}
    As a result we obtain the following list of objects.
    Let $\Sigma^*$ be a surface homeomorphic to the projective plane minus an open disk which is obtained from $\Delta'$ by adding a crosscap to the interior of $c_i$.
    Then there exists $\mathcal{I}'\subseteq \mathcal{Q}''$, which coincides with the restriction of $\mathcal{I}$ to $G''$, and a non-orientable, even-faced rendition $\rho'''$ of $(G'',\Omega'')$ in $\Sigma^*$ with a unique vortex $c_{i+1}'$ and the following properties hold:
    \begin{itemize}
        \item $\mathcal{I}'$ is disjoint from $\sigma(c_{i+1}')$,
    
        \item the vortex society $(G_{i+1},\Omega_{i+1})$ of $c_{i+1}'$ in $\rho'''$ has an even-faced, cylindrical rendition $\rho_{i+1}'$ with nest $\mathcal{C}_{i+1} = \{ C_1^{i+1}, \ldots , C^{i+1}_{\mathsf{nest}(t'^2-i-1,t',k)} \}$ around the unique vortex $c_{i+1}$,
        
        \item every element of $\mathcal{J}'$ has an endpoint in $V(\sigma_{\rho_{i+1}'}(c_{i+1}))$, and
    
        \item $\mathcal{J}'$ is orthogonal to $\mathcal{C}_{i+1}$. Moreover,
    
        \item let $\mathcal{J}' = \{ J_1, \ldots , J_{\mathsf{radial}(t'^2-i-1,t',k)} \}$.
        For each $j \in [\mathsf{radial}(t'^2-i-1,t',k)]$ let $x_j$ be the endpoint of $J_j$ in $V(\Omega'')$ and let $y_j$ be the endpoint of $J_j$ on $c_{i+1}'$; then if $x_1, x_2, \ldots , x_{\mathsf{radial}(t'^2-i-1,t',k)}$ appear on $\Omega''$ in the order listed, then $y_1, y_2, \ldots , y_{\mathsf{radial}(t'^2-i-1,t',k)}$ appear on $N_{\rho'''}(c_{i+1}')$ in the order listed.
    
        \item Finally, let $\Delta''$ be the open disk bounded by the trace of the outermost cycle of $\mathcal{C}''$ in $\rho''$.
        Then $\rho''$ restricted to $\Delta' \setminus \Delta''$ is equal to $\rho'''$ restricted to $\Delta' \setminus \Delta''$.
    \end{itemize}
    Now let $\rho_{i+1}$ be obtained by first unifying the renditions $\rho'''$ and $\rho_{i+1}'$ along the vortex society of $c_{i+1}'$, then combining the resulting rendition of $(G'',\Omega'')$ in the disk $\Delta'$ with the rendition $\rho''$ along to boundary of $\Delta''$, and then reintegrating $\rho''$ into $\rho_i$.
    Moreover, let $\Sigma_{i+1}$ be obtained from $\Sigma_i$ by removing the interior of $\Delta'$ and replacing it with $\Sigma^*$.
    Notice that this means that $\Sigma_{i+1}$ is obtained from $\Sigma_i$ by adding a single crosscap.
    We also set $A_{i+1}\coloneqq A_i\cup A$ and obtain 
    \begin{align*}
        |A_{i+1}|   \leq~& i\cdot\mathsf{apex}^\mathsf{genus}_{\ref{thm:societyclassification}}(t')+16t'^3+\mathsf{apex}^\mathsf{genus}_{\ref{thm:societyclassification}}(t')\\
                    \leq~& (i+1)\mathsf{apex}^\mathsf{genus}_{\ref{thm:societyclassification}}(t')+16t'^3.
    \end{align*}
    So the first three points of our invariant are maintained.

    By construction, the radial linkage $\mathcal{R}'$ can be extended onto the restriction of $\mathcal{R}_i$ to the proper outer graph of the closed curve obtained by following along the vertices on $\Omega'$.
    This allows us to extend the crosscap transaction $\mathcal{I}$ of order $4k + 4c_{\ref{prop:universal-surface-walls}}t'^{12} +2$ along $\mathcal{R}_i$ to obtain a crosscap transaction $\mathcal{P}_{i+1}$ on $(G_0,\Omega_0)$ whose paths are disjoint from all paths in $\mathcal{P}_j$ for all $j \in [i]$.
    Further, we can extend $\mathcal{R}'$ along $\mathcal{R}_i$ to form the radial linkage $\mathcal{R}_{i+1}$ such that it is orthogonal to both $\mathcal{C}^*$ and $\mathcal{C}_{i+1}$ and satisfies
    \begin{align*}
        |\mathcal{R}_{i+1}| = \mathsf{radial}(t'^2-i-1,t',k),
    \end{align*}
    its endpoints coincide with some of the endpoints of $\mathcal{R}_0$, and it is disjoint from the paths in $\mathfrak{P}_{i+1} = \mathfrak{P}_i \cup \{ \mathcal{P}_{i+1} \}$.
    Indeed, it is straightforward to see that $(G_0 - A_{i+1}, \Omega_0, \mathcal{C}^*, \mathcal{R}_{i+1}, \mathfrak{P}_{i+1})$ is a $\Sigma_{i+1}$-configuration of strength $(4(k+c_{\ref{prop:universal-surface-walls}}{t'}^{12}+1), \mathsf{radial}(t'^2-i,t',k), p_1, \ldots , p_{i+1})$ with $p_j = 4k+4c_{\ref{prop:universal-surface-walls}}{t'}^{12}$.
    So the last three points of our invariant are also satisfied.

    By \zcref{obs:surface-configs-to-walls} the $\Sigma_{i+1}$-configuration found in the previous paragraph yields the existence of an extended $(4(k+c_{\ref{prop:universal-surface-walls}}{t'}^{12}+1))$-surface wall $D_{i+1}$ with the amount of crosscaps and handles used to obtain $\Sigma_{i+1}$ from the sphere such that the base cycles of $D_{i+1}$ coincide with the cycles of $\mathcal{C}^*$ and $D_{i+1}$ has no vortex segments.
    This, together with the society $(G_{i+1},\Omega_{i+1})$, the cylindrical rendition $\rho_{i+1}$, and the nest $\mathcal{C}_{i+1}$ around the unique vortex $c_{i+1}$ of both $\rho_{i+1}$ and $\rho'_{i+1}$, satisfies the remaining three points of our invariant and thus this step is complete.

    We now remark upon the differences that occur if we are instead dealing with the case in which $\mathcal{Q}''$ and we applied \zcref{lemma:integrate-handle-clean} instead.
    The crucial difference here is that while $\rho''$ is even-faced, it is not guaranteed to be non-orientable.
    In case $\rho''$ happens to be non-orientable, we can proceed as above.
    Otherwise, following the discussion around \zcref{lem:get-parity-handle} and \zcref{lemma:integrate-handle-clean}, we receive $\mathscr{H}_t$ as an odd minor that is controlled by $M$.
  
    Observe that, in the case where $i=t'^2-1$ we have reached a situation where $K_{t'}$ embeds in $\Sigma_{i+1}$.
    This means that \zcref{prop:universal-surface-walls} implies the existence of a $K_{t'}$-model controlled by $D_{i+1}$.
    So in this case we would be done immediately.
    Hence, we may assume that $i+1\in[t'^2-1]$.

    \textbf{Case 4:} The remaining case to discuss is \ref{item_s6} in \zcref{thm:evensocietyclassification}, which provides us with a set $A \subseteq V(G_i)$ with $|A| \leq \mathsf{apex}^\mathsf{fin}_{\ref{thm:evensocietyclassification}}(t',t',4(k+c_{\ref{prop:universal-surface-walls}}{t'}^{12}+1),\mathsf{transaction}(t'^2,t',k))$, an even-face rendition $\rho'$ of $(G_i - A, \Omega_i)$ in $\Delta_i$ with breadth $b \in [\nicefrac{1}{2}(t'-3)(t'-4)+t-1]$ and depth at most $\mathsf{depth}_{\ref{thm:evensocietyclassification}}(t',4(k+c_{\ref{prop:universal-surface-walls}}{t'}^{12}+1),\mathsf{transaction}(t'^2-i,t',k))$, and a parity $(4(k+c_{\ref{prop:universal-surface-walls}}{t'}^{12}+1))$-surface-wall $D$ with signature $(0,0,0,0,0,b)$, such that $D$ is grounded in $\rho'$, the base cycles of $D$ are the cycles $C_{\mathsf{nest}(t'^2-i,t',k)-\mathsf{cost}_{\ref{thm:evensocietyclassification}}(t',4(k+c_{\ref{prop:universal-surface-walls}}{t'}^{12}+1))-1-k-c_{\ref{prop:universal-surface-walls}}{t'}^{12}}^i, \ldots , C_{\mathsf{nest}(t'^2-i,t',k) - \mathsf{cost}_{\ref{thm:evensocietyclassification}}(t',4(k+c_{\ref{prop:universal-surface-walls}}{t'}^{12}+1))-1}^i$, and there exists a bijection between the vortices $v$ of $\rho'$ and the even vortex segments $S_v$ of $D$, where $v$ is the unique vortex contained in the disk $\Delta_{S_v}$ defined by the trace of the inner cycle of the nest of $S_v$ where $\Delta_{S_v}$ is chosen to avoid the trace of the simple cycle of $D$.

    At this point the additional infrastructure on the order of $c_{\ref{prop:universal-surface-walls}}{t'}^{12}$ has served its purpose and we can concentrate on just building our extended $k$-surface wall.
    For this purpose, we must now combine the radial linkage $\mathcal{R}_i$ we found in our induction with the rails we have for the vortex segments in $D$.
    Notice that the union of all rails of $D$ form a radial linkage $\mathcal{R}''$ of order $16b(k+c_{\ref{prop:universal-surface-walls}}{t'}^{12})$ such that every vortex segment provides exactly $16(k+c_{\ref{prop:universal-surface-walls}}{t'}^{12})$ of these paths.
    Moreover, there are $4bk$ cycles among $C_{\mathsf{nest}(t'^2-i,t',k)-\mathsf{cost}_{\ref{thm:evensocietyclassification}}(t',4(k+c_{\ref{prop:universal-surface-walls}}{t'}^{12}+1))-1-4bk}^i, \ldots , C_{\mathsf{nest}(t'^2-i,t',k)-\mathsf{cost}_{\ref{thm:evensocietyclassification}}(t',4(k+c_{\ref{prop:universal-surface-walls}}{t'}^{12}+1))-1}^i$.
    Now let $\mathcal{R}'$ be a radial linkage of order $4bk$ formed by selecting $4k$ rails from each vortex segment of $D$.

    By definition of $D$, for every $j \in [\mathsf{nest}(t'^2-i,t',k)-\mathsf{cost}_{\ref{thm:evensocietyclassification}}(t',4(k+c_{\ref{prop:universal-surface-walls}}{t'}^{12}+1))-1-4bk, \mathsf{nest}(t'^2-i,t',k)-\mathsf{cost}_{\ref{thm:societyclassification}}(t',4(k+c_{\ref{prop:universal-surface-walls}}{t'}^{12}+1))-1]$ and every vortex $v$ of $\rho$, both the nest of $S_v$ and $v$ itself are disjoint from $C^i_j$.

    Let $I = [\mathsf{nest}(t'^2-i,t',k)-\mathsf{cost}_{\ref{thm:evensocietyclassification}}(t',4(k+c_{\ref{prop:universal-surface-walls}}{t'}^{12}+1))-1-4bk+1, \mathsf{nest}(t'^2-i,t',k)-\mathsf{cost}_{\ref{thm:evensocietyclassification}}(t',4(k+c_{\ref{prop:universal-surface-walls}}{t'}^{12}+1))-1]$.
    Then we have $|I| = 4bk+2$.
    Therefore, we may call upon \zcref{prop:connected_linkages} for $\mathcal{C}'$, $I$, $\mathcal{R}'$ and $\mathcal{R}_i$ to obtain a radial linkage $\mathcal{R}_{i+1}$ of order $4bk$ whose endpoints on the outermost cycles of the nests of the $S_v$ coincide with the vertices of the rails of $S_v$ for every vortex segment $S_v$ of $\rho'$ and whose other endpoints are a subset of the endpoints of $\mathcal{R}_i$ on $V(\Omega_0)$.
    Moreover, the endpoints of the paths among $\mathcal{R}_i$ that lead to $S_v$ appear consecutively on $\Omega_0$.
    Hence, we may obtain an extended $k$-surface wall $D_{i+1}$ from $D_i$ by discarding some of the cycles and paths in each of the handle and crosscap segments and integrating the nests of the $S_v$ along the paths in $\mathcal{R}_{i+1}$.
    We may also obtain a $\Sigma_i$-rendition $\rho_{i+1}$ of breadth at most $b$ and depth at most
    \begin{align*}
        \mathsf{depth}_{\ref{thm:evensocietyclassification}}(t',4(k+c_{\ref{prop:universal-surface-walls}}{t'}^{12}+1),&~\mathsf{transaction}(t'^2-i,t',k)) \leq \\
                    & \quad \mathsf{depth}_{\ref{thm:evensocietyclassification}}(t',4(k+c_{\ref{prop:universal-surface-walls}}{t'}^{12}+1),\mathsf{transaction}(t'^2,t',k))
    \end{align*}
    for $G-A_{i+1}$, where $A_{i+1\coloneqq A_i\cup A}$ with
    \begin{align*}
        |A_{i+1}|   \leq~& |A_i| + \mathsf{apex}^\mathsf{fin}_{\ref{thm:evensocietyclassification}}(t',t',4(k+c_{\ref{prop:universal-surface-walls}}{t'}^{12}+1),\mathsf{transaction}(t'^2,t',k)) \\
                    \leq~& i \cdot \mathsf{apex}^\mathsf{genus}_{\ref{thm:evensocietyclassification}}(t') + \mathsf{apex}^\mathsf{fin}_{\ref{thm:evensocietyclassification}}(t',t',4(k+c_{\ref{prop:universal-surface-walls}}{t'}^{12}+1),\mathsf{transaction}(t'^2,t',k)) + 16t'^3.
    \end{align*}
    It follows, in particular from the choice of the size of $M_0$, that $\Lambda = (A_{i+1}, M_0, D_{i+1}, \rho_{i+1})$ is a $k$-$(\mathsf{apex}_{\ref{thm:strongest_localstructure}}(t'^2,t',k),b,\mathsf{depth}_{\ref{thm:strongest_localstructure}}(t',k),r)$-$\Sigma_i$-layout centred at $M$ as desired.
    This completes our proof.
\end{proof}

\subsection{Local structure implies bounded odd cycle packing number}

We now show that if \zcref{itm:localstructurenearlybipartite} or \zcref{itm:localstructureevenfacedrendition} hold in \zcref{thm:strongest_localstructure}, then then there is a bounded size apex set $X \subseteq V(G)$ such that the large block of $\mathcal{T}_M - X$ has bounded odd cycle packing number.
Clearly, if \zcref{itm:localstructurenearlybipartite} holds, then we can choose our $X$ correspondingly, with $|X| \leq \mathsf{apex}^\mathsf{bip}_{\ref{thm:strongest_localstructure}}(t)$, and the odd cycle packing number of the large block of $\mathcal{T}_M - X$ is 0.
So we may assume that \zcref{itm:localstructureevenfacedrendition} holds.
Let $X$ contain the apex set $A$ with $|A| \leq \mathsf{apex}_{\ref{thm:strongest_localstructure}}(t,k)$.
It now suffices to prove the result for the large block of $\mathcal{T}_M - A$.
Note then that there is a non-orientable, even-faced $\Sigma$-rendition of the large block of $\mathcal{T}_M - A$ such that there are $b = \nicefrac{1}{2}(t-3)(t-4)$ vortices each of depth $d = \mathsf{depth}_{\ref{thm:strongest_localstructure}}(t,k)$ and $\Sigma$ has genus $g < \lceil \mathsf{c}_{\ref{thm:ktminormodel}} (2t^2+2t) \sqrt{\log 24t^2+24t} \rceil^3$.
Furthermore, around each vortex we have pairwise disjoint $k$-cylindrical meshes arising from the vortex segments of the surface wall that \zcref{thm:strongest_localstructure} provides.

We first state some helpful lemmas before proceeding with the main result.
The following lemma about odd $A$-paths will help in some later constructions.

\begin{lemma}[Lemma 11~\cite{GeelenGRSV2009Oddminor}]\label{lem:oddApaths}
    Let $G$ be a graph and $A \subseteq V(G)$. Then one of the following holds for all positive integers $k$.
    \begin{enumerate}
        \item there exist $k$ pairwise disjoint odd $A$-paths, or
        \item there exists a set $X \subseteq V(G)$ of size at most $2k-2$ such that $G- X$ does not contain an odd $A$-path.
    \end{enumerate}
    Furthermore, there exists an algorithm that given $G, A,$ and $k$, finds one of the outcomes in time $\mathbf{O}(|V(G)||E(G)|)$.
\end{lemma}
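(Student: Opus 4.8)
The plan is to derive this from the classical theory of packing $A$-paths with a parity constraint, which is the $\mathbb{Z}_2$-case of the theorem of Chudnovsky, Geelen, Gerards, Goddyn, Lohman, and Seymour on non-zero $A$-paths in group-labelled graphs: viewing $G$ as the $\mathbb{Z}_2$-labelled graph in which every edge carries the label $1$, an odd $A$-path is precisely a non-zero $A$-path. First I would record the reformulation that the two alternatives of the lemma are equivalent to the single inequality $\tau \le 2\nu$, where $\nu$ is the maximum number of pairwise disjoint odd $A$-paths and $\tau$ is the minimum size of a vertex set meeting all of them. Indeed, applying the lemma with $k = \nu+1$ (where the first outcome fails) yields $\tau \le 2(\nu+1)-2 = 2\nu$, and conversely $\tau \le 2\nu$ gives the lemma for every $k$: if $k \le \nu$ use the first outcome, and otherwise $\tau \le 2\nu \le 2k-2$.

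To prove $\tau \le 2\nu$ I would reduce to Mader's min-max theorem on vertex-disjoint $\mathcal{S}$-paths. The auxiliary graph $G^\ast$ is built from the bipartite double cover $G \times K_2$ (with levels $0,1$) by attaching, for every $a \in A$, two pendant terminals $a^+$ adjacent to $(a,0)$ and $a^-$ adjacent to $(a,1)$, and choosing the family $\mathcal{S}$ so that its parts separate $a^+$ from $b^+$ and from $a^-$, but allow $a^+$ to $b^-$ for $b \ne a$. The level-alternation of $G \times K_2$ forces any $\mathcal{S}$-path to project to a walk of prescribed parity in $G$ between two \emph{distinct} vertices of $A$, so that even $A$-paths and odd closed walks do not yield $\mathcal{S}$-paths, while every odd $A$-path does lift to one. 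Getting this encoding exactly right — so that the correspondence ``odd $A$-path $\leftrightarrow$ $\mathcal{S}$-path'' holds in the strong sense needed — is the most delicate point; the rest is bookkeeping.

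Given the construction, a packing of $k$ pairwise disjoint odd $A$-paths in $G$ lifts to $k$ vertex-disjoint $\mathcal{S}$-paths in $G^\ast$ (the lifts are disjoint since distinct vertices of $G$ have disjoint pairs of preimages), so $\nu(G) \le \nu_{\mathcal{S}}(G^\ast)$. In the other direction I would invoke Mader's theorem: if $\nu_{\mathcal{S}}(G^\ast) < k$, then Mader's min formula supplies a blocker in $G^\ast$ of the corresponding total weight; projecting every vertex of the blocker of the form $(v,0)$ or $(v,1)$ down to $v \in V(G)$ yields a set $X \subseteq V(G)$ meeting every odd $A$-path, and the two-preimages phenomenon is exactly what turns the bound into $|X| \le 2(k-1)$. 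The main obstacle, beyond fixing the gadget, is the careful accounting needed to see that Mader's blocker projects to a hitting set of size at most $2k-2$ rather than merely $\mathbf{O}(k)$. Finally, the algorithmic statement follows because the $\mathcal{S}$-path problem reduces to ordinary maximum matching, which on $G^\ast$ (of size linear in $|V(G)|+|E(G)|$) is solvable within the claimed $\mathbf{O}(|V(G)||E(G)|)$ bound, and both the packing and the set $X$ can be extracted from an optimal matching together with its Gallai--Edmonds decomposition.
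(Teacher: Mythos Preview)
The paper does not supply its own proof of this lemma: it is quoted from \cite{GeelenGRSV2009Oddminor}, followed only by the remark that the running time relies on the Micali--Vazirani matching algorithm and a pointer to Section~4 of that reference for the algorithm. Your sketch is therefore being compared against the original source rather than anything in this paper.

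Your framing --- recognise the statement as the $\mathbb{Z}_2$ case of the non-zero $A$-path theorem, rewrite it as $\tau \le 2\nu$, and extract both the packing and the hitting set from a maximum-matching computation via Gallai--Edmonds --- is correct and is what the cited references do. The gap is the specific bridge you propose, the Mader $\mathcal{S}$-path reduction on the double cover. No partition $\mathcal{S}$ of the pendant set $\{a^+,a^- : a\in A\}$ can achieve what you describe: you need $a^+$-to-$b^-$ to be a legitimate $\mathcal{S}$-path (so $a^+$ and $b^-$ lie in different parts) while $a^+$-to-$b^+$ and $a^+$-to-$a^-$ are not (same part); but then $a^+ \sim b^+$ together with $b^+ \sim b^-$ forces $a^+ \sim b^-$, a contradiction as soon as $|A|\ge 2$. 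Equivalently, the ``forbidden'' pairs --- all $(a^+,b^+)$, all $(a^-,b^-)$, and the matching $(a^+,a^-)$ --- do not form a disjoint union of cliques, so they are not the same-part relation of any partition, and Mader's theorem does not apply to more general allowed-pair graphs. The double cover by itself cannot encode ``distinct endpoints in $A$'' and ``odd length'' through a single partition constraint. The proof in the cited references bypasses Mader and works with a direct matching construction on an auxiliary graph of linear size, from which both the packing and the $(2k-2)$-hitting set fall out of the Gallai--Edmonds structure; you already anticipate that endpoint, so it is only the intermediate Mader step that needs replacing.
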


Note to achieve this runtime we need to apply the $\mathbf{O}(\sqrt{|V(G)|}|E(G)|)$ maximum matching algorithm of Micali and Vazirani~\cite{MicaliV1980algorithmfor}. See Section 4 of \cite{GeelenGRSV2009Oddminor} for a discussion of the algorithm.

Given a society $(G, \Omega)$, a set of paths $P_1, \ldots, P_k$ such that $P_i$ has ends $s_i,t_i$ for all $i \in [k]$ is called \textit{sequential} if the ends occur in the order $s_1,t_1, \ldots ,s_k, t_k$ in $\Omega$. Given a two colouring of $V(\Omega)$, a path $P$ with both ends in $V(\Omega)$ is called \textit{parity breaking} if the colouring of its endpoints does not extend to a proper two colouring of $P$.

Given these definitions, the following is a simple corollary of \zcref{lem:oddApaths}.
We note that here we use \emph{two colouring} to denote an arbitrary assignment of two colours to a set of vertices and call a two colouring that does not produce monochromatic edges \emph{proper}. 

\begin{corollary}\label{lem:paritybreakingApaths}
    Let $(G, \Omega)$ be a society, let $\tau : V(\Omega) \rightarrow [2]$ be a two colouring, let $A \subseteq V(\Omega)$, and let $k$ be a positive integer.
    Then one of the following holds:
    \begin{enumerate}
        \item There exists $k$ pairwise disjoint parity breaking $A$-paths, or
        \item there exists a set $X \subseteq V(G)$ of size at most $2k-2$ such that $G - X$ does not contain a parity breaking $A$-path.
    \end{enumerate}
    Furthermore, there exists an algorithm that given $G,A, \tau,$ and $k$, finds one of the outcomes in time $\mathbf{O}(|V(G)||E(G)|)$.
\end{corollary}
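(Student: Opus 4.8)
The plan is to reduce the statement to \zcref{lem:oddApaths} by means of a standard parity-gadget construction. The key observation is that a path $P$ with endpoints $a, a' \in A$ is parity breaking with respect to the two colouring $\tau$ if and only if: either $\tau(a) = \tau(a')$ and $P$ has odd length, or $\tau(a) \neq \tau(a')$ and $P$ has even length. So the parity condition we want to control is not simply the parity of the length of $P$, but that parity corrected by whether the colours of the two endpoints agree. To absorb the correction into the length, I would build an auxiliary graph $G'$ from $G$ as follows: fix one colour class, say $\tau^{-1}(1)$, and for every vertex $v \in A$ with $\tau(v) = 1$, subdivide nothing but instead attach a pendant path of length one — more precisely, replace each $a \in A$ by introducing a new vertex $a^*$ joined to $a$ by a single edge whenever $\tau(a)=1$, and set $A^* = \{a^* : a \in A, \tau(a)=1\} \cup \{a : a \in A, \tau(a)=2\}$. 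Then an $A$-path $P$ in $G$ corresponds to an $A^*$-path $P^*$ in $G'$ whose length is that of $P$ plus one for each endpoint lying in $\tau^{-1}(1)$; a short case check over the four colour-pattern cases of the endpoints shows that $P^*$ has odd length in $G'$ precisely when $P$ is parity breaking in $G$. Moreover $A^*$-paths in $G'$ are in bijection with $A$-paths in $G$ (the pendant edges can only appear at the ends), and pairwise disjointness is preserved in both directions.

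With this translation in hand, I would apply \zcref{lem:oddApaths} to the pair $(G', A^*)$ with the same parameter $k$. In the first outcome we obtain $k$ pairwise disjoint odd $A^*$-paths in $G'$; pulling these back along the bijection yields $k$ pairwise disjoint parity breaking $A$-paths in $G$, which is outcome (i). In the second outcome we obtain a set $X' \subseteq V(G')$ of size at most $2k-2$ meeting every odd $A^*$-path in $G'$. Here I would need a small cleanup: any pendant vertex $a^*$ in $X'$ can be replaced by the corresponding vertex $a \in A \subseteq V(G)$ without increasing the size or losing the hitting property, since every $A^*$-path through $a^*$ also passes through $a$. After this replacement $X' \subseteq V(G)$ and $|X'| \le 2k-2$, and $G - X'$ contains no parity breaking $A$-path, since such a path would lift to an odd $A^*$-path in $G' - X'$. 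This gives outcome (ii) with $X = X'$.

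For the algorithmic statement, the construction of $G'$ and $A^*$ is linear in $|V(G)| + |E(G)|$, the cleanup step is also linear, and the call to \zcref{lem:oddApaths} runs in time $\mathbf{O}(|V(G')||E(G')|) = \mathbf{O}(|V(G)||E(G)|)$ since $|V(G')| \le 2|V(G)|$ and $|E(G')| \le |E(G)| + |V(G)|$; hence the whole procedure runs within the claimed bound.

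I do not expect any real obstacle here — the content is entirely in the parity bookkeeping of the gadget, and the only point that needs care is making sure the correspondence between parity breaking $A$-paths and odd $A^*$-paths is exact across all four endpoint-colour patterns, and that the transversal $X'$ can be pushed back into $V(G)$ without cost. An alternative, essentially equivalent route would be to recolour: since we are free to choose the colouring, one can first note that the statement only depends on $\tau$ through the partition into the two classes, and then the gadget above is exactly the device that turns "parity breaking relative to $\tau$" into the colour-blind notion "odd" that \zcref{lem:oddApaths} handles.
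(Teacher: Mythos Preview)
Your proposal is correct and is essentially identical to the paper's proof: the paper also attaches a pendant edge to each vertex $u\in A$ with $\tau(u)=1$, takes $A'=\{u\in A:\tau(u)=2\}\cup\{v_u:\tau(u)=1\}$, observes that parity breaking $A$-paths are in bijection with odd $A'$-paths, applies \zcref{lem:oddApaths}, and replaces any pendant vertex in the hitting set by its neighbour. Your write-up is simply more explicit about the four-case parity check and the algorithmic accounting, but the argument is the same.
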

\begin{proof}
    Construct $G'$ from $G$ by adding the degree one vertices $B = \{ v_u : u \in A, \tau(u) = 1\}$ such that $v_u$ is adjacent only to $u$.
    Let $A' = \{u \in A : \tau(u) = 2\} \cup B$, and note that parity breaking $A$-paths are in bijection with odd $A'$-paths.
    Also note that in any hitting set $X \subseteq V(G')$, one may replace $v_u$ with $u$ without creating an odd $A$-path.
    Thus the result clearly follows from \zcref{lem:oddApaths}.
\end{proof}

We now prove a lemma that helps us use parity breaking paths to construct $\mathscr{V}_t$ as an odd-minor.

\begin{lemma}\label{lem:moveparitybreakingtobranchvtcs}
    Let $\rho$ be an even-faced cylindrical rendition of $(G, \Omega)$ in a disk $\Delta$ around a vortex $c_0$ with a nest $\{ C_1, \ldots, C_{2t} \}$, such that $C_1, \ldots, C_{2t}$ are the concentric cycles of a $(4t \times 2t)$-cylindrical mesh $M$ with radial paths $P_1, \ldots, P_{4t}$, and the trace of $C_{2t}$ is $\mathsf{bd}(\Delta)$.
    Let $\tau : V(M) \rightarrow [2]$ be a proper two colouring, and suppose there exists $2t$ parity breaking paths.
    Then there exist $t$ parity breaking paths whose endpoints are contained in the set of endpoints of $P_2, P_4, \ldots, P_{4t}$ in $C_{2t}$.
\end{lemma}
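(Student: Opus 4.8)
The plan is to reroute the given large collection of parity breaking paths so that their endpoints land on the "even-indexed" radial paths, using the cylindrical mesh $M$ as rerouting infrastructure. First I would take the $2t$ parity breaking paths and, if necessary, restrict to a subcollection that is sequential (or handle-able via an Erd\H{o}s--Szekeres-type argument as in \zcref{lem:monotonetransaction}); this is cheap since we only need $t$ paths at the end and we start with $2t$. For each such path $P$ with endpoints $u,v \in V(\Omega) \subseteq V(C_{2t})$, note that $u$ and $v$ each lie on (or near) some radial path $P_{j}$; I would push each endpoint to the endpoint on $C_{2t}$ of a nearby even-indexed radial path $P_{2i}$ by appending a short subpath of $C_{2t}$. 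The key point is that this rerouting does not change parity modulo the following observation: since $\tau$ is a \emph{proper} two colouring of $M$ and the rerouting segments lie inside $M$, the parity of the new path relative to the colouring of its new endpoints is unchanged — the concatenated segment of $C_{2t}$ is properly two-coloured, so a path $P$ is parity breaking for its original endpoints if and only if the extended path is parity breaking for its new endpoints. (This uses the same elementary parity bookkeeping as in \zcref{lem:paritypaths}: a properly two-coloured path contributes ``nothing'' to the parity discrepancy.)

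The main technical step is ensuring the rerouted paths remain pairwise disjoint while simultaneously avoiding the interior of $\Delta$ (so that we stay in the outer graph and the rendition structure is preserved). For this I would use the mesh $M$ exactly as in the orthogonalisation arguments (\zcref{lemma:orthogonal_transaction}, \zcref{prop:radialtoorthogonal}): since the trace of $C_{2t}$ bounds $\Delta$ and the radial paths $P_1,\dots,P_{4t}$ are pairwise disjoint, the cyclic order of the endpoints on $C_{2t}$ is respected, and one can greedily push endpoints to the nearest unused even-indexed radial endpoint while maintaining disjointness, since there are $4t$ radial paths (hence $2t$ even-indexed ones) and only $t$ paths to place. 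The ``sticks out away from $c_0$'' language of nests guarantees that short arcs of $C_{2t}$ used for rerouting can be taken on the side away from the vortex, keeping everything outside $\Delta$'s interior where needed, or — if the paths are already grounded $C_{2t}$-paths — one simply works on $C_{2t}$ itself.

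The hard part will be the careful parity accounting when an endpoint of $P$ does not sit exactly on a radial path vertex but somewhere along an arc of $C_{2t}$, combined with making sure that after rerouting we still have genuinely \emph{distinct} paths (no two collapsing to share an endpoint). I expect this to require a short case analysis: either the two parity breaking paths we are rerouting already have well-separated endpoints in $\Omega$, in which case pigeonhole on the $2t$ even-indexed radial endpoints suffices; or several endpoints cluster, in which case we discard the excess (we have a factor-two surplus) and reroute the survivors. Throughout, the parity invariance is the conceptual crux: I would isolate it as a one-line sub-claim stating that for any path $Q$ with both endpoints on $C_{2t}$ and any arc $R \subseteq C_{2t}$, the path $Q'$ obtained by extending $Q$ along $R$ is parity breaking for its endpoints with respect to $\tau$ if and only if $Q$ is, because $\tau$ restricted to the properly two-coloured arc $R$ extends any proper two colouring of $Q$. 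Once that sub-claim is in hand, the rest is the routine rerouting-and-pigeonhole bookkeeping sketched above, and the lemma follows.
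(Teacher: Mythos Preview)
Your rerouting approach has a genuine gap. The parity sub-claim you isolate is correct: extending a path by a properly two-coloured arc of $C_{2t}$ preserves whether it is parity breaking. But the disjointness bookkeeping breaks down, and the factor-two surplus does not save you. Consider the scenario where all $4t$ endpoints of the $2t$ given parity breaking paths lie in a single arc of $C_{2t}$ between two consecutive even-indexed radial endpoints (nothing in the hypotheses prevents this: the paths pass through the vortex $c_0$ and their $\Omega$-endpoints can cluster arbitrarily). Once you extend one path to the two neighbouring targets $a_1,a_2$, the entire arc is covered, and any further extension must cross this first rerouted path. Pushing deeper into the mesh via the radial paths does not obviously help either, because connecting a radial segment to the ``middle'' of a parity breaking path requires an arc of some $C_i$, and those arcs collide for the same reason. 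Your references to \zcref{lemma:orthogonal_transaction} and \zcref{prop:radialtoorthogonal} do not apply here: those lemmas reroute linkages that already go from $\Omega$ to $C_1$, not paths that loop back to $\Omega$ through the vortex. (Also, the remark about staying ``outside $\Delta$'s interior'' is confused: the trace of $C_{2t}$ \emph{is} $\mathsf{bd}(\Delta)$, so there is no outside.)

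The paper does something entirely different and much shorter: it invokes the parity-breaking-$A$-paths duality (\zcref{lem:paritybreakingApaths}, a direct corollary of the odd-$A$-paths theorem \zcref{lem:oddApaths}). Letting $A$ be the $2t$ even-indexed radial endpoints, it suffices to show that no set $X$ of size at most $2t-2$ kills all parity breaking $A$-paths. After deleting such an $X$, at least two even-indexed radial paths, at least one concentric cycle $C_i$, and at least one of the original $2t$ parity breaking paths $P$ all survive. Since $\rho$ is even-faced, $P$ uses a vortex edge and hence meets $C_i$; because $C_i$ is properly two-coloured while $P$ is not, their union contains an odd cycle in the block of $C_i$, and the two surviving radial paths then yield a parity breaking $A$-path. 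This avoids explicit rerouting altogether; the missing idea in your proposal is precisely this dual formulation.
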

\begin{proof}
    Let $A$ be the set of endpoints of $P_2, P_4, \ldots, P_{4t}$ in $C_{2t}$.
    By \zcref{lem:paritybreakingApaths}, it suffices to show that there is no set $X \subseteq V(G)$ of size at most $2t-2$ such that $G-X$ does not contain a parity breaking $A$-path.
    Suppose towards a contradiction there was such an $X$, and note that $G - X$ still contains two paths $P_i, P_j$ whose endpoints are in $A$.
    Note also that at least one concentric cycle $C_i$ survives in $G - X$.
    There is also a parity breaking path $P$ that survives in $G - X$, and note that, because $\rho$ is even-faced, $P$ must use an edge of the vortex $c_0$.
    In particular this means that $P$ intersects $C_i$, and so creates an odd cycle in $P \cup C_i$ because $C_i$ is properly two coloured by $\tau$ while there is no extension of $\tau$ to properly two colour $P$.
    This odd cycle is in the same block as $C_i$, so there is a parity breaking $A$-path between the endpoints of $P_i, P_j$, a contradiction.
\end{proof}

We then need a lemma about the type of parity breaking paths we can find. We note that the original proof of the lemma in \cite{HuynhJW2019Unified} argues for the case in which $t_1=t_2$, but clearly the same arguments prove the more general statement that we derive here. We also extend their result to include an algorithm for finding such a subcollection.

\begin{lemma}[Lemma 25~\cite{HuynhJW2019Unified}]\label{lem:seqplanarcrosscap}
    Let $(G, \Omega)$ be a society and let $\mathcal{P}$ be a collection of $t_1t_2$ paths in $G$ with both ends in $V(\Omega)$. Then one of the following holds.
    \begin{enumerate}
        \item there exists a subcollection of $\mathcal{P}$ of size $t_1$ that is sequential, or
        \item there exists a subcollection of $\mathcal{P}$ of size $t_2$ that is a transaction.
    \end{enumerate}
    Moreover, there exists an algorithm that, given $G, \Omega, \mathcal{P}, t_1,$ and $t_2$, finds one of the outcomes in time $\mathbf{O}(|V(G)| + |E(G)| + (t_1t_2)^2)$.
\end{lemma}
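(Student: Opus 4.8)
This is a Ramsey-type dichotomy in the spirit of Erd\H os--Szekeres, applied to a collection of paths in a society, where the two ``colours'' correspond to whether a path's endpoints are ``nested'' or ``crossed'' with respect to another path. The plan is to first set up the right partial structure on $\mathcal{P}$ and then invoke a two-dimensional pigeonhole / Dilworth-type argument to extract a large homogeneous subcollection.

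First I would fix, for each path $P \in \mathcal{P}$, an ordered pair of its endpoints $(s_P, t_P)$ in $V(\Omega)$ (say, listing them in the cyclic order so that traversing $\Omega$ from some fixed reference point we meet $s_P$ before $t_P$ within the arc not containing the reference point; one has to be a little careful, but any consistent convention works). For two distinct paths $P, Q$, exactly one of the following holds with respect to the cyclic order $\Omega$: the endpoints interleave (i.e. $s_P, s_Q, t_P, t_Q$ appear in cyclic order up to rotation), in which case $P$ and $Q$ ``cross''; or the endpoints do not interleave, in which case one of the two segments $s_P \Omega t_P$ contains both endpoints of $Q$ or is disjoint from both, and in particular $\{P, Q\}$ is ``non-crossing''. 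A collection that is pairwise non-crossing is exactly a \emph{laminar family} of arcs of $\Omega$, and a laminar family of $t_1$ arcs contains a chain of $\sqrt{t_1}$ arcs (nested, which after choosing the inclusion-minimal representatives gives a sequential subcollection of that size) or an antichain of $\sqrt{t_1}$ arcs (pairwise disjoint, hence sequential as well). A collection that is pairwise crossing is, by definition, a transaction. So the real content is to reduce to one of these two pure cases.

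The key step is the Ramsey/Dilworth argument: build an auxiliary graph $H$ on vertex set $\mathcal{P}$ where $PQ \in E(H)$ iff $P$ and $Q$ cross. If $H$ has a clique of size $t_2$, that clique is a transaction of size $t_2$ and we are in outcome (2). Otherwise, by (the easy direction of) Ramsey's theorem — or more efficiently, by a direct greedy/Dilworth argument on the laminar structure — since $|\mathcal{P}| = t_1 t_2$ and $H$ has no $t_2$-clique, $H$ has an independent set of size $\lceil t_1 t_2 / (t_2 - 1)\rceil \geq t_1$; wait — that bound needs $H$ to have bounded clique number giving bounded chromatic-like behaviour, which is not automatic. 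Instead I would argue as follows to get the clean product bound: among the $t_1 t_2$ paths, either some path $P$ crosses at least $t_2 - 1$ others (and then greedily, keeping only paths crossing a current ``pivot'', after $t_2$ pivots either we exhaust the pool — impossible by the count — or we have found a transaction of size $t_2$), or every path crosses at most $t_2 - 2$ others, in which case a greedy extraction of a non-crossing (laminar) subfamily gives size at least $t_1 t_2 / (t_2 - 1) > t_1$; then the laminar family of size $> t_1$ contains a chain or antichain of size $\geq \sqrt{t_1}\cdot$something — and here I realise the honest bookkeeping is exactly the Erd\H os--Szekeres/Dilworth step: a laminar family of $m$ arcs, ordered by inclusion, is a poset of width $w$ and height $h$ with, say, no single antichain exceeding the disjoint paths and no chain exceeding the nested ones; $m \le wh$ forces $w \ge t_1$ or $h \ge t_1$ once we start from $m \ge t_1^2$ — so I should simply run the whole argument with the cruder but clean bound: from $t_1 t_2$ paths extract either a transaction of size $t_2$ or a non-crossing subfamily of size $t_1 t_2/(t_2-1) \ge t_1 t_2 / t_2 = t_1$ ... this still needs care. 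The cleanest route, and the one I would actually write, is to use \zcref{lem:monotonetransaction}-style Erd\H os--Szekeres reasoning directly: I will reference \cite{HuynhJW2019Unified} for the combinatorial core and only spell out the reduction that a pairwise-crossing family is a transaction and a laminar family yields a sequential family (its inclusion-minimal chain or its antichain of disjoint arcs).

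For the algorithmic part, all of this is constructive: computing the crossing relation for all pairs costs $\mathbf{O}((t_1 t_2)^2)$ once we have the cyclic positions of the $2 t_1 t_2$ endpoints (obtained in $\mathbf{O}(|V(G)|)$ by one traversal of $\Omega$, assuming $\Omega$ is given as an explicit cyclic list, plus $\mathbf{O}(|E(G)|)$ overhead to identify the paths' endpoints); the greedy pivot extraction and the chain/antichain extraction from the laminar family are linear in the size of the auxiliary structures, i.e. $\mathbf{O}((t_1 t_2)^2)$. This gives the claimed $\mathbf{O}(|V(G)| + |E(G)| + (t_1 t_2)^2)$ running time. The main obstacle is purely bookkeeping: getting the quantitative pigeonhole to land exactly on the product $t_1 t_2$ (rather than $t_1^2 t_2$ or $t_1 t_2^2$), which is why the argument should be phrased as ``large clique in the crossing graph $\Rightarrow$ transaction; otherwise low maximum degree in the crossing graph $\Rightarrow$ large laminar family $\Rightarrow$ apply a one-dimensional Erd\H os--Szekeres to the arcs ordered by one endpoint'' — the last one-dimensional step is what converts a laminar family into a sequential family without squaring the parameter, and I expect that to be the delicate part to get right in the write-up.
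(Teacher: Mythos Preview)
Your approach has a real gap, and it is precisely the ``delicate part'' you flagged. Working directly on the cycle with the crossing/non-crossing dichotomy forces you into a three-way split (pairwise crossing $\Rightarrow$ crosscap transaction; nested $\Rightarrow$ planar transaction; disjoint $\Rightarrow$ sequential), and the bookkeeping to combine these does not land on the product $t_1 t_2$ --- your own accounting shows you lose a factor at the Dilworth step on the laminar family. Also, your parenthetical that a nested chain ``after choosing the inclusion-minimal representatives gives a sequential subcollection'' is simply wrong: nested arcs have endpoints in the order $s_1,s_2,\ldots,s_k,t_k,\ldots,t_1$, which is a planar transaction, not sequential.

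The paper's proof sidesteps all of this with one move you are missing: \emph{cut $\Omega$ at an arbitrary point to get a linear order $\Omega'$}. Each path then determines an interval on $\Omega'$, and the relevant auxiliary graph $H$ is the \emph{interval graph} (edges are intersecting intervals, not crossing arcs). Now both ``crossing'' and ``nested'' on the cycle become ``intersecting'' on the line, and pairwise intersecting intervals have a common point by Helly, so a clique in $H$ is automatically a transaction between the two segments on either side of that point. An independent set in $H$ is a set of pairwise disjoint intervals, which is exactly sequential. Since interval graphs are perfect, $\alpha(H)\omega(H)\geq |V(H)| = t_1 t_2$, giving $\omega(H)\geq t_2$ or $\alpha(H)\geq t_1$ with no loss. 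The algorithm then just computes a maximum clique and a maximum independent set in an interval graph, both of which are greedy in $\mathbf{O}(|V(H)|+|E(H)|)$ after sorting endpoints along $\Omega'$, yielding the stated running time.
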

\begin{proof}
    Consider arbitrarily cutting $\Omega$ into a linear ordering $\Omega'$. Then the ends of $\mathcal{P}$ determine intervals in $\Omega'$. Let $H$ be the corresponding interval graph. Because interval graphs are perfect, $\alpha(H)\omega(H) = \alpha(H)\chi(H) \geq |V(H)|$. Note this is a characterization of perfect graphs \cite{Lovasz1972Characterization}, but we only need the trivial direction. Thus there exists $t_1$ intervals which are pairwise intersecting or $t_2$ intervals which are pairwise disjoint. Note that pairwise disjoint intervals correspond to a subcollection of $\mathcal{P}$ that is sequential, while pairwise intersecting intervals correspond to a subcollection that is a transaction.

    Consider ordering the intervals in $V(H)$ by their left endpoint. Then for every vertex $v \in V(H)$, its pre-neighbourhood is a clique. We can then easily find a largest clique via finding a vertex with the largest pre-neighbourhood in time $\mathbf{O}(|V(H)| + |E(H)|)$. We can find the largest independent set by a similar greedy algorithm in time $\mathbf{O}(|V(H)|)$ after sorting the intervals by their right endpoint \cite{GuptaLL1982Efficient}. Note that we can easily order the intervals by either endpoint via reading off $\Omega'$ in time $\mathbf{O}(|V(\Omega')|)$. We can construct the intervals of $\Omega'$ given by $\mathcal{P}$ in time $\mathbf{O}(|V(G)| + |E(G)|)$. Because $|E(H)| \leq |\mathcal{P}|^2$, this gives a total runtime of $\mathbf{O}(|V(G)| + |E(G)| + (t_1t_2)^2)$. Note that if we are guaranteed that the first option will occur, we can find this option in time $\mathbf{O}(|V(G)| + |E(G)| + |\mathcal{P}|) = \mathbf{O}(|V(G)| + |E(G)|)$.
\end{proof}

Let $G$ be a graph with a non-orientable, even-faced $\Sigma$-rendition $\rho$ with no vortices and no cells $c$ with $|N(c)| = 1$. Then the \textit{natural embedding} of $G$ is a graph $G'$ embedded in $\Sigma$ formed from $G$ by the following operation.
For every cell $c$ with $N(c) = \{u,v\}$, with $u \neq v$, replace $\sigma(c)$ with a $u$-$v$-path of length 1 or 2 matching the unique parity of a $u$-$v$-path in $\sigma(c)$.
For every cell $c$ with $|N(c)| = 3$, replace $\sigma(c)$ with a cycle $C$ of length 4 or 6 containing $N(c)$ such that for each $u,v \in N(c)$, there is a $u$-$v$-path of length 1 or 2 in $C$ matching the unique parity of such a path in $\sigma(c)$.
This map of cells to paths or cycles induces a map of any grounded path or cycle in $G$ to a path or cycle respectively in $G'$ of the same parity.
Note that we can embed $G'$ in $\Sigma$ such that the trace of every grounded path or cycle in $G$ is the trace of its image in $G'$.

If we further assume that $G$ is 2-connected, then any odd cycle contained entirely in a cell $c$ of $\rho$ gives two paths of different parities between the vertices in $N(c)$, so every odd cycle is grounded. Then if $G'$ contains an odd cycle, it has a preimage which is a grounded cycle in $G$. In particular $G'$ has the same odd cycle packing number as $G$.

If we allow $\rho$ to have vortices, then we can extend the natural embedding of $G$ to a non-orientable, even-face $\Sigma$-rendition with the same set of vortices such that after removing the vortices, the remaining graph is embedded in the surface.
We call this augmented graph with its $\Sigma$-rendition the \textit{extended natural embedding} of $G$.
As before if we assume $G$ is 2-connected, then the extended natural embedding of $G$ also has the same odd cycle packing as $G$ because the grounded paths between vortices are preserved.

For a graph $G$ with a labelling $\gamma: E(G) \rightarrow \Gamma$ over an abelian group $\Gamma$, we say the \textit{label} of a cycle or path $H$ is the value $\sum_{e \in H} \gamma(e)$.
\begin{lemma}\label{lem:alllabelsingouplabeledgraphs}
    Let $G$ be a 2-connected graph and let $\gamma: E(G) \rightarrow \mathbb{Z}_2 \times \mathbb{Z}_2$. If $G$ has a cycle with label $(1,0)$ and a cycle with label $(0,1)$, then $G$ has a cycle with label $(1,1)$.
\end{lemma}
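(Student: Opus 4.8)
The plan is to reduce to the well-known parity argument on cycle spaces, being careful that the hypotheses are phrased in terms of the \emph{existence} of cycles with prescribed labels, not in terms of a basis. First I would pick a cycle $C_1$ with label $(1,0)$ and a cycle $C_2$ with label $(0,1)$, and observe that their symmetric difference $C_1 \triangle C_2$ (as an edge set) has label $(1,1)$, since $\gamma$ restricted to any edge set is additive over symmetric differences (this uses only that $\mathbb{Z}_2 \times \mathbb{Z}_2$ has characteristic $2$). Now $C_1 \triangle C_2$ is an element of the cycle space of $G$, hence an edge-disjoint union of (simple) cycles $D_1, \ldots, D_m$. The labels of the $D_j$ sum to $(1,1)$ in $\mathbb{Z}_2 \times \mathbb{Z}_2$. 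If some $D_j$ already has label $(1,1)$ we are done; otherwise each $D_j$ has label in $\{(0,0),(1,0),(0,1)\}$, and for the sum to equal $(1,1)$ there must exist indices $j$ with $D_j$ of label $(1,0)$ and $j'$ with $D_{j'}$ of label $(0,1)$.

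So it suffices to handle the case where we have two \emph{edge-disjoint} cycles $D, D'$ with labels $(1,0)$ and $(0,1)$ respectively, and we want a single cycle with label $(1,1)$. Here I would invoke $2$-connectedness: by Menger's theorem (\hyperref[prop:mengersthm]{Menger's Theorem}) there are two vertex-disjoint $V(D)$--$V(D')$ paths $Q_1, Q_2$ in $G$ (allowing trivial paths of length $0$ if $D$ and $D'$ share a vertex, and if they share two or more vertices the argument only gets easier). Let $u_1, u_2$ be the endpoints of $Q_1, Q_2$ on $D$ and $v_1, v_2$ their endpoints on $D'$. The vertices $u_1, u_2$ split $D$ into two paths $R_1, R_2$ whose labels sum to $(1,0)$, so exactly one of them, say $R_a$, has first coordinate $1$; similarly $v_1, v_2$ split $D'$ into $R_1', R_2'$ with labels summing to $(0,1)$, so exactly one, say $R_b'$, has second coordinate $1$. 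Forming the cycle $Q_1 u_1 R_a u_2 Q_2 v_2 R_b' v_1 Q_1$ (reindexing the $Q_i$ so the endpoints match up correctly), its label is $\gamma(Q_1) + \gamma(Q_2) + \gamma(R_a) + \gamma(R_b')$; by choosing between $R_1,R_2$ and between $R_1',R_2'$ — each choice changes the running label by $(1,0)$ respectively $(0,1)$ — we can realize all four combinations of the contribution from the $D$-arc and the $D'$-arc, in particular one that makes the total equal $(1,1)$. This is the standard "three paths between two cycles" trick; the point is that the two independent parity choices give us full control of both coordinates.

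The main obstacle, and the part I would write out most carefully, is the bookkeeping in the last paragraph: matching up the endpoints of $Q_1, Q_2$ correctly so that $Q_1 \cup R_a \cup Q_2 \cup R_b'$ really is a cycle (and not two disjoint paths), and verifying that the two arc-choices are genuinely independent parity adjustments of the two coordinates of the label. The degenerate cases — $Q_1$ or $Q_2$ trivial, $D$ and $D'$ meeting in one or two vertices — should be absorbed into the same argument by allowing zero-length paths, but they deserve an explicit sentence. Everything else (additivity of $\gamma$ over symmetric differences, decomposition of a cycle-space element into edge-disjoint cycles, existence of the two vertex-disjoint connecting paths from $2$-connectedness) is routine.
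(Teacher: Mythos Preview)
Your approach is essentially the same as the paper's: take the symmetric difference, decompose into edge-disjoint cycles, reduce to two edge-disjoint cycles $D,D'$ with labels $(1,0)$ and $(0,1)$, then use $2$-connectedness to find two connecting paths and form a cycle whose label can be adjusted by toggling the arc choices on each side. The label calculation (the two toggles shift by $(1,0)$ and $(0,1)$ independently, hence hit all of $\mathbb{Z}_2\times\mathbb{Z}_2$) is correct.

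There is, however, a genuine gap in the sentence ``if they share two or more vertices the argument only gets easier''. If $D$ and $D'$ are edge-disjoint but share \emph{three or more} vertices, then after taking $Q_1,Q_2$ trivial at two shared vertices $w_1,w_2$, the arcs $R_a\subseteq D$ and $R_b'\subseteq D'$ between $w_1$ and $w_2$ may still meet at the remaining shared vertices. In that case $R_a\cup R_b'$ is not a simple cycle but an Eulerian subgraph; it decomposes into several edge-disjoint cycles whose labels sum to $(1,1)$, yet individually each could land in $\{(0,0),(1,0),(0,1)\}$, putting you back at the start of the argument with no progress measure. The paper closes exactly this gap by choosing $C_1,C_2$ at the outset to minimise $|E(C_1)|+|E(C_2)|$: edge-disjointness then comes for free, and if $|V(C_1)\cap V(C_2)|\geq 3$ one shows $C_1\cup C_2$ decomposes into at least three cycles, two of which form a strictly smaller pair with labels $(1,0),(0,1)$, contradicting minimality. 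Adding this minimality framework at the beginning of your proof would repair the argument.
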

\begin{proof}
    Let $C_1$ be a cycle with label $(1,0)$ and let $C_2$ be a cycle with label $(0,1)$, chosen such that $|E(C_1)| + |E(C_2)|$ is minimum. Note that the symmetric difference of $C_1$ and $C_2$ is an edge disjoint collection of cycles with labels summing to $(1,1)$. We may assume no cycles in the collection have label $(1,1)$, so there are cycles of label $(1,0)$ and $(0,1)$. Thus we may assume $C_1$ and $C_2$ are edge disjoint by the minimality of $|E(C_1)| + |E(C_2)|$. Furthermore we may assume $C_1$ and $C_2$ share at most two vertices, otherwise we claim we can decompose the symmetric difference $C_1 \triangle C_2$ into at least 3 cycles, for which we may assume two of the cycles have labels $(1,0)$ and $(0,1)$. Indeed it suffices to show that there is some cycle in $C_1 \triangle C_2$ disjoint from a vertex of degree 4 because the decomposition of the remaining Eulerian graph into cycles must include at least 2 cycles.
    
    Consider suppressing all degree 2 vertices in $C_1 \triangle C_2$ to obtain the 4-regular multigraph $H$ on at least 3 vertices. We may assume $H$ is a simple graph, otherwise a cycle of length 2 does not contain some other vertex. Then $H$ is the union of two edge disjoint cycles $C_1', C_2'$ on the same vertex set. Taking a chord in $C_1'$ and some path between its endpoints in $C_1'$, we obtain a cycle which does not contain some vertex.
    Thus we may assume the cycles $C_1$ and $C_2$ share at most 2 vertices.
    
    Because $G$ is 2-connected, there are two vertex disjoint, possibly trivial, paths between $V(C_1)$ and $V(C_2)$ containing $V(C_1) \cap V(C_2)$. Let $u,v$ be the ends of these paths in $V(C_1)$. Note that through $C_1$ there is a $u$-$v$-path of label $(0,y)$ and $(1,y)$ for some $y \in \mathbb{Z}_2$. Similarly by taking the path through $C_2$ there is a $u$-$v$-path of label $(x,0)$ and $(x,1)$ for some $x \in \mathbb{Z}_2$. By choosing the paths of length $(a,y)$ and $(x,b)$ for $(a,b)$ such that $(a,b) + (x,y) = (1,1)$, we obtain a cycle of label $(1,1)$ through $u,v$.
\end{proof}

This allows us to give a helpful characterisation of cycles with one-sided traces in non-orientable, even-faced renditions.

\begin{lemma}\label{lem:oddcycleiffonesided}
    Let $G$ be a 2-connected graph with a non-orientable, even-faced $\Sigma$-rendition $\rho$ with no vortices and $|N(\rho)| \geq 2$. Suppose that $G$ is not bipartite. Then a grounded cycle in $G$ is odd if and only if its trace is one-sided.
\end{lemma}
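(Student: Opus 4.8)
\textbf{Proof proposal for \zcref{lem:oddcycleiffonesided}.}

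The plan is to transfer the problem to the natural embedding and then use \zcref{lem:alllabelsingouplabeledgraphs} with the group $\mathbb{Z}_2 \times \mathbb{Z}_2$. First I would pass to the natural embedding $G'$ of $G$ in $\Sigma$, which exists since $\rho$ is a non-orientable, even-faced $\Sigma$-rendition with no vortices and (after the routine normalisation that removes cells with $|N(c)|=1$, which can be done without changing $2$-connectivity or the class of grounded cycles) no cells of that degenerate type. Because $G$ is $2$-connected, every odd cycle contained entirely within a cell $c$ yields two $N(c)$-paths of different parities inside $\sigma(c)$; hence every odd cycle of $G$ is grounded, and the cell-to-path/cycle correspondence preserves parities, so $G'$ is non-bipartite, has the same odd cycle packing number, and a grounded cycle of $G$ is odd if and only if its image in $G'$ is odd, with the same trace. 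Thus it suffices to prove the statement for the properly embedded graph $G'$ in $\Sigma$.

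Now equip $G'$ with a $\mathbb{Z}_2 \times \mathbb{Z}_2$-labelling $\gamma$ where the first coordinate records parity (i.e. $\gamma_1(e) = 1$ for every edge) so that $\gamma_1(C) = |E(C)| \bmod 2$, and the second coordinate is an orientation/crossing label, chosen so that $\gamma_2(C) = 1$ exactly when the trace of $C$ in $\Sigma$ is one-sided (equivalently, passes through an odd number of crosscaps). Such a $\gamma_2$ is the standard homology-based labelling available on any embedded graph: fix a system of loops (meridians/crosscap curves) and set $\gamma_2(e)$ according to how $e$ crosses them; closed walks then get the first Stiefel–Whitney class of their normal bundle, which is $1$ iff the walk is one-sided. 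The defining property of a non-orientable, even-faced $\Sigma$-rendition (as specialised to $G'$ via \textbf{L1}-type conditions, in particular that every grounded odd cycle is one-sided) says precisely: there is no cycle $C$ in $G'$ with label $(1,0)$, i.e. no odd cycle with two-sided trace.

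It remains to rule out a cycle with label $(0,1)$, i.e. an even cycle whose trace is one-sided; combined with the absence of a $(1,0)$-cycle and \zcref{lem:alllabelsingouplabeledgraphs}, the absence of a $(0,1)$-cycle would force the absence of a $(1,1)$-cycle, contradicting non-bipartiteness (which, by the first paragraph, gives an odd cycle, necessarily grounded, hence by even-facedness one-sided, hence of label $(1,1)$) --- wait, that is backwards, so let me state the argument correctly: we \emph{have} a $(1,1)$-cycle (any odd cycle, which is one-sided), and we \emph{have no} $(1,0)$-cycle. We want: no even one-sided cycle. Suppose for contradiction $G'$ has a cycle $D$ of label $(0,1)$. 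Then $G'$ has a $(1,1)$-cycle and a $(0,1)$-cycle; I claim it then has a $(1,0)$-cycle. Indeed, apply \zcref{lem:alllabelsingouplabeledgraphs} to the relabelling $\gamma' := \gamma + (0,1)\cdot\mathbb{1}$? No --- the clean route is: \zcref{lem:alllabelsingouplabeledgraphs} is symmetric in the three nonzero group elements, since $\mathrm{GL}_2(\mathbb{Z}_2)$ acts transitively on them and a linear change of labelling permutes cycle labels accordingly. So a $2$-connected graph possessing cycles of any two distinct nonzero labels possesses a cycle of the third. Having a $(1,1)$-cycle and a $(0,1)$-cycle therefore yields a $(1,0)$-cycle --- an odd, two-sided cycle in $G'$ --- contradicting even-facedness. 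Hence no even one-sided cycle exists, so a grounded cycle of $G$ is odd iff one-sided.

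The main obstacle I anticipate is pinning down the second label $\gamma_2$ precisely and verifying the two facts asserted about it: that $\gamma_2(C)=1$ iff the trace of $C$ is one-sided, and that ``one-sided'' is the right topological invariant making \textbf{L1} read as ``no $(1,0)$-cycle''. This is standard surface topology ($\gamma_2$ is the pullback of $w_1(\Sigma)$), but care is needed because $\Sigma$ here may itself be orientable (it could have only handles), in which case $\gamma_2 \equiv 0$ and the ``only if'' direction (odd implies one-sided) degenerates to ``$G$ has no grounded odd cycle'' --- consistent with the remark after the definition of non-orientable even-faced renditions, and consistent with the hypothesis that $G$ is non-bipartite forcing $\Sigma$ to carry crosscaps once all odd cycles are grounded. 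I would handle this by noting that if $G'$ is non-bipartite and all its odd cycles are one-sided then $\Sigma$ is non-orientable, so $\gamma_2$ is non-trivial and the argument above runs without degeneracy; the symmetry-of-labels step via $\mathrm{GL}_2(\mathbb{Z}_2)$ is then the only other point to state carefully.
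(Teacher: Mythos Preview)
Your proposal is correct and follows essentially the same route as the paper: pass to the natural embedding $G'$, put the $\mathbb{Z}_2\times\mathbb{Z}_2$-labelling on $G'$ where the first coordinate is identically $1$ and the second records whether an edge goes through a crosscap, observe that non-orientable even-facedness forbids $(1,0)$-cycles, and then derive a contradiction from a putative $(0,1)$-cycle by combining it with an existing $(1,1)$-cycle via \zcref{lem:alllabelsingouplabeledgraphs} under the $\mathrm{GL}_2(\mathbb{Z}_2)$-automorphism swapping $(1,1)$ and $(1,0)$. The paper's proof is the same argument with the second coordinate defined concretely as ``edge goes through a crosscap'' rather than via $w_1(\Sigma)$, so your worries about pinning down $\gamma_2$ dissolve once you use that concrete definition on the actually embedded graph $G'$.
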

\begin{proof}
    The forward direction follows from the definition of a non-orientable, even-faced $\Sigma$-rendition. Note that because $G$ is 2-connected with $|N(\rho)| \geq 2$, every odd cycle in $G$ is grounded as noted above. Thus it suffices to assume that $G$ contains some grounded odd cycle $C_1$ with a one-sided trace, and suppose towards contradiction that $G$ contains a grounded even cycle $C_2$ with a one-sided trace. Note that the trace of a cycle is one-sided if and only if it goes through a crosscap an odd number of times.
    
    Let $G'$ be the natural embedding of $G$. Let $\gamma: E(G') \rightarrow \mathbb{Z}_2 \times \mathbb{Z}_2$ be such that the first component of every edge is 1, and the second component is 1 if and only if the edge goes through a crosscap. Then by assumption, $G'$ contains a cycle of label $(1,1)$ and a cycle of label $(0,1)$. This implies that $G'$ contains a cycle of label $(1,0)$ by \zcref{lem:alllabelsingouplabeledgraphs} after the automorphism swapping $(1,1)$ and $(1,0)$. Thus $G'$ contains an odd cycle with a two-sided trace and hence so does $G$, a contradiction.
\end{proof}

With all of our tools gathered, we move on to the proof of the main theorem of this section, which implies the refinement of \zcref{thm:strongest_localstructure} that we are ultimately interested in using. In the below proof we first find a parity vortex as an odd-minor, or we remove all parity breaking paths from vortices. In the later case, we can then only obtain odd cycles from crosscaps and not via vortices. We then refine our rendition such that a cycle is odd if and only if its trace is one-sided. We use this to bound the odd-cycle-packing in terms of the genus.

\begin{theorem}\label{thm:localstructureboundsocp}
    Let $t$ be a positive integer and let $G$ be the large block of $\mathcal{T}_M$ for some mesh $M$.
    Suppose $G$ has a non-orientable, even-faced $\Sigma$-rendition $\rho_G$ of breadth $b$, depth $d$, and where $\Sigma$ has Euler-genus $g$.
    Furthermore, for each vortex $c_i$ with $i \in [b]$, there exists a grounded $k$-cylindrical mesh $M_i$ for $k = 4t$ with concentric cycles each bounding a disk containing vortex $c_i$, and these cylindrical meshes are pairwise disjoint and each controlled by $\mathcal{T}_M$. Then one of the following holds.
    \begin{enumerate}
        \item $G$ has $\mathcal{V}_t$ as an odd-minor controlled by $\mathcal{T}_M$, or
        \item There exists a set $A \subseteq V(G)$ of size at most $4bt(d + 1)$ such that the large block of $\mathcal{T}_M - A$ has odd cycle packing number at most $gb(d+1)$.
    \end{enumerate}
    Furthermore there exists an algorithm that, given $G, M_1, \ldots, M_b,$ and $t$ as above as input, finds one of these outcomes in time $\mathbf{O}(|V(G)||E(G)|)$.
\end{theorem}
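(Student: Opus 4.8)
The strategy is to dichotomize on whether we can find many vertex-disjoint ``parity breaking'' paths threading through the vortices, using the tools about odd $A$-paths assembled above (\zcref{lem:oddApaths,lem:paritybreakingApaths,lem:moveparitybreakingtobranchvtcs,lem:seqplanarcrosscap}). First I would fix a proper two colouring of the natural (crosscap-free) part of $\rho_G$, which exists since $\rho_G$ is even-faced and we may pass to the extended natural embedding without changing the odd cycle packing number. For each vortex $c_i$ I would look at the society $(G_i,\Omega_i)$ obtained from the innermost cycle of the cylindrical mesh $M_i$ together with the part of the graph that sticks toward $c_i$, together with the colouring inherited on $V(\Omega_i)$. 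Apply \zcref{lem:paritybreakingApaths} locally to each vortex with parameter $2t$: either vortex $c_i$ admits $2t$ pairwise disjoint parity breaking $A_i$-paths (with $A_i$ taken on the innermost mesh cycle of $M_i$), or there is a hitting set $X_i$ with $|X_i|\le 4t-2$ whose removal kills all parity breaking paths internal to $c_i$.

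In the first case, I would use \zcref{lem:moveparitybreakingtobranchvtcs} to push the $2t$ parity breaking paths out onto the endpoints of alternate radial paths on the outer mesh cycle $C_{2t}$, yielding $t$ parity breaking paths whose ends land on branch vertices suitable for a parity vortex. Then \zcref{lem:seqplanarcrosscap} (with $t_1=t_2=\sqrt{t}$-style splitting, or more simply $t_1t_2$ chosen so that one of the two outcomes has size $t$) forces either a sequential subcollection or a transaction of the required size; a sequential collection of $t$ parity breaking paths on the cylindrical mesh gives exactly the edge set needed to build $\mathscr{V}_t$ as an odd minor (the $C_{4t}$ of $H_t$ coming from one of the mesh's concentric cycles, with $2t$ radial paths as the $(k\times 4k)$-grid rungs, and the $t$ parity breaking paths contracted to the edges $x_{2i-1}x_{2i}$), while a transaction of parity breaking paths across the nest — combined with the cylindrical mesh — can be contracted into a parity vortex as well using that the nest ``controls'' the minor. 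Crucially, all of this structure lives inside the mesh $M_i$, which is controlled by $\mathcal{T}_M$, so the resulting odd minor model is controlled by $\mathcal{T}_M$, giving outcome (1).

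In the second case — which occurs for every vortex — set $A_0 \coloneqq \bigcup_{i\in[b]} X_i$, so $|A_0| \le b(4t-2) < 4bt$. Now in $G - A_0$ no odd cycle uses a parity breaking path internal to a vortex; combined with even-facedness of the non-vortex cells and the fact that grounded odd cycles in an even-faced rendition avoiding such vortex-paths must be contractible-free, every odd cycle of the large block of $\mathcal{T}_M - A_0$ is grounded and passes through crosscaps. I would pass to the extended natural embedding and invoke \zcref{lem:oddcycleiffonesided}: after possibly removing a further bounded apex set to make the relevant block 2-connected and non-bipartite (or noting that if it is bipartite we are trivially done), a grounded cycle is odd if and only if its trace is one-sided in $\Sigma$. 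A collection of pairwise vertex-disjoint odd cycles then yields pairwise disjoint one-sided closed curves in $\Sigma$; but a surface of Euler-genus $g$ cannot contain more than $g$ pairwise disjoint one-sided, pairwise non-homotopic noncontractible curves, and cycles that are homotopic but disjoint can be ``absorbed'' via the depth-$d$ linear decomposition of the vortices and the fact that curves in the crosscap-free part cut along a fixed one-sided curve bound genus-reduced pieces. Carefully accounting, each of the $g$ crosscaps contributes at most $b(d+1)$ disjoint odd cycles once we also throw away at most $4bt(d+1)$ vertices to separate the vortex copies, giving odd cycle packing number at most $gb(d+1)$ after removing a set $A$ of size at most $4bt(d+1)$ (which includes $A_0$). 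The runtime is dominated by the $b$ applications of \zcref{lem:paritybreakingApaths} and one application of \zcref{lem:seqplanarcrosscap}, all of which run in $\mathbf{O}(|V(G)||E(G)|)$ time by the Micali–Vazirani matching algorithm.

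\textbf{Main obstacle.} The delicate point is the final counting argument bounding the number of pairwise disjoint one-sided odd cycles by $gb(d+1)$: disjoint odd cycles need not be pairwise non-homotopic, so one must argue that among disjoint cycles passing through a fixed crosscap, at most $b(d+1)$ can be ``parallel'' — this is where the depth-$d$ linear decomposition of each vortex and the boundedness of the breadth $b$ are used, essentially because parallel one-sided curves through one crosscap must all cross the cylindrical mesh region near some vortex and hence form a large transaction there, which the depth bound caps. Making this rigorous (choosing the apex set $A$ so the cut surface falls apart cleanly and re-applying the even-faced/one-sided dictionary on each piece) is the technically heaviest step; everything before it is a fairly direct assembly of the stated lemmas.
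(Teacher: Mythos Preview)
Your overall architecture is right, but there is a genuine gap in the first dichotomy that breaks the argument.

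You apply \zcref{lem:paritybreakingApaths} at each vortex with parameter $2t$, and then in the ``many paths'' case you use \zcref{lem:moveparitybreakingtobranchvtcs} to get $t$ parity breaking paths on branch vertices before invoking \zcref{lem:seqplanarcrosscap}. But \zcref{lem:seqplanarcrosscap} applied to $t$ paths with $t_1 t_2 = t$ only guarantees a sequential or a transaction subcollection of size roughly $\sqrt{t}$, not $t$; and your claim that the transaction outcome ``can be contracted into a parity vortex as well'' is incorrect --- a planar transaction of parity breaking paths across the nest gives no such structure. The paper's key idea, which you are missing, is to exploit the depth bound $d$ of the vortex to \emph{eliminate} the transaction case: apply \zcref{lem:paritybreakingApaths} with parameter $2t(d+1)$, push to $t(d+1)$ paths on branch vertices, and then apply \zcref{lem:seqplanarcrosscap} with $t_1 = t$ and $t_2 = d+1$. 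A transaction of order $d+1$ would, since each parity breaking path must enter the vortex $c_i$, yield a transaction of order $d+1$ in the vortex society of $c_i$, contradicting depth $\le d$. So only the sequential case survives, yielding $t$ sequential parity breaking paths and hence $\mathscr{V}_t$. This is also why the apex set has size $4bt(d+1)$ rather than your $4bt$: the hitting set from \zcref{lem:paritybreakingApaths} has size $\le 4t(d+1)-2$ per vortex.

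Your sketch of the second half (bounding the odd cycle packing number) has the right ingredients but the mechanism you describe --- ``parallel one-sided curves through one crosscap must form a large transaction near some vortex'' --- is not quite how the paper proceeds. The paper collapses each vortex to a single vertex $v_i$, takes a maximal collection $\mathcal{S}$ of pairwise non-crossing one-sided cycles in the resulting graph (so $|\mathcal{S}|\le g$), and for each $C\in\mathcal{S}$ observes that its preimage meets at most $b$ of the original odd cycles, while at each $v_i$ on $C$ a depth-$d$ separator in $\sigma(c_i)$ (total $\le bd$ vertices) disconnects the two sides. Deleting all of this kills at most $g\cdot b(d+1)$ of the odd cycles; any survivor has a trace not crossing $\mathcal{S}$, hence by maximality uses crosscaps an even number of times, contradicting oddness via the two-colouring. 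The cylindrical meshes $M_i$ play no role in this counting step --- they were only needed for the $\mathscr{V}_t$ construction.
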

\begin{proof}
    Let $\tau_1, \ldots, \tau_b$ be such that $\tau_i: V(M_i) \rightarrow [2]$ is a proper two colouring for each $i \in [b]$.
    For each vortex $c_i$, let $\{P^i_1, \ldots, P^i_k\}$ be the radial paths of $M_i$.
    Let $\{C^i_1, \ldots, C^i_k\}$ be the concentric cycles of $M_i$ which give a nest around $c_i$, with the trace of each $C^i_j$ bounding the closed disk $\Delta^i_j$ with $c_i \subseteq \Delta^i_1 \subseteq \ldots \subseteq \Delta^i_k$.

    For each vortex $c_i$, let $(G_i, \Omega_i)$ be the $\Delta^i_{2t}$ society. Suppose that $(G_i, \Omega_i)$ has $2t(d+1)$ parity breaking paths. Note that because $\rho_G$ is an non-orientable, even-faced rendition, each path must use an edge in the vortex $c_i$. By \zcref{lem:moveparitybreakingtobranchvtcs}, there are $t(d+1)$ parity breaking paths whose endpoints are contained in $(P^i_2 \cup P^i_4 \cup \ldots \cup P^i_{4t}) \cap C^i_{2t}$ such that no two endpoints are in the same radial path. Then by \zcref{lem:seqplanarcrosscap}, it must have $d + 1$ in a transaction or $t$ in series.

    Suppose there are $d+1$ paths in a transaction, say $Q_1, \ldots, Q_{d+1}$ indexed naturally. Let $X$ be the segment of $\Omega_i$ containing the ends of $Q_1, \ldots, Q_{d+1}$ and let $Y$ be the segment of $\Omega_{c_i}$ for the vortex society $(\sigma(c_i), \Omega_{c_i})$ lying between $Q_1$ and $Q_{d+1}$ where the paths enter $c_i$. Note that $X,Y,$ the trace of $Q_1$, and the trace of $Q_{d+1}$ bound a disk. Every path in $Q_2, \ldots, Q_{d}$ begins on $X$ and cannot cross the trace of $Q_1$ or $Q_{d+1}$, so they must pass through $Y$ and reach the segment $\Omega_{c_i} \setminus Y$. Thus $Q_1, \ldots, Q_{d+1}$ give a transaction of order $d+1$ across $c_i$, contradicting the depth of $c_i$.
    
    Then we must have $t$ which are sequential, and the union of these parity breaking paths and the $(4t \times t)$-cylindrical mesh contained in $C^i_{2t+1} \cup \ldots \cup C^i_{3t} \cup P^i_1 \cup \ldots \cup P^i_{4t}$ then clearly contains the parity vortex of order $t$. Note that the aforementioned $(4t \times t)$-cylindrical mesh contains a minor model $\varphi_W$ for the $(4t \times t)$-cylindrical grid $W$, and the minor model $\varphi_{\mathcal{V}_t}$ of the parity vortex can be chosen such that $V(\mathcal{V}_t)$ is in bijection with $V(W)$ and for every vertex $x \in V(\mathcal{V}_t)$ corresponding to $y \in V(W)$, $\varphi_{\mathcal{V}_t}(x) \supseteq \varphi_{W}(y)$. Because $W$ is a minor of $M_i$ and $M_i$ is controlled by $\mathcal{T}_M$, $W$ is controlled by $\mathcal{T}_M$. Hence $\mathcal{V}_t$ is controlled by $\mathcal{T}_M$.
    
    We may then assume $(G_i, \Omega_i)$ does not have $2t(d+1)$ parity breaking paths. Therefore by \zcref{lem:paritybreakingApaths}, there exists a set $A \subseteq \bigcup_{i \in [b]}V(G_i)$ of size at most $4bt(d + 1)$ such that after deleting it, $(G_i, \Omega_i)$ has no parity breaking paths for all $i \in [b]$.

    Note that we can apply \zcref{lem:paritybreakingApaths} to each $G_i$ to either find the set $A$ or find $2t(d+1)$ parity breaking paths. For each $G_i$ this takes time $\mathbf{O}(|V(G_i)||E(G_i)|)$, so the total runtime is $\mathbf{O}(|V(G)||E(G)|)$. If we obtain the parity breaking paths, we can then apply \zcref{lem:paritybreakingApaths} to each $G_i$, but now on a set containing one vertex from each of $P^i_2 \cap C^i_{2t}, P^i_4 \cap C^i_{2t}, \ldots, P^i_{4t} \cap C^i_{2t}$. We are then guaranteed to find $t(d+1)$ parity breaking paths, and we can find them in time $\mathbf{O}(|V(G)||E(G)|)$ over all $G_i$. We can then find a subcollection which is sequential in time $\mathbf{O}(|V(G)| + |E(G)|)$ by \zcref{lem:seqplanarcrosscap}. Note that we need not incur the $(t_1t_2)^2$ runtime in \zcref{lem:seqplanarcrosscap} because we need only to find the first option. We then easily obtain our parity vortex as an odd-minor.
    
     It remains to show the large block $B_G$ of $\mathcal{T}_M - A$ has odd-cycle-packing at most $gb(d+1)$. Let $H$ denote the extended natural embedding of $B_G$, and let $\rho$ be its non-orientable, even-faced $\Sigma$-rendition. Note that such an extended natural embedding exists because no cell $c \in C(\rho_G)$ with $(V(\sigma_{\rho_G}(c)) \setminus N_{\rho_G}(c)) \cap V(B_G) \neq \emptyset$ or $E(\sigma_{\rho_G}(c)) \cap E(B_G) \neq \emptyset$ can have $|N(c)| = 1$, because $B_G$ is 2-connected. We may denote subgraphs of $H$ by there preimage in $B_G$ when there can be no confusion.
    
    We replace the large block of $\mathcal{T}_M - A$ with its extended natural embedding so that we can precisely consider the edges that go through a crosscap, which we can think of as a small circular hole in the surface with anti-nodal points identified. Note that by 2-connectivity $(G_i \cap B_G, \Omega_i)$ has a parity breaking path if and only if the vortex society of $c_i$ in $\rho$ does, so every vortex society of $\rho$ has no parity breaking paths.

    Let $\overline{H} = (H - \bigcup_{i \in [b]} V(\sigma(c_i))\setminus N(c_i)) - \bigcup_{i \in [b]} E(c_i)$ and let $\overline{\rho}$ be the restriction of $\rho$ to $\overline{H}$.
    We note that $\overline{\rho}$ is an embedding with no vortices. Note that every block of $\overline{H}$ is either the large block of $\mathcal{T}_M - A - \bigcup_{i \in [b]}V(\sigma(c_i))\setminus N(c_i)$, which contains each $C^i_j$ for $i \in [b]$ and $j \in [2t+2,4t]$, or is embedded inside $\Delta_{2t+1}^i$ for some $i \in [b]$. Because $\overline{\rho}$ is a non-orientable, even-faced rendition, every block inside $\Delta_{2t+1}^i$ for some $i$ is bipartite and contains no cycles with a one-sided trace. If the large block of $\mathcal{T}_M - A - \bigcup_{i \in [b]}V(\sigma(c_i))\setminus N(c_i)$ is bipartite, then $\overline{H}$ is bipartite. We could then extend a two colouring of $\overline{H}$ to a two colouring of $H$ because each vortex society has no parity breaking paths. Thus we may assume by \zcref{lem:oddcycleiffonesided} that every grounded cycle in $\overline{\rho}$ is odd if and only if its trace is one-sided.
    
    Consider the graph formed from $\overline{H}$ by subdividing every edge by the number of times it goes through a crosscap. The resulting graph is bipartite and can be coloured with a 2-colouring $\tau$ which agrees with $\tau_i$ for $i \in [b]$ after possibly inverting the colouring of some of the $\tau_i$. Then $\tau$ can be extended to a 2-colouring of the vortices because each vortex does not contain a parity breaking path. This can be restricted to a 2-colouring of $H$ such that an edge is monochromatic if and only if it goes through a crosscap an odd number of times.

    Suppose for the sake of contradiction that there exists a collection $\mathcal{C}$ of $gb(d+1) + 1$ vertex disjoint odd cycles in $H$. Let $H'$ be the graph formed from $H$ by doing the following for each vortex $c_i$. Delete $V(\sigma(c_i)) \setminus N(c_i)$ and $E(c_i)$, then add a new vertex $v_i$ adjacent to all vertices in $N(c_i)$. Let $\rho'$ be the $\Sigma$-rendition of $H'$ derived from $\rho$ by placing the new vertex $v_i$ in the cell $c_i$, removing $c_i$, and making $v_i$ adjacent to all vertices in $N(c_i)$, whilst adding appropriate non-vortex cells each with $v_i$ and a vertex of $N(c_i)$ as their nodes.
    
    We think of creating $v_i$ through the process of subdividing every edge in $\sigma(c_i)$ and identifying $V(\sigma(c_i)) \setminus N(c_i)$ at a single vertex $v_i$. Then every cycle $C$ in $H$ is mapped to a circuit in $H'$ where every path in $C$ whose edges are found in $E(\sigma(c_i))$ with the endpoints $u \in N(c_i)$ and $w \in N(c_i)$ is replaced by the path of length 2 with edges $uv_i$ and $v_iw$. Note that each cycle is mapped to a circuit which may hit $v_i$ multiple times, but will not hit $v_i$ if it does not contain an edge in $\sigma(c_i)$. Similarly the trace of each cycle in $\mathcal{C}$ under $\rho$ is mapped to a (not necessarily simple) closed curve in $\Sigma$ under $\rho'$. We say two curves in $\rho'$ \textit{do not cross} if there is a way to continuously deform the curve in a small disc around each vortex point $v_i$ such that the resulting curves do not intersect, and otherwise we say that the curves \textit{cross}. That is, curves cross if they intersect at a point other than a vortex point $v_i$, or if the first curve ``enters'' on one side of a vortex point and ``exits'' on the other side, where the sides are relative to some small section of the second curve.

    Let $H_{\mathcal{C}}$ be the subgraph of $H'$ with vertex set equal to the image of $V(\bigcup \mathcal{C})$ under the described procedure. Note that $H_{\mathcal{C}}$ has at most $b$ vertices of degree larger than 3, those being $v_1, \ldots, v_b$. Let $\rho_{\mathcal{C}}$ be the restriction of $\rho'$ to $H_{\mathcal{C}}$.

    Consider a maximal collection of cycles $\mathcal{S}$ in $H_{\mathcal{C}}$ each with a trace that is one-sided and does not cross the trace of any other cycle in the collection. Note that the size of any such collection is at most $g$ because we can perturb the trace of each cycle to obtain a set of pairwise disjoint one-sided curves. For each cycle $C \in \mathcal{S}$, consider the minimal subgraph $C'$ of $H$ which is mapped to $C$. Note that $C'$ intersects at most $b$ of the cycles in $\mathcal{C}$. Consider a vortex point $v_i$ which is hit by $C$, and let $u_1, u_2  \in N_\rho(c_i)$ be the boundary nodes neighbouring $v_i$ in $C$. Consider the vortex society $(\sigma(c_i), \Omega_{c_i})$. Let $X,Y$ be the two disjoint segments of $\Omega_i$ formed by deleting $u_1, u_2$, and let $A'_i \subseteq V(\sigma(c_i))$ be the set of size at most $d$ whose deletion separates $X$ and $Y$ in $\sigma(c_i)$. Note that if we delete $V(C') \cup \bigcup_{i \in [b]} A'_i$, then every remaining path in $H_{\mathcal{C}}$ has a trace under $\rho_{\mathcal{C}}$ which does not cross the trace of $C$. Note that $V(C')$ hits at most $b$ cycles in $\mathcal{C}$ while $|\bigcup_{i \in [b]} A'_i| \leq bd$, so together they hit at most $b(d+1)$ cycles in $\mathcal{C}$. Now consider deleting this set for every cycle in $\mathcal{S}$, and recall $|\mathcal{S}| \leq g$. Call this deleted set $A'$, and note that there is at least one cycle $\tilde{C}$ remaining in $\mathcal{C}$ disjoint from $A'$.
    
    $\tilde{C}$ has a trace which doesn't cross any cycle in $\mathcal{S}$. Suppose towards contradiction that $\tilde{C}$ used a crosscap an odd number of times. Then there must be some cycle in the Eulerian subgraph of $H_{\mathcal{C}}$ induced by $\tilde{C}$ which uses a crosscap an odd number of times, contradicting the maximality of $\mathcal{S}$. Thus $\tilde{C}$ can be 2-coloured by $\tau$ such that there are an even number of monochromatic edges, contradicting that $\tilde{C}$ is odd.
\end{proof}

\begin{corollary}\label{cor:corecorollary}
    There exist functions $\mathsf{mesh}_{\ref{cor:corecorollary}}, \mathsf{apex}_{\ref{cor:corecorollary}}, \mathsf{cycles}_{\ref{cor:corecorollary}} \colon \mathbb{N} \to \mathbb{N}$ such that for all non-negative integers $t$, for every graph $G$, and every $\mathsf{mesh}_{\ref{cor:corecorollary}}(t)$-mesh $M \subseteq G$, one of the following holds.
    \begin{enumerate}
        \item There exists a set $A \subseteq V(G)$ of size at most $\mathsf{apex}_{\ref{cor:corecorollary}}(t)$ such that the large block of $\mathcal{T}_M - A$ is non-empty and has odd cycle packing number at most $\mathsf{cycles}_{\ref{cor:corecorollary}}(t)$,
        \item $G$ has the parity handle $\mathscr{H}_t$ of order $t$ as an odd-minor controlled by $\mathcal{T}_M$, or
        \item $G$ has the parity vortex $\mathscr{V}_t$ of order $t$ as an odd-minor controlled by $\mathcal{T}_M$.
    \end{enumerate}
    In particular, we have that

    {\centering
    $ \displaystyle
        \begin{aligned}
            \mathsf{apex}_{\ref{cor:corecorollary}}(t) \in   & \ \mathbf{O}\big(t^{150}\big) \text{ and} \\
            \mathsf{cycles}_{\ref{cor:corecorollary}}(t), \mathsf{mesh}_{\ref{cor:corecorollary}}(t) \in   & \ \mathbf{O}(t^{161}) .
        \end{aligned}
    $
    \par}
    
    Furthermore, we have $\mathsf{mesh}_{\ref{cor:corecorollary}}(t) > \mathsf{apex}_{\ref{cor:corecorollary}}(t)$, and there exists an algorithm that, given $G$, $M$, and $t$ as above as input, finds one of these outcomes in time $(f_{\ref{thm:ktminormodel}}(t) + \mathsf{poly}(k))|E(G)||V(G)|^2$.
\end{corollary}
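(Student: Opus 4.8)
The plan is to combine the two preceding results, \zcref{thm:strongest_localstructure} and \zcref{thm:localstructureboundsocp}, into a single statement by setting the parameters correctly and threading the three possible outcomes through. First I would fix the auxiliary constants: let $t' \coloneqq \mathsf{clique}_{\ref{thm:strongest_localstructure}}(t)$ and choose the ``detail'' parameter $k$ fed into \zcref{thm:strongest_localstructure} to be $k \coloneqq 4t$, since this is precisely the value of $k$ that \zcref{thm:localstructureboundsocp} requires for its cylindrical meshes (it asks for $k$-cylindrical meshes with $k = 4t$). I would set the ``remainder mesh'' parameter to be, say, $r \coloneqq 1$, since we do not need any additional mesh width for later use here. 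Then $\mathsf{mesh}_{\ref{cor:corecorollary}}(t) \coloneqq \mathsf{mesh}_{\ref{thm:strongest_localstructure}}(t, 1, 4t)$; from the bound $\mathsf{mesh}_{\ref{thm:strongest_localstructure}}(t,r,k) \in \mathbf{O}((t+k)^{161} + r t^{12})$ we immediately get $\mathsf{mesh}_{\ref{cor:corecorollary}}(t) \in \mathbf{O}(t^{161})$.

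Next I would apply \zcref{thm:strongest_localstructure} to $G$ and the $\mathsf{mesh}_{\ref{cor:corecorollary}}(t)$-mesh $M$ with the clique parameter $t$, the detail parameter $4t$, and $r = 1$. This produces one of three outcomes. If outcome \zcref{itm:localstructurenearlybipartite} holds, there is a set $X$ with $|X| \leq \mathsf{apex}^\mathsf{bip}_{\ref{thm:strongest_localstructure}}(t) \in \mathbf{O}(t^{12})$ such that the large block of $\mathcal{T}_M - X$ is bipartite, hence has odd cycle packing number $0$; this gives the first outcome of the corollary (after noting the large block is nonempty because $\mathcal{T}_M$ has order bigger than $|X|$, which holds since $\mathsf{mesh}_{\ref{cor:corecorollary}}(t) > |X|$). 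If outcome \zcref{itm:localstructureparityhandle} holds, $G$ has $\mathscr{H}_t$ as an odd minor controlled by $M$, which is exactly the second outcome of the corollary. The interesting case is outcome \zcref{itm:localstructureevenfacedrendition}: here we get a non-orientable, even $\Sigma$-layout $\Lambda$ centred at $M$ with apex set $A$ of size $\mathsf{apex}_{\ref{thm:strongest_localstructure}}(t, 4t) \in \mathbf{O}(t^{150})$, breadth $b \leq \nicefrac12(t'-3)(t'-4) + t - 1$, depth $d \leq \mathsf{depth}_{\ref{thm:strongest_localstructure}}(t,4t)$, and the surface $\Sigma$ of genus $g < {t'}^2 \in \mathbf{O}(t^8)$; moreover the $\Sigma$-rendition $\rho$ is $(M-A)$-central. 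Crucially, the definition of a non-orientable, even $(k,l)$-$(a,b,d,r)$-$\Sigma$-layout guarantees that, sitting around each vortex, there is a cylindrical $k$-mesh (a vortex segment of the parity surface-wall $D$, whose nest cycles are contractible, cf.\ L6) with concentric cycles bounding disks containing that vortex, and these are pairwise disjoint and controlled by $\mathcal{T}_D$, hence by $\mathcal{T}_M$ since $\mathcal{T}_D$ is a truncation of $\mathcal{T}_M$ (L5). So the hypotheses of \zcref{thm:localstructureboundsocp} are met for the large block of $\mathcal{T}_M - A$ (which is also the large block of $\mathcal{T}_{M'}$ for the submesh $M'$ in the layout, up to the usual identification). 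Applying \zcref{thm:localstructureboundsocp} with clique parameter $t$ then yields either $\mathscr{V}_t$ as an odd minor controlled by $\mathcal{T}_M$ (third outcome of the corollary) or a set $A'$ of size at most $4bt(d+1) \in \mathbf{O}(t^{151} \cdot t^{150}) = \mathbf{O}(t^{301})$ — hmm, I need to recheck this — such that, after deleting $A \cup A'$, the large block has odd cycle packing number at most $g b (d+1)$.

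Let me be more careful about the arithmetic in the layout case, since this is where I expect the main bookkeeping obstacle. The claimed bound $\mathsf{apex}_{\ref{cor:corecorollary}}(t) \in \mathbf{O}(t^{150})$ in the statement suggests that the total apex set should be $\mathbf{O}(t^{150})$, so either the bounds on $b$, $d$ must interact more favourably than a naive product, or the intended reading is that $\mathsf{apex}_{\ref{cor:corecorollary}}(t) \coloneqq \mathsf{apex}_{\ref{thm:strongest_localstructure}}(t, 4t) + 4bt(d+1)$ and one should double-check that $4bt(d+1)$ does not dominate. With $b \in \mathbf{O}(t^8)$ (from $b \le \nicefrac12(t'-3)(t'-4)+t-1$ and $t' \in \mathbf{O}(t^4)$) and $d \in \mathbf{O}(t^{150})$ we would get $4bt(d+1) \in \mathbf{O}(t^{159})$, which would mildly exceed $\mathbf{O}(t^{150})$; so I would want to track the more precise polynomial degrees, possibly revising the claimed exponent to whatever the honest product gives, or observing that the dominant contribution is $\mathsf{apex}_{\ref{thm:strongest_localstructure}}$ if the $\mathbf{O}$-notation in that theorem is loose. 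The safest route in the write-up is to set $\mathsf{apex}_{\ref{cor:corecorollary}}(t) \coloneqq \mathsf{apex}_{\ref{thm:strongest_localstructure}}(t,4t) + 4 b_{\max} t (d_{\max}+1)$ and $\mathsf{cycles}_{\ref{cor:corecorollary}}(t) \coloneqq g_{\max} b_{\max} (d_{\max}+1)$ with $b_{\max}, d_{\max}, g_{\max}$ the explicit upper bounds above, then compute the resulting polynomial degrees and state them; I would adjust the advertised exponents to match the honest computation rather than forcing them. Finally, for the inequality $\mathsf{mesh}_{\ref{cor:corecorollary}}(t) > \mathsf{apex}_{\ref{cor:corecorollary}}(t)$: this follows because $\mathsf{mesh}_{\ref{thm:strongest_localstructure}}(t,r,k)$ is constructed (see its definition) with a factor $100\,\mathsf{clique}_{\ref{thm:strongest_localstructure}}(t)^3$ multiplying a sum that already contains $\mathsf{apex}_{\ref{thm:strongest_localstructure}}(t,k)$ plus strictly positive further terms, so $\mathsf{mesh}_{\ref{thm:strongest_localstructure}}(t,r,k) > \mathsf{apex}_{\ref{thm:strongest_localstructure}}(t,k) + 4 b_{\max} t(d_{\max}+1)$ holds provided the extra terms in $\mathsf{mesh}$ cover $4 b_{\max} t (d_{\max}+1)$, which they do since $\mathsf{mesh}$ contains the summand $\nicefrac12(t-3)(t-4)(2 \mathsf{depth}_{\ref{thm:evensocietyclassification}}(\ldots)+1)$ of comparable or larger order and then multiplies everything by $100\,{t'}^3$. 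For the running time, both \zcref{thm:strongest_localstructure} and \zcref{thm:localstructureboundsocp} run in time $(f_{\ref{thm:ktminormodel}}(t)+\mathsf{poly}(k))|E(G)||V(G)|^2$ and $\mathbf{O}(|V(G)||E(G)|)$ respectively, so the sequential composition is within the stated bound; the algorithm just runs the first, branches on its output, and runs the second if needed, translating the $\mathcal{T}_{D}$-controlled or $\mathcal{T}_{M'}$-controlled objects back to $\mathcal{T}_M$-controlled ones using that $\mathcal{T}_D$ and $\mathcal{T}_{M'}$ are truncations of $\mathcal{T}_M$.
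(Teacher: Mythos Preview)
Your proposal is correct and follows essentially the same route as the paper: apply \zcref{thm:strongest_localstructure} with $k=4t$, dispatch outcomes \zcref{itm:localstructurenearlybipartite} and \zcref{itm:localstructureparityhandle} directly, and in outcome \zcref{itm:localstructureevenfacedrendition} feed the layout into \zcref{thm:localstructureboundsocp}. The only cosmetic difference is that the paper sets $r=0$ rather than $r=1$, and it writes out the constants $b = \nicefrac{1}{2}(t'-3)(t'-4)+t$ and $g = {t'}^2$ explicitly before defining $\mathsf{apex}_{\ref{cor:corecorollary}}$ and $\mathsf{cycles}_{\ref{cor:corecorollary}}$. Your careful hesitation about whether $4bt(d+1)$ genuinely fits under $\mathbf{O}(t^{150})$ is well-placed: with the bounds \emph{as stated} in \zcref{thm:strongest_localstructure} one does get something closer to $\mathbf{O}(t^{159})$, and the paper does not address this discrepancy; your suggestion to simply report the honest exponent is the cleaner way to write it.
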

\begin{proof}
    We set
    \begin{align*}
        t' &= \mathsf{clique}_{\ref{thm:strongest_localstructure}}(t) , \\
        k &= 4t , \\
        b &= \nicefrac{1}{2}(t'-3)(t'-4) + t , \\
        g &= {t'}^2 , \\
        \mathsf{apex}_{\ref{cor:corecorollary}}(t) &= \mathsf{apex}_{\ref{thm:strongest_localstructure}}(t,k) + 4bt(\mathsf{depth}_{\ref{thm:strongest_localstructure}}(t,k)+1) , \\
        \mathsf{cycles}_{\ref{cor:corecorollary}}(t) &= gb(\mathsf{depth}_{\ref{thm:strongest_localstructure}}(t,k) + 1) , \text{ and}\\
        \mathsf{mesh}_{\ref{cor:corecorollary}}(t) &= \mathsf{mesh}_{\ref{thm:strongest_localstructure}}(t,0,k) .
    \end{align*}
    We first apply \zcref{thm:strongest_localstructure}.
    If \zcref{itm:localstructureparityhandle} holds, then we are done.
    If \zcref{itm:localstructurenearlybipartite} holds, then we set $A$ to the corresponding apex set of size at most $\mathsf{apex}^\mathsf{bip}_{\ref{thm:strongest_localstructure}}(t')$ and again we are done. Note $\mathsf{apex}^\mathsf{bip}_{\ref{thm:strongest_localstructure}}(t') \leq \mathsf{apex}_{\ref{thm:strongest_localstructure}}(t',k)$.
    If \zcref{itm:localstructureevenfacedrendition} holds, then we apply \zcref{thm:localstructureboundsocp} to the large block  of $\mathcal{T}_M - A'$ for $A'$ the apex set of $\Lambda$ of size at most $\mathsf{apex}_{\ref{thm:strongest_localstructure}}(t',k)$ given in \zcref{itm:localstructureevenfacedrendition} of \zcref{thm:strongest_localstructure}.
    If we then obtain $\mathcal{V}_t$ as an odd-minor controlled by $\mathcal{T}_M$, we are done.
    Otherwise we set $A$ to contain $A'$ and the apex set given in \zcref{thm:localstructureboundsocp} of size at most $4bt(\mathsf{depth}_{\ref{thm:strongest_localstructure}}(t',k)+1)$ to obtain the desired result.
    Note here that according to the definition of the functions involved in \zcref{thm:strongest_localstructure}, we have 
    \begin{align*}
        \mathsf{apex}_{\ref{thm:strongest_localstructure}}(t',k) + 4bt(\mathsf{depth}_{\ref{thm:strongest_localstructure}}(t',k)+1) &~= \mathsf{apex}_{\ref{thm:strongest_localstructure}}(t',4t) + 2(t'-3)(t'-4) t( \mathsf{depth}_{\ref{thm:strongest_localstructure}}(t',4t) +1) \\
                                                                            &~< \mathsf{mesh}_{\ref{thm:strongest_localstructure}}(t',0,4t) \\
                                                                            &~= \mathsf{mesh}_{\ref{cor:corecorollary}}(t) .
    \end{align*}
    Thus $\mathsf{mesh}_{\ref{cor:corecorollary}}(t) > \mathsf{apex}_{\ref{cor:corecorollary}}(t)$ is guaranteed, which also implies that the large block of $\mathcal{T}_M - A$ is non-empty.
    Finally, we note that the runtime of the procedure emerging from the proof of this result is dominated by the runtime of \zcref{thm:strongest_localstructure}.
\end{proof}
\section{Global structure}\label{sec:localtoglobal}
As in the previous sections, we will need some additional definitions before we can finally prove \zcref{thm:globalstructure}.

\paragraph{Highly linked sets.}
Let $\alpha \in [2/3, 1)_{\mathbb{R}}$.
Moreover, let $G$ be a graph and $X \subseteq V(G)$ be a vertex set. 
A set $S \subseteq V(G)$ is said to be an \emph{$\alpha$-balanced separator} for $X$ if for every component $C$ of $G - S$ it holds that $|V(C) \cap X| \leq \alpha|X|$. 
Let $k$ be a non-negative integer.
We say that $X$ is a \emph{$(k, \alpha)$-linked set} of $G$ if there is no $\alpha$-balanced separator of size at most $k$ for $X$ in $G$.

Given a $(3k, \alpha)$-linked set $X$ of $G$ we define $$\mathcal{T}_{X} \coloneqq \{ (A, B) \in \mathcal{S}_{k+1}(G) ~\!\colon\!~ |X \cap B| > \alpha|X| \}.$$ 
It is not hard to see that $\mathcal{T}_{S}$ is a tangle of order $k+1$ in $G$.

We need an algorithmic way to find, given a highly linked set, a large wall whose tangle is a truncation of the tangle induced by the highly linked set.
This is done in \cite{ThilikosW2024Excluding} by algorithmatising a proof of Kawarabayashi et al.\ from \cite{KawarabayashiTW2021Quickly}. 

\begin{proposition}[Thilikos and Wiederrecht \cite{ThilikosW2024Excluding} (see Theorem 4.2.)]\label{thm:algogrid}
Let $k\geq 3$ be an integer and $\alpha\in [2/3,1)$.
There exist universal constants $c_1, c_2\in\mathbb{N}\setminus\{ 0\}$, and an algorithm that, given a graph $G$ and a $(c_1k^{20},\alpha)$-linked set $X\subseteq V(G)$ computes in time $2^{\mathbf{O}(k^{c_2})}|V(G)|^2|E(G)|\log(|V(G)|)$ a $k$-wall $W\subseteq G$ such that $\mathcal{T}_W$ is a truncation of $\mathcal{T}_X$.
\end{proposition}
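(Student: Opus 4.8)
The plan is to reduce this to the now-standard machinery for extracting a ``controlled'' wall from a tangle, made algorithmic. First I would pass from the linked set $X$ to the tangle $\mathcal{T}_X$ of order $\theta \coloneqq c_1k^{20}/3 + 1$ in $G$; as observed right above the statement this is immediate, the tangle axiom following from the fact that $X$ admits no $\alpha$-balanced separator of size at most $c_1k^{20}/3$, so that no three small sides of separations in $\mathcal{T}_X$ can jointly cover $X$, let alone $G$. Crucially, $\mathcal{T}_X$ comes with a trivial membership oracle: given any separation $(A,B)$ of order less than $\theta$ one decides whether $(A,B)\in\mathcal{T}_X$ by testing $|X\cap B| > \alpha|X|$ in $\mathbf{O}(|V(G)|)$ time. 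Thus it suffices to find, given $\mathcal{T}_X$ through this oracle, a $k$-wall $W\subseteq G$ with $\mathcal{T}_W\subseteq\mathcal{T}_X$.

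To do that I would run the iterative wall-building procedure of Kawarabayashi, Thomas, and Wollan in the form algorithmatised by Thilikos and Wiederrecht: start from a crude wall of bounded order (obtained greedily, or from a low-order grid minor produced by the polynomial grid theorem together with the tangle), and repeatedly enlarge it. At each step one uses that $\mathcal{T}_X$ has order polynomial in $k$ to route a fresh family of pairwise vertex-disjoint paths connecting the current wall to itself ``across the big side'' of $\mathcal{T}_X$, thereby producing a wall with more rows and more columns, and then one re-centres so that the tangle induced by the enlarged wall still points the same way as $\mathcal{T}_X$ on every small separation. That polynomially-many-in-$k$ tangle order (the exponent $20$) suffices to drive this loop all the way up to a $k$-wall is exactly the polynomial grid/wall theorem in tangle form, which I would quote. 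The point of the algorithmic version is that each enlargement uses only max-flow computations and \emph{approximate} balanced separators (important-separator branching), not brute-force enumeration of small separations; this contributes the $2^{\mathbf{O}(k^{c_2})}$ factor, each step costs $\mathbf{O}(|V(G)|\,|E(G)|\log|V(G)|)$ with the $\log$ from dynamic connectivity, and there are $\mathbf{O}(|V(G)|)$ steps, giving the claimed $2^{\mathbf{O}(k^{c_2})}|V(G)|^2|E(G)|\log|V(G)|$ bound.

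The main obstacle --- and the reason this cannot be done in a few lines --- is the algorithmic re-centring: one must ensure $\mathcal{T}_W\subseteq\mathcal{T}_X$, i.e.\ that for \emph{every} separation $(A,B)$ with $|A\cap B| < k$ the $W$-majority side coincides with the $\mathcal{T}_X$-big side, and this cannot be checked directly. Instead one maintains throughout the loop an invariant formulated in terms of $\alpha$-balanced separators for $X$ relative to the current wall, proves that the enlargement move preserves it, and argues that it implies the desired tangle inclusion; the delicate point is that the approximate separators returned by the flow subroutine must be shown to be good enough to preserve this invariant while the cumulative loss in wall order stays within the polynomial budget encoded by $k^{20}$. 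Once this bookkeeping is in place, reading off the final $k$-wall $W$ with $\mathcal{T}_W$ a truncation of $\mathcal{T}_X$ is routine.
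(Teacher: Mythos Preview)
The paper does not prove this proposition at all: it is quoted verbatim as an external result of Thilikos and Wiederrecht \cite{ThilikosW2024Excluding} (their Theorem~4.2) and used as a black box. So there is nothing in the paper to compare your sketch against.

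Your outline is a plausible high-level description of how the cited result is obtained (the Kawarabayashi--Thomas--Wollan wall-from-tangle construction made algorithmic), but since the present paper simply imports the statement, you should do the same: cite \cite{ThilikosW2024Excluding} and move on, rather than attempting to reprove it here.
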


Additionally, we need to be able to find balanced separators efficiently.

\begin{proposition}[Reed \cite{Reed1992Finding}]\label{prop_findsep}
    There exists an algorithm that takes as input an integer $k$, a graph $G$, and a set $X \subseteq V(G)$ of size at most $3k+1$, and finds, in time $2^{\mathbf{O}(k)}m$, either a $\nicefrac{2}{3}$-balanced separator of size at most $k$ for $X$ or correctly determines that $X$ is $(k,\nicefrac{2}{3})$-linked in $G$.
\end{proposition}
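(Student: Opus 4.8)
The plan is to prove Reed's result directly, reducing it to a bounded number of maximum vertex-disjoint-path computations. Write $m' \coloneqq |X| \le 3k+1$; if $m' \le 1$ the empty set is already a $\nicefrac{2}{3}$-balanced separator, so we may assume $m' \ge 2$. The heart of the argument is to replace the (somewhat unwieldy) notion of a balanced separator by a two-way vertex cut. I claim that $X$ admits a $\nicefrac{2}{3}$-balanced separator of size at most $k$ if and only if there exist a set $S_X \subseteq X$ with $|S_X| \le k$ and a partition $X \setminus S_X = X_1 \cup X_2$ with $|X_1|, |X_2| \le \nicefrac{2}{3}\, m'$ such that $G - S_X$ contains a vertex set $S'$ with $|S'| \le k - |S_X|$, disjoint from $X_1 \cup X_2$, whose deletion destroys every $X_1$--$X_2$ path. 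The ``if'' direction is immediate: since $S_X \subseteq X$ and $S'$ separates $X_1$ from $X_2$ in $G - S_X$, each component of $G - (S_X \cup S')$ meets $X$ inside only one of $X_1$, $X_2$ and hence in at most $\nicefrac{2}{3}\, m'$ vertices. For the converse, given a $\nicefrac{2}{3}$-balanced separator $S$ with $|S| \le k$, set $S_X \coloneqq S \cap X$; the components of $G - S$ have $X$-counts each at most $\nicefrac{2}{3}\, m'$ and summing to at most $m'$, which is $\tfrac{3}{2}$ times the bound $\nicefrac{2}{3}\, m'$, so a short greedy bin-packing argument splits these components into two groups, each with $X$-count at most $\nicefrac{2}{3}\, m'$; their $X$-vertices form the desired $X_1, X_2$, and $S \setminus S_X$ is a witnessing cut.

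Given this reformulation, the algorithm is to enumerate all candidate pairs $(S_X, \{X_1, X_2\})$: there are $2^{\mathbf{O}(k)}$ choices of $S_X \subseteq X$ with $|S_X| \le k$ (as $|X| \le 3k+1$), and for each of them $2^{\mathbf{O}(k)}$ balanced bipartitions of $X \setminus S_X$. For each candidate we compute, in $G - S_X$, a maximum collection of internally-vertex-disjoint $X_1$--$X_2$ paths that avoid $X_1 \cup X_2$ internally, together with a minimum separating set, by the standard node-splitting reduction to an integral $s$--$t$ flow (split each vertex outside $X_1 \cup X_2$ into an in/out pair joined by a unit-capacity arc, leave the vertices of $X_1 \cup X_2$ uncapacitated, and add a super-source joined to $X_1$ and a super-sink joined from $X_2$); by \zcref{prop:mengersthm} the minimum cut $S'$ obtained this way is exactly a smallest $X_1$--$X_2$ separator avoiding $X_1 \cup X_2$. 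We abort each flow computation after $k+1$ augmentations, since all we need to decide is whether $|S'| \le k - |S_X|$; as every augmenting path is found in $\mathbf{O}(m)$ time (cf.\ the discussion below \zcref{prop:mengersthm}), each candidate costs $\mathbf{O}(km)$, and the whole search costs $2^{\mathbf{O}(k)} \cdot \mathbf{O}(km) = 2^{\mathbf{O}(k)}\, m$. If some candidate yields $|S_X| + |S'| \le k$ we return $S_X \cup S'$, which is a $\nicefrac{2}{3}$-balanced separator of size at most $k$ by the ``if'' direction; otherwise, by the ``only if'' direction, no such separator exists and we report that $X$ is $(k, \nicefrac{2}{3})$-linked.

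The content here is modest, as this is essentially a classical observation, and I expect the two points that need actual care to be: the ``only if'' direction of the reformulation, where one must both account for separator vertices lying in $X$ and verify the bin-packing step (with each item bounded by $\nicefrac{2}{3}\, m'$ and total at most $m'$, a maximal group of sum $\le \nicefrac{2}{3}\, m'$ that still has sum $< \nicefrac{1}{3}\, m'$ forces every remaining item to be small, so the remainder is balanced greedily); and pinning the running time to $2^{\mathbf{O}(k)}\, m$ rather than $2^{\mathbf{O}(k)}\, m \log n$ or $2^{\mathbf{O}(k)}\, \mathbf{poly}(n)$, which is exactly why we cap the number of flow augmentations at $\mathbf{O}(k)$ and use plain breadth-first augmenting paths rather than invoking a generic maximum-flow routine. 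I would also note at the outset that components of $G$ disjoint from $X$ can simply be discarded, since they never violate balance, so that the ``$m$'' in the bound may be taken to be $|E(G)|$ of the relevant part of $G$ (and the trivial case $|E(G)| = 0$ is handled by the remark on small $|X|$).
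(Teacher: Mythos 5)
The paper does not prove this statement at all: it is imported verbatim as a black box from Reed's 1992 paper (and is the standard separator-finding routine used throughout the graph-minors algorithmic literature), so there is no in-paper proof to compare against. Your self-contained argument is the classical one and is essentially correct: the reformulation of ``$\exists$ a $\nicefrac{2}{3}$-balanced separator of size $\le k$'' as ``$\exists\, S_X \subseteq X$ and a balanced bipartition $X\setminus S_X = X_1 \cup X_2$ admitting an $X_1$--$X_2$ vertex cut of size $\le k - |S_X|$ in $G - S_X$'' is exactly how Reed (and, e.g., the Cygan et al.\ textbook) prove this, and the $2^{\mathbf{O}(k)}$ enumeration combined with flow computations truncated after $k+1$ augmenting paths gives the claimed $2^{\mathbf{O}(k)}m$ bound. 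Two small points. First, your parenthetical justification of the bin-packing step is stated backwards: if an inclusion-maximal group $A$ with $\mathrm{sum}(A) \le \nicefrac{2}{3}m'$ has $\mathrm{sum}(A) < \nicefrac{1}{3}m'$, then maximality forces every \emph{remaining} item to exceed $\nicefrac{2}{3}m' - \mathrm{sum}(A) > \nicefrac{1}{3}m'$, i.e.\ to be \emph{large}, whence there are at most two of them and one checks directly that $\{x\}$ and $A \cup \{y\}$ both stay below $\nicefrac{2}{3}m'$ (using $\mathrm{sum}(A) + y \le m' - x < \nicefrac{2}{3}m'$); the conclusion you need is true, but the reason as written (``forces every remaining item to be small'') is not. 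Second, when reducing ``$S'$ destroys every $X_1$--$X_2$ path'' to cuts hitting only paths internally disjoint from $X_1 \cup X_2$, one should note that every $X_1$--$X_2$ path contains such a minimal subpath, so the Menger/flow computation does certify the stronger separation property used in the ``if'' direction; this is routine but worth a sentence. With those two remarks filled in, the proof is complete and matches the cited source in both method and bounds.
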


Finally, to now prove \zcref{thm:globalstructure}, we prove a stronger statement by induction.
The core structure of this proof is derived from one of the core proofs in \cite{RobertsonS1991Graph}, though in our peculiar situation we depart from it noticeably in the second major case.

\begin{theorem}\label{thm:globalstructure_induction}
    There exist functions $\mathsf{width}_{\ref{thm:globalstructure_induction}}, \mathsf{link}_{\ref{thm:globalstructure_induction}} \colon \mathbb{N} \to \mathbb{N}$ such that for every graph $G$, positive integer $t$, and vertex set $X \subseteq V(G)$ with $|X| \leq 3\mathsf{link}_{\ref{thm:globalstructure_induction}}(t) + 1$ one of the following holds:
    \begin{enumerate}
        \item $G$ has the parity handle of order $t$ as an odd minor,
        \item $G$ has the parity vortex of order $t$ as an odd minor, or
        \item there exists a tame, rooted \ocptd $(T,r,\beta,\alpha)$ for $G$ of width at most $\mathsf{width}_{\ref{thm:globalstructure_induction}}(t)$ such that $X \subseteq \alpha(r)$ and $|V(T)| \in \mathbf{O}(|V(G)|)$.
    \end{enumerate}
    In particular, $\mathsf{link}_{\ref{thm:globalstructure_induction}}(t), \mathsf{width}_{\ref{thm:globalstructure}}(t) \in \mathbf{O}(t^{3220})$ and there also exists an algorithm that, given $G$ and $k$ as above as input finds one of these outcomes in time $\max(f_{\ref{thm:ktminormodel}}(t),2^{\mathbf{poly}(t)})|V(G)|^6$.
\end{theorem}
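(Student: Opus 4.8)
\textbf{Proof strategy for \zcref{thm:globalstructure_induction}.}
The plan is to prove the statement by induction on $|V(G)|$, using the highly-linked-set/tangle machinery as in the classical derivation of the GMST from the Local Structure Theorem in \cite{RobertsonS1991Graph}, but with the two substantive departures dictated by our setting: (i) the ``flat'' pieces must instead be turned into pieces of bounded odd cycle packing number via \zcref{cor:corecorollary}, and (ii) one of the obstruction outcomes (the parity vortex arising from a vortex hosting a parity-breaking transaction) requires some extra care because a parity vortex, unlike a $K_t$-minor, does not immediately give a tangle of large order.
We set $\mathsf{link}_{\ref{thm:globalstructure_induction}}(t)$ to be a polynomial in $\mathsf{mesh}_{\ref{cor:corecorollary}}(t)$ (on the order of $c_1\mathsf{mesh}_{\ref{cor:corecorollary}}(t)^{20}$, borrowing the exponent from \zcref{thm:algogrid}) and $\mathsf{width}_{\ref{thm:globalstructure_induction}}(t)$ to be $3\mathsf{link}_{\ref{thm:globalstructure_induction}}(t)+1+\mathsf{apex}_{\ref{cor:corecorollary}}(t)+\mathsf{cycles}_{\ref{cor:corecorollary}}(t)$ or so; chasing the bounds through yields the claimed $\mathbf{O}(t^{3220})$.

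First I would dispose of the base/easy case: given $G$ and $X$ with $|X|\le 3\mathsf{link}_{\ref{thm:globalstructure_induction}}(t)+1$, run \zcref{prop_findsep} with $k=\mathsf{link}_{\ref{thm:globalstructure_induction}}(t)$. If $X$ admits a $\nicefrac23$-balanced separator $S$ of size at most $k$, then set $Z\coloneqq X\cup S$, take the components $C_1,\dots,C_m$ of $G-Z$, and recurse on each $G_i\coloneqq G[V(C_i)\cup Z]$ with highly-linked set $Z$ (noting $|Z|\le 3\mathsf{link}_{\ref{thm:globalstructure_induction}}(t)+1$ after we re-check that $Z$ itself is not $(\mathsf{link}_{\ref{thm:globalstructure_induction}}(t),\nicefrac23)$-linked in each $G_i$; if it is, we are in the ``highly linked'' case for that piece, handled below, but with $Z$ playing the role of $X$). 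Each $G_i$ has fewer vertices, so by induction each either produces one of the obstructions (which we return, since they are odd-minor-monotone by \zcref{lemma:OCPtwIsOddMinorMonotone}) or a tame rooted \ocptd with $Z$ in the apex set of its root; we glue these along a new root node $r$ with $\beta(r)=\alpha(r)\coloneqq Z$, attaching each $G_i$'s root as a child, and since every adhesion is contained in $Z=\alpha(r)$ the result is tame and $X\subseteq Z=\alpha(r)$. If instead \zcref{prop_findsep} certifies $X$ is $(\mathsf{link}_{\ref{thm:globalstructure_induction}}(t),\nicefrac23)$-linked, we pass to the main case.

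In the main case we have a $(c_1\mathsf{mesh}_{\ref{cor:corecorollary}}(t)^{20},\nicefrac23)$-linked set $X$ (after upgrading the linkedness exponent, which is why $\mathsf{link}_{\ref{thm:globalstructure_induction}}$ is a $20$th-power polynomial). Apply \zcref{thm:algogrid} to obtain a $\mathsf{mesh}_{\ref{cor:corecorollary}}(t)$-wall $W$ (hence a $\mathsf{mesh}_{\ref{cor:corecorollary}}(t)$-mesh $M$) with $\mathcal{T}_W$ a truncation of $\mathcal{T}_X$. Now feed $G$ and $M$ into \zcref{cor:corecorollary}. Two of its outcomes — a parity handle $\mathscr{H}_t$ or a parity vortex $\mathscr{V}_t$ as an odd minor — are exactly the first two conclusions, and we return them. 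In the remaining outcome we get $A\subseteq V(G)$ with $|A|\le\mathsf{apex}_{\ref{cor:corecorollary}}(t)$ such that the large block $B$ of $\mathcal{T}_M-A$ is nonempty with $\ocp(B)\le\mathsf{cycles}_{\ref{cor:corecorollary}}(t)$. Following the RS strategy: let $Y$ be the set of vertices of $B$ together with $A$; roughly, $B$ with its small attachment set to the rest of $G$ will be the ``central bag'' and everything hanging off it gets handled recursively. Concretely, for each component $C$ of $G-V(B)-A$ with neighbourhood $N_C\subseteq V(B)\cup A$ (which has bounded size because $\mathcal{T}_M$ has large order and $B$ is its large block, so attachments to $B$ are along separations of bounded order), recurse on $G[V(C)\cup N_C]$ with highly-linked set $N_C$. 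We then form the root bag $\beta(r)\coloneqq V(B)\cup A\cup X$, apex set $\alpha(r)\coloneqq A\cup X\cup (\text{a vertex cover of all attachment adhesions inside } B)$, and attach the recursively obtained decompositions as children. The key point making this tame and bounded-width is that $\ocp(G[\beta(r)\setminus\alpha(r)])=\ocp(B\setminus(\text{small set}))\le\mathsf{cycles}_{\ref{cor:corecorollary}}(t)$, so $|\alpha(r)|+\ocp(\dots)$ is within the claimed width, and the adhesions are contained in $\alpha(r)$.

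\textbf{The main obstacle} I anticipate is the bookkeeping around \ref{ocp3} and tameness when stitching decompositions: we must guarantee that after carving out $B$, every $B$-path of $G-\alpha(r)$ with endpoints in $\beta(r)$ is actually captured inside $\beta(r)$, which forces $\alpha(r)$ to absorb a bounded number of extra ``cut'' vertices (here King's/König-type arguments as in the proof of \zcref{lemma:GDelta} are the template), and that the child components attach along adhesions entirely inside $\alpha(r)$ so that tameness (at most one non-apex vertex per adhesion) is preserved; showing this requires the attachments of the children to $B$ to be controlled, which is where the fact that $\mathcal{T}_M$ has order far exceeding $\mathsf{apex}_{\ref{cor:corecorollary}}(t)+\mathsf{cycles}_{\ref{cor:corecorollary}}(t)$ (guaranteed by $\mathsf{mesh}_{\ref{cor:corecorollary}}(t)>\mathsf{apex}_{\ref{cor:corecorollary}}(t)$ and the polynomial slack we built into $\mathsf{link}_{\ref{thm:globalstructure_induction}}$) is used. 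The $|V(T)|\in\mathbf{O}(|V(G)|)$ bound follows because every recursive call strictly decreases the vertex count and each call adds $\mathbf{O}(1)$ new nodes plus the children, a standard linear-recursion-tree count, and the runtime $\max(f_{\ref{thm:ktminormodel}}(t),2^{\mathbf{poly}(t)})|V(G)|^6$ follows from the $|V(G)|^2|E(G)|\log|V(G)|$ cost of \zcref{thm:algogrid}, the $(f_{\ref{thm:ktminormodel}}(t)+\mathsf{poly}(t))|E(G)||V(G)|^2$ cost of \zcref{cor:corecorollary}, the $2^{\mathbf{O}(k)}m$ cost of \zcref{prop_findsep}, and the linear depth/width of the recursion. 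Finally, \zcref{thm:globalstructure} is the special case $X=\emptyset$ of \zcref{thm:globalstructure_induction}.
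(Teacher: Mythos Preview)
Your overall architecture matches the paper's: induct, split on whether $X$ has a small $\nicefrac23$-balanced separator or is highly linked, and in the linked case extract a mesh via \zcref{thm:algogrid}, feed it to \zcref{cor:corecorollary}, and either return an obstruction or make the bounded-\ocp\ block $B$ of $\mathcal{T}_M-A$ the root bag and recurse on what hangs off it. But several steps, as written, fail.

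In Case~1 your claim $|Z|=|X\cup S|\le 3\mathsf{link}(t)+1$ is simply false: it can be $4\mathsf{link}(t)+1$, so the induction hypothesis does not apply to the children, and iterating would let the set grow without bound. The paper instead recurses with the \emph{piece-local} set $X_i'=(X\cap V(G_i))\cup S$, which has size at most $\lfloor\tfrac23|X|\rfloor+|S|\le 3\mathsf{link}(t)$ because $S$ is balanced. Relatedly, the paper inducts on $|V(G)\setminus X|$ after padding $X$ to size exactly $3\mathsf{link}(t)+1$; this is what forces progress, whereas induction on $|V(G)|$ alone can loop (for small $X$, a balanced separator may be empty). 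In Case~2 your justification that $|N_C|$ is small via the tangle is the wrong argument: the bound comes from $B$ being a \emph{block} of $G-A$, so any component of $(G-A)-V(B)$ has at most one neighbour in $V(B)$, hence $|N_C|\le 1+|A|$. The tangle is instead what bounds $|X\cap V(C)|$, and you must include $X\cap V(C)$ (and $A$) in the child's recursion parameter---the paper uses $X_i=(V(G_i)\cap X)\cup A\cup(V(B_0)\cap V(G_i))$---or else those vertices lie in the $r$--$r_i$ adhesion but not in $\alpha_i(r_i)$, and tameness fails. With that fix, the single $B$-neighbour is the at-most-one non-apex adhesion vertex, and \ref{ocp3} at $r$ follows directly from $B$ being a block of $G-A\supseteq G-\alpha(r)$; no K\H{o}nig-type argument is needed.
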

\begin{proof}
    Our functions for this proof are:
    \begin{align*}
        \mathsf{link}_{\ref{thm:globalstructure_induction}}(t)     \coloneqq~& c_1 \mathsf{mesh}_{\ref{cor:corecorollary}}(t)^{20}, \text{ and} \\
        \mathsf{width}_{\ref{thm:globalstructure_induction}}(t)    \coloneqq~& \max(4\mathsf{link}_{\ref{thm:globalstructure_induction}}(t), \mathsf{cycles}_{\ref{cor:corecorollary}}(t)) + 1 ,
    \end{align*}
    where $c_1$ is the first constant from \zcref{thm:algogrid}.
    Using the estimates on our functions in \zcref{cor:corecorollary} we thus have $\mathsf{link}_{\ref{thm:globalstructure_induction}}(t),\mathsf{width}_{\ref{thm:globalstructure}}(t) \in \mathbf{O}(t^{3220})$ as promised.

    We proceed by induction on $|V(G) \setminus X|$ and note that if $|V(G)| \leq 3\mathsf{link}_{\ref{thm:globalstructure_induction}} + 1$, we may simply use a tree with a single vertex and assign all vertices to the unique bag in this decomposition to get our third desired outcome if we let $V(G)$ also be the apex set of this bag.
    Therefore, we may assume that $|V(G) \setminus X| \geq 3\mathsf{link}_{\ref{thm:globalstructure_induction}} + 2$.
    If $|X| \leq 3\mathsf{link}_{\ref{thm:globalstructure_induction}}$, we may add vertices arbitrarily to $X$ until we have $|X| = 3\mathsf{link}_{\ref{thm:globalstructure_induction}} + 1$ and then apply our induction hypothesis to be done.
    Thus we may further assume that $|X| = 3\mathsf{link}_{\ref{thm:globalstructure_induction}} + 1$.

    By now applying \zcref{prop_findsep} to $X$, we find one of two possible outcomes in $(2^{\mathsf{poly}(t)}m)$-time:
    \begin{enumerate}
        \item A $\nicefrac{2}{3}$-balanced separator $S$ for $X$ in $G$ with $|S| \leq \mathsf{link}_{\ref{thm:globalstructure_induction}}(t)$, or
        \item $X$ is $(\mathsf{link}_{\ref{thm:globalstructure_induction}}(t),\nicefrac{2}{3})$-linked in $G$.
    \end{enumerate}
    \textbf{Case 1:} There exists a $\nicefrac{2}{3}$-balanced separator $S$ for $X$ in $G$ with $|S| \leq \mathsf{link}_{\ref{thm:globalstructure_induction}}(t)$.

    Let $G_1', \ldots, G_\ell'$ be the components of $G - S$ and for each $i \in [\ell]$, let $G_i \coloneqq G[V(G_i') \cup S]$.
    Moreover, for each $i \in [\ell]$, let $X_i' \coloneqq (V(G_i) \cap X) \cup S$.
    By definition of balanced separators, we have
    \begin{align*}
        |X_i'|  &~\leq \lfloor \nicefrac{2}{3}(3\mathsf{link}_{\ref{thm:globalstructure_induction}}(t)+1) \rfloor + \mathsf{link}_{\ref{thm:globalstructure_induction}}(t) \\
                &~\leq 3\mathsf{link}_{\ref{thm:globalstructure_induction}}(t) .
    \end{align*}
    Thus we can construct a tame, rooted \ocptd $(T,r,\beta,\alpha)$ with the desired properties as follows.
    Introduce a vertex $r$, set $\beta(r) \coloneqq X \cup S$, and set $\alpha(r) \coloneqq \beta(r)$.
    For each $i \in [\ell]$ where $V(G_i) = X_i'$, we introduce a vertex $x_i$ to the tree, make it adjacent to $r$, set $\beta(x_i) \coloneqq X_i'$, and set $\alpha(x_i) \coloneq \beta(x_i)$.
    For each of the remaining $i \in [\ell]$, there exists some $v_i \in V(G_i) \setminus X_i'$, allowing us to set $X_i \coloneqq X_i' \cup \{ v_i \}$ whilst ensuring that $|V(G_i) \setminus X_i| < |V(G) \setminus X|$.
    This allows us to apply the induction hypothesis to $G_i$ and $X_i$, which either yields $\mathscr{H}_t$ or $\mathscr{V}_t$ as an odd minor, in which case we are done, or there exists a tame, rooted \ocptd $(T_i,r_i,\beta_i,\alpha_i)$ with the desired properties.
    Assuming we do not find $\mathscr{H}_t$ or $\mathscr{V}_t$ as an odd minor, we may then add $T_i$ to our tree, make $r_i$ adjacent to $r$, set $\beta(x) \coloneqq \beta_i(x)$, and set $\alpha(x) \coloneqq \alpha_i(x)$ for all $x \in V(T_i)$.
    This concludes the proof of our theorem in this case.

    \textbf{Case 2:} $X$ is $(\mathsf{link}_{\ref{thm:globalstructure_induction}}(t),\nicefrac{2}{3})$-linked in $G$.

    We start by applying \zcref{thm:algogrid} to find a $\mathsf{mesh}_{\ref{cor:corecorollary}}(t)$-mesh $M$ in $G$ such that the tangle $\mathcal{T}_M$ is a truncation of the tangle $\mathcal{T}_X$, which takes $2^{\mathbf{O}(k^{c_2})}|V(G)|^2|E(G)|\log(|V(G)|)$-time.
    Further, we apply \zcref{cor:corecorollary} to $M$, which takes $(f_{\ref{thm:ktminormodel}}(t) + \mathsf{poly}(k))|E(G)||V(G)|^2$-time.
    If this yields $\mathscr{H}_t$ or $\mathscr{V}_t$ as an odd minor, we are done.
    Thus we may suppose that instead we find a set $A \subseteq V(G)$ with $|A| \leq \mathsf{apex}_{\ref{cor:corecorollary}}(t)$ such that the large block $B$ of $\mathcal{T}_M - A$ is non-empty and has odd cycle packing number at most $\mathsf{cycles}_{\ref{cor:corecorollary}}$.
    
    Let $B_0,B_1, \ldots , B_\ell$ be the blocks of $G - A$, such that $B_0 = B$, and there exists some $r \in [0,\ell]$ with the property that $H_0 = \bigcup_{i=0}^r B_i$ is the component of $G-A$ that contains $B_0$ and thus $B_{r+1}, \ldots , B_\ell$ all reside in different components of $G-A$.
    Let $(T',\beta')$ be the block decomposition (see \zcref{prop:blockdecomposition}) of $H_0$, where we let $r \in V(T')$ be the vertex corresponding to $B_0$ and let $t_1', \ldots , t_y'$ be the neighbours of $r$ in $T'$.
    If $r$ has no neighbours we may skip ahead to treating the other blocks in the graph.
    Thus we may suppose that $y$ is a positive integer.
    For each $i \in [y]$, let $G_i \subseteq G - A$ be the graph induced by the component of $T' - r$ containing $t_i'$.

    Since $B_0$ is the large block of $\mathcal{T}_M - A$, we have that $\mathcal{T}_M$ is a truncation of the tangle $\mathcal{T}_X$, and
    \[ |A| + 1 \leq \mathsf{link}_{\ref{thm:globalstructure_induction}}(t) , \]
    we in particular know that $V(G_i) \cap X$ contains at most $\nicefrac{2}{3}|X| \leq 2\mathsf{link}_{\ref{thm:globalstructure_induction}}(t)$ vertices of $X$.
    As $B_0$ is a block, there exists a vertex in $V(B_0) \setminus V(G_i)$ for each $i \in [y]$.
    We apply our induction hypothesis to $G_i$ with $X_i \coloneqq (V(G_i) \cap X) \cup A \cup (V(B_0) \cap V(G_i))$ being our special set, whilst noting that
    \[ |X_i| = |(V(G_i) \cap X) \cup A \cup (V(B_0) \cap V(G_i))| = 2\mathsf{link}_{\ref{thm:globalstructure_induction}}(t) + \mathsf{link}_{\ref{thm:globalstructure_induction}}(t) + 1 = 3\mathsf{link}_{\ref{thm:globalstructure_induction}}(t) + 1 . \]
    Thus for each $i \in [y]$ we receive a tame, rooted \ocptd $(T_i,r_i,\beta_i,\alpha_i)$ of width at most $\mathsf{width}_{\ref{thm:globalstructure_induction}}(t)$ such that $X_i \subseteq \alpha_i(r_i)$.
    We then construct our tame, rooted \ocptd $(T_0',r_0',\beta_0',\alpha_0')$ for $G_0$ by starting with $r$, adding $T_i$ for each $i \in [y]$, making $r$ and $r_i$ adjacent, setting $\beta_0(r) = V(B_0) \cup A$, $\alpha_0(r) = A \cup X$, and $\beta_0(x) = \beta_i(x)$, as well as $\alpha_0(x) = \alpha_i(x)$ for all $i \in [y]$.
    The tameness of the resulting \ocptd is preserved by construction.

    If we now have $r = \ell$, we are done.
    Thus we may suppose that $r \neq \ell$ and there exists a positive $k$ such that $H_0, H_1, \ldots, H_k$ are the components of $G-A$.
    Again, due to $B_0$ being the large block of $\mathcal{T}_M - A$, the fact that $B_0$ is non-empty, and $\mathcal{T}_M$ being a truncation of the tangle $\mathcal{T}_X$, we know that $V(H_i) \setminus X$ contains at most $\nicefrac{2}{3}|X| \leq 2\mathsf{link}_{\ref{thm:globalstructure_induction}}(t)$ vertices of $X$.
    We can again apply our induction hypothesis on each $i \in [k]$ to find a tame, rooted \ocptd $(T_i,r_i,\beta_i,\alpha_i)$.
    This allows us to construct the tame, rooted \ocptd $(T,r,\beta,\alpha)$ for $G$ by adding $T_i$ to $T_0$ for each $i \in [k]$, setting $\beta(x) = \beta_i(x)$, and $\alpha(x) = \alpha_i(x)$ for each $i \in [0,k]$.
    Again, we achieve tameness by construction and it is easy to observe that, again by construction, we have $X \subseteq \alpha(r)$.
    Since $\alpha(r) = A \cup X$ and $|A \cup X| \leq 4\mathsf{link}_{\ref{thm:globalstructure_induction}}(t)$, we can confirm that the width of $(T,r,\beta,\alpha)$ is indeed bounded by $\mathsf{width}_{\ref{thm:globalstructure_induction}}(t)$.
    This completes our proof.
\end{proof}

\zcref{thm:globalstructure} is an immediate corollary of this result.
\section{Integer programming}\label{sec:integerprogramming}
In this section we seek to extend our results on \mis to solving integer programs. The main goal of this section is to prove \zcref{cor:IPsWork}, and along the way we seek to explain a deeper connection between \mis, integer programming, signed graphs, and totally $\Delta$-modular matrices. We first need to introduce signed graphs, which we discuss in detail here.

\subsection{Signed graphs}
A \textit{signed graph} is a pair $(G, \gamma)$ where $G$ is a (multi) graph and $\gamma: E(G) \rightarrow \mathbb{Z}_2$ is a labelling of the edges of $G$ in the two element group. $\gamma(e)$ is called the \textit{label} or \textit{sign} of an edge $e$. Edges with label 1 are called \textit{odd edges} while edges with label 0 are called \textit{even edges}. We think of the labelling of the edges as a more refined way of describing the parity of a cycle. That is, we call a cycle $C$ in $G$ \textit{even} if $\sum_{e \in E(C)} \gamma(e) = 0$, and otherwise we call the cycle \textit{odd}. It is most commonly natural to think of an unsigned graph $G$ as one where every edge is odd.

A signed graph $(G, \gamma')$ is obtained from $(G, \gamma)$ by \textit{shifting at a vertex} $v \in V(G)$ if
$$\gamma'(e) = \begin{cases}
    \gamma(e) & e \not\in \partial(v) \text{ or $e$ is a loop}\\
    \gamma(e) + 1 & e \in \partial(v) \text{ and $e$ is not a loop}
\end{cases}$$
where $\partial(v)$ is the set of edges incident to $v$. That is, $\gamma'$ is obtained by flipping the label of every nonloop edge incident to $v$. Note that shifting at a vertex $v$ does not change the parity of any cycle. Clearly if a cycle does not use the vertex $v$ then it has the same parity, and if a cycle does use the vertex $v$ then shifting will contribute 2 to aforementioned sum. More generally, a signed graph $(G, \gamma')$ is obtained from $(G, \gamma)$ by \textit{shifting} if it is obtained via a sequence of shiftings at vertices. Because we often only care about the parity of cycles, we consider signed graphs $(G, \gamma'), (G, \gamma)$ to be \textit{shifting equivalent} if $(G, \gamma')$ can be obtained from $(G, \gamma)$ by shifting. We note that any two signed graphs $(G, \gamma_1), (G, \gamma_2)$ have the same collection of odd cycles if and only if they are shifting equivalent. One direction follows from the fact that shifting does not change the parity of cycles, while the other direction can be seen by fixing a maximal forest $F \subseteq E(G)$ (that is, a spanning tree of each component of $G$) and shifting both such that the label of $e$ is 0 for each $e \in F$. This can be done for each tree by rooting the tree at a vertex $r$, shifting only at children of $r$ to make all edges in $\partial(r)$ have label 0, and recursing on each subtree. Then the label of every other edge in $G$ is exactly determined by the parity of fundamental cycle it creates, so the resulting graphs must necessarily have the same labelling.

We note the similarity between shifting and an \emph{even subdivision} as defined in \zcref{sec:preliminaries}. Both can be used to ``swap the parity'' of edges along a minimal edge cut, and both don't change the parity of any cycle.

\paragraph{Signed graph minors} We say $(H, \gamma_H)$ is a \textit{signed graph minor} of $(G, \gamma_G)$ if it can be obtained via a sequence of edge deletions, vertex deletions, shiftings, and contracting even edges. Note that because we may only contract even edges, every cycle in $(H, \gamma_H)$ corresponds to a cycle in $(G, \gamma_G)$ of the same parity. In particular we do not allow the contraction of odd loops (odd loops can still be deleted) even though this could be done in the corresponding matroid. We discuss the reason behind this in more detail below. Note that if $\gamma_G$ and $\gamma_H$ are both identically 1, then this corresponds exactly with the definition of odd-minors given in \zcref{sec:preliminaries} via shifting at every vertex in one side of an edge cut to make that edge cut the only even edges, and then contracting that edge cut. Thus a graph $G$ forbidding one of the parity breaking grids as an odd-minor is equivalent to the signed graph $(G, \gamma)$, where $\gamma(e) = 1$ for all $e \in E(G)$, forbidding the same grid as a signed graph minor where each edge in the grid has label 1. Note that the parity breaking grids are shifting equivalent to the signed graph in which all edges of the cylindrical grid are even and all other edges (denoted in red in \zcref{fig:ParityGridsIntro}) are odd.

\paragraph{Matroids} A matroid $\mathcal{M}$ is a pair $(S, \mathcal{I})$ where $S$ is \textit{ground set} of $\mathcal{M}$ and $\mathcal{I} \subseteq 2^S$ is the collection of \textit{independent sets} of $\mathcal{M}$ which obey the following properties
\begin{enumerate}
    \item $\varnothing \in \mathcal{I}$,
    \item $\mathcal{I}$ is downwards closed, that is $I \in \mathcal{I}$ and $J \subseteq I$ implies that $J \in \mathcal{I}$,
    \item if $I,J \in \mathcal{I}$ with $|J| < |I|$, then there exists an element $x \in I$ such that $J \cup \{x\} \in \mathcal{I}$.
\end{enumerate}
One should think of a these properties as being motivated by the properties of linear independence. More formally, given a matrix $A \in \mathbb{F}^{m \times n}$, where $\mathbb{F}$ is some arbitrary field, the \textit{linear matroid} $\mathcal{M}(A)$ is the matroid with ground set given by the columns $A$, and a set of columns is independent in $\mathcal{M}(A)$ exactly when they are linearly independent. Note that row operations preserve linear independence, and thus matrices $A$ and $A'$ have the same linear matroid when they are row equivalent.

Matroids also serve to generalize many properties of (multi) graphs in the following sense. The \textit{graphic matroid} of a graph $G$ is the matroid $\mathcal{M}(G)$ which has ground set $E(G)$ such that a set of edges are independent if they are acyclic. One can check that this obeys the properties above. Many properties of graphs can be generalized to properties of matroids. Given a graph $G$, the \textit{vertex-edge incidence matrix of $G$} is the matrix $A \in \{0, 1, 2\}^{V(G) \times E(G)}$ where $A_{v,e}$ is nonzero if and only if $e \in E(G)$ is incident to $v \in V(G)$, and the sum of every column is 2. That is, if $e \in E(G)$ is a loop at vertex $v \in V(G)$, then we will set $A_{v,e} = 2$, and otherwise the endpoints of $e$ both have entry 1. The \textit{edge-vertex incidence matrix of $G$} is the transpose of the vertex-edge incidence matrix of $G$. Then note that the graphic matroid is exactly the linear matroid of its vertex-edge incidence matrix over $GF(2)$. Note in particular that over $GF(2)$ the column corresponding to loops become the zero vector, which agrees with the matroidal property that adding the edge to any set makes it not independent.

\paragraph{Matroid minors} Matroids also have a notion of minors inspired by their connection to graph theory. Given a matroid $\mathcal{M} = (S, \mathcal{I})$ and a set $F \subseteq S$, the matroid $\mathcal{M} - F$ is the matroid obtained by \textit{deleting} $F$ where $\mathcal{M} - F$ has ground set $S \setminus F$ and independent sets $\{I \in \mathcal{I} : I \cap F = \varnothing\}$. The matroid $\mathcal{M}/F$ is the matroid obtained by \textit{contracting} $F$ where $\mathcal{M}/F$ has ground set $S \setminus F$ and independent sets $\{I : I \cup B \in \mathcal{I}\}$ where $B$ is a maximal independent set in $F$. One can verify that the choice of $B$ does not matter, and that this definition corresponds with the same operations in the graph setting (where we allow ourselves to keep multiple edges and loops). That is, given $F \subseteq E(G)$, $\mathcal{M}(G - F) = \mathcal{M}(G) - F$ and $\mathcal{M}(G/F) = \mathcal{M}(G)/F$. A matroid $\mathcal{M}'$ is a \textit{minor} of $\mathcal{M}$ if $\mathcal{M}'$ is formed from $\mathcal{M}$ via a sequence of deletions and contractions.

It will be convenient to consider the notion of deletion and contraction in a linear matroid. Let $A \in \mathbb{F}^{m \times n}$ be a matrix over the field $\mathbb{F}$ and let $F$ be a subset of the columns of $A$. Clearly $\mathcal{M}(A) - F$ is the linear matroid for the submatrix of $A$ obtained by deleting the columns $F$. To see $\mathcal{M}(A)/F$, fix a maximal independent set $B$ of $F$. We may reorder the columns such that $F$ corresponds to the first $|F|$ columns and $B$ corresponds to the first $|B|$ columns. Let $A'$ be formed from $A$ by row reducing to the form
$$A' = \left[\begin{array}{cc|c}
    I_{|B|} & C & D_1\\
    0 & 0 & D_2
\end{array}\right].$$
where $I_{|B|}$ is the $|B| \times |B|$ identity matrix, and $C$ has dimension $|B| \times |F \setminus B|$. Then the matroid $\mathcal{M}(A)/F$ is equal to $\mathcal{M}(D_2)$. That is to contract a set of columns, we first row reduce until those columns contain an identity matrix, and then delete the columns of $F$ and the rows of that identity matrix.

For a more in-depth introduction to matroids, see James Oxley's book~\cite{Oxley2011Matroid}, his short introduction~\cite{Oxley2003matroid}, or his even shorter introduction~\cite{OxleyBriefly}.

\paragraph{Signed incidence matrices} A matrix $A \in \{-2, -1, 0, 1, 2\}^{V(G) \times E(G)}$ is a \textit{signed vertex-edge incidence matrix} of a signed graph $(G, \gamma)$ if it has the following properties. For each column $e \in E(G)$ corresponding to a non-loop even edge, there is a 1 in the row of one end of $e$, a $-1$ in the row of the other end, and 0 in all other entries. If $e$ is an odd edge, both ends are either both 1 or both $-1$, and all other entries are 0. If $e$ is an even loop, then the column $e$ is the 0 vector. If $e$ is an odd loop, then there is a 2 or $-2$ in the row corresponding to the end of the loop. Similarly $A$ is a \textit{signed edge-vertex incidence matrix} of a signed graph $(G, \gamma)$ if its transpose is a signed vertex-edge incidence matrix of $(G, \gamma)$. Note that all signed vertex-edge incidence matrices of a signed graph $(G, \gamma)$ are the same up to scaling columns by $-1$, so in particular they all have the same linear matroid. Also note that shifting at a vertex in $G$ corresponds to multiplying a row of $A$ by $-1$. Note that if $(G, \gamma)$ is such that every edge is odd, then a signed vertex-edge incidence matrix of $(G, \gamma)$ is exactly the vertex-edge incidence matrix of $G$ (up to negating columns). Signed incidence matrices arise naturally in the study of totally-delta modular matrices, which we discuss in more detail below.

\paragraph{Signed-graphic matroids} The \textit{(frame) signed-graphic matroid} of a signed graph $(G, \gamma)$ is the matroid $\mathcal{M}(G, \gamma) = \mathcal{M}(A)$ over the field $\mathbb{R}$ where $A$ is a signed vertex-edge incidence matrix of $(G, \gamma)$. We note that signed graphs also correspond naturally to another matroid, that being their \textit{lift matroid} as appears in \cite{Gerards1995Tutt,GeelenG2005Regular}, but we will not be interested in this correspondence. Equivalently, a set of edges in a signed-graphic matroid is independent if and only if it does not contain an even cycle, or two odd cycles joined by a path (possibly of length 0). This implies that if $(G, \gamma)$ is such that every edge is even, then $\mathcal{M}(G, \gamma) = \mathcal{M}(G)$. In particular note that it is sometimes helpful to think of an unsigned graph as a signed graph with all even edges (such as when looking at its graphic matroid) and other times more helpful to think of it as a signed graph with all odd edges (such as when looking at odd minors and integer programming).

The minors of $\mathcal{M}(G, \gamma)$ correspond to the signed-graphic matroids of the signed graph minors of $(G, \gamma)$ except for at odd loops. To see this, note that contracting a nonloop edge $e \in E(G)$ corresponds to first possibly negating a row such that the entries of $e$ have different signs (so shifting such that $e$ is an even edge), adding the row of one end of $e$ to the row of the other (contracting $e$ in the graph), and then deleting the column $e$ and the row which is left nonzero in $e$. Note that contracting an even loop is the same as deleting it. Contracting an odd loop in the matroid is the same as deleting the column of that edge and the row of the vertex $v$ its incident to. This makes the other nonloop edges incident to $v$ have only one nonzero entry in the matrix, and because scaling columns doesn't affect the matroid, this turns all nonloop edges incident to $v$ into odd loops at their other end. This type of contraction is undesirable when one is interested in the odd cycles of $G$ or the subdeterminants of its signed vertex-edge incidence matrix $A$, so we disallow this type of contraction in signed graphs. Note that the other types of contraction can be done while preserving that $A$ is a signed vertex-edge incidence matrix and not scaling rows or columns. In particular if we maintain that $A$ is a signed vertex-edge incidence matrix of a signed graph, then the contraction of an even edge preserves subdeterminants of $A$, while the contraction of an odd edge does not.

For a more in-depth introduction to signed graphs and their matroids, see \cite{Zaslavsky1982Signed, Zaslavsky2013Signed}. Note that in the literature it is common to see the labelling over the two element group with multiplication instead of addition, and the words ``even'' and ``odd'' replaced with ``balanced'' and ``unbalanced'' respectively.

\subsection{Totally \texorpdfstring{$\Delta$}{Δ}-modular matrices} In this section we will be interested in \textit{integer (linear) programs} which are problems of the form
$$\max \{w^{\text{T}} x : Ax \leq b, x \in \mathbb{Z}^n\}$$
where $A \in \mathbb{Z}^{m \times n}, b \in \mathbb{Z}^m, w \in \mathbb{Z}^n$. Many combinatorial optimization problems can be modelled as integer programs, see \cite{Schrijver2003Combinatorial}. Most notable for our setting will be \mis for a graph $G$, can be formulated as
$$\max\{w^{\text{T}} x : Ax \leq \mathbf{1}, \mathbf{0} \leq x \leq \mathbf{1}, x \in \mathbb{Z}^{V(G)}\}$$
where $w \in \mathbb{Z}^{V(G)}$ is the weight of the vertices, $A$ is the edge-vertex incidence matrix of $G$, and $\mathbf{1}$ the vector with 1's in every entry. This implies that integer programming is NP-hard in general, but there has been a lot of work to solve integer programs with specific conditions on $A$, such as having a constant number of rows \cite{Papdimitriou1981complexity, EisenbrandW2019proximity}, a constant number of columns \cite{Lenstra1983Integer, Kannan1987Minkowskis, Dadush2012integer}, a particular block structure \cite{HemmeckeOR2011nfold, JansenLR2020nearlinear, CslovjecsekEHRW2021blockstructred}, or various graph-theoretic properties \cite{GanianOR2017goingbeyond, EisenbrandHKKLO2022algorithmic}.

A matrix $A$ is \textit{totally $\Delta$-modular} if each of its subdeterminants has absolute value at most $\Delta$, where a subdeterminant is the determinant of any square submatrix of $A$. There has been much study of integer programs where $A$ is a totally $\Delta$-modular matrix due to the following famous conjecture.

\begin{conjecture}[\cite{shevchenko1996qualitative}]
    The integer program $\max\{w ^{\text{T}} x : Ax \leq b, x \in \mathbb{Z}^n\}$ can be solved in polynomial time when $A$ is totally $\Delta$-modular for some constant $\Delta$.
\end{conjecture}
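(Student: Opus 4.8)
The conjecture as stated is open, and it is widely believed that its crux is a structural one. The plan is therefore not to attack it directly, but to set up, at the level of generality of totally $\Delta$-modular matrices, the route that has so far only been completed for $\Delta\le 2$: decompose the linear matroid $\mathcal{M}(A)$ into well-understood basic pieces glued along small separations, and solve the integer program by a dynamic program along this decomposition. First I would normalise the instance: delete redundant rows, remove zero columns and keep only one representative from each parallel class (these contribute only box constraints or are vacuous), and reduce to the case where $\mathcal{M}(A)$ is connected, since $1$-, $2$-, and $3$-sums, as well as direct sums, of linear matroids translate into a straightforward merging of partial solutions across a bounded interface, exactly in the spirit of the dynamic program over an \ocp-tree-decomposition used in the proof of \zcref{thm:miswithboundedocptw} and \zcref{thm:IPsIntro}.

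The conceptual core would be a \emph{structure theorem for matroids of bounded subdeterminant}: there should be a constant $c=c(\Delta)$ and a tree-decomposition of $\mathcal{M}(A)$ of adhesion at most $c$ whose torsos are, up to a bounded modification, one of (i) graphic or cographic matroids, so that the corresponding subprogram has a totally unimodular matrix and is solvable by linear programming, (ii) signed-graphic matroids of bounded odd cycle packing number, to which the algorithm of Fiorini et al.\ (\zcref{thm:fptalgoformis}) applies as a black box, or (iii) one of finitely many exceptional matroids on a bounded ground set. For $\Delta=1$ this is Seymour's decomposition of regular matroids; for $\Delta=2$ it is the Artmann--Weismantel--Zenklusen analysis; the case $\Delta\ge 3$ is the genuinely new input. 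It is precisely here that the bridge developed in this section is the right tool: totally $\Delta$-modular signed incidence matrices have bounded \ocp, and more generally bounded \ocp-treewidth of the matrix, so outcome (ii) can carry real weight rather than being a formality, and the dynamic-programming machinery that proves \zcref{thm:MISIntro} lifts verbatim to the integer-program setting.

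Granting such a decomposition, the algorithm is then the natural one: for each torso, and for every assignment of its (boundedly many) boundary variables, compute the optimal value of the subprogram---solving outcome (i) by an LP solver, outcome (ii) via \zcref{thm:fptalgoformis}, outcome (iii) by brute force---and combine the values upward through the tree as in the proof of \zcref{thm:miswithboundedocptw}. Bounded adhesion bounds the number of boundary assignments per node by $2^{\mathcal{O}(c)}$, bounded width keeps each torso tractable, and one obtains a running time of $n^{f(\Delta)}$, polynomial for each fixed $\Delta$.

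\textbf{The main obstacle.} The hard part is the structure theorem of the second paragraph: nothing of that shape is known for $\Delta\ge 3$, and its proof would essentially resolve the conjecture. Even assuming a Seymour-style decomposition along small separations, two further difficulties remain. First, the decomposition must be made \emph{algorithmic}---one has to \emph{find} it in polynomial time, which already required substantial effort for regular matroids. Second, the ``glue'' must be controlled so that the subprograms interact only through a bounded interface; this is exactly the job of the matrix analogue of \ocp-treewidth, and extending that analogue beyond signed incidence matrices to arbitrary bounded-subdeterminant matrices is itself unresolved. Accordingly, what the present section contributes toward this plan is the missing ingredient for the signed-graphic regime (\zcref{thm:IPsIntro}), while the general structure theorem---and hence the full conjecture---stays open.
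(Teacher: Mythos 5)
The statement you were asked to prove is not a theorem of this paper at all: it is an open conjecture, attributed to Shevchenko and stated here only to motivate the integer-programming section. The paper offers no proof of it and does not claim one; its actual contribution in this direction is the special case where $A$ is (reducible to) a signed edge-vertex incidence matrix with at most two nonzero entries per row and the underlying signed graph excludes the parity grids, handled via the reduction to \mis and the \ocp-tree-decomposition dynamic program (\zcref{thm:IPsIntro}, \zcref{cor:IPsWork}). You have correctly recognised this: your submission is explicitly a research program rather than a proof, and it honestly isolates the missing ingredient — a Seymour-style decomposition theorem for matroids representable by totally $\Delta$-modular matrices with $\Delta\ge 3$ — as the open core of the problem. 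There is therefore no ``gap'' to point out beyond the one you name yourself; no argument is presented that could fail.

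Two small calibration points against what the paper actually does. First, the paper's partial progress does not proceed by decomposing the matroid $\mathcal{M}(A)$ along small separations; it decomposes the \emph{signed graph} underlying $A$ via an \ocp-tree-decomposition and reduces the IP to weighted \mis\ on a subdivision (so the ``torsos'' are graph bags of bounded odd cycle packing number after removing a bounded apex set, not matroid torsos). Your proposed matroidal tree-decomposition with graphic/cographic/signed-graphic/sporadic torsos is a strictly more ambitious object, closer in spirit to Seymour's regular-matroid decomposition, and nothing of that shape is currently known for $\Delta\ge 3$ — as you note. Second, be careful with the claim that ``totally $\Delta$-modular signed incidence matrices have bounded \ocp-treewidth of the matrix'': the paper's \zcref{lem:deltamodularequivalence} gives bounded $\mathsf{OCP}$ (hence trivially bounded \ocp-treewidth), but the converse fails — the whole point of \zcref{thm:IPsIntro} is that bounded \ocp-treewidth covers matrices of \emph{unbounded} subdeterminant — so outcome (ii) of your structure theorem would need to be phrased in terms of bounded $\mathsf{OCP}$ (or bounded \ocp-treewidth) rather than bounded subdeterminant if it is to interface cleanly with \zcref{thm:fptalgoformis}. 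With those caveats, your assessment of the state of the problem is accurate.
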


The case where $\Delta = 1$ is a classical result and follows from solving the linear programming relaxation $\max\{w^{\text{T}} x : Ax \leq b, x \in \mathbb{R}^n\}$, see Schrijver's books \cite{Schrijver2003Combinatorial}. The case when $\Delta = 2$ was solved by Artmann, Weismantel, and Zenklusen~\cite{ArtmannWZ2017Strongly}. The case when $A$ is a matrix with at most two nonzero entries per row was solved recently by Fiorini, Joret, Weltge, and Yuditsky~\cite{Fiorini2025Integer} via a reduction to solving \mis on graphs of bounded odd cycle packing number. 

We denote the absolute value of the largest entry in a matrix $A$ by $\|A\|_\infty$. The \textit{signed support} of a matrix $A$ is the matrix obtained by putting a 1 in every positive entry of $A$, a $-1$ in every negative entry, and a 0 in every 0 entry. We note that for matrices with at most two nonzero entries per row, having bounded subdeterminants is equivalent to the corresponding graph having bounded odd cycle packing number and not having too many large entries, which we formalize in the following lemma that arose from discussions with Rose McCarty \cite{McCarty2024Personal}. We remark that \zcref{itm:DMequivItem2} is similar to \cite[Lemma~4]{Fiorini2025Integer} but not the same.

\begin{lemma}\label{lem:deltamodularequivalence}
    Let $A \in \mathbb{Z}^{m \times n}$ have at most two nonzero entries per row. Let $\ocp(A)$ denote that odd cycle packing number of the signed graph whose edge-vertex incidence matrix is given by the signed support of $A$, and let $k$ be the number of columns of $A$ with entries outside $\{-1, 0, 1\}$. Then if $A$ is totally $\Delta$-modular,
    \begin{enumerate}
        \item $\|A\|_\infty \leq \Delta$,
        \item\label{itm:DMequivItem2} $k \leq 2\log_2\Delta$, and
        \item $\ocp(A) \leq \log_2\Delta$.
    \end{enumerate}
    Conversely, if $\Delta$ is the largest absolute value of a subdeterminant of $A$, then $\Delta \leq 2^{\ocp(A)}\|A\|_{\infty}^k$.
\end{lemma}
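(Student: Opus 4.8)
The plan is to prove the four implications by relating subdeterminants of $A$ to combinatorial data of the signed graph $(G,\gamma)$ whose edge-vertex incidence matrix is the signed support of $A$. Throughout, recall that the nonzero pattern of a row of $A$ with two nonzero entries corresponds to an edge of $G$ (a loop if both nonzero entries lie in the same column is impossible since a row has two \emph{distinct} nonzero positions, but a row with a single nonzero entry corresponds to a half-edge/loop), and the sign pattern within that row determines whether the edge is even or odd. The first bullet, $\|A\|_\infty\le\Delta$, is immediate: every single entry of $A$ is a $1\times 1$ submatrix, hence a subdeterminant, so its absolute value is at most $\Delta$.

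For the converse inequality $\Delta\le 2^{\ocp(A)}\|A\|_\infty^k$, which I expect to be the technical heart, I would argue as follows. Let $S$ be a square submatrix of $A$ witnessing $\Delta=|\det S|$; by passing to the support of $S$ we may assume $S$ is the edge-vertex incidence matrix of a subgraph $H$ of $G$ (a row of $S$ with only one nonzero entry can be row-expanded first, contributing at most $\|A\|_\infty$ per such row, and discarding those rows/columns does not increase $\Delta$ by more than a factor absorbed into the $\|A\|_\infty^k$ term — this bookkeeping is where care is needed). For a \emph{connected} subgraph with $r$ edges on $r$ vertices one has a classical fact from the theory of signed-graphic (frame) matroids: the incidence matrix is nonsingular iff the component contains exactly one odd cycle, and in that case the determinant has absolute value $2$ times the product of the relevant edge-weight entries; more generally $|\det|$ of a connected square block equals $2^{\#\text{odd cycles in it}}$ times a product of at most $k$ entries each bounded by $\|A\|_\infty$ (here I use that contracting even edges reduces to the nonsingular-block case, and contracting such an edge preserves subdeterminants, as noted in the discussion of signed-graphic matroid minors preceding this lemma). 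Since a nonsingular square block of an incidence matrix has at most one odd cycle \emph{per connected component}, and distinct components are vertex-disjoint, the odd cycles appearing across all blocks of $S$ form a packing of pairwise vertex-disjoint odd cycles in $G$; hence their number is at most $\ocp(A)$. Multiplying the contributions of the components gives $\Delta=|\det S|\le 2^{\ocp(A)}\|A\|_\infty^k$.

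Given the converse inequality, the remaining three bullets follow quickly. If $A$ is totally $\Delta$-modular then $2^{\ocp(A)}\le 2^{\ocp(A)}\|A\|_\infty^k=\Delta'\le\Delta$ where $\Delta'$ is the true maximum subdeterminant — more directly, I would instead exhibit explicit submatrices: for $\ocp(A)\le\log_2\Delta$, take $\ell$ vertex-disjoint odd cycles realizing $\ocp(A)$; their union's incidence submatrix is block-diagonal with $\ell$ blocks each of determinant $\pm 2$ (or $\pm 2\cdot(\text{product of weights})$, which only helps), so $\Delta\ge 2^\ell$, giving $\ell\le\log_2\Delta$. For bullet~\ref{itm:DMequivItem2}, each of the $k$ columns with an entry $\lambda_i$ outside $\{-1,0,1\}$, $|\lambda_i|\ge 2$, has some row where it is nonzero; since each row has at most two nonzero entries, one can greedily select at least $\lceil k/2\rceil$ of these columns together with rows so that the chosen rows use disjoint pairs of chosen columns, yielding a block-triangular (indeed block $2\times 2$ or $1\times 1$) submatrix whose determinant has absolute value at least $\prod|\lambda_i|\ge 2^{\lceil k/2\rceil}$; hence $2^{\lceil k/2\rceil}\le\Delta$, i.e.\ $k\le 2\log_2\Delta$. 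The main obstacle, as flagged, is the precise determinant formula for square blocks of signed incidence matrices and the reduction handling rows with only one nonzero entry and non-square intermediate steps; everything else is extremal bookkeeping.
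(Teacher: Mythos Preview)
Your proposal is essentially correct and follows the same overall strategy as the paper, with one correctable slip. For the converse bound the paper cofactor-expands on \emph{columns} with at most one nonzero entry (these exist by averaging, since the submatrix is square with at most two nonzeros per row); iterating yields a matrix with exactly two nonzeros per column and hence per row, so the underlying signed graph is $2$-regular, i.e.\ a disjoint union of cycles, and the determinant of each cycle block is computed explicitly via the permutation expansion (only the identity and one cyclic permutation survive). Your row-expansion instead leaves unicycles, and the step ``contracting even edges reduces to the nonsingular-block case'' is not valid for a weighted matrix $A$: contracting an edge merges two columns by addition, which can create new large entries and does not preserve subdeterminants outside the $\pm1$ setting to which the paper's remark preceding the lemma applies. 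The easy fix is to further column-expand on leaves of each unicycle (or simply adopt the paper's column expansion from the outset); the $\|A\|_\infty^k$ accounting then goes through because each bad column is the pivot column at most once. Your arguments for the three forward bounds match the paper; your greedy construction for the second item is the non-inductive phrasing of the paper's induction and indeed yields a lower-triangular submatrix with diagonal entries of absolute value at least $2$.
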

\begin{proof}
    We first show the reverse direction. Note that clearly this bound is best possible by considering $A$ a block diagonal matrix where each block is either an odd cycle or a single entry of value $\|A\|_\infty$.
    
    Consider an arbitrary square submatrix $B$ of $A$. Because $A$ has at most two nonzero entries per row and $B$ is square, $B$ has an average of at most two nonzero entries per column. If there exists a column with at most 1 nonzero entry, then by performing cofactor expansion on that column we can reduce to finding the determinant of a submatrix of $B$ with a single row and column deleted. Furthermore if that column had an entry of magnitude greater than 1, then this cofactor expansion can at most add a factor of $\|A\|_\infty$ to the determinant of $B$ while the corresponding submatrix has one less column with large entries. Otherwise this cofactor expansion does not increase the magnitude of the determinant of $B$. If $B$ has a column with at least 3 nonzero entries, then because the average number of nonzero entries is at most 2, there must be some column with at most 1 nonzero entry. Thus after repeating this process we may assume that $B$ has exactly two nonzero entries in each column. Let $(G, \gamma)$ be the signed graph whose edge-vertex incidence matrix is given by the signed support of $B$, so $G$ is 2-regular. Note that the determinant of $B$ is the product of the determinants for each submatrix corresponding a component of $G$, so we may assume $G$ is connected and thus $G$ is a cycle.

    Suppose $G$ is a cycle with $B \in \mathbb{Z}^{\ell \times \ell}$ the submatrix of $A$ whose signed support is the edge-vertex incidence matrix of $G$. We take all indices to be modulo $\ell$. By reordering the columns we may assume all nonzero entries are in $B_{i, i}$ and $B_{i,i+1}$ for $i \in [\ell]$. Note that the only permutations $\sigma \in S_\ell$ with all of $B_{i,\sigma(i)}$ nonzero are the identity and the cyclic permutation $\sigma_c(i) = i+1$. Thus
    $$\det B = \sum_{\sigma \in S_\ell}\operatorname{sgn}(\sigma)\prod_{i=1}^\ell B_{i,\sigma(i)} =  \prod_{i=1}^\ell B_{i,i} + \operatorname{sgn}(\sigma_c)\prod_{i=1}^\ell B_{i,i+1}.$$
    Note that the two terms in the sum have the same sign when $G$ is an odd cycle, and different signs when $G$ is an even cycle. Each term has magnitude at most $\|B\|_\infty^k$ for $k$ the number of columns with entries outside of $\{-1, 0, 1\}$, and each term has magnitude at least 1. Thus if $G$ is an even cycle, $|\det B| \leq \|B\|_\infty^k - 1$, and if $G$ is an odd cycle $|\det B| \leq 2\|B\|_\infty^k$.
    
    In the forward direction note that if $A$ is totally $\Delta$-modular, then $\|A\|_\infty \leq \Delta$. By the same argument as above, note that the subdeterminant given by a submatrix $B$ with signed support the edge-vertex incidence matrix of an odd cycle, the determinant of $B$ is at least 2 in absolute value. Thus $\ocp(A) \leq \log_2 \Delta$. We now also show $k \leq 2\log_2\Delta$ for $k$ the number of columns of $A$ with entries outside $\{-1, 0, 1\}$. We proceed by induction on $k$. If $k = 0$ we can take any nonzero entry of $A$ to certify that $\Delta \geq 1$. Otherwise fix a column $c$ and a row $r$ of $A$ such that $|A_{r,c}| \geq 2$. Form the submatrix $A'$ by deleting row $r$ and all columns in which that row is nonzero. Note that we have then removed at most 2 columns with large entries. For any submatrix of $A'$ with subdeterminant $\Delta'$, we can form a submatrix of $A$ with determinant at least $2\Delta'$ by adding row $r$ and column $c$. Then by computing the subdeterminant via cofactor expansion on $r$ we obtain the desired factor of 2, completing the proof by induction. All bounds given are best possible. To see this for the bound on $k$, consider $2A$ for $A$ the edge-vertex incidence matrix of a perfect matching.
\end{proof}

In particular the proof above implies that if $A \in \{-1, 0, 1\}^{m \times n}$, then the largest absolute value of a subdeterminant of $A$ is equal to $2^{\ocp(A)}$, which is well known for unsigned graphs \cite{GrossmanKS1995ontheminors}. Note that this holds even for $A \in \{-2, -1, 0, 1, 2\}^{m \times n}$ the signed edge-vertex incidence matrix of a signed graph $(G, \gamma)$ because rows with entries outside $\{-1, 0, 1\}$ occur exactly at odd loops. Thus if $A$ is the signed edge-vertex incidence matrix of a signed graph $(G,\gamma)$, $A$ is totally $\Delta$-modular if and only if $(G, \gamma)$ forbids the disjoint collection of $\lfloor \log_2 \Delta \rfloor + 1$ odd loops as a signed graph minor.

\subsection{Integer programs forbidding a signed graph minor}
The main result of Fiorini, Joret, Weltge, and Yuditsky~\cite{Fiorini2025Integer} implies in particular that if $A \in \{-1, 0, 1\}^{m \times n}$ is the signed edge-vertex incidence matrix of a signed graph $(G, \gamma)$ which forbids the disjoint collection of $k$ odd loops as a signed graph minor, for some constant $k$, then we can solve the integer program $\max\{w^{\text{T}} x : Ax \leq b, \ell \leq x \leq u, x \in \mathbb{Z}^n\}$ in polynomial time where $\ell, u$ are vectors with entries in $\mathbb{Z} \cup \{-\infty, \infty\}$. Note that in the context of integer programming, loops can be moved into the bounds $\ell \leq x \leq u$, so it suffices to consider $A \in \{-1, 0, 1\}^{m \times n}$. They do this by reducing the problem to solving \mis on a subdivision of a subgraph of $G$, and in fact their reduction works in more generality. The proof below is very similar to theirs, but we include it for completeness.

\begin{theorem}\label{thm:reductionfromIPtoindependentset}
    Let $T: \mathbb{N}^2 \rightarrow \mathbb{R}_+$ be increasing in both entries, and suppose that we can solve \mis in $T(|V(G)|,|E(G)|)$ time over signed graphs with all edges odd who forbid a collection of signed graph minors $\mathcal{G}$ each of minimum degree 2. Let $A \in \{-1, 0, 1\}^{m \times n}$ be the edge-vertex incidence matrix of a signed graph $(G, \gamma)$ which forbids the signed graphs in $\mathcal{G}$ as signed graph minors. Then we can solve the integer program $\max\{w^{\text{T}} x : Ax \leq b, \ell \leq x \leq u, x \in \mathbb{Z}^n\}$ in $\mathbf{O}(m^{\omega_{\ref{def:matrixmultconstant}} + 3}\log^3m + T(n + 2m, 3m))$ time, where $b \in \mathbb{Z}^m, w \in \mathbb{Z}^n$, and $\ell,u \in (\mathbb{Z} \cup \{-\infty, \infty\})^n$.
\end{theorem}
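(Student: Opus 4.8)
The plan is to follow the reduction of Fiorini, Joret, Weltge, and Yuditsky, adapted to the more general hypothesis about forbidden signed-graph minors. First I would reduce the integer program to a bounded-variable one. Since each column of $A$ has at most two nonzero entries and corresponds to an edge of $(G,\gamma)$, and each row has at most two nonzero entries and corresponds to a vertex, I would first handle the box constraints: for each variable $x_j$ whose bounds $\ell_j, u_j$ are not both finite, I would use the structure of $A$ together with the fact that $(G,\gamma)$ forbids a fixed signed minor to argue (via standard proximity/Graver-basis bounds, using that the subdeterminants of $A$ are bounded by $2^{\mathsf{OCP}(A)}$ as established by \zcref{lem:deltamodularequivalence}) that there is an optimal solution in which every coordinate lies in an interval of length $\mathbf{O}(m)$, so after a shift we may assume $\mathbf{0} \le x \le u'$ with $u' \in \mathbb{N}^n$ and $\|u'\|_\infty \in \mathbf{O}(m)$. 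Alternatively, and more in the spirit of the cited proof, I would split each variable into $\mathbf{O}(\log m)$ binary variables to encode its value in the relevant bounded range; this multiplies the number of columns by a logarithmic factor but keeps everything polynomial and, crucially, does not affect which signed minors appear.

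The core step is then to realise the resulting $\{0,1\}$-integer program as a \mis instance on a signed graph built from $(G,\gamma)$. Each inequality row $a_i^{\mathrm T} x \le b_i$ with at most two nonzero entries is, after the standard change of variables flipping $x_j \mapsto 1 - x_j$ on the negative coordinates, an inequality of the form $x_{j_1} + x_{j_2} \le 1$ (an edge/non-edge condition) or a single-variable bound; the sign of the edge in the auxiliary graph is dictated by whether we flipped an even or odd number of the two coordinates, which is exactly recorded by $\gamma$. Subdividing each odd edge once turns the signed graph into an ordinary graph, and vertex weights are assigned so that a maximum weight independent set corresponds to an optimal feasible $x$. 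The key point is that this auxiliary graph is a subdivision of a subgraph of a graph obtained from $(G,\gamma)$ by the binary-splitting operation, and subdivisions and subgraph-taking only produce signed-graph minors of $(G,\gamma)$ (here I would invoke the discussion in the signed-graphs subsection that even subdivisions and shiftings do not change the cycle parities and that signed-graph minors are closed under these operations), together with the $\mathsf{OCP}$-preservation from \zcref{lem:deltamodularequivalence}. Hence the auxiliary graph still forbids every member of $\mathcal{G}$ as a signed-graph minor — the minimum-degree-2 hypothesis on $\mathcal{G}$ is what lets us ignore the low-degree vertices introduced by splitting and subdivision. Applying the assumed \mis algorithm with running time $T$ to this auxiliary graph, which has $\mathbf{O}(n + m)$ vertices and $\mathbf{O}(m)$ edges after absorbing the logarithmic blow-up into constants appropriately, gives the claimed bound; the $\mathbf{O}(m^{\omega_{\ref{def:matrixmultconstant}}+3}\log^3 m)$ term accounts for the preprocessing (detecting and removing redundant rows, computing the proximity bounds via linear algebra, and constructing the reduction).

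The main obstacle I anticipate is the bookkeeping that shows the forbidden signed minors genuinely survive all the transformations, not just the odd-cycle-packing bound. Splitting a variable into a logarithmic number of bits introduces new vertices and new edges (the "carry" structure), and one must check that the resulting signed graph is a signed-graph minor of $(G,\gamma)$ — or at least that it forbids $\mathcal{G}$ — rather than merely having bounded $\mathsf{OCP}$; this is exactly where the hypothesis that every graph in $\mathcal{G}$ has minimum degree at least $2$ is used, since the gadgets attach to the original graph along vertices of degree $1$ or $2$ and any minor model of a minimum-degree-$2$ graph can be pushed off of such pendant/path structure. A secondary subtlety is ensuring that the shifting operations used to normalise the inequalities (flipping coordinates so that each constraint becomes $x_{j_1} + x_{j_2} \le 1$) are globally consistent, i.e. correspond to a genuine shifting of the signed graph rather than an arbitrary sign pattern; this is handled by the observation, already recorded in the excerpt, that two signings with the same odd cycles are shifting-equivalent, so any consistent choice on a spanning forest extends. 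Once these structural points are in place, the rest is the routine translation between $\{0,1\}$-IPs with at most two variables per constraint and weighted independent set, and the running-time accounting.
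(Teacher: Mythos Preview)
There is a genuine gap. Your reduction to $\{0,1\}$ variables does not work, and this is the heart of the argument.

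Your first suggestion---proximity via the bound $\Delta\le 2^{\mathsf{OCP}(A)}$---assumes $\mathsf{OCP}(A)$ is bounded, but the hypothesis only says $(G,\gamma)$ forbids some fixed collection $\mathcal{G}$ of signed minors. Nothing in the statement forces $\mathsf{OCP}$ to be bounded (indeed the whole point of the theorem, and of $\ocp$-treewidth, is to go \emph{beyond} bounded $\mathsf{OCP}$). So the subdeterminants of $A$ can be exponential in $n$, and Cook--Gerards--Schrijver--Tardos only gives a range of size $n\Delta$, which could be $n\cdot n^{n/2}$. That is enough to make the box constraints finite with polynomially many bits, but certainly not an $\mathbf{O}(m)$ range.

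Your alternative---splitting each variable into $\mathbf{O}(\log m)$ bits---also fails. The binary carry structure is not a subdivision or a subgraph of $G$; it introduces genuinely new adjacencies among the bit-vertices of a single original vertex, and there is no reason these gadgets cannot create a forbidden minor. The minimum-degree-$2$ hypothesis on $\mathcal{G}$ lets you ignore pendant paths, but it does not let you ignore arbitrary gadgets attached at a single vertex. Moreover, even if this worked the vertex count would be $\mathbf{O}(n\log m)$, not $n+2m$.

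The idea you are missing is \emph{half-integrality of the LP relaxation}. The paper first makes all coefficients of $A$ nonnegative by subdividing each even edge into two odd edges and each all-$(-1)$ odd edge into three odd edges (this is where $n+2m$ vertices and $3m$ edges come from, and it is a genuine parity-preserving subdivision, so the forbidden minors survive). After handling the equality constraints via a large objective penalty, one is left with $\max\{w^{\mathrm T}x : Ax\le b,\ \ell\le x\le u\}$ where $A$ is the edge-vertex incidence matrix of an unsigned graph. The extreme points of this polytope are half-integral (Nemhauser--Trotter), and a direct argument (Proposition~6 in the paper / Fiorini et al.) shows that some integral optimum $\bar x$ satisfies $\|\bar x - x^*\|_\infty\le \tfrac12$ for any extremal LP optimum $x^*$. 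Thus after translating by $\lfloor x^*\rfloor$ one may restrict to $x\in\{0,1\}^n$ with no gadgets whatsoever, and then the problem is literally \textsc{Maximum Weighted Independent Set} on a subgraph of the subdivided $G$. The $m^{\omega+3}\log^3 m$ term is the cost of solving this LP relaxation in strongly polynomial time (Tardos / Dadush--Natura--V\'egh), not generic preprocessing.
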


We note that the extra term of $m^{\omega_{\ref{def:matrixmultconstant}} + 3}\log^3 m$ in the runtime is dominated by solving the linear programming relaxation. This term is almost always dominated by the $T(n,m)$, with a notable exception being when $A$ is totally unimodular (so $\ocp(G) = 0$). We also note that the reduction only requires that you can solve \mis over signed graphs with all edges odd which appear in a family of signed graphs closed under subgraphs and parity-preserving subdivisions, that is under replacing an edge by a path of the same parity. The class of signed graphs forbidding some collection of signed graph minors each of minimum degree 2 is one such family.

\paragraph{Bounding the integer program} We first replace $\ell, u$ with vectors in $\mathbb{Z}^n$, forcing the polyhedron $\{x \in \mathbb{R}^n : Ax \leq b, \ell \leq x \leq u\}$ to be bounded. To do this we apply a proximity result of Cook, Gerards, Schrijver, and Tardos~\cite{CookGST1986sensitivity}.

\begin{theorem}[\cite{CookGST1986sensitivity}]
    Let $A$ be an $m \times n$ matrix with largest absolute value of a subdeterminant at most $\Delta$ and let $b,w$ be vectors such that $\{x \in \mathbb{Z} : Ax \leq b\}$ is nonempty and $\max\{w^{\text{T}} x : Ax \leq b\}$ exists. Then for any optimal solution $\overline{x}$ to $\max\{w^{\text{T}} x : Ax \leq b\}$, there exists an optimal solution $z^*$ to $\max\{w^{\text{T}} x : Ax \leq b, x \in \mathbb{Z}^n\}$ with $\|\overline{x} - z^*\|_\infty \leq n\Delta$.
\end{theorem}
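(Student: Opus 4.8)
The final statement is the theorem of Cook, Gerards, Schrijver, and Tardos on proximity between an optimal vertex of a linear program and an optimal integer solution, stated with bound $\|\overline{x} - z^*\|_\infty \leq n\Delta$. I will describe how to prove this classical result. The key idea is a cone-decomposition argument: the difference between an optimal LP point and an optimal IP point can be written as a nonnegative combination of integral vectors lying in a cone defined by the tight constraints, and an averaging/rounding argument then produces an integer point closer to the LP optimum without decreasing the objective.

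The plan is as follows. First I would reduce to the case where $\overline{x}$ is a vertex (basic optimal solution) of the polyhedron $P = \{x : Ax \leq b\}$: if $\max\{w^{\mathrm T}x : Ax\leq b\}$ is attained, then it is attained at a vertex of the optimal face, and since we only need the bound for \emph{some} optimal $\overline x$ it suffices to prove it for this vertex; actually the statement quantifies over \emph{any} optimal $\overline x$, so I should instead pick $z^*$ depending on $\overline x$. So: fix any optimal $\overline x$ and any optimal integer solution $z_0$. Let $I$ be the set of rows with $A_i \overline x = b_i$ (the constraints tight at $\overline x$), and let $A_I$ denote the corresponding submatrix. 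Since $\overline x$ is optimal for the LP, by LP duality $w$ lies in the cone generated by the rows of $A_I$, i.e.\ $w^{\mathrm T} = y^{\mathrm T} A_I$ for some $y \geq 0$. Consider the vector $\overline x - z_0$. Decompose it via the following cone: partition rows of $A_I$ into those with $A_i z_0 \leq b_i$ (call the collection $A_{I_1}$) — actually every row $i\in I$ has $A_i z_0 \le b_i$ since $z_0\in P$ and $A_i\overline x = b_i$, so $A_i(\overline x - z_0) \geq 0$ for all $i \in I$. Thus $\overline x - z_0$ lies in the cone $C = \{v : A_I v \geq 0\}$.

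Next, the heart of the argument. By the integer version of Carathéodory's theorem for cones (or by a direct simplicial-decomposition argument due to the authors), the rational vector $\overline x - z_0 \in C$ can be written as $\overline x - z_0 = \sum_{j} \lambda_j g_j$ where $\lambda_j > 0$ and each $g_j$ is a nonzero \emph{integral} vector in $C$ whose entries are bounded: specifically each $g_j$ can be chosen so that $A_I g_j \geq 0$ and the nonzero entries of $A_I g_j$ correspond to a linearly independent subsystem, forcing $\|g_j\|_\infty \leq \Delta$ by Cramer's rule. Now I would set $z^* = \overline x - \sum_j \lfloor \lambda_j \rfloor g_j = z_0 + \sum_j (\lambda_j - \lfloor\lambda_j\rfloor) g_j + \ldots$; more carefully, let $z^* = z_0 + \sum_j \lfloor \lambda_j \rfloor g_j$. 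Then $z^*$ is integral. One checks $z^* \in P$: for $i \in I$, $A_i z^* = A_i z_0 + \sum_j \lfloor\lambda_j\rfloor A_i g_j \leq A_i z_0 + \sum_j \lambda_j A_i g_j = A_i \overline x = b_i$; for $i \notin I$ one uses that $\overline x$ satisfies $A_i \overline x < b_i$ strictly and argues the perturbation $\overline x - z^* = \sum_j(\lambda_j-\lfloor\lambda_j\rfloor)g_j$ has $\|\overline x - z^*\|_\infty \le \sum_j(\lambda_j - \lfloor\lambda_j\rfloor)\|g_j\|_\infty$; here I need the number of terms with fractional part to be at most $n$ (Carathéodory bound in dimension $n$) so that $\|\overline x - z^*\|_\infty \le n\Delta$. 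For the slack constraints $i\notin I$ one should instead argue directly that $z^*$ remains feasible — this requires care, and is handled in the original proof by choosing the decomposition so that no $g_j$ increases a slack constraint beyond its slack, or by noting the problem can be reset around $z^*$. Finally, optimality of $z^*$: $w^{\mathrm T} z^* = w^{\mathrm T} z_0 + \sum_j \lfloor\lambda_j\rfloor w^{\mathrm T} g_j$, and $w^{\mathrm T} g_j = y^{\mathrm T} A_I g_j \geq 0$ since $y \geq 0$ and $A_I g_j \geq 0$; hence $w^{\mathrm T} z^* \geq w^{\mathrm T} z_0$, so $z^*$ is also an optimal integer solution.

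The main obstacle I anticipate is the simultaneous control of feasibility for the non-tight constraints and the bound on $\|g_j\|_\infty$ and on the number of summands. The clean way is: restrict attention to the cone $C$ cut out only by the tight constraints $A_I$, apply the conic integer Carathéodory / triangulation lemma to get $\overline x - z_0 = \sum_{j=1}^{r} \lambda_j g_j$ with $r \leq n$, $\lambda_j \geq 0$, $g_j \in \mathbb{Z}^n \cap C$, and each $g_j$ arising as (up to scaling) a vector with $A_I g_j$ supported on a linearly independent set of rows — so $g_j$ is, after clearing denominators, bounded by subdeterminants, i.e.\ $\|g_j\|_\infty \le \Delta$ — then set $z^* = \overline x - \sum_j \lceil \lambda_j \rceil g_j$ or $z_0 + \sum_j \lfloor\lambda_j\rfloor g_j$ and verify feasibility by the telescoping inequality above, while for $i\notin I$ one invokes a continuity/induction argument: if $z^*$ violates some slack constraint, replace the target $\overline x$ by the first boundary point hit along the segment and recurse, the dimension of the tight system strictly increasing. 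I would present the proof by first isolating this triangulation lemma (Cramer-bound on extreme rays of $C$), then the rounding, then feasibility via the boundary-walk induction, then optimality via dual feasibility of $w$ with respect to $A_I$.
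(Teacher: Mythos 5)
This statement is quoted in the paper as an external result of Cook, Gerards, Schrijver, and Tardos and is not proved there, so there is no in-paper proof to compare against; I can only assess your reconstruction of the original argument on its own terms.

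Your reconstruction contains a genuine gap, and it is exactly the one you flag yourself: feasibility of $z^*$ on the constraints that are \emph{not} tight at $\overline{x}$. The source of the problem is your choice of cone. You put $\overline{x} - z_0$ into $C = \{v : A_I v \geq 0\}$ using only the rows tight at $\overline{x}$, which gives you no control over $A_i z^*$ for $i \notin I$: the generators $g_j$ may point in a direction that increases $A_i$ well beyond the slack $b_i - A_i\overline{x}$, and the rounding $z^* = z_0 + \sum_j \lfloor\lambda_j\rfloor g_j$ can then leave the polyhedron. The ``boundary-walk induction'' you sketch as a patch is not worked out, and it is not clear it preserves the $n\Delta$ bound, since each recursion step would a priori contribute its own error. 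The standard (and clean) fix is to define the cone by comparing $\overline{x}$ with $z_0$ on \emph{every} row: partition the rows into $A_1 = \{i : A_i\overline{x} \geq A_i z_0\}$ and $A_2 = \{i : A_i\overline{x} < A_i z_0\}$ and work in $C = \{v : A_1 v \geq 0,\ A_2 v \leq 0\}$, which still contains $\overline{x} - z_0$. Then feasibility of $z^* = z_0 + \sum_j\lfloor\lambda_j\rfloor g_j = \overline{x} - \sum_j(\lambda_j - \lfloor\lambda_j\rfloor)g_j$ splits cleanly: for $i \in A_1$ one has $A_i z^* \leq A_i\overline{x} \leq b_i$ (using the second expression and $A_i g_j \geq 0$), and for $i \in A_2$ one has $A_i z^* \leq A_i z_0 \leq b_i$ (using the first expression and $A_i g_j \leq 0$). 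No induction on slack constraints is needed.

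The rest of your outline is essentially right once the cone is corrected. The extreme rays of $C$ are determined by $n-1$ linearly independent rows of $A$ set to equality, so by Cramer's rule they can be scaled to integral vectors with $\|g_j\|_\infty \leq \Delta$; Carath\'eodory gives at most $n$ generators, so $\|\overline{x} - z^*\|_\infty \leq \sum_j(\lambda_j - \lfloor\lambda_j\rfloor)\|g_j\|_\infty \leq n\Delta$. For optimality, note that all rows tight at $\overline{x}$ lie in $A_1$ (since $A_i\overline{x} = b_i \geq A_i z_0$), so your duality argument $w^{\mathrm{T}} = y^{\mathrm{T}}A_I$ with $y \geq 0$ and $A_I g_j \geq 0$ still yields $w^{\mathrm{T}}g_j \geq 0$ and hence $w^{\mathrm{T}}z^* \geq w^{\mathrm{T}}z_0$; alternatively one argues that if $w^{\mathrm{T}}g_j < 0$ for some $j$ with $\lambda_j \geq 1$ then $\overline{x} - g_j$ would be a feasible point with strictly larger objective value, contradicting LP optimality of $\overline{x}$.
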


We apply the theorem above to the system $\begin{bmatrix}
    A\\
    -I\\
    I
\end{bmatrix}x \leq \begin{bmatrix}
    b\\
    \ell\\
    u
\end{bmatrix}$, so the largest magnitude subdeterminant of this matrix is at most that of $A$. Note that we don't have a polynomial bound on $\Delta$, but due to Hadamard's inequality we can bound $\Delta$ by $n^{n/2}$. This implies we can solve the linear programming relaxation $\max\{w^{\text{T}} x : Ax \leq b, \ell \leq x \leq u\}$ to get optimal solution $\overline{x}$ in strongly polynomial time $\mathbf{O}(m^{\omega_{\ref{def:matrixmultconstant}} + 3}\log^3m)$ using the algorithm of Dadush, Natura, and V\'{e}gh~\cite{DadushNV2020revisiting}, which improves on the work of Tardos~\cite{Tardos1986strongly} using the linear program solver of van den Brand~\cite{Brand2020deterministic}. If the linear programming relaxation is infeasible, then we declare that the integer program is infeasible. If the linear programming relaxation is unbounded, then the integer program is either infeasible or unbounded. We can then repeat the process with $w = \textbf{0}$ to distinguish between the cases. In the result that we obtain the optimum $\overline{x}$, we then restrict our the problem to consider vectors $x$ with $\|x - \overline{x}\|_\infty \leq n(n^{n/2})$. Note that because $\overline{x}$ and $n(n^{n/2})$ can be represented in polynomially many bits, we can represent the new $\ell, u \in \mathbb{Z}^n$ in polynomial size.

\paragraph{Reduction to positive coefficients} We now construct an equivalent linear program in which every constraint is one of the following types.
\begin{align*}
    x_i &\geq \ell_i\\
    x_i &\leq u_i\\
    x_i + x_j &\leq b_{ij}\\
    x_i + x_j &= b_{ij}
\end{align*}
That is, we will force $A$ to only have entries in $\{0,1\}$ with some of the constraints set to equality. To do this we replace every constraint of the form $x_i - x_j \leq b_{ij}$ with the two constraints
\begin{align*}
    x_i + y_{ij} &\leq b_{ij}\\
    x_j + y_{ij} &= 0.
\end{align*}
Similarly we replace every constraint of the form $-x_i - x_j \leq b_{ij}$ with the three constraints
\begin{align*}
    z_{ij} + z'_{ij} &\leq b_{ij}\\
    x_i + z_{ij} &= 0\\
    x_j + z'_{ij} &= 0.
\end{align*}
Note that in the signed graph corresponding to $A$, this process corresponds to subdividing an even edge with 2 odd edges, and subdividing an odd edge into 3 odd edges. Thus if the original signed graph forbid some collection of signed graph minors of minimum degree 2, then this subdivision does as well. Also note that the resulting signed graph $(G, \gamma)$ is one in which every edge is odd, so we can now think of $A$ as the edge-vertex incidence matrix of the unsigned graph $G$. Note that we may now assume that $G$ is a simple graph by replacing any constraints arising from parallel edges by strictest of those constraints. Also note that $G$ now has at most $3m$ edges and $2m + n$ vertices.

\paragraph{Eliminating equality constraints} We now update the objective function so that we may replace the equality constraints with inequality. To do this we will penalize not being at equality by such a large amount that no optimal solution will be infeasible to the original program. Let $A = \begin{bmatrix}
    A_1\\
    A_2
\end{bmatrix}$ and $b = \begin{bmatrix}
    b_1\\
    b_2
\end{bmatrix}$ such that the constraints are $A_1x \leq b_1$ and $A_2x = b_2$. That is the current integer program is
\begin{equation}\label{eq:originalIP}
    \max\{w^{\text{T}} x: x \in K \cap  \mathbb{Z}^n\}
\end{equation}
\begin{equation*}
    K = \{x \in \mathbb{R}^n : Ax_1 \leq b_1, Ax_2 = b_2, \ell \leq x \leq u\}
\end{equation*}
and we wish to replace it with
\begin{equation}\label{eq:extendedIP}
    \max\{\overline{w}^{\text{T}} x : x \in K' \cap \mathbb{Z}^n\}
\end{equation}
\begin{equation*}
    K' = \{x \in \mathbb{R}^n : Ax \leq b, \ell \leq x \leq u\}
\end{equation*}
for some other $\overline{w}$. We define the constant
$$\mu = \max\{w^{\text{T}} x : \ell \leq x \leq u\} - \min\{w^{\text{T}} x : \ell \leq x \leq u\} + 1$$
and we set
$$\overline{w} = w + \mu \mathbf{1}^{\text{T}} A_2.$$
Clearly if $K' \cap \mathbb{Z}^n = \varnothing$, then $K \cap \mathbb{Z}^n = \varnothing$, so we may assume \eqref{eq:extendedIP} is feasible. We now prove the following lemma.
\begin{lemma}[Lemma 5~\cite{Fiorini2025Integer}]
    Let $x^*$ be any optimal solution to \eqref{eq:extendedIP}. Then if $K \cap \mathbb{Z}^n \neq \varnothing$, $x^* \in K$. If $x^* \in K$, then it is optimal for \eqref{eq:originalIP}. 
\end{lemma}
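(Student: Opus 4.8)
The statement is Lemma 5 from \cite{Fiorini2025Integer}, adapted to our setting; the proof is essentially the penalty-function argument sketched just before it. The key intuition is that $\overline{w}$ adds a term $\mu\,\mathbf{1}^{\mathrm{T}}A_2 x$ which rewards satisfying the equality constraints $A_2 x = b_2$, and that the penalty $\mu$ is large enough that any violation of even a single equality constraint costs more than the total possible spread of the original objective $w^{\mathrm{T}}x$ over the box $\ell \le x \le u$. I would carry out the proof in two short steps, corresponding to the two assertions.

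\emph{Step 1: if $K \cap \mathbb{Z}^n \neq \varnothing$, then any optimal solution $x^*$ of \eqref{eq:extendedIP} lies in $K$.} Fix some feasible $z \in K \cap \mathbb{Z}^n$, so $A_2 z = b_2$ and hence $\overline{w}^{\mathrm{T}} z = w^{\mathrm{T}} z + \mu\,\mathbf{1}^{\mathrm{T}} b_2$. Now suppose for contradiction that $x^* \notin K$; since $x^* \in K'$ satisfies $A_2 x^* \le b_2$ (and $\ell \le x^* \le u$, $A_1 x^* \le b_1$), the only way $x^*$ can fail to be in $K$ is that $\mathbf{1}^{\mathrm{T}}A_2 x^* < \mathbf{1}^{\mathrm{T}} b_2$, and because $A_2 x^*$ and $b_2$ are integer vectors with $A_2 x^* \le b_2$ componentwise, this gives $\mathbf{1}^{\mathrm{T}}A_2 x^* \le \mathbf{1}^{\mathrm{T}} b_2 - 1$. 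Then
\[
  \overline{w}^{\mathrm{T}} x^* = w^{\mathrm{T}} x^* + \mu\,\mathbf{1}^{\mathrm{T}}A_2 x^* \le w^{\mathrm{T}} x^* + \mu\,\mathbf{1}^{\mathrm{T}} b_2 - \mu.
\]
By definition of $\mu$ we have $w^{\mathrm{T}} x^* - w^{\mathrm{T}} z \le \mu - 1 < \mu$, since both $x^*$ and $z$ lie in the box $\{\,\ell \le x \le u\,\}$. Combining, $\overline{w}^{\mathrm{T}} x^* < w^{\mathrm{T}} z + \mu\,\mathbf{1}^{\mathrm{T}} b_2 = \overline{w}^{\mathrm{T}} z$, contradicting optimality of $x^*$ for \eqref{eq:extendedIP}. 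Hence $x^* \in K$.

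\emph{Step 2: if $x^* \in K$, then $x^*$ is optimal for \eqref{eq:originalIP}.} For any $x \in K \cap \mathbb{Z}^n$ we have $A_2 x = b_2$, so $\overline{w}^{\mathrm{T}} x = w^{\mathrm{T}} x + \mu\,\mathbf{1}^{\mathrm{T}} b_2$; in particular $\overline{w}$ and $w$ differ by the same constant on all of $K \cap \mathbb{Z}^n$, and ordering is preserved. Since $x^* \in K \cap \mathbb{Z}^n$ and $x^*$ maximises $\overline{w}^{\mathrm{T}}x$ over $K' \cap \mathbb{Z}^n \supseteq K \cap \mathbb{Z}^n$, it in particular maximises $\overline{w}^{\mathrm{T}}x$ — and therefore $w^{\mathrm{T}}x$ — over $K \cap \mathbb{Z}^n$. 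Thus $x^*$ is an optimal solution to \eqref{eq:originalIP}. Note that the two steps together give the desired equivalence: solving \eqref{eq:extendedIP} either certifies infeasibility of \eqref{eq:originalIP} (if the returned $x^*$ is not in $K$, Step 1 forces $K \cap \mathbb{Z}^n = \varnothing$) or returns a solution in $K$ which Step 2 certifies as optimal.

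\emph{Main obstacle.} There is no deep obstacle here — the argument is elementary — but the one place to be careful is the integrality rounding in Step 1: one must use that $A_2$ and $b_2$ have integer entries and $x^*$ is integral to pass from the strict inequality $\mathbf{1}^{\mathrm{T}}A_2 x^* < \mathbf{1}^{\mathrm{T}}b_2$ to the gap of size at least $1$, and one must note that $x^*$ and the witness $z$ both live in the box so that the $\mu$-bound on the spread of $w^{\mathrm{T}}$ actually applies to their difference. A secondary point worth a sentence is that $\mu$ is well-defined and polynomially bounded: the box $\ell \le x \le u$ was made bounded in the earlier proximity step, so $\max$ and $\min$ of $w^{\mathrm{T}}x$ over it are finite and computable, which is what lets the overall algorithm run in the claimed time.
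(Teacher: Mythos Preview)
Your proof is correct and follows essentially the same penalty argument as the paper: the constant-shift observation on $K$ for the optimality claim, and the integrality gap of at least $1$ combined with the box bound on $w^{\mathrm{T}}x$ for the feasibility claim. The only cosmetic difference is that the paper proves the two assertions in the opposite order.
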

\begin{proof}
    Note that maximizing $\overline{w}^{\text{T}} x$ over $K \cap \mathbb{Z}^n$ is the same as maximizing $w^{\text{T}} x$ over $K\cap \mathbb{Z}^n$ because for any $x \in K$, $\mu\mathbf{1}^{\text{T}} A_2x = \mu\mathbf{1}^{\text{T}} b_2$ is constant. That is, suppose $x^*$ is optimal for \eqref{eq:extendedIP} and consider an arbitrary $x \in K \cap \mathbb{Z}^n$. Then
    $$w^{\text{T}} x = \overline{w}^{\text{T}} x - \mu\mathbf{1}^{\text{T}} b_2 \leq \overline{w}^{\text{T}} x^* - \mu\mathbf{1}^{\text{T}} b_2 = w^{\text{T}} x^*$$
    so $x^*$ is optimal for \eqref{eq:originalIP}.

    Now suppose for the sake of contradiction that $x^* \not\in K \cap \mathbb{Z}^n$, but there exists some $\tilde{x} \in K \cap \mathbb{Z}^n$. Then because $x^*$ satisfies $A_2x^* \leq b_2$ but not $A_2x^* = b_2$ and both $A_2, b_2$ have integer entries, there must be some inequality which has a gap of at least 1. In particular, $\mathbf{1}^{\text{T}} A_2x^* \leq \mathbf{1}^{\text{T}} b_2 - 1$. Thus
    $$\overline{w}^{\text{T}} x^* \leq w^{\text{T}} x^* + \mu(\mathbf{1}^{\text{T}} b_2 - 1) \leq \max\{w^{\text{T}} x : \ell \leq x \leq u\} + \mu(\mathbf{1}^{\text{T}} b_2 - 1).$$
    Because $\tilde{x} \in K \cap \mathbb{Z}^n$, it satisfies
    $$\overline{w}^{\text{T}} \tilde{x} = w^{\text{T}} \tilde{x} + \mu\mathbf{1}^{\text{T}} b_2 \geq \min\{w^{\text{T}} x : \ell \leq x \leq u\} + \mu\mathbf{1}^{\text{T}} b_2.$$
    However this implies we have
    \begin{align*}
        \overline{w}^{\text{T}} x^* &\leq \max\{w^{\text{T}} x : \ell \leq x \leq u\} + \mu(\mathbf{1}^{\text{T}} b_2 - 1)\\
        &= \max\{w^{\text{T}} x : \ell \leq x \leq u\} - \mu + \mu\mathbf{1}^{\text{T}} b_2\\
        &= \min\{w^{\text{T}} x : \ell \leq x \leq u\} - 1 + \mu\mathbf{1}^{\text{T}} b_2\\
        &\leq \overline{w}^{\text{T}} \tilde{x} - 1
    \end{align*}
    contradicting the optimality of $x^*$.
\end{proof}
Thus to solve \eqref{eq:originalIP}, we can solve \eqref{eq:extendedIP}. If we get that \eqref{eq:extendedIP} is infeasible, then so is \eqref{eq:originalIP}. Otherwise we can check whether our optimal solution is feasible for \eqref{eq:originalIP} in $\textbf{O}(m)$ time. If so, then we have the desired optimal. Otherwise we declare that \eqref{eq:originalIP} is infeasible.

\paragraph{Reduction to $\{0, 1\}$ variables} We have reduced the integer program to one of the form
\begin{equation}\label{eq:IPbefore01vars}
    \max\{w^{\text{T}} x : x \in K \cap \mathbb{Z}^{V(G)}\}
\end{equation}
\begin{equation*}
    K = \{x \in \mathbb{R}^{V(G)} : Ax \leq b, \ell \leq x \leq u\}
\end{equation*}
where $A$ is the edge-vertex incidence matrix of a graph $G$ and $\ell, u \in \mathbb{Z}^{V(G)}$. Furthermore the signed graph $(G, \gamma)$ where $\gamma(e) = 1$ for all $e \in E(G)$ forbids the signed graph minors in $\mathcal{G}$. We now show how to replace $\ell, u$ with $\mathbf{0}, \mathbf{1}$ respectively. We will then be able to remove unnecessary constraints such that $b = \mathbf{1}$ and obtain the \mis problem over a subgraph of $G$, which we can solve in polynomial time by assumption. To do this, we will solve the linear programming relaxation
\begin{equation}\label{eq:LPrelax}
    \max\{w^{\text{T}} x :x \in K\}
\end{equation}
and show that we can restrict the original integer program to values which are the closest integer to that linear program solution. We first need the following lemmas, whose proofs we include for completeness.

\begin{lemma}[Prop.\@ 11~\cite{ConfortiFHJW2019stableset}]\label{lem:verticesgraphpolyhedron}
    Let $B$ be the edge-vertex incidence matrix of a graph $H$. Then all vertices of the polyhedron $Q = \operatorname{conv}\{x \in \mathbb{Z}^{V(H)} : Bx \leq \mathbf{1}\}$ are contained in $\{0, 1\}^{V(H)}$.
\end{lemma}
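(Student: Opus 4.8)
The plan is to show that the integer points of the polyhedron $Q$ are exactly the characteristic vectors of the independent sets of $H$ (together with possibly some other $0/1$ vectors), and conclude that every vertex of $Q$ is such a characteristic vector. First I would recall the explicit description of the matrix $B$: each row corresponds to an edge $e = \{u,v\}$ of $H$ and has a $1$ in the columns of $u$ and $v$ and $0$ elsewhere (loops give a single $2$ and are trivial to handle separately, or may be assumed absent). Thus for $x \in \mathbb{Z}^{V(H)}$ the condition $Bx \le \mathbf{1}$ says $x_u + x_v \le 1$ for every edge; since there are no sign or lower-bound constraints here, an integer $x$ can have arbitrarily negative entries, so the integer set is not literally contained in $\{0,1\}^{V(H)}$ — instead I will argue that the \emph{recession cone} of $Q$ absorbs the "below-zero" directions and hence every \emph{vertex} lands in $\{0,1\}^{V(H)}$.

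Here is the key chain of steps. Step one: observe that $Q = \operatorname{conv}\{x \in \mathbb{Z}^{V(H)} : Bx \le \mathbf{1}\}$ is a (nonempty) polyhedron whose recession cone contains $-\mathbb{R}_{\ge 0}^{V(H)}$, because $B$ has nonnegative entries, so decreasing any coordinate of a feasible integer point keeps it feasible; in particular the "all-ones-below" directions $-e_v$ are all recession directions. Step two: let $\bar x$ be a vertex (extreme point) of $Q$. Since $\bar x$ is an extreme point it cannot be written as the midpoint of two distinct points of $Q$; I claim this forces $0 \le \bar x \le \mathbf{1}$. If some coordinate $\bar x_v < 0$, then both $\bar x + e_v$ and (using that $-e_v$ is a recession direction applied far enough, or simply that $\bar x + 2e_v$ has the same "is it $\le \mathbf 1$?" analysis) $\bar x - e_v$ lie in $Q$ provided the resulting point still satisfies $Bx \le \mathbf 1$ — and since we only \emph{decrease} going to $\bar x - e_v$, that point is automatically feasible, while $\bar x + e_v$ is feasible as long as $\bar x_v + 1 \le 0 < 1$ and every edge constraint at $v$ still holds, which it does because $\bar x_v+1 \le 0$. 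Hence $\bar x = \tfrac12((\bar x - e_v) + (\bar x + e_v))$ contradicts extremality. Similarly, if $\bar x_v > 1$, then both $\bar x \pm e_v$ are integer points with $Bx \le \mathbf 1$ (at $v$ the constraints only get easier when we decrease and stay violated-or-not the same going up only by $1$ while we already have slack $\ge 1$ is false — here one must be careful), so the clean argument is: a vertex of $Q$ is the unique maximizer of some linear functional $c^\top x$ over $Q$; by the recession-cone observation $c \le 0$ is forced (otherwise the max is $+\infty$ using $e_v$), and since the integer feasible set with $c \le 0$ attains its max at a point with each $x_v$ as small as the edge constraints allow it to be pinned up to, one checks directly that this optimum has all coordinates in $\{0,1\}$. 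Step three: conclude $\bar x \in \{0,1\}^{V(H)}$, which is the statement.

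The main obstacle I anticipate is the upper bound $\bar x_v \le 1$: unlike the lower bound, increasing a coordinate is \emph{not} free, so the argument must genuinely use that $\bar x$ is a \emph{vertex} (not merely feasible) and route through the dual/optimality characterization — namely that a vertex is the unique optimum of a linear functional $c$, that $c$ must be coordinatewise nonpositive by the recession cone, and that an integer optimum of a nonpositive linear functional subject only to $x_u + x_v \le 1$ will set each $x_v$ to $0$ unless pushing it to $1$ is both feasible and helpful, never to a value $\ge 2$. Once this optimality framing is in place the remaining verifications are routine edge-by-edge checks, so I would present the recession-cone lemma and the "vertex $=$ unique LP optimum with $c \le 0$" reduction as the two substantive lines and then dispatch the coordinate bounds in a short paragraph. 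Throughout I would keep in mind that this lemma is being invoked (in the surrounding text) with $H$ a subdivision of a subgraph of the signed graph $G$, so no special structural hypotheses on $H$ beyond "it is a graph" are needed, and the proof above uses none.
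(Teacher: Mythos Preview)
Your coordinate-by-coordinate perturbation $\bar x \pm e_v$ does not work. Take $H$ a triangle and $\bar x = (2,-1,-1)$: this is a feasible integer point with $\bar x_2 < 0$, but $\bar x + e_2 = (2,0,-1)$ violates $x_1 + x_2 \le 1$. The claim ``every edge constraint at $v$ still holds because $\bar x_v + 1 \le 0$'' is simply false --- the edge constraint is $x_u + x_v \le 1$, not $x_v \le 1$, and $\bar x_u$ can be large. Of course $(2,-1,-1)$ is not a vertex of $Q$, but your argument does not show this; you need a different perturbation direction. Your fallback ``clean argument'' also has problems: the recession directions are $-e_v$, not $+e_v$, so boundedness forces $c \ge 0$, not $c \le 0$. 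More seriously, the recession cone is all of $\{d : Bd \le 0\}$, which strictly contains the negative orthant (e.g.\ $(-1,1,-1)$ is a recession direction for the triangle), so ``$c \ge 0$'' is not the full constraint on $c$, and the promised ``routine edge-by-edge checks'' that the optimum lands in $\{0,1\}$ are precisely the substance of the lemma.

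The paper's proof does what your argument is missing: it looks at the subgraph $H'$ of \emph{tight} constraints at the integer vertex $x^*$, observes that integrality of $x^*$ forces each component of $H'$ to be bipartite (tight edges give $x^*_u + x^*_v = 1$, so walking along them alternates values, and an odd cycle would force $x^*_v = 1 - x^*_v$), and then builds a perturbation $r \in \{-1,0,+1\}^{V(H)}$ that is $+1$ on one side of a bipartition, $-1$ on the other, and $0$ outside that component. The verification that $x^* \pm r$ are both feasible then uses the bipartite structure and the assumption that the values on that component are $\alpha, 1-\alpha$ with $\alpha \ge 2$. This is exactly the ``right direction'' your $e_v$ attempt was groping for; the key idea you are missing is that the perturbation must respect the tight-constraint graph rather than move a single coordinate.
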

\begin{proof}
    Let $x^*$ be a vertex of $Q$, and by definition $x^* \in \mathbb{Z}^{V(H)}$. Let $E(x^*) \subseteq E(H)$ be the rows of $B$ such that the corresponding inequalities in $Bx^* \leq \mathbf{1}$ are at equality. That is, for each edge $uv \in E(x^*)$, $x^*_u + x^*_v = 1$. Let $H'$ be a connected component of $H[E(x^*)]$. Note that if vertices $u,v$ share a common neighbour $w$ in $H'$, then the constraints $x^*_u + x^*_w = 1, x^*_w + x^*_v = 1$ implies that $x^*_u = x^*_v$. Note that because $x^*$ is integer, this implies that there are no odd cycles in $H'$. Thus $H'$ is bipartite, and there exists an integer $\alpha \geq 1$ such that $V'_\alpha = \{v \in V(H') : x^*_v = \alpha\}$ and $V'_{1 - \alpha} = \{v \in V(H') : x^*_v = 1 - \alpha\}$ is a bipartition of $H'$.

    Suppose for the sake of contradiction that $x^* \not\in \{0, 1\}^{V(H)}$. Then it must be that $\alpha \geq 2$. Let $r \in \mathbb{Z}^V$ be such that
    $$r_v = \begin{cases}
        1 & v \in V'_\alpha\\
        -1 & v \in V'_{1 - \alpha}\\
        0 & v \in V(H) \setminus V(H').
    \end{cases}$$
    We then argue that $x^* + r$ and $x^* - r$ are both integer points in $Q$, contradicting that $x^*$ is a vertex. Clearly all constraints with both ends in $V(H) \setminus V(H')$ are satisfied for both $x^* + r$ and $x^* - r$. For constraints given by edges in $E(x^*)$, they are at equality for both $x^* + r$ and $x^* - r$. Edges with exactly one end in $V(H')$ must have a constraint with a gap of at least 1, so they are satisfied for both $x^* + r, x^* - r$. Edges $uv$ with both ends in $V(H')$ which were not tight for $x^*$ must have both ends in the same side of the bipartition of $H'$. Thus $x_u + x_v \leq 1$ implies that $u,v \in V'_{1 - \alpha}$, and because $\alpha \geq 2$ the constraint has a slack of at least 2. Therefore the constraint is still satisfied for $x^* + r, x^* - r$.
\end{proof}

\begin{lemma}[Lemma 8~\cite{Fiorini2025Integer}]\label{lem:optimumgraphpolyhedron}
    Let $B$ be the an edge-vertex incidence matrix of a graph $H$, and let $w \in \mathbb{R}^{V(H)}$. Then if $\max\{w^{\text{T}} x : Bx \leq \mathbf{1}\}$ is attained at $\frac{1}{2}\mathbf{1}$, then $\max\{w^{\text{T}} x: Bx \leq \mathbf{1}, x \in \mathbb{Z}^{V(H)}\}$ is attained at a point in $\{0, 1\}^{V(H)}$.
\end{lemma}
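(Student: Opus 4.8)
\textbf{Proof plan for \zcref{lem:optimumgraphpolyhedron}.}
The plan is to exploit LP duality together with \zcref{lem:verticesgraphpolyhedron}. First I would observe that the hypothesis that $\max\{w^{\text{T}}x : Bx \le \mathbf{1}\}$ is attained at $\tfrac12\mathbf{1}$ is a statement about the \emph{linear} program over the polyhedron $P = \{x \in \mathbb{R}^{V(H)} : Bx \le \mathbf{1}\}$, and I want to convert it into a statement about the integer program, i.e.\ the LP over $Q = \operatorname{conv}\{x \in \mathbb{Z}^{V(H)} : Bx \le \mathbf{1}\}$. Since $Q \subseteq P$, a first-order attempt is: show $w^{\text{T}}x \le w^{\text{T}}(\tfrac12\mathbf{1})$ for all $x \in Q$, but that only gives an upper bound that is off by a factor of $2$ and does not immediately pin the optimum to $\{0,1\}^{V(H)}$. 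Instead I would extract the relevant \emph{dual certificate}: since $\tfrac12\mathbf{1}$ is optimal for $\max\{w^{\text{T}}x : Bx \le \mathbf{1}\}$, complementary slackness gives a dual solution $y \ge 0$ supported on the constraints tight at $\tfrac12\mathbf{1}$ — but every constraint of $Bx \le \mathbf{1}$ is tight at $\tfrac12\mathbf{1}$, so this is not restrictive — together with $B^{\text{T}}y = w$. The key consequence is $w = B^{\text{T}}y$ with $y \ge 0$, i.e.\ $w$ is a non-negative combination of the rows of $B$, which are exactly the edge-indicator vectors of $H$.

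Given this, the core step is to argue directly on an integral vertex of $Q$. Let $x^\star$ be a vertex of $Q$ maximising $w^{\text{T}}x$; by \zcref{lem:verticesgraphpolyhedron} we already know $x^\star \in \{0,1\}^{V(H)}$, so in fact \emph{every} vertex of $Q$ is in $\{0,1\}^{V(H)}$ and the maximum of $w^{\text{T}}x$ over $Q$ — equivalently over $\{x \in \mathbb{Z}^{V(H)} : Bx \le \mathbf{1}\}$ — is automatically attained at a point of $\{0,1\}^{V(H)}$. So upon reflection, the conclusion is \emph{immediate} from \zcref{lem:verticesgraphpolyhedron} alone: the integer program $\max\{w^{\text{T}}x : Bx \le \mathbf{1},\ x \in \mathbb{Z}^{V(H)}\}$ is the same as $\max\{w^{\text{T}}x : x \in Q\}$, a linear objective over the polyhedron $Q$; if this maximum is finite it is attained at a vertex of $Q$, and all vertices of $Q$ lie in $\{0,1\}^{V(H)}$. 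The hypothesis about $\tfrac12\mathbf{1}$ is needed only to guarantee the maximum is \emph{finite} (it is bounded above by $w^{\text{T}}(\tfrac12\mathbf{1})$ since $\tfrac12\mathbf{1} \in P \supseteq Q$), so that a vertex optimum exists.

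The steps, in order, would be: (1) record that $\tfrac12\mathbf{1} \in P$ and is optimal for the LP, hence $w^{\text{T}}(\tfrac12\mathbf{1})$ is an upper bound on $w^{\text{T}}x$ for all $x \in Q$, so $\max\{w^{\text{T}}x : x \in Q\}$ is finite; (2) observe $\max\{w^{\text{T}}x : Bx \le \mathbf{1},\ x \in \mathbb{Z}^{V(H)}\} = \max\{w^{\text{T}}x : x \in Q\}$ because a linear functional takes the same maximum over a set and over its convex hull; (3) since $Q$ is a pointed polyhedron (it lies in the box $[0,1]^{V(H)}$ by \zcref{lem:verticesgraphpolyhedron}, hence contains no line) with a finite linear maximum, the maximum is attained at a vertex; (4) apply \zcref{lem:verticesgraphpolyhedron} to conclude that vertex lies in $\{0,1\}^{V(H)}$. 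The main (and only) subtlety is the bookkeeping in step (1): one must be careful about the degenerate case where $H$ has an isolated vertex $v$, so that $x_v$ is unconstrained from above — but then optimality of $\tfrac12\mathbf{1}$ forces $w_v \le 0$, and if $w_v = 0$ the value of $x_v$ is irrelevant while if $w_v < 0$ the LP optimum is actually $-\infty$ in the $x_v$ coordinate unless we read $P$ with the implicit lower bound; in the intended application $B$ has full column support (every variable appears in some constraint after the earlier reductions) so this case does not arise, and I would simply note that $Q$ is bounded, hence a polytope, making step (3) automatic.
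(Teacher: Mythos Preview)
Your step~(3) contains a genuine gap. You claim that $Q = \operatorname{conv}\{x \in \mathbb{Z}^{V(H)} : Bx \le \mathbf{1}\}$ lies in the box $[0,1]^{V(H)}$ because all its vertices do, and hence is pointed. But \zcref{lem:verticesgraphpolyhedron} only constrains the \emph{vertices} of $Q$, not $Q$ itself; an unbounded polyhedron whose vertices all sit in a box can still escape the box along its recession directions. And $Q$ \emph{is} unbounded whenever $H$ has a bipartite component: if $L \cup R$ is a bipartition, the vector $d$ with $d_v = 1$ on $L$ and $d_v = -1$ on $R$ satisfies $Bd = 0$, so $Q$ contains a full line in direction $d$ and therefore has \emph{no} vertices at all. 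In that case the finite maximum over $Q$ is never attained at a vertex, and your argument stalls. Concretely, for a single edge $uv$ the point $(2,-1)$ is an integer feasible point outside $[0,1]^2$.

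This is exactly the case the paper's proof isolates and handles separately. After disposing of the pointed case via \zcref{lem:verticesgraphpolyhedron} just as you do, the paper observes that $Q$ fails to be pointed precisely when $H$ is bipartite, and then exhibits the optimal $\{0,1\}$-point directly: the indicator vector of the heavier side $L$ of the bipartition has value $\sum_{v \in L} w_v \ge \tfrac12 \sum_{v} w_v = w^{\text{T}}(\tfrac12\mathbf 1)$, which is the LP optimum and hence an upper bound on the integer optimum. So the hypothesis on $\tfrac12\mathbf 1$ is not merely a finiteness device as you suggest; it pins down the LP value as $\tfrac12 w^{\text{T}}\mathbf 1$, which is what makes the bipartite construction go through.
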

\begin{proof}
    We may assume $H$ is connected. Let $Q = \operatorname{conv}\{x \in \mathbb{Z}^{V(H)} : Bx \leq \mathbf{1}\}$. It is well known that if $Q$ contains a vertex, then $\max\{w^{\text{T}} x : x \in Q\}$ is attained at a vertex. Thus by \zcref{lem:verticesgraphpolyhedron} we may assume that $Q$ does not contain a vertex, or equivalently $Q$ does not contain a line. Note that $Q$ contains a line exactly when there is a nonzero solution to $Bx = 0$ which gives the direction of the line. Such an equation has a nonzero solution if and only if $H$ is bipartite. If $H$ is bipartite, then setting $x_v = 1$ for one side of the bipartition and $x_v = -1$ for the other side is a valid solution. If $H$ is not bipartite, then extending an odd cycle to a spanning subgraph with $|V(H)|$ many edges gives a submatrix with nonzero determinant (compute the determinant via cofactor expansion on degree one vertices until $H$ is an odd cycle, which has nonzero determinant as in \zcref{lem:deltamodularequivalence}), so $B$ has full column rank. Hence we may assume $H$ is bipartite.

    Let $V(H) = L \cup R$ be the bipartition of $H$, and without loss of generality we may assume $\sum_{v \in L} w_v \geq \sum_{v \in R} w_v$. Let $x^* \in \{0, 1\}^{V(H)}$ be the characteristic vector of $L$. Note that clearly $x^* \in Q$. Then
    $$w^{\text{T}} x^* = \sum_{v \in L} w_v \geq \frac{1}{2}\sum_{v \in V(H)}w_v = \max\{w^{\text{T}} x : Bx \leq \mathbf{1}\} \geq \max\{w^{\text{T}} x : Bx \leq \textbf{1}, x \in \mathbb{Z}^{V(H)}\}$$
    so $x^*$ is the desired optimal solution.
\end{proof}

We are now to prove the following proposition. We call a solution to the linear program \eqref{eq:LPrelax} \textit{extremal} if it is a vertex of $K$. Note that because $K$ is bounded, if $K \neq \varnothing$ then there is always a solution to the integer program which is a vertex, and given an arbitrary solution we can always find one which is a vertex, for example by perturbing $w$ by a tiny amount.

\begin{proposition}[Prop.\@ 6~\cite{Fiorini2025Integer}]\label{prop:integralitygap}
    Suppose that \eqref{eq:IPbefore01vars} is feasible. Then for every extremal optimal solution $x^*$ of \eqref{eq:LPrelax}, there exists an optimal solution $\overline{x}$ to \eqref{eq:IPbefore01vars} such that $\|x^* - \overline{x}\|_\infty \leq 1/2$.
\end{proposition}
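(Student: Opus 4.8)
The plan is to reduce \eqref{eq:IPbefore01vars} to a stable set problem on a carefully chosen subgraph, using the structure of the optimal vertex $x^*$ of the linear relaxation together with \zcref{lem:optimumgraphpolyhedron}. First I would take an extremal optimal solution $x^*$ of \eqref{eq:LPrelax}; since $K$ is a rational polyhedron and $x^*$ is a vertex, $x^*$ has bounded denominators, but more importantly I want to exploit the fact that $A$ is the (unsigned) edge--vertex incidence matrix of $G$ and the non-trivial constraints $Ax \le b$ are of the form $x_u + x_v \le b_{uv}$. I would partition $V(G)$ according to the fractional part of $x^*$: let $V_{\mathsf{int}}$ be the vertices $v$ with $x^*_v \in \mathbb{Z}$ and $V_{\mathsf{frac}}$ the rest. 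On $V_{\mathsf{int}}$ the natural candidate for $\overline{x}$ is to keep the value $x^*_v$; the work is concentrated on $V_{\mathsf{frac}}$, where we must round $x^*_v$ to one of the two nearest integers while keeping feasibility and not losing objective value.

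The key observation I would make is that by complementary slackness the vertices in $V_{\mathsf{frac}}$, together with the constraints tight at $x^*$, form a bipartite-like pattern: for a constraint $x_u + x_v \le b_{uv}$ tight at $x^*$ with both $u,v \in V_{\mathsf{frac}}$, rounding one up forces the other down. I would formalise this by passing to the subgraph $H$ induced on $V_{\mathsf{frac}}$ by the tight constraints, after shifting each coordinate so that $x^*_v$ becomes exactly $1/2$ on $V_{\mathsf{frac}}$ and each relevant constraint becomes $y_u + y_v \le 1$ (constraints that are not tight or that involve $V_{\mathsf{int}}$ have slack at least $1/2$, hence at least $1$ after integer rounding of the $V_{\mathsf{int}}$ part, so they remain satisfied for any $\{0,1\}$-rounding on $V_{\mathsf{frac}}$; the box constraints $0 \le x \le 1$ translate to the same shape). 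This is exactly the setting of \zcref{lem:optimumgraphpolyhedron}: the restricted objective $w$ on $V_{\mathsf{frac}}$ is maximised over $\{y : B y \le \mathbf{1}\}$ at $\tfrac12 \mathbf 1$, where $B$ is an edge--vertex incidence matrix of $H$, because $x^*$ was an \emph{optimal} vertex of the relaxation and any local improvement on $V_{\mathsf{frac}}$ would contradict optimality. Hence \zcref{lem:optimumgraphpolyhedron} gives an integer point $\overline{y} \in \{0,1\}^{V_{\mathsf{frac}}}$ attaining the same objective, and $\|\tfrac12\mathbf 1 - \overline y\|_\infty \le 1/2$; undoing the shift gives $\overline x$ on $V_{\mathsf{frac}}$ with $\|x^* - \overline x\|_\infty \le 1/2$, and combined with $\overline x = x^*$ on $V_{\mathsf{int}}$ this is the desired solution.

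The remaining point to check is that $\overline x$ is genuinely feasible and optimal for the \emph{integer} program \eqref{eq:IPbefore01vars}, not merely for the relaxation restricted to $H$. Feasibility follows because every constraint of \eqref{eq:IPbefore01vars} either (i) lies inside $H$ and is handled by $B\overline y \le \mathbf 1$, (ii) touches $V_{\mathsf{int}}$, in which case the slack at $x^*$ plus the integrality of the endpoints gives room for the $1/2$-perturbation on the $V_{\mathsf{frac}}$ side, or (iii) is a box constraint, handled identically. Optimality follows because $w^\top \overline x = w^\top x^* = \max$ of the relaxation $\ge$ integer optimum. I expect the main obstacle to be the bookkeeping in step two: verifying that after shifting the $V_{\mathsf{int}}$-coordinates to their (unique, because integral) values and restricting to $V_{\mathsf{frac}}$, the resulting system really is $B y \le \mathbf 1$ with $B$ an edge--vertex incidence matrix and $\tfrac12\mathbf 1$ really is an LP optimum there — this hinges on a clean case analysis of which constraints survive with what slack, and on the fact that an extremal optimal $x^*$ cannot be improved by moving within the affine hull of the tight constraints. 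Once that is nailed down, the rest is a direct invocation of \zcref{lem:optimumgraphpolyhedron}.
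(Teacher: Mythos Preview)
Your overall strategy matches the paper's: split $V(G)$ into integer and fractional coordinates of the half-integral vertex $x^*$, build the tight-edge graph $H$ on $V_{\mathsf{frac}}$, and invoke \zcref{lem:optimumgraphpolyhedron}. However, your optimality argument contains a genuine error.

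You write that \zcref{lem:optimumgraphpolyhedron} ``gives an integer point $\overline{y}\in\{0,1\}^{V_{\mathsf{frac}}}$ attaining the same objective'' and conclude ``$w^\top \overline x = w^\top x^*$''. This is false: the lemma only says the \emph{integer} optimum over $\{y:By\le\mathbf{1}\}$ is attained in $\{0,1\}^{V_{\mathsf{frac}}}$; it does not say this equals the LP value at $\tfrac12\mathbf{1}$. For a triangle with unit weights the LP value is $3/2$ but the best $\{0,1\}$ point gives $1$. So in general $w^\top\overline x < w^\top x^*$, and your inequality chain ``$w^\top\overline x = w^\top x^* \ge$ integer optimum'' collapses.

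The paper fills this gap with two extra ingredients you are missing. First, it proves separately that $x^*$ is already best possible on the integer coordinates: for every integer optimum $z$ of \eqref{eq:IPbefore01vars} one has $\sum_{v\in V_{\mathsf{int}}} w_v z_v \le \sum_{v\in V_{\mathsf{int}}} w_v x^*_v$ (via a small perturbation argument that would otherwise contradict optimality of $x^*$). Second, on $V_{\mathsf{frac}}$ it does not compare $\overline y$ to $\tfrac12\mathbf{1}$ but to the shifted integer optimum $z' \coloneqq z|_{V_{\mathsf{frac}}} - \lfloor x^*|_{V_{\mathsf{frac}}}\rfloor$, checking that $Bz'\le\mathbf{1}$ (because tight edges have $\lfloor x^*_u\rfloor+\lfloor x^*_v\rfloor=b_{uv}-1$), so that the $\{0,1\}$-optimum $\overline y$ beats $z'$. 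Summing the two parts gives $w^\top\overline x \ge w^\top z$, which is the correct comparison. As a smaller point, you should state up front that $x^*$ is half-integral (Nemhauser--Trotter), since your ``shift so that $x^*_v$ becomes exactly $1/2$'' step presupposes it.
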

\begin{proof}
    Every extremal point in $K$ is half integer, see Prop. 2.1 in \cite{NemhauserT1974properties}, so $x^* \in \frac{1}{2}\mathbb{Z}^{V(G)}$. Let $V_0 = \{v \in V(G) : x_v \in \mathbb{Z}\}$ and $V_* = V(G) \setminus V_0$.

    We first show that $x^*$ is best possible on its integer coordinates. Let $z \in \mathbb{Z}^{V(G)}$ be an optimal solution to \eqref{eq:IPbefore01vars}. Then it must be that
    $$\sum_{v \in V_0} w_vz_v \leq \sum_{v \in V_0} w_vx^*_v.$$
    Otherwise consider $y \in \mathbb{R}^{V(G)}$ such that $y_v = x^*_v$ for $v \in V_*$ and $y_v = (1 - \varepsilon)x^*_v + \varepsilon z_v$ for $v \in V_0$. We now claim that we can choose $\varepsilon > 0$ small enough such that $y \in K$, and because $w^{\text{T}} y = w^{\text{T}} x + \varepsilon\sum_{v \in V_0}w_v(z_v - x_v) > w^{\text{T}} x$, we contradict that $x^*$ is optimal. Because $\ell \leq x^* \leq u$ and $\ell \leq z \leq u$, it must be that $\ell \leq y \leq u$. Consider any constraint of the form $x_{v} + x_{v'} \leq b_{vv'}$ which is at equality for $x^*$. Note that if this constraint is tight for $x^*$, then it must be that either both $v,v' \in V_0$, or both $v,v' \in V_*$. If both $v,v' \in V_*$, then $y_v + y_{v'} = x^*_v + x^*_{v'} = b_{vv'}$. If both $v,v' \in V_0$, then $(y_v,y_{v'})$ is the linear combination of $(x^*_v, x^*_{v'})$ and $(z_v, z_{v'})$, both of which satisfy the inequality. Therefore $y$ does not violate any tight constraints for any $\varepsilon$, so we can choose $\varepsilon$ small enough to not violate any of the loose constraints as desired.

    We now show that there exists an optimal solution which is obtained by rounding the half-integral coordinates of $x^*$ to one of the closest integers. Consider the graph $H$ with $V(H) = V_*$ and whose edges are those whose constraints are tight with both ends in $V_*$. Let $B$ be an edge-vertex incidence matrix of $H$. We first claim that for any $q$ satisfying $Bq \leq \mathbf{1}$, the vector $y$ such that $y_v = x^*_v$ for $v \in V_0$ and $y_v = \lfloor x^*_v\rfloor + q$ for $v \in V_*$ satisfies $y \in K$. Note that because $x^* \in K$ and $\ell,u$ are integer, $y \geq \lfloor x^* \rfloor \geq \ell$, and $y \leq \lceil x^* \rceil \leq u$. Consider any constraint of the form $x_v + x_{v'} \leq b_{vv'}$. If both $v,v' \in V_0$, then $y_v + y_{v'} = x^*_v + x^*_{v'} \leq b_{vv'}$. If $v \in V_0$ and $v' \in V^*$, then because $b$ is integer, $x^*_v + x^*_{v'} \leq b_{vv'} - \frac{1}{2}$, and hence $y_v + y_{v'} \leq x^*_v + x^*_{v'} + \frac{1}{2} \leq b_{vv'}$. If $v,v' \in V_*$ with the constraint tight for $x^*$, then note $\lfloor x^*_v\rfloor + \lfloor x^*_{v'}\rfloor \leq b_{vv'} - 1$ while $q_v + q_{v'} \leq 1$, so $y_v + y_{v'} \leq b_{vv'}$. If $v,v' \in V^*$ and the constraint is not tight, then $\lfloor x^*_v\rfloor + \lfloor x^*_{v'}\rfloor \leq b_{vv'} - 2$ and trivially $q_v + q_{v'} \leq 2$, so $y_v + y_{v'} \leq b_{vv'}$. Thus $y \in K$ as desired.
    
    We now argue that $\frac{1}{2}\mathbf{1}$ is an optimal solution to $\max\{(w|_{V_*})^{\text{T}} x : Bx \leq \mathbf{1}\}$. Indeed suppose there exists $y^* \in \mathbb{R}^{V_*}$ such that $By^* \leq \mathbf{1}$ and $(w|_{V_*})^{\text{T}} y^* > \frac{1}{2}(w|_{V_*})^{\text{T}} \mathbf{1}$. Then let $y \in \mathbb{R}^{V(G)}$ be such that $y_v = x^*_v$ for $v \in V_0$ and $y_v = \lfloor x^*_v \rfloor + y^*_v$ for $v \in V_*$. The $y\in K$ as above, and
    $$w^{\text{T}} y = \sum_{v \in V_0} w_vx^*_v + \sum_{v \in V_*}w_v(\lfloor x^*_v\rfloor + y^*_v) \geq \sum_{v \in V_0} w_vx^*_v + \sum_{v \in V_*}w_v\left(\lfloor x^*_v\rfloor + \frac{1}{2}\right) = w^{\text{T}} x^*$$
    contradicting the optimality of $x^*$.

    Therefore by \zcref{lem:optimumgraphpolyhedron}, there exists some $q \in \{0,1\}^{V_*}$ an optimal solution to $\max \{(w|_{V_*})^{\text{T}} x : Bx \leq \mathbf{1}, x \in \mathbb{Z}^{V_*}\}$. Let $\overline{x} \in \mathbb{Z}^{V(G)}$ be such that $\overline{x}_v = x^*_v$ if $v \in V_0$, and $\overline{x}_v = \lfloor x_v^*\rfloor +q_v$ if $v \in V_*$. As above $\overline{x} \in K$. Let $z \in \mathbb{Z}^{V(G)}$ be an optimal solution to \eqref{eq:IPbefore01vars}. Then note that $z' = z|_{V_*} -\lfloor x|_{V_*}\rfloor$ satisfies $Bz' \leq \mathbf{1}$. Thus the optimality of $q$ implies
    \begin{align*}
        \sum_{v \in V_*} w_v\overline{x}_v &= \sum_{v \in V_*} w_v(\lfloor x^*_v\rfloor + q_v)\\
        &\geq \sum_{v \in V_*}w_v(\lfloor x^*_v\rfloor + z'_v)\\
        &= \sum_{v \in V_*} w_vz_v.
    \end{align*}
    From above we have
    $$\sum_{v \in V_0} w_v \overline{x}_v = \sum_{v \in V_0} w_vx^*_v \geq \sum_{v \in V_0}w_vz_v$$
    and hence $w^{\text{T}} \overline{x} \geq w^{\text{T}} z$. Thus $\overline{x}$ is an optimal solution to \eqref{eq:IPbefore01vars} with $\|x^* - \overline{x}\|_\infty \leq 1/2$, completing the proof.
\end{proof}

Finally we reduce to \mis as follows. First, we compute an extremal optimal solution $x^*$ to \eqref{eq:LPrelax} in strongly polynomial time $\textbf{O}(m^{\omega_{\ref{def:matrixmultconstant}} + 3}\log^3m)$. Clearly if \eqref{eq:LPrelax} is infeasible, then so is \eqref{eq:IPbefore01vars}, and note that \eqref{eq:LPrelax} is never unbounded. We then translate the feasible region $K$ by $\lfloor x^* \rfloor$. That is, we now consider the integer program 
\begin{equation}\label{eq:IPin01vars}
    \max\{w^{\text{T}} x : x + \lfloor x^*\rfloor \in K \cap \mathbb{Z}^{V(G)}\}.
\end{equation}
Note that this has the same structure as \eqref{eq:IPbefore01vars} but with perhaps $\ell,u,$ and $b$ changed. We can update $\ell,u,b$ in $\textbf{O}(m)$ time. Then by \zcref{prop:integralitygap}, it suffices to find solutions to \eqref{eq:IPin01vars} where $x \in \{0,1\}^{V(G)}$. We then remove all trivial constraints. That is, for a constraint $x_v + x_{v'} \leq b_{vv'}$, if $b_{vv'} < 0$, we declare the problem infeasible. If $b_{vv'} = 0$, we set $x_v, x_{v'}$ to be 0. If $b_{vv'} > 1$, we can remove the constraint (and thus the edge from $G$). Similarly the constraints $\ell \leq x \leq u$ may result in fixing more variables or directly imply infeasibility. We can iterate through all constraints, either fixing variables or declaring infeasibility, in $\textbf{O}(m)$ time. For each variable that is fixed to 0, we delete the vertex from $G$. For each variable that is fixed to 1, we delete the vertex and all of its neighbours from $G$. Then the resulting integer program is exactly \mis on a subgraph of $G$ which we can solve in $T(|V(G)|, |E(G)|) \leq T(n + 2m, 3m)$ time by assumption.

\subsection{Integer programs forbidding a parity breaking grid}

If we combine the above reduction with our main result we obtain the following. Let $\gamma_1$ denote the function which maps every input to 1.

\begin{corollary}\label{cor:IPsWork}
    Let $(G, \gamma)$ be a signed graph which forbids $(\mathscr{H}_k, \gamma_1)$ and $(\mathscr{V}_k, \gamma_1)$ as signed graph minors. Let $A \in \{-1, 0, 1\}^{m \times n}$ be the signed edge-vertex incidence matrix of $G$. Then there exists a function $f(k)$ such that we can solve the integer program
    $$\max\{w^{\text{T}} x : Ax \leq b, \ell \leq x \leq u, x \in \mathbb{Z}^n\}$$
    in $n^{f(k)}$ time, where $w \in \mathbb{Z}^n$, $b \in \mathbb{Z}^m$, and $\ell,u \in (\mathbb{Z} \cup \{-\infty, \infty\})^n$.
\end{corollary}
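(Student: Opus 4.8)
The plan is to deduce \zcref{cor:IPsWork} directly from \zcref{thm:reductionfromIPtoindependentset} (the Fiorini--Joret--Weltge--Yuditsky-style reduction, reproven in the excerpt) together with \zcref{thm:MISIntro}/\zcref{thm:miswithboundedocptw} (polynomial-time \mis on bounded \ocp-treewidth). The crucial observation that glues these together is the dictionary between odd-minors and signed-graph minors laid out in the subsection on signed graphs: for a signed graph $(H,\gamma_1)$ in which every edge is odd, $(H,\gamma_1)$ is a signed-graph minor of $(G,\gamma)$ precisely when one can perform deletions and even-contractions; and after shifting so that the relevant edge cut is even, contracting it corresponds to a bond contraction. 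Consequently, forbidding $(\mathscr{H}_k,\gamma_1)$ and $(\mathscr{V}_k,\gamma_1)$ as signed-graph minors of $(G,\gamma)$ is the same as forbidding $\mathscr{H}_k$ and $\mathscr{V}_k$ as odd-minors of the underlying graph $G$ — here we use that the parity handle and parity vortex are shifting-equivalent to the signed graphs with all edges odd, as noted in the text.

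First I would record that the class of signed graphs forbidding $(\mathscr{H}_k,\gamma_1)$ and $(\mathscr{V}_k,\gamma_1)$ as signed-graph minors is closed under subgraphs and under parity-preserving subdivisions (replacing an edge by a path of the same parity), since both operations preserve the parities of all cycles and hence the signed-graph minor relation; the two forbidden patterns $\mathscr{H}_k,\mathscr{V}_k$ have minimum degree $2$, so they fit the hypothesis of \zcref{thm:reductionfromIPtoindependentset}. Second, I would verify the key input to that reduction: on a signed graph $(H,\gamma_1)$ with all edges odd that forbids $\mathscr{H}_k$ and $\mathscr{V}_k$ as odd-minors, \mis is solvable in time $|V(H)|^{f(k)}$. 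This is where \zcref{thm:GridThmIntro} and \zcref{thm:miswithboundedocptw} enter: by \zcref{thm:GridThmIntro}(ii), excluding $\mathscr{H}_k$ and $\mathscr{V}_k$ as odd-minors forces $\ocptw(H)\le f(k)$ for a polynomial $f$, and then \zcref{thm:miswithboundedocptw} (equivalently \zcref{thm:MISIntro}) solves weighted \mis in $|V(H)|^{f'(k)}$ time. (The weight hypothesis $\sum w(v)\in\mathbf{O}(|V(H)|)$ is harmless for \mis since every $0/1$-weighting of optimum value at most $|V(H)|$ suffices, and by the reduction in \zcref{thm:reductionfromIPtoindependentset} the \mis instances produced carry only $0/1$ weights bounded by $|V(G)|$.) Third, I would feed this into \zcref{thm:reductionfromIPtoindependentset} with $T(p,q)=p^{f(k)}$: the reduction turns the given integer program over $A$, the signed edge-vertex incidence matrix of $(G,\gamma)$ with entries in $\{-1,0,1\}$, into an \mis instance on a graph obtained from $G$ by parity-preserving subdivisions and subgraph operations (on $n+2m$ vertices and at most $3m$ edges), which therefore still forbids $\mathscr{H}_k,\mathscr{V}_k$ as odd-minors, is solvable in $(n+2m)^{f(k)}$ time, and the extra $\mathbf{O}(m^{\omega+3}\log^3 m)$ overhead from the linear-programming relaxation and proximity steps is polynomial and absorbed into $n^{f(k)}$ after bounding $m$ polynomially in $n$ (the constraint matrix has $m=|E(G)|$ rows, and degenerate/parallel rows are pruned in the reduction).

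The main obstacle is bookkeeping the minor-closure through the reduction rather than any deep new idea: one must check carefully that each of the gadget replacements in \zcref{thm:reductionfromIPtoindependentset} — splitting a constraint $x_i-x_j\le b_{ij}$ into $x_i+y_{ij}\le b_{ij}$, $x_j+y_{ij}=0$, the analogous three-constraint gadget for $-x_i-x_j\le b_{ij}$, and the final removal of trivial constraints and fixing of variables — corresponds on the signed-graph side to subdividing an even edge by two odd edges, subdividing an odd edge into three odd edges, or deleting edges/vertices, all of which are parity-preserving and hence keep us inside the class forbidding $\mathscr{H}_k,\mathscr{V}_k$ as signed-graph (equivalently odd-) minors. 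A secondary point to handle cleanly is the translation between \emph{odd-minor} of the underlying graph $G$ and \emph{signed-graph minor} of $(G,\gamma)$ when $\gamma$ is not identically $1$: one uses that the parity of cycles is all that matters for the signed-graph minor relation, together with shifting, to replace $(G,\gamma)$ by a shifting-equivalent $(G,\gamma_1)$-like object; concretely, $(\mathscr{H}_k,\gamma_1)$ is a signed-graph minor of $(G,\gamma)$ iff some bond contraction plus deletions of $G$ yields $\mathscr{H}_k$ as an odd-minor, which is exactly the hypothesis "$G$ forbids $\mathscr{H}_k$ as an odd-minor'' once we note that shifting does not change which cycles are odd. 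With these identifications in place, the corollary follows by assembling \zcref{thm:GridThmIntro}, \zcref{thm:miswithboundedocptw}, and \zcref{thm:reductionfromIPtoindependentset}, with $f(k)$ taken to be a suitable polynomial composition of the functions appearing in those three results.
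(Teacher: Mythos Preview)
Your approach is essentially the same as the paper's: invoke \zcref{thm:reductionfromIPtoindependentset} to reduce to \mis on a parity-preserving subdivision of a subgraph of $G$ (still forbidding the two grids), then use \zcref{thm:globalstructure}/\zcref{thm:GridThmIntro} to bound the \ocp-treewidth, and finish with \zcref{thm:miswithboundedocptw}. One small correction: your parenthetical handling of the weight hypothesis in \zcref{thm:miswithboundedocptw} is off—the \mis instance produced by the reduction does \emph{not} carry $0/1$ weights but rather the (possibly large) integer weights $\overline{w}$ coming from the penalisation step; the paper simply ignores the hypothesis $\sum w(v)\in\mathbf{O}(|V(G)|)$, which is a cosmetic artefact of the dynamic-programming bookkeeping and not essential to the algorithm.
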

\begin{proof}
    By \zcref{thm:reductionfromIPtoindependentset}, it suffices to solve \mis on $(G', \gamma_1)$ for $G'$ a graph with at most $n + 2m$ vertices and $3m$ edges and $(G', \gamma_1)$ forbidding $(\mathscr{H}_k, \gamma_1)$ and $(\mathscr{V}_k, \gamma_1)$ as signed graph minors. Because each of these graphs have every edge odd, $G'$ forbids $\mathscr{H}_k$ and $\mathscr{V}_k$ as odd-minors. Therefore by \zcref{thm:globalstructure}, $G'$ has \ocp-treewidth at most $f_{\ref{thm:globalstructure}}(k)$. Then by \zcref{thm:miswithboundedocptw}, we can solve \mis in time $n^{f(k)}$ as desired.
\end{proof}

\zcref{cor:IPsWork} can be seen as a partial answer to the below question, which seeks to extend \zcref{quest:MainIntro} to integer programs with at most two nonzero entries per row.

\begin{question}\label{ques:IPquestion}
    What is the largest minor-closed class of matrices with at most two nonzero entries per row such that we can solve the integer program $\max\{w^T x : Ax \leq b, \ell \leq x \leq u, x \in \mathbb{Z}^n\}$ in polynomial time for all matrices $A$ in the class?
\end{question}

The above problem is interesting for multiple minor relations. If we let $A \in \{0, 1\}^{m \times n}$ and let the minor relation be the normal minor relation for the graph whose incidence matrix is $A$, then this question is equivalent to finding the largest minor-closed class of graphs for which we can solve \mis. This question is answered by the Grid Theorem of Robertson and Seymour \cite{RobertsonS1986Grapha}, which implies that we can solve \mis in polynomial time on a minor-closed class if and only if the class forbids a planar graph. \zcref{cor:IPsWork} provides a partial answer to the question when we let $A \in \{-1, 0, 1\}^{m \times n}$ and let the minor relation be signed-graph minors of the signed-graph whose signed edge-vertex incidence matrix is $A$. \zcref{thm:reductionfromIPtoindependentset} implies that this case is once again equivalent to \mis, and thus the question is essentially equivalent to \zcref{quest:MainIntro}. It is natural to pose \zcref{ques:IPquestion} for matrices with entries outside $\{-1, 0, 1\}$ and under other minor relations, such as the minors of the linear matroid of $A^T$.

\zcref{ques:IPquestion} is of course interesting for matrices with more than two nonzero entries per row, but will likely require different techniques. We remark that the class $\mathcal{M}_\Delta$ of matroids with a representation as a $\Delta$-modular matrix $A$, that is all $\mathsf{rank(A) \times \mathsf{rank}(A)}$ subdeterminants have absolute value at most $\Delta$, is closed under matroid minors \cite[Prop 8.6.1]{GeelenNW2024Excluding} and duality \cite[Theorem 4.2]{OxleyW2022Two-Modular}. Much less is known about the class $\mathcal{M}_\Delta^t$ of matroids with a representation as a totally $\Delta$-modular matrix. In particular it is unknown whether $\mathcal{M}_\Delta^t = \mathcal{M}_\Delta$, though this holds for $\Delta \leq 2$.

\bigskip

\textbf{Acknowledgements.}
The authors thank Niloufar Fuladi, Micha{\l} Seweryn, and in particular, Rose McCarty -- who independently initiated this project with the fourth author before she and the fourth author joined their efforts with the remaining four authors -- for several helpful discussions.

\bibliographystyle{alphaurl}
\bibliography{literature}

\end{document}